\providecommand{\U}[1]{\protect\rule{.1in}{.1in}}
\theoremstyle{definition}
\newtheorem{theo}{Theorem}[section]
\newenvironment{theorem}[1][]
{\begin{theo}[#1]\begin{leftbar}}
{\end{leftbar}\end{theo}}
\newtheorem{lem}[theo]{Lemma}
\newenvironment{lemma}[1][]
{\begin{lem}[#1]\begin{leftbar}}
{\end{leftbar}\end{lem}}
\newtheorem{prop}[theo]{Proposition}
\newenvironment{proposition}[1][]
{\begin{prop}[#1]\begin{leftbar}}
{\end{leftbar}\end{prop}}
\newtheorem{defi}[theo]{Definition}
\newenvironment{definition}[1][]
{\begin{defi}[#1]\begin{leftbar}}
{\end{leftbar}\end{defi}}
\newtheorem{remk}[theo]{Remark}
\newenvironment{remark}[1][]
{\begin{remk}[#1]\begin{leftbar}}
{\end{leftbar}\end{remk}}
\newtheorem{coro}[theo]{Corollary}
\newenvironment{corollary}[1][]
{\begin{coro}[#1]\begin{leftbar}}
{\end{leftbar}\end{coro}}
\newtheorem{conv}[theo]{Convention}
\newenvironment{convention}[1][]
{\begin{conv}[#1]\begin{leftbar}}
{\end{leftbar}\end{conv}}
\newtheorem{quest}[theo]{Question}
\newtheorem{warn}[theo]{Warning}
\newtheorem{conj}[theo]{Conjecture}
\newenvironment{conjecture}[1][]
{\begin{conj}[#1]\begin{leftbar}}
{\end{leftbar}\end{conj}}
\newtheorem{exmp}[theo]{Example}
\newenvironment{example}[1][]
{\begin{exmp}[#1]\begin{leftbar}}
{\end{leftbar}\end{exmp}}
\newtheorem{qust}[theo]{Question}
\newenvironment{question}[1][]
{\begin{qust}[#1]\begin{leftbar}}
{\end{leftbar}\end{qust}}
\newenvironment{statement}{\begin{quote}}{\end{quote}}
\newenvironment{proof}[1][Proof]{\noindent\textbf{#1.} }{\ \rule{0.5em}{0.5em}}
\newenvironment{convention}[1][Convention]{\noindent\textbf{#1.} }{\ \rule{0.5em}{0.5em}}
\newenvironment{question}[1][Question]{\noindent\textbf{#1.} }{\ \rule{0.5em}{0.5em}}
\newenvironment{convention}[1][Convention]{\noindent\textbf{#1.} }{\ \rule{0.5em}{0.5em}}
\newenvironment{verlong}{}{}
\newenvironment{noncompile}{}{}
\let\sumnonlimits\sum
\let\prodnonlimits\prod
\renewcommand{\sum}{\sumnonlimits\limits}
\renewcommand{\prod}{\prodnonlimits\limits}
\begin{document}

\title{A basis for a quotient of symmetric polynomials (draft)}
\author{Darij Grinberg}
\date{
%TCIMACRO{\TeXButton{today}{\today}}%
%BeginExpansion
\today
%EndExpansion
}
\maketitle

\begin{abstract}
\textbf{Abstract.} Consider the ring $\mathcal{S}$ of symmetric polynomials in
$k$ variables over an arbitrary base ring $\mathbf{k}$. Fix $k$ scalars
$a_{1},a_{2},\ldots,a_{k}\in\mathbf{k}$.

Let $I$ be the ideal of $\mathcal{S}$ generated by $h_{n-k+1}-a_{1}%
,h_{n-k+2}-a_{2},\ldots,h_{n}-a_{k}$, where $h_{i}$ is the $i$-th complete
homogeneous symmetric polynomial.

The quotient ring $\mathcal{S}/I$ generalizes both the usual and the quantum
cohomology of the Grassmannian.

We show that $\mathcal{S}/I$ has a $\mathbf{k}$-module basis consisting of
(residue classes of) Schur polynomials fitting into an $\left(  n-k\right)
\times k$-rectangle; and that its multiplicative structure constants satisfy
the same $S_{3}$-symmetry as those of the Grassmannian cohomology. We prove a
Pieri rule and a \textquotedblleft rim hook algorithm\textquotedblright, and
conjecture a positivity property generalizing that of Gromov-Witten
invariants. We construct two further bases of $\mathcal{S}/I$ as well.

We also study the quotient of the whole polynomial ring (not just the
symmetric polynomials) by the ideal generated by the same $k$ polynomials as
$I$.

\end{abstract}
\tableofcontents

\section{Introduction}

\textbf{This is still a draft }-- proofs are at various levels of detail, and
the order of the results reflects the order in which I found them more than
the order in which they are most reasonable to read. This draft will probably
be split into several smaller papers for publication. \textbf{I recommend
\cite{Grinbe19} as a quick survey of the main results proved here}.

This work is devoted to a certain construction that generalizes both the
regular and the quantum cohomology ring of the Grassmannian \cite{Postni05}.
This construction is purely algebraic -- we do not know any geometric meaning
for it at this point -- but shares some basic properties with quantum
cohomology, such as an $S_{3}$-symmetry of its structure constants
(generalizing the $S_{3}$-symmetry for Littlewood-Richardson coefficients and
Gromov-Witten invariants) and conjecturally a positivity as well. All our
arguments are algebraic and combinatorial.

\subsection{Acknowledgments}

DG thanks Dongkwan Kim, Alex Postnikov, Victor Reiner, Mark Shimozono, Josh
Swanson, Kaisa Taipale, and Anders Thorup for enlightening conversations, and
the Mathematisches Forschungsinstitut Oberwolfach for its hospitality during
part of the writing process. The SageMath computer algebra system
\cite{SageMath} has been used for experimentation leading up to some of the
results below.

\section{The basis theorems}

\subsection{Definitions and notations}

Let $\mathbb{N}$ denote the set $\left\{  0,1,2,\ldots\right\}  $.

Let $\mathbf{k}$ be a commutative ring. Let $k\in\mathbb{N}$.

Let $\mathcal{P}$ denote the polynomial ring $\mathbf{k}\left[  x_{1}%
,x_{2},\ldots,x_{k}\right]  $. This is a graded ring, where the grading is by
total degree (so $\deg x_{i}=1$ for each $i\in\left\{  1,2,\ldots,k\right\}  $).

For each $\alpha\in\mathbb{Z}^{k}$ and each $i\in\left\{  1,2,\ldots
,k\right\}  $, we denote the $i$-th entry of $\alpha$ by $\alpha_{i}$ (so that
$\alpha=\left(  \alpha_{1},\alpha_{2},\ldots,\alpha_{k}\right)  $). For each
$\alpha\in\mathbb{N}^{k}$, we define a monomial $x^{\alpha}$ by $x^{\alpha
}=x_{1}^{\alpha_{1}}x_{2}^{\alpha_{2}}\cdots x_{k}^{\alpha_{k}}$.

Let $\mathcal{S}$ denote the ring of symmetric polynomials in $\mathcal{P}$;
in other words, $\mathcal{S}$ is the ring of invariants of the symmetric group
$S_{k}$ acting on $\mathcal{P}$. (The action here is the one you would expect:
A permutation $\sigma\in S_{k}$ sends a monomial $x_{i_{1}}x_{i_{2}}\cdots
x_{i_{m}}$ to $x_{\sigma\left(  i_{1}\right)  }x_{\sigma\left(  i_{2}\right)
}\cdots x_{\sigma\left(  i_{m}\right)  }$.)

The following fact is well-known (going back to Emil Artin):

\begin{proposition}
\label{prop.artin}The $\mathcal{S}$-module $\mathcal{P}$ is free with basis
$\left(  x^{\alpha}\right)  _{\alpha\in\mathbb{N}^{k};\ \alpha_{i}<i\text{ for
each }i}$.
\end{proposition}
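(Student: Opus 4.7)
The plan is to induct on $k$. For the inductive step, I factor the inclusion $\mathcal{S} \hookrightarrow \mathcal{P}$ through the intermediate ring $\mathcal{T} := \mathcal{S}'[x_k]$, where $\mathcal{S}' := \mathbf{k}[x_1,\ldots,x_{k-1}]^{S_{k-1}}$ and $\mathcal{P}' := \mathbf{k}[x_1,\ldots,x_{k-1}]$. By the induction hypothesis, $\mathcal{P}'$ is free over $\mathcal{S}'$ with basis $\{x^\beta : \beta\in\mathbb{N}^{k-1},\ \beta_i<i\}$. Since $\mathcal{P} = \mathcal{P}'[x_k] \cong \mathcal{P}' \otimes_{\mathcal{S}'} \mathcal{T}$, extending scalars shows $\mathcal{P}$ is free over $\mathcal{T}$ with the same basis $\{x^\beta\}$. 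It therefore suffices to show that $\mathcal{T}$ is free over $\mathcal{S}$ with basis $1, x_k, x_k^2, \ldots, x_k^{k-1}$, since the product of the two bases is precisely the basis sought.

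For the spanning part of this auxiliary claim, I would use two observations. First, evaluating the identity $\prod_{j=1}^{k}(t - x_j) = t^k - e_1 t^{k-1} + e_2 t^{k-2} - \cdots + (-1)^k e_k$ in $\mathcal{S}[t]$ at $t = x_k$ yields a monic degree-$k$ relation for $x_k$ over $\mathcal{S}$, so $1, x_k, \ldots, x_k^{k-1}$ generate $\mathcal{S}[x_k]$ as an $\mathcal{S}$-module. Second, solving the triangular recursion $e_i(x_1,\ldots,x_{k-1}) = e_i(x_1,\ldots,x_k) - x_k\cdot e_{i-1}(x_1,\ldots,x_{k-1})$ in succession shows that each generator of $\mathcal{S}'$ is a polynomial in $x_k$ with $\mathcal{S}$-coefficients, so $\mathcal{S}' \subseteq \mathcal{S}[x_k]$, and therefore $\mathcal{T} = \mathcal{S}[x_k]$.

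The main obstacle is the linear independence of $1, x_k, \ldots, x_k^{k-1}$ over $\mathcal{S}$. My plan is to prove this first over $\mathbf{k} = \mathbb{Z}$ and then deduce the general case by base change. The map $\mathcal{S}_\mathbb{Z}^{\,k} \to \mathcal{P}_\mathbb{Z}$ sending the standard basis to $1, x_k, \ldots, x_k^{k-1}$ is graded with torsion-free source, so its injectivity is equivalent to injectivity after tensoring with $\mathbb{Q}$. Over $\mathbb{Q}$, this is a classical consequence of Galois theory: $\mathbb{Q}(x_1,\ldots,x_k)/\mathbb{Q}(e_1,\ldots,e_k)$ is a Galois extension of degree $k!$ with group $S_k$, and since $S_k$ acts transitively on $\{x_1,\ldots,x_k\}$, the minimal polynomial of $x_k$ is $\prod_j(t - x_j)$, of degree $k$, giving the required independence. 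Finally, once the isomorphism $\mathcal{S}_\mathbb{Z}^{\,k} \xrightarrow{\sim} \mathcal{T}_\mathbb{Z}$ is established, the full statement follows for arbitrary $\mathbf{k}$ by base change, since $\mathcal{P}_\mathbb{Z}$ is a polynomial ring and $\mathcal{S}_\mathbb{Z}$ is the $\mathbb{Z}$-polynomial ring in the algebraically independent $e_1,\ldots,e_k$, so both tensor correctly with $\mathbf{k}$.
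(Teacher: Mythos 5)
Your skeleton is sound and is essentially the classical argument that the paper itself does not spell out but delegates to its references (Artin's Galois-theoretic proof covers the field case; LLPT95 and Bourbaki cover general $\mathbf{k}$): the tower $\mathcal{S}\subseteq\mathcal{T}=\mathcal{S}'[x_k]\subseteq\mathcal{P}$, the monic relation $x_k^k=e_1x_k^{k-1}-e_2x_k^{k-2}+\cdots-(-1)^{k-1}e_k$, the triangular recursion giving $\mathcal{S}'\subseteq\mathcal{S}[x_k]$, the identification $\mathcal{P}\cong\mathcal{P}'\otimes_{\mathcal{S}'}\mathcal{T}$, and the transitivity-of-bases step are all correct, as are the $\mathbb{Z}\to\mathbb{Q}$ reduction (torsion-free source, $\mathbb{Q}$ flat) and the Galois computation of the minimal polynomial of $x_k$.

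The gap is the final sentence, "the full statement follows for arbitrary $\mathbf{k}$ by base change." At that point of your induction you need injectivity of the map $\mathcal{S}_{\mathbf{k}}^{\,k}\to\mathcal{P}_{\mathbf{k}}$, $(a_j)\mapsto\sum_j a_jx_k^j$, and what you have is injectivity of the corresponding map over $\mathbb{Z}$. But $-\otimes_{\mathbb{Z}}\mathbf{k}$ preserves isomorphisms and surjections, not injections (tensor the injection $2\colon\mathbb{Z}\to\mathbb{Z}$ with $\mathbb{F}_2$); and $\mathbf{k}$ is in general not flat over $\mathbb{Z}$, unlike $\mathbb{Q}$, so the trick you used in the $\mathbb{Z}\to\mathbb{Q}$ direction does not transfer. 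Your justification only identifies the source and target of the base-changed map with $\mathcal{S}_{\mathbf{k}}^{\,k}$ and $\mathcal{P}_{\mathbf{k}}$; it does not address whether injectivity survives. What is actually needed is $\operatorname{Tor}_1^{\mathbb{Z}}\left(\mathcal{P}_{\mathbb{Z}}/\mathcal{T}_{\mathbb{Z}},\mathbf{k}\right)=0$, i.e.\ that $\mathcal{P}_{\mathbb{Z}}/\mathcal{T}_{\mathbb{Z}}$ is torsion-free. This is true, but it is exactly the missing content: for instance, $\mathcal{T}_{\mathbb{Z}}$ has the $\mathbb{Z}$-basis $\left\{m_{\lambda}(x_1,\ldots,x_{k-1})\,x_k^{\,j}\right\}$, whose members are $0$-$1$ sums of monomials with pairwise disjoint supports, so $\mathcal{T}_{\mathbb{Z}}$ is a $\mathbb{Z}$-module direct summand of $\mathcal{P}_{\mathbb{Z}}$. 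Two clean repairs: (1) run the entire induction over $\mathbb{Z}$ first, obtaining the bijection $\bigoplus_{\alpha}\mathcal{S}_{\mathbb{Z}}\to\mathcal{P}_{\mathbb{Z}}$ asserted by the Proposition, and only then tensor this \emph{isomorphism} with $\mathbf{k}$, using $\mathcal{S}_{\mathbb{Z}}\otimes_{\mathbb{Z}}\mathbf{k}\cong\mathcal{S}_{\mathbf{k}}$ (fundamental theorem of symmetric polynomials over $\mathbb{Z}$ and over $\mathbf{k}$); isomorphisms do survive arbitrary base change. Or (2) drop the $\mathbb{Z}/\mathbb{Q}$ detour and prove independence directly over any $\mathbf{k}$: if $\sum_{j=0}^{k-1}a_jx_k^j=0$ with $a_j\in\mathcal{S}$, then applying the transpositions $x_k\leftrightarrow x_i$ shows that $Q(t)=\sum_j a_jt^j\in\mathcal{S}[t]$ vanishes at each of $x_1,\ldots,x_k$; dividing successively by the monic factors $t-x_1,\ldots,t-x_k$ (and using that $x_i-x_j$ for $i\neq j$ is a non-zerodivisor in $\mathcal{P}$) shows that $(t-x_1)(t-x_2)\cdots(t-x_k)$ divides $Q$ in $\mathcal{P}[t]$, which forces $Q=0$ since $\deg_t Q<k$.
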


Proofs of Proposition \ref{prop.artin} can be found in \cite[(DIFF.1.3)]%
{LLPT95}, in \cite[Chapter IV, \S 6, no. 1, Theorem 1 c)]{Bourba03} and in
\cite[(5.1)]{Macdon91}\footnote{Strictly speaking, \cite[(5.1)]{Macdon91} is
only the particular case of Proposition \ref{prop.artin} for $\mathbf{k}%
=\mathbb{Z}$. However, with some minor modifications, the proof given in
\cite{Macdon91} works for any $\mathbf{k}$.}. The particular case when
$\mathbf{k}$ is a field is also proved in \cite[result shown at the end of
\S II.G]{Artin71}\footnote{To be more precise, Artin proves in \cite[\S II.G,
Example 2]{Artin71} that (when $\mathbf{k}$ is a field)
\par
\begin{itemize}
\item the monomials $x^{\alpha}$ with $\alpha\in\mathbb{N}^{k}$ satisfying
$\alpha_{i}<i$ for each $i$ are linearly independent over the field
$\mathcal{S}_{\operatorname*{rat}}$ of symmetric rational functions in
$x_{1},x_{2},\ldots,x_{k}$ over $\mathbf{k}$ (and therefore also linearly
independent over the ring $\mathcal{S}$ of symmetric polynomials), and
\par
\item each polynomial $g\in\mathcal{P}$ can be represented as a polynomial in
$x_{1},x_{2},\ldots,x_{k}$ with coefficients in $\mathcal{S}$ and having
degree $<i$ in each $x_{i}$ (that is, as an $\mathcal{S}$-linear combination
of the monomials $x^{\alpha}$ with $\alpha\in\mathbb{N}^{k}$ satisfying
$\alpha_{i}<i$).
\end{itemize}
\par
Combining these two facts yields Proposition \ref{prop.artin} (when
$\mathbf{k}$ is a field).}. The particular case of Proposition
\ref{prop.artin} when $\mathbf{k}=\mathbb{Q}$ also appears in \cite[Remark
3.2]{Garsia02}. A related result is proven in \cite[Proposition 3.4]{FoGePo97}
(for $\mathbf{k}=\mathbb{Z}$, but the proof applies equally over any
$\mathbf{k}$).

Now, fix an integer $n\geq k$. For each $i\in\left\{  1,2,\ldots,k\right\}  $,
let $a_{i}$ be an element of $\mathcal{P}$ with degree $<n-k+i$. (This is
clearly satisfied when $a_{1},a_{2},\ldots,a_{k}$ are constants in
$\mathbf{k}$, but also in some other cases. Note that the $a_{i}$ do not have
to be homogeneous.)

For each $\alpha\in\mathbb{Z}^{k}$, we let $\left\vert \alpha\right\vert $
denote the sum of the entries of the $k$-tuple $\alpha$ (that is, $\left\vert
\alpha\right\vert =\alpha_{1}+\alpha_{2}+\cdots+\alpha_{k}$).

For each $m\in\mathbb{Z}$, we let $h_{m}$ denote the $m$-th complete
homogeneous symmetric polynomial; this is the element of $\mathcal{S}$ defined
by%
\begin{equation}
h_{m}=\sum_{1\leq i_{1}\leq i_{2}\leq\cdots\leq i_{m}\leq k}x_{i_{1}}x_{i_{2}%
}\cdots x_{i_{m}}=\sum_{\substack{\alpha\in\mathbb{N}^{k};\\\left\vert
\alpha\right\vert =m}}x^{\alpha}. \label{eq.hm}%
\end{equation}
(Thus, $h_{0}=1$, and $h_{m}=0$ when $m<0$.)

Let $J$ be the ideal of $\mathcal{P}$ generated by the $k$ differences%
\begin{equation}
h_{n-k+1}-a_{1},h_{n-k+2}-a_{2},\ldots,h_{n}-a_{k}. \label{eq.gensJ}%
\end{equation}

If $M$ is a $\mathbf{k}$-module and $N$ is a submodule of $M$, then the
projection of any $m\in M$ onto the quotient $M/N$ (that is, the congruence
class of $m$ modulo $N$) will be denoted by $\overline{m}$.

\subsection{The basis theorem for $\mathcal{P}/J$}

The following is our first result:

\begin{theorem}
\label{thm.P/J}The $\mathbf{k}$-module $\mathcal{P}/J$ is free with basis
$\left(  \overline{x^{\alpha}}\right)  _{\alpha\in\mathbb{N}^{k};\ \alpha
_{i}<n-k+i\text{ for each }i}$.
\end{theorem}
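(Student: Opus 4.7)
The plan is to exhibit an explicit generating set $G_1, G_2, \ldots, G_k$ for $J$ whose leading monomials under a suitable term order are $x_1^{n-k+1}, x_2^{n-k+2}, \ldots, x_k^n$---pairwise coprime and monic---and then invoke the coprime-leading-monomial form of Buchberger's criterion to deduce that $\left(\overline{x^{\alpha}}\right)_{\alpha_i < n-k+i}$ is a $\mathbf{k}$-basis of $\mathcal{P}/J$.

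Concretely, for each $j \in \{1, 2, \ldots, k\}$, I would set
\[
G_j := h_{n-k+j}(x_j, x_{j+1}, \ldots, x_k) - \sum_{i=0}^{j-1} (-1)^i\, e_i(x_1, \ldots, x_{j-1})\, a_{j-i},
\]
where $e_i$ denotes the $i$-th elementary symmetric polynomial. The classical identity
\[
h_m(x_j, \ldots, x_k) = \sum_{i=0}^{j-1} (-1)^i\, e_i(x_1, \ldots, x_{j-1})\, h_{m-i}(x_1, \ldots, x_k),
\]
an easy iterate of $\sum_m h_m t^m = \prod_{i=1}^k (1 - x_i t)^{-1}$, rewrites $G_j$ as $\sum_{i=0}^{j-1} (-1)^i\, e_i(x_1, \ldots, x_{j-1})\, (h_{n-k+j-i} - a_{j-i})$, exhibiting $G_j \in J$; the triangular form of this change of generators also lets me solve for each original generator $h_{n-k+j} - a_j$ as a $\mathcal{P}$-linear combination of $G_1, \ldots, G_j$, so $J = (G_1, \ldots, G_k)$.

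Now equip $\mathcal{P}$ with the degree-reverse-lexicographic order with $x_1 > x_2 > \cdots > x_k$. The top-degree part of $G_j$ is $h_{n-k+j}(x_j, \ldots, x_k)$---every other summand $e_i(x_1, \ldots, x_{j-1})\, a_{j-i}$ has total degree strictly below $n-k+j$, combining $\deg e_i \leq i$ with $\deg a_{j-i} < n-k+j-i$---and that polynomial's leading monomial in this order is $x_j^{n-k+j}$, with coefficient $1$. The leading monomials $x_1^{n-k+1}, \ldots, x_k^n$ involve pairwise disjoint variables and are therefore pairwise coprime. The coprime form of Buchberger's first criterion---valid over an arbitrary commutative ring $\mathbf{k}$ because the $S$-polynomial $m_j G_i - m_i G_j$ collapses identically to $T_i G_j - T_j G_i$ (where $m_\ell := x_\ell^{n-k+\ell}$ and $T_\ell := G_\ell - m_\ell$), with no division of coefficients---then shows that $\{G_1, \ldots, G_k\}$ is a Gr\"obner basis of $J$. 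Hence the monomials not divisible by any $x_j^{n-k+j}$, namely the $x^\alpha$ with $\alpha_j < n-k+j$ for each $j$, project to a $\mathbf{k}$-basis of $\mathcal{P}/J$.

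The step I expect to need the most care is the Gr\"obner-basis conclusion over a general (possibly non-reduced) commutative ring $\mathbf{k}$. The spanning half is immediate from each $G_j$ being monic: any monomial divisible by some $x_j^{n-k+j}$ can be reduced modulo $J$ using $G_j$. Linear independence is the subtle part; the cleanest route is induction on the leading monomial of a hypothetical representation $\sum_j q_j G_j = f$ with $f$ a $\mathbf{k}$-linear combination of standard monomials, exploiting the pairwise coprimality of the $m_j$ to rewrite any apparent leading-term cancellation via the identity $m_j G_i - m_i G_j = T_i G_j - T_j G_i$ in a way that strictly decreases the leading monomial of the representation, ultimately forcing $f = 0$.
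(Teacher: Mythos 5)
Your proposal is correct and follows essentially the same route as the paper: the same generators $G_j=h_{n-k+j}\left(x_j,\ldots,x_k\right)-\sum_{i=0}^{j-1}\left(-1\right)^i e_i\left(x_1,\ldots,x_{j-1}\right)a_{j-i}$, the same $h$/$e$ identity and triangular change-of-generators argument to show they generate $J$, and the same conclusion via coprime monic leading monomials ($x_j^{n-k+j}$), Buchberger's first criterion over a commutative ring, and the Macaulay--Buchberger standard-monomial basis theorem. The only differences are cosmetic (degrevlex instead of deglex, both giving the same leading monomials) and that you sketch the over-a-general-ring justification that the paper delegates to its cited reference.
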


\begin{example}
Let $n=5$ and $k=2$. Then, $\mathcal{P}=\mathbf{k}\left[  x_{1},x_{2}\right]
$, and $J$ is the ideal of $\mathcal{P}$ generated by the $2$ differences
\begin{align*}
h_{4}-a_{1}  &  =\left(  x_{1}^{4}+x_{1}^{3}x_{2}+x_{1}^{2}x_{2}^{2}%
+x_{1}x_{2}^{3}+x_{2}^{4}\right)  -a_{1}\ \ \ \ \ \ \ \ \ \ \text{and}\\
h_{5}-a_{2}  &  =\left(  x_{1}^{5}+x_{1}^{4}x_{2}+x_{1}^{3}x_{2}^{2}+x_{1}%
^{2}x_{2}^{3}+x_{1}x_{2}^{4}+x_{2}^{5}\right)  -a_{2}.
\end{align*}
Theorem \ref{thm.P/J} yields that the $\mathbf{k}$-module $\mathcal{P}/J$ is
free with basis $\left(  \overline{x^{\alpha}}\right)  _{\alpha\in
\mathbb{N}^{2};\ \alpha_{i}<3+i\text{ for each }i}$; this basis can also be
rewritten as $\left(  \overline{x_{1}^{\alpha_{1}}x_{2}^{\alpha_{2}}}\right)
_{\alpha_{1}\in\left\{  0,1,2,3\right\}  ;\ \alpha_{2}\in\left\{
0,1,2,3,4\right\}  }$. As a consequence, any $\overline{x_{1}^{\beta_{1}}%
x_{2}^{\beta_{2}}}\in\mathcal{P}/J$ can be written as a linear combination of
elements of this basis. For example,%
\begin{align*}
\overline{x_{1}^{4}}  &  =a_{1}-\overline{x_{1}^{3}x_{2}}-\overline{x_{1}%
^{2}x_{2}^{2}}-\overline{x_{1}x_{2}^{3}}-\overline{x_{2}^{4}}%
\ \ \ \ \ \ \ \ \ \ \text{and}\\
\overline{x_{2}^{5}}  &  =a_{2}-a_{1}\overline{x_{1}}.
\end{align*}
These expressions will become more complicated for higher values of $n$ and
$k$.
\end{example}

Theorem \ref{thm.P/J} is related to the second part of \cite[Proposition
2.9]{CoKrWa09} (and our proof below can be viewed as an elaboration of the
argument sketched in the last paragraph of \cite[proof of Proposition
2.9]{CoKrWa09}).

\subsection{The basis theorem for $\mathcal{S}/I$}

To state our next result, we need some more notations.

\begin{definition}
\label{def.partitions}\textbf{(a)} We define the concept of
\textit{partitions} (of an integer) as in \cite[Chapter 2]{GriRei18}. Thus, a
partition is a weakly decreasing infinite sequence $\left(  \lambda
_{1},\lambda_{2},\lambda_{3},\ldots\right)  $ of nonnegative integers such
that all but finitely many $i$ satisfy $\lambda_{i}=0$. We identify each
partition $\left(  \lambda_{1},\lambda_{2},\lambda_{3},\ldots\right)  $ with
the finite list $\left(  \lambda_{1},\lambda_{2},\ldots,\lambda_{p}\right)  $
whenever $p\in\mathbb{N}$ has the property that $\left(  \lambda_{i}=0\text{
for all }i>p\right)  $. For example, the partition $\left(
3,1,1,\underbrace{0,0,\ldots}_{\text{zeroes}}\right)  $ is identified with
$\left(  3,1,1,0\right)  $ and with $\left(  3,1,1\right)  $.

\textbf{(b)} A \textit{part} of a partition $\lambda$ means a nonzero entry of
$\lambda$.

\textbf{(c)} Let $P_{k,n}$ denote the set of all partitions that have at most
$k$ parts and have the property that each of their parts is $\leq n-k$.
(Visually speaking, $P_{k,n}$ is the set of all partitions whose Young diagram
fits into a $k\times\left(  n-k\right)  $-rectangle.)

\textbf{(d)} We let $\varnothing$ denote the empty partition $\left(
{}\right)  $.
\end{definition}

\begin{example}
If $n=4$ and $k=2$, then%
\[
P_{k,n}=P_{2,4}=\left\{  \varnothing,\left(  1\right)  ,\left(  2\right)
,\left(  1,1\right)  ,\left(  2,1\right)  ,\left(  2,2\right)  \right\}  .
\]

If $n=5$ and $k=2$, then%
\[
P_{k,n}=P_{2,5}=\left\{  \varnothing,\left(  1\right)  ,\left(  2\right)
,\left(  3\right)  ,\left(  1,1\right)  ,\left(  2,1\right)  ,\left(
3,1\right)  ,\left(  2,2\right)  ,\left(  3,2\right)  ,\left(  3,3\right)
\right\}  .
\]

\end{example}

It is well-known (and easy to see) that $P_{k,n}$ is a finite set of size
$\dbinom{n}{k}$. (Indeed, the map%
\begin{align*}
P_{k,n}  &  \rightarrow\left\{  \left(  a_{1},a_{2},\ldots,a_{k}\right)
\in\left\{  1,2,\ldots,n\right\}  ^{k}\ \mid\ a_{1}>a_{2}>\cdots
>a_{k}\right\}  ,\\
\lambda &  \mapsto\left(  \lambda_{1}+k,\lambda_{2}+k-1,\ldots,\lambda
_{k}+1\right)
\end{align*}
is easily seen to be well-defined and to be a bijection; but the set
\newline$\left\{  \left(  a_{1},a_{2},\ldots,a_{k}\right)  \in\left\{
1,2,\ldots,n\right\}  ^{k}\ \mid\ a_{1}>a_{2}>\cdots>a_{k}\right\}  $ has size
$\dbinom{n}{k}$.)

\begin{definition}
\label{def.slam}For any partition $\lambda$, we let $s_{\lambda}$ denote the
Schur polynomial in $x_{1},x_{2},\ldots,x_{k}$ corresponding to the partition
$\lambda$. This Schur polynomial is what is called $s_{\lambda}\left(
x_{1},x_{2},\ldots,x_{k}\right)  $ in \cite[Chapter 2]{GriRei18}. Note that%
\begin{equation}
s_{\lambda}=0\ \ \ \ \ \ \ \ \ \ \text{if }\lambda\text{ has more than
}k\text{ parts.} \label{eq.slam=0-too-long}%
\end{equation}

\end{definition}

If $\lambda$ is any partition, then the Schur polynomial $s_{\lambda
}=s_{\lambda}\left(  x_{1},x_{2},\ldots,x_{k}\right)  $ is symmetric and thus
belongs to $\mathcal{S}$.

We now state our next fundamental fact:

\begin{theorem}
\label{thm.S/J}Assume that $a_{1},a_{2},\ldots,a_{k}$ belong to $\mathcal{S}$.
Let $I$ be the ideal of $\mathcal{S}$ generated by the $k$ differences
(\ref{eq.gensJ}). Then, the $\mathbf{k}$-module $\mathcal{S}/I$ is free with
basis $\left(  \overline{s_{\lambda}}\right)  _{\lambda\in P_{k,n}}$.
\end{theorem}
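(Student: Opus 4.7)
The strategy is to deduce Theorem~\ref{thm.S/J} from Theorem~\ref{thm.P/J} via Proposition~\ref{prop.artin}. Because the generators $h_{n-k+i}-a_{i}$ of $J$ already lie in $\mathcal{S}$ and coincide with those of $I$, one has $J = I\mathcal{P}$. Proposition~\ref{prop.artin} realizes $\mathcal{P} \cong \mathcal{S} \otimes_{\mathbf{k}} T$ as $\mathcal{S}$-modules, where $T$ is the free $\mathbf{k}$-module on the Artin staircase $\{x^{\alpha} : \alpha_{i} < i\}$. Quotienting the first tensor factor by $I$ yields $\mathcal{P}/J \cong (\mathcal{S}/I) \otimes_{\mathbf{k}} T$ as $\mathbf{k}$-modules; since $1 = x^{(0,\ldots,0)}$ belongs to $T$, the natural $\mathbf{k}$-linear map $\mathcal{S}/I \to \mathcal{P}/J$, $\overline{f} \mapsto \overline{f}$, is split-injective.

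\textbf{Linear independence.} Via this injection, it is enough to prove independence of $(\overline{s_{\lambda}})_{\lambda \in P_{k,n}}$ inside $\mathcal{P}/J$. For $\lambda \in P_{k,n}$, every monomial $x^{\sigma}$ appearing in $s_{\lambda}$ satisfies each $\sigma_{i} \leq \lambda_{1} \leq n-k < n-k+i$ (because the $i$'s in any SSYT of shape $\lambda$ occupy distinct columns, of which there are only $\lambda_{1}$), so $\overline{s_{\lambda}}$ is already expressed in the basis of Theorem~\ref{thm.P/J}. By symmetry of $s_{\lambda}$, the coefficient of the basis vector $\overline{x^{(\mu_{k}, \mu_{k-1}, \ldots, \mu_{1})}}$ in $\overline{s_{\lambda}}$ equals the Kostka number $K_{\lambda\mu}$. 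Ordering $P_{k,n}$ first by $|\lambda|$ and then by dominance, the matrix $(K_{\lambda\mu})_{\lambda, \mu \in P_{k,n}}$ becomes block-diagonal with unitriangular blocks, and is therefore invertible over any $\mathbf{k}$.

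\textbf{Spanning, and the main obstacle.} Since $(s_{\mu})_{\ell(\mu)\leq k}$ is a $\mathbf{k}$-basis of $\mathcal{S}$, spanning reduces to showing $\overline{s_{\mu}}$ lies in the $\mathbf{k}$-span of $(\overline{s_{\lambda}})_{\lambda\in P_{k,n}}$ for every $\mu$ with $\ell(\mu) \leq k$ and $\mu_{1} > n-k$. I would induct on the pair $(\mu_{1}, |\mu|)$ in lexicographic order, expanding $s_{\mu} = \det(h_{\mu_{i}-i+j})_{1\leq i,j\leq k}$ via Jacobi-Trudi, substituting $h_{n-k+\ell} \equiv a_{\ell} \pmod{I}$ whenever an entry falls into $\{n-k+1,\ldots,n\}$, and reducing entries $h_{m}$ with $m > n$ via the classical recursion $h_{m} = \sum_{j=1}^{k} (-1)^{j-1} e_{j} h_{m-j}$ (valid in $\mathcal{S}$ for $m > k$); the resulting symmetric polynomials are then re-expanded in the Schur basis. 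The main obstacle is exactly this spanning step when $\mu_{1} > n$: such $h_{m}$ are not directly governed by the defining relations of $I$, and one must verify carefully that the inductive statistic $(\mu_{1}, |\mu|)$ strictly decreases at every stage. The ``rim hook algorithm'' promised in the abstract should provide a cleaner and more uniform combinatorial replacement for this somewhat ad-hoc reduction.
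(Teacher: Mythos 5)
The first half of your argument is correct, and is in fact a pleasant variant of what the paper does. The identification $\mathcal{P}/J\cong(\mathcal{S}/I)\otimes_{\mathbf{k}}T$ coming from Proposition \ref{prop.artin} together with $J=I\mathcal{P}$ does give a split injection $\mathcal{S}/I\to\mathcal{P}/J$, and your independence argument is sound: every monomial of $s_{\lambda}$ with $\lambda\in P_{k,n}$ lies in the staircase region $\alpha_{i}<n-k+i$, so reading off the coefficients of the basis vectors $\overline{x^{\left(\mu_{k},\mu_{k-1},\ldots,\mu_{1}\right)}}$ of Theorem \ref{thm.P/J} produces the Kostka matrix $\left(K_{\lambda\mu}\right)_{\lambda,\mu\in P_{k,n}}$, which is block-diagonal by size and unitriangular with respect to dominance, hence invertible over any $\mathbf{k}$. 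The paper reaches the same conclusion by a different route: it shows that $\left(\overline{s_{\lambda}x^{\alpha}}\right)$ spans $\mathcal{P}/J$ (Lemma \ref{lem.basis-over-basis}) and has the same cardinality $\binom{n}{k}\cdot k!$ as the basis of Theorem \ref{thm.P/J}, then applies Lemma \ref{lem.freemod-span-basis}. Your route trades that counting argument for Kostka unitriangularity, and has the advantage that independence does not rely on spanning -- which matters, because spanning is exactly where your proposal is incomplete.

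The spanning step is a genuine gap, as you yourself flag, and the statistic you propose, $\left(\mu_{1},\left\vert\mu\right\vert\right)$ in lexicographic order, is not the right one: after substituting $h_{n-k+\ell}\equiv a_{\ell}\operatorname{mod}I$ and re-expanding in the Schur basis, the partitions $\kappa$ that appear are controlled only by their size (namely $\left\vert\kappa\right\vert<\left\vert\mu\right\vert$, which follows from $\deg a_{\ell}<n-k+\ell$); since $a_{\ell}$ is an arbitrary symmetric polynomial of that degree, $\kappa_{1}$ can perfectly well exceed $\mu_{1}$, so your lex statistic need not decrease. The correct organization -- and this is what the paper does -- is to induct on $\left\vert\mu\right\vert$ alone. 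First prove by strong induction that $h_{i}\equiv\left(\text{some symmetric polynomial of degree}<i\right)\operatorname{mod}I$ for all $i>n-k$: for $n-k<i\leq n$ this is $h_{i}\equiv a_{i-n+k}$ plus the degree hypothesis on the $a_{j}$, and for $i>n$ it follows from the recursion of Corollary \ref{cor.heh-id.0} together with the induction hypothesis (Lemma \ref{lem.I.hi-red}). Then Laplace-expand the Jacobi--Trudi determinant along its first row, using that the cofactors are homogeneous of degree $\left\vert\mu\right\vert-\left(\mu_{1}-1+j\right)$ (Lemma \ref{lem.I.cofactor1}), to conclude $s_{\mu}\equiv\left(\text{degree}<\left\vert\mu\right\vert\right)\operatorname{mod}I$ (Lemma \ref{lem.I.sl-red}); finally re-expand the remainder in Schur polynomials of size $<\left\vert\mu\right\vert$ (Lemma \ref{lem.I.sm-as-smaller}) and close the induction on size (Lemma \ref{lem.I.sl-red2}). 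One more caution: the rim hook algorithm you hope would replace this reduction (Theorem \ref{thm.rimhook}) is proved only under the assumption $a_{1},a_{2},\ldots,a_{k}\in\mathbf{k}$, so it cannot be invoked in the generality $a_{1},a_{2},\ldots,a_{k}\in\mathcal{S}$ in which Theorem \ref{thm.S/J} is stated.
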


We will prove Theorem \ref{thm.S/J} below; a different proof has been given by
Weinfeld in \cite[Corollary 6.2]{Weinfe19}.

The $\mathbf{k}$-algebra $\mathcal{S}/I$ generalizes several constructions in
the literature:

\begin{itemize}
\item If $\mathbf{k}=\mathbb{Z}$ and $a_{1}=a_{2}=\cdots=a_{k}=0$, then
$\mathcal{S}/I$ becomes the cohomology ring of the Grassmannian of
$k$-dimensional subspaces in an $n$-dimensional space (see, e.g.,
\cite[\S 9.4]{Fulton97} or \cite[Exercise 3.2.12]{Manive01}); the elements of
the basis $\left(  \overline{s_{\lambda}}\right)  _{\lambda\in P_{k,n}}$
correspond to the Schubert classes.

\item If $\mathbf{k}=\mathbb{Z}\left[  q\right]  $ and $a_{1}=a_{2}%
=\cdots=a_{k-1}=0$ and $a_{k}=-\left(  -1\right)  ^{k}q$, then $\mathcal{S}/I$
becomes isomorphic to the quantum cohomology ring of the same Grassmannian
(see \cite{Postni05}). Indeed, our ideal $I$ becomes the $J_{kn}^{q}$ of
\cite[(6)]{Postni05} in this case, and Theorem \ref{thm.S/J} generalizes the
fact that the quotient $\left(  \Lambda_{k}\otimes\mathbb{Z}\left[  q\right]
\right)  /J_{kn}^{q}$ in \cite[(6)]{Postni05} has basis $\left(  s_{\lambda
}\right)  _{\lambda\in P_{kn}}$.
\end{itemize}

One goal of this paper is to provide a purely algebraic foundation for the
study of the standard and quantum cohomology rings of the Grassmannian,
without having to resort to geometry for proofs of the basic properties of
these rings. In particular, Theorem \ref{thm.S/J} shows that the
\textquotedblleft abstract Schubert classes\textquotedblright\ $\overline
{s_{\lambda}}$ (with $\lambda\in P_{k,n}$) form a basis of the $\mathbf{k}%
$-module $\mathcal{S}/I$, whereas Corollary \ref{cor.S3sym} further below
shows that the structure constants of the $\mathbf{k}$-algebra $\mathcal{S}/I$
with respect to this basis (we may call them \textquotedblleft generalized
Gromov-Witten invariants\textquotedblright) satisfy an $S_{3}$-symmetry. These
two properties are two of the facts for whose proofs \cite{Postni05} relies on
algebro-geometric literature; thus, our paper helps provide an alternative
footing for \cite{Postni05} using only combinatorics and
algebra\footnote{This, of course, presumes that one is willing to forget the
cohomological definition of the ring $\operatorname*{QH}\nolimits^{\ast
}\left(  \operatorname*{Gr}\nolimits_{kn}\right)  $, and instead to define it
algebraically as the quotient ring $\left(  \Lambda_{k}\otimes\mathbb{Z}%
\left[  q\right]  \right)  /J_{kn}^{q}$, using the notations of
\cite{Postni05}.}.

\begin{remark}
The $\mathbf{k}$-algebra $\mathcal{P}/J$ somewhat resembles the
\textquotedblleft splitting algebra\textquotedblright\ $\operatorname*{Split}%
\nolimits_{A}^{d}\left(  p\right)  $ from \cite[\S 1.3]{LakTho12}; further
analogies between these concepts can be made as we study the former. For
example, the basis we give in Theorem \ref{thm.P/J} is like the basis in
\cite[(1.5)]{LakTho12}. It is not currently clear to us whether there is more
than analogies.
\end{remark}

\section{A fundamental identity}

Let us use the notations $h_{m}$ and $e_{m}$ for complete homogeneous
symmetric polynomials and elementary symmetric polynomials in general. Thus,
for any $m\in\mathbb{Z}$ and any $p$ elements $y_{1},y_{2},\ldots,y_{p}$ of a
commutative ring, we set%
\begin{align}
h_{m}\left(  y_{1},y_{2},\ldots,y_{p}\right)   &  =\sum_{1\leq i_{1}\leq
i_{2}\leq\cdots\leq i_{m}\leq p}y_{i_{1}}y_{i_{2}}\cdots y_{i_{m}%
}\ \ \ \ \ \ \ \ \ \ \text{and}\label{eq.def.hm}\\
e_{m}\left(  y_{1},y_{2},\ldots,y_{p}\right)   &  =\sum_{1\leq i_{1}%
<i_{2}<\cdots<i_{m}\leq p}y_{i_{1}}y_{i_{2}}\cdots y_{i_{m}}.
\label{eq.def.em}%
\end{align}
(Thus, $h_{0}\left(  y_{1},y_{2},\ldots,y_{p}\right)  =1$ and $e_{0}\left(
y_{1},y_{2},\ldots,y_{p}\right)  =1$. Also, $e_{m}\left(  y_{1},y_{2}%
,\ldots,y_{p}\right)  =0$ for all $m>p$. Also, for any $m<0$, we have
$h_{m}\left(  y_{1},y_{2},\ldots,y_{p}\right)  =0$ and $e_{m}\left(
y_{1},y_{2},\ldots,y_{p}\right)  =0$. Finally, what we have previously called
$h_{m}$ without any arguments can now be rewritten as $h_{m}\left(
x_{1},x_{2},\ldots,x_{k}\right)  $. Similarly, we shall occasionally
abbreviate $e_{m}\left(  x_{1},x_{2},\ldots,x_{k}\right)  $ as $e_{m}$.)

\begin{lemma}
\label{lem.heh-id}Let $i\in\left\{  1,2,\ldots,k+1\right\}  $ and
$p\in\mathbb{N}$. Then,%
\[
h_{p}\left(  x_{i},x_{i+1},\ldots,x_{k}\right)  =\sum_{t=0}^{i-1}\left(
-1\right)  ^{t}e_{t}\left(  x_{1},x_{2},\ldots,x_{i-1}\right)  h_{p-t}\left(
x_{1},x_{2},\ldots,x_{k}\right)  .
\]

\end{lemma}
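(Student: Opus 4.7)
The plan is to prove the identity by extracting coefficients from a generating-function factorization. Work in the formal power series ring $R[[z]]$ over $R = \mathbf{k}[x_1, x_2, \ldots, x_k]$, and recall the standard identities
\[
\sum_{m \geq 0} h_m(y_1, \ldots, y_p)\, z^m \;=\; \prod_{j=1}^{p} \frac{1}{1 - y_j z}, \qquad \sum_{m \geq 0} e_m(y_1, \ldots, y_p)\, z^m \;=\; \prod_{j=1}^{p} (1 + y_j z),
\]
valid for any $y_1, \ldots, y_p \in R$. These are proved by expanding each factor as a geometric series (or a two-term factor) and collecting.

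Next I would factor the product $\prod_{j=1}^{k} (1 - x_j z)^{-1}$ as
\[
\prod_{j=1}^{k} \frac{1}{1 - x_j z} \;=\; \prod_{j=1}^{i-1} \frac{1}{1 - x_j z} \cdot \prod_{j=i}^{k} \frac{1}{1 - x_j z},
\]
and multiply both sides by $\prod_{j=1}^{i-1}(1 - x_j z)$ to obtain
\[
\prod_{j=i}^{k} \frac{1}{1 - x_j z} \;=\; \Bigl(\prod_{j=1}^{i-1}(1 - x_j z)\Bigr)\,\Bigl(\prod_{j=1}^{k} \frac{1}{1 - x_j z}\Bigr).
\]
Translated via the generating-function identities, this reads
\[
\sum_{p \geq 0} h_p(x_i, \ldots, x_k)\, z^p \;=\; \Bigl(\sum_{t \geq 0} (-1)^t e_t(x_1, \ldots, x_{i-1})\, z^t\Bigr) \Bigl(\sum_{q \geq 0} h_q(x_1, \ldots, x_k)\, z^q\Bigr),
\]
where the sign comes from substituting $-z$ into the $E$-generating function for $x_1, \ldots, x_{i-1}$.

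Finally I would extract the coefficient of $z^p$ on both sides, obtaining
\[
h_p(x_i, \ldots, x_k) \;=\; \sum_{t = 0}^{p} (-1)^t e_t(x_1, \ldots, x_{i-1})\, h_{p-t}(x_1, \ldots, x_k).
\]
Since $e_t(x_1, \ldots, x_{i-1}) = 0$ for $t \geq i$, the sum truncates at $t = i-1$, giving the claim. The two edge cases $i = 1$ (where the leftmost product is empty and the right-hand sum reduces to the single term $h_p$) and $i = k+1$ (where $h_p$ of the empty tuple equals $\delta_{p,0}$) are verified directly and match; this is the only place one must be slightly careful, but nothing substantial can go wrong. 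I do not expect a real obstacle: the identity is essentially a repackaging of the factorization of $\prod 1/(1-x_j z)$.
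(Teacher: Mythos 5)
Your proposal is correct and follows essentially the same route as the paper: both use the generating-function identities $\sum_q h_q(y_1,\ldots,y_p)u^q = \prod_j (1-y_ju)^{-1}$ and $\sum_q (-1)^q e_q(y_1,\ldots,y_p)u^q = \prod_j (1-y_ju)$, factor $\prod_{j=i}^{k}(1-x_ju)^{-1}$ as $\bigl(\prod_{j=1}^{i-1}(1-x_ju)\bigr)\bigl(\prod_{j=1}^{k}(1-x_ju)^{-1}\bigr)$, compare coefficients of $u^p$, and truncate the sum at $t=i-1$ using $e_t(x_1,\ldots,x_{i-1})=0$ for $t>i-1$. The edge cases $i=1$ and $i=k+1$ are handled uniformly by the empty-product conventions, so no separate verification is needed.
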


Notice that if $i=k+1$, then the term $h_{p}\left(  x_{i},x_{i+1},\ldots
,x_{k}\right)  $ on the left hand side of Lemma \ref{lem.heh-id} is understood
to be $h_{p}$ of an empty list of vectors; this is $1$ when $p=0$ and $0$ otherwise.

Lemma \ref{lem.heh-id} is actually a particular case of \cite[detailed
version, Theorem 3.15]{dimcr} (applied to $a=x_{i}\in\mathbf{k}\left[  \left[
x_{1},x_{2},x_{3},\ldots\right]  \right]  $ and $b=h_{p}\left(  x_{1}%
,x_{2},x_{3},\ldots\right)  \in\operatorname*{QSym}$)\ \ \ \ \footnote{Here,
we are using the ring $\mathbf{k}\left[  \left[  x_{1},x_{2},x_{3}%
,\ldots\right]  \right]  $ of formal power series in \textbf{infinitely} many
variables $x_{1},x_{2},x_{3},\ldots$, and its subring $\operatorname*{QSym}$
of quasisymmetric functions. See \cite{dimcr} for a brief introduction to both
of these. Note that the symmetric function $h_{p}\left(  x_{1},x_{2}%
,x_{3},\ldots\right)  $ is called $h_{p}$ in \cite{dimcr}.}. However, we shall
give a more elementary proof of it here. This proof relies on the following
two basic identities:

\begin{lemma}
\label{lem.he-powser}Let $A$ be a commutative ring. Let $y_{1},y_{2}%
,\ldots,y_{p}$ be some elements of $A$. Consider the ring $A\left[  \left[
u\right]  \right]  $ of formal power series in one indeterminate $u$ over $A$.
Then, in this ring, we have%
\begin{equation}
\sum_{q\in\mathbb{N}}h_{q}\left(  y_{1},y_{2},\ldots,y_{p}\right)  u^{q}%
=\prod_{j=1}^{p}\dfrac{1}{1-y_{j}u} \label{eq.lem.he-powser.h}%
\end{equation}
and%
\begin{equation}
\sum_{q\in\mathbb{N}}\left(  -1\right)  ^{q}e_{q}\left(  y_{1},y_{2}%
,\ldots,y_{p}\right)  u^{q}=\prod_{j=1}^{p}\left(  1-y_{j}u\right)  .
\label{eq.lem.he-powser.e}%
\end{equation}

\end{lemma}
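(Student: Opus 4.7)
The plan is to treat the two identities separately and derive each one by directly manipulating the right-hand side in the formal power series ring $A[[u]]$, then comparing coefficients of $u^q$ to the combinatorial definitions \eqref{eq.def.hm} and \eqref{eq.def.em}. Both identities are essentially bookkeeping exercises; the main (minor) obstacle is making the combinatorial bijections explicit, which I sketch below.

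For \eqref{eq.lem.he-powser.h}, I would first expand each factor on the right-hand side as a geometric series: since $y_j u$ has no constant term in $A[[u]]$, we have $\dfrac{1}{1-y_j u} = \sum_{n \in \NN} y_j^n u^n$. Multiplying these $p$ series together, the coefficient of $u^q$ in $\prod_{j=1}^{p} \dfrac{1}{1-y_j u}$ equals
\[
\sum_{\substack{(n_1,n_2,\ldots,n_p) \in \NN^p \\ n_1 + n_2 + \cdots + n_p = q}} y_1^{n_1} y_2^{n_2} \cdots y_p^{n_p}.
\]
It then remains to identify this sum with $h_q(y_1, y_2, \ldots, y_p)$. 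This follows from the standard bijection between weak compositions $(n_1, \ldots, n_p)$ of $q$ into $p$ nonnegative parts and weakly increasing sequences $1 \le i_1 \le i_2 \le \cdots \le i_q \le p$: the sequence $(i_1, \ldots, i_q)$ is obtained by listing each index $j$ exactly $n_j$ times. Under this bijection, $y_1^{n_1} \cdots y_p^{n_p} = y_{i_1} y_{i_2} \cdots y_{i_q}$, and comparing with \eqref{eq.def.hm} gives the claim.

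For \eqref{eq.lem.he-powser.e}, I would simply expand the finite product $\prod_{j=1}^{p}(1-y_j u)$ by choosing, for each $j \in \{1, 2, \ldots, p\}$, either the summand $1$ or the summand $-y_j u$. This yields
\[
\prod_{j=1}^{p}(1-y_j u) = \sum_{S \subseteq \{1,2,\ldots,p\}} (-1)^{|S|} u^{|S|} \prod_{j \in S} y_j.
\]
Grouping by $q = |S|$ and noting that $\sum_{\substack{S \subseteq \{1,\ldots,p\} \\ |S| = q}} \prod_{j \in S} y_j = e_q(y_1, y_2, \ldots, y_p)$ by \eqref{eq.def.em} (which is an empty sum, hence $0$, when $q > p$, matching the convention on $e_q$), we obtain $\prod_{j=1}^{p}(1-y_j u) = \sum_{q \in \NN} (-1)^q e_q(y_1, \ldots, y_p) u^q$, which is \eqref{eq.lem.he-powser.e}.

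There is no real obstacle: both identities reduce to recognizing that expanding the product on the right produces exactly the monomial sum in the combinatorial definition of $h_q$ or $e_q$. The only place where a reader might want care is the infinite product expansion in \eqref{eq.lem.he-powser.h}, but since each $\dfrac{1}{1-y_j u}$ has well-defined meaning in $A[[u]]$ and only finitely many factors are multiplied, convergence is automatic in the $(u)$-adic topology.
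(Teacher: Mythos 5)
Your proof is correct, but it takes a different route from the paper. The paper does not reprove these identities from scratch: it obtains \eqref{eq.lem.he-powser.h} and \eqref{eq.lem.he-powser.e} by citing the standard generating-function identities for complete homogeneous and elementary symmetric functions in infinitely many variables (\cite[(2.2.18), (2.2.19)]{GriRei18}) and then substituting $y_{1},y_{2},\ldots,y_{p},0,0,0,\ldots$ for the variables $x_{1},x_{2},x_{3},\ldots$ and $u$ (respectively $-u$) for $t$. You instead give a self-contained elementary argument directly in $A\left[\left[u\right]\right]$: expanding each $\dfrac{1}{1-y_{j}u}$ as a geometric series, extracting the coefficient of $u^{q}$, and matching it against the definition \eqref{eq.def.hm} via the standard bijection between weak compositions of $q$ into $p$ parts and weakly increasing sequences $1\leq i_{1}\leq\cdots\leq i_{q}\leq p$; and, for \eqref{eq.lem.he-powser.e}, expanding the finite product over subsets $S\subseteq\left\{1,2,\ldots,p\right\}$ and matching against \eqref{eq.def.em}. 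Your version buys independence from the infinite-variable machinery of $\Lambda$ and works verbatim over any commutative ring $A$ with no appeal to external references; the paper's version buys brevity, at the cost of asking the reader to check that the substitution (setting all but finitely many variables to $0$) is legitimate and behaves as expected on both sides of the cited identities. Both arguments are sound; your coefficient comparison and the remark that only finitely many factors occur (so no convergence issues arise beyond invertibility of $1-y_{j}u$ in $A\left[\left[u\right]\right]$) cover the only points where care is needed.
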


\begin{proof}
[Proof of Lemma \ref{lem.he-powser}.]The identity (\ref{eq.lem.he-powser.h})
can be obtained from the identities \cite[(2.2.18)]{GriRei18} by substituting
$y_{1},y_{2},\ldots,y_{p},0,0,0,\ldots$ for the indeterminates $x_{1}%
,x_{2},x_{3},\ldots$ and substituting $u$ for $t$. The identity
(\ref{eq.lem.he-powser.e}) can be obtained from the identities \cite[(2.2.19)]%
{GriRei18} by substituting $y_{1},y_{2},\ldots,y_{p},0,0,0,\ldots$ for the
indeterminates $x_{1},x_{2},x_{3},\ldots$ and substituting $-u$ for $t$. Thus,
Lemma \ref{lem.he-powser} is proven.
\end{proof}

\begin{proof}
[Proof of Lemma \ref{lem.heh-id}.]Consider the ring $\mathcal{P}\left[
\left[  u\right]  \right]  $ of formal power series in one indeterminate $u$
over $\mathcal{P}$. Applying (\ref{eq.lem.he-powser.h}) to $\mathcal{P}$ and
$\left(  x_{i},x_{i+1},\ldots,x_{k}\right)  $ instead of $A$ and $\left(
y_{1},y_{2},\ldots,y_{p}\right)  $, we obtain%
\[
\sum_{q\in\mathbb{N}}h_{q}\left(  x_{i},x_{i+1},\ldots,x_{k}\right)
u^{q}=\prod_{j=1}^{k-i+1}\dfrac{1}{1-x_{i+j-1}u}=\prod_{j=i}^{k}\dfrac
{1}{1-x_{j}u}%
\]
(here, we have substituted $j$ for $i+j-1$ in the product). Applying
(\ref{eq.lem.he-powser.e}) to $\mathcal{P}$ and $\left(  x_{1},x_{2}%
,\ldots,x_{i-1}\right)  $ instead of $A$ and $\left(  y_{1},y_{2},\ldots
,y_{p}\right)  $, we obtain%
\begin{equation}
\sum_{q\in\mathbb{N}}\left(  -1\right)  ^{q}e_{q}\left(  x_{1},x_{2}%
,\ldots,x_{i-1}\right)  u^{q}=\prod_{j=1}^{i-1}\left(  1-x_{j}u\right)  .
\label{pf.lem.heh-id.2}%
\end{equation}
Applying (\ref{eq.lem.he-powser.h}) to $\mathcal{P}$ and $\left(  x_{1}%
,x_{2},\ldots,x_{k}\right)  $ instead of $A$ and $\left(  y_{1},y_{2}%
,\ldots,y_{p}\right)  $, we obtain
\begin{equation}
\sum_{q\in\mathbb{N}}h_{q}\left(  x_{1},x_{2},\ldots,x_{k}\right)  u^{q}%
=\prod_{j=1}^{k}\dfrac{1}{1-x_{j}u}. \label{pf.lem.heh-id.3}%
\end{equation}
Thus,
\begin{align*}
&  \sum_{q\in\mathbb{N}}h_{q}\left(  x_{i},x_{i+1},\ldots,x_{k}\right)
u^{q}\\
&  =\prod_{j=i}^{k}\dfrac{1}{1-x_{j}u}=\left(  \prod_{j=1}^{k}\dfrac
{1}{1-x_{j}u}\right)  /\left(  \prod_{j=1}^{i-1}\dfrac{1}{1-x_{j}u}\right) \\
&  =\underbrace{\left(  \prod_{j=1}^{i-1}\left(  1-x_{j}u\right)  \right)
}_{\substack{=\sum_{q\in\mathbb{N}}\left(  -1\right)  ^{q}e_{q}\left(
x_{1},x_{2},\ldots,x_{i-1}\right)  u^{q}\\\text{(by (\ref{pf.lem.heh-id.2}))}%
}}\ \ \underbrace{\left(  \prod_{j=1}^{k}\dfrac{1}{1-x_{j}u}\right)
}_{\substack{=\sum_{q\in\mathbb{N}}h_{q}\left(  x_{1},x_{2},\ldots
,x_{k}\right)  u^{q}\\\text{(by (\ref{pf.lem.heh-id.3}))}}}\\
&  =\left(  \sum_{q\in\mathbb{N}}\left(  -1\right)  ^{q}e_{q}\left(
x_{1},x_{2},\ldots,x_{i-1}\right)  u^{q}\right)  \left(  \sum_{q\in\mathbb{N}%
}h_{q}\left(  x_{1},x_{2},\ldots,x_{k}\right)  u^{q}\right)  .
\end{align*}
Comparing the coefficient before $u^{p}$ in this equality of power series, we
obtain%
\begin{align*}
h_{p}\left(  x_{i},x_{i+1},\ldots,x_{k}\right)   &  =\sum_{t=0}^{p}\left(
-1\right)  ^{t}e_{t}\left(  x_{1},x_{2},\ldots,x_{i-1}\right)  h_{p-t}\left(
x_{1},x_{2},\ldots,x_{k}\right) \\
&  =\sum_{t=0}^{\infty}\left(  -1\right)  ^{t}e_{t}\left(  x_{1},x_{2}%
,\ldots,x_{i-1}\right)  h_{p-t}\left(  x_{1},x_{2},\ldots,x_{k}\right) \\
&  \ \ \ \ \ \ \ \ \ \ \left(  \text{since }h_{p-t}\left(  x_{1},x_{2}%
,\ldots,x_{k}\right)  =0\text{ for all }t>p\right) \\
&  =\sum_{t=0}^{i-1}\left(  -1\right)  ^{t}e_{t}\left(  x_{1},x_{2}%
,\ldots,x_{i-1}\right)  h_{p-t}\left(  x_{1},x_{2},\ldots,x_{k}\right) \\
&  \ \ \ \ \ \ \ \ \ \ \left(  \text{since }e_{t}\left(  x_{1},x_{2}%
,\ldots,x_{i-1}\right)  =0\text{ for all }t>i-1\right)  .
\end{align*}
This proves Lemma \ref{lem.heh-id}.
\end{proof}

\begin{corollary}
\label{cor.heh-id.0}Let $p$ be a positive integer. Then,%
\[
h_{p}=-\sum_{t=1}^{k}\left(  -1\right)  ^{t}e_{t}h_{p-t}.
\]

\end{corollary}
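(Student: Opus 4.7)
The plan is to deduce Corollary \ref{cor.heh-id.0} directly from Lemma \ref{lem.heh-id} by specializing the parameter $i$ to the boundary value $i = k+1$, at which point the left-hand side degenerates and the identity becomes a recurrence purely among the $h_m$ and $e_t$ in $x_1, x_2, \ldots, x_k$.

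First I would set $i = k+1$ in Lemma \ref{lem.heh-id}. As noted in the remark just after that lemma, the left-hand side $h_p(x_i, x_{i+1}, \ldots, x_k)$ becomes $h_p$ applied to the empty list of variables; since $p \geq 1$ is positive, this value is $0$. The right-hand side becomes $\sum_{t=0}^{k} (-1)^t e_t(x_1, \ldots, x_{k}) h_{p-t}(x_1, \ldots, x_k) = \sum_{t=0}^{k} (-1)^t e_t h_{p-t}$, using the abbreviations $e_t = e_t(x_1,\ldots,x_k)$ and $h_{p-t} = h_{p-t}(x_1,\ldots,x_k)$ from the paragraph preceding Lemma \ref{lem.heh-id}.

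Next I would split off the $t = 0$ term: since $e_0 = 1$, this term equals $h_p$. Moving it to the left, the resulting equation is
\[
h_p + \sum_{t=1}^{k} (-1)^t e_t h_{p-t} = 0,
\]
which rearranges to the claimed identity $h_p = -\sum_{t=1}^{k} (-1)^t e_t h_{p-t}$.

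There is no substantial obstacle here; the entire content of the corollary is packaged into the special case $i = k+1$ of Lemma \ref{lem.heh-id}, with the positivity of $p$ being precisely what forces the vanishing of the empty-list $h_p$. The only minor care needed is to confirm the convention that $h_p$ of no variables vanishes for $p > 0$, which is already recorded in the excerpt.
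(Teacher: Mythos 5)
Your proposal is correct and matches the paper's own argument: both apply Lemma \ref{lem.heh-id} with $i=k+1$, observe that $h_{p}$ of the empty list of variables vanishes because $p>0$, split off the $t=0$ term $e_{0}h_{p}=h_{p}$, and solve for $h_{p}$. No gaps.
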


\begin{proof}
[Proof of Corollary \ref{cor.heh-id.0}.]Lemma \ref{lem.heh-id} (applied to
$i=k+1$) yields%
\begin{align*}
h_{p}\left(  x_{k+1},x_{k+2},\ldots,x_{k}\right)   &  =\sum_{t=0}^{k}\left(
-1\right)  ^{t}\underbrace{e_{t}\left(  x_{1},x_{2},\ldots,x_{k}\right)
}_{=e_{t}}\underbrace{h_{p-t}\left(  x_{1},x_{2},\ldots,x_{k}\right)
}_{=h_{p-t}}\\
&  =\sum_{t=0}^{k}\left(  -1\right)  ^{t}e_{t}h_{p-t}.
\end{align*}
Comparing this with%
\[
h_{p}\left(  x_{k+1},x_{k+2},\ldots,x_{k}\right)  =h_{p}\left(  \text{an empty
list of variables}\right)  =0\ \ \ \ \ \ \ \ \ \ \left(  \text{since
}p>0\right)  ,
\]
we obtain%
\[
0=\sum_{t=0}^{k}\left(  -1\right)  ^{t}e_{t}h_{p-t}=\underbrace{\left(
-1\right)  ^{0}}_{=1}\underbrace{e_{0}}_{=1}\underbrace{h_{p-0}}_{=h_{p}}%
+\sum_{t=1}^{k}\left(  -1\right)  ^{t}e_{t}h_{p-t}=h_{p}+\sum_{t=1}^{k}\left(
-1\right)  ^{t}e_{t}h_{p-t}.
\]
Hence,%
\[
h_{p}=-\sum_{t=1}^{k}\left(  -1\right)  ^{t}e_{t}h_{p-t}.
\]
This proves Corollary \ref{cor.heh-id.0}.
\end{proof}

\section{Proof of Theorem \ref{thm.P/J}}

We shall next prove Theorem \ref{thm.P/J} using Gr\"{o}bner bases. For the
concept of Gr\"{o}bner bases over a commutative ring, see \cite[detailed
version, \S 3]{Grinbe17}.

We define a degree-lexicographic term order on the monomials in $\mathcal{P}$,
where the variables are ordered by $x_{1}>x_{2}>\cdots>x_{k}$. Explicitly,
this term order is the total order on the set of monomials in $x_{1}%
,x_{2},\ldots,x_{k}$ defined as follows: Two monomials $x_{1}^{\alpha_{1}%
}x_{2}^{\alpha_{2}}\cdots x_{k}^{\alpha_{k}}$ and $x_{1}^{\beta_{1}}%
x_{2}^{\beta_{2}}\cdots x_{k}^{\beta_{k}}$ satisfy $x_{1}^{\alpha_{1}}%
x_{2}^{\alpha_{2}}\cdots x_{k}^{\alpha_{k}}>x_{1}^{\beta_{1}}x_{2}^{\beta_{2}%
}\cdots x_{k}^{\beta_{k}}$ if and only if

\begin{itemize}
\item \textbf{either} $\alpha_{1}+\alpha_{2}+\cdots+\alpha_{k}>\beta_{1}%
+\beta_{2}+\cdots+\beta_{k}$,

\item \textbf{or} $\alpha_{1}+\alpha_{2}+\cdots+\alpha_{k}=\beta_{1}+\beta
_{2}+\cdots+\beta_{k}$ and there exists some $i\in\left\{  1,2,\ldots
,k\right\}  $ such that $\alpha_{i}>\beta_{i}$ and $\left(  \alpha_{j}%
=\beta_{j}\text{ for all }j<i\right)  $.
\end{itemize}

This total order is a term order (in the sense of \cite[detailed version,
Definition 3.5]{Grinbe17}). Fix this term order; thus it makes sense to speak
of Gr\"{o}bner bases of ideals.

\begin{proposition}
\label{prop.J.grobas}The family
\[
\left(  h_{n-k+i}\left(  x_{i},x_{i+1},\ldots,x_{k}\right)  -\sum_{t=0}%
^{i-1}\left(  -1\right)  ^{t}e_{t}\left(  x_{1},x_{2},\ldots,x_{i-1}\right)
a_{i-t}\right)  _{i\in\left\{  1,2,\ldots,k\right\}  }%
\]
is a Gr\"{o}bner basis of the ideal $J$. (Recall that we are using the
notations from (\ref{eq.def.hm}) and (\ref{eq.def.em}).)
\end{proposition}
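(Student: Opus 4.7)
The plan is to verify that $(g_i)_{i\in\{1,\ldots,k\}}$ (where $g_i$ denotes the $i$-th polynomial in the proposed basis) lies in $J$, generates $J$, and has leading monomials satisfying Buchberger's coprimality criterion.

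First I would rewrite each $g_i$ using Lemma \ref{lem.heh-id} with $p=n-k+i$, which lets me replace $h_{n-k+i}(x_i,x_{i+1},\ldots,x_k)$ by $\sum_{t=0}^{i-1}(-1)^{t}e_t(x_1,\ldots,x_{i-1})\,h_{n-k+i-t}$. Substituting this into the definition of $g_i$ collapses everything into the single clean expression
\[
g_i \;=\; \sum_{t=0}^{i-1}(-1)^{t}\,e_t(x_1,\ldots,x_{i-1})\,\bigl(h_{n-k+i-t}-a_{i-t}\bigr).
\]
Since $i-t$ ranges over $\{1,2,\ldots,i\}\subseteq\{1,\ldots,k\}$, each summand is a multiple of a generator of $J$, so $g_i\in J$. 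Moreover, the $t=0$ term is exactly $h_{n-k+i}-a_i$ with coefficient $1$, so the expression is unitriangular in the original generators: by induction on $i$ I can solve for $h_{n-k+i}-a_i$ as a $\mathcal P$-linear combination of $g_1,\ldots,g_i$, proving that the family $(g_i)$ generates $J$.

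Next I would identify leading monomials. In our deg-lex order, within $h_{n-k+i}(x_i,\ldots,x_k)$ every monomial has total degree $n-k+i$ and involves only $x_i,\ldots,x_k$; the lex tiebreaker then singles out the monomial that maximizes the exponent of the largest variable present, namely $x_i^{n-k+i}$. Because $a_{i-t}$ has degree $<n-k+i-t$ and $e_t(x_1,\ldots,x_{i-1})$ has degree $t$, the correction term $\sum_{t=0}^{i-1}(-1)^{t}e_t(x_1,\ldots,x_{i-1})a_{i-t}$ has total degree strictly less than $n-k+i$, so it cannot influence the leading monomial. Hence $\mathrm{LM}(g_i)=x_i^{n-k+i}$ with leading coefficient $1$.

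Finally, since the leading monomials $x_1^{n-k+1},x_2^{n-k+2},\ldots,x_k^{n}$ are pairwise coprime (they involve disjoint variables) and each $g_i$ is monic, I would invoke the coprime-leading-term form of Buchberger's criterion. Concretely, writing $g_i=m_i+r_i$ with $m_i=x_i^{n-k+i}$ and $\mathrm{LM}(r_i)<m_i$, the $S$-polynomial becomes
\[
S(g_i,g_j) \;=\; m_j g_i - m_i g_j \;=\; m_j r_i - m_i r_j \;=\; r_i g_j - r_j g_i,
\]
a $t$-representation in which the leading monomial of each $r_{\ell}g_{\ell'}$ is strictly smaller than $\mathrm{lcm}(m_i,m_j)=m_i m_j$. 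The main (and really only) obstacle is checking that the Gröbner-basis machinery of \cite[\S 3]{Grinbe17} applies over the commutative ring $\mathbf k$; this is exactly where monic leading coefficients pay off, since the reduction algorithm then behaves just as over a field, and the criterion yields the Gröbner property immediately.
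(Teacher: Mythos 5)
Your proposal is correct and takes essentially the same route as the paper: it rewrites each $g_i$ via Lemma \ref{lem.heh-id} to exhibit the unitriangular relation with the generators $h_{n-k+i}-a_{i}$ (which the paper packages as Lemma \ref{lem.submodules.same-sum}, while you do the induction directly), identifies the leading monomials $x_i^{n-k+i}$ by the same degree argument, and concludes by Buchberger's first criterion for the variable-disjoint monic leading terms (you spell out the $S$-polynomial representation that the paper obtains by citing \cite[detailed version, Proposition 3.9]{Grinbe17}).
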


Proposition \ref{prop.J.grobas} is somewhat similar to \cite[Theorem
1.2.7]{Sturmf08} (or, equivalently, \cite[\S 7.1, Proposition 5]{CoLiOs15}),
but not the same.\footnote{For example, our $a_{1},a_{2},\ldots,a_{k}$ are
elements of $\mathbf{k}$ rather than indeterminates (although they
\textbf{can} be indeterminates if $\mathbf{k}$ itself is a polynomial ring),
and our term order is degree-lexicographic rather than lexicographic. Thus, it
should not be surprising that the families are different.} It is also similar
to \cite[comment at the end of \S III.4]{LomQui21}. Our proof of it relies on
the following elementary fact:

\begin{lemma}
\label{lem.submodules.same-sum}Let $A$ be a commutative ring. Let $b_{1}%
,b_{2},\ldots,b_{k}\in A$ and $c_{1},c_{2},\ldots,c_{k}\in A$. Assume that
\begin{equation}
b_{i}\in c_{i}+\sum_{t=1}^{i-1}c_{i-t}A \label{eq.lem.submodules.same-sum.ass}%
\end{equation}
for each $i\in\left\{  1,2,\ldots,k\right\}  $. Then, $b_{1}A+b_{2}%
A+\cdots+b_{k}A=c_{1}A+c_{2}A+\cdots+c_{k}A$ (as ideals of $A$).
\end{lemma}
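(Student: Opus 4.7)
The plan is to prove the two inclusions $b_1 A + \cdots + b_k A \subseteq c_1 A + \cdots + c_k A$ and $c_1 A + \cdots + c_k A \subseteq b_1 A + \cdots + b_k A$ separately, exploiting the triangular shape of the hypothesis (\ref{eq.lem.submodules.same-sum.ass}).

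The forward inclusion is immediate: for each $i$, the assumption $b_i \in c_i + \sum_{t=1}^{i-1} c_{i-t}A$ says that $b_i$ is an $A$-linear combination of $c_1, c_2, \ldots, c_i$, hence $b_i \in c_1A + c_2A + \cdots + c_kA$. Multiplying by arbitrary elements of $A$ and summing over $i$, we get $b_1 A + \cdots + b_k A \subseteq c_1 A + \cdots + c_k A$.

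For the reverse inclusion, I would show by strong induction on $i \in \{1, 2, \ldots, k\}$ that $c_i \in b_1 A + b_2 A + \cdots + b_k A$. Rewriting (\ref{eq.lem.submodules.same-sum.ass}), there exist $\alpha_{i,1}, \ldots, \alpha_{i,i-1} \in A$ with
\[
b_i = c_i + \sum_{t=1}^{i-1} c_{i-t}\, \alpha_{i,t},
\qquad\text{so}\qquad
c_i = b_i - \sum_{t=1}^{i-1} c_{i-t}\, \alpha_{i,t}.
\]
On the right, $b_i \in b_1 A + \cdots + b_k A$, and by the inductive hypothesis each $c_{i-t}$ with $1 \le t \le i-1$ (so that $i - t < i$) lies in $b_1 A + \cdots + b_k A$; therefore $c_i$ does too. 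This completes the induction, giving the second inclusion and hence the lemma.

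The argument has no real obstacle: the only substantive point is the observation that the triangular structure of (\ref{eq.lem.submodules.same-sum.ass}) — where $b_i$ differs from $c_i$ only by combinations of \emph{earlier} $c_j$'s — is exactly what allows one to solve recursively for each $c_i$ in terms of $b_1, \ldots, b_i$. The base case $i = 1$ (where the sum $\sum_{t=1}^{0}$ is empty and so $b_1 = c_1$) is handled uniformly by the same formula.
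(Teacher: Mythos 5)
Your proof is correct and uses essentially the same idea as the paper: exploiting the triangular form of the hypothesis to solve recursively for the $c_i$'s in terms of the $b_i$'s. The only cosmetic difference is that the paper proves the equality of the partial sums $\sum_{p=1}^{j}b_pA=\sum_{p=1}^{j}c_pA$ by induction on $j$, whereas you prove the easy inclusion directly and establish $c_i\in b_1A+\cdots+b_kA$ by strong induction; both amount to the same argument.
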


\begin{proof}
[Proof of Lemma \ref{lem.submodules.same-sum}.]We claim that%
\begin{equation}
\sum_{p=1}^{j}b_{p}A=\sum_{p=1}^{j}c_{p}A\ \ \ \ \ \ \ \ \ \ \text{for each
}j\in\left\{  0,1,\ldots,k\right\}  . \label{pf.lem.submodules.same-sum.main}%
\end{equation}

[\textit{Proof of (\ref{pf.lem.submodules.same-sum.main}):} We shall prove
(\ref{pf.lem.submodules.same-sum.main}) by induction on $j$:

\textit{Induction base:} For $j=0$, both sides of the equality
(\ref{pf.lem.submodules.same-sum.main}) are the zero ideal of $A$ (since they
are empty sums of ideals of $A$). Thus, (\ref{pf.lem.submodules.same-sum.main}%
) holds for $j=0$. This completes the induction base.

\textit{Induction step:} Let $i\in\left\{  1,2,\ldots,k\right\}  $. Assume
that (\ref{pf.lem.submodules.same-sum.main}) holds for $j=i-1$. We must prove
that (\ref{pf.lem.submodules.same-sum.main}) holds for $j=i$.

We have assumed that (\ref{pf.lem.submodules.same-sum.main}) holds for
$j=i-1$. In other words, we have $\sum_{p=1}^{i-1}b_{p}A=\sum_{p=1}^{i-1}%
c_{p}A$. But (\ref{eq.lem.submodules.same-sum.ass}) yields $b_{i}\in
c_{i}+\sum_{t=1}^{i-1}c_{i-t}A=c_{i}+\sum_{p=1}^{i-1}c_{p}A$ (here, we have
substituted $p$ for $i-t$ in the sum). Thus,
\[
c_{i}\in b_{i}-\sum_{p=1}^{i-1}c_{p}A=b_{i}+\sum_{p=1}^{i-1}c_{p}A,
\]
so that%
\[
c_{i}A\subseteq\left(  b_{i}+\sum_{p=1}^{i-1}c_{p}A\right)  A\subseteq
b_{i}A+\sum_{p=1}^{i-1}c_{p}A.
\]

But from $b_{i}\in c_{i}+\sum_{p=1}^{i-1}c_{p}A$, we obtain%
\[
b_{i}A\subseteq\left(  c_{i}+\sum_{p=1}^{i-1}c_{p}A\right)  A\subseteq
c_{i}A+\sum_{p=1}^{i-1}c_{p}A=\sum_{p=1}^{i}c_{p}A.
\]
Now,%
\[
\sum_{p=1}^{i}b_{p}A=\underbrace{\sum_{p=1}^{i-1}b_{p}A}_{\substack{=\sum
_{p=1}^{i-1}c_{p}A\subseteq\sum_{p=1}^{i}c_{p}A\\\text{(since }i-1\leq
i\text{)}}}+\underbrace{b_{i}A}_{\subseteq\sum_{p=1}^{i}c_{p}A}\subseteq
\sum_{p=1}^{i}c_{p}A+\sum_{p=1}^{i}c_{p}A=\sum_{p=1}^{i}c_{p}A.
\]
Combining this inclusion with%
\begin{align*}
\sum_{p=1}^{i}c_{p}A  &  =\sum_{p=1}^{i-1}c_{p}A+\underbrace{c_{i}%
A}_{\subseteq b_{i}A+\sum_{p=1}^{i-1}c_{p}A}\subseteq\sum_{p=1}^{i-1}%
c_{p}A+b_{i}A+\sum_{p=1}^{i-1}c_{p}A\\
&  =\underbrace{\sum_{p=1}^{i-1}c_{p}A+\sum_{p=1}^{i-1}c_{p}A}_{=\sum
_{p=1}^{i-1}c_{p}A=\sum_{p=1}^{i-1}b_{p}A}+b_{i}A=\sum_{p=1}^{i-1}b_{p}%
A+b_{i}A=\sum_{p=1}^{i}b_{p}A,
\end{align*}
we obtain $\sum_{p=1}^{i}b_{p}A=\sum_{p=1}^{i}c_{p}A$. In other words,
(\ref{pf.lem.submodules.same-sum.main}) holds for $j=i$. This completes the
induction step. Thus, (\ref{pf.lem.submodules.same-sum.main}) is proven by induction.]

Now, (\ref{pf.lem.submodules.same-sum.main}) (applied to $j=k$) yields%
\[
\sum_{p=1}^{k}b_{p}A=\sum_{p=1}^{k}c_{p}A.
\]
Thus,
\[
b_{1}A+b_{2}A+\cdots+b_{k}A=\sum_{p=1}^{k}b_{p}A=\sum_{p=1}^{k}c_{p}%
A=c_{1}A+c_{2}A+\cdots+c_{k}A.
\]
This proves Lemma \ref{lem.submodules.same-sum}.
\end{proof}

\begin{proof}
[Proof of Proposition \ref{prop.J.grobas} (sketched).]For each $i\in\left\{
1,2,\ldots,k\right\}  $, we define a polynomial $b_{i}\in\mathcal{P}$ by%
\[
b_{i}=h_{n-k+i}\left(  x_{i},x_{i+1},\ldots,x_{k}\right)  -\sum_{t=0}%
^{i-1}\left(  -1\right)  ^{t}e_{t}\left(  x_{1},x_{2},\ldots,x_{i-1}\right)
a_{i-t}.
\]
Then, we must prove that the family $\left(  b_{i}\right)  _{i\in\left\{
1,2,\ldots,k\right\}  }$ is a Gr\"{o}bner basis of the ideal $J$. We shall
first prove that this family generates $J$.

For each $i\in\left\{  1,2,\ldots,k\right\}  $, we define $c_{i}\in
\mathcal{P}$ by $c_{i}=h_{n-k+i}-a_{i}$. Then, $J$ is the ideal of
$\mathcal{P}$ generated by the $k$ elements $c_{1},c_{2},\ldots,c_{k}$ (by the
definition of $J$). In other words,%
\begin{equation}
J=c_{1}\mathcal{P}+c_{2}\mathcal{P}+\cdots+c_{k}\mathcal{P}.
\label{pf.prop.J.grobas.J=1}%
\end{equation}

For each $i\in\left\{  1,2,\ldots,k\right\}  $, we have%
\begin{align*}
b_{i}  &  =\underbrace{h_{n-k+i}\left(  x_{i},x_{i+1},\ldots,x_{k}\right)
}_{\substack{=\sum_{t=0}^{i-1}\left(  -1\right)  ^{t}e_{t}\left(  x_{1}%
,x_{2},\ldots,x_{i-1}\right)  h_{n-k+i-t}\left(  x_{1},x_{2},\ldots
,x_{k}\right)  \\\text{(by Lemma \ref{lem.heh-id} (applied to }%
p=n-k+i\text{))}}}-\sum_{t=0}^{i-1}\left(  -1\right)  ^{t}e_{t}\left(
x_{1},x_{2},\ldots,x_{i-1}\right)  a_{i-t}\\
&  =\sum_{t=0}^{i-1}\left(  -1\right)  ^{t}e_{t}\left(  x_{1},x_{2}%
,\ldots,x_{i-1}\right)  h_{n-k+i-t}\left(  x_{1},x_{2},\ldots,x_{k}\right) \\
&  \ \ \ \ \ \ \ \ \ \ -\sum_{t=0}^{i-1}\left(  -1\right)  ^{t}e_{t}\left(
x_{1},x_{2},\ldots,x_{i-1}\right)  a_{i-t}\\
&  =\sum_{t=0}^{i-1}\left(  -1\right)  ^{t}e_{t}\left(  x_{1},x_{2}%
,\ldots,x_{i-1}\right)  \left(  \underbrace{h_{n-k+i-t}\left(  x_{1}%
,x_{2},\ldots,x_{k}\right)  }_{=h_{n-k+i-t}}-a_{i-t}\right) \\
&  =\sum_{t=0}^{i-1}\left(  -1\right)  ^{t}e_{t}\left(  x_{1},x_{2}%
,\ldots,x_{i-1}\right)  \underbrace{\left(  h_{n-k+i-t}-a_{i-t}\right)
}_{\substack{=c_{i-t}\\\text{(by the definition of }c_{i-t}\text{)}}}\\
&  =\sum_{t=0}^{i-1}\left(  -1\right)  ^{t}e_{t}\left(  x_{1},x_{2}%
,\ldots,x_{i-1}\right)  c_{i-t}\\
&  =\underbrace{\left(  -1\right)  ^{0}}_{=1}\underbrace{e_{0}\left(
x_{1},x_{2},\ldots,x_{0-1}\right)  }_{=1}\underbrace{c_{i-0}}_{=c_{i}}%
+\sum_{t=1}^{i-1}\underbrace{\left(  -1\right)  ^{t}e_{t}\left(  x_{1}%
,x_{2},\ldots,x_{i-1}\right)  }_{\in\mathcal{P}}c_{i-t}\\
&  \in c_{i}+\sum_{t=1}^{i-1}\underbrace{\mathcal{P}c_{i-t}}_{=c_{i-t}%
\mathcal{P}}=c_{i}+\sum_{t=1}^{i-1}c_{i-t}\mathcal{P}.
\end{align*}
Hence, Lemma \ref{lem.submodules.same-sum} (applied to $A=\mathcal{P}$) yields
that $b_{1}\mathcal{P}+b_{2}\mathcal{P}+\cdots+b_{k}\mathcal{P}=c_{1}%
\mathcal{P}+c_{2}\mathcal{P}+\cdots+c_{k}\mathcal{P}$ (as ideals of
$\mathcal{P}$). Comparing this with (\ref{pf.prop.J.grobas.J=1}), we obtain
$J=b_{1}\mathcal{P}+b_{2}\mathcal{P}+\cdots+b_{k}\mathcal{P}$. Thus, the
family $\left(  b_{i}\right)  _{i\in\left\{  1,2,\ldots,k\right\}  }$
generates the ideal $J$. Furthermore, for each $i\in\left\{  1,2,\ldots
,k\right\}  $, the $i$-th element%
\[
b_{i}=h_{n-k+i}\left(  x_{i},x_{i+1},\ldots,x_{k}\right)  -\sum_{t=0}%
^{i-1}\left(  -1\right)  ^{t}e_{t}\left(  x_{1},x_{2},\ldots,x_{i-1}\right)
a_{i-t}%
\]
of this family has leading term $x_{i}^{n-k+i}$ (because the polynomial
\newline$\sum_{t=0}^{i-1}\left(  -1\right)  ^{t}e_{t}\left(  x_{1}%
,x_{2},\ldots,x_{i-1}\right)  a_{i-t}$ has degree $<n-k+i$%
\ \ \ \ \footnote{\textit{Proof.} It clearly suffices to show that for each
$t\in\left\{  0,1,\ldots,i-1\right\}  $, the polynomial $e_{t}\left(
x_{1},x_{2},\ldots,x_{i-1}\right)  a_{i-t}$ has degree $<n-k+i$.
\par
So let us do this. Let $t\in\left\{  0,1,\ldots,i-1\right\}  $. Then, the
polynomial $a_{i-t}$ has degree $<n-k+\left(  i-t\right)  $ (by the definition
of $a_{1},a_{2},\ldots,a_{k}$). In other words, $\deg\left(  a_{i-t}\right)
<n-k+\left(  i-t\right)  $. Hence, the polynomial $e_{t}\left(  x_{1}%
,x_{2},\ldots,x_{i-1}\right)  a_{i-t}$ has degree%
\begin{align*}
\deg\left(  e_{t}\left(  x_{1},x_{2},\ldots,x_{i-1}\right)  a_{i-t}\right)
&  =\underbrace{\deg\left(  e_{t}\left(  x_{1},x_{2},\ldots,x_{i-1}\right)
\right)  }_{\leq t}+\underbrace{\deg\left(  a_{i-t}\right)  }_{<n-k+\left(
i-t\right)  }\\
&  <t+\left(  n-k+\left(  i-t\right)  \right)  =n-k+i.
\end{align*}
In other words, the polynomial $e_{t}\left(  x_{1},x_{2},\ldots,x_{i-1}%
\right)  a_{i-t}$ has degree $<n-k+i$. Qed.}, whereas the polynomial
$h_{n-k+i}\left(  x_{i},x_{i+1},\ldots,x_{k}\right)  $ is homogeneous of
degree $n-k+i$ with leading term $x_{i}^{n-k+i}$\ \ \ \ \footnote{Indeed,
every term of the polynomial $h_{n-k+i}\left(  x_{i},x_{i+1},\ldots
,x_{k}\right)  $ has the form $x_{i}^{u_{i}}x_{i+1}^{u_{i+1}}\cdots
x_{k}^{u_{k}}$ for some nonnegative integers $u_{i},u_{i+1},\ldots,u_{k}%
\in\mathbb{N}$ satisfying $u_{i}+u_{i+1}+\cdots+u_{k}=n-k+i$. Among these
terms, clearly the largest one is $x_{i}^{n-k+i}$.}). Thus, the leading terms
of the $k$ elements of this family are disjoint (in the sense that no two of
these leading terms have any indeterminates in common). Thus, clearly,
Buchberger's first criterion (see, e.g., \cite[detailed version, Proposition
3.9]{Grinbe17}) shows that this family is a Gr\"{o}bner basis.
\end{proof}

\begin{proof}
[Proof of Theorem \ref{thm.P/J} (sketched).]This follows using the
Macaulay-Buchberger basis theorem (e.g., \cite[detailed version, Proposition
3.10]{Grinbe17}) from Proposition \ref{prop.J.grobas}. (Indeed, if we let $G$
be the Gr\"{o}bner basis of $J$ constructed in Proposition \ref{prop.J.grobas}%
, then the monomials $\overline{x^{\alpha}}$ for all $\alpha\in\mathbb{N}^{k}$
satisfying $\left(  \alpha_{i}<n-k+i\text{ for each }i\right)  $ are precisely
the $G$-reduced monomials\footnote{because the $i$-th entry of the Gr\"{o}bner
basis $G$ has head term $x_{i}^{n-k+i}$}.)
\end{proof}

\section{\label{sect.S/J}Proof of Theorem \ref{thm.S/J}}

Next, we shall prove Theorem \ref{thm.S/J}.

\begin{convention}
For the rest of Section \ref{sect.S/J}, \textbf{we assume that }$a_{1}%
,a_{2},\ldots,a_{k}$ \textbf{belong to }$\mathcal{S}$\textbf{.}
\end{convention}

Thus, $a_{1},a_{2},\ldots,a_{k}$ are symmetric polynomials. Moreover, recall
that for each $i\in\left\{  1,2,\ldots,k\right\}  $, the polynomial $a_{i}$
has degree $<n-k+i$. In other words, for each $i\in\left\{  1,2,\ldots
,k\right\}  $, we have%
\begin{equation}
\deg\left(  a_{i}\right)  <n-k+i. \label{eq.degai.1}%
\end{equation}
Substituting $i-n+k$ for $i$ in this statement, we obtain the following: For
each $i\in\left\{  n-k+1,n-k+2,\ldots,n\right\}  $, we have%
\begin{equation}
\deg\left(  a_{n-k+i}\right)  <n-k+\left(  i-n+k\right)  =i.
\label{eq.degan-k+i.1}%
\end{equation}

Let $I$ be the ideal of $\mathcal{S}$ generated by the $k$ differences
(\ref{eq.gensJ}). Hence, these differences belong to $I$. Thus,%
\begin{equation}
h_{n-k+j}\equiv a_{j}\operatorname{mod}I\ \ \ \ \ \ \ \ \ \ \text{for each
}j\in\left\{  1,2,\ldots,k\right\}  . \label{eq.h=amodI}%
\end{equation}
Renaming the index $j$ as $i-n+k$ in this statement, we obtain%
\begin{equation}
h_{i}\equiv a_{i-n+k}\operatorname{mod}I\ \ \ \ \ \ \ \ \ \ \text{for each
}i\in\left\{  n-k+1,n-k+2,\ldots,n\right\}  . \label{eq.h=amodI2}%
\end{equation}

\begin{lemma}
\label{lem.basis-over-basis}Let $A$ be a commutative $\mathbf{k}$-algebra. Let
$B$ be a commutative $A$-algebra. Assume that the $A$-module $B$ is spanned by
the family $\left(  b_{u}\right)  _{u\in U}\in B^{U}$. Let $\mathcal{I}$ be an
ideal of $A$. Let $\left(  a_{v}\right)  _{v\in V}\in A^{V}$ be a family of
elements of $A$ such that the $\mathbf{k}$-module $A/\mathcal{I}$ is spanned
by the family $\left(  \overline{a_{v}}\right)  _{v\in V}\in\left(
A/\mathcal{I}\right)  ^{V}$. Then, the $\mathbf{k}$-module $B/\left(
\mathcal{I}B\right)  $ is spanned by the family $\left(  \overline{a_{v}b_{u}%
}\right)  _{\left(  u,v\right)  \in U\times V}\in\left(  B/\left(
\mathcal{I}B\right)  \right)  ^{U\times V}$.
\end{lemma}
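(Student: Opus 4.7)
The plan is to unfold what ``spanned'' means on each level and chase an arbitrary element of $B$ through two successive representations.

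First I would pick an arbitrary $b \in B$. Since the $A$-module $B$ is spanned by $(b_u)_{u \in U}$, I can write $b = \sum_{u \in U} \alpha_u b_u$ for some family $(\alpha_u)_{u \in U} \in A^U$ with only finitely many nonzero entries. Next, for each such $u$, the hypothesis that $(\overline{a_v})_{v \in V}$ spans the $\mathbf{k}$-module $A/\mathcal{I}$ gives a family $(\kappa_{u,v})_{v \in V} \in \mathbf{k}^V$ (finitely supported) together with an element $i_u \in \mathcal{I}$ such that $\alpha_u = \sum_{v \in V} \kappa_{u,v} a_v + i_u$. Substituting this back yields
\[
b = \sum_{u \in U} \left( \sum_{v \in V} \kappa_{u,v} a_v + i_u \right) b_u = \sum_{(u,v) \in U \times V} \kappa_{u,v} a_v b_u + \sum_{u \in U} i_u b_u.
\]

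The second sum lies in $\mathcal{I}B$ (since each $i_u \in \mathcal{I}$ and $b_u \in B$), so reducing modulo $\mathcal{I}B$ gives
\[
\overline{b} = \sum_{(u,v) \in U \times V} \kappa_{u,v} \, \overline{a_v b_u}
\]
in $B/(\mathcal{I}B)$. Since $b \in B$ was arbitrary and every element of $B/(\mathcal{I}B)$ has the form $\overline{b}$ for some $b \in B$, this exhibits an arbitrary element of $B/(\mathcal{I}B)$ as a $\mathbf{k}$-linear combination of the $\overline{a_v b_u}$, which is exactly the claim.

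There is no substantive obstacle here: the only thing to be slightly careful about is that both sums are finite (because $(\alpha_u)_{u \in U}$ is finitely supported and, for each of the finitely many $u$ with $\alpha_u \neq 0$, the family $(\kappa_{u,v})_{v \in V}$ is finitely supported), so the double sum indexed by $U \times V$ is finitely supported, as required for a linear combination. No Noetherian or finiteness hypothesis on $U$, $V$, $A$, $B$ or $\mathcal{I}$ is needed.
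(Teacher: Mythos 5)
Your proof is correct and follows essentially the same route as the paper: unfold both spanning hypotheses and express an arbitrary $\overline{b}$ as a $\mathbf{k}$-linear combination of the $\overline{a_v b_u}$. The only (cosmetic) difference is that you lift the relation $\overline{\alpha_u}=\sum_{v}\kappa_{u,v}\overline{a_v}$ back to $A$ and track the error term $\sum_u i_u b_u \in \mathcal{I}B$ explicitly, whereas the paper computes directly in $B/(\mathcal{I}B)$ using its $A/\mathcal{I}$-module structure; your bookkeeping of finite supports also handles infinite $U$ without the paper's simplifying assumption.
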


\begin{proof}
[Proof of Lemma \ref{lem.basis-over-basis}.]Easy. Here is the proof under the
assumption that the set $U$ is finite\footnote{The case when $U$ is infinite
needs only minor modifications. But we shall only use the case when $U$ is
finite.}:

Let $x\in B/\left(  \mathcal{I}B\right)  $. Thus, $x=\overline{b}$ for some
$b\in B$. Consider this $b$. Recall that the $A$-module $B$ is spanned by the
family $\left(  b_{u}\right)  _{u\in U}$. Hence, $b=\sum_{u\in U}p_{u}b_{u}$
for some family $\left(  p_{u}\right)  _{u\in U}\in A^{U}$ of elements of $A$.
Consider this family $\left(  p_{u}\right)  _{u\in U}$.

Recall that the $\mathbf{k}$-module $A/\mathcal{I}$ is spanned by the family
$\left(  \overline{a_{v}}\right)  _{v\in V}\in\left(  A/\mathcal{I}\right)
^{V}$. Thus, for each $u\in U$, there exists a family $\left(  q_{u,v}\right)
_{v\in V}\in\mathbf{k}^{V}$ of elements of $\mathbf{k}$ such that
$\overline{p_{u}}=\sum_{v\in V}q_{u,v}\overline{a_{v}}$ (and such that all but
finitely many $v\in V$ satisfy $q_{u,v}=0$). Consider this family $\left(
q_{u,v}\right)  _{v\in V}$.

Now, recall that $B/\left(  \mathcal{I}B\right)  $ is an $A/\mathcal{I}%
$-module (since $B$ is an $A$-module, but each $i\in\mathcal{I}$ clearly acts
as $0$ on $B/\left(  \mathcal{I}B\right)  $). Now,%
\begin{align*}
x  &  =\overline{b}=\overline{\sum_{u\in U}p_{u}b_{u}}%
\ \ \ \ \ \ \ \ \ \ \left(  \text{since }b=\sum_{u\in U}p_{u}b_{u}\right) \\
&  =\sum_{u\in U}\underbrace{\overline{p_{u}}}_{=\sum_{v\in V}q_{u,v}%
\overline{a_{v}}}\overline{b_{u}}=\underbrace{\sum_{u\in U}\sum_{v\in V}%
}_{=\sum_{\left(  u,v\right)  \in U\times V}}q_{u,v}\underbrace{\overline
{a_{v}}\overline{b_{u}}}_{=\overline{a_{v}b_{u}}}=\sum_{\left(  u,v\right)
\in U\times V}q_{u,v}\overline{a_{v}b_{u}}.
\end{align*}
Thus, $x$ belongs to the $\mathbf{k}$-submodule of $B/\left(  \mathcal{I}%
B\right)  $ spanned by the family \newline$\left(  \overline{a_{v}b_{u}%
}\right)  _{\left(  u,v\right)  \in U\times V}$. Since we have proven this for
all $x\in B/\left(  \mathcal{I}B\right)  $, we thus conclude that the
$\mathbf{k}$-module $B/\left(  \mathcal{I}B\right)  $ is spanned by the family
$\left(  \overline{a_{v}b_{u}}\right)  _{\left(  u,v\right)  \in U\times V}%
\in\left(  B/\left(  \mathcal{I}B\right)  \right)  ^{U\times V}$. This proves
Lemma \ref{lem.basis-over-basis}.
\end{proof}

\begin{lemma}
\label{lem.freemod-span-basis}Let $M$ be a free $\mathbf{k}$-module with a
finite basis $\left(  b_{s}\right)  _{s\in S}$. Let $\left(  a_{u}\right)
_{u\in U}\in M^{U}$ be a family that spans $M$. Assume that $\left\vert
U\right\vert =\left\vert S\right\vert $. Then, $\left(  a_{u}\right)  _{u\in
U}$ is a basis of the $\mathbf{k}$-module $M$. (In other words: A spanning
family of $M$ whose size equals the size of a basis must itself be a basis, as
long as the sizes are finite.)
\end{lemma}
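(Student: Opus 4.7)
The plan is to reduce the claim to the classical fact that a surjective endomorphism of a finitely generated free module over a commutative ring is automatically an isomorphism, and to derive this fact using the splitting of surjections onto projective (hence free) modules together with the determinant.

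First, since $\left\vert U\right\vert =\left\vert S\right\vert $ is finite, I would pick any bijection $\varphi:S\to U$. Using that $\left(b_{s}\right)_{s\in S}$ is a basis of $M$, I would define a $\mathbf{k}$-linear endomorphism $f:M\to M$ by setting $f\left(b_{s}\right)=a_{\varphi\left(s\right)}$ for every $s\in S$ and extending linearly. Because $\left(f\left(b_{s}\right)\right)_{s\in S}=\left(a_{\varphi\left(s\right)}\right)_{s\in S}$ is a reindexing of the spanning family $\left(a_{u}\right)_{u\in U}$, the map $f$ is surjective.

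Next, since $M$ is free, it is projective, so the short exact sequence $0\to\ker f\to M\xrightarrow{f}M\to0$ splits. This yields a $\mathbf{k}$-linear section $g:M\to M$ with $f\circ g=\operatorname{id}_{M}$. Representing $f$ and $g$ as $n\times n$ matrices $A$ and $B$ over $\mathbf{k}$ with respect to the finite basis $\left(b_{s}\right)_{s\in S}$ (where $n=\left\vert S\right\vert $), the relation $f\circ g=\operatorname{id}_{M}$ becomes $AB=I_{n}$. Taking determinants gives $\det\left(A\right)\det\left(B\right)=1$, so $\det\left(A\right)$ is a unit in $\mathbf{k}$. By the adjugate formula, $A$ is invertible over $\mathbf{k}$, and therefore $f$ is a $\mathbf{k}$-module automorphism of $M$.

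Finally, since $f$ is an automorphism and $\left(b_{s}\right)_{s\in S}$ is a basis of $M$, the image family $\left(f\left(b_{s}\right)\right)_{s\in S}=\left(a_{\varphi\left(s\right)}\right)_{s\in S}$ is again a basis of $M$; reindexing along $\varphi$, we conclude that $\left(a_{u}\right)_{u\in U}$ is a basis of $M$. The main subtlety is the splitting step: I expect this to be the only place where one must genuinely use that $M$ is free (as opposed to merely finitely generated) over an arbitrary commutative ring. If one prefers to avoid invoking projectivity, the same conclusion can be reached by applying the Cayley--Hamilton / determinant trick to the $\mathbf{k}\left[t\right]$-module structure on $M$ in which $t$ acts as $f$: surjectivity of $f$ gives $tM=M$, which forces a relation $\left(1+tc\left(t\right)\right)\cdot M=0$ and hence exhibits $-c\left(f\right)$ as a two-sided inverse of $f$.
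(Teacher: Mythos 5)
Your argument is correct. The paper itself does not give a proof of this lemma: it simply cites it as a known fact (namely \cite[Exercise 2.5.18(b)]{GriRei18}), so there is nothing to match step by step. What you have written is essentially the standard proof of that cited fact: reindex the spanning family along a bijection $\varphi:S\to U$, turn it into a surjective endomorphism $f$ of $M$ via the basis $\left(b_{s}\right)_{s\in S}$, show that $f$ is an automorphism, and transport the basis. Your main route (split the surjection $f:M\to M$ using projectivity of the free target, get $AB=I_{n}$ on matrices, conclude $\det A$ is a unit and invert $A$ via the adjugate) is sound, and it is worth noting that it genuinely uses freeness of the codomain only for the splitting; the determinant step then forces $BA=I_{n}$ as well, so $f$ is invertible and carries the basis $\left(b_{s}\right)_{s\in S}$ to the basis $\left(a_{\varphi\left(s\right)}\right)_{s\in S}$, which reindexes back to $\left(a_{u}\right)_{u\in U}$. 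Your alternative route (Cayley--Hamilton/determinant trick on the $\mathbf{k}\left[t\right]$-module structure with $t$ acting as $f$) is exactly the usual proof that a surjective endomorphism of a \emph{finitely generated} module is an isomorphism -- the statement the paper itself invokes in its proof of Lemma \ref{lem.freemod.2} (as \cite[Exercise 2.5.18(a)]{GriRei18}) -- and it has the advantage of not needing freeness at all, only finite generation plus the existence of one finite basis to set up the map $f$. Either way, your write-up closes the gap that the paper leaves to a citation.
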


\begin{proof}
[Proof of Lemma \ref{lem.freemod-span-basis}.]Well-known (see, e.g.,
\cite[Exercise 2.5.18 \textbf{(b)}]{GriRei18}).
\end{proof}

\begin{lemma}
\label{lem.I.hi-red}Let $i$ be an integer such that $i>n-k$. Then,%
\[
h_{i}\equiv\left(  \text{some symmetric polynomial of degree }<i\right)
\operatorname{mod}I.
\]

\end{lemma}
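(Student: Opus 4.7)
My plan is to prove Lemma \ref{lem.I.hi-red} by strong induction on $i$, splitting into two regimes: the \emph{base regime} $n-k < i \leq n$, where the conclusion follows directly from the generators of $I$, and the \emph{inductive regime} $i > n$, where I reduce $h_i$ via the Newton-type identity from Corollary \ref{cor.heh-id.0}.

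For the base case $i \in \{n-k+1, n-k+2, \ldots, n\}$, the congruence (\ref{eq.h=amodI2}) gives $h_i \equiv a_{i-n+k} \bmod I$. Since we are assuming $a_1, a_2, \ldots, a_k \in \mathcal{S}$, the polynomial $a_{i-n+k}$ is symmetric, and (\ref{eq.degan-k+i.1}) provides the degree bound $\deg(a_{i-n+k}) < i$. This handles the base of the induction.

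For the inductive step, suppose $i > n$ and assume the lemma holds for all integers $j$ with $n-k < j < i$. Apply Corollary \ref{cor.heh-id.0} (with $p = i$) to get
\[
h_i = -\sum_{t=1}^{k} (-1)^t e_t h_{i-t}.
\]
For each $t \in \{1, 2, \ldots, k\}$, we have $i - t \geq i - k > n - k$ (using $i > n$), so the inductive hypothesis applies and yields a symmetric polynomial $g_t$ of degree $< i-t$ with $h_{i-t} \equiv g_t \bmod I$. Since $e_t$ is symmetric of degree $t$, the product $e_t g_t$ is symmetric of degree $< t + (i-t) = i$. Hence
\[
h_i \equiv -\sum_{t=1}^{k}(-1)^t e_t g_t \bmod I,
\]
and the right-hand side is a symmetric polynomial of degree $< i$, completing the induction.

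There is no serious obstacle here; the entire argument is driven by Corollary \ref{cor.heh-id.0}. The only mild point to verify is that the inductive hypothesis is available for every index $i-t$ that arises, which is exactly the observation that $i > n$ forces $i - t > n - k$ for all $t \in \{1, \ldots, k\}$, so the recursion stays within the range where the lemma has already been established.
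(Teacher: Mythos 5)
Your proposal is correct and follows essentially the same route as the paper: strong induction on $i$, with the case $n-k<i\leq n$ settled by (\ref{eq.h=amodI2}) together with the degree bound (\ref{eq.degan-k+i.1}), and the case $i>n$ handled via Corollary \ref{cor.heh-id.0} after checking that each index $i-t$ stays in the range $>n-k$. No gaps to report.
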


\begin{proof}
[Proof of Lemma \ref{lem.I.hi-red} (sketched).]We shall prove Lemma
\ref{lem.I.hi-red} by strong induction on $i$. Thus, we assume (as the
induction hypothesis) that%
\begin{equation}
h_{j}\equiv\left(  \text{some symmetric polynomial of degree }<j\right)
\operatorname{mod}I \label{pf.lem.I.hi-red.IH}%
\end{equation}
for every $j\in\left\{  n-k+1,n-k+2,\ldots,i-1\right\}  $.

If $i\leq n$, then (\ref{eq.h=amodI2}) yields $h_{i}\equiv a_{i-n+k}%
\operatorname{mod}I$ (since $i\in\left\{  n-k+1,n-k+2,\ldots,n\right\}  $),
which clearly proves Lemma \ref{lem.I.hi-red} (since $a_{i-n+k}$ is a
symmetric polynomial of degree $<i$\ \ \ \ \footnote{by (\ref{eq.degan-k+i.1}%
)}). Thus, for the rest of this proof, we WLOG assume that $i>n$. Hence, each
$t\in\left\{  1,2,\ldots,k\right\}  $ satisfies \newline$i-t\in\left\{
n-k+1,n-k+2,\ldots,i-1\right\}  $ (since $\underbrace{i}_{>n}-\underbrace{t}%
_{\leq k}>n-k$ and $i-\underbrace{t}_{\geq1}\leq i-1$) and therefore%
\begin{equation}
h_{i-t}\equiv\left(  \text{some symmetric polynomial of degree }<i-t\right)
\operatorname{mod}I \label{pf.lem.I.hi-red.2}%
\end{equation}
(by (\ref{pf.lem.I.hi-red.IH}), applied to $j=i-t$).

But $i$ is a positive integer (since $i>n\geq0$). Hence, Corollary
\ref{cor.heh-id.0} (applied to $p=i$) yields%
\begin{align*}
h_{i}  &  =-\sum_{t=1}^{k}\left(  -1\right)  ^{t}e_{t}\underbrace{h_{i-t}%
}_{\substack{\equiv\left(  \text{some symmetric polynomial of degree
}<i-t\right)  \operatorname{mod}I\\\text{(by (\ref{pf.lem.I.hi-red.2}))}}}\\
&  \equiv-\sum_{t=1}^{k}\left(  -1\right)  ^{t}e_{t}\cdot\left(  \text{some
symmetric polynomial of degree }<i-t\right) \\
&  =\left(  \text{some symmetric polynomial of degree }<i\right)
\operatorname{mod}I.
\end{align*}
This completes the induction step. Thus, Lemma \ref{lem.I.hi-red} is proven.
\end{proof}

\begin{definition}
The \textit{size} of a partition $\lambda=\left(  \lambda_{1},\lambda
_{2},\lambda_{3},\ldots\right)  $ is defined as $\lambda_{1}+\lambda
_{2}+\lambda_{3}+\cdots$, and is denoted by $\left\vert \lambda\right\vert $.
\end{definition}

\begin{definition}
\label{def.Pk}Let $P_{k}$ denote the set of all partitions with at most $k$
parts. Thus, the elements of $P_{k}$ are weakly decreasing $k$-tuples of
nonnegative integers.
\end{definition}

\begin{proposition}
\label{prop.jacobi-trudi.Sh}Let $\lambda=\left(  \lambda_{1},\lambda
_{2},\ldots,\lambda_{k}\right)  $ be a partition in $P_{k}$. Then:

\textbf{(a)} We have%
\[
s_{\lambda}=\det\left(  \left(  h_{\lambda_{u}-u+v}\right)  _{1\leq u\leq
k,\ 1\leq v\leq k}\right)  .
\]

\textbf{(b)} Let $p\in\left\{  0,1,\ldots,k\right\}  $ be such that
$\lambda=\left(  \lambda_{1},\lambda_{2},\ldots,\lambda_{p}\right)  $. Then,%
\[
s_{\lambda}=\det\left(  \left(  h_{\lambda_{u}-u+v}\right)  _{1\leq u\leq
p,\ 1\leq v\leq p}\right)  .
\]

\end{proposition}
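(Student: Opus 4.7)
The plan is to treat part (a) as a standard reference and then derive part (b) from (a) by a block-triangular determinant argument.

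For part (a), I would simply cite the classical Jacobi--Trudi identity for Schur polynomials, which is available as \cite[Corollary 2.4.3]{GriRei18} or equivalently can be extracted by running the bialternant formula $s_\lambda = \det(x_i^{\lambda_j+k-j})/\det(x_i^{k-j})$ through the dual Cauchy/Lindstr\"om--Gessel--Viennot machinery; no novel argument is needed.

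For part (b), I would start from the $k\times k$ Jacobi--Trudi matrix $M = (h_{\lambda_u - u + v})_{1\le u,v\le k}$ furnished by (a) and exploit the fact that $\lambda_u = 0$ for $u > p$. With this in mind, the entries in rows $p+1, p+2, \ldots, k$ simplify to $h_{v-u}$. I would then observe two things: (i) whenever $u > p$ and $v \le p$, we have $v - u \le -1$, so $h_{v-u} = 0$; this kills the entire lower-left $(k-p)\times p$ block. (ii) Whenever both $u, v \in \{p+1, \ldots, k\}$, the entry $h_{v-u}$ vanishes for $v < u$ and equals $h_0 = 1$ for $v = u$, so the lower-right $(k-p)\times(k-p)$ block is upper unitriangular with determinant~$1$. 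Consequently $M$ has block form $\begin{pmatrix} A & B \\ 0 & U \end{pmatrix}$ where $A$ is precisely the $p\times p$ matrix appearing in (b) and $U$ is upper unitriangular, so $\det M = \det A \cdot \det U = \det A$. Combined with (a), this gives $s_\lambda = \det A$, proving (b).

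There is essentially no obstacle here: part (a) is cited, and part (b) is a routine block-triangular computation. The only care needed is to keep track of the two vanishing patterns ($v - u < 0$ in the lower-left block; $v < u$ in the lower-right block) and to note that the diagonal entries of the lower-right block are genuinely $h_0 = 1$, which relies on the convention recalled in (\ref{eq.hm}).
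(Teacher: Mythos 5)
Your proposal is correct, but it runs in the opposite direction from the paper. The paper takes part \textbf{(b)} as the citation: the skew Jacobi--Trudi identity \cite[(2.4.16)]{GriRei18} with $\mu=\varnothing$, evaluated at the $k$ variables $x_{1},x_{2},\ldots,x_{k}$, gives (b) for every admissible $p$ at once (the matrix size in that identity may exceed the number of parts), and then (a) is literally the special case $p=k$ of (b); no determinant manipulation is needed. You instead cite only the full $k\times k$ case for (a) and recover (b) by the block-triangular computation: in the rows $u>p$ the entries become $h_{v-u}$, which vanish for $v\le p$ and form an upper unitriangular block for $v>p$, so the $k\times k$ determinant collapses to the $p\times p$ one. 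That computation is correct (including its reliance on $h_{m}=0$ for $m<0$ and $h_{0}=1$, as recalled after (\ref{eq.hm})), and it is in effect a self-contained proof of the size-flexibility of the Jacobi--Trudi determinant, which the paper simply imports from its reference. What your route buys is independence from how the cited identity fixes its matrix size -- the same block argument would equally let you pass from an $\ell\times\ell$ statement with $\ell$ equal to the number of parts up to the $k\times k$ one -- at the cost of a short extra computation; the paper's route is shorter because its reference already covers all sizes. One minor caveat: double-check your reference for (a) -- the paper points to \cite[(2.4.16)]{GriRei18} (skew case, specialized to $\mu=\varnothing$ and then evaluated at $x_{1},\ldots,x_{k}$) rather than a corollary numbered 2.4.3 -- but this does not affect the mathematics.
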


\begin{proof}
[Proof of Proposition \ref{prop.jacobi-trudi.Sh}.]\textbf{(b)} Proposition
\ref{prop.jacobi-trudi.Sh} \textbf{(b)} is the well-known Jacobi-Trudi
identity, and is proven in various places. (For instance, \cite[(2.4.16)]%
{GriRei18} states a similar formula for skew Schur functions; if we set
$\mu=\varnothing$ in it and apply both sides to the variables $x_{1}%
,x_{2},\ldots,x_{k}$, then we recover the claim of Proposition
\ref{prop.jacobi-trudi.Sh} \textbf{(b)}.)

\textbf{(a)} We have $\lambda=\left(  \lambda_{1},\lambda_{2},\ldots
,\lambda_{k}\right)  $. Hence, Proposition \ref{prop.jacobi-trudi.Sh}
\textbf{(a)} is the particular case of Proposition \ref{prop.jacobi-trudi.Sh}
\textbf{(b)} for $p=k$.
\end{proof}

\begin{lemma}
\label{lem.lam-sum-deg-1}Let $\lambda=\left(  \lambda_{1},\lambda_{2}%
,\ldots,\lambda_{\ell}\right)  $ be any partition. Let $i\in\left\{
1,2,\ldots,\ell\right\}  $ and $j\in\left\{  1,2,\ldots,\ell\right\}  $. Then,%
\[
\sum_{\substack{u\in\left\{  1,2,\ldots,\ell\right\}  ;\\u\neq i}}\left(
\lambda_{u}-u\right)  +\sum_{\substack{u\in\left\{  1,2,\ldots,\ell\right\}
;\\u\neq j}}u=\left\vert \lambda\right\vert -\left(  \lambda_{i}-i+j\right)
.
\]

\end{lemma}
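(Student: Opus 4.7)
The plan is to prove this by a direct computation: break each of the two restricted sums into a full sum minus the omitted term, and then observe that the two $\sum_{u=1}^{\ell} u$ contributions cancel.

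More precisely, I would first note that
\[
\sum_{\substack{u \in \{1,2,\ldots,\ell\};\\u \neq i}} (\lambda_u - u) = \sum_{u=1}^{\ell}(\lambda_u - u) - (\lambda_i - i) = |\lambda| - \sum_{u=1}^{\ell} u - (\lambda_i - i),
\]
using that $\sum_{u=1}^{\ell} \lambda_u = |\lambda|$ (since $\lambda = (\lambda_1,\ldots,\lambda_\ell)$ and all further entries are zero). Similarly,
\[
\sum_{\substack{u \in \{1,2,\ldots,\ell\};\\u \neq j}} u = \sum_{u=1}^{\ell} u - j.
\]
Adding these two equalities makes the $\sum_{u=1}^{\ell} u$ terms cancel, leaving
\[
|\lambda| - (\lambda_i - i) - j = |\lambda| - (\lambda_i - i + j),
\]
which is exactly the claim.

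There is no real obstacle; the lemma is purely bookkeeping, and the only thing to be careful about is the sign pattern when pulling out the $u = i$ term from the first (signed) sum. No induction or combinatorial argument is needed, and neither the partition structure (weak decrease, nonnegativity) nor the values of $i,j$ beyond their being in $\{1,\ldots,\ell\}$ play any role in the identity.
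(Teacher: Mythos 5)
Your proof is correct and follows essentially the same route as the paper's: write each restricted sum as the full sum minus the omitted term, use $\sum_{u=1}^{\ell}\lambda_{u}=\left\vert \lambda\right\vert$, and let the two $\sum_{u=1}^{\ell}u$ contributions cancel. Nothing is missing.
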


\begin{proof}
[Proof of Lemma \ref{lem.lam-sum-deg-1}.]We have%
\begin{align*}
&  \underbrace{\sum_{\substack{u\in\left\{  1,2,\ldots,\ell\right\}  ;\\u\neq
i}}\left(  \lambda_{u}-u\right)  }_{=\sum_{u\in\left\{  1,2,\ldots
,\ell\right\}  }\left(  \lambda_{u}-u\right)  -\left(  \lambda_{i}-i\right)
}+\underbrace{\sum_{\substack{u\in\left\{  1,2,\ldots,\ell\right\}  ;\\u\neq
j}}u}_{=\sum_{u\in\left\{  1,2,\ldots,\ell\right\}  }u-j}\\
&  =\underbrace{\sum_{u\in\left\{  1,2,\ldots,\ell\right\}  }\left(
\lambda_{u}-u\right)  }_{=\sum_{u\in\left\{  1,2,\ldots,\ell\right\}  }%
\lambda_{u}-\sum_{u\in\left\{  1,2,\ldots,\ell\right\}  }u}-\left(
\lambda_{i}-i\right)  +\sum_{u\in\left\{  1,2,\ldots,\ell\right\}  }u-j\\
&  =\sum_{u\in\left\{  1,2,\ldots,\ell\right\}  }\lambda_{u}-\sum
_{u\in\left\{  1,2,\ldots,\ell\right\}  }u-\left(  \lambda_{i}-i\right)
+\sum_{u\in\left\{  1,2,\ldots,\ell\right\}  }u-j\\
&  =\underbrace{\sum_{u\in\left\{  1,2,\ldots,\ell\right\}  }\lambda_{u}%
}_{=\left\vert \lambda\right\vert }-\left(  \lambda_{i}-i\right)
-j=\left\vert \lambda\right\vert -\left(  \lambda_{i}-i\right)  -j=\left\vert
\lambda\right\vert -\left(  \lambda_{i}-i+j\right)  .
\end{align*}
This proves Lemma \ref{lem.lam-sum-deg-1}.
\end{proof}

Next, let us recall the definition of a cofactor of a matrix:

\begin{definition}
\label{def.det.cofactor}Let $\ell\in\mathbb{N}$. Let $R$ be a commutative
ring. Let $A\in R^{\ell\times\ell}$ be any $\ell\times\ell$-matrix. Let
$i\in\left\{  1,2,\ldots,\ell\right\}  $ and $j\in\left\{  1,2,\ldots
,\ell\right\}  $. Then:

\textbf{(a)} The $\left(  i,j\right)  $\textit{-th minor} of the matrix $A$ is
defined to be the determinant of the $\left(  \ell-1\right)  \times\left(
\ell-1\right)  $-matrix obtained from $A$ by removing the $i$-th row and the
$j$-th column.

\textbf{(b)} The $\left(  i,j\right)  $\textit{-th cofactor} of the matrix $A$
is defined to $\left(  -1\right)  ^{i+j}$ times the $\left(  i,j\right)  $-th
minor of $A$.
\end{definition}

It is known that any $\ell\times\ell$-matrix $A=\left(  a_{i,j}\right)
_{1\leq i\leq\ell,\ 1\leq j\leq\ell}$ over a commutative ring $R$ satisfies%
\begin{equation}
\det A=\sum_{j=1}^{\ell}a_{i,j}\cdot\left(  \text{the }\left(  i,j\right)
\text{-th cofactor of }A\right)  \label{eq.det.lap-exp}%
\end{equation}
for each $i\in\left\{  1,2,\ldots,\ell\right\}  $. (This is the Laplace
expansion of the determinant of $A$ along its $i$-th row.)

\begin{lemma}
\label{lem.I.cofactor1}Let $\lambda=\left(  \lambda_{1},\lambda_{2}%
,\ldots,\lambda_{\ell}\right)  $ be any partition. Let $i\in\left\{
1,2,\ldots,\ell\right\}  $ and $j\in\left\{  1,2,\ldots,\ell\right\}  $. Then,
the $\left(  i,j\right)  $-th cofactor of the matrix $\left(  h_{\lambda
_{u}-u+v}\right)  _{1\leq u\leq\ell,\ 1\leq v\leq\ell}$ is a homogeneous
symmetric polynomial of degree $\left\vert \lambda\right\vert -\left(
\lambda_{i}-i+j\right)  $.
\end{lemma}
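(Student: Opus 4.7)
The plan is to compute the $(i,j)$-th minor of the matrix $M := (h_{\lambda_u - u + v})_{1 \le u,v \le \ell}$ directly via the Leibniz formula and to observe that every term has the same total degree $|\lambda| - (\lambda_i - i + j)$. The cofactor is $(-1)^{i+j}$ times this minor, so homogeneity of the minor implies homogeneity of the cofactor. Symmetry is automatic because each $h_m$ is a symmetric polynomial and products and sums of symmetric polynomials are symmetric.

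More precisely, the first step is to recall that for every $m \in \mathbb{Z}$, the element $h_m \in \mathcal{S}$ is either $0$ (when $m < 0$) or a homogeneous symmetric polynomial of degree $m$; in either case we may regard $h_m$ as living in the degree-$m$ homogeneous component of $\mathcal{S}$. The $(i,j)$-th minor of $M$ is the determinant of the $(\ell-1) \times (\ell-1)$ submatrix obtained by deleting row $i$ and column $j$; its rows are indexed by $u \in \{1,\ldots,\ell\} \setminus \{i\}$ and its columns by $v \in \{1,\ldots,\ell\} \setminus \{j\}$, with entry $h_{\lambda_u - u + v}$.

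The Leibniz expansion of this minor is a signed sum of products of the form $\prod_{u \neq i} h_{\lambda_u - u + \sigma(u)}$, indexed by bijections $\sigma \colon \{1,\ldots,\ell\}\setminus\{i\} \to \{1,\ldots,\ell\}\setminus\{j\}$. Each such product is homogeneous of degree
\[
\sum_{\substack{u \in \{1,\ldots,\ell\};\\ u \neq i}} \bigl(\lambda_u - u + \sigma(u)\bigr)
= \sum_{\substack{u \in \{1,\ldots,\ell\};\\ u \neq i}} (\lambda_u - u) \;+\; \sum_{\substack{u \in \{1,\ldots,\ell\};\\ u \neq i}} \sigma(u).
\]
Because $\sigma$ is a bijection onto $\{1,\ldots,\ell\}\setminus\{j\}$, the second sum equals $\sum_{u \neq j} u$, and then Lemma \ref{lem.lam-sum-deg-1} shows that this total is exactly $|\lambda| - (\lambda_i - i + j)$, independently of $\sigma$.

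Thus every term in the Leibniz expansion is homogeneous symmetric of the same degree $|\lambda| - (\lambda_i - i + j)$, so the minor is too, and multiplication by $(-1)^{i+j}$ preserves both homogeneity and symmetry. The only mildly delicate point is the treatment of entries $h_{\lambda_u - u + v}$ with $\lambda_u - u + v < 0$, where the entry is $0$; this is harmless since a vanishing term is homogeneous of any degree and the nonzero terms all carry the asserted degree. No further ingredients are needed.
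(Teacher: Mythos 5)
Your proof is correct and follows essentially the same route as the paper: expand the $(i,j)$-th minor by the Leibniz formula (the paper's ``diagonal products'' are precisely your products over bijections $\sigma\colon\{1,\ldots,\ell\}\setminus\{i\}\to\{1,\ldots,\ell\}\setminus\{j\}$), apply Lemma \ref{lem.lam-sum-deg-1} to see that every term is homogeneous of degree $\left\vert\lambda\right\vert-\left(\lambda_{i}-i+j\right)$, and note that the sign $\left(-1\right)^{i+j}$ changes nothing. Your explicit remark that entries $h_{m}$ with $m<0$ vanish and are therefore homogeneous of any degree is a point the paper leaves implicit, but it is the same argument.
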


\begin{proof}
[Proof of Lemma \ref{lem.I.cofactor1} (sketched).]This is a simple argument
that inflates in length by a multiple when put on paper. You will probably
have arrived at the proof long before you have finished reading the following.

For each $u\in\left\{  1,2,\ldots,\ell\right\}  $ and $v\in\left\{
1,2,\ldots,\ell\right\}  $, we define an integer $w\left(  u,v\right)  $ by%
\begin{equation}
w\left(  u,v\right)  =\lambda_{u}-u+v. \label{pf.lem.I.cofactor1.wuv=}%
\end{equation}

\begin{noncompile}
For each $m\in\mathbb{Z}$, we let $\mathcal{S}_{\deg=m}$ denote the $m$-th
graded component of the graded ring $\mathcal{S}$. This is the $\mathbf{k}%
$-submodule of $\mathcal{S}$ consisting of all homogeneous symmetric
polynomials of degree $m$. Hence,%
\begin{equation}
h_{m}\in\mathcal{S}_{\deg=m}\ \ \ \ \ \ \ \ \ \ \text{for each }m\in
\mathbb{Z}. \label{pf.lem.I.cofactor1.hm-hg}%
\end{equation}

Let $A$ be the matrix $\left(  h_{\lambda_{u}-u+v}\right)  _{1\leq u\leq
\ell,\ 1\leq v\leq\ell}$. Write this matrix $A$ in the form $A=\left(
a_{u,v}\right)  _{1\leq u\leq\ell,\ 1\leq v\leq\ell}$. Hence,%
\[
\left(  a_{u,v}\right)  _{1\leq u\leq\ell,\ 1\leq v\leq\ell}=A=\left(
h_{\lambda_{u}-u+v}\right)  _{1\leq u\leq\ell,\ 1\leq v\leq\ell}.
\]
Thus, for each $u\in\left\{  1,2,\ldots,\ell\right\}  $ and $v\in\left\{
1,2,\ldots,\ell\right\}  $, we have%
\begin{align}
a_{u,v}  &  =h_{\lambda_{u}-u+v}=h_{w\left(  u,v\right)  }%
\ \ \ \ \ \ \ \ \ \ \left(  \text{since }\lambda_{u}-u+v=w\left(  u,v\right)
\text{ (by (\ref{pf.lem.I.cofactor1.wuv=}))}\right) \nonumber\\
&  \in\Lambda_{\deg=w\left(  u,v\right)  } \label{pf.lem.I.cofactor1.auv-hg}%
\end{align}
(by (\ref{pf.lem.I.cofactor1.hm-hg}), applied to $m=w\left(  u,v\right)  $).
\end{noncompile}

Let $A$ be the matrix $\left(  h_{w\left(  u,v\right)  }\right)  _{1\leq
u\leq\ell,\ 1\leq v\leq\ell}$. Let $\mu$ be the $\left(  i,j\right)  $-th
minor of the matrix $A$. Thus, $\mu$ is the determinant of the $\left(
\ell-1\right)  \times\left(  \ell-1\right)  $-matrix obtained from $A$ by
removing the $i$-th row and the $j$-th column (by Definition
\ref{def.det.cofactor} \textbf{(a)}). The combinatorial definition of a
determinant (i.e., the definition of a determinant as a sum over all
permutations) thus shows that $\mu$ is a sum of $\left(  \ell-1\right)  !$
many products of the form%
\[
\pm h_{w\left(  i_{1},j_{1}\right)  }h_{w\left(  i_{2},j_{2}\right)  }\cdots
h_{w\left(  i_{\ell-1},j_{\ell-1}\right)  },
\]
where $i_{1},i_{2},\ldots,i_{\ell-1}$ are $\ell-1$ distinct elements of the
set $\left\{  1,2,\ldots,\ell\right\}  \setminus\left\{  i\right\}  $ and
where $j_{1},j_{2},\ldots,j_{\ell-1}$ are $\ell-1$ distinct elements of the
set $\left\{  1,2,\ldots,\ell\right\}  \setminus\left\{  j\right\}  $. Let us
refer to such products as \textit{diagonal products}. Hence, $\mu$ is a sum of
diagonal products.

We shall now claim the following:

\begin{statement}
\textit{Claim 1:} Each diagonal product is a homogeneous symmetric polynomial
of degree $\left\vert \lambda\right\vert -\left(  \lambda_{i}-i+j\right)  $.
\end{statement}

[\textit{Proof of Claim 1:} Let $d$ be a diagonal product. We must show that
$d$ is a homogeneous symmetric polynomial of degree $\left\vert \lambda
\right\vert -\left(  \lambda_{i}-i+j\right)  $.

We have assumed that $d$ is a diagonal product. In other words, $d$ is a
product of the form
\[
\pm h_{w\left(  i_{1},j_{1}\right)  }h_{w\left(  i_{2},j_{2}\right)  }\cdots
h_{w\left(  i_{\ell-1},j_{\ell-1}\right)  },
\]
where $i_{1},i_{2},\ldots,i_{\ell-1}$ are $\ell-1$ distinct elements of the
set $\left\{  1,2,\ldots,\ell\right\}  \setminus\left\{  i\right\}  $ and
where $j_{1},j_{2},\ldots,j_{\ell-1}$ are $\ell-1$ distinct elements of the
set $\left\{  1,2,\ldots,\ell\right\}  \setminus\left\{  j\right\}  $.
Consider these $i_{1},i_{2},\ldots,i_{\ell-1}$ and these $j_{1},j_{2}%
,\ldots,j_{\ell-1}$.

The numbers $i_{1},i_{2},\ldots,i_{\ell-1}$ are $\ell-1$ distinct elements of
the set $\left\{  1,2,\ldots,\ell\right\}  \setminus\left\{  i\right\}  $; but
the latter set has only $\ell-1$ elements altogether. Thus, these numbers
$i_{1},i_{2},\ldots,i_{\ell-1}$ must be precisely the $\ell-1$ elements of the
set $\left\{  1,2,\ldots,\ell\right\}  \setminus\left\{  i\right\}  $ in some
order. Similarly, the numbers $j_{1},j_{2},\ldots,j_{\ell-1}$ must be
precisely the $\ell-1$ elements of the set $\left\{  1,2,\ldots,\ell\right\}
\setminus\left\{  j\right\}  $ in some order.

For each $p\in\left\{  1,2,\ldots,\ell-1\right\}  $, the element $h_{w\left(
i_{p},j_{p}\right)  }$ of $\mathcal{S}$ is homogeneous of degree $w\left(
i_{p},j_{p}\right)  $ (because for each $m\in\mathbb{Z}$, the element $h_{m}$
of $\mathcal{S}$ is homogeneous of degree $m$). Hence, the product
$h_{w\left(  i_{1},j_{1}\right)  }h_{w\left(  i_{2},j_{2}\right)  }\cdots
h_{w\left(  i_{\ell-1},j_{\ell-1}\right)  }$ is homogeneous of degree
\begin{align*}
&  w\left(  i_{1},j_{1}\right)  +w\left(  i_{2},j_{2}\right)  +\cdots+w\left(
i_{\ell-1},j_{\ell-1}\right) \\
&  =\sum_{p\in\left\{  1,2,\ldots,\ell-1\right\}  }\underbrace{w\left(
i_{p},j_{p}\right)  }_{\substack{=\lambda_{i_{p}}-i_{p}+j_{p}\\\text{(by the
definition of }w\left(  i_{p},j_{p}\right)  \text{)}}}=\sum_{p\in\left\{
1,2,\ldots,\ell-1\right\}  }\left(  \lambda_{i_{p}}-i_{p}+j_{p}\right) \\
&  =\underbrace{\sum_{p\in\left\{  1,2,\ldots,\ell-1\right\}  }\left(
\lambda_{i_{p}}-i_{p}\right)  }_{\substack{=\left(  \lambda_{i_{1}}%
-i_{1}\right)  +\left(  \lambda_{i_{2}}-i_{2}\right)  +\cdots+\left(
\lambda_{i_{\ell-1}}-i_{\ell-1}\right)  \\=\sum_{u\in\left\{  1,2,\ldots
,\ell\right\}  \setminus\left\{  i\right\}  }\left(  \lambda_{u}-u\right)
\\\text{(since }i_{1},i_{2},\ldots,i_{\ell-1}\text{ are precisely}\\\text{the
}\ell-1\text{ elements of the set }\left\{  1,2,\ldots,\ell\right\}
\setminus\left\{  i\right\}  \\\text{in some order)}}}+\underbrace{\sum
_{p\in\left\{  1,2,\ldots,\ell-1\right\}  }j_{p}}_{\substack{=j_{1}%
+j_{2}+\cdots+j_{\ell-1}\\=\sum_{u\in\left\{  1,2,\ldots,\ell\right\}
\setminus\left\{  j\right\}  }u\\\text{(since }j_{1},j_{2},\ldots,j_{\ell
-1}\text{ are precisely}\\\text{the }\ell-1\text{ elements of the set
}\left\{  1,2,\ldots,\ell\right\}  \setminus\left\{  j\right\}  \\\text{in
some order)}}}\\
&  =\underbrace{\sum_{u\in\left\{  1,2,\ldots,\ell\right\}  \setminus\left\{
i\right\}  }}_{=\sum_{\substack{u\in\left\{  1,2,\ldots,\ell\right\}  ;\\u\neq
i}}}\left(  \lambda_{u}-u\right)  +\underbrace{\sum_{u\in\left\{
1,2,\ldots,\ell\right\}  \setminus\left\{  j\right\}  }}_{=\sum
_{\substack{u\in\left\{  1,2,\ldots,\ell\right\}  ;\\u\neq j}}}u\\
&  =\sum_{\substack{u\in\left\{  1,2,\ldots,\ell\right\}  ;\\u\neq i}}\left(
\lambda_{u}-u\right)  +\sum_{\substack{u\in\left\{  1,2,\ldots,\ell\right\}
;\\u\neq j}}u=\left\vert \lambda\right\vert -\left(  \lambda_{i}-i+j\right)
\end{align*}
(by Lemma \ref{lem.lam-sum-deg-1}). Thus, $d$ is homogeneous of degree
$\left\vert \lambda\right\vert -\left(  \lambda_{i}-i+j\right)  $ as well
(since $d=\pm h_{w\left(  i_{1},j_{1}\right)  }h_{w\left(  i_{2},j_{2}\right)
}\cdots h_{w\left(  i_{\ell-1},j_{\ell-1}\right)  }$). Hence, $d$ is a
homogeneous symmetric polynomial of degree $\left\vert \lambda\right\vert
-\left(  \lambda_{i}-i+j\right)  $ (since $d$ is clearly a symmetric
polynomial). This proves Claim 1.]

Now, $\mu$ is a sum of diagonal products; but each such diagonal product is a
homogeneous symmetric polynomial of degree $\left\vert \lambda\right\vert
-\left(  \lambda_{i}-i+j\right)  $ (by Claim 1). Hence, their sum $\mu$ is
also a homogeneous symmetric polynomial of degree $\left\vert \lambda
\right\vert -\left(  \lambda_{i}-i+j\right)  $.

Recall that $\mu$ is the $\left(  i,j\right)  $-th minor of the matrix $A$.
Hence, the $\left(  i,j\right)  $-th cofactor of the matrix $A$ is $\left(
-1\right)  ^{i+j}\mu$ (by Definition \ref{def.det.cofactor} \textbf{(b)}).
Thus, this cofactor is a homogeneous symmetric polynomial of degree
$\left\vert \lambda\right\vert -\left(  \lambda_{i}-i+j\right)  $ (since $\mu$
is a homogeneous symmetric polynomial of degree $\left\vert \lambda\right\vert
-\left(  \lambda_{i}-i+j\right)  $).

But
\begin{equation}
A=\left(  \underbrace{h_{w\left(  u,v\right)  }}_{\substack{=h_{\lambda
_{u}-u+v}\\\text{(by (\ref{pf.lem.I.cofactor1.wuv=}))}}}\right)  _{1\leq
u\leq\ell,\ 1\leq v\leq\ell}=\left(  h_{\lambda_{u}-u+v}\right)  _{1\leq
u\leq\ell,\ 1\leq v\leq\ell}. \label{pf.lem.I.cofactor1.A=}%
\end{equation}
We have shown that the $\left(  i,j\right)  $-th cofactor of the matrix $A$ is
a homogeneous symmetric polynomial of degree $\left\vert \lambda\right\vert
-\left(  \lambda_{i}-i+j\right)  $. In view of (\ref{pf.lem.I.cofactor1.A=}),
this rewrites as follows: The $\left(  i,j\right)  $-th cofactor of the matrix
$\left(  h_{\lambda_{u}-u+v}\right)  _{1\leq u\leq\ell,\ 1\leq v\leq\ell}$ is
a homogeneous symmetric polynomial of degree $\left\vert \lambda\right\vert
-\left(  \lambda_{i}-i+j\right)  $. This proves Lemma \ref{lem.I.cofactor1}.
\end{proof}

\begin{lemma}
\label{lem.I.sl-red}Let $\lambda\in P_{k}$ be a partition such that
$\lambda\notin P_{k,n}$. Then,%
\[
s_{\lambda}\equiv\left(  \text{some symmetric polynomial of degree
}<\left\vert \lambda\right\vert \right)  \operatorname{mod}I.
\]

\end{lemma}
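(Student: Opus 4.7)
The plan is to combine the Jacobi-Trudi identity (Proposition \ref{prop.jacobi-trudi.Sh}) with Laplace expansion along the first row and then reduce that row modulo $I$ via Lemma \ref{lem.I.hi-red}. The first row of the Jacobi-Trudi matrix is the one whose entries have the largest subscripts, so it is exactly where the hypothesis $\lambda \notin P_{k,n}$ gives us traction: it forces those subscripts to be $>n-k$.

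First I would extract, from $\lambda \in P_k \setminus P_{k,n}$, the key inequality $\lambda_1 > n-k$. Indeed, $\lambda \in P_k$ means $\lambda$ has at most $k$ parts, so the only way $\lambda$ can fail to be in $P_{k,n}$ is for some part to exceed $n-k$; since parts are weakly decreasing, this forces $\lambda_1 > n-k$. Writing $\lambda = (\lambda_1, \lambda_2, \ldots, \lambda_k)$ (padding by zeros if necessary), Proposition \ref{prop.jacobi-trudi.Sh}\textbf{(a)} gives
\[
s_\lambda = \det\left((h_{\lambda_u - u + v})_{1 \leq u \leq k,\ 1 \leq v \leq k}\right).
\]
Applying the Laplace expansion (\ref{eq.det.lap-exp}) along the first row ($u = 1$) yields
\[
s_\lambda = \sum_{v=1}^{k} h_{\lambda_1 - 1 + v} \cdot C_v,
\]
where $C_v$ denotes the $(1,v)$-th cofactor of that matrix.

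Now I would attack each summand separately. For each $v \in \{1, 2, \ldots, k\}$, the subscript satisfies $\lambda_1 - 1 + v \geq \lambda_1 > n-k$, so Lemma \ref{lem.I.hi-red} produces a symmetric polynomial $\widetilde{h}_v$ of degree $< \lambda_1 - 1 + v$ with $h_{\lambda_1 - 1 + v} \equiv \widetilde{h}_v \operatorname{mod} I$. On the other hand, Lemma \ref{lem.I.cofactor1} (applied with $i = 1$, $j = v$) tells us that $C_v$ is a homogeneous symmetric polynomial of degree $|\lambda| - (\lambda_1 - 1 + v)$. Multiplying, $\widetilde{h}_v \cdot C_v$ is a symmetric polynomial of degree strictly less than
\[
(\lambda_1 - 1 + v) + \bigl(|\lambda| - (\lambda_1 - 1 + v)\bigr) = |\lambda|.
\]

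Summing over $v$ and using that $I$ is an ideal of $\mathcal{S}$, we obtain
\[
s_\lambda \equiv \sum_{v=1}^{k} \widetilde{h}_v \cdot C_v \operatorname{mod} I,
\]
and the right-hand side is a symmetric polynomial of degree $< |\lambda|$, as desired. There is no substantive obstacle here: once one recognizes that the first row is the correct row to expand along, everything else is bookkeeping. The two "main effort" points are (i) justifying $\lambda_1 > n-k$ from the hypothesis, which is immediate, and (ii) matching the degree bound from Lemma \ref{lem.I.hi-red} with the cofactor degree from Lemma \ref{lem.I.cofactor1} so that they telescope to $|\lambda|$.
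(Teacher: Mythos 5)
Your proposal is correct and follows essentially the same route as the paper's own proof: extract $\lambda_1 > n-k$ from $\lambda \in P_k \setminus P_{k,n}$, apply the Jacobi--Trudi identity with Laplace expansion along the first row, reduce each $h_{\lambda_1-1+j}$ via Lemma \ref{lem.I.hi-red}, and control the cofactor degrees via Lemma \ref{lem.I.cofactor1} so the degrees telescope to $<\left\vert \lambda\right\vert$. No gaps.
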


\begin{proof}
[Proof of Lemma \ref{lem.I.sl-red} (sketched).]Write the partition $\lambda$
as $\lambda=\left(  \lambda_{1},\lambda_{2},\ldots,\lambda_{k}\right)  $.
(This can be done, since $\lambda\in P_{k}$.) Note that $k>0$ (since
otherwise, $\lambda\in P_{k}$ would lead to $\lambda=\varnothing\in P_{k,n}$,
which would contradict $\lambda\notin P_{k,n}$).

From $\lambda\in P_{k}$ and $\lambda\notin P_{k,n}$, we conclude that not all
parts of the partition $\lambda$ are $\leq n-k$. Thus, the first entry
$\lambda_{1}$ of $\lambda$ is $>n-k$ (since $\lambda_{1}\geq\lambda_{2}%
\geq\lambda_{3}\geq\cdots$). But $\lambda=\left(  \lambda_{1},\lambda
_{2},\ldots,\lambda_{k}\right)  $. Thus, Proposition
\ref{prop.jacobi-trudi.Sh} \textbf{(a)} yields%
\begin{equation}
s_{\lambda}=\det\left(  \left(  h_{\lambda_{u}-u+v}\right)  _{1\leq u\leq
k,\ 1\leq v\leq k}\right)  =\sum_{j=1}^{k}h_{\lambda_{1}-1+j}\cdot C_{j},
\label{pf.lem.I.sl-red.JT}%
\end{equation}
where $C_{j}$ denotes the $\left(  1,j\right)  $-th cofactor of the $k\times
k$-matrix $\left(  h_{\lambda_{u}-u+v}\right)  _{1\leq u\leq k,\ 1\leq v\leq
k}$. (Here, the last equality sign follows from (\ref{eq.det.lap-exp}),
applied to $\ell=k$ and $R=\mathcal{S}$ and $A=\left(  h_{\lambda_{u}%
-u+v}\right)  _{1\leq u\leq k,\ 1\leq v\leq k}$ and $a_{u,v}=h_{\lambda
_{u}-u+v}$ and $i=1$.)

For each $j\in\left\{  1,2,\ldots,k\right\}  $, we have $\lambda_{1}%
-1+j\geq\lambda_{1}-1+1=\lambda_{1}>n-k$ and therefore%
\begin{align}
&  h_{\lambda_{1}-1+j}\nonumber\\
&  \equiv\left(  \text{some symmetric polynomial of degree }<\lambda
_{1}-1+j\right)  \operatorname{mod}I \label{pf.lem.I.sl-red.deg1}%
\end{align}
(by Lemma \ref{lem.I.hi-red}, applied to $i=\lambda_{1}-1+j$).

For each $j\in\left\{  1,2,\ldots,k\right\}  $, the polynomial $C_{j}$ is the
$\left(  1,j\right)  $-th cofactor of the matrix $\left(  h_{\lambda_{u}%
-u+v}\right)  _{1\leq u\leq k,\ 1\leq v\leq k}$ (by its definition), and thus
is a homogeneous symmetric polynomial of degree $\left\vert \lambda\right\vert
-\left(  \lambda_{1}-1+j\right)  $ (by Lemma \ref{lem.I.cofactor1}, applied to
$\ell=k$ and $i=1$). Hence,%
\begin{equation}
C_{j}=\left(  \text{some symmetric polynomial of degree }\leq\left\vert
\lambda\right\vert -\left(  \lambda_{1}-1+j\right)  \right)
\label{pf.lem.I.sl-red.deg}%
\end{equation}
for each $j\in\left\{  1,2,\ldots,k\right\}  $.

Therefore, (\ref{pf.lem.I.sl-red.JT}) becomes%
\begin{align*}
s_{\lambda}  &  =\sum_{j=1}^{k}\underbrace{h_{\lambda_{1}-1+j}}%
_{\substack{\equiv\left(  \text{some symmetric polynomial of degree }%
<\lambda_{1}-1+j\right)  \operatorname{mod}I\\\text{(by
(\ref{pf.lem.I.sl-red.deg1}))}}}\\
&  \ \ \ \ \ \ \ \ \ \ \cdot\underbrace{C_{j}}_{\substack{=\left(  \text{some
symmetric polynomial of degree }\left\vert \lambda\right\vert -\left(
\lambda_{1}-1+j\right)  \right)  \\\text{(by (\ref{pf.lem.I.sl-red.deg}))}}}\\
&  \equiv\sum_{j=1}^{k}\left(  \text{some symmetric polynomial of degree
}<\lambda_{1}-1+j\right) \\
&  \ \ \ \ \ \ \ \ \ \ \cdot\left(  \text{some symmetric polynomial of degree
}\left\vert \lambda\right\vert -\left(  \lambda_{1}-1+j\right)  \right) \\
&  =\left(  \text{some symmetric polynomial of degree }<\left\vert
\lambda\right\vert \right)  \operatorname{mod}I.
\end{align*}
This proves Lemma \ref{lem.I.sl-red}.
\end{proof}

Recall Definition \ref{def.Pk}.

\begin{lemma}
\label{lem.I.sm-as-smaller}Let $N\in\mathbb{N}$. Let $f\in\mathcal{S}$ be a
symmetric polynomial of degree $<N$. Then, there exists a family $\left(
c_{\kappa}\right)  _{\kappa\in P_{k};\ \left\vert \kappa\right\vert <N}$ of
elements of $\mathbf{k}$ such that $f=\sum_{\substack{\kappa\in P_{k}%
;\\\left\vert \kappa\right\vert <N}}c_{\kappa}s_{\kappa}$.
\end{lemma}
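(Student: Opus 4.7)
The plan is to reduce this to the standard fact that the Schur polynomials $(s_\lambda)_{\lambda \in P_k}$ form a $\mathbf{k}$-basis of the ring $\mathcal{S}$, and to combine this with the observation that each $s_\lambda$ is homogeneous of degree $|\lambda|$, so that degree control on $f$ immediately translates into size control on the indexing partitions $\kappa$.

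First I would invoke the well-known basis theorem: the family $(s_\lambda)_{\lambda \in P_k}$ is a basis of the $\mathbf{k}$-module $\mathcal{S}$ (this appears, for instance, in \cite[Chapter 2]{GriRei18}; one way to prove it is via the Jacobi--Trudi identity of Proposition \ref{prop.jacobi-trudi.Sh} combined with the fact that $(h_1, h_2, \ldots, h_k)$ is an algebraically independent generating set of $\mathcal{S}$, but here we may just cite it). Consequently, there is a unique family $(c_\kappa)_{\kappa \in P_k} \in \mathbf{k}^{P_k}$ (with only finitely many nonzero entries) such that
\[
f = \sum_{\kappa \in P_k} c_\kappa s_\kappa.
\]

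Next I would use homogeneity. Each Schur polynomial $s_\kappa$ is homogeneous of degree $|\kappa|$, so the displayed expansion is automatically compatible with the grading of $\mathcal{S}$: for each $d \in \mathbb{N}$, the degree-$d$ homogeneous component of $f$ equals $\sum_{\kappa \in P_k;\ |\kappa| = d} c_\kappa s_\kappa$. Since the $(s_\kappa)_{\kappa \in P_k}$ are $\mathbf{k}$-linearly independent, it follows that whenever the degree-$d$ component of $f$ vanishes, we must have $c_\kappa = 0$ for all $\kappa \in P_k$ with $|\kappa| = d$.

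Finally I would apply the hypothesis $\deg f < N$. This hypothesis says exactly that the degree-$d$ homogeneous component of $f$ vanishes for all $d \geq N$. By the previous paragraph, this forces $c_\kappa = 0$ for every $\kappa \in P_k$ with $|\kappa| \geq N$. Discarding these vanishing terms from the expansion of $f$ gives $f = \sum_{\kappa \in P_k;\ |\kappa| < N} c_\kappa s_\kappa$, which is the desired conclusion. There is no real obstacle here; the only nontrivial ingredient is the cited basis theorem, and everything else is bookkeeping with the grading.
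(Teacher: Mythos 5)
Your proof is correct and follows essentially the same route as the paper: both rest on the fact that $\left(s_{\lambda}\right)_{\lambda\in P_{k}}$ is a basis of $\mathcal{S}$ compatible with the grading (each $s_{\lambda}$ being homogeneous of degree $\left\vert \lambda\right\vert$). The only cosmetic difference is that the paper first splits $f$ into its homogeneous components and expands each in the degree-$d$ part of the basis, whereas you expand $f$ in the full basis and then use linear independence to discard the coefficients in degrees $\geq N$; both are sound.
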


\begin{proof}
[Proof of Lemma \ref{lem.I.sm-as-smaller}.]For each $d\in\mathbb{N}$, we let
$\mathcal{S}_{\deg=d}$ be the $d$-th graded part of the graded $\mathbf{k}%
$-module $\mathcal{S}$. This is the $\mathbf{k}$-submodule of $\mathcal{S}$
consisting of all homogeneous elements of $\mathcal{S}$ of degree $d$
(including the zero vector $0$, which is homogeneous of every degree).

Recall that the family $\left(  s_{\lambda}\right)  _{\lambda\in P_{k}}$ is a
graded basis of the graded $\mathbf{k}$-module $\mathcal{S}$. In other words,
for each $d\in\mathbb{N}$, the family $\left(  s_{\lambda}\right)
_{\lambda\in P_{k};\ \left\vert \lambda\right\vert =d}$ is a basis of the
$\mathbf{k}$-submodule $\mathcal{S}_{\deg=d}$ of $\mathcal{S}$. Hence, for
each $d\in\mathbb{N}$, we have%
\begin{align}
\mathcal{S}_{\deg=d}  &  =\left(  \text{the }\mathbf{k}\text{-linear span of
the family }\left(  s_{\lambda}\right)  _{\lambda\in P_{k};\ \left\vert
\lambda\right\vert =d}\right) \nonumber\\
&  =\sum_{\substack{\lambda\in P_{k};\\\left\vert \lambda\right\vert
=d}}\mathbf{k}s_{\lambda}. \label{pf.lem.I.sm-as-smaller.2}%
\end{align}

The polynomial $f$ has degree $<N$. Hence, we can write $f$ in the form
$f=\sum_{d=0}^{N-1}f_{d}$ for some $f_{0},f_{1},\ldots,f_{N-1}\in\mathcal{P}$,
where each $f_{d}$ is a homogeneous polynomial of degree $d$. Consider these
$f_{0},f_{1},\ldots,f_{N-1}$. These $N$ polynomials $f_{0},f_{1}%
,\ldots,f_{N-1}$ are the first $N$ homogeneous components of $f$, and thus are
symmetric (since $f$ is symmetric); in other words, $f_{0},f_{1}%
,\ldots,f_{N-1}$ are elements of $\mathcal{S}$. Thus, for each $d\in\left\{
0,1,\ldots,N-1\right\}  $, the polynomial $f_{d}$ is an element of
$\mathcal{S}$ and is homogeneous of degree $d$ (as we already know). In other
words, for each $d\in\left\{  0,1,\ldots,N-1\right\}  $, we have%
\begin{equation}
f_{d}\in\mathcal{S}_{\deg=d}. \label{pf.lem.I.sm-as-smaller.4}%
\end{equation}
Now,
\[
f=\sum_{d=0}^{N-1}\underbrace{f_{d}}_{\substack{\in\mathcal{S}_{\deg
=d}\\\text{(by (\ref{pf.lem.I.sm-as-smaller.4}))}}}\in\sum_{d=0}%
^{N-1}\underbrace{\mathcal{S}_{\deg=d}}_{\substack{=\sum_{\substack{\lambda\in
P_{k};\\\left\vert \lambda\right\vert =d}}\mathbf{k}s_{\lambda}\\\text{(by
(\ref{pf.lem.I.sm-as-smaller.2}))}}}=\underbrace{\sum_{d=0}^{N-1}%
\sum_{\substack{\lambda\in P_{k};\\\left\vert \lambda\right\vert =d}}}%
_{=\sum_{\substack{\lambda\in P_{k};\\\left\vert \lambda\right\vert <N}%
}}\mathbf{k}s_{\lambda}=\sum_{\substack{\lambda\in P_{k};\\\left\vert
\lambda\right\vert <N}}\mathbf{k}s_{\lambda}=\sum_{\substack{\kappa\in
P_{k};\\\left\vert \kappa\right\vert <N}}\mathbf{k}s_{\kappa}%
\]
(here, we have renamed the summation index $\lambda$ as $\kappa$ in the sum).
In other words, there exists a family $\left(  c_{\kappa}\right)  _{\kappa\in
P_{k};\ \left\vert \kappa\right\vert <N}$ of elements of $\mathbf{k}$ such
that $f=\sum_{\substack{\kappa\in P_{k};\\\left\vert \kappa\right\vert
<N}}c_{\kappa}s_{\kappa}$. This proves Lemma \ref{lem.I.sm-as-smaller}.
\end{proof}

\begin{lemma}
\label{lem.I.sl-red2}For each $\mu\in P_{k}$, the element $\overline{s_{\mu}%
}\in\mathcal{S}/I$ belongs to the $\mathbf{k}$-submodule of $\mathcal{S}/I$
spanned by the family $\left(  \overline{s_{\lambda}}\right)  _{\lambda\in
P_{k,n}}$.
\end{lemma}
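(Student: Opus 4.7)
The plan is to proceed by strong induction on the size $\left\vert \mu\right\vert $ of the partition $\mu\in P_{k}$. The base case $\left\vert \mu\right\vert =0$ forces $\mu=\varnothing\in P_{k,n}$, so $\overline{s_{\mu}}=\overline{s_{\varnothing}}$ is itself one of the spanning family members and there is nothing to prove.

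For the induction step, fix $\mu\in P_{k}$ and assume the claim holds for every $\kappa\in P_{k}$ with $\left\vert \kappa\right\vert <\left\vert \mu\right\vert $. If $\mu\in P_{k,n}$, then again $\overline{s_{\mu}}$ itself lies in the claimed span and we are done. Otherwise $\mu\notin P_{k,n}$, and Lemma \ref{lem.I.sl-red} provides a symmetric polynomial $f\in\mathcal{S}$ of degree $<\left\vert \mu\right\vert $ such that $s_{\mu}\equiv f\operatorname{mod}I$, i.e.\ $\overline{s_{\mu}}=\overline{f}$ in $\mathcal{S}/I$.

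Now apply Lemma \ref{lem.I.sm-as-smaller} with $N=\left\vert \mu\right\vert $ to this $f$: there is a family $\left(c_{\kappa}\right)_{\kappa\in P_{k};\ \left\vert \kappa\right\vert <\left\vert \mu\right\vert }$ of scalars in $\mathbf{k}$ with
\[
f=\sum_{\substack{\kappa\in P_{k};\\\left\vert \kappa\right\vert <\left\vert \mu\right\vert }}c_{\kappa}s_{\kappa},\qquad\text{hence}\qquad\overline{s_{\mu}}=\overline{f}=\sum_{\substack{\kappa\in P_{k};\\\left\vert \kappa\right\vert <\left\vert \mu\right\vert }}c_{\kappa}\overline{s_{\kappa}}.
\]
Each index $\kappa$ in this sum satisfies $\kappa\in P_{k}$ and $\left\vert \kappa\right\vert <\left\vert \mu\right\vert $, so the induction hypothesis says that $\overline{s_{\kappa}}$ lies in the $\mathbf{k}$-submodule of $\mathcal{S}/I$ spanned by $\left(\overline{s_{\lambda}}\right)_{\lambda\in P_{k,n}}$. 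Since that submodule is closed under $\mathbf{k}$-linear combinations, $\overline{s_{\mu}}$ belongs to it as well, completing the induction.

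No step here is particularly delicate: the real work was done in Lemmas \ref{lem.I.sl-red} and \ref{lem.I.sm-as-smaller}, and this lemma simply bundles them through a degree induction. The only point worth being careful about is that the symmetric polynomial $f$ produced by Lemma \ref{lem.I.sl-red} is a priori only a polynomial (not indexed by $P_{k}$ or $P_{k,n}$), which is exactly why we need Lemma \ref{lem.I.sm-as-smaller} to re-expand it into Schur polynomials indexed by $P_{k}$ before invoking the inductive hypothesis.
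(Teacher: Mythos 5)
Your proof is correct and follows essentially the same route as the paper's: strong induction on $\left\vert \mu\right\vert$, reducing the case $\mu\notin P_{k,n}$ via Lemma \ref{lem.I.sl-red} to a symmetric polynomial of lower degree, re-expanding it through Lemma \ref{lem.I.sm-as-smaller}, and applying the induction hypothesis. The separate base case is harmless but unnecessary, since the strong induction hypothesis already covers it vacuously.
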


\begin{proof}
[Proof of Lemma \ref{lem.I.sl-red2}.]Let $M$ be the $\mathbf{k}$-submodule of
$\mathcal{S}/I$ spanned by the family $\left(  \overline{s_{\lambda}}\right)
_{\lambda\in P_{k,n}}$. We thus must prove that $\overline{s_{\mu}}\in M$ for
each $\mu\in P_{k}$.

We shall prove this by strong induction on $\left\vert \mu\right\vert $. Thus,
we fix some $N\in\mathbb{N}$, and we assume (as induction hypothesis) that
\begin{equation}
\overline{s_{\kappa}}\in M\ \ \ \ \ \ \ \ \ \ \text{for each }\kappa\in
P_{k}\text{ satisfying }\left\vert \kappa\right\vert <N.
\label{pf.lem.I.sl-red2.IH}%
\end{equation}
Now, let $\mu\in P_{k}$ be such that $\left\vert \mu\right\vert =N$. We then
must show that $\overline{s_{\mu}}\in M$.

If $\mu\in P_{k,n}$, then this is obvious (since $\overline{s_{\mu}}$ then
belongs to the family that spans $M$). Thus, for the rest of this proof, we
WLOG assume that $\mu\notin P_{k,n}$. Hence, Lemma \ref{lem.I.sl-red} (applied
to $\lambda=\mu$) yields%
\[
s_{\mu}\equiv\left(  \text{some symmetric polynomial of degree }<\left\vert
\mu\right\vert \right)  \operatorname{mod}I.
\]
In other words, there exists some symmetric polynomial $f\in\mathcal{S}$ of
degree $<\left\vert \mu\right\vert $ such that $s_{\mu}\equiv
f\operatorname{mod}I$. Consider this $f$.

The polynomial $f$ is a symmetric polynomial of degree $<\left\vert
\mu\right\vert $. In other words, $f$ is a symmetric polynomial of degree $<N$
(since $\left\vert \mu\right\vert =N$). Hence, Lemma \ref{lem.I.sm-as-smaller}
shows that there exists a family $\left(  c_{\kappa}\right)  _{\kappa\in
P_{k};\ \left\vert \kappa\right\vert <N}$ of elements of $\mathbf{k}$ such
that $f=\sum_{\substack{\kappa\in P_{k};\\\left\vert \kappa\right\vert
<N}}c_{\kappa}s_{\kappa}$. Consider this family. From $f=\sum
_{\substack{\kappa\in P_{k};\\\left\vert \kappa\right\vert <N}}c_{\kappa
}s_{\kappa}$, we obtain%
\[
\overline{f}=\overline{\sum_{\substack{\kappa\in P_{k};\\\left\vert
\kappa\right\vert <N}}c_{\kappa}s_{\kappa}}=\sum_{\substack{\kappa\in
P_{k};\\\left\vert \kappa\right\vert <N}}c_{\kappa}\underbrace{\overline
{s_{\kappa}}}_{\substack{\in M\\\text{(by (\ref{pf.lem.I.sl-red2.IH}))}}%
}\in\sum_{\substack{\kappa\in P_{k};\\\left\vert \kappa\right\vert
<N}}c_{\kappa}M\subseteq M\ \ \ \ \ \ \ \ \ \ \left(  \text{since }M\text{ is
a }\mathbf{k}\text{-module}\right)  .
\]
But from $s_{\mu}\equiv f\operatorname{mod}I$, we obtain $\overline{s_{\mu}%
}=\overline{f}\in M$. This completes our induction step. Thus, we have proven
by strong induction that $\overline{s_{\mu}}\in M$ for each $\mu\in P_{k}$.
This proves Lemma \ref{lem.I.sl-red2}.
\end{proof}

\begin{proof}
[Proof of Theorem \ref{thm.S/J} (sketched).]Proposition \ref{prop.artin}
yields that $\left(  x^{\alpha}\right)  _{\alpha\in\mathbb{N}^{k};\ \alpha
_{i}<i\text{ for each }i}$ is a spanning set of the $\mathcal{S}$-module
$\mathcal{P}$.

Recall Definition \ref{def.Pk}. It is well-known that $\left(  s_{\lambda
}\right)  _{\lambda\in P_{k}}$ is a basis of the $\mathbf{k}$-module
$\mathcal{S}$. Hence, $\left(  \overline{s_{\lambda}}\right)  _{\lambda\in
P_{k}}$ is a spanning set of the $\mathbf{k}$-module $\mathcal{S}/I$. Thus,
$\left(  \overline{s_{\lambda}}\right)  _{\lambda\in P_{k,n}}$ is also a
spanning set of the $\mathbf{k}$-module $\mathcal{S}/I$ (because Lemma
\ref{lem.I.sl-red2} shows that every element of the first spanning set belongs
to the span of the second). It remains to prove that this spanning set is also
a basis.

In order to do so, we consider the family $\left(  \overline{s_{\lambda
}x^{\alpha}}\right)  _{\lambda\in P_{k,n};\ \alpha\in\mathbb{N}^{k}%
;\ \alpha_{i}<i\text{ for each }i}$ in the $\mathbf{k}$-module $\mathcal{P}%
/J$. This family spans $\mathcal{P}/J$ (by Lemma \ref{lem.basis-over-basis}),
because the family $\left(  \overline{s_{\lambda}}\right)  _{\lambda\in
P_{k,n}}$ spans $\mathcal{S}/I$ whereas the family $\left(  x^{\alpha}\right)
_{\alpha\in\mathbb{N}^{k};\ \alpha_{i}<i\text{ for each }i}$ spans
$\mathcal{P}$ over $\mathcal{S}$ (and because $I\mathcal{P}=J$). Moreover,
this family $\left(  \overline{s_{\lambda}x^{\alpha}}\right)  _{\lambda\in
P_{k,n};\ \alpha\in\mathbb{N}^{k};\ \alpha_{i}<i\text{ for each }i}$ has size
\begin{align*}
\underbrace{\left\vert P_{k,n}\right\vert }_{=\dbinom{n}{k}}\cdot
\underbrace{\left\vert \left\{  \alpha\in\mathbb{N}^{k}\ \mid\ \alpha
_{i}<i\text{ for each }i\right\}  \right\vert }_{=k!}  &  =\dbinom{n}{k}\cdot
k!\\
&  =n\left(  n-1\right)  \cdots\left(  n-k+1\right)  ,
\end{align*}
which is exactly the size of the basis $\left(  \overline{x^{\alpha}}\right)
_{\alpha\in\mathbb{N}^{k};\ \alpha_{i}<n-k+i\text{ for each }i}$ of the
$\mathbf{k}$-module $\mathcal{P}/J$ (this is a basis by Theorem \ref{thm.P/J}%
). Thus, this family $\left(  \overline{s_{\lambda}x^{\alpha}}\right)
_{\lambda\in P_{k,n};\ \alpha\in\mathbb{N}^{k};\ \alpha_{i}<i\text{ for each
}i}$ must be a basis of the $\mathbf{k}$-module $\mathcal{P}/J$ (by Lemma
\ref{lem.freemod-span-basis}), and hence is $\mathbf{k}$-linearly independent.
Thus, its subfamily $\left(  \overline{s_{\lambda}}\right)  _{\lambda\in
P_{k,n}}$ is also $\mathbf{k}$-linearly independent.

The canonical $\mathbf{k}$-linear map $\mathcal{S}/I\rightarrow\mathcal{P}/J$
(obtained as a quotient of the inclusion $\mathcal{S}\rightarrow\mathcal{P}$)
is injective (because it sends the spanning set $\left(  \overline{s_{\lambda
}}\right)  _{\lambda\in P_{k,n}}$ of $\mathcal{S}/I$ to the $\mathbf{k}%
$-linearly independent family $\left(  \overline{s_{\lambda}}\right)
_{\lambda\in P_{k,n}}$ in $\mathcal{P}/J$). Hence, the $\mathbf{k}$-linear
independency of the family $\left(  \overline{s_{\lambda}}\right)
_{\lambda\in P_{k,n}}$ in $\mathcal{P}/J$ yields the $\mathbf{k}$-linear
independency of the family $\left(  \overline{s_{\lambda}}\right)
_{\lambda\in P_{k,n}}$ in $\mathcal{S}/I$. Thus, the family $\left(
\overline{s_{\lambda}}\right)  _{\lambda\in P_{k,n}}$ in $\mathcal{S}/I$ is a
basis of $\mathcal{S}/I$ (since it is $\mathbf{k}$-linearly independent and
spans $\mathcal{S}/I$). This proves Theorem \ref{thm.S/J}.
\end{proof}

\section{\label{sect.symmetry}Symmetry of the multiplicative structure
constants}

\begin{convention}
\label{conv.symmetry-conv}For the rest of Section \ref{sect.symmetry},
\textbf{we assume that }$a_{1},a_{2},\ldots,a_{k}$ \textbf{belong to
}$\mathbf{k}$\textbf{.}

If $m\in\mathcal{S}$, then the notation $\overline{m}$ shall always mean the
projection of $m\in\mathcal{S}$ onto the quotient $\mathcal{S}/I$ (and not the
projection of $m\in\mathcal{P}$ onto the quotient $\mathcal{P}/J$).
\end{convention}

\begin{definition}
\label{def.omega-and-complement}\textbf{(a)} Let $\omega$ be the partition
$\left(  n-k,n-k,\ldots,n-k\right)  $ with $k$ entries equal to $n-k$. (This
is the largest partition in $P_{k,n}$.)

\textbf{(b)} Let $I$ be the ideal of $\mathcal{S}$ generated by the $k$
differences (\ref{eq.gensJ}). For each $\mu\in P_{k,n}$, let
$\operatorname*{coeff}\nolimits_{\mu}:\mathcal{S}/I\rightarrow\mathbf{k}$ be
the $\mathbf{k}$-linear map that sends $\overline{s_{\mu}}$ to $1$ while
sending all other $\overline{s_{\lambda}}$ (with $\lambda\in P_{k,n}$) to $0$.
(This is well-defined by Theorem \ref{thm.S/J}. Actually, $\left(
\operatorname*{coeff}\nolimits_{\mu}\right)  _{\mu\in P_{k,n}}$ is the dual
basis to the basis $\left(  \overline{s_{\lambda}}\right)  _{\lambda\in
P_{k,n}}$ of $\mathcal{S}/I$.)

\textbf{(c)} If $\lambda$ is any partition and if $p$ is a positive integer,
then $\lambda_{p}$ shall always denote the $p$-th entry of $\lambda$. Thus,
$\lambda=\left(  \lambda_{1},\lambda_{2},\lambda_{3},\ldots\right)  $ for
every partition $\lambda$.

\textbf{(d)} For every partition $\nu=\left(  \nu_{1},\nu_{2},\ldots,\nu
_{k}\right)  \in P_{k,n}$, we let $\nu^{\vee}$ denote the partition $\left(
n-k-\nu_{k},n-k-\nu_{k-1},\ldots,n-k-\nu_{1}\right)  \in P_{k,n}$. This
partition $\nu^{\vee}$ is called the \textit{complement} of $\nu$.
\end{definition}

We can now make a more substantial claim:

\begin{theorem}
\label{thm.coeffw}Each $\nu\in P_{k,n}$ and $f\in\mathcal{S}/I$ satisfy
$\operatorname*{coeff}\nolimits_{\omega}\left(  \overline{s_{\nu}}f\right)
=\operatorname*{coeff}\nolimits_{\nu^{\vee}}\left(  f\right)  $.
\end{theorem}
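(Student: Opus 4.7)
The plan is to reduce the claim via $\mathbf{k}$-bilinearity to a combinatorial identity, and then to analyze it through the degree filtration on $\mathcal{S}/I$ combined with classical Littlewood--Richardson considerations.

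By $\mathbf{k}$-bilinearity of both sides in $f$, together with the basis property of $\left(\overline{s_\mu}\right)_{\mu\in P_{k,n}}$ (Theorem~\ref{thm.S/J}), it suffices to establish the identity when $f=\overline{s_\mu}$ for some $\mu\in P_{k,n}$. The goal then becomes
\begin{equation*}
\operatorname*{coeff}\nolimits_\omega(\overline{s_\nu s_\mu}) = \delta_{\mu, \nu^\vee} \qquad \text{for all } \mu, \nu \in P_{k,n}.
\end{equation*}
The main tool will be the degree filtration $F^d$ on $\mathcal{S}/I$, defined as the image of the degree-$\leq d$ part of $\mathcal{S}$. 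Because each $a_i\in\mathbf{k}$ (Convention~\ref{conv.symmetry-conv}) has degree $0 < n-k+i$, the top-degree component of each generator $h_{n-k+i}-a_i$ of $I$ is $h_{n-k+i}$ itself. Tracking degrees through the proof of Lemma~\ref{lem.I.sl-red2}, I would show that $F^d$ coincides with the $\mathbf{k}$-span of $\{\overline{s_\kappa} : \kappa\in P_{k,n},\ |\kappa|\leq d\}$, since each reduction step coming from Lemma~\ref{lem.I.sl-red} strictly decreases degree. In particular, $\operatorname*{coeff}\nolimits_\omega$ vanishes on $F^{|\omega|-1}$ where $|\omega|=k(n-k)$.

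Writing $N:=|\nu|+|\mu|$, I would split into three cases. When $N<|\omega|$, both sides vanish: the left because $\overline{s_\nu s_\mu}\in F^{|\omega|-1}$, and the right because $|\mu|<|\nu^\vee|$ forces $\mu\neq\nu^\vee$. When $N=|\omega|$, I would expand $s_\nu s_\mu=\sum_\lambda c^\lambda_{\nu,\mu}s_\lambda$ by Littlewood--Richardson in $\mathcal{S}$; every $\lambda$ in the sum has $|\lambda|=k(n-k)$, so the only one in $P_{k,n}$ is $\omega$, while any other has $\lambda_1>n-k$ and hence falls into $F^{|\omega|-1}$ by Lemma~\ref{lem.I.sl-red}. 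Therefore $\operatorname*{coeff}\nolimits_\omega(\overline{s_\nu s_\mu})=c^\omega_{\nu,\mu}$, and classical Poincaré duality for Schur polynomials inside the $k\times(n-k)$ rectangle identifies this with $\delta_{\mu,\nu^\vee}$.

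The main obstacle will be the remaining case $N>|\omega|$, where one must show that $\operatorname*{coeff}\nolimits_\omega(\overline{s_\nu s_\mu})=0$. The filtration argument alone only drops $\overline{s_\nu s_\mu}$ into $F^{|\omega|}$, and the residual symbol in degree $|\omega|$ is precisely the coefficient in question. Small examples confirm that individual $\overline{s_\lambda}$ with $|\lambda|>|\omega|$ and $\lambda\notin P_{k,n}$ can have nonzero $\overline{s_\omega}$-part, so the needed vanishing must come from cancellations among the Littlewood--Richardson terms $c^\lambda_{\nu,\mu}$ after reduction modulo $I$. I would attack this by strong induction on $N$: using the Jacobi--Trudi expansion of $s_\mu$ together with the Pieri rule, or alternatively the rim hook algorithm advertised in the introduction, one rewrites $\overline{s_\nu s_\mu}$ as a combination of products of strictly smaller total size, to which the already-proven cases apply; the combinatorics of the rim-hook reduction should render the required cancellations explicit.
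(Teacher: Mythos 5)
Your reduction to $\operatorname*{coeff}\nolimits_{\omega}\left(\overline{s_{\nu}s_{\mu}}\right)=\delta_{\mu,\nu^{\vee}}$ is exactly the paper's Lemma \ref{lem.coeffw.prod2}, and your two easy cases are sound: the degree filtration claim is the paper's Lemma \ref{lem.sm-as-Pkn} (so $\operatorname*{coeff}\nolimits_{\omega}$ does kill everything of degree $<\left\vert \omega\right\vert$), and in the case $\left\vert \nu\right\vert+\left\vert \mu\right\vert=\left\vert \omega\right\vert$ your identification of the coefficient with $c_{\nu,\mu}^{\omega}$ and the duality $c_{\nu,\mu}^{\omega}=\delta_{\mu,\nu^{\vee}}$ (the paper's Corollary \ref{cor.LRcoeffs.omega}) work. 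But the case $\left\vert \nu\right\vert+\left\vert \mu\right\vert>\left\vert \omega\right\vert$ is the entire difficulty of the theorem, and your treatment of it is only a hope, not an argument. Moreover, the mechanism you propose is the wrong one: the vanishing there does \emph{not} come from cancellations among Littlewood--Richardson terms after reduction modulo $I$. It comes term by term, from two facts you have not proved: (i) every $\rho$ occurring in $s_{\nu}s_{\mu}$ with $c_{\nu,\mu}^{\rho}\neq0$ satisfies $\rho_{1}\leq\nu_{1}+\mu_{1}\leq2\left(n-k\right)$ (the paper's Corollary \ref{cor.LRcoeffs.dom0} / Corollary \ref{cor.LRcoeffs.reduced}); and (ii) for every partition $\rho\neq\omega$ with at most $k$ parts and $\rho_{1}\leq2\left(n-k\right)$ one has $\operatorname*{coeff}\nolimits_{\omega}\left(\overline{s_{\rho}}\right)=0$, \emph{even when} $\left\vert \rho\right\vert>\left\vert \omega\right\vert$ (the paper's Theorem \ref{thm.coeff-s.2}). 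Fact (ii) is a substantial theorem: your own example observation (that $\overline{s_{\lambda}}$ with $\left\vert \lambda\right\vert>\left\vert \omega\right\vert$ can have nonzero $\overline{s_{\omega}}$-coefficient) shows why a naive induction cannot close, since a single reduction step such as the rim hook formula produces terms like $a_{j}\overline{s_{\kappa}}$ in which $\kappa=\omega$ itself can occur with nonzero coefficient (already for $k=1$: $\overline{s_{\left(2n-1\right)}}=a_{1}\overline{s_{\omega}}$); so "apply the already-proven cases to smaller products" does not follow, and you would have to track precisely which partitions can appear and with what first row, which is what the paper spends Sections \ref{sect.symmetry}--\ref{sect.redh} doing.

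Concretely, the paper closes this gap in two ways, either of which you would need to reconstruct: the first proof replaces your degree filtration by the filtration $Q_{p}$ indexed by $\lambda_{k}$, introduces the dual map $\delta:\mathcal{Z}\rightarrow\mathcal{S}/I$, $\mathbf{s}_{\lambda}\mapsto\overline{s_{\lambda^{\vee}}}$, and proves via the Pieri rules and Bernstein-type identities (Lemmas \ref{lem.coeffw.0}--\ref{lem.coeffw.del3}) that $\delta\left(\left(\mathbf{s}_{\lambda}\right)^{\perp}\mathbf{f}\right)\equiv\overline{s_{\lambda}}\delta\left(\mathbf{f}\right)\operatorname{mod}Q_{n-k-1}$, from which the claim follows by applying $\operatorname*{coeff}\nolimits_{\omega}$; the second proof establishes Theorem \ref{thm.coeff-s.2} through the reduction formula for $h_{n+m}$ (Proposition \ref{prop.redh.1}) and the interplay of the submodules $L_{p}=H_{p}$, $C$, $R_{p}$ with $Q_{0}$ (Theorems \ref{thm.coeff-h.1} and \ref{thm.coeff-h.2}), and then combines it with the first-row bound (i). As it stands, your proposal proves the theorem only when $\left\vert \nu\right\vert+\left\vert \mu\right\vert\leq\left\vert \omega\right\vert$.
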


The proof of this theorem requires some preliminary work.

We first recall some basic notations from \cite[Chapter 2]{GriRei18}. If
$\lambda$ and $\mu$ are two partitions, then we say that $\mu\subseteq\lambda$
if and only if each positive integer $p$ satisfies $\mu_{p}\leq\lambda_{p}$. A
\textit{skew partition} means a pair $\left(  \lambda,\mu\right)  $ of two
partitions satisfying $\mu\subseteq\lambda$; such a pair is denoted by
$\lambda/\mu$. We refer to \cite[\S 2.7]{GriRei18} for the definition of a
\textit{vertical }$i$\textit{-strip} (where $i\in\mathbb{N}$).

Let $\Lambda$ be the ring of symmetric functions in infinitely many
indeterminates $x_{1},x_{2},x_{3},\ldots$ over $\mathbf{k}$. If $\mathbf{f}%
\in\Lambda$ is a symmetric function, then $\mathbf{f}\left(  x_{1}%
,x_{2},\ldots,x_{k}\right)  $ is a symmetric polynomial in $\mathcal{S}$; the
map%
\[
\Lambda\rightarrow\mathcal{S},\qquad\mathbf{f}\mapsto\mathbf{f}\left(
x_{1},x_{2},\ldots,x_{k}\right)
\]
is a surjective $\mathbf{k}$-algebra homomorphism. We shall use boldfaced
notations for symmetric functions in $\Lambda$ in order to distinguish them
from symmetric polynomials in $\mathcal{S}$. In particular:

\begin{itemize}
\item For any $i\in\mathbb{Z}$, we let $\mathbf{h}_{i}$ be the $i$-th complete
homogeneous symmetric function in $\Lambda$. (This is called $h_{i}$ in
\cite[Definition 2.2.1]{GriRei18}.)

\item For any $i\in\mathbb{Z}$, we let $\mathbf{e}_{i}$ be the $i$-th
elementary symmetric function in $\Lambda$. (This is called $e_{i}$ in
\cite[Definition 2.2.1]{GriRei18}.)

\item For any partition $\lambda$, we let $\mathbf{e}_{\lambda}$ be the
corresponding elementary symmetric function in $\Lambda$. (This is called
$e_{\lambda}$ in \cite[Definition 2.2.1]{GriRei18}.)

\item For any partition $\lambda$, we let $\mathbf{s}_{\lambda}$ be the
corresponding Schur function in $\Lambda$. (This is called $s_{\lambda}$ in
\cite[Definition 2.2.1]{GriRei18}.)

\item For any partitions $\lambda$ and $\mu$, we let $\mathbf{s}_{\lambda/\mu
}$ be the corresponding skew Schur function in $\Lambda$. (This is called
$s_{\lambda/\mu}$ in \cite[\S 2.3]{GriRei18}. Note that $\mathbf{s}%
_{\lambda/\mu}=0$ unless $\mu\subseteq\lambda$.)
\end{itemize}

Also, we shall use the \textit{skewing operators} as defined (e.g.) in
\cite[\S 2.8]{GriRei18}. We recall their main properties:

\begin{itemize}
\item For each $\mathbf{f}\in\Lambda$, the skewing operator $\mathbf{f}%
^{\perp}$ is a $\mathbf{k}$-linear map $\Lambda\rightarrow\Lambda$. It depends
$\mathbf{k}$-linearly on $\mathbf{f}$ (that is, we have $\left(
\alpha\mathbf{f}+\beta\mathbf{g}\right)  ^{\perp}=\alpha\mathbf{f}^{\perp
}+\beta\mathbf{g}^{\perp}$ for any $\alpha,\beta\in\mathbf{k}$ and
$\mathbf{f},\mathbf{g}\in\Lambda$).

\item For any partitions $\lambda$ and $\mu$, we have
\begin{equation}
\left(  \mathbf{s}_{\mu}\right)  ^{\perp}\left(  \mathbf{s}_{\lambda}\right)
=\mathbf{s}_{\lambda/\mu}. \label{eq.skewing.ss}%
\end{equation}
(This is \cite[(2.8.2)]{GriRei18}.)

\item For any $\mathbf{f},\mathbf{g}\in\Lambda$, we have
\begin{equation}
\left(  \mathbf{fg}\right)  ^{\perp}=\mathbf{g}^{\perp}\circ\mathbf{f}^{\perp
}. \label{eq.skewing.fg}%
\end{equation}
(This is \cite[Proposition 2.8.2(ii)]{GriRei18}, applied to $A=\Lambda$.)

\item We have $1^{\perp}=\operatorname*{id}$.
\end{itemize}

For each partition $\lambda$, let $\lambda^{t}$ denote the \textit{conjugate
partition} of $\lambda$; see \cite[Definition 2.2.8]{GriRei18} for its definition.

Recall the second Jacobi-Trudi identity (\cite[(2.4.17)]{GriRei18}):

\begin{proposition}
\label{prop.jt.e}Let $\lambda=\left(  \lambda_{1},\lambda_{2},\ldots
,\lambda_{\ell}\right)  $ and $\mu=\left(  \mu_{1},\mu_{2},\ldots,\mu_{\ell
}\right)  $ be two partitions. Then,%
\[
\mathbf{s}_{\lambda^{t}/\mu^{t}}=\det\left(  \left(  \mathbf{e}_{\lambda
_{i}-\mu_{j}-i+j}\right)  _{1\leq i\leq\ell,\ 1\leq j\leq\ell}\right)  .
\]

\end{proposition}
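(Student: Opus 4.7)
The plan is to deduce this from the first Jacobi-Trudi identity by invoking the classical involution $\omega$ on $\Lambda$. Recall that $\omega: \Lambda \to \Lambda$ is a $\mathbf{k}$-algebra automorphism sending $\mathbf{h}_i$ to $\mathbf{e}_i$ for every $i \in \mathbb{N}$, and satisfying $\omega(\mathbf{s}_\lambda) = \mathbf{s}_{\lambda^t}$ for every partition $\lambda$. The first step is to invoke the skew version of the first Jacobi-Trudi identity (i.e., (2.4.16) in \cite{GriRei18}, which generalizes Proposition \ref{prop.jacobi-trudi.Sh}):
\[
\mathbf{s}_{\lambda/\mu} = \det\left(\left(\mathbf{h}_{\lambda_i - \mu_j - i + j}\right)_{1 \le i \le \ell,\ 1 \le j \le \ell}\right).
\]
Here we interpret $\mathbf{h}_m = 0$ for $m < 0$ (and similarly for $\mathbf{e}_m$), so the statement makes sense for arbitrary integer entries.

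Next I would apply $\omega$ to both sides. Since $\omega$ is a $\mathbf{k}$-algebra homomorphism, it commutes with the Leibniz expansion of the determinant, so the right-hand side becomes
\[
\det\left(\left(\omega(\mathbf{h}_{\lambda_i - \mu_j - i + j})\right)_{1 \le i \le \ell,\ 1 \le j \le \ell}\right) = \det\left(\left(\mathbf{e}_{\lambda_i - \mu_j - i + j}\right)_{1 \le i \le \ell,\ 1 \le j \le \ell}\right),
\]
using $\omega(\mathbf{h}_m) = \mathbf{e}_m$ (this even holds vacuously for $m < 0$, and for $m = 0$ both sides equal $1$). For the left-hand side I would use the identity $\omega(\mathbf{s}_{\lambda/\mu}) = \mathbf{s}_{\lambda^t/\mu^t}$. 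Combining these two computations gives exactly the claimed formula.

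The one step that deserves a careful argument is $\omega(\mathbf{s}_{\lambda/\mu}) = \mathbf{s}_{\lambda^t/\mu^t}$. I would derive this from $\omega(\mathbf{s}_\nu) = \mathbf{s}_{\nu^t}$ together with the Littlewood-Richardson expansion $\mathbf{s}_{\lambda/\mu} = \sum_{\nu} c^{\lambda}_{\mu\nu}\, \mathbf{s}_\nu$ and the symmetry $c^{\lambda}_{\mu\nu} = c^{\lambda^t}_{\mu^t\nu^t}$ of Littlewood-Richardson coefficients (all of which appear in \cite{GriRei18}); alternatively one can prove it combinatorially by observing that conjugation of semistandard tableaux of shape $\lambda/\mu$ sets up a weight-reversing bijection between monomials appearing in $\mathbf{s}_{\lambda/\mu}$ and those appearing in $\mathbf{s}_{\lambda^t/\mu^t}$ after the standard $\omega$-substitution. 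This is really the only substantive obstacle; once it is in hand, the rest of the argument is a one-line application of $\omega$ to the skew first Jacobi-Trudi identity.
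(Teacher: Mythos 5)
The paper does not actually prove Proposition \ref{prop.jt.e}: it is simply quoted from \cite[(2.4.17)]{GriRei18}, in the same way that Proposition \ref{prop.jacobi-trudi.Sh} is quoted from \cite[(2.4.16)]{GriRei18}. Your argument is the standard derivation of the dual ($\mathbf{e}$-) Jacobi--Trudi identity from the $\mathbf{h}$-version via the involution $\omega$, and its skeleton is correct: $\omega$ is a $\mathbf{k}$-algebra homomorphism, so it may be applied entrywise through the Leibniz expansion of the determinant; $\omega\left(\mathbf{h}_{m}\right)=\mathbf{e}_{m}$ also holds (trivially) for $m\leq0$; and the skew first Jacobi--Trudi identity is applicable because $\lambda$ and $\mu$ are both written with $\ell$ entries. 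Two caveats are worth recording. First, you must make sure that the input $\omega\left(\mathbf{s}_{\lambda/\mu}\right)=\mathbf{s}_{\lambda^{t}/\mu^{t}}$ is obtained without invoking the dual Jacobi--Trudi identity itself, since in several standard treatments that identity is precisely how one proves $\omega\left(\mathbf{s}_{\nu}\right)=\mathbf{s}_{\nu^{t}}$; your first route (the Littlewood--Richardson expansion of $\mathbf{s}_{\lambda/\mu}$ together with the conjugation symmetry $c_{\mu,\nu}^{\lambda}=c_{\mu^{t},\nu^{t}}^{\lambda^{t}}$, both available in \cite{GriRei18} independently of (2.4.17)) does avoid this circularity, so the proof stands with those citations. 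Second, your proposed combinatorial alternative does not work as stated: transposing a column-strict tableau of shape $\lambda/\mu$ does not yield a column-strict tableau of shape $\lambda^{t}/\mu^{t}$ (the weak and strict inequalities trade places), and $\omega$ is not a substitution of variables, so there is no \textquotedblleft standard $\omega$-substitution\textquotedblright\ under which a \textquotedblleft weight-reversing bijection\textquotedblright\ of monomials would make sense; genuinely combinatorial proofs of $\omega\left(\mathbf{s}_{\lambda/\mu}\right)=\mathbf{s}_{\lambda^{t}/\mu^{t}}$ require substantially more work. Since that remark is offered only as an alternative, the main argument is unaffected.
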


\begin{corollary}
\label{cor.jt.el}Let $\lambda=\left(  \lambda_{1},\lambda_{2},\ldots
,\lambda_{\ell}\right)  $ be a partition. Then,%
\[
\mathbf{s}_{\lambda^{t}}=\det\left(  \left(  \mathbf{e}_{\lambda_{i}%
-i+j}\right)  _{1\leq i\leq\ell,\ 1\leq j\leq\ell}\right)  .
\]

\end{corollary}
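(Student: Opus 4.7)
The plan is to derive Corollary \ref{cor.jt.el} as the special case $\mu = \varnothing$ of Proposition \ref{prop.jt.e}. Concretely, I would take $\mu$ to be the $\ell$-tuple $(0, 0, \ldots, 0)$, viewed as the empty partition padded with trailing zeros so that it has the same length $\ell$ as $\lambda$. This is permissible since the hypotheses of Proposition \ref{prop.jt.e} only require that $\lambda$ and $\mu$ are partitions written as $\ell$-tuples; they do not require them to be nonzero.

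First I would observe that the conjugate of the empty partition is itself empty, so $\mu^t = \varnothing$, and therefore $\mathbf{s}_{\lambda^t / \mu^t} = \mathbf{s}_{\lambda^t / \varnothing} = \mathbf{s}_{\lambda^t}$ (using the convention that skewing by the empty partition does nothing, i.e., $\mathbf{s}_{\nu/\varnothing} = \mathbf{s}_\nu$ for any partition $\nu$). Next, I would simplify the matrix entries: each $\mu_j$ equals $0$, so
\[
\mathbf{e}_{\lambda_i - \mu_j - i + j} = \mathbf{e}_{\lambda_i - i + j}.
\]
Substituting these into the formula provided by Proposition \ref{prop.jt.e} yields exactly
\[
\mathbf{s}_{\lambda^t} = \det\left( \left( \mathbf{e}_{\lambda_i - i + j} \right)_{1 \le i \le \ell,\ 1 \le j \le \ell} \right),
\]
which is the claim of Corollary \ref{cor.jt.el}.

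There is essentially no obstacle here; the only thing to double-check is that one is allowed to apply Proposition \ref{prop.jt.e} with $\mu$ being the all-zero $\ell$-tuple (i.e., that zero entries in $\mu$ are permitted by the convention on partitions in \cite[Chapter 2]{GriRei18}, which they are by Definition \ref{def.partitions} \textbf{(a)}), and that $\mathbf{s}_{\nu/\varnothing} = \mathbf{s}_\nu$. Both are immediate from the definitions, so the proof is a one-line specialization of the second Jacobi-Trudi identity.
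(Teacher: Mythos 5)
Your proposal is correct and coincides with the paper's own proof: Corollary \ref{cor.jt.el} is obtained by applying Proposition \ref{prop.jt.e} to $\mu=\varnothing$ (padded with zeros to length $\ell$), using $\varnothing^{t}=\varnothing$ and $\mathbf{s}_{\lambda^{t}/\varnothing}=\mathbf{s}_{\lambda^{t}}$. Nothing further is needed.
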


\begin{proof}
[Proof of Corollary \ref{cor.jt.el}.]This follows from Proposition
\ref{prop.jt.e}, applied to $\mu=\varnothing$ (since $\varnothing
^{t}=\varnothing$ and thus $\mathbf{s}_{\lambda^{t}/\varnothing^{t}%
}=\mathbf{s}_{\lambda^{t}/\varnothing}=\mathbf{s}_{\lambda^{t}}$).
\end{proof}

We also recall one of the Pieri rules (\cite[(2.7.2)]{GriRei18}):

\begin{proposition}
\label{prop.pieri.e}Let $\lambda$ be a partition, and let $i\in\mathbb{N}$.
Then,%
\[
\mathbf{s}_{\lambda}\mathbf{e}_{i}=\sum_{\substack{\mu\text{ is a
partition;}\\\mu/\lambda\text{ is a vertical }i\text{-strip}}}\mathbf{s}_{\mu
}.
\]

\end{proposition}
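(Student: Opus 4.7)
The plan is to reduce this to the complete-homogeneous Pieri rule via the standard involution $\omega\colon\Lambda\to\Lambda$. Recall that the $\mathbf{h}$-Pieri rule states that, for any partition $\nu$ and any $i\in\mathbb{N}$,
\[
\mathbf{s}_{\nu}\mathbf{h}_{i}=\sum_{\substack{\kappa\text{ is a partition;}\\\kappa/\nu\text{ is a horizontal }i\text{-strip}}}\mathbf{s}_{\kappa}.
\]
I will assume this as the classical input (it can itself be proved by the Lindstr\"om--Gessel--Viennot lattice-path argument applied to the Jacobi--Trudi identity stated in Proposition~\ref{prop.jacobi-trudi.Sh}, or by a direct RSK/bumping argument on semistandard tableaux). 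The e-Pieri rule will then follow by applying $\omega$.

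First I would recall the three properties of $\omega$ that I need: $\omega$ is a $\mathbf{k}$-algebra endomorphism of $\Lambda$ (in fact an involution), it satisfies $\omega(\mathbf{h}_{i})=\mathbf{e}_{i}$ for every $i\in\mathbb{N}$, and it satisfies $\omega(\mathbf{s}_{\kappa})=\mathbf{s}_{\kappa^{t}}$ for every partition $\kappa$. All three are standard (for instance, \cite[Corollary 2.4.3 and Theorem 2.4.7]{GriRei18}). Next I would apply the $\mathbf{h}$-Pieri rule with $\nu=\lambda^{t}$:
\[
\mathbf{s}_{\lambda^{t}}\mathbf{h}_{i}=\sum_{\substack{\kappa\text{ is a partition;}\\\kappa/\lambda^{t}\text{ is a horizontal }i\text{-strip}}}\mathbf{s}_{\kappa}.
\]
Applying $\omega$ to both sides and using its multiplicativity gives
\[
\mathbf{s}_{\lambda}\mathbf{e}_{i}=\omega(\mathbf{s}_{\lambda^{t}})\,\omega(\mathbf{h}_{i})=\sum_{\substack{\kappa\text{ is a partition;}\\\kappa/\lambda^{t}\text{ is a horizontal }i\text{-strip}}}\mathbf{s}_{\kappa^{t}}.
\]

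The last step is to re-index by $\mu=\kappa^{t}$. Since conjugation is an involution on partitions, $\kappa\mapsto\kappa^{t}$ is a bijection, and the transposition of Young diagrams interchanges horizontal strips with vertical strips: $\kappa/\lambda^{t}$ is a horizontal $i$-strip if and only if $\kappa^{t}/\lambda=\mu/\lambda$ is a vertical $i$-strip (this is immediate from the column/row reformulation of strips, and is essentially \cite[Remark 2.7.3]{GriRei18}). Substituting, the right-hand side becomes $\sum_{\mu:\ \mu/\lambda\text{ a vertical }i\text{-strip}}\mathbf{s}_{\mu}$, which is what Proposition~\ref{prop.pieri.e} claims.

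The only real obstacle is not the algebraic manipulation but the verification of the strip-conjugation bijection and the three properties of $\omega$; both are utterly standard but need to be cited carefully. If one did not want to rely on $\omega$, the alternative plan would be to expand $\mathbf{s}_{\lambda^{t}}$ via Corollary~\ref{cor.jt.el} and multiply by $\mathbf{e}_{i}$, then recognize the resulting alternating sum as the Jacobi--Trudi determinant of the desired right-hand side; this is more computational and less conceptual, and I would only fall back on it if the $\omega$-route were unavailable.
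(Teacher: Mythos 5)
Your argument is correct: deducing the $\mathbf{e}$-Pieri rule from the $\mathbf{h}$-Pieri rule by applying the involution $\omega$ (using $\omega(\mathbf{h}_{i})=\mathbf{e}_{i}$, $\omega(\mathbf{s}_{\kappa})=\mathbf{s}_{\kappa^{t}}$, and the fact that conjugation exchanges horizontal and vertical strips, so that $\kappa/\lambda^{t}$ is a horizontal $i$-strip if and only if $\kappa^{t}/\lambda$ is a vertical $i$-strip) is the standard derivation, and every step you invoke is properly justified; the only implicit point worth making explicit is that $\omega(\mathbf{s}_{\lambda^{t}})=\mathbf{s}_{(\lambda^{t})^{t}}=\mathbf{s}_{\lambda}$. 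The paper itself gives no proof of Proposition \ref{prop.pieri.e} -- it simply cites \cite[(2.7.2)]{GriRei18} -- so your write-up supplies a complete argument where the paper relies on a reference; the trade-off is only that your proof still rests on the $\mathbf{h}$-Pieri rule as classical input, which is acceptable since that result is at least as standard as the one being proved.
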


From this, we can easily derive the following:

\begin{corollary}
\label{cor.pieri.eskew}Let $\lambda$ be a partition, and let $i\in\mathbb{N}$.
Then,%
\[
\left(  \mathbf{e}_{i}\right)  ^{\perp}\mathbf{s}_{\lambda}=\sum
_{\substack{\mu\text{ is a partition;}\\\lambda/\mu\text{ is a vertical
}i\text{-strip}}}\mathbf{s}_{\mu}.
\]

\end{corollary}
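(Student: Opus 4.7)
The plan is to derive this corollary from the Pieri rule of Proposition \ref{prop.pieri.e} via the adjointness property of the skewing operator. Recall that the Hall inner product $\langle \cdot, \cdot \rangle$ on $\Lambda$ (see \cite[\S 2.5]{GriRei18}) is the symmetric $\mathbf{k}$-bilinear form for which the Schur functions $(\mathbf{s}_\nu)_\nu$ form an orthonormal basis, and that the skewing operator $\mathbf{f}^\perp$ is characterized (in \cite[\S 2.8]{GriRei18}) by the adjointness identity $\langle \mathbf{f}^\perp \mathbf{g}, \mathbf{h}\rangle = \langle \mathbf{g}, \mathbf{f}\mathbf{h}\rangle$ for all $\mathbf{g}, \mathbf{h} \in \Lambda$. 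These two facts are exactly the machinery I would use.

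First I would fix an arbitrary partition $\mu$ and compute the coefficient of $\mathbf{s}_\mu$ in the Schur expansion of $(\mathbf{e}_i)^\perp \mathbf{s}_\lambda$. By orthonormality of the Schur basis, this coefficient equals $\langle (\mathbf{e}_i)^\perp \mathbf{s}_\lambda, \mathbf{s}_\mu\rangle$; the adjointness identity, applied with $\mathbf{f} = \mathbf{e}_i$, $\mathbf{g} = \mathbf{s}_\lambda$, $\mathbf{h} = \mathbf{s}_\mu$, rewrites this as $\langle \mathbf{s}_\lambda, \mathbf{e}_i \mathbf{s}_\mu\rangle$.

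Next I would apply Proposition \ref{prop.pieri.e} with $\mu$ in place of $\lambda$ to expand $\mathbf{s}_\mu \mathbf{e}_i = \sum_\nu \mathbf{s}_\nu$, summed over partitions $\nu$ such that $\nu/\mu$ is a vertical $i$-strip. Taking the inner product with $\mathbf{s}_\lambda$ and again invoking orthonormality selects exactly the term $\nu = \lambda$ when $\lambda/\mu$ is a vertical $i$-strip, and otherwise yields $0$. Thus the coefficient of $\mathbf{s}_\mu$ in $(\mathbf{e}_i)^\perp \mathbf{s}_\lambda$ is $1$ when $\lambda/\mu$ is a vertical $i$-strip and $0$ otherwise. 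Summing over all $\mu$ (only finitely many $\mu \subseteq \lambda$ can contribute, so convergence is trivial) yields precisely the claimed identity.

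There is no substantive obstacle: the entire argument is a single invocation of adjointness combined with the already-proven Pieri rule. The only ingredient not made fully explicit in the excerpt is the adjointness characterization of $\mathbf{f}^\perp$, but this is the defining property of the skewing operator in \cite[\S 2.8]{GriRei18} and hence freely available. A stylistic alternative that avoids the Hall inner product would be to combine (\ref{eq.skewing.ss}) with the identity $\mathbf{e}_i = \mathbf{s}_{(1^i)}$ to get $(\mathbf{e}_i)^\perp \mathbf{s}_\lambda = \mathbf{s}_{\lambda/(1^i)}$ and then quote a dual Pieri expansion of the skew Schur function, but this just repackages the same content.
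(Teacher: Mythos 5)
Your proof is correct, and it matches what the paper intends: the paper gives no explicit argument, merely noting that the corollary is "easily derived" from Proposition \ref{prop.pieri.e} (and citing \cite[(2.8.4)]{GriRei18}), and the standard derivation is exactly your adjointness computation $\langle (\mathbf{e}_i)^\perp \mathbf{s}_\lambda, \mathbf{s}_\mu\rangle = \langle \mathbf{s}_\lambda, \mathbf{e}_i\mathbf{s}_\mu\rangle$ combined with the Pieri rule. Nothing further is needed.
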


Corollary \ref{cor.pieri.eskew} is also proven in \cite[(2.8.4)]{GriRei18}.

The next proposition is the claim of \cite[Exercise 2.9.1(b)]{GriRei18}:

\begin{proposition}
\label{prop.bernstein.1}Let $\lambda$ be a partition. Let $m\in\mathbb{Z}$ be
such that $m\geq\lambda_{1}$. Then,%
\[
\sum_{i\in\mathbb{N}}\left(  -1\right)  ^{i}\mathbf{h}_{m+i} \left(
\mathbf{e}_{i}\right)  ^{\perp}\mathbf{s}_{\lambda}=\mathbf{s}_{\left(
m,\lambda_{1},\lambda_{2},\lambda_{3},\ldots\right)  }.
\]

\end{proposition}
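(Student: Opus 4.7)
Let $\ell$ denote the number of parts of $\lambda$, so that $\lambda = (\lambda_1, \lambda_2, \ldots, \lambda_\ell)$. The hypothesis $m \geq \lambda_1$ guarantees that $\mu := (m, \lambda_1, \lambda_2, \ldots, \lambda_\ell)$ is a genuine partition (of length $\leq \ell+1$). My strategy is to apply the first Jacobi-Trudi identity to $\mathbf{s}_\mu$, expand the resulting $(\ell+1) \times (\ell+1)$ determinant along its top row, and identify each cofactor with $(\mathbf{e}_i)^\perp \mathbf{s}_\lambda$ via the skew version of the same identity.

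Concretely, the first Jacobi-Trudi identity in $\Lambda$ (the analogue of Proposition \ref{prop.jacobi-trudi.Sh}, or equivalently (2.4.16) of \cite{GriRei18} applied with empty inner shape) gives
\[
\mathbf{s}_\mu \;=\; \det\bigl((\mathbf{h}_{\mu_u - u + v})_{1 \le u,\, v \le \ell+1}\bigr).
\]
Using the Laplace expansion (\ref{eq.det.lap-exp}) along the first row, and observing that the first-row entries are $\mathbf{h}_{m+(j-1)}$ with cofactor sign $(-1)^{1+j}$, I obtain
\[
\mathbf{s}_\mu \;=\; \sum_{i=0}^{\ell} (-1)^i\, \mathbf{h}_{m+i}\, M_i,
\]
where $M_i$ denotes the minor obtained by deleting row $1$ and column $i+1$. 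After reindexing the remaining rows by $u' = u - 1$, the $(u',v)$-entry of the matrix whose determinant is $M_i$ equals $\mathbf{h}_{\lambda_{u'} - u' + (v-1)}$, with $u' \in \{1, \ldots, \ell\}$ and $v \in \{1, \ldots, \ell+1\} \setminus \{i+1\}$.

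To finish, I would identify $M_i$ with $(\mathbf{e}_i)^\perp \mathbf{s}_\lambda$. Since $\mathbf{e}_i = \mathbf{s}_{(1^i)}$, equation (\ref{eq.skewing.ss}) gives $(\mathbf{e}_i)^\perp \mathbf{s}_\lambda = \mathbf{s}_{\lambda/(1^i)}$. Applying the skew first Jacobi-Trudi identity again to $\lambda/(1^i)$ (for $0 \le i \le \ell$) produces an $\ell \times \ell$ determinant whose $(u,v)$-entry is $\mathbf{h}_{\lambda_u - (1^i)_v - u + v}$, which simplifies to $\mathbf{h}_{\lambda_u - u + (v-1)}$ when $v \le i$ and to $\mathbf{h}_{\lambda_u - u + v}$ when $v > i$. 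Writing $w = v - 1$ for $v \le i$ and $w = v$ for $v > i$, the resulting column labels $w$ run through $\{0, 1, \ldots, \ell\} \setminus \{i\}$ in strictly increasing order, matching precisely the column labels of $M_i$; hence the two matrices are identical and $M_i = (\mathbf{e}_i)^\perp \mathbf{s}_\lambda$. For $i > \ell$ one has $(\mathbf{e}_i)^\perp \mathbf{s}_\lambda = \mathbf{s}_{\lambda/(1^i)} = 0$ since $(1^i) \not\subseteq \lambda$, so these higher terms contribute nothing, and extending the summation to $i \in \mathbb{N}$ yields the claimed identity.

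The main obstacle is the column-matching in the last paragraph: one must verify that the reindexing from the skew Jacobi-Trudi matrix for $\lambda/(1^i)$ to the minor $M_i$ preserves the natural left-to-right column order, so that no additional permutation sign sneaks in. Once this bookkeeping is confirmed, the rest is a clean combination of Jacobi-Trudi, Laplace expansion, and (\ref{eq.skewing.ss}).
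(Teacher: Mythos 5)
Your proof is correct, and it is worth noting that it is genuinely self-contained where the paper is not: the paper does not prove Proposition \ref{prop.bernstein.1} at all, but merely cites it as \cite[Exercise 2.9.1(b)]{GriRei18}. Your route -- first Jacobi--Trudi for $\mu=(m,\lambda_1,\ldots,\lambda_\ell)$ (a legitimate partition precisely because $m\geq\lambda_1$), Laplace expansion (\ref{eq.det.lap-exp}) along the top row, and identification of each minor with $\mathbf{s}_{\lambda/(1^i)}=(\mathbf{e}_i)^{\perp}\mathbf{s}_{\lambda}$ via (\ref{eq.skewing.ss}) and the skew Jacobi--Trudi identity -- is the classical determinantal proof of the Bernstein creation-operator property, and it meshes well with the toolkit the paper already has in place (Proposition \ref{prop.jt.h}, the cofactor expansion, and (\ref{eq.skewing.ss})). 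The bookkeeping you single out as the main obstacle does check out: the Laplace sign attached to the $(1,i+1)$ entry $\mathbf{h}_{m+i}$ is $(-1)^{1+(i+1)}=(-1)^i$; in the minor $M_i$ the surviving columns $v\in\{1,\ldots,\ell+1\}\setminus\{i+1\}$ keep their original increasing order, and after the shift $w=v-1$ the $(u',w)$ entry is $\mathbf{h}_{\lambda_{u'}-u'+w}$ with $w$ running through $\{0,1,\ldots,\ell\}\setminus\{i\}$ increasingly; in the skew Jacobi--Trudi matrix for $\lambda/(1^i)$ the exponent shift $w=v-(1^i)_v$ is likewise strictly increasing in $v$ and traverses exactly the same set $\{0,1,\ldots,\ell\}\setminus\{i\}$. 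Hence the two $\ell\times\ell$ matrices agree column-by-column with no hidden column permutation and no sign, giving $M_i=\mathbf{s}_{\lambda/(1^i)}$. Finally, your treatment of the tail is right: for $i>\ell$ we have $(1^i)\not\subseteq\lambda$, so $(\mathbf{e}_i)^{\perp}\mathbf{s}_{\lambda}=\mathbf{s}_{\lambda/(1^i)}=0$ and the sum over $i\in\mathbb{N}$ truncates to $0\leq i\leq\ell$, while the edge cases $i=0$ (where $M_0$ is the Jacobi--Trudi determinant of $\lambda$ itself) and $\ell=0$ are consistent with the general computation.
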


We shall use this to derive the following corollary:

\begin{corollary}
\label{cor.bernstein.2}Let $\lambda$ be a partition with at most $k$ parts.
Let $\overline{\lambda}$ be the partition $\left(  \lambda_{2},\lambda
_{3},\lambda_{4},\ldots\right)  $. Then,%
\[
\mathbf{s}_{\lambda}=\sum_{i=0}^{k-1}\left(  -1\right)  ^{i}\mathbf{h}%
_{\lambda_{1}+i}\sum_{\substack{\mu\text{ is a partition;}\\\overline{\lambda
}/\mu\text{ is a vertical }i\text{-strip}}}\mathbf{s}_{\mu}.
\]

\end{corollary}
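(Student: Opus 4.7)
The plan is to derive Corollary \ref{cor.bernstein.2} by applying Proposition \ref{prop.bernstein.1} to the partition $\overline{\lambda}$ rather than to $\lambda$, and then rewriting the skewing operators via Corollary \ref{cor.pieri.eskew}. First I would verify that $\overline{\lambda} = (\lambda_2, \lambda_3, \ldots)$ is indeed a partition (immediate from $\lambda$ being weakly decreasing) having at most $k-1$ parts (since $\lambda$ has at most $k$ parts), and that the hypothesis $m \geq \overline{\lambda}_1$ of Proposition \ref{prop.bernstein.1} holds for $m = \lambda_1$: indeed $\overline{\lambda}_1 = \lambda_2 \leq \lambda_1 = m$.

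Then Proposition \ref{prop.bernstein.1} (applied to $\overline{\lambda}$ and $m = \lambda_1$) yields
\[
\sum_{i \in \mathbb{N}} (-1)^i \mathbf{h}_{\lambda_1 + i}\, (\mathbf{e}_i)^\perp \mathbf{s}_{\overline{\lambda}} = \mathbf{s}_{(\lambda_1, \overline{\lambda}_1, \overline{\lambda}_2, \ldots)} = \mathbf{s}_{(\lambda_1, \lambda_2, \lambda_3, \ldots)} = \mathbf{s}_\lambda.
\]
Substituting the expansion of $(\mathbf{e}_i)^\perp \mathbf{s}_{\overline{\lambda}}$ supplied by Corollary \ref{cor.pieri.eskew} (applied to $\overline{\lambda}$ in place of $\lambda$) gives
\[
\mathbf{s}_\lambda = \sum_{i \in \mathbb{N}} (-1)^i \mathbf{h}_{\lambda_1 + i} \sum_{\substack{\mu \text{ is a partition;} \\ \overline{\lambda}/\mu \text{ is a vertical } i\text{-strip}}} \mathbf{s}_\mu.
\]

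It remains to truncate the outer sum at $i = k-1$. This is the one place where the at-most-$k$-parts hypothesis on $\lambda$ gets used: a vertical $i$-strip contains at most one cell per row, so the existence of a partition $\mu$ with $\overline{\lambda}/\mu$ a vertical $i$-strip forces $i \leq \ell(\overline{\lambda}) \leq k-1$. Hence every term with $i \geq k$ has an empty inner sum and contributes zero, yielding the claimed identity. I do not expect a real obstacle here; the only subtle point is confirming that the indexing shift between $\lambda$ and $\overline{\lambda}$ in Proposition \ref{prop.bernstein.1} matches correctly, which is just a matter of reading off the first entry.
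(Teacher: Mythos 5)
Your proposal is correct and follows essentially the same route as the paper's proof: apply Proposition \ref{prop.bernstein.1} to $\overline{\lambda}$ with $m=\lambda_{1}$, rewrite $\left(  \mathbf{e}_{i}\right)  ^{\perp}\mathbf{s}_{\overline{\lambda}}$ via Corollary \ref{cor.pieri.eskew}, and truncate the sum at $i=k-1$ because $\overline{\lambda}$ has at most $k-1$ parts. The truncation argument you give (a vertical $i$-strip occupies $i$ distinct rows of $\overline{\lambda}$) is exactly the paper's justification, so nothing is missing.
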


\begin{proof}
[Proof of Corollary \ref{cor.bernstein.2}.]The partition $\overline{\lambda}$
is obtained from $\lambda$ by removing the first part. Hence, this partition
$\overline{\lambda}$ has at most $k-1$ parts (since $\lambda$ has at most $k$
parts). Thus, if $i\in\mathbb{N}$ satisfies $i\geq k$, then%
\begin{equation}
\text{there exists no partition }\mu\text{ such that }\overline{\lambda}%
/\mu\text{ is a vertical }i\text{-strip.} \label{pf.cor.bernstein.2.bound}%
\end{equation}

We have $\overline{\lambda}=\left(  \lambda_{2},\lambda_{3},\lambda_{4}%
,\ldots\right)  $, so that $\left(  \lambda_{2},\lambda_{3},\lambda_{4}%
,\ldots\right)  =\overline{\lambda}=\left(  \overline{\lambda}_{1}%
,\overline{\lambda}_{2},\overline{\lambda}_{3},\ldots\right)  $. Hence,%
\[
\left(  \lambda_{1},\lambda_{2},\lambda_{3},\lambda_{4},\ldots\right)
=\left(  \lambda_{1},\overline{\lambda}_{1},\overline{\lambda}_{2}%
,\overline{\lambda}_{3},\ldots\right)  .
\]
Also, clearly, $\lambda_{1}\geq\overline{\lambda}_{1}$ (since $\lambda_{1}%
\geq\lambda_{2}=\overline{\lambda}_{1}$). Hence, Proposition
\ref{prop.bernstein.1} (applied to $\overline{\lambda}$ and $\lambda_{1}$
instead of $\lambda$ and $m$) yields%
\[
\sum_{i\in\mathbb{N}}\left(  -1\right)  ^{i}\mathbf{h}_{\lambda_{1}+i}\left(
\mathbf{e}_{i}\right)  ^{\perp}\mathbf{s}_{\overline{\lambda}}=\mathbf{s}%
_{\left(  \lambda_{1},\overline{\lambda}_{1},\overline{\lambda}_{2}%
,\overline{\lambda}_{3},\ldots\right)  }=\mathbf{s}_{\lambda}%
\]
(since $\left(  \lambda_{1},\overline{\lambda}_{1},\overline{\lambda}%
_{2},\overline{\lambda}_{3},\ldots\right)  =\left(  \lambda_{1},\lambda
_{2},\lambda_{3},\lambda_{4},\ldots\right)  =\lambda$). Therefore,%
\begin{align*}
\mathbf{s}_{\lambda}  &  =\sum_{i\in\mathbb{N}}\left(  -1\right)
^{i}\mathbf{h}_{\lambda_{1}+i}\underbrace{\left(  \mathbf{e}_{i}\right)
^{\perp}\mathbf{s}_{\overline{\lambda}}}_{\substack{=\sum_{\substack{\mu\text{
is a partition;}\\\overline{\lambda}/\mu\text{ is a vertical }i\text{-strip}%
}}\mathbf{s}_{\mu}\\\text{(by Corollary \ref{cor.pieri.eskew})}}}=\sum
_{i\in\mathbb{N}}\left(  -1\right)  ^{i}\mathbf{h}_{\lambda_{1}+i}%
\sum_{\substack{\mu\text{ is a partition;}\\\overline{\lambda}/\mu\text{ is a
vertical }i\text{-strip}}}\mathbf{s}_{\mu}\\
&  =\sum_{i=0}^{k-1}\left(  -1\right)  ^{i}\mathbf{h}_{\lambda_{1}+i}%
\sum_{\substack{\mu\text{ is a partition;}\\\overline{\lambda}/\mu\text{ is a
vertical }i\text{-strip}}}\mathbf{s}_{\mu}+\sum_{i\geq k}\left(  -1\right)
^{i}\mathbf{h}_{\lambda_{1}+i}\underbrace{\sum_{\substack{\mu\text{ is a
partition;}\\\overline{\lambda}/\mu\text{ is a vertical }i\text{-strip}%
}}\mathbf{s}_{\mu}}_{\substack{=0\\\text{(by (\ref{pf.cor.bernstein.2.bound}%
))}}}\\
&  =\sum_{i=0}^{k-1}\left(  -1\right)  ^{i}\mathbf{h}_{\lambda_{1}+i}%
\sum_{\substack{\mu\text{ is a partition;}\\\overline{\lambda}/\mu\text{ is a
vertical }i\text{-strip}}}\mathbf{s}_{\mu}.
\end{align*}
This proves Corollary \ref{cor.bernstein.2}.
\end{proof}

\begin{convention}
We WLOG assume that $k>0$ for the rest of Section \ref{sect.symmetry} (since
otherwise, Theorem \ref{thm.coeffw} is trivial).
\end{convention}

Next, we define a filtration on the $\mathbf{k}$-module $\mathcal{S}/I$:

\begin{definition}
\label{def.Qp}For each $p\in\mathbb{Z}$, we let $Q_{p}$ denote the
$\mathbf{k}$-submodule of $\mathcal{S}/I$ spanned by the $\overline
{s_{\lambda}}$ with $\lambda\in P_{k,n}$ satisfying $\lambda_{k}\leq p$.
\end{definition}

Thus, $0=Q_{-1}\subseteq Q_{0}\subseteq Q_{1}\subseteq Q_{2}\subseteq\cdots$.
Theorem \ref{thm.S/J} shows that the $\mathbf{k}$-module $\mathcal{S}/I$ is
free with basis $\left(  \overline{s_{\lambda}}\right)  _{\lambda\in P_{k,n}}%
$; hence, $\mathcal{S}/I=Q_{n-k}$ (since each $\lambda\in P_{k,n}$ satisfies
$\lambda_{k}\leq n-k$).

Note that $\left(  Q_{0},Q_{1},Q_{2},\ldots\right)  $ is a filtration of the
$\mathbf{k}$-module $\mathcal{S}/I$, but not (in general) of the $\mathbf{k}%
$-algebra $\mathcal{S}/I$.

\begin{lemma}
\label{lem.coeffw.coeffQ}We have $\operatorname*{coeff}\nolimits_{\omega
}\left(  Q_{n-k-1}\right)  =0$.
\end{lemma}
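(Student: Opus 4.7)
The plan is that this lemma should follow essentially by unpacking definitions. By Definition \ref{def.Qp}, the $\mathbf{k}$-module $Q_{n-k-1}$ is spanned by the family $\left(\overline{s_\lambda}\right)_{\lambda \in P_{k,n};\ \lambda_k \leq n-k-1}$. Since $\operatorname{coeff}_\omega$ is $\mathbf{k}$-linear, it suffices to verify that $\operatorname{coeff}_\omega\left(\overline{s_\lambda}\right) = 0$ for each $\lambda \in P_{k,n}$ satisfying $\lambda_k \leq n-k-1$.

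Fix such a $\lambda$. I would observe that $\omega = (n-k, n-k, \ldots, n-k)$ satisfies $\omega_k = n-k$, whereas $\lambda_k \leq n-k-1 < n-k$. Hence $\lambda \neq \omega$. By the definition of $\operatorname{coeff}_\omega$ in Definition \ref{def.omega-and-complement} \textbf{(b)}, the map $\operatorname{coeff}_\omega$ sends $\overline{s_\omega}$ to $1$ and all other basis elements $\overline{s_\mu}$ (with $\mu \in P_{k,n}$, $\mu \neq \omega$) to $0$. Applying this with $\mu = \lambda$, we obtain $\operatorname{coeff}_\omega\left(\overline{s_\lambda}\right) = 0$, as desired.

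The only subtlety is confirming that $\operatorname{coeff}_\omega$ is well-defined, which has already been taken care of in Definition \ref{def.omega-and-complement} \textbf{(b)} (it is well-defined precisely because $\left(\overline{s_\lambda}\right)_{\lambda \in P_{k,n}}$ is a $\mathbf{k}$-module basis of $\mathcal{S}/I$ by Theorem \ref{thm.S/J}). There is no real obstacle here; the lemma is a definitional observation whose only purpose is to set up the more substantive arguments toward Theorem \ref{thm.coeffw}.
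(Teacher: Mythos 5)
Your proof is correct and follows exactly the paper's own argument: reduce by $\mathbf{k}$-linearity to the spanning elements $\overline{s_{\lambda}}$ with $\lambda_{k}\leq n-k-1$, observe $\lambda_{k}<n-k=\omega_{k}$ hence $\lambda\neq\omega$, and conclude from the definition of $\operatorname*{coeff}\nolimits_{\omega}$. Nothing is missing.
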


\begin{proof}
[Proof of Lemma \ref{lem.coeffw.coeffQ}.]The map $\operatorname*{coeff}%
\nolimits_{\omega}$ is $\mathbf{k}$-linear; thus, it suffices to prove that
$\operatorname*{coeff}\nolimits_{\omega}\left(  \overline{s_{\lambda}}\right)
=0$ for each $\lambda\in P_{k,n}$ satisfying $\lambda_{k}\leq n-k-1$ (because
the $\mathbf{k}$-module $Q_{n-k-1}$ is spanned by the $\overline{s_{\lambda}}$
with $\lambda\in P_{k,n}$ satisfying $\lambda_{k}\leq n-k-1$). So let us fix
some $\lambda\in P_{k,n}$ satisfying $\lambda_{k}\leq n-k-1$. We must then
prove that $\operatorname*{coeff}\nolimits_{\omega}\left(  \overline
{s_{\lambda}}\right)  =0$.

We have $\lambda_{k}\leq n-k-1<n-k=\omega_{k}$. Thus, $\lambda_{k}\neq
\omega_{k}$, so that $\lambda\neq\omega$.

The definition of the map $\operatorname*{coeff}\nolimits_{\omega}$ yields
$\operatorname*{coeff}\nolimits_{\omega}\left(  \overline{s_{\lambda}}\right)
=%
\begin{cases}
1, & \text{if }\lambda=\omega;\\
0, & \text{if }\lambda\neq\omega
\end{cases}
=0$ (since $\lambda\neq\omega$). This completes our proof of Lemma
\ref{lem.coeffw.coeffQ}.
\end{proof}

\begin{lemma}
\label{lem.coeffw.0}Let $\lambda$ be a partition with at most $k$ parts.
Assume that $\lambda_{1}=n-k+1$. Let $\overline{\lambda}$ be the partition
$\left(  \lambda_{2},\lambda_{3},\lambda_{4},\ldots\right)  $. Then,
\[
\overline{s_{\lambda}}=\sum_{i=0}^{k-1}\left(  -1\right)  ^{i}a_{1+i}%
\sum_{\substack{\mu\text{ is a partition;}\\\overline{\lambda}/\mu\text{ is a
vertical }i\text{-strip}}}\overline{s_{\mu}}.
\]

\end{lemma}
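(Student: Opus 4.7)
The plan is to obtain Lemma \ref{lem.coeffw.0} as a direct consequence of Corollary \ref{cor.bernstein.2} by specializing to $k$ variables and then reducing modulo $I$.

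First, I would apply Corollary \ref{cor.bernstein.2} to the partition $\lambda$ itself (which has at most $k$ parts by hypothesis). This gives the identity
\[
\mathbf{s}_{\lambda}=\sum_{i=0}^{k-1}\left(-1\right)^{i}\mathbf{h}_{\lambda_{1}+i}\sum_{\substack{\mu\text{ is a partition;}\\\overline{\lambda}/\mu\text{ is a vertical }i\text{-strip}}}\mathbf{s}_{\mu}
\]
in the ring $\Lambda$ of symmetric functions. Next, I apply the surjective $\mathbf{k}$-algebra homomorphism $\Lambda\rightarrow\mathcal{S},\ \mathbf{f}\mapsto\mathbf{f}(x_{1},x_{2},\ldots,x_{k})$ to both sides. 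Since this homomorphism sends $\mathbf{h}_{m}\mapsto h_{m}$ and $\mathbf{s}_{\nu}\mapsto s_{\nu}$, it produces the corresponding identity
\[
s_{\lambda}=\sum_{i=0}^{k-1}\left(-1\right)^{i}h_{\lambda_{1}+i}\sum_{\substack{\mu\text{ is a partition;}\\\overline{\lambda}/\mu\text{ is a vertical }i\text{-strip}}}s_{\mu}
\]
in $\mathcal{S}$.

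Now I would use the hypothesis $\lambda_{1}=n-k+1$. For each $i\in\{0,1,\ldots,k-1\}$, we have $\lambda_{1}+i=n-k+(1+i)$ with $1+i\in\{1,2,\ldots,k\}$, so (\ref{eq.h=amodI}) (applied to $j=1+i$) yields $h_{\lambda_{1}+i}=h_{n-k+(1+i)}\equiv a_{1+i}\operatorname{mod}I$. Passing to the quotient $\mathcal{S}/I$ and using the $\mathbf{k}$-linearity of the projection map (together with the fact that, by Convention \ref{conv.symmetry-conv}, each $a_{1+i}$ lies in $\mathbf{k}$ and thus commutes trivially with everything), I obtain
\[
\overline{s_{\lambda}}=\sum_{i=0}^{k-1}\left(-1\right)^{i}a_{1+i}\sum_{\substack{\mu\text{ is a partition;}\\\overline{\lambda}/\mu\text{ is a vertical }i\text{-strip}}}\overline{s_{\mu}},
\]
which is exactly the claim.

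There is no genuine obstacle: Corollary \ref{cor.bernstein.2} does all the combinatorial heavy lifting, and the remainder is just specialization followed by a substitution of the defining relations of $I$. The only minor points to record are that the homomorphism $\Lambda\rightarrow\mathcal{S}$ respects the two families of basis elements appearing in the identity, and that the condition $\lambda_{1}=n-k+1$ lines up the indices $\lambda_{1}+i$ exactly with the range $\{n-k+1,\ldots,n\}$ on which the generators of $I$ give us a substitution rule.
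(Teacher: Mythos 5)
Your proposal is correct and follows essentially the same route as the paper's own proof: apply Corollary \ref{cor.bernstein.2} to $\lambda$, evaluate at the $k$ variables $x_{1},x_{2},\ldots,x_{k}$, replace $h_{\lambda_{1}+i}=h_{n-k+(1+i)}$ by $a_{1+i}$ using (\ref{eq.h=amodI}), and project onto $\mathcal{S}/I$. No gaps to report.
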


\begin{proof}
[Proof of Lemma \ref{lem.coeffw.0}.]Corollary \ref{cor.bernstein.2} yields%
\[
\mathbf{s}_{\lambda}=\sum_{i=0}^{k-1}\left(  -1\right)  ^{i}\mathbf{h}%
_{\lambda_{1}+i}\sum_{\substack{\mu\text{ is a partition;}\\\overline{\lambda
}/\mu\text{ is a vertical }i\text{-strip}}}\mathbf{s}_{\mu}.
\]
This is an identity in $\Lambda$. Evaluating both of its sides at the $k$
variables $x_{1},x_{2},\ldots,x_{k}$, we obtain%
\begin{align*}
s_{\lambda}  &  =\sum_{i=0}^{k-1}\left(  -1\right)  ^{i}\underbrace{h_{\lambda
_{1}+i}}_{\substack{=h_{n-k+1+i}\\\text{(since }\lambda_{1}=n-k+1\text{)}%
}}\sum_{\substack{\mu\text{ is a partition;}\\\overline{\lambda}/\mu\text{ is
a vertical }i\text{-strip}}}s_{\mu}\\
&  =\sum_{i=0}^{k-1}\left(  -1\right)  ^{i}\underbrace{h_{n-k+1+i}%
}_{\substack{\equiv a_{1+i}\operatorname{mod}I\\\text{(by (\ref{eq.h=amodI}%
))}}}\sum_{\substack{\mu\text{ is a partition;}\\\overline{\lambda}/\mu\text{
is a vertical }i\text{-strip}}}s_{\mu}\\
&  \equiv\sum_{i=0}^{k-1}\left(  -1\right)  ^{i}a_{1+i}\sum_{\substack{\mu
\text{ is a partition;}\\\overline{\lambda}/\mu\text{ is a vertical
}i\text{-strip}}}s_{\mu}\operatorname{mod}I.
\end{align*}
Projecting both sides of this equality from $\mathcal{S}$ to $\mathcal{S}/I$,
we obtain%
\begin{equation}
\overline{s_{\lambda}}=\overline{\sum_{i=0}^{k-1}\left(  -1\right)
^{i}a_{1+i}\sum_{\substack{\mu\text{ is a partition;}\\\overline{\lambda}%
/\mu\text{ is a vertical }i\text{-strip}}}s_{\mu}}=\sum_{i=0}^{k-1}\left(
-1\right)  ^{i}a_{1+i}\sum_{\substack{\mu\text{ is a partition;}%
\\\overline{\lambda}/\mu\text{ is a vertical }i\text{-strip}}}\overline
{s_{\mu}}.\nonumber
\end{equation}
This proves Lemma \ref{lem.coeffw.0}.
\end{proof}

\begin{lemma}
\label{lem.coeffw.1}Let $\lambda$ be a partition with at most $k$ parts.
Assume that $\lambda_{1}=n-k+1$. Then, $\overline{s_{\lambda}}\in Q_{0}$.
\end{lemma}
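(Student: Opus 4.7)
The plan is to invoke Lemma \ref{lem.coeffw.0} to expand $\overline{s_\lambda}$ as a $\mathbf{k}$-linear combination of classes $\overline{s_\mu}$, and then verify that each such $\overline{s_\mu}$ lies in $Q_0$. Since some of the $\mu$'s that arise may themselves have $\mu_1 = n-k+1$ (and hence fail to lie in $P_{k,n}$), I would handle the argument by strong induction on $|\lambda|$.

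The key observation is that if $\overline{\lambda}/\mu$ is a vertical $i$-strip, then $\mu \subseteq \overline{\lambda}$, so $\mu$ has at most $k-1$ parts (in particular $\mu_k = 0$), and moreover $\mu_1 \leq \overline{\lambda}_1 = \lambda_2 \leq \lambda_1 = n-k+1$. I would split the summands into two cases. If $\mu_1 \leq n-k$, then $\mu \in P_{k,n}$ with $\mu_k = 0$, so $\overline{s_\mu} \in Q_0$ directly from the definition of $Q_0$. If instead $\mu_1 = n-k+1$, then $\mu$ is again a partition with at most $k-1 \leq k$ parts and $\mu_1 = n-k+1$, but of strictly smaller size $|\mu| = |\overline{\lambda}| - i \leq |\lambda| - \lambda_1 < |\lambda|$ (using $\lambda_1 = n-k+1 \geq 1$); so the induction hypothesis applies and gives $\overline{s_\mu} \in Q_0$. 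Since $Q_0$ is a $\mathbf{k}$-submodule and the coefficients $(-1)^i a_{1+i}$ lie in $\mathbf{k}$, the conclusion $\overline{s_\lambda} \in Q_0$ follows.

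The base case is $|\lambda| = n-k+1$, which forces $\lambda = (n-k+1)$ (a single part). Then $\overline{\lambda} = \varnothing$, and the only $\mu$ with $\overline{\lambda}/\mu$ a vertical $i$-strip is $\mu = \varnothing$ with $i = 0$, so Lemma \ref{lem.coeffw.0} collapses to $\overline{s_\lambda} = a_1 \overline{s_\varnothing}$, which lies in $Q_0$ since $\varnothing \in P_{k,n}$ has last entry $0$.

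The main obstacle is really just the bookkeeping: one must see that the reduction provided by Lemma \ref{lem.coeffw.0} strictly decreases $|\lambda|$ while preserving the two invariants \emph{(at most $k$ parts)} and \emph{(first part $= n-k+1$)} on those summands which are not already manifestly in $Q_0$. No deeper combinatorial or algebraic difficulty arises; the substantive content has been packaged into the Bernstein-type identity Corollary \ref{cor.bernstein.2} and the resulting Lemma \ref{lem.coeffw.0}.
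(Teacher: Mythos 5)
Your proposal is correct and follows essentially the same route as the paper's proof: strong induction on $\left\vert \lambda\right\vert$, expansion via Lemma \ref{lem.coeffw.0}, and the same case split on whether $\mu_{1}\leq n-k$ (so $\mu\in P_{k,n}$ with $\mu_{k}=0$) or $\mu_{1}=n-k+1$ (so the induction hypothesis applies since $\left\vert \mu\right\vert \leq\left\vert \lambda\right\vert -\lambda_{1}<\left\vert \lambda\right\vert$). The explicit base case you give is harmless but not needed, since the general step never invokes the induction hypothesis when no $\mu$ with $\mu_{1}=n-k+1$ arises.
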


\begin{proof}
[Proof of Lemma \ref{lem.coeffw.1}.]We shall prove Lemma \ref{lem.coeffw.1} by
strong induction on $\left\vert \lambda\right\vert $. Thus, we fix some
$N\in\mathbb{N}$, and we assume (as induction hypothesis) that Lemma
\ref{lem.coeffw.1} is already proven whenever $\left\vert \lambda\right\vert
<N$. We now must prove Lemma \ref{lem.coeffw.1} in the case when $\left\vert
\lambda\right\vert =N$.

So let $\lambda$ be as in Lemma \ref{lem.coeffw.1}, and assume that
$\left\vert \lambda\right\vert =N$. Let $\overline{\lambda}$ be the partition
$\left(  \lambda_{2},\lambda_{3},\lambda_{4},\ldots\right)  $. Then, Lemma
\ref{lem.coeffw.0} yields%
\begin{equation}
\overline{s_{\lambda}}=\sum_{i=0}^{k-1}\left(  -1\right)  ^{i}a_{1+i}%
\sum_{\substack{\mu\text{ is a partition;}\\\overline{\lambda}/\mu\text{ is a
vertical }i\text{-strip}}}\overline{s_{\mu}}. \label{pf.lem.coeffw.1.3}%
\end{equation}

But if $\mu$ is a partition such that $\overline{\lambda}/\mu$ is a vertical
$i$-strip, then%
\begin{equation}
\overline{s_{\mu}}\in Q_{0}. \label{pf.lem.coeffw.1.5}%
\end{equation}

[\textit{Proof of (\ref{pf.lem.coeffw.1.5}):} The partition $\lambda$ has at
most $k$ parts; thus, the partition $\overline{\lambda}$ has at most $k-1$ parts.

Now, let $\mu$ be a partition such that $\overline{\lambda}/\mu$ is a vertical
$i$-strip. Then, $\mu\subseteq\overline{\lambda}$, so that $\mu$ has at most
$k-1$ parts (since $\overline{\lambda}$ has at most $k-1$ parts). Thus,
$\mu_{k}=0\leq0$. Also, $\mu$ has at most $k$ parts (since $\mu$ has at most
$k-1$ parts). If $\mu_{1}\leq n-k$, then this yields that $\mu\in P_{k,n}$ and
therefore $\overline{s_{\mu}}\in Q_{0}$ (since $\mu\in P_{k,n}$ and $\mu
_{k}\leq0$). Thus, (\ref{pf.lem.coeffw.1.5}) is proven if $\mu_{1}\leq n-k$.
Hence, for the rest of this proof, we WLOG assume that we don't have $\mu
_{1}\leq n-k$. Hence, $\mu_{1}>n-k$.

But $\mu\subseteq\overline{\lambda}$, so that $\mu_{1}\leq\overline{\lambda
}_{1}=\lambda_{2}\leq\lambda_{1}=n-k+1$. Combining this with $\mu_{1}>n-k$, we
obtain $\mu_{1}=n-k+1$. Also, $\mu\subseteq\overline{\lambda}$, so that
\[
\left\vert \mu\right\vert \leq\left\vert \overline{\lambda}\right\vert
=\left\vert \lambda\right\vert -\underbrace{\lambda_{1}}_{=n-k+1\geq
1>0}<\left\vert \lambda\right\vert =N.
\]
Hence, we can apply Lemma \ref{lem.coeffw.1} to $\mu$ instead of $\lambda$ (by
the induction hypothesis). We thus obtain $\overline{s_{\mu}}\in Q_{0}$. This
completes the proof of (\ref{pf.lem.coeffw.1.5}).]

Now, (\ref{pf.lem.coeffw.1.3}) becomes%
\[
\overline{s_{\lambda}}=\sum_{i=0}^{k-1}\left(  -1\right)  ^{i}a_{1+i}%
\sum_{\substack{\mu\text{ is a partition;}\\\overline{\lambda}/\mu\text{ is a
vertical }i\text{-strip}}}\underbrace{\overline{s_{\mu}}}_{\substack{\in
Q_{0}\\\text{(by (\ref{pf.lem.coeffw.1.5}))}}}\in Q_{0}.
\]
Thus, we have proven Lemma \ref{lem.coeffw.1} for our $\lambda$. This
completes the induction step; thus, Lemma \ref{lem.coeffw.1} is proven.
\end{proof}

\begin{lemma}
\label{lem.coeffw.eisl}Let $i\in\mathbb{N}$ and $\lambda\in P_{k,n}$. Then,%
\[
\overline{e_{i}}\overline{s_{\lambda}}\equiv\sum_{\substack{\mu\in
P_{k,n};\\\mu/\lambda\text{ is a vertical }i\text{-strip}}}\overline{s_{\mu}%
}\operatorname{mod}Q_{0}.
\]

\end{lemma}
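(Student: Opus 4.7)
The plan is to pass through the symmetric function ring $\Lambda$, apply the Pieri rule (Proposition \ref{prop.pieri.e}), specialize to $k$ variables, and then use Lemma \ref{lem.coeffw.1} to throw away the terms indexed by partitions $\mu$ that lie outside $P_{k,n}$.

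First, I would apply Proposition \ref{prop.pieri.e} in $\Lambda$ to obtain
\[
\mathbf{s}_{\lambda}\mathbf{e}_{i} = \sum_{\substack{\mu\text{ a partition};\\ \mu/\lambda\text{ vertical }i\text{-strip}}} \mathbf{s}_{\mu},
\]
and evaluate both sides at $x_1,x_2,\ldots,x_k$ to get an identity in $\mathcal{S}$. Projecting to $\mathcal{S}/I$ gives
\[
\overline{e_{i}}\,\overline{s_{\lambda}} = \sum_{\substack{\mu\text{ a partition};\\ \mu/\lambda\text{ vertical }i\text{-strip}}} \overline{s_{\mu}}.
\]

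Next, I would split this sum according to where $\mu$ sits. If $\mu$ has more than $k$ parts, then $s_{\mu}=0$ by (\ref{eq.slam=0-too-long}), so such terms drop out already in $\mathcal{S}$. Otherwise $\mu$ has at most $k$ parts, and there are two subcases. If $\mu_1 \leq n-k$, then $\mu \in P_{k,n}$ and we keep the term. If instead $\mu_1 > n-k$, then I would observe that $\lambda \in P_{k,n}$ gives $\lambda_1 \leq n-k$, while the fact that $\mu/\lambda$ is a vertical strip forces $\mu_j - \lambda_j \in \{0,1\}$ for all $j$; in particular $\mu_1 \leq \lambda_1 + 1 \leq n-k+1$. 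Combined with $\mu_1 > n-k$, this pins down $\mu_1 = n-k+1$, and Lemma \ref{lem.coeffw.1} then yields $\overline{s_{\mu}} \in Q_0$.

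Collecting the three cases, the terms with $\mu$ outside $P_{k,n}$ either vanish outright or lie in $Q_0$, so they disappear modulo $Q_0$, leaving exactly the claimed sum over $\mu \in P_{k,n}$ with $\mu/\lambda$ a vertical $i$-strip. The only mildly non-trivial step is the bound $\mu_1 \leq n-k+1$ that lets Lemma \ref{lem.coeffw.1} be applied; everything else is routine bookkeeping and an appeal to the Pieri rule.
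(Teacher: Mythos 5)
Your proof is correct and follows essentially the same route as the paper: apply the Pieri rule in $\Lambda$, evaluate at $x_1,\ldots,x_k$, project to $\mathcal{S}/I$, and dispose of the terms with $\mu\notin P_{k,n}$ by noting that either $s_\mu=0$ (more than $k$ parts) or $\mu_1=n-k+1$, in which case Lemma \ref{lem.coeffw.1} puts $\overline{s_\mu}$ into $Q_0$. The key bound $\mu_1\le\lambda_1+1\le n-k+1$ is exactly the one the paper uses, so there is nothing to add.
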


\begin{proof}
[Proof of Lemma \ref{lem.coeffw.eisl}.]If $\mu$ is a partition such that
$\mu/\lambda$ is a vertical $i$-strip and $\mu\notin P_{k,n}$, then%
\begin{equation}
\overline{s_{\mu}}\equiv0\operatorname{mod}Q_{0}.
\label{pf.lem.coeffw.eisl.mu}%
\end{equation}

[\textit{Proof of (\ref{pf.lem.coeffw.eisl.mu}):} Let $\mu$ be a partition
such that $\mu/\lambda$ is a vertical $i$-strip and $\mu\notin P_{k,n}$. We
must prove (\ref{pf.lem.coeffw.eisl.mu}).

If the partition $\mu$ has more than $k$ parts, then
(\ref{pf.lem.coeffw.eisl.mu}) easily follows\footnote{\textit{Proof.} Assume
that the partition $\mu$ has more than $k$ parts. Thus,
(\ref{eq.slam=0-too-long}) (applied to $\mu$ instead of $\lambda$) yields
$s_{\mu}=0$. Thus, $\overline{s_{\mu}}=0\equiv0\operatorname{mod}Q_{0}$. Thus,
(\ref{pf.lem.coeffw.eisl.mu}) holds.}. Hence, for the rest of this proof, we
WLOG assume that the partition $\mu$ has at most $k$ parts.

Since $\mu/\lambda$ is a vertical strip, we have $\mu_{1}\leq\lambda_{1}+1$.
But $\lambda_{1}\leq n-k$ (since $\lambda\in P_{k,n}$). If $\mu_{1}=n-k+1$,
then (\ref{pf.lem.coeffw.eisl.mu}) easily follows\footnote{\textit{Proof.}
Assume that $\mu_{1}=n-k+1$. Then, Lemma \ref{lem.coeffw.1} (applied to $\mu$
instead of $\lambda$) yields $\overline{s_{\mu}}\in Q_{0}$. Hence,
$\overline{s_{\mu}}\equiv0\operatorname{mod}Q_{0}$. Thus,
(\ref{pf.lem.coeffw.eisl.mu}) holds.}. Hence, for the rest of this proof, we
WLOG assume that $\mu_{1}\neq n-k+1$. Combining this with $\mu_{1}%
\leq\underbrace{\lambda_{1}}_{\leq n-k}+1\leq n-k+1$, we obtain $\mu
_{1}<n-k+1$, so that $\mu_{1}\leq n-k$. Hence, $\mu\in P_{k,n}$ (since $\mu$
has at most $k$ parts). This contradicts $\mu\notin P_{k,n}$. Thus,
$\overline{s_{\mu}}\equiv0\operatorname{mod}Q_{0}$ (because \textit{ex falso
quodlibet}). Hence, (\ref{pf.lem.coeffw.eisl.mu}) is proven.]

Proposition \ref{prop.pieri.e} yields%
\[
\mathbf{s}_{\lambda}\mathbf{e}_{i}=\sum_{\substack{\mu\text{ is a
partition;}\\\mu/\lambda\text{ is a vertical }i\text{-strip}}}\mathbf{s}_{\mu
}.
\]
This is an identity in $\Lambda$. Evaluating both of its sides at the $k$
variables $x_{1},x_{2},\ldots,x_{k}$, we obtain%
\[
s_{\lambda}e_{i}=\sum_{\substack{\mu\text{ is a partition;}\\\mu/\lambda\text{
is a vertical }i\text{-strip}}}s_{\mu}.
\]
Projecting both sides of this equality from $\mathcal{S}$ to $\mathcal{S}/I$,
we obtain%
\begin{align*}
\overline{s_{\lambda}}\overline{e_{i}}  &  =\overline{\sum_{\substack{\mu
\text{ is a partition;}\\\mu/\lambda\text{ is a vertical }i\text{-strip}%
}}s_{\mu}}=\sum_{\substack{\mu\text{ is a partition;}\\\mu/\lambda\text{ is a
vertical }i\text{-strip}}}\overline{s_{\mu}}\\
&  =\sum_{\substack{\mu\text{ is a partition;}\\\mu/\lambda\text{ is a
vertical }i\text{-strip;}\\\mu\in P_{k,n}}}\overline{s_{\mu}}+\sum
_{\substack{\mu\text{ is a partition;}\\\mu/\lambda\text{ is a vertical
}i\text{-strip;}\\\mu\notin P_{k,n}}}\underbrace{\overline{s_{\mu}}%
}_{\substack{\equiv0\operatorname{mod}Q_{0}\\\text{(by
(\ref{pf.lem.coeffw.eisl.mu}))}}}\\
&  \equiv\sum_{\substack{\mu\text{ is a partition;}\\\mu/\lambda\text{ is a
vertical }i\text{-strip;}\\\mu\in P_{k,n}}}\overline{s_{\mu}}=\sum
_{\substack{\mu\in P_{k,n};\\\mu/\lambda\text{ is a vertical }i\text{-strip}%
}}\overline{s_{\mu}}\operatorname{mod}Q_{0}.
\end{align*}
Thus, $\overline{e_{i}}\overline{s_{\lambda}}=\overline{s_{\lambda}}%
\overline{e_{i}}\equiv\sum_{\substack{\mu\in P_{k,n};\\\mu/\lambda\text{ is a
vertical }i\text{-strip}}}\overline{s_{\mu}}\operatorname{mod}Q_{0}$. This
proves Lemma \ref{lem.coeffw.eisl}.
\end{proof}

\begin{lemma}
\label{lem.coeffw.eiQ}Let $i\in\mathbb{Z}$ and $p\in\mathbb{Z}$. Then,
$\overline{e_{i}}Q_{p}\subseteq Q_{p+1}$.
\end{lemma}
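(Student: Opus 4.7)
The plan is to reduce the claim to the case $i \geq 1$ and $0 \leq p \leq n-k-1$ by disposing of the trivial boundary cases, and then apply Lemma \ref{lem.coeffw.eisl} directly. First, if $i < 0$ then $e_i = 0$, and if $i = 0$ then $\overline{e_0} = 1$, so $\overline{e_0} Q_p = Q_p \subseteq Q_{p+1}$. If $p < 0$, then $Q_p = 0$ (an empty span), so there is nothing to prove. If $p \geq n-k$, then every $\lambda \in P_{k,n}$ satisfies $\lambda_k \leq n-k \leq p+1$, so $Q_{p+1} = \mathcal{S}/I$ contains everything. This leaves the essential range $i \geq 1$ and $0 \leq p \leq n-k-1$.

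In this range, since $\overline{e_i} \cdot (-)$ is $\mathbf{k}$-linear and $Q_p$ is spanned by the elements $\overline{s_\lambda}$ with $\lambda \in P_{k,n}$ and $\lambda_k \leq p$, it suffices to prove that $\overline{e_i} \, \overline{s_\lambda} \in Q_{p+1}$ for every such $\lambda$. Fix such a $\lambda$. Lemma \ref{lem.coeffw.eisl} gives
\[
\overline{e_i} \, \overline{s_\lambda} \equiv \sum_{\substack{\mu \in P_{k,n};\\ \mu/\lambda \text{ is a vertical } i\text{-strip}}} \overline{s_\mu} \pmod{Q_0}.
\]
Since $p \geq 0$, we have $Q_0 \subseteq Q_{p+1}$ (the $Q_\bullet$ form an ascending chain by definition), so it remains only to verify that each summand $\overline{s_\mu}$ on the right-hand side lies in $Q_{p+1}$.

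The key observation — and really the only substantive step — is that a vertical strip contains at most one cell in each row. Therefore, if $\mu/\lambda$ is a vertical $i$-strip, then $\mu_k \leq \lambda_k + 1 \leq p + 1$. Combined with $\mu \in P_{k,n}$, this shows $\overline{s_\mu} \in Q_{p+1}$ by the very definition of $Q_{p+1}$. Hence $\overline{e_i} \, \overline{s_\lambda} \in Q_{p+1}$, completing the proof. I do not anticipate any real obstacle: Lemma \ref{lem.coeffw.eisl} does all the heavy lifting, and the remaining argument is just bookkeeping about the $k$-th row of a vertical strip.
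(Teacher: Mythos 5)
Your proof is correct and follows essentially the same route as the paper's: reduce by linearity to a single $\overline{s_{\lambda}}$ with $\lambda_{k}\leq p$, apply Lemma \ref{lem.coeffw.eisl}, use $Q_{0}\subseteq Q_{p+1}$, and observe that a vertical strip adds at most one box to the $k$-th row, so each $\mu$ in the sum satisfies $\mu_{k}\leq\lambda_{k}+1\leq p+1$. The extra case splits (for $i=0$ and $p\geq n-k$) are harmless but unnecessary, since the paper's argument covers them uniformly.
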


\begin{proof}
[Proof of Lemma \ref{lem.coeffw.eiQ}.]Due to the definition of $Q_{p}$, it
suffices to prove that every $\lambda\in P_{k,n}$ satisfying $\lambda_{k}\leq
p$ satisfies $\overline{e_{i}}\overline{s_{\lambda}}\in Q_{p+1}$. So let us
fix $\lambda\in P_{k,n}$ satisfying $\lambda_{k}\leq p$. We must prove that
$\overline{e_{i}}\overline{s_{\lambda}}\in Q_{p+1}$.

From $\lambda_{k}\geq0$, we obtain $0\leq\lambda_{k}\leq p\leq p+1$.

We WLOG assume that $i\in\mathbb{N}$ (since otherwise, we have $e_{i}=0$ and
thus $\overline{e_{i}}\overline{s_{\lambda}}=\overline{0}\overline{s_{\lambda
}}=0\in Q_{p+1}$).

If $\mu\in P_{k,n}$ is such that $\mu/\lambda$ is a vertical $i$-strip, then%
\begin{equation}
\overline{s_{\mu}}\equiv0\operatorname{mod}Q_{p+1}.
\label{pf.lem.coeffw.eiQ.mu}%
\end{equation}

[\textit{Proof of (\ref{pf.lem.coeffw.eiQ.mu}):} Let $\mu\in P_{k,n}$ be such
that $\mu/\lambda$ is a vertical $i$-strip. We must prove
(\ref{pf.lem.coeffw.eiQ.mu}).

Since $\mu/\lambda$ is a vertical strip, we have $\mu_{k}\leq
\underbrace{\lambda_{k}}_{\leq p}+1\leq p+1$. From $\mu\in P_{k,n}$ and
$\mu_{k}\leq p+1$, we obtain $\overline{s_{\mu}}\in Q_{p+1}$. In other words,
$\overline{s_{\mu}}\equiv0\operatorname{mod}Q_{p+1}$. Thus,
(\ref{pf.lem.coeffw.eiQ.mu}) is proven.]

Lemma \ref{lem.coeffw.eisl} yields%
\[
\overline{e_{i}}\overline{s_{\lambda}}\equiv\sum_{\substack{\mu\in
P_{k,n};\\\mu/\lambda\text{ is a vertical }i\text{-strip}}}\overline{s_{\mu}%
}\operatorname{mod}Q_{0}.
\]
Hence,%
\[
\overline{e_{i}}\overline{s_{\lambda}}-\sum_{\substack{\mu\in P_{k,n}%
;\\\mu/\lambda\text{ is a vertical }i\text{-strip}}}\overline{s_{\mu}}\in
Q_{0}\subseteq Q_{p+1}\ \ \ \ \ \ \ \ \ \ \left(  \text{since }0\leq
p+1\right)  .
\]
Thus,%
\begin{equation}
\overline{e_{i}}\overline{s_{\lambda}}\equiv\sum_{\substack{\mu\in
P_{k,n};\\\mu/\lambda\text{ is a vertical }i\text{-strip}}%
}\underbrace{\overline{s_{\mu}}}_{\substack{\equiv0\operatorname{mod}%
Q_{p+1}\\\text{(by (\ref{pf.lem.coeffw.eiQ.mu}))}}}\equiv0\operatorname{mod}%
Q_{p+1}.\nonumber
\end{equation}
In other words, $\overline{e_{i}}\overline{s_{\lambda}}\in Q_{p+1}$. This
completes our proof of Lemma \ref{lem.coeffw.eiQ}.
\end{proof}

The next fact that we use from the theory of symmetric functions are some
basic properties of the Littlewood-Richardson coefficients. For any partitions
$\lambda,\mu,\nu$, we let $c_{\mu,\nu}^{\lambda}$ be the Littlewood-Richardson
coefficient as defined in \cite[Definition 2.5.8]{GriRei18}. Then, we have the
following fact (part of \cite[Remark 2.5.9]{GriRei18}):

\begin{proposition}
\label{prop.LR.props}Let $\lambda$ and $\mu$ be two partitions.

\textbf{(a)} We have
\[
\mathbf{s}_{\lambda/\mu}=\sum_{\nu\text{ is a partition}}c_{\mu,\nu}^{\lambda
}\mathbf{s}_{\nu}.
\]

\textbf{(b)} If $\nu$ is a partition, then $c_{\mu,\nu}^{\lambda}=0$ unless
$\nu\subseteq\lambda$.

\textbf{(c)} If $\nu$ is a partition, then $c_{\mu,\nu}^{\lambda}=0$ unless
$\left\vert \mu\right\vert +\left\vert \nu\right\vert =\left\vert
\lambda\right\vert $.
\end{proposition}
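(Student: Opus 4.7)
The plan is to take as input Definition 2.5.8 of \cite{GriRei18}, which defines $c_{\mu,\nu}^{\lambda}$ as the coefficient of $\mathbf{s}_{\lambda}$ in the expansion of the product $\mathbf{s}_{\mu}\mathbf{s}_{\nu}$ in the Schur basis of $\Lambda$. Part (c) is then immediate from degree considerations: $\mathbf{s}_{\mu}\mathbf{s}_{\nu}$ is homogeneous of degree $\left\vert \mu\right\vert +\left\vert \nu\right\vert $, while $\mathbf{s}_{\lambda}$ is homogeneous of degree $\left\vert \lambda\right\vert $, and the family $\left(\mathbf{s}_{\lambda}\right)_{\lambda}$ is a graded basis of $\Lambda$, so only $\lambda$ with $\left\vert \lambda\right\vert =\left\vert \mu\right\vert +\left\vert \nu\right\vert $ can contribute.

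For part (a), I would invoke the Hall inner product $\left\langle \cdot,\cdot\right\rangle $ on $\Lambda$, for which the Schur functions form an orthonormal basis, together with the standard adjunction $\left\langle \mathbf{f}^{\perp}\mathbf{g},\mathbf{h}\right\rangle =\left\langle \mathbf{g},\mathbf{f}\mathbf{h}\right\rangle $ between skewing and multiplication (this is the defining property of $\mathbf{f}^{\perp}$; see \cite[\S 2.8]{GriRei18}). Combined with (\ref{eq.skewing.ss}), this yields
\[
\left\langle \mathbf{s}_{\lambda/\mu},\mathbf{s}_{\nu}\right\rangle =\left\langle \mathbf{s}_{\mu}^{\perp}\mathbf{s}_{\lambda},\mathbf{s}_{\nu}\right\rangle =\left\langle \mathbf{s}_{\lambda},\mathbf{s}_{\mu}\mathbf{s}_{\nu}\right\rangle =c_{\mu,\nu}^{\lambda}.
\]
Expanding $\mathbf{s}_{\lambda/\mu}$ in the orthonormal Schur basis then produces exactly the formula of part (a).

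For part (b), I would exploit commutativity of $\Lambda$: from $\mathbf{s}_{\mu}\mathbf{s}_{\nu}=\mathbf{s}_{\nu}\mathbf{s}_{\mu}$ we get $c_{\mu,\nu}^{\lambda}=c_{\nu,\mu}^{\lambda}$. Applying part (a) with the roles of $\mu$ and $\nu$ swapped gives
\[
\mathbf{s}_{\lambda/\nu}=\sum_{\mu\text{ is a partition}}c_{\nu,\mu}^{\lambda}\mathbf{s}_{\mu}.
\]
If $\nu\not\subseteq\lambda$, the left-hand side vanishes (as noted in the bullet list preceding Proposition \ref{prop.jt.e}), so the linear independence of the Schur functions forces every coefficient $c_{\nu,\mu}^{\lambda}$ on the right to be zero. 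Hence $c_{\mu,\nu}^{\lambda}=c_{\nu,\mu}^{\lambda}=0$, which is part (b). The only ingredients beyond what the excerpt already records are the orthonormality of the Schur basis and the skewing--multiplication adjunction, both of which are standard in \cite{GriRei18}; I do not anticipate any real obstacle.
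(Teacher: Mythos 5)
Your proof is correct, but there is nothing in the paper to measure it against: the paper does not prove Proposition \ref{prop.LR.props} at all, it simply quotes it as part of \cite[Remark 2.5.9]{GriRei18}. What you have written is essentially the standard derivation that the cited source itself performs, so you are supplying the omitted argument rather than diverging from one. Concretely: with $c_{\mu,\nu}^{\lambda}$ taken to be the coefficient of $\mathbf{s}_{\lambda}$ in $\mathbf{s}_{\mu}\mathbf{s}_{\nu}$, part (c) is indeed immediate from the fact that $\left(  \mathbf{s}_{\lambda}\right)  _{\lambda}$ is a \emph{graded} basis; for part (a), the chain $\langle\mathbf{s}_{\lambda/\mu},\mathbf{s}_{\nu}\rangle=\langle\left(  \mathbf{s}_{\mu}\right)  ^{\perp}\mathbf{s}_{\lambda},\mathbf{s}_{\nu}\rangle=\langle\mathbf{s}_{\lambda},\mathbf{s}_{\mu}\mathbf{s}_{\nu}\rangle=c_{\mu,\nu}^{\lambda}$ is valid over an arbitrary commutative $\mathbf{k}$, since the Schur functions are orthonormal for the Hall inner product and $\mathbf{f}^{\perp}$ is by construction the adjoint of multiplication by $\mathbf{f}$ (both facts are in \cite{GriRei18}); for part (b), the symmetry $c_{\mu,\nu}^{\lambda}=c_{\nu,\mu}^{\lambda}$ together with $\mathbf{s}_{\lambda/\nu}=0$ for $\nu\not\subseteq\lambda$ and the linear independence of the Schur functions settles it --- or, even more directly, $c_{\mu,\nu}^{\lambda}=\langle\mathbf{s}_{\lambda/\nu},\mathbf{s}_{\mu}\rangle=0$ in that case, with no need to invoke commutativity separately. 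The only point worth double-checking is the bookkeeping of conventions: if \cite[Definition 2.5.8]{GriRei18} should happen to define $c_{\mu,\nu}^{\lambda}$ via the skew expansion rather than the product expansion, your inner-product identity still links the two descriptions, so the argument survives with the roles of part (a) and the definition interchanged.
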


Next, let $\mathcal{Z}$ be the $\mathbf{k}$-submodule of $\Lambda$ spanned by
the $\mathbf{s}_{\lambda}$ with $\lambda\in P_{k,n}$. Then, $\left(
\mathbf{s}_{\lambda}\right)  _{\lambda\in P_{k,n}}$ is a basis of the
$\mathbf{k}$-module $\mathcal{Z}$ (since $\left(  \mathbf{s}_{\lambda}\right)
_{\lambda\text{ is a partition}}$ is a basis of the $\mathbf{k}$-module
$\Lambda$). We thus can define a $\mathbf{k}$-linear map $\delta
:\mathcal{Z}\rightarrow\mathcal{S}/I$ by setting%
\[
\delta\left(  \mathbf{s}_{\lambda}\right)  =\overline{s_{\lambda^{\vee}}%
}\ \ \ \ \ \ \ \ \ \ \text{for every }\lambda\in P_{k,n}.
\]
Notice that a partition $\lambda$ satisfies $\lambda\in P_{k,n}$ if and only
if $\lambda\subseteq\omega$.

\begin{lemma}
\label{lem.coeffw.Z-invar-skew}We have $\mathbf{f}^{\perp}\left(
\mathcal{Z}\right)  \subseteq\mathcal{Z}$ for each $\mathbf{f}\in\Lambda$.
\end{lemma}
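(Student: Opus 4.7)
The plan is to reduce the claim to a statement about individual skewing operators $(\mathbf{s}_\mu)^\perp$ applied to individual basis elements $\mathbf{s}_\lambda$, and then to use the Littlewood-Richardson expansion together with the fact that $P_{k,n}$ is downward closed under the containment order of partitions.

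First I would use the fact that $\mathbf{f}^\perp$ depends $\mathbf{k}$-linearly on $\mathbf{f}$ together with the fact that $(\mathbf{s}_\mu)_{\mu\text{ a partition}}$ is a $\mathbf{k}$-basis of $\Lambda$. This reduces the problem to proving $(\mathbf{s}_\mu)^\perp(\mathcal{Z}) \subseteq \mathcal{Z}$ for every partition $\mu$. Since $\mathcal{Z}$ is spanned by $(\mathbf{s}_\lambda)_{\lambda\in P_{k,n}}$, it is further enough to show that $(\mathbf{s}_\mu)^\perp(\mathbf{s}_\lambda) \in \mathcal{Z}$ for each partition $\mu$ and each $\lambda\in P_{k,n}$.

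Next I would invoke (\ref{eq.skewing.ss}) to rewrite $(\mathbf{s}_\mu)^\perp(\mathbf{s}_\lambda) = \mathbf{s}_{\lambda/\mu}$, and then apply Proposition \ref{prop.LR.props}\textbf{(a)} to expand
\[
\mathbf{s}_{\lambda/\mu} = \sum_{\nu\text{ is a partition}} c^\lambda_{\mu,\nu}\, \mathbf{s}_\nu.
\]
The point is that, by Proposition \ref{prop.LR.props}\textbf{(b)}, the coefficient $c^\lambda_{\mu,\nu}$ vanishes unless $\nu \subseteq \lambda$. Since $\lambda\in P_{k,n}$ means exactly $\lambda \subseteq \omega$, any $\nu$ contributing a nonzero term satisfies $\nu \subseteq \lambda \subseteq \omega$, hence $\nu \in P_{k,n}$. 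So every term $c^\lambda_{\mu,\nu}\mathbf{s}_\nu$ lies in $\mathcal{Z}$, and therefore $\mathbf{s}_{\lambda/\mu}\in\mathcal{Z}$.

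There is no real obstacle; the only thing to be careful about is the logical structure of the reduction to a basis (the sum defining $\mathbf{f}$ in terms of Schur functions is possibly infinite over all partitions, but when applied to any fixed $\mathbf{s}_\lambda\in\mathcal{Z}$ only finitely many $\mu$ with $|\mu|\leq|\lambda|$ contribute, by Proposition \ref{prop.LR.props}\textbf{(c)}, so $\mathbf{f}^\perp(\mathbf{s}_\lambda)$ is well-defined as a finite $\mathbf{k}$-linear combination of Schur functions $\mathbf{s}_\nu$ with $\nu\subseteq\lambda$, and the argument goes through).
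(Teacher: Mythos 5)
Your proposal is correct and follows essentially the same route as the paper's proof: reduce by linearity of $\mathbf{f}\mapsto\mathbf{f}^{\perp}$ to the case $\mathbf{f}=\mathbf{s}_{\mu}$, then to basis elements $\mathbf{s}_{\lambda}$ with $\lambda\in P_{k,n}$, and conclude via (\ref{eq.skewing.ss}) and Proposition \ref{prop.LR.props} \textbf{(a)}, \textbf{(b)} that every contributing $\nu$ satisfies $\nu\subseteq\lambda\subseteq\omega$, hence $\mathbf{s}_{\nu}\in\mathcal{Z}$. (Your worry about infinite sums is moot, since every element of $\Lambda$ is already a finite $\mathbf{k}$-linear combination of Schur functions.)
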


\begin{proof}
[Proof of Lemma \ref{lem.coeffw.Z-invar-skew}.]Since $\mathbf{f}^{\perp}$
depends $\mathbf{k}$-linearly on $\mathbf{f}$, it suffices to check that
$\left(  \mathbf{s}_{\mu}\right)  ^{\perp}\left(  \mathcal{Z}\right)
\subseteq\mathcal{Z}$ for each partition $\mu$. So let us fix a partition
$\mu$; we then must prove that $\left(  \mathbf{s}_{\mu}\right)  ^{\perp
}\left(  \mathcal{Z}\right)  \subseteq\mathcal{Z}$.

Recall that $\mathcal{Z}$ is the $\mathbf{k}$-module spanned by the
$\mathbf{s}_{\lambda}$ with $\lambda\in P_{k,n}$. Hence, in order to prove
that $\left(  \mathbf{s}_{\mu}\right)  ^{\perp}\left(  \mathcal{Z}\right)
\subseteq\mathcal{Z}$, it suffices to check that $\left(  \mathbf{s}_{\mu
}\right)  ^{\perp}\left(  \mathbf{s}_{\lambda}\right)  \in\mathcal{Z}$ for
each $\lambda\in P_{k,n}$. So let us fix $\lambda\in P_{k,n}$; we must then
prove that $\left(  \mathbf{s}_{\mu}\right)  ^{\perp}\left(  \mathbf{s}%
_{\lambda}\right)  \in\mathcal{Z}$.

From (\ref{eq.skewing.ss}), we obtain%
\begin{align*}
\left(  \mathbf{s}_{\mu}\right)  ^{\perp}\left(  \mathbf{s}_{\lambda}\right)
&  =\mathbf{s}_{\lambda/\mu}=\sum_{\nu\text{ is a partition}}c_{\mu,\nu
}^{\lambda}\mathbf{s}_{\nu}\ \ \ \ \ \ \ \ \ \ \left(  \text{by Proposition
\ref{prop.LR.props} \textbf{(a)}}\right) \\
&  =\sum_{\substack{\nu\text{ is a partition;}\\\nu\subseteq\lambda}%
}c_{\mu,\nu}^{\lambda}\mathbf{s}_{\nu}+\sum_{\substack{\nu\text{ is a
partition;}\\\text{not }\nu\subseteq\lambda}}\underbrace{c_{\mu,\nu}^{\lambda
}}_{\substack{=0\\\text{(by Proposition \ref{prop.LR.props} \textbf{(b)})}%
}}\mathbf{s}_{\nu}\\
&  =\sum_{\substack{\nu\text{ is a partition;}\\\nu\subseteq\lambda}%
}c_{\mu,\nu}^{\lambda}\underbrace{\mathbf{s}_{\nu}}_{\substack{\in
\mathcal{Z}\\\text{(because }\nu\subseteq\lambda\\\text{and }\lambda\in
P_{k,n}\text{ lead to}\\\nu\in P_{k,n}\text{)}}}\in\mathcal{Z}.
\end{align*}
This completes our proof of Lemma \ref{lem.coeffw.Z-invar-skew}.
\end{proof}

\begin{lemma}
\label{lem.coeffw.del1}Let $i\in\mathbb{Z}$ and $\mathbf{f}\in\mathcal{Z}$.
Then,%
\[
\delta\left(  \left(  \mathbf{e}_{i}\right)  ^{\perp}\mathbf{f}\right)
\equiv\overline{e_{i}}\delta\left(  \mathbf{f}\right)  \operatorname{mod}%
Q_{0}.
\]
(Note that $\delta\left(  \left(  \mathbf{e}_{i}\right)  ^{\perp}%
\mathbf{f}\right)  $ is well-defined, since Lemma
\ref{lem.coeffw.Z-invar-skew} yields $\left(  \mathbf{e}_{i}\right)  ^{\perp
}\mathbf{f}\in\mathcal{Z}$.)
\end{lemma}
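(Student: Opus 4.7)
The plan is to reduce by $\mathbf{k}$-linearity to the case $\mathbf{f}=\mathbf{s}_{\lambda}$ for some fixed $\lambda\in P_{k,n}$: both sides of the claimed congruence depend $\mathbf{k}$-linearly on $\mathbf{f}$ (using Lemma \ref{lem.coeffw.Z-invar-skew} to ensure that the left-hand side stays inside $\mathcal{Z}$ so that $\delta$ can be applied), and $\left(\mathbf{s}_{\lambda}\right)_{\lambda\in P_{k,n}}$ is a basis of $\mathcal{Z}$.

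For such $\lambda$, I would expand the left-hand side using Corollary \ref{cor.pieri.eskew}. One has $\left(\mathbf{e}_{i}\right)^{\perp}\mathbf{s}_{\lambda}=\sum_{\mu}\mathbf{s}_{\mu}$, summed over all partitions $\mu$ such that $\lambda/\mu$ is a vertical $i$-strip; each such $\mu$ satisfies $\mu\subseteq\lambda\subseteq\omega$ and hence lies in $P_{k,n}$. Applying $\delta$ termwise then gives
\[
\delta\left(\left(\mathbf{e}_{i}\right)^{\perp}\mathbf{s}_{\lambda}\right)=\sum_{\substack{\mu\in P_{k,n};\\\lambda/\mu\text{ is a vertical }i\text{-strip}}}\overline{s_{\mu^{\vee}}}.
\]
For the right-hand side, I would apply Lemma \ref{lem.coeffw.eisl} with $\lambda^{\vee}\in P_{k,n}$ in place of $\lambda$ to obtain
\[
\overline{e_{i}}\,\delta(\mathbf{s}_{\lambda})=\overline{e_{i}}\,\overline{s_{\lambda^{\vee}}}\equiv\sum_{\substack{\nu\in P_{k,n};\\\nu/\lambda^{\vee}\text{ is a vertical }i\text{-strip}}}\overline{s_{\nu}}\operatorname{mod}Q_{0}.
\]

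The crux is then to match these two sums term-by-term. I would show that the complementation involution $\mu\mapsto\mu^{\vee}$ of $P_{k,n}$ restricts to a bijection between the index set of the first sum and that of the second. If $\lambda/\mu$ is a vertical $i$-strip, then $0\leq\lambda_{j}-\mu_{j}\leq 1$ for every $j$; unwinding Definition \ref{def.omega-and-complement} \textbf{(d)} gives
\[
\mu^{\vee}_{j}-\lambda^{\vee}_{j}=\lambda_{k-j+1}-\mu_{k-j+1}\in\{0,1\}\qquad\text{for each }j\in\{1,\ldots,k\},
\]
and summing these also yields $\left\vert\mu^{\vee}/\lambda^{\vee}\right\vert=\left\vert\lambda/\mu\right\vert=i$; hence $\mu^{\vee}/\lambda^{\vee}$ is a vertical $i$-strip. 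The converse follows by the symmetric argument together with $(\mu^{\vee})^{\vee}=\mu$. Substituting $\nu=\mu^{\vee}$ in the right-hand sum then identifies it term-by-term with the left-hand sum, proving the desired congruence.

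The only real obstacle is this combinatorial complement-duality for vertical strips, but once phrased via the explicit formula in Definition \ref{def.omega-and-complement} \textbf{(d)} it reduces to a direct entry-by-entry verification. Note that the congruence modulo $Q_{0}$ enters solely through Lemma \ref{lem.coeffw.eisl}; no further reductions with respect to the filtration $\left(Q_{p}\right)_{p\in\mathbb{Z}}$ are needed.
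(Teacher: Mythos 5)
Your proof is correct and follows essentially the same route as the paper: the $\mathbf{k}$-linearity reduction to $\mathbf{f}=\mathbf{s}_{\lambda}$ with $\lambda\in P_{k,n}$, the dual Pieri rule (Corollary \ref{cor.pieri.eskew}), Lemma \ref{lem.coeffw.eisl} applied to $\lambda^{\vee}$, and the complementation bijection on vertical strips (which the paper justifies by a $180^{\circ}$ rotation of the skew diagram rather than your entrywise check via Definition \ref{def.omega-and-complement} \textbf{(d)}). The only thing to add is a one-line disposal of the case $i<0$, where $\mathbf{e}_{i}=0$ and $e_{i}=0$ make both sides vanish, since the Pieri-type statements you invoke are only stated for $i\in\mathbb{N}$.
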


\begin{proof}
[Proof of Lemma \ref{lem.coeffw.del1}.]Both sides of the claim are
$\mathbf{k}$-linear in $\mathbf{f}$. Hence, we can WLOG assume that
$\mathbf{f}=\mathbf{s}_{\lambda}$ for some $\lambda\in P_{k,n}$ (since
$\left(  \mathbf{s}_{\lambda}\right)  _{\lambda\in P_{k,n}}$ is a basis of the
$\mathbf{k}$-module $\mathcal{Z}$). Assume this, and consider this $\lambda$.

We must prove that $\delta\left(  \left(  \mathbf{e}_{i}\right)  ^{\perp
}\mathbf{f}\right)  \equiv\overline{e_{i}}\delta\left(  \mathbf{f}\right)
\operatorname{mod}Q_{0}$. If $i<0$, then this is obvious (because if $i<0$,
then both $\mathbf{e}_{i}$ and $e_{i}$ equal $0$, and therefore both sides of
the congruence $\delta\left(  \left(  \mathbf{e}_{i}\right)  ^{\perp
}\mathbf{f}\right)  \equiv\overline{e_{i}}\delta\left(  \mathbf{f}\right)
\operatorname{mod}Q_{0}$ are equal to $0$). Hence, for the rest of this proof,
we WLOG assume that we don't have $i<0$. Thus, $i\geq0$, so that
$i\in\mathbb{N}$.

It is easy to see that if $\mu\in P_{k,n}$, then we have the following
equivalence of statements:%
\begin{equation}
\left(  \lambda/\mu\text{ is a vertical }i\text{-strip}\right)
\ \Longleftrightarrow\ \left(  \mu^{\vee}/\lambda^{\vee}\text{ is a vertical
}i\text{-strip}\right)  . \label{pf.lem.coeffw.del1.eq}%
\end{equation}
(Indeed, the skew Young diagram of $\mu^{\vee}/\lambda^{\vee}$ is obtained
from the skew Young diagram of $\lambda/\mu$ by a rotation by $180^{\circ}$.)

From $\mathbf{f}=\mathbf{s}_{\lambda}$, we obtain%
\begin{align*}
\left(  \mathbf{e}_{i}\right)  ^{\perp}\mathbf{f}  &  =\left(  \mathbf{e}%
_{i}\right)  ^{\perp}\mathbf{s}_{\lambda}=\sum_{\substack{\mu\text{ is a
partition;}\\\lambda/\mu\text{ is a vertical }i\text{-strip}}}\mathbf{s}_{\mu
}\ \ \ \ \ \ \ \ \ \ \left(  \text{by Corollary \ref{cor.pieri.eskew}}\right)
\\
&  =\sum_{\substack{\mu\in P_{k,n};\\\lambda/\mu\text{ is a vertical
}i\text{-strip}}}\mathbf{s}_{\mu}%
\end{align*}
(because if $\mu$ is a partition such that $\lambda/\mu$ is a vertical
$i$-strip, then $\mu\in P_{k,n}$ (since $\mu\subseteq\lambda$ and $\lambda\in
P_{k,n}$)). Applying the map $\delta$ to both sides of this equality, we find%
\begin{align}
\delta\left(  \left(  \mathbf{e}_{i}\right)  ^{\perp}\mathbf{f}\right)   &
=\delta\left(  \sum_{\substack{\mu\in P_{k,n};\\\lambda/\mu\text{ is a
vertical }i\text{-strip}}}\mathbf{s}_{\mu}\right)  =\underbrace{\sum
_{\substack{\mu\in P_{k,n};\\\lambda/\mu\text{ is a vertical }i\text{-strip}%
}}}_{\substack{=\sum_{\substack{\mu\in P_{k,n};\\\mu^{\vee}/\lambda^{\vee
}\text{ is a vertical }i\text{-strip}}}\\\text{(by
(\ref{pf.lem.coeffw.del1.eq}))}}}\underbrace{\delta\left(  \mathbf{s}_{\mu
}\right)  }_{\substack{=\overline{s_{\mu^{\vee}}}\\\text{(by the definition of
}\delta\text{)}}}\nonumber\\
&  =\sum_{\substack{\mu\in P_{k,n};\\\mu^{\vee}/\lambda^{\vee}\text{ is a
vertical }i\text{-strip}}}\overline{s_{\mu^{\vee}}}=\sum_{\substack{\mu\in
P_{k,n};\\\mu/\lambda^{\vee}\text{ is a vertical }i\text{-strip}}%
}\overline{s_{\mu}} \label{pf.lem.coeffw.del1.LHS}%
\end{align}
(here, we have substituted $\mu$ for $\mu^{\vee}$ in the sum, since the map
$P_{k,n}\rightarrow P_{k,n},\ \mu\mapsto\mu^{\vee}$ is a bijection).

On the other hand, from $\mathbf{f}=\mathbf{s}_{\lambda}$, we obtain
$\delta\left(  \mathbf{f}\right)  =\delta\left(  \mathbf{s}_{\lambda}\right)
=\overline{s_{\lambda^{\vee}}}$ (by the definition of $\delta$) and thus%
\[
\overline{e_{i}}\delta\left(  \mathbf{f}\right)  =\overline{e_{i}}%
\overline{s_{\lambda^{\vee}}}\equiv\sum_{\substack{\mu\in P_{k,n}%
;\\\mu/\lambda^{\vee}\text{ is a vertical }i\text{-strip}}}\overline{s_{\mu}%
}\operatorname{mod}Q_{0}\ \ \ \ \ \ \ \ \ \ \left(
\begin{array}
[c]{c}%
\text{by Lemma \ref{lem.coeffw.eisl}, applied}\\
\text{to }\lambda^{\vee}\text{ instead of }\lambda
\end{array}
\right)  .
\]
Comparing this with (\ref{pf.lem.coeffw.del1.LHS}), we obtain $\delta\left(
\left(  \mathbf{e}_{i}\right)  ^{\perp}\mathbf{f}\right)  \equiv
\overline{e_{i}}\delta\left(  \mathbf{f}\right)  \operatorname{mod}Q_{0}$.
This proves Lemma \ref{lem.coeffw.del1}.
\end{proof}

\begin{lemma}
\label{lem.coeffw.del2}Let $p\in\mathbb{N}$. Let $i_{1},i_{2},\ldots,i_{p}%
\in\mathbb{Z}$ and $\mathbf{f}\in\mathcal{Z}$. Then,%
\[
\delta\left(  \left(  \mathbf{e}_{i_{1}}\mathbf{e}_{i_{2}}\cdots
\mathbf{e}_{i_{p}}\right)  ^{\perp}\mathbf{f}\right)  \equiv\overline
{e_{i_{1}}e_{i_{2}}\cdots e_{i_{p}}}\delta\left(  \mathbf{f}\right)
\operatorname{mod}Q_{p-1}.
\]

\end{lemma}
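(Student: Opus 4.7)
The plan is to induct on $p$. The base case $p=0$ is trivial: the empty product $\mathbf{e}_{i_1}\mathbf{e}_{i_2}\cdots\mathbf{e}_{i_p}$ equals $1$, and since $1^{\perp} = \operatorname{id}$, both sides reduce to $\delta(\mathbf{f})$; the congruence is modulo $Q_{-1} = 0$, i.e.\ equality, which holds trivially.

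For the inductive step, assume the statement is proven for $p-1$, and consider $p \geq 1$. The key move is to peel off the first factor: using (\ref{eq.skewing.fg}) repeatedly, one obtains
\[
\left(\mathbf{e}_{i_1}\mathbf{e}_{i_2}\cdots\mathbf{e}_{i_p}\right)^{\perp}
= \left(\mathbf{e}_{i_2}\cdots\mathbf{e}_{i_p}\right)^{\perp} \circ \left(\mathbf{e}_{i_1}\right)^{\perp}.
\]
Set $\mathbf{g} = \left(\mathbf{e}_{i_1}\right)^{\perp}\mathbf{f}$; this lies in $\mathcal{Z}$ by Lemma \ref{lem.coeffw.Z-invar-skew}. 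Apply the induction hypothesis to the $(p-1)$-tuple $(i_2,\ldots,i_p)$ and the element $\mathbf{g}$ to get
\[
\delta\!\left(\left(\mathbf{e}_{i_2}\cdots\mathbf{e}_{i_p}\right)^{\perp}\mathbf{g}\right) \equiv \overline{e_{i_2}\cdots e_{i_p}}\,\delta(\mathbf{g}) \pmod{Q_{p-2}}.
\]
Next, apply Lemma \ref{lem.coeffw.del1} to rewrite $\delta(\mathbf{g}) = \overline{e_{i_1}}\,\delta(\mathbf{f}) + q$ for some $q \in Q_0$. Multiplying by $\overline{e_{i_2}\cdots e_{i_p}}$ gives
\[
\overline{e_{i_2}\cdots e_{i_p}}\,\delta(\mathbf{g}) = \overline{e_{i_1}e_{i_2}\cdots e_{i_p}}\,\delta(\mathbf{f}) + \overline{e_{i_2}\cdots e_{i_p}}\,q.
\]

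The remaining task is to show that the error terms land inside $Q_{p-1}$. The term $\overline{e_{i_2}\cdots e_{i_p}}\,q$ is obtained from an element of $Q_0$ by multiplying successively by $\overline{e_{i_p}}, \overline{e_{i_{p-1}}}, \ldots, \overline{e_{i_2}}$; iterating Lemma \ref{lem.coeffw.eiQ} a total of $p-1$ times shows that the result lies in $Q_{0+(p-1)} = Q_{p-1}$. Since also $Q_{p-2} \subseteq Q_{p-1}$, combining the two congruences yields
\[
\delta\!\left(\left(\mathbf{e}_{i_1}\mathbf{e}_{i_2}\cdots\mathbf{e}_{i_p}\right)^{\perp}\mathbf{f}\right) \equiv \overline{e_{i_1}e_{i_2}\cdots e_{i_p}}\,\delta(\mathbf{f}) \pmod{Q_{p-1}},
\]
completing the induction. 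There is no real obstacle here; the only thing requiring minor care is bookkeeping the index of the filtration ($Q_{p-2}$ from the induction hypothesis plus one application of Lemma \ref{lem.coeffw.eiQ} per remaining factor yields exactly $Q_{p-1}$), and the boundary case $p=1$, where $Q_{p-2} = 0$ and the statement reduces directly to Lemma \ref{lem.coeffw.del1}.
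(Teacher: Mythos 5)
Your proof is correct and follows essentially the same route as the paper: induction on $p$, using (\ref{eq.skewing.fg}), Lemma \ref{lem.coeffw.Z-invar-skew}, Lemma \ref{lem.coeffw.del1} for the single-factor step, and Lemma \ref{lem.coeffw.eiQ} to push the error terms up the filtration. The only difference is that you peel off the first factor $\mathbf{e}_{i_{1}}$ rather than the last, so the $Q_{0}$-error coming from Lemma \ref{lem.coeffw.del1} must be raised by $p-1$ iterated applications of Lemma \ref{lem.coeffw.eiQ} within one step (instead of one application per induction step, as in the paper); this is a harmless reshuffling of the same bookkeeping.
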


\begin{proof}
[Proof of Lemma \ref{lem.coeffw.del2}.]We proceed by induction on $p$.

The \textit{induction base} (the case $p=0$) is obvious (since $1^{\perp
}=\operatorname*{id}$ and thus $1^{\perp}\mathbf{f}=\mathbf{f}$).

\textit{Induction step:} Let $q\in\mathbb{N}$. Assume (as the induction
hypothesis) that Lemma \ref{lem.coeffw.del2} holds for $p=q$. We must now
prove that Lemma \ref{lem.coeffw.del2} holds for $p=q+1$. In other words, we
must prove that every $i_{1},i_{2},\ldots,i_{q+1}\in\mathbb{Z}$ and
$\mathbf{f}\in\mathcal{Z}$ satisfy
\begin{equation}
\delta\left(  \left(  \mathbf{e}_{i_{1}}\mathbf{e}_{i_{2}}\cdots
\mathbf{e}_{i_{q+1}}\right)  ^{\perp}\mathbf{f}\right)  \equiv\overline
{e_{i_{1}}e_{i_{2}}\cdots e_{i_{q+1}}}\delta\left(  \mathbf{f}\right)
\operatorname{mod}Q_{q}. \label{pf.lem.coeffw.del2.goal}%
\end{equation}

So let $i_{1},i_{2},\ldots,i_{q+1}\in\mathbb{Z}$ and $\mathbf{f}\in
\mathcal{Z}$. We must prove (\ref{pf.lem.coeffw.del2.goal}).

Lemma \ref{lem.coeffw.eiQ} (applied to $i_{q+1}$ and $q-1$ instead of $i$ and
$p$) yields $\overline{e_{i_{q+1}}}Q_{q-1}\subseteq Q_{q}$.

The induction hypothesis yields
\[
\delta\left(  \left(  \mathbf{e}_{i_{1}}\mathbf{e}_{i_{2}}\cdots
\mathbf{e}_{i_{q}}\right)  ^{\perp}\mathbf{f}\right)  \equiv\overline
{e_{i_{1}}e_{i_{2}}\cdots e_{i_{q}}}\delta\left(  \mathbf{f}\right)
\operatorname{mod}Q_{q-1}.
\]
Multiplying both sides of this congruence by $\overline{e_{i_{q+1}}}$, we
obtain%
\begin{equation}
\overline{e_{i_{q+1}}}\delta\left(  \left(  \mathbf{e}_{i_{1}}\mathbf{e}%
_{i_{2}}\cdots\mathbf{e}_{i_{q}}\right)  ^{\perp}\mathbf{f}\right)
\equiv\overline{e_{i_{q+1}}}\overline{e_{i_{1}}e_{i_{2}}\cdots e_{i_{q}}%
}\delta\left(  \mathbf{f}\right)  \operatorname{mod}Q_{q}
\label{pf.lem.coeffw.del2.2}%
\end{equation}
(since $\overline{e_{i_{q+1}}}Q_{q-1}\subseteq Q_{q}$).

Applying Lemma \ref{lem.coeffw.Z-invar-skew} to $\mathbf{f}=\mathbf{e}_{i_{1}%
}\mathbf{e}_{i_{2}}\cdots\mathbf{e}_{i_{q}}$, we obtain $\left(
\mathbf{e}_{i_{1}}\mathbf{e}_{i_{2}}\cdots\mathbf{e}_{i_{q}}\right)  ^{\perp
}\left(  \mathcal{Z}\right)  \subseteq\mathcal{Z}$. Hence, $\left(
\mathbf{e}_{i_{1}}\mathbf{e}_{i_{2}}\cdots\mathbf{e}_{i_{q}}\right)  ^{\perp
}\mathbf{f}\in\mathcal{Z}$ (since $\mathbf{f}\in\mathcal{Z}$).

But (\ref{eq.skewing.fg}) (applied to $\mathbf{f}=\mathbf{e}_{i_{1}}%
\mathbf{e}_{i_{2}}\cdots\mathbf{e}_{i_{q}}$ and $\mathbf{g}=\mathbf{e}%
_{i_{q+1}}$) yields%
\[
\left(  \mathbf{e}_{i_{1}}\mathbf{e}_{i_{2}}\cdots\mathbf{e}_{i_{q+1}}\right)
^{\perp}=\left(  \mathbf{e}_{i_{q+1}}\right)  ^{\perp}\circ\left(
\mathbf{e}_{i_{1}}\mathbf{e}_{i_{2}}\cdots\mathbf{e}_{i_{q}}\right)  ^{\perp
}.
\]
Hence,%
\[
\left(  \mathbf{e}_{i_{1}}\mathbf{e}_{i_{2}}\cdots\mathbf{e}_{i_{q+1}}\right)
^{\perp}\mathbf{f}=\left(  \left(  \mathbf{e}_{i_{q+1}}\right)  ^{\perp}%
\circ\left(  \mathbf{e}_{i_{1}}\mathbf{e}_{i_{2}}\cdots\mathbf{e}_{i_{q}%
}\right)  ^{\perp}\right)  \mathbf{f}=\left(  \mathbf{e}_{i_{q+1}}\right)
^{\perp}\left(  \left(  \mathbf{e}_{i_{1}}\mathbf{e}_{i_{2}}\cdots
\mathbf{e}_{i_{q}}\right)  ^{\perp}\mathbf{f}\right)  .
\]
Applying the map $\delta$ to both sides of this equality, we find%
\begin{align*}
\delta\left(  \left(  \mathbf{e}_{i_{1}}\mathbf{e}_{i_{2}}\cdots
\mathbf{e}_{i_{q+1}}\right)  ^{\perp}\mathbf{f}\right)   &  =\delta\left(
\left(  \mathbf{e}_{i_{q+1}}\right)  ^{\perp}\left(  \left(  \mathbf{e}%
_{i_{1}}\mathbf{e}_{i_{2}}\cdots\mathbf{e}_{i_{q}}\right)  ^{\perp}%
\mathbf{f}\right)  \right) \\
&  \equiv\overline{e_{i_{q+1}}}\delta\left(  \left(  \mathbf{e}_{i_{1}%
}\mathbf{e}_{i_{2}}\cdots\mathbf{e}_{i_{q}}\right)  ^{\perp}\mathbf{f}\right)
\operatorname{mod}Q_{0}%
\end{align*}
(by Lemma \ref{lem.coeffw.del1}, applied to $i_{q+1}$ and $\left(
\mathbf{e}_{i_{1}}\mathbf{e}_{i_{2}}\cdots\mathbf{e}_{i_{q}}\right)  ^{\perp
}\mathbf{f}$ instead of $i$ and $\mathbf{f}$). Since $Q_{0}\subseteq Q_{q}$,
this yields%
\begin{align*}
\delta\left(  \left(  \mathbf{e}_{i_{1}}\mathbf{e}_{i_{2}}\cdots
\mathbf{e}_{i_{q+1}}\right)  ^{\perp}\mathbf{f}\right)   &  \equiv
\overline{e_{i_{q+1}}}\delta\left(  \left(  \mathbf{e}_{i_{1}}\mathbf{e}%
_{i_{2}}\cdots\mathbf{e}_{i_{q}}\right)  ^{\perp}\mathbf{f}\right) \\
&  \equiv\overline{e_{i_{q+1}}}\overline{e_{i_{1}}e_{i_{2}}\cdots e_{i_{q}}%
}\delta\left(  \mathbf{f}\right)  \ \ \ \ \ \ \ \ \ \ \left(  \text{by
(\ref{pf.lem.coeffw.del2.2})}\right) \\
&  =\overline{e_{i_{1}}e_{i_{2}}\cdots e_{i_{q+1}}}\delta\left(
\mathbf{f}\right)  \operatorname{mod}Q_{q}.
\end{align*}
Thus, (\ref{pf.lem.coeffw.del2.goal}) is proven. This completes the induction
step. Thus, Lemma \ref{lem.coeffw.del2} is proven.
\end{proof}

\begin{lemma}
\label{lem.coeffw.del3}Let $\lambda\in P_{k,n}$ and $\mathbf{f}\in\mathcal{Z}%
$. Then,%
\[
\delta\left(  \left(  \mathbf{s}_{\lambda}\right)  ^{\perp}\mathbf{f}\right)
\equiv\overline{s_{\lambda}}\delta\left(  \mathbf{f}\right)
\operatorname{mod}Q_{n-k-1}.
\]

\end{lemma}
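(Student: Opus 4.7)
The plan is to reduce the lemma to the already-proven Lemma \ref{lem.coeffw.del2} by expressing $\mathbf{s}_\lambda$ as a signed sum of products of $\mathbf{e}_i$'s via the second Jacobi--Trudi identity. The choice of the $e$-form of Jacobi--Trudi (rather than the $h$-form) is crucial: it bounds the number of factors in each product by $\lambda_1\leq n-k$, which is exactly what is needed to land in $Q_{n-k-1}$.

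First I would dispose of the trivial case $\lambda=\varnothing$ (where $\mathbf{s}_\lambda=1$ and $(\mathbf{s}_\lambda)^\perp=\operatorname{id}$). For $\lambda\neq\varnothing$, set $\ell=\lambda_1$ and note that $\ell\leq n-k$ since $\lambda\in P_{k,n}$. The transpose $\lambda^t$ then has exactly $\ell$ parts, and Corollary \ref{cor.jt.el} applied to $\lambda^t$ (using $(\lambda^t)^t=\lambda$) gives
\[
\mathbf{s}_\lambda \;=\; \det\!\left((\mathbf{e}_{\lambda^t_i-i+j})_{1\leq i,j\leq\ell}\right) \;=\; \sum_{\sigma\in S_\ell} (-1)^\sigma\, \mathbf{e}_{\lambda^t_1-1+\sigma(1)}\mathbf{e}_{\lambda^t_2-2+\sigma(2)}\cdots\mathbf{e}_{\lambda^t_\ell-\ell+\sigma(\sigma(\ell))}.
\]
Evaluating this identity in $\Lambda$ at $x_1,x_2,\ldots,x_k$ yields the analogous formula for $s_\lambda\in\mathcal{S}$ with each $\mathbf{e}_j$ replaced by $e_j$.

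Next, using the $\mathbf{k}$-linear dependence of $\mathbf{g}^\perp$ on $\mathbf{g}$ and the identity $(\mathbf{g}\mathbf{h})^\perp=\mathbf{h}^\perp\circ\mathbf{g}^\perp$ from \eqref{eq.skewing.fg}, I would expand
\[
(\mathbf{s}_\lambda)^\perp \mathbf{f} \;=\; \sum_{\sigma\in S_\ell}(-1)^\sigma\, \bigl(\mathbf{e}_{\lambda^t_1-1+\sigma(1)}\cdots\mathbf{e}_{\lambda^t_\ell-\ell+\sigma(\ell)}\bigr)^\perp\mathbf{f}.
\]
For each fixed $\sigma$, Lemma \ref{lem.coeffw.del2} applied with $p=\ell$ and $(i_1,\ldots,i_\ell)=(\lambda^t_1-1+\sigma(1),\ldots,\lambda^t_\ell-\ell+\sigma(\ell))$ gives
\[
\delta\!\left(\bigl(\mathbf{e}_{\lambda^t_1-1+\sigma(1)}\cdots\mathbf{e}_{\lambda^t_\ell-\ell+\sigma(\ell)}\bigr)^\perp\mathbf{f}\right) \;\equiv\; \overline{e_{\lambda^t_1-1+\sigma(1)}\cdots e_{\lambda^t_\ell-\ell+\sigma(\ell)}}\,\delta(\mathbf{f})\pmod{Q_{\ell-1}}.
\]
Since $\ell\leq n-k$, the filtration inclusion $Q_{\ell-1}\subseteq Q_{n-k-1}$ upgrades every such congruence to one modulo $Q_{n-k-1}$.

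Finally, summing the $\ell!$ congruences with signs $(-1)^\sigma$, pulling the $\mathbf{k}$-linear map $\delta$ and the bar $\overline{(\cdot)}$ out of the finite sum, and recognizing the result as the Jacobi--Trudi expansion of $\overline{s_\lambda}$ applied to $\delta(\mathbf{f})$, yields $\delta((\mathbf{s}_\lambda)^\perp\mathbf{f})\equiv\overline{s_\lambda}\,\delta(\mathbf{f})\pmod{Q_{n-k-1}}$. I don't anticipate a serious obstacle: the only substantive point is the bookkeeping of why the $e$-form (and not the $h$-form) of Jacobi--Trudi is needed, which boils down to the fact that $\lambda\in P_{k,n}$ controls $\lambda_1$ (the matrix size in the $e$-form) but not the length of $\lambda$ (the matrix size in the $h$-form) by the quantity $n-k$.
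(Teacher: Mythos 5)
Your proposal is correct and follows essentially the same route as the paper's proof: expand $\mathbf{s}_{\lambda}$ by the $e$-form Jacobi--Trudi identity (Corollary \ref{cor.jt.el} applied to $\lambda^{t}$), apply Lemma \ref{lem.coeffw.del2} termwise, and compare with the evaluated identity for $s_{\lambda}$. The only cosmetic difference is that you take the matrix size to be $\lambda_{1}$ and then use $Q_{\lambda_{1}-1}\subseteq Q_{n-k-1}$, whereas the paper takes the size to be $\ell=n-k$ outright (padding $\lambda^{t}$ with zero parts), which lands in $Q_{n-k-1}$ directly; both are fine (and the stray $\sigma(\sigma(\ell))$ in your display is evidently a typo for $\sigma(\ell)$).
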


\begin{proof}
[Proof of Lemma \ref{lem.coeffw.del3}.]Let $\ell=n-k$. From $\lambda\in
P_{k,n}$, we have $\lambda_{1}\leq n-k=\ell$.

Consider the conjugate partition $\lambda^{t}$ of $\lambda$. Then,
$\lambda^{t}$ has exactly $\lambda_{1}$ parts. Thus, $\lambda^{t}$ has
$\leq\ell$ parts (since $\lambda_{1}\leq\ell$). Therefore, $\lambda
^{t}=\left(  \left(  \lambda^{t}\right)  _{1},\left(  \lambda^{t}\right)
_{2},\ldots,\left(  \lambda^{t}\right)  _{\ell}\right)  $. Hence, Corollary
\ref{cor.jt.el} (applied to $\lambda^{t}$ instead of $\lambda$) yields%
\[
\mathbf{s}_{\left(  \lambda^{t}\right)  ^{t}}=\det\left(  \left(
\mathbf{e}_{\left(  \lambda^{t}\right)  _{i}-i+j}\right)  _{1\leq i\leq
\ell,\ 1\leq j\leq\ell}\right)  .
\]
In view of $\left(  \lambda^{t}\right)  ^{t}=\lambda$, this rewrites as
\begin{equation}
\mathbf{s}_{\lambda}=\det\left(  \left(  \mathbf{e}_{\left(  \lambda
^{t}\right)  _{i}-i+j}\right)  _{1\leq i\leq\ell,\ 1\leq j\leq\ell}\right)
=\sum_{\sigma\in S_{\ell}}\left(  -1\right)  ^{\sigma}\prod_{i=1}^{\ell
}\mathbf{e}_{\left(  \lambda^{t}\right)  _{i}-i+\sigma\left(  i\right)  }
\label{pf.lem.coeffw.del3.jt}%
\end{equation}
(where $S_{\ell}$ denotes the symmetric group of the set $\left\{
1,2,\ldots,\ell\right\}  $, and where $\left(  -1\right)  ^{\sigma}$ denotes
the sign of a permutation $\sigma\in S_{\ell}$). Hence,%
\[
\left(  \mathbf{s}_{\lambda}\right)  ^{\perp}\mathbf{f}=\left(  \sum
_{\sigma\in S_{\ell}}\left(  -1\right)  ^{\sigma}\prod_{i=1}^{\ell}%
\mathbf{e}_{\left(  \lambda^{t}\right)  _{i}-i+\sigma\left(  i\right)
}\right)  ^{\perp}\mathbf{f}=\sum_{\sigma\in S_{\ell}}\left(  -1\right)
^{\sigma}\left(  \prod_{i=1}^{\ell}\mathbf{e}_{\left(  \lambda^{t}\right)
_{i}-i+\sigma\left(  i\right)  }\right)  ^{\perp}\mathbf{f}.
\]
Applying the map $\delta$ to this equality, we obtain%
\begin{align}
\delta\left(  \left(  \mathbf{s}_{\lambda}\right)  ^{\perp}\mathbf{f}\right)
&  =\delta\left(  \sum_{\sigma\in S_{\ell}}\left(  -1\right)  ^{\sigma}\left(
\prod_{i=1}^{\ell}\mathbf{e}_{\left(  \lambda^{t}\right)  _{i}-i+\sigma\left(
i\right)  }\right)  ^{\perp}\mathbf{f}\right) \nonumber\\
&  =\sum_{\sigma\in S_{\ell}}\left(  -1\right)  ^{\sigma}\underbrace{\delta
\left(  \left(  \prod_{i=1}^{\ell}\mathbf{e}_{\left(  \lambda^{t}\right)
_{i}-i+\sigma\left(  i\right)  }\right)  ^{\perp}\mathbf{f}\right)
}_{\substack{\equiv\overline{\prod_{i=1}^{\ell}e_{\left(  \lambda^{t}\right)
_{i}-i+\sigma\left(  i\right)  }}\delta\left(  \mathbf{f}\right)
\operatorname{mod}Q_{\ell-1}\\\text{(by Lemma \ref{lem.coeffw.del2},
applied}\\\text{to }p=\ell\text{ and }i_{j}=\left(  \lambda^{t}\right)
_{j}-j+\sigma\left(  j\right)  \text{)}}}\ \ \ \ \ \ \ \ \ \ \left(
\text{since }\delta\text{ is }\mathbf{k}\text{-linear}\right) \nonumber\\
&  \equiv\sum_{\sigma\in S_{\ell}}\left(  -1\right)  ^{\sigma}\overline
{\prod_{i=1}^{\ell}e_{\left(  \lambda^{t}\right)  _{i}-i+\sigma\left(
i\right)  }}\delta\left(  \mathbf{f}\right)  \operatorname{mod}Q_{\ell-1}.
\label{pf.lem.coeffw.del3.LHS}%
\end{align}

On the other hand, (\ref{pf.lem.coeffw.del3.jt}) is an identity in $\Lambda$.
Evaluating both of its sides at the $k$ variables $x_{1},x_{2},\ldots,x_{k}$,
we obtain%
\[
s_{\lambda}=\sum_{\sigma\in S_{\ell}}\left(  -1\right)  ^{\sigma}\prod
_{i=1}^{\ell}e_{\left(  \lambda^{t}\right)  _{i}-i+\sigma\left(  i\right)  }.
\]
Hence,%
\[
\overline{s_{\lambda}}\delta\left(  \mathbf{f}\right)  =\overline{\sum
_{\sigma\in S_{\ell}}\left(  -1\right)  ^{\sigma}\prod_{i=1}^{\ell}e_{\left(
\lambda^{t}\right)  _{i}-i+\sigma\left(  i\right)  }}\delta\left(
\mathbf{f}\right)  =\sum_{\sigma\in S_{\ell}}\left(  -1\right)  ^{\sigma
}\overline{\prod_{i=1}^{\ell}e_{\left(  \lambda^{t}\right)  _{i}%
-i+\sigma\left(  i\right)  }}\delta\left(  \mathbf{f}\right)  .
\]
Thus, (\ref{pf.lem.coeffw.del3.LHS}) rewrites as $\delta\left(  \left(
\mathbf{s}_{\lambda}\right)  ^{\perp}\mathbf{f}\right)  \equiv\overline
{s_{\lambda}}\delta\left(  \mathbf{f}\right)  \operatorname{mod}Q_{\ell-1}$.
In other words, $\delta\left(  \left(  \mathbf{s}_{\lambda}\right)  ^{\perp
}\mathbf{f}\right)  \equiv\overline{s_{\lambda}}\delta\left(  \mathbf{f}%
\right)  \operatorname{mod}Q_{n-k-1}$ (since $\ell=n-k$). This proves Lemma
\ref{lem.coeffw.del3}.
\end{proof}

\begin{lemma}
\label{lem.coeffw.prod2}Let $\lambda\in P_{k,n}$ and $\mu\in P_{k,n}$. Then,%
\[
\operatorname*{coeff}\nolimits_{\omega}\left(  \overline{s_{\lambda}s_{\mu}%
}\right)  =%
\begin{cases}
1, & \text{if }\lambda=\mu^{\vee};\\
0, & \text{if }\lambda\neq\mu^{\vee}%
\end{cases}
.
\]

\end{lemma}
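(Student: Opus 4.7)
The plan is to derive the lemma from Lemma \ref{lem.coeffw.del3} by choosing $\mathbf{f}$ appropriately. Observe first that, by the definition of $\delta$, the element $\overline{s_{\mu}}$ can be written as $\delta(\mathbf{s}_{\mu^{\vee}})$ (since $(\mu^{\vee})^{\vee}=\mu$), and similarly $\overline{s_{\omega}}=\delta(\mathbf{s}_{\varnothing})$. Therefore
\[
\overline{s_{\lambda}s_{\mu}}=\overline{s_{\lambda}}\cdot\delta(\mathbf{s}_{\mu^{\vee}}).
\]
Now apply Lemma \ref{lem.coeffw.del3} with $\mathbf{f}=\mathbf{s}_{\mu^{\vee}}\in\mathcal{Z}$ to rewrite the right-hand side modulo $Q_{n-k-1}$:
\[
\overline{s_{\lambda}}\cdot\delta(\mathbf{s}_{\mu^{\vee}})\equiv\delta\!\left((\mathbf{s}_{\lambda})^{\perp}\mathbf{s}_{\mu^{\vee}}\right)=\delta(\mathbf{s}_{\mu^{\vee}/\lambda})\pmod{Q_{n-k-1}},
\]
where the last equality uses (\ref{eq.skewing.ss}). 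Applying $\operatorname*{coeff}\nolimits_{\omega}$ and invoking Lemma \ref{lem.coeffw.coeffQ} (which says $\operatorname*{coeff}\nolimits_{\omega}$ kills $Q_{n-k-1}$), we obtain
\[
\operatorname*{coeff}\nolimits_{\omega}\!\left(\overline{s_{\lambda}s_{\mu}}\right)=\operatorname*{coeff}\nolimits_{\omega}\!\left(\delta(\mathbf{s}_{\mu^{\vee}/\lambda})\right).
\]

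Next I would expand $\mathbf{s}_{\mu^{\vee}/\lambda}$ in the Schur basis using Proposition \ref{prop.LR.props}(a):
\[
\mathbf{s}_{\mu^{\vee}/\lambda}=\sum_{\nu}c_{\lambda,\nu}^{\mu^{\vee}}\mathbf{s}_{\nu}.
\]
By Proposition \ref{prop.LR.props}(b), all summands with $c_{\lambda,\nu}^{\mu^{\vee}}\neq 0$ satisfy $\nu\subseteq\mu^{\vee}\subseteq\omega$, so every such $\nu$ lies in $P_{k,n}$. Thus $\mathbf{s}_{\mu^{\vee}/\lambda}\in\mathcal{Z}$ and applying $\delta$ term-by-term yields
\[
\delta(\mathbf{s}_{\mu^{\vee}/\lambda})=\sum_{\nu\in P_{k,n}}c_{\lambda,\nu}^{\mu^{\vee}}\overline{s_{\nu^{\vee}}}.
\]
Now $\operatorname*{coeff}\nolimits_{\omega}(\overline{s_{\nu^{\vee}}})=1$ iff $\nu^{\vee}=\omega$, i.e.\ iff $\nu=\varnothing$; it is $0$ otherwise. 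Hence
\[
\operatorname*{coeff}\nolimits_{\omega}\!\left(\overline{s_{\lambda}s_{\mu}}\right)=c_{\lambda,\varnothing}^{\mu^{\vee}}.
\]

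Finally, I would identify $c_{\lambda,\varnothing}^{\mu^{\vee}}$ directly. By Proposition \ref{prop.LR.props}(c) it vanishes unless $|\lambda|=|\mu^{\vee}|$, and by (b) it vanishes unless $\lambda\subseteq\mu^{\vee}$; together these force $\lambda=\mu^{\vee}$, in which case $\mathbf{s}_{\mu^{\vee}/\lambda}=\mathbf{s}_{\varnothing}=1$, so $c_{\lambda,\varnothing}^{\mu^{\vee}}=1$. This produces precisely the case distinction claimed in the lemma. There is no serious obstacle: the proof is a straightforward assembly of Lemma \ref{lem.coeffw.del3}, Lemma \ref{lem.coeffw.coeffQ}, and the basic properties of Littlewood--Richardson coefficients; the only subtlety worth checking carefully is that all Schur functions arising in the expansion of $\mathbf{s}_{\mu^{\vee}/\lambda}$ are indexed by partitions in $P_{k,n}$, so that $\delta$ may indeed be applied term-by-term.
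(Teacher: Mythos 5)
Your argument is correct and follows essentially the same route as the paper's proof: the reduction of $\operatorname*{coeff}\nolimits_{\omega}\left(\overline{s_{\lambda}s_{\mu}}\right)$ to $\operatorname*{coeff}\nolimits_{\omega}\left(\delta\left(\mathbf{s}_{\mu^{\vee}/\lambda}\right)\right)$ via Lemma \ref{lem.coeffw.del3} and Lemma \ref{lem.coeffw.coeffQ} is identical, and your direct extraction of the answer as the single coefficient $c_{\lambda,\varnothing}^{\mu^{\vee}}$ is just a mild streamlining of the paper's three-case analysis. The one point to repair is the appeal to Proposition \ref{prop.LR.props} \textbf{(b)} for the claim that $c_{\lambda,\varnothing}^{\mu^{\vee}}=0$ unless $\lambda\subseteq\mu^{\vee}$: as stated, \textbf{(b)} only constrains the \emph{second} lower index (giving the vacuous condition $\varnothing\subseteq\mu^{\vee}$), so you should instead use the symmetry $c_{\lambda,\varnothing}^{\mu^{\vee}}=c_{\varnothing,\lambda}^{\mu^{\vee}}$, or the paper's remark that $\mathbf{s}_{\mu^{\vee}/\lambda}=0$ unless $\lambda\subseteq\mu^{\vee}$, or most directly the observation that $c_{\lambda,\varnothing}^{\rho}$ is the coefficient of $\mathbf{s}_{\rho}$ in $\mathbf{s}_{\lambda}\mathbf{s}_{\varnothing}=\mathbf{s}_{\lambda}$ and hence equals $1$ if $\rho=\lambda$ and $0$ otherwise, which yields the desired case distinction at once.
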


\begin{proof}
[Proof of Lemma \ref{lem.coeffw.prod2}.]From $\mu\in P_{k,n}$, we obtain
$\mu^{\vee}\in P_{k,n}$. Hence, $\mathbf{s}_{\mu^{\vee}}\in\mathcal{Z}$ and
\begin{align*}
\delta\left(  \mathbf{s}_{\mu^{\vee}}\right)   &  =\overline{s_{\left(
\mu^{\vee}\right)  ^{\vee}}}\ \ \ \ \ \ \ \ \ \ \left(  \text{by the
definition of }\delta\right) \\
&  =\overline{s_{\mu}}\ \ \ \ \ \ \ \ \ \ \left(  \text{since }\left(
\mu^{\vee}\right)  ^{\vee}=\mu\right)  .
\end{align*}

Also, Lemma \ref{lem.coeffw.del3} (applied to $\mathbf{f}=\mathbf{s}%
_{\mu^{\vee}}$) yields%
\[
\delta\left(  \left(  \mathbf{s}_{\lambda}\right)  ^{\perp}\mathbf{s}%
_{\mu^{\vee}}\right)  \equiv\overline{s_{\lambda}}\delta\left(  \mathbf{s}%
_{\mu^{\vee}}\right)  \operatorname{mod}Q_{n-k-1}%
\]
(since $\mathbf{s}_{\mu^{\vee}}\in\mathcal{Z}$). In other words,
$\delta\left(  \left(  \mathbf{s}_{\lambda}\right)  ^{\perp}\mathbf{s}%
_{\mu^{\vee}}\right)  -\overline{s_{\lambda}}\delta\left(  \mathbf{s}%
_{\mu^{\vee}}\right)  \in Q_{n-k-1}$. Hence,%
\[
\operatorname*{coeff}\nolimits_{\omega}\left(  \delta\left(  \left(
\mathbf{s}_{\lambda}\right)  ^{\perp}\mathbf{s}_{\mu^{\vee}}\right)
-\overline{s_{\lambda}}\delta\left(  \mathbf{s}_{\mu^{\vee}}\right)  \right)
\in\operatorname*{coeff}\nolimits_{\omega}\left(  Q_{n-k-1}\right)  =0
\]
(by Lemma \ref{lem.coeffw.coeffQ}). Thus,
\begin{align}
\operatorname*{coeff}\nolimits_{\omega}\left(  \delta\left(  \left(
\mathbf{s}_{\lambda}\right)  ^{\perp}\mathbf{s}_{\mu^{\vee}}\right)  \right)
&  =\operatorname*{coeff}\nolimits_{\omega}\left(  \overline{s_{\lambda}%
}\underbrace{\delta\left(  \mathbf{s}_{\mu^{\vee}}\right)  }_{=\overline
{s_{\mu}}}\right)  =\operatorname*{coeff}\nolimits_{\omega}\left(
\overline{s_{\lambda}}\overline{s_{\mu}}\right) \nonumber\\
&  =\operatorname*{coeff}\nolimits_{\omega}\left(  \overline{s_{\lambda}%
s_{\mu}}\right)  . \label{pf.lem.coeffw.prod2.3}%
\end{align}

Applying (\ref{eq.skewing.ss}) to $\lambda$ and $\mu^{\vee}$ instead of $\mu$
and $\lambda$, we obtain $\left(  \mathbf{s}_{\lambda}\right)  ^{\perp
}\mathbf{s}_{\mu^{\vee}}=\mathbf{s}_{\mu^{\vee}/\lambda}$. Thus,
(\ref{pf.lem.coeffw.prod2.3}) rewrites as%
\begin{equation}
\operatorname*{coeff}\nolimits_{\omega}\left(  \delta\left(  \mathbf{s}%
_{\mu^{\vee}/\lambda}\right)  \right)  =\operatorname*{coeff}\nolimits_{\omega
}\left(  \overline{s_{\lambda}s_{\mu}}\right)  . \label{pf.lem.coeffw.prod2.4}%
\end{equation}

We are in one of the following three cases:

\textit{Case 1:} We have $\lambda=\mu^{\vee}$.

\textit{Case 2:} We have $\lambda\subseteq\mu^{\vee}$ but not $\lambda
=\mu^{\vee}$.

\textit{Case 3:} We don't have $\lambda\subseteq\mu^{\vee}$.

Let us first consider Case 1. In this case, we have $\lambda=\mu^{\vee}$.
Thus, $\mathbf{s}_{\mu^{\vee}/\lambda}=\mathbf{s}_{\mu^{\vee}/\mu^{\vee}%
}=1=\mathbf{s}_{\varnothing}$ and thus%
\begin{align*}
\delta\left(  \mathbf{s}_{\mu^{\vee}/\lambda}\right)   &  =\delta\left(
\mathbf{s}_{\varnothing}\right)  =\overline{s_{\varnothing^{\vee}}%
}\ \ \ \ \ \ \ \ \ \ \left(  \text{by the definition of }\delta\right) \\
&  =\overline{s_{\omega}}\ \ \ \ \ \ \ \ \ \ \left(  \text{since }%
\varnothing^{\vee}=\omega\right)  .
\end{align*}
Therefore, $\operatorname*{coeff}\nolimits_{\omega}\left(  \delta\left(
\mathbf{s}_{\mu^{\vee}/\lambda}\right)  \right)  =\operatorname*{coeff}%
\nolimits_{\omega}\left(  \overline{s_{\omega}}\right)  =1$ (by the definition
of $\operatorname*{coeff}\nolimits_{\omega}$). Comparing this with%
\[%
\begin{cases}
1, & \text{if }\lambda=\mu^{\vee};\\
0, & \text{if }\lambda\neq\mu^{\vee}%
\end{cases}
=1\ \ \ \ \ \ \ \ \ \ \left(  \text{since }\lambda=\mu^{\vee}\right)  ,
\]
we obtain $\operatorname*{coeff}\nolimits_{\omega}\left(  \overline
{s_{\lambda}s_{\mu}}\right)  =%
\begin{cases}
1, & \text{if }\lambda=\mu^{\vee};\\
0, & \text{if }\lambda\neq\mu^{\vee}%
\end{cases}
$. Hence, Lemma \ref{lem.coeffw.prod2} is proven in Case 1.

Let us next consider Case 2. In this case, we have $\lambda\subseteq\mu^{\vee
}$ but not $\lambda=\mu^{\vee}$. Hence, $\left\vert \lambda\right\vert
<\left\vert \mu^{\vee}\right\vert $ and $\lambda\neq\mu^{\vee}$.

Now, every partition $\nu$ satisfying $\left\vert \lambda\right\vert
+\left\vert \nu\right\vert =\left\vert \mu^{\vee}\right\vert $ and
$\nu\subseteq\mu^{\vee}$ must satisfy%
\begin{equation}
\nu\in P_{k,n}\text{ and }\operatorname*{coeff}\nolimits_{\omega}\left(
\delta\left(  \mathbf{s}_{\nu}\right)  \right)  =0.
\label{pf.lem.coeffw.prod2.c2.1}%
\end{equation}

[\textit{Proof of (\ref{pf.lem.coeffw.prod2.c2.1}):} Let $\nu$ be a partition
satisfying $\left\vert \lambda\right\vert +\left\vert \nu\right\vert
=\left\vert \mu^{\vee}\right\vert $ and $\nu\subseteq\mu^{\vee}$. We must
prove (\ref{pf.lem.coeffw.prod2.c2.1}).

First of all, from $\nu\subseteq\mu^{\vee}$ and $\mu^{\vee}\in P_{k,n}$, we
obtain $\nu\in P_{k,n}$. It thus remains to show that $\operatorname*{coeff}%
\nolimits_{\omega}\left(  \delta\left(  \mathbf{s}_{\nu}\right)  \right)  =0$.

The definition of $\delta$ yields $\delta\left(  \mathbf{s}_{\nu}\right)
=\overline{s_{\nu^{\vee}}}$ (since $\nu\in P_{k,n}$). But $\left\vert
\lambda\right\vert +\left\vert \nu\right\vert =\left\vert \mu^{\vee
}\right\vert $ yields $\left\vert \nu\right\vert =\left\vert \mu^{\vee
}\right\vert -\left\vert \lambda\right\vert >0$ (since $\left\vert
\lambda\right\vert <\left\vert \mu^{\vee}\right\vert $).

But every partition $\kappa\in P_{k,n}$ satisfies $\left\vert \kappa^{\vee
}\right\vert =\underbrace{k\left(  n-k\right)  }_{=\left\vert \omega
\right\vert }-\left\vert \kappa\right\vert =\left\vert \omega\right\vert
-\left\vert \kappa\right\vert $. Applying this to $\kappa=\nu$, we obtain
\[
\left\vert \nu^{\vee}\right\vert =\left\vert \omega\right\vert
-\underbrace{\left\vert \nu\right\vert }_{>0}<\left\vert \omega\right\vert .
\]
Hence, $\left\vert \nu^{\vee}\right\vert \neq\left\vert \omega\right\vert $,
so that $\nu^{\vee}\neq\omega$.

But the definition of $\operatorname*{coeff}\nolimits_{\omega}$ yields
$\operatorname*{coeff}\nolimits_{\omega}\left(  \overline{s_{\nu^{\vee}}%
}\right)  =%
\begin{cases}
1, & \text{if }\nu^{\vee}=\omega;\\
0, & \text{if }\nu^{\vee}\neq\omega
\end{cases}
=0$ (since $\nu^{\vee}\neq\omega$). In view of $\delta\left(  \mathbf{s}_{\nu
}\right)  =\overline{s_{\nu^{\vee}}}$, this rewrites as $\operatorname*{coeff}%
\nolimits_{\omega}\left(  \delta\left(  \mathbf{s}_{\nu}\right)  \right)  =0$.
This completes the proof of (\ref{pf.lem.coeffw.prod2.c2.1}).]

Proposition \ref{prop.LR.props} \textbf{(a)} (applied to $\mu^{\vee}$ and
$\lambda$ instead of $\lambda$ and $\mu$) yields
\begin{align*}
\mathbf{s}_{\mu^{\vee}/\lambda}  &  =\sum_{\nu\text{ is a partition}%
}c_{\lambda,\nu}^{\mu^{\vee}}\mathbf{s}_{\nu}\\
&  =\sum_{\substack{\nu\text{ is a partition;}\\\nu\subseteq\mu^{\vee}%
}}c_{\lambda,\nu}^{\mu^{\vee}}\mathbf{s}_{\nu}+\sum_{\substack{\nu\text{ is a
partition;}\\\text{not }\nu\subseteq\mu^{\vee}}}\underbrace{c_{\lambda,\nu
}^{\mu^{\vee}}}_{\substack{=0\\\text{(by Proposition \ref{prop.LR.props}
\textbf{(b)},}\\\text{applied to }\mu^{\vee}\text{ and }\lambda\text{ instead
of }\lambda\text{ and }\mu\text{)}}}\mathbf{s}_{\nu}\\
&  =\sum_{\substack{\nu\text{ is a partition;}\\\nu\subseteq\mu^{\vee}%
}}c_{\lambda,\nu}^{\mu^{\vee}}\mathbf{s}_{\nu}\\
&  =\sum_{\substack{\nu\text{ is a partition;}\\\nu\subseteq\mu^{\vee
};\\\left\vert \lambda\right\vert +\left\vert \nu\right\vert =\left\vert
\mu^{\vee}\right\vert }}c_{\lambda,\nu}^{\mu^{\vee}}\mathbf{s}_{\nu}%
+\sum_{\substack{\nu\text{ is a partition;}\\\nu\subseteq\mu^{\vee
};\\\text{not }\left\vert \lambda\right\vert +\left\vert \nu\right\vert
=\left\vert \mu^{\vee}\right\vert }}\underbrace{c_{\lambda,\nu}^{\mu^{\vee}}%
}_{\substack{=0\\\text{(by Proposition \ref{prop.LR.props} \textbf{(c)}%
,}\\\text{applied to }\mu^{\vee}\text{ and }\lambda\text{ instead of }%
\lambda\text{ and }\mu\text{)}}}\mathbf{s}_{\nu}\\
&  =\sum_{\substack{\nu\text{ is a partition;}\\\nu\subseteq\mu^{\vee
};\\\left\vert \lambda\right\vert +\left\vert \nu\right\vert =\left\vert
\mu^{\vee}\right\vert }}c_{\lambda,\nu}^{\mu^{\vee}}\mathbf{s}_{\nu}.
\end{align*}
Applying the map $\delta$ to this equality, we find%
\begin{align*}
\delta\left(  \mathbf{s}_{\mu^{\vee}/\lambda}\right)   &  =\delta\left(
\sum_{\substack{\nu\text{ is a partition;}\\\nu\subseteq\mu^{\vee
};\\\left\vert \lambda\right\vert +\left\vert \nu\right\vert =\left\vert
\mu^{\vee}\right\vert }}c_{\lambda,\nu}^{\mu^{\vee}}\mathbf{s}_{\nu}\right)
=\sum_{\substack{\nu\text{ is a partition;}\\\nu\subseteq\mu^{\vee
};\\\left\vert \lambda\right\vert +\left\vert \nu\right\vert =\left\vert
\mu^{\vee}\right\vert }}c_{\lambda,\nu}^{\mu^{\vee}}\delta\left(
\mathbf{s}_{\nu}\right) \\
&  \ \ \ \ \ \ \ \ \ \ \left(
\begin{array}
[c]{c}%
\text{since every partition }\nu\text{ satisfying }\nu\subseteq\mu^{\vee
}\text{ and }\left\vert \lambda\right\vert +\left\vert \nu\right\vert
=\left\vert \mu^{\vee}\right\vert \\
\text{must satisfy }\nu\in P_{k,n}\text{ (by (\ref{pf.lem.coeffw.prod2.c2.1}))
and thus }\mathbf{s}_{\nu}\in\mathcal{Z}%
\end{array}
\right)  .
\end{align*}
Applying the map $\operatorname*{coeff}\nolimits_{\omega}$ to this equality,
we find%
\begin{align*}
\operatorname*{coeff}\nolimits_{\omega}\left(  \delta\left(  \mathbf{s}%
_{\mu^{\vee}/\lambda}\right)  \right)   &  =\operatorname*{coeff}%
\nolimits_{\omega}\left(  \sum_{\substack{\nu\text{ is a partition;}%
\\\nu\subseteq\mu^{\vee};\\\left\vert \lambda\right\vert +\left\vert
\nu\right\vert =\left\vert \mu^{\vee}\right\vert }}c_{\lambda,\nu}^{\mu^{\vee
}}\delta\left(  \mathbf{s}_{\nu}\right)  \right) \\
&  =\sum_{\substack{\nu\text{ is a partition;}\\\nu\subseteq\mu^{\vee
};\\\left\vert \lambda\right\vert +\left\vert \nu\right\vert =\left\vert
\mu^{\vee}\right\vert }}c_{\lambda,\nu}^{\mu^{\vee}}%
\underbrace{\operatorname*{coeff}\nolimits_{\omega}\left(  \delta\left(
\mathbf{s}_{\nu}\right)  \right)  }_{\substack{=0\\\text{(by
(\ref{pf.lem.coeffw.prod2.c2.1}))}}}=0.
\end{align*}
Comparing this with%
\[%
\begin{cases}
1, & \text{if }\lambda=\mu^{\vee};\\
0, & \text{if }\lambda\neq\mu^{\vee}%
\end{cases}
=0\ \ \ \ \ \ \ \ \ \ \left(  \text{since }\lambda\neq\mu^{\vee}\right)  ,
\]
we obtain $\operatorname*{coeff}\nolimits_{\omega}\left(  \overline
{s_{\lambda}s_{\mu}}\right)  =%
\begin{cases}
1, & \text{if }\lambda=\mu^{\vee};\\
0, & \text{if }\lambda\neq\mu^{\vee}%
\end{cases}
$. Hence, Lemma \ref{lem.coeffw.prod2} is proven in Case 2.

Let us finally consider Case 3. In this case, we don't have $\lambda
\subseteq\mu^{\vee}$. Hence, we don't have $\lambda=\mu^{\vee}$ either. Thus,
$\lambda\neq\mu^{\vee}$.

Also, $\mathbf{s}_{\mu^{\vee}/\lambda}=0$ (since we don't have $\lambda
\subseteq\mu^{\vee}$). Thus,%
\[
\operatorname*{coeff}\nolimits_{\omega}\left(  \delta\left(
\underbrace{\mathbf{s}_{\mu^{\vee}/\lambda}}_{=0}\right)  \right)
=\operatorname*{coeff}\nolimits_{\omega}\left(  \delta\left(  0\right)
\right)  =0.
\]
Comparing this with%
\[%
\begin{cases}
1, & \text{if }\lambda=\mu^{\vee};\\
0, & \text{if }\lambda\neq\mu^{\vee}%
\end{cases}
=0\ \ \ \ \ \ \ \ \ \ \left(  \text{since }\lambda\neq\mu^{\vee}\right)  ,
\]
we obtain $\operatorname*{coeff}\nolimits_{\omega}\left(  \overline
{s_{\lambda}s_{\mu}}\right)  =%
\begin{cases}
1, & \text{if }\lambda=\mu^{\vee};\\
0, & \text{if }\lambda\neq\mu^{\vee}%
\end{cases}
$. Hence, Lemma \ref{lem.coeffw.prod2} is proven in Case 3.

We have now proven Lemma \ref{lem.coeffw.prod2} in all three Cases 1, 2 and 3.
Thus, Lemma \ref{lem.coeffw.prod2} always holds.
\end{proof}

\begin{proof}
[Proof of Theorem \ref{thm.coeffw}.]Write $f\in\mathcal{S}/I$ in the form
$f=\sum_{\lambda\in P_{k,n}}\alpha_{\lambda}\overline{s_{\lambda}}$ with
$\alpha_{\lambda}\in\mathbf{k}$. (This is possible, since $\left(
\overline{s_{\lambda}}\right)  _{\lambda\in P_{k,n}}$ is a basis of the
$\mathbf{k}$-module $\mathcal{S}/I$.) Then, the definition of
$\operatorname*{coeff}\nolimits_{\nu^{\vee}}$ yields $\operatorname*{coeff}%
\nolimits_{\nu^{\vee}}\left(  f\right)  =\alpha_{\nu^{\vee}}$.

On the other hand,%
\begin{align*}
\operatorname*{coeff}\nolimits_{\omega}\left(  \overline{s_{\nu}%
}\underbrace{f}_{=\sum_{\lambda\in P_{k,n}}\alpha_{\lambda}\overline
{s_{\lambda}}}\right)   &  =\operatorname*{coeff}\nolimits_{\omega}\left(
\overline{s_{\nu}}\sum_{\lambda\in P_{k,n}}\alpha_{\lambda}\overline
{s_{\lambda}}\right)  =\sum_{\lambda\in P_{k,n}}\alpha_{\lambda}%
\operatorname*{coeff}\nolimits_{\omega}\left(  \underbrace{\overline{s_{\nu}%
}\overline{s_{\lambda}}}_{=\overline{s_{\lambda}s_{\nu}}}\right) \\
&  =\sum_{\lambda\in P_{k,n}}\alpha_{\lambda}\underbrace{\operatorname*{coeff}%
\nolimits_{\omega}\left(  \overline{s_{\lambda}s_{\nu}}\right)  }_{\substack{=%
\begin{cases}
1, & \text{if }\lambda=\nu^{\vee};\\
0, & \text{if }\lambda\neq\nu^{\vee}%
\end{cases}
\\\text{(by Lemma \ref{lem.coeffw.prod2}, applied to }\mu=\nu\text{)}}}\\
&  =\sum_{\lambda\in P_{k,n}}\alpha_{\lambda}%
\begin{cases}
1, & \text{if }\lambda=\nu^{\vee};\\
0, & \text{if }\lambda\neq\nu^{\vee}%
\end{cases}
=\alpha_{\nu^{\vee}}%
\end{align*}
(since $\nu^{\vee}\in P_{k,n}$). Comparing this with $\operatorname*{coeff}%
\nolimits_{\nu^{\vee}}\left(  f\right)  =\alpha_{\nu^{\vee}}$, we obtain
$\operatorname*{coeff}\nolimits_{\omega}\left(  \overline{s_{\nu}}f\right)
=\operatorname*{coeff}\nolimits_{\nu^{\vee}}\left(  f\right)  $. This proves
Theorem \ref{thm.coeffw}.
\end{proof}

\begin{definition}
For any three partitions $\alpha,\beta,\gamma\in P_{k,n}$, let $g_{\alpha
,\beta,\gamma}=\operatorname*{coeff}\nolimits_{\gamma^{\vee}}\left(
\overline{s_{\alpha}}\overline{s_{\beta}}\right)  \in\mathbf{k}$.
\end{definition}

These scalars $g_{\alpha,\beta,\gamma}$ are thus the structure constants of
the $\mathbf{k}$-algebra $\mathcal{S}/I$ in the basis $\left(  \overline
{s_{\lambda}}\right)  _{\lambda\in P_{k,n}}$ (although slightly reindexed). As
a consequence of Theorem \ref{thm.coeffw}, we obtain the following $S_{3}%
$-property of these structure constants:

\begin{corollary}
\label{cor.S3sym}We have%
\[
g_{\alpha,\beta,\gamma}=g_{\alpha,\gamma,\beta}=g_{\beta,\alpha,\gamma
}=g_{\beta,\gamma,\alpha}=g_{\gamma,\alpha,\beta}=g_{\gamma,\beta,\alpha
}=\operatorname*{coeff}\nolimits_{\omega}\left(  \overline{s_{\alpha}s_{\beta
}s_{\gamma}}\right)
\]
for any $\alpha,\beta,\gamma\in P_{k,n}$.
\end{corollary}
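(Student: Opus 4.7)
The plan is very short: the whole corollary should fall out of Theorem \ref{thm.coeffw} combined with the commutativity of the ring $\mathcal{S}/I$.

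First, I would unfold the definition of the structure constant and rewrite $g_{\alpha,\beta,\gamma}=\operatorname*{coeff}\nolimits_{\gamma^{\vee}}\left(\overline{s_{\alpha}}\,\overline{s_{\beta}}\right)$ as $\operatorname*{coeff}\nolimits_{\gamma^{\vee}}\left(\overline{s_{\alpha}s_{\beta}}\right)$, using that the projection $\mathcal{S}\to\mathcal{S}/I$ is a ring homomorphism. Then I would apply Theorem \ref{thm.coeffw} with $\nu=\gamma$ and $f=\overline{s_{\alpha}s_{\beta}}$; the theorem directly gives
\[
g_{\alpha,\beta,\gamma}=\operatorname*{coeff}\nolimits_{\gamma^{\vee}}\left(\overline{s_{\alpha}s_{\beta}}\right)=\operatorname*{coeff}\nolimits_{\omega}\left(\overline{s_{\gamma}}\cdot\overline{s_{\alpha}s_{\beta}}\right)=\operatorname*{coeff}\nolimits_{\omega}\left(\overline{s_{\alpha}s_{\beta}s_{\gamma}}\right).
\]

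Second, I would observe that the expression on the right is manifestly invariant under any permutation of the triple $(\alpha,\beta,\gamma)$, since the product $s_{\alpha}s_{\beta}s_{\gamma}$ in $\mathcal{S}$ is commutative and the map $\operatorname*{coeff}\nolimits_{\omega}$ does not depend on $\alpha,\beta,\gamma$. Therefore the same computation, applied with the roles of $\alpha,\beta,\gamma$ permuted, yields the same value of $\operatorname*{coeff}\nolimits_{\omega}\left(\overline{s_{\alpha}s_{\beta}s_{\gamma}}\right)$. This simultaneously identifies all six $g$'s with the common quantity $\operatorname*{coeff}\nolimits_{\omega}\left(\overline{s_{\alpha}s_{\beta}s_{\gamma}}\right)$ and thus proves the equality chain.

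There is really no obstacle here; the entire content has already been packed into Theorem \ref{thm.coeffw}. The only thing to be careful about is that the identity from Theorem \ref{thm.coeffw} is stated as $\operatorname*{coeff}\nolimits_{\omega}(\overline{s_{\nu}}f)=\operatorname*{coeff}\nolimits_{\nu^{\vee}}(f)$, so we must apply it with the \emph{Schur polynomial we want to strip off} playing the role of $s_{\nu}$; choosing $\nu=\gamma$ (rather than $\alpha$ or $\beta$) is what converts the index $\gamma^{\vee}$ appearing in the definition of $g_{\alpha,\beta,\gamma}$ into the symmetric expression $\operatorname*{coeff}\nolimits_{\omega}\left(\overline{s_{\alpha}s_{\beta}s_{\gamma}}\right)$.
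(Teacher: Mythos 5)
Your proposal is correct and follows essentially the same route as the paper: apply Theorem \ref{thm.coeffw} with $\nu=\gamma$ and $f=\overline{s_{\alpha}}\,\overline{s_{\beta}}$ to get $g_{\alpha,\beta,\gamma}=\operatorname*{coeff}\nolimits_{\omega}\left(\overline{s_{\alpha}s_{\beta}s_{\gamma}}\right)$, and then use the symmetry of the right-hand side (which the paper dispatches with "the rest of the claim follows by analogy"). No gaps.
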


\begin{proof}
[Proof of Corollary \ref{cor.S3sym}.]Let $\alpha,\beta,\gamma\in P_{k,n}$. It
clearly suffices to prove $g_{\alpha,\beta,\gamma}=\operatorname*{coeff}%
\nolimits_{\omega}\left(  \overline{s_{\alpha}s_{\beta}s_{\gamma}}\right)  $,
since the rest of the claim then follows by analogy.

Theorem \ref{thm.coeffw} (applied to $\nu=\gamma$ and $f=\overline{s_{\alpha}%
}\overline{s_{\beta}}$) yields%
\[
\operatorname*{coeff}\nolimits_{\omega}\left(  \overline{s_{\gamma}}%
\overline{s_{\alpha}}\overline{s_{\beta}}\right)  =\operatorname*{coeff}%
\nolimits_{\gamma^{\vee}}\left(  \overline{s_{\alpha}}\overline{s_{\beta}%
}\right)  =g_{\alpha,\beta,\gamma}%
\]
(by the definition of $g_{\alpha,\beta,\gamma}$). Thus, $g_{\alpha
,\beta,\gamma}=\operatorname*{coeff}\nolimits_{\omega}\left(
\underbrace{\overline{s_{\gamma}}\overline{s_{\alpha}}\overline{s_{\beta}}%
}_{=\overline{s_{\alpha}s_{\beta}s_{\gamma}}}\right)  =\operatorname*{coeff}%
\nolimits_{\omega}\left(  \overline{s_{\alpha}s_{\beta}s_{\gamma}}\right)  $.
This completes our proof of Corollary \ref{cor.S3sym}.
\end{proof}

\section{\label{sect.redh}Complete homogeneous symmetric polynomials}

In this section, we shall further explore the projections $\overline{h_{i}}$
of complete homogeneous symmetric polynomials $h_{i}$ onto $\mathcal{S}/I$.
This exploration will culminate in a second proof of Theorem \ref{thm.coeffw}.

\begin{convention}
Convention \ref{conv.symmetry-conv} remains in place for the whole Section
\ref{sect.redh}.

We shall also use all the notations introduced in Section \ref{sect.symmetry}.

If $j\in\mathbb{N}$, then the expression \textquotedblleft$1^{j}%
$\textquotedblright\ in a tuple stands for $j$ consecutive entries equal to
$1$ (that is, $\underbrace{1,1,\ldots,1}_{j\text{ times}}$). Thus, $\left(
m,1^{j}\right)  =\left(  m,\underbrace{1,1,\ldots,1}_{j\text{ times}}\right)
$ for any $m\in\mathbb{N}$ and $j\in\mathbb{N}$.
\end{convention}

\subsection{A reduction formula for $h_{n+m}$}

The following result helps us reduce complete homogeneous symmetric
polynomials $h_{n+m}$ modulo the ideal $I$:

\begin{proposition}
\label{prop.redh.1}Let $m$ be a positive integer. Then,%
\[
h_{n+m}\equiv\sum_{j=0}^{k-1}\left(  -1\right)  ^{j}a_{k-j}s_{\left(
m,1^{j}\right)  }\operatorname{mod}I.
\]

\end{proposition}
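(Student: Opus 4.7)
The plan is to first prove the polynomial identity
\[
h_{n+m} = \sum_{j=0}^{k-1} (-1)^j h_{n-j} \, s_{(m,1^j)}
\]
in $\mathcal{S}$ itself, and then to reduce modulo $I$ using (\ref{eq.h=amodI2}): for each $j \in \{0, 1, \ldots, k-1\}$, the index $n-j$ lies in $\{n-k+1, \ldots, n\}$, so $h_{n-j} \equiv a_{k-j} \operatorname{mod} I$, which yields the conclusion at once.

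To establish the polynomial identity, I would first prove the hook Jacobi--Trudi identity
\[
s_{(m,1^j)} = \sum_{i=0}^{j} (-1)^i h_{m+i} \, e_{j-i}
\qquad \text{for } m \geq 1 \text{ and } j \geq 0
\]
by induction on $j$. The base case $j = 0$ just says $s_{(m)} = h_m$. For the inductive step, Proposition \ref{prop.pieri.e} applied to $\lambda = (m)$ (specialized to the variables $x_1, \ldots, x_k$) yields the Pieri expansion $h_m \, e_j = s_{(m,1^j)} + s_{(m+1,1^{j-1})}$; solving for $s_{(m,1^j)}$ and applying the inductive hypothesis to $s_{(m+1,1^{j-1})}$ produces the telescoping formula after reindexing.

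Next, I would substitute this hook identity into the right-hand side of the targeted polynomial identity, freely extending the outer sum to all $j \geq 0$ (legitimate because $s_{(m,1^j)} = 0$ for $j \geq k$ by (\ref{eq.slam=0-too-long})), swap the order of summation, and substitute $\ell = j - i$. After the sign simplification $(-1)^{i+j} = (-1)^{\ell + 2i} = (-1)^\ell$, this gives
\[
\sum_{j \geq 0} (-1)^j h_{n-j} \, s_{(m,1^j)}
= \sum_{i \geq 0} h_{m+i} \sum_{\ell \geq 0} (-1)^\ell e_\ell \, h_{(n-i)-\ell}.
\]
By Corollary \ref{cor.heh-id.0} (which states $\sum_{\ell = 0}^{k}(-1)^\ell e_\ell h_{p-\ell} = 0$ for $p \geq 1$, and extends to summation over all $\ell \geq 0$ since $e_\ell = 0$ for $\ell > k$), the inner sum vanishes unless $n - i = 0$, in which case it equals $e_0 h_0 = 1$. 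Hence only the $i = n$ term survives, producing $h_{n+m}$, as required.

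The main obstacle is nothing more than careful bookkeeping of signs and indices in the double-sum rearrangement; all of the necessary inputs (Pieri's rule, Corollary \ref{cor.heh-id.0}, the congruence (\ref{eq.h=amodI2}), and the vanishing (\ref{eq.slam=0-too-long})) are already available in the excerpt, and the hook Jacobi--Trudi identity, though not explicitly stated earlier, falls out of them by a one-line induction.
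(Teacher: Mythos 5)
Your proposal is correct and follows essentially the same route as the paper: the paper likewise proves $\mathbf{h}_{n+m}=\sum_{j}(-1)^{j}\mathbf{h}_{n-j}\mathbf{s}_{(m,1^{j})}$ (Proposition \ref{prop.redh.Lam}) via the hook expansion $\mathbf{s}_{(m,1^{j})}=\sum_{i=0}^{j}(-1)^{i}\mathbf{h}_{m+i}\mathbf{e}_{j-i}$, a sum interchange, and the alternating $e$--$h$ identity, then truncates at $j=k-1$ using (\ref{eq.slam=0-too-long}) and reduces modulo $I$ via (\ref{eq.h=amodI}). The only differences are minor: you work directly in $\mathcal{S}$ and invoke Corollary \ref{cor.heh-id.0}, whereas the paper works in $\Lambda$ with $\sum_{j=0}^{N}(-1)^{j}\mathbf{h}_{N-j}\mathbf{e}_{j}=\delta_{0,N}$ and specializes afterwards, and you reprove the hook identity by a Pieri-rule induction where the paper cites it from \cite{GriRei18}.
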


We shall derive Proposition \ref{prop.redh.1} from the following identity
between symmetric functions in $\Lambda$:

\begin{proposition}
\label{prop.redh.Lam}Let $m$ be a positive integer. Then,%
\[
\mathbf{h}_{n+m}=\sum_{j=0}^{n}\left(  -1\right)  ^{j}\mathbf{h}%
_{n-j}\mathbf{s}_{\left(  m,1^{j}\right)  }.
\]

\end{proposition}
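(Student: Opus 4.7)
\textbf{Proof plan for Proposition \ref{prop.redh.Lam}.} The plan is to expand each Schur function $\mathbf{s}_{(m,1^j)}$ (a hook) as an alternating sum of products $\mathbf{h}_{m+i} \mathbf{e}_{j-i}$, substitute this into the right-hand side, swap the order of summation, and then reduce the inner sum via the classical reciprocity identity $\sum_{\ell \geq 0} (-1)^\ell \mathbf{h}_{N-\ell} \mathbf{e}_\ell = [N=0]$. After these manipulations, all but one term collapses, yielding $\mathbf{h}_{n+m}$ on the nose.

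First I would establish the \emph{hook expansion}
\[
\mathbf{s}_{(m,1^j)} \;=\; \sum_{i=0}^{j} (-1)^i \, \mathbf{h}_{m+i} \, \mathbf{e}_{j-i}
\]
for all $m \geq 1$ and $j \geq 0$. This follows from the Jacobi-Trudi identity (Proposition \ref{prop.jacobi-trudi.Sh} \textbf{(b)} applied to the partition $(m,1^j)$ in $\Lambda$): the resulting $(j+1) \times (j+1)$ determinant has only two nonzero entries in its first column, namely $\mathbf{h}_m$ in position $(1,1)$ and $\mathbf{h}_0 = 1$ in position $(2,1)$ (all entries below are $\mathbf{h}_k$ with $k<0$). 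Expanding along this column identifies the two relevant minors as $\mathbf{e}_j = \mathbf{s}_{(1^j)}$ and $\mathbf{s}_{(m+1,1^{j-1})}$ respectively, giving the recursion $\mathbf{s}_{(m,1^j)} = \mathbf{h}_m \mathbf{e}_j - \mathbf{s}_{(m+1,1^{j-1})}$, which iterates to the displayed formula. (Corollary \ref{cor.jt.el} applied to the conjugate partition $(j+1,1^{m-1})$ would give an equivalent $\mathbf{e}$-determinant, but the column expansion above seems more direct.)

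Next I would substitute this hook expansion into the right-hand side of the target identity and rewrite with $\ell := j - i$:
\[
\sum_{j=0}^{n} (-1)^j \mathbf{h}_{n-j} \mathbf{s}_{(m,1^j)}
\;=\; \sum_{\substack{i,\ell \geq 0 \\ i+\ell \leq n}} (-1)^{\ell} \, \mathbf{h}_{m+i} \, \mathbf{h}_{n-i-\ell} \, \mathbf{e}_{\ell}
\;=\; \sum_{i=0}^{n} \mathbf{h}_{m+i} \sum_{\ell=0}^{n-i} (-1)^{\ell} \, \mathbf{h}_{n-i-\ell} \, \mathbf{e}_{\ell}.
\]
The inner sum is the $HE$-reciprocity identity in $\Lambda$, which comes from $H(t) E(-t) = 1$ (analogous to Lemma \ref{lem.he-powser} and Corollary \ref{cor.heh-id.0}, but in infinitely many variables): for any integer $N \geq 0$, one has $\sum_{\ell=0}^{N} (-1)^\ell \mathbf{h}_{N-\ell} \mathbf{e}_\ell = [N=0]$. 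Applied with $N = n - i$, only the index $i = n$ contributes, and the whole sum collapses to $\mathbf{h}_{m+n} = \mathbf{h}_{n+m}$, as desired.

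The only point that requires genuine care is the hook expansion step; the rest is bookkeeping. In particular one should double-check the edge cases $m=1$ (where the recursion meets the bottom of the hook) and $j=0$ (where the formula reads $\mathbf{s}_{(m)} = \mathbf{h}_m$), since sign and index conventions can easily go wrong. The positivity hypothesis $m \geq 1$ is used precisely to ensure that $(m,1^j)$ is a genuine partition and that the Jacobi-Trudi matrix has the clean structure described.
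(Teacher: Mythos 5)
Your proposal is correct and follows essentially the same route as the paper: expand the hook Schur function as $\mathbf{s}_{(m,1^j)}=\sum_{i=0}^{j}(-1)^i\mathbf{h}_{m+i}\mathbf{e}_{j-i}$, swap the summations, and collapse the inner sum via $\sum_{\ell=0}^{N}(-1)^\ell\mathbf{h}_{N-\ell}\mathbf{e}_\ell=\delta_{0,N}$. The only (harmless) difference is that you derive the hook expansion yourself from the Jacobi--Trudi determinant (for which you should cite the $\Lambda$-version, i.e.\ Proposition \ref{prop.jt.h} with $\mu=\varnothing$, rather than the $k$-variable Proposition \ref{prop.jacobi-trudi.Sh}), whereas the paper simply cites it as \cite[Exercise 2.9.14(b)]{GriRei18}.
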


\begin{proof}
[Proof of Proposition \ref{prop.redh.Lam}.]Let $j\in\mathbb{N}$.

In \cite[Exercise 2.9.14(b)]{GriRei18}, it is shown that%
\begin{equation}
\sum_{i=0}^{b}\left(  -1\right)  ^{i}\mathbf{h}_{a+i+1}\mathbf{e}%
_{b-i}=\mathbf{s}_{\left(  a+1,1^{b}\right)  } \label{pf.prop.redh.Lam.ab}%
\end{equation}
for all $a,b\in\mathbb{N}$. Applying this equality to $a=m-1$ and $b=j$, we
obtain%
\begin{equation}
\sum_{i=0}^{j}\left(  -1\right)  ^{i}\mathbf{h}_{m+i}\mathbf{e}_{j-i}%
=\mathbf{s}_{\left(  m,1^{j}\right)  }. \label{pf.prop.redh.Lam.1}%
\end{equation}

Now, forget that we fixed $j$. We thus have proven (\ref{pf.prop.redh.Lam.1})
for each $j\in\mathbb{N}$.

Also, for any $N\in\mathbb{N}$, we have%
\[
\sum_{\substack{\left(  i,j\right)  \in\mathbb{N}^{2};\\i+j=N}}\left(
-1\right)  ^{i}\mathbf{e}_{i}\mathbf{h}_{j}=\delta_{0,N}%
\]
(where $\delta_{0,N}$ is a Kronecker delta). (This is \cite[(2.4.4)]%
{GriRei18}, with $n$ renamed as $N$.) Thus, for any $N\in\mathbb{N}$, we have%
\begin{align*}
\delta_{0,N}  &  =\sum_{\substack{\left(  i,j\right)  \in\mathbb{N}%
^{2};\\i+j=N}}\left(  -1\right)  ^{i}\mathbf{e}_{i}\mathbf{h}_{j}%
=\sum_{\substack{\left(  i,j\right)  \in\mathbb{N}^{2};\\i+j=N}}\left(
-1\right)  ^{i}\mathbf{h}_{j}\mathbf{e}_{i}\\
&  =\sum_{i=0}^{N}\left(  -1\right)  ^{i}\mathbf{h}_{N-i}\mathbf{e}%
_{i}\ \ \ \ \ \ \ \ \ \ \left(
\begin{array}
[c]{c}%
\text{here, we have substituted }\left(  i,N-i\right) \\
\text{for }\left(  i,j\right)  \text{ in the sum}%
\end{array}
\right) \\
&  =\sum_{j=0}^{N}\left(  -1\right)  ^{j}\mathbf{h}_{N-j}\mathbf{e}%
_{j}\ \ \ \ \ \ \ \ \ \ \left(
\begin{array}
[c]{c}%
\text{here, we have renamed the}\\
\text{summation index }i\text{ as }j
\end{array}
\right)  .
\end{align*}
Thus, for any $N\in\mathbb{N}$, we have%
\begin{equation}
\sum_{j=0}^{N}\left(  -1\right)  ^{j}\mathbf{h}_{N-j}\mathbf{e}_{j}%
=\delta_{0,N}. \label{pf.prop.redh.Lam.3}%
\end{equation}

For each $i\in\left\{  0,1,\ldots,n\right\}  $, we have%
\begin{align}
&  \sum_{j=i}^{n}\left(  -1\right)  ^{j-i}\mathbf{h}_{n-j}\mathbf{e}%
_{j-i}\nonumber\\
&  =\sum_{j=0}^{n-i}\left(  -1\right)  ^{j}\mathbf{h}_{n-i-j}\mathbf{e}%
_{j}\ \ \ \ \ \ \ \ \ \ \left(  \text{here, we have substituted }j\text{ for
}j-i\text{ in the sum}\right) \nonumber\\
&  =\delta_{0,n-i}\ \ \ \ \ \ \ \ \ \ \left(  \text{by
(\ref{pf.prop.redh.Lam.3}), applied to }n-i\text{ instead of }N\right)
\nonumber\\
&  =\delta_{i,n}. \label{pf.prop.redh.Lam.5}%
\end{align}

Now,%
\begin{align*}
&  \sum_{j=0}^{n}\left(  -1\right)  ^{j}\mathbf{h}_{n-j}\underbrace{\mathbf{s}%
_{\left(  m,1^{j}\right)  }}_{\substack{=\sum_{i=0}^{j}\left(  -1\right)
^{i}\mathbf{h}_{m+i}\mathbf{e}_{j-i}\\\text{(by (\ref{pf.prop.redh.Lam.1}))}%
}}\\
&  =\sum_{j=0}^{n}\left(  -1\right)  ^{j}\mathbf{h}_{n-j}\sum_{i=0}^{j}\left(
-1\right)  ^{i}\mathbf{h}_{m+i}\mathbf{e}_{j-i}=\underbrace{\sum_{j=0}^{n}%
\sum_{i=0}^{j}}_{=\sum_{i=0}^{n}\sum_{j=i}^{n}}\left(  -1\right)
^{j}\mathbf{h}_{n-j}\left(  -1\right)  ^{i}\mathbf{h}_{m+i}\mathbf{e}_{j-i}\\
&  =\sum_{i=0}^{n}\sum_{j=i}^{n}\left(  -1\right)  ^{j}\mathbf{h}_{n-j}\left(
-1\right)  ^{i}\mathbf{h}_{m+i}\mathbf{e}_{j-i}=\sum_{i=0}^{n}\mathbf{h}%
_{m+i}\sum_{j=i}^{n}\left(  -1\right)  ^{j-i}\mathbf{h}_{n-j}\mathbf{e}%
_{j-i}\\
&  =\sum_{i=0}^{n}\mathbf{h}_{m+i}\underbrace{\sum_{j=i}^{n}\left(  -1\right)
^{j-i}\mathbf{h}_{n-j}\mathbf{e}_{j-i}}_{\substack{=\delta_{i,n}\\\text{(by
(\ref{pf.prop.redh.Lam.5}))}}}=\sum_{i=0}^{n}\mathbf{h}_{m+i}\delta
_{i,n}=\mathbf{h}_{m+n}=\mathbf{h}_{n+m}.
\end{align*}
This proves Proposition \ref{prop.redh.Lam}.
\end{proof}

\begin{proof}
[Proof of Proposition \ref{prop.redh.1}.]For each integer $j\geq k$, we have%
\begin{equation}
s_{\left(  m,1^{j}\right)  }=0. \label{pf.prop.redh.1.=0}%
\end{equation}

[\textit{Proof of (\ref{pf.prop.redh.1.=0}):} Let $j\geq k$ be an integer.
Then, the partition $\left(  m,1^{j}\right)  $ has $j+1$ parts; thus, this
partition has more than $k$ parts (since $j+1>j\geq k$). Thus,
(\ref{eq.slam=0-too-long}) (applied to $\lambda=\left(  m,1^{j}\right)  $)
yields $s_{\left(  m,1^{j}\right)  }=0$. This proves (\ref{pf.prop.redh.1.=0}).]

Proposition \ref{prop.redh.Lam} yields%
\[
\mathbf{h}_{n+m}=\sum_{j=0}^{n}\left(  -1\right)  ^{j}\mathbf{h}%
_{n-j}\mathbf{s}_{\left(  m,1^{j}\right)  }.
\]
This is an identity in $\Lambda$. Evaluating both of its sides at the $k$
variables $x_{1},x_{2},\ldots,x_{k}$, we obtain%
\begin{align*}
h_{n+m}  &  =\sum_{j=0}^{n}\left(  -1\right)  ^{j}h_{n-j}s_{\left(
m,1^{j}\right)  }\\
&  =\sum_{j=0}^{k-1}\left(  -1\right)  ^{j}h_{n-j}s_{\left(  m,1^{j}\right)
}+\sum_{j=k}^{n}\left(  -1\right)  ^{j}h_{n-j}\underbrace{s_{\left(
m,1^{j}\right)  }}_{\substack{=0\\\text{(by (\ref{pf.prop.redh.1.=0}))}}}\\
&  =\sum_{j=0}^{k-1}\left(  -1\right)  ^{j}\underbrace{h_{n-j}}%
_{\substack{=h_{n-k+\left(  k-j\right)  }\equiv a_{k-j}\operatorname{mod}%
I\\\text{(by (\ref{eq.h=amodI}), applied to }k-j\\\text{instead of }j\text{)}%
}}s_{\left(  m,1^{j}\right)  }\equiv\sum_{j=0}^{k-1}\left(  -1\right)
^{j}a_{k-j}s_{\left(  m,1^{j}\right)  }\operatorname{mod}I.
\end{align*}
This proves Proposition \ref{prop.redh.1}.
\end{proof}

\subsection{Lemmas on free modules}

Next, we state a basic lemma from commutative algebra:

\begin{lemma}
\label{lem.freemod.2}Let $r\in\mathbb{N}$. Let $X$ and $Y$ be two free
$\mathbf{k}$-modules of rank $r$. Then, every surjective $\mathbf{k}$-linear
map from $X$ to $Y$ is a $\mathbf{k}$-module isomorphism.
\end{lemma}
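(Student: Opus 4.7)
My plan is to reduce the problem to showing that a square matrix over $\mathbf{k}$ with a right inverse is invertible, and then to apply the determinant/adjugate trick. After choosing bases for $X$ and $Y$ (each of cardinality $r$), the map $f$ becomes an $r \times r$ matrix $A \in \mathbf{k}^{r \times r}$, and it suffices to prove that a surjective $A: \mathbf{k}^r \to \mathbf{k}^r$ is invertible. Since being an isomorphism is preserved under passage to coordinates, nothing is lost in this reduction.

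The next step is to build a right inverse for $A$ explicitly. Surjectivity of $A$ means that for each standard basis vector $e_i$ of $\mathbf{k}^r$, I can choose some $b_i \in \mathbf{k}^r$ with $A b_i = e_i$; letting $B$ be the matrix whose $i$-th column is $b_i$, one obtains a $\mathbf{k}$-linear map satisfying $AB = I_r$. Applying the determinant, which is multiplicative because $\mathbf{k}$ is commutative, yields $\det(A) \det(B) = 1$, so $\det(A)$ is a unit of $\mathbf{k}$.

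Finally, the classical adjugate identity $A \cdot \operatorname{adj}(A) = \operatorname{adj}(A) \cdot A = \det(A) \cdot I_r$ produces a two-sided inverse $\det(A)^{-1} \operatorname{adj}(A)$ of $A$, so $A$ (and hence $f$) is a $\mathbf{k}$-module isomorphism. I do not foresee any significant obstacle here; the argument relies only on the commutativity of $\mathbf{k}$ and the finiteness of $r$, both of which are built into the hypotheses. The one subtlety worth checking is that the vectors $b_i$ really do assemble into a genuine $\mathbf{k}$-linear section $\mathbf{k}^r \to \mathbf{k}^r$, but this is automatic once a basis of the free module $Y$ has been fixed, so the construction of $B$ goes through without difficulty.
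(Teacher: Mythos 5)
Your argument is correct, but it follows a different route from the paper. The paper composes the surjection $f\colon X\to Y$ with some $\mathbf{k}$-module isomorphism $j\colon Y\to X$ (which exists since $X$ and $Y$ are free of the same rank) to get a surjective \emph{endomorphism} of the finitely generated $\mathbf{k}$-module $X$, and then invokes the known fact (cited as \cite[Exercise 2.5.18(a)]{GriRei18}) that any surjective endomorphism of a finitely generated module over a commutative ring is an isomorphism; from this it deduces that $f$ itself is an isomorphism. You instead make the argument self-contained: after choosing bases you represent $f$ by an $r\times r$ matrix $A$, build a right inverse $B$ column by column from preimages of the standard basis vectors (this step is fine -- any matrix over $\mathbf{k}$ defines a $\mathbf{k}$-linear map, so no extra care is needed), conclude $\det(A)\det(B)=1$ so $\det(A)$ is a unit, and then use the adjugate identity to produce a two-sided inverse. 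What the paper's approach buys is brevity and a pointer to a more general statement (surjective endomorphisms of arbitrary finitely generated modules, not just free ones); what yours buys is an elementary, explicitly computational proof that avoids the black box -- though morally the two are close, since the cited general fact is itself usually proven by a Cayley--Hamilton/determinant trick of the same flavor.
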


\begin{proof}
[Proof of Lemma \ref{lem.freemod.2}.]Let $f:X\rightarrow Y$ be a surjective
$\mathbf{k}$-linear map from $X$ to $Y$. We must prove that $f$ is a
$\mathbf{k}$-module isomorphism.

There is clearly a $\mathbf{k}$-module isomorphism $j:Y\rightarrow X$ (since
$X$ and $Y$ are free $\mathbf{k}$-modules of the same rank). Consider this
$j$. Then, the composition $j\circ f$ is surjective (since $j$ and $f$ are
surjective), and thus is a surjective endomorphism of the finitely generated
$\mathbf{k}$-module $X$. But \cite[Exercise 2.5.18(a)]{GriRei18} shows that
any surjective endomorphism of a finitely generated $\mathbf{k}$-module is a
$\mathbf{k}$-module isomorphism. Hence, we conclude that $j\circ f$ is a
$\mathbf{k}$-module isomorphism. Thus, $f$ is a $\mathbf{k}$-module
isomorphism (since $j$ is a $\mathbf{k}$-module isomorphism). This proves
Lemma \ref{lem.freemod.2}.
\end{proof}

\begin{lemma}
\label{lem.freemod.3}Let $Z$ be a $\mathbf{k}$-module. Let $U$, $X$ and $Y$ be
$\mathbf{k}$-submodules of $Z$ such that $Z=X\oplus Y$ and $X\subseteq U$. Let
$r\in\mathbb{N}$. Assume that the $\mathbf{k}$-module $X$ has a basis with $r$
elements, whereas the $\mathbf{k}$-module $U$ can be spanned by $r$ elements.
Then, $X=U$.
\end{lemma}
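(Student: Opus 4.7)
The plan is to exploit the direct-sum decomposition $Z = X \oplus Y$ by using the associated projection, in order to produce a surjective $\mathbf{k}$-linear map between free $\mathbf{k}$-modules of the same finite rank, and then invoke Lemma \ref{lem.freemod.2}.

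More precisely, let $\pi \colon Z \to X$ denote the projection along $Y$ (so $\pi(x+y) = x$ for $x \in X$ and $y \in Y$). Since $X \subseteq U$, the restriction $\pi|_U \colon U \to X$ is surjective, because it fixes $X$ pointwise. Next, I would pick $r$ generators $u_1, u_2, \ldots, u_r$ of $U$ and use them to define the $\mathbf{k}$-linear map
\[
\varphi \colon \mathbf{k}^r \to U, \qquad (c_1, c_2, \ldots, c_r) \mapsto \sum_{i=1}^{r} c_i u_i,
\]
which is surjective by construction. Then the composition $\pi|_U \circ \varphi \colon \mathbf{k}^r \to X$ is a surjective $\mathbf{k}$-linear map between two free $\mathbf{k}$-modules of rank $r$ (here $X$ is free of rank $r$ by hypothesis). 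Lemma \ref{lem.freemod.2} then forces $\pi|_U \circ \varphi$ to be a $\mathbf{k}$-module isomorphism.

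From this isomorphism I would extract injectivity of $\varphi$ (since the composition is injective), which combined with its surjectivity shows that $\varphi$ is itself a $\mathbf{k}$-module isomorphism. Consequently $\pi|_U = (\pi|_U \circ \varphi) \circ \varphi^{-1}$ is also a $\mathbf{k}$-module isomorphism. Finally, for any $u \in U$, the element $\pi|_U(u)$ lies in $X \subseteq U$ and satisfies $\pi|_U\bigl(\pi|_U(u)\bigr) = \pi|_U(u)$ (since $\pi$ restricts to the identity on $X$); injectivity of $\pi|_U$ then yields $u = \pi|_U(u) \in X$, giving $U \subseteq X$ and hence $U = X$.

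The argument is essentially routine once one notices that the projection $\pi$ is the right tool. There is no genuine obstacle here; the only mildly delicate point is remembering that Lemma \ref{lem.freemod.2} converts a surjection between free modules of equal finite rank into an isomorphism, which is exactly what upgrades the spanning hypothesis on $U$ into a freeness statement and ultimately into the equality $X = U$.
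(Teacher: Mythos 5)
Your proof is correct and follows essentially the same route as the paper: project along $Y$ onto $X$, compose with a surjection $\mathbf{k}^r \to U$, and apply Lemma \ref{lem.freemod.2} to the resulting surjection between free modules of rank $r$. The only cosmetic difference is the final step, where you deduce $U \subseteq X$ from the injectivity of $\pi|_U$ and the idempotence $\pi|_U(\pi|_U(u)) = \pi|_U(u)$, whereas the paper identifies the inverse of $\pi|_U$ with the inclusion $\iota\colon X \to U$ and concludes that $\iota$ is surjective; both variants are equally valid.
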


\begin{proof}
[Proof of Lemma \ref{lem.freemod.3}.]Let $\pi:Z\rightarrow X$ be the canonical
projection from the direct sum $Z=X\oplus Y$ onto its addend $X$. Let
$\iota:X\rightarrow U$ be the canonical injection. Then, the composition%
\[
X\overset{\iota}{\longrightarrow}U\overset{\pi\mid_{U}}{\longrightarrow}X
\]
is just $\operatorname*{id}\nolimits_{X}$ (since $\pi\mid_{X}%
=\operatorname*{id}\nolimits_{X}$). Hence, the map $\pi\mid_{U}$ is surjective.

We assumed that the $\mathbf{k}$-module $U$ can be spanned by $r$ elements.
Thus, there is a surjective $\mathbf{k}$-module homomorphism $u:\mathbf{k}%
^{r}\rightarrow U$. Consider this $u$.

Both $\mathbf{k}$-modules $\mathbf{k}^{r}$ and $X$ are free of rank $r$ (since
$X$ has a basis with $r$ elements). The composition
\[
\mathbf{k}^{r}\overset{u}{\longrightarrow}U\overset{\pi\mid_{U}%
}{\longrightarrow}X
\]
is surjective (since both $u$ and $\pi\mid_{U}$ are surjective), and thus is a
$\mathbf{k}$-module isomorphism (by Lemma \ref{lem.freemod.2}, applied to
$\mathbf{k}^{r}$ and $X$ instead of $X$ and $Y$). Hence, it is injective.
Thus, $u$ is injective. Since $u$ is also surjective, we thus conclude that
$u$ is bijective, and therefore a $\mathbf{k}$-module isomorphism. Since both
$u$ and the composition $\mathbf{k}^{r}\overset{u}{\longrightarrow
}U\overset{\pi\mid_{U}}{\longrightarrow}X$ are $\mathbf{k}$-module
isomorphisms, we now conclude that the map $\pi\mid_{U}$ is a $\mathbf{k}%
$-module isomorphism. Hence, it has an inverse. But this inverse must be
$\iota$ (since the composition $X\overset{\iota}{\longrightarrow}%
U\overset{\pi\mid_{U}}{\longrightarrow}X$ is $\operatorname*{id}\nolimits_{X}%
$). Thus, $\iota$ is a $\mathbf{k}$-module isomorphism, too. Thus, in
particular, $\iota$ is surjective. Therefore, $U=\iota\left(  X\right)  =X$.
This proves Lemma \ref{lem.freemod.3}.
\end{proof}

\subsection{The symmetric polynomials $h_{\nu}$}

\begin{definition}
Let $\ell\in\mathbb{N}$, and let $\nu=\left(  \nu_{1},\nu_{2},\ldots,\nu
_{\ell}\right)  \in\mathbb{Z}^{\ell}$ be any $\ell$-tuple of integers. Then,
we define the symmetric polynomial $h_{\nu}\in\mathcal{S}$ as follows:%
\[
h_{\nu}=h_{\nu_{1}}h_{\nu_{2}}\cdots h_{\nu_{\ell}}.
\]

\end{definition}

Note that the polynomial $h_{\nu}$ does not change if we permute the entries
of the $\ell$-tuple $\nu$. If an $\ell$-tuple $\nu$ of integers contains any
negative entries, then $h_{\nu}=0$ (since $h_{i}=0$ for any $i<0$). Also, if
an $\ell$-tuple $\nu$ of integers contains any entry $=0$, then we can remove
this entry without changing $h_{\nu}$ (since $h_{0}=1$).

\subsection{The submodules $L_{p}$ and $H_{p}$ of $\mathcal{S}/I$}

It is time to define two further filtrations of the $\mathbf{k}$-module
$\mathcal{S}/I$ (in addition to the filtration $\left(  Q_{p}\right)
_{p\in\mathbb{Z}}$ from Definition \ref{def.Qp}):

\begin{definition}
\label{def.LpHp}\textbf{(a)} If $\lambda$ is a partition, then $\ell\left(
\lambda\right)  $ shall denote the \textit{length} of $\lambda$; this is
defined as the number of positive entries of $\lambda$. Note that $\ell\left(
\lambda\right)  \leq k$ for each $\lambda\in P_{k,n}$.

\textbf{(b)} For each $p\in\mathbb{Z}$, we let $L_{p}$ denote the $\mathbf{k}%
$-submodule of $\mathcal{S}/I$ spanned by the $\overline{s_{\lambda}}$ with
$\lambda\in P_{k,n}$ satisfying $\ell\left(  \lambda\right)  \leq p$.

\textbf{(c)} For each $p\in\mathbb{Z}$, we let $H_{p}$ denote the $\mathbf{k}%
$-submodule of $\mathcal{S}/I$ spanned by the $\overline{h_{\lambda}}$ with
$\lambda\in P_{k,n}$ satisfying $\ell\left(  \lambda\right)  \leq p$.
\end{definition}

The only partition $\lambda$ satisfying $\ell\left(  \lambda\right)  \leq0$ is
the empty partition $\varnothing=\left(  {}\right)  $; it belongs to $P_{k,n}$
and satisfies $\overline{s_{\lambda}}=1$. Hence, $L_{0}$ is the $\mathbf{k}%
$-submodule of $\mathcal{S}/I$ spanned by $1$. Similarly, $H_{0}$ is the same
$\mathbf{k}$-submodule.

Also, $L_{k}$ is the $\mathbf{k}$-submodule of $\mathcal{S}/I$ spanned by all
$\overline{s_{\lambda}}$ with $\lambda\in P_{k,n}$ (because each $\lambda\in
P_{k,n}$ satisfies $\ell\left(  \lambda\right)  \leq k$). But the latter
$\mathbf{k}$-submodule is $\mathcal{S}/I$ itself (by Theorem \ref{thm.S/J}).
Thus, we conclude that $L_{k}$ is $\mathcal{S}/I$ itself. In other words,%
\[
L_{k}=\mathcal{S}/I.
\]

Clearly, $L_{0}\subseteq L_{1}\subseteq L_{2}\subseteq\cdots$ and
$H_{0}\subseteq H_{1}\subseteq H_{2}\subseteq\cdots$. We shall soon see that
the families $\left(  L_{p}\right)  _{p\in\mathbb{Z}}$ and $\left(
H_{p}\right)  _{p\in\mathbb{Z}}$ are identical (Proposition \ref{prop.Lp=Hp})
and are filtrations of the $\mathbf{k}$-algebra $\mathcal{S}/I$ (Proposition
\ref{prop.Lpalg}). First let us show a basic fact:

\begin{lemma}
\label{lem.Hp-nu}Let $p\in\mathbb{N}$ be such that $p\leq k$. Let $\nu=\left(
\nu_{1},\nu_{2},\ldots,\nu_{p}\right)  \in\mathbb{Z}^{p}$. Assume that
$\nu_{i}\leq n$ for each $i\in\left\{  1,2,\ldots,p\right\}  $. Then,
$\overline{h_{\nu}}\in H_{p}$.
\end{lemma}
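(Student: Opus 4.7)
The strategy is to prove the lemma by strong induction on $p$. For the base case $p=0$, the tuple $\nu$ is empty, so $h_\nu = 1 = h_\varnothing$ with $\varnothing \in P_{k,n}$ and $\ell(\varnothing) = 0$; thus $\overline{h_\nu} = \overline{h_\varnothing} \in H_0$.

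For the induction step, I would exploit two simple facts: first, that $h_\nu$ is invariant under permutations of the entries of $\nu$; second, that under the standing assumption $a_1,a_2,\ldots,a_k \in \mathbf{k}$ (Convention \ref{conv.symmetry-conv}), the congruence (\ref{eq.h=amodI2}) lets us rewrite $\overline{h_j}$ for any $j \in \{n-k+1,\ldots,n\}$ as the scalar $\overline{a_{j-n+k}} \in \mathbf{k}$. Armed with these, I would split into four cases according to the entries of $\nu$. If some $\nu_i < 0$, then $h_{\nu_i} = 0$ kills the product and $\overline{h_\nu} = 0 \in H_p$ trivially. If some $\nu_i = 0$, then $h_{\nu_i} = 1$ lets us drop this entry, so $h_\nu = h_{\nu'}$ for a tuple $\nu' \in \mathbb{Z}^{p-1}$ satisfying the same hypothesis, and the induction hypothesis gives $\overline{h_{\nu'}} \in H_{p-1} \subseteq H_p$. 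If some $\nu_i$ lies in $\{n-k+1,\ldots,n\}$, then (\ref{eq.h=amodI2}) yields $\overline{h_{\nu_i}} = \overline{a_{\nu_i-n+k}} \in \mathbf{k}$; the remaining factor $\overline{h_{\nu''}}$ (with $\nu''$ obtained by deleting that entry) lies in $H_{p-1}$ by induction, and since $H_{p-1}$ is a $\mathbf{k}$-submodule, the whole product $\overline{h_\nu}$ lies in $H_{p-1} \subseteq H_p$.

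In the remaining case, every entry satisfies $1 \leq \nu_i \leq n-k$; then sorting $\nu$ in weakly decreasing order yields a partition $\lambda \in P_{k,n}$ of length exactly $p \leq k$ with $h_\nu = h_\lambda$, so $\overline{h_\nu} \in H_p$ directly by the definition of $H_p$. The case analysis is exhaustive, since after ruling out negative, zero, and ``large'' entries, only entries in $\{1,\ldots,n-k\}$ can remain. No significant obstacle is expected; the argument is essentially a bookkeeping application of (\ref{eq.h=amodI2}). The one conceptual point worth flagging is that the third case genuinely requires $a_j \in \mathbf{k}$: if the $a_j$ were non-constant symmetric polynomials, reducing $\overline{h_{\nu_i}}$ modulo $I$ would introduce additional symmetric factors that would need a finer filtration argument to handle.
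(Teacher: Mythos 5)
Your proposal is correct and uses essentially the same ingredients as the paper's proof: permutation-invariance of $h_{\nu}$, the vanishing $h_{\nu_i}=0$ for negative entries, the reduction $\overline{h_{\nu_i}}=\overline{a_{\nu_i-(n-k)}}\in\mathbf{k}$ for entries in $\left\{ n-k+1,\ldots,n\right\}$ via (\ref{eq.h=amodI}), and the observation that the remaining small entries form a partition in $P_{k,n}$ with at most $p$ parts. The only difference is organizational: the paper sorts $\nu$ once and handles all the large entries simultaneously in a single non-inductive argument, whereas you peel off one entry at a time by induction on $p$ — both are fine.
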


(The condition \textquotedblleft$p\leq k$\textquotedblright\ can be removed
from this lemma, but we aren't yet at the point where this is easy to see. We
will show this in Proposition \ref{prop.Hp-nu2} below.)

\begin{proof}
[Proof of Lemma \ref{lem.Hp-nu}.]We WLOG assume that $\nu_{1}\geq\nu_{2}%
\geq\cdots\geq\nu_{p}$ (since otherwise, we can just permute the entries of
$\nu$ to achieve this). Let $j$ be the number of $i\in\left\{  1,2,\ldots
,p\right\}  $ satisfying $\nu_{i}>n-k$. Then,%
\[
\nu_{1}\geq\nu_{2}\geq\cdots\geq\nu_{j}>n-k\geq\nu_{j+1}\geq\nu_{j+2}%
\geq\cdots\geq\nu_{p}.
\]

We WLOG assume that all of the $\nu_{1},\nu_{2},\ldots,\nu_{p}$ are
nonnegative (since otherwise, we have $h_{\nu}=0$ and thus $\overline{h_{\nu}%
}=0\in H_{p}$).

Now,
\begin{equation}
\overline{h_{\nu_{i}}}\in\mathbf{k}\ \ \ \ \ \ \ \ \ \ \text{for each }%
i\in\left\{  1,2,\ldots,j\right\}  . \label{pf.lem.Hp-nu.high}%
\end{equation}

[\textit{Proof of (\ref{pf.lem.Hp-nu.high}):} Let $i\in\left\{  1,2,\ldots
,j\right\}  $. Then, $\nu_{i}>n-k$ (since $\nu_{1}\geq\nu_{2}\geq\cdots\geq
\nu_{j}>n-k$), but also $\nu_{i}\leq n$ (by the assumptions of Lemma
\ref{lem.Hp-nu}). Thus, $n-k<\nu_{i}\leq n$, so that $\nu_{i}\in\left\{
n-k+1,n-k+2,\ldots,n\right\}  $ and thus $\nu_{i}-\left(  n-k\right)
\in\left\{  1,2,\ldots,k\right\}  $. Hence, (\ref{eq.h=amodI}) (applied to
$\nu_{i}-\left(  n-k\right)  $ instead of $j$) yields $h_{\nu_{i}}\equiv
a_{\nu_{i}-\left(  n-k\right)  }\operatorname{mod}I$. Hence, $\overline
{h_{\nu_{i}}}=\overline{a_{\nu_{i}-\left(  n-k\right)  }}\in\mathbf{k}$. This
proves (\ref{pf.lem.Hp-nu.high}).]

Furthermore, $\left(  \nu_{j+1},\nu_{j+2},\ldots,\nu_{p}\right)  $ is a
partition (since $\nu_{j+1}\geq\nu_{j+2}\geq\cdots\geq\nu_{p}$ and since all
of the $\nu_{1},\nu_{2},\ldots,\nu_{p}$ are nonnegative) with at most $k$
entries (indeed, its number of entries is $\leq p-j\leq p\leq k$), and all of
its entries are $\leq n-k$ (since $n-k\geq\nu_{j+1}\geq\nu_{j+2}\geq\cdots
\geq\nu_{p}$). Hence, $\left(  \nu_{j+1},\nu_{j+2},\ldots,\nu_{p}\right)  $
belongs to $P_{k,n}$.

From $\left(  \nu_{j+1},\nu_{j+2},\ldots,\nu_{p}\right)  \in P_{k,n}$ and
$\ell\left(  \nu_{j+1},\nu_{j+2},\ldots,\nu_{p}\right)  \leq p-j\leq p$, we
obtain $\overline{h_{\left(  \nu_{j+1},\nu_{j+2},\ldots,\nu_{p}\right)  }}\in
H_{p}$ (by the definition of $H_{p}$).

Now, the definition of $h_{\nu}$ yields $h_{\nu}=h_{\nu_{1}}h_{\nu_{2}}\cdots
h_{\nu_{p}}$, so that%
\begin{align*}
\overline{h_{\nu}}  &  =\overline{h_{\nu_{1}}h_{\nu_{2}}\cdots h_{\nu_{p}}%
}=\overline{h_{\nu_{1}}}\overline{h_{\nu_{2}}}\cdots\overline{h_{\nu_{p}}%
}=\underbrace{\left(  \overline{h_{\nu_{1}}}\overline{h_{\nu_{2}}}%
\cdots\overline{h_{\nu_{j}}}\right)  }_{\substack{\in\mathbf{k}\\\text{(by
(\ref{pf.lem.Hp-nu.high}))}}}\underbrace{\left(  \overline{h_{\nu_{j+1}}%
}\overline{h_{\nu_{j+2}}}\cdots\overline{h_{\nu_{p}}}\right)  }%
_{\substack{=\overline{h_{\nu_{j+1}}h_{\nu_{j+2}}\cdots h_{\nu_{p}}%
}\\=\overline{h_{\left(  \nu_{j+1},\nu_{j+2},\ldots,\nu_{p}\right)  }}\in
H_{p}}}\\
&  \in\mathbf{k}H_{p}\subseteq H_{p}.
\end{align*}
This proves Lemma \ref{lem.Hp-nu}.
\end{proof}

\begin{lemma}
\label{lem.Lp-basis}Let $p\in\mathbb{Z}$. Then, the family $\left(
\overline{s_{\lambda}}\right)  _{\lambda\in P_{k,n};\ \ell\left(
\lambda\right)  \leq p}$ is a basis of the $\mathbf{k}$-module $L_{p}$.
\end{lemma}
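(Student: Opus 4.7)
The plan is to observe that this lemma is almost immediate from Theorem \ref{thm.S/J}, so the work amounts to recording the two standard observations and keeping careful track of what \emph{basis} means here.

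First I would note that by the very definition of $L_p$ in Definition \ref{def.LpHp} \textbf{(b)}, the family $\left(\overline{s_{\lambda}}\right)_{\lambda \in P_{k,n};\ \ell(\lambda) \leq p}$ spans $L_p$ as a $\mathbf{k}$-module. This disposes of the spanning part with no real argument.

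Next, for linear independence, I would invoke Theorem \ref{thm.S/J}, which tells us that the \emph{full} family $\left(\overline{s_{\lambda}}\right)_{\lambda \in P_{k,n}}$ is a basis of $\mathcal{S}/I$, and in particular is $\mathbf{k}$-linearly independent. Any subfamily of a $\mathbf{k}$-linearly independent family is again $\mathbf{k}$-linearly independent, so the subfamily indexed by those $\lambda \in P_{k,n}$ with $\ell(\lambda) \leq p$ remains linearly independent inside $\mathcal{S}/I$, and hence inside the submodule $L_p$.

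Combining spanning and linear independence, the family is a basis of $L_p$. The only very minor edge cases worth a sentence are $p < 0$ (in which case the indexing set is empty, and $L_p = 0$ since it is an empty span) and $p \geq k$ (in which case $\ell(\lambda) \leq k \leq p$ for every $\lambda \in P_{k,n}$, so the family is the full basis from Theorem \ref{thm.S/J} and $L_p = \mathcal{S}/I$). I do not expect any real obstacle here; the lemma is essentially a bookkeeping consequence of Theorem \ref{thm.S/J}, and the genuine content of the section will appear only in subsequent lemmas that relate $L_p$ to $H_p$.
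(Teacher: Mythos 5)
Your proposal is correct and follows essentially the same route as the paper: spanning is immediate from the definition of $L_{p}$, and linear independence is inherited from the full family $\left(\overline{s_{\lambda}}\right)_{\lambda\in P_{k,n}}$, which is a basis of $\mathcal{S}/I$ by Theorem \ref{thm.S/J}. No gaps.
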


\begin{proof}
[Proof of Lemma \ref{lem.Lp-basis}.]Theorem \ref{thm.S/J} yields that $\left(
\overline{s_{\lambda}}\right)  _{\lambda\in P_{k,n}}$ is a basis of the
$\mathbf{k}$-module $\mathcal{S}/I$. Hence, this family $\left(
\overline{s_{\lambda}}\right)  _{\lambda\in P_{k,n}}$ is $\mathbf{k}$-linearly
independent. Thus, its subfamily $\left(  \overline{s_{\lambda}}\right)
_{\lambda\in P_{k,n};\ \ell\left(  \lambda\right)  \leq p}$ is $\mathbf{k}%
$-linearly independent as well. Moreover, this subfamily $\left(
\overline{s_{\lambda}}\right)  _{\lambda\in P_{k,n};\ \ell\left(
\lambda\right)  \leq p}$ spans the $\mathbf{k}$-module $L_{p}$ (by the
definition of $L_{p}$). Hence, this subfamily $\left(  \overline{s_{\lambda}%
}\right)  _{\lambda\in P_{k,n};\ \ell\left(  \lambda\right)  \leq p}$ is a
basis of the $\mathbf{k}$-module $L_{p}$. This proves Lemma \ref{lem.Lp-basis}.
\end{proof}

\begin{lemma}
\label{lem.Lp=Hp.restr}Let $p\in\left\{  0,1,\ldots,k\right\}  $. Then,
$L_{p}=H_{p}$.
\end{lemma}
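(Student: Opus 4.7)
The plan is to show the inclusion $L_p \subseteq H_p$ by direct computation via Jacobi--Trudi, and then upgrade it to equality by invoking Lemma~\ref{lem.freemod.3} for a rank comparison. Both $L_p$ and $H_p$ have spanning families of size $r := \left|\left\{\lambda \in P_{k,n} \ \mid\ \ell(\lambda) \leq p\right\}\right|$, and $L_p$ already has a basis of that size by Lemma~\ref{lem.Lp-basis}, so once we know $L_p \subseteq H_p$ the two will coincide.

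First I would prove that $\overline{s_\lambda} \in H_p$ for every $\lambda \in P_{k,n}$ with $\ell(\lambda) \leq p$. Writing $\lambda = (\lambda_1, \lambda_2, \ldots, \lambda_p)$, Proposition~\ref{prop.jacobi-trudi.Sh}~\textbf{(b)} gives
\[
s_\lambda \;=\; \det\!\left(\left(h_{\lambda_u - u + v}\right)_{1 \leq u \leq p,\ 1 \leq v \leq p}\right) \;=\; \sum_{\sigma \in S_p} (-1)^\sigma \, h_{\nu(\sigma)},
\]
where $\nu(\sigma) = \left(\lambda_1 - 1 + \sigma(1),\ \lambda_2 - 2 + \sigma(2),\ \ldots,\ \lambda_p - p + \sigma(p)\right) \in \mathbb{Z}^p$. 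Since $\lambda \in P_{k,n}$, every entry satisfies $\lambda_u \leq n - k$; since $u \geq 1$ and $\sigma(u) \leq p \leq k$, each entry of $\nu(\sigma)$ is bounded by $(n-k) - 1 + k = n - 1 \leq n$. Hence Lemma~\ref{lem.Hp-nu} applies to each $\nu(\sigma)$, yielding $\overline{h_{\nu(\sigma)}} \in H_p$. Summing over $\sigma$ gives $\overline{s_\lambda} \in H_p$, and as $\lambda$ ranges over $P_{k,n}$ with $\ell(\lambda) \leq p$ we obtain $L_p \subseteq H_p$.

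Next I would set up the rank argument. Theorem~\ref{thm.S/J} implies that $\mathcal{S}/I = L_p \oplus Y$, where $Y$ is the $\mathbf{k}$-submodule spanned by the $\overline{s_\lambda}$ with $\lambda \in P_{k,n}$ and $\ell(\lambda) > p$. Lemma~\ref{lem.Lp-basis} tells us that $L_p$ is free with a basis of $r$ elements. On the other hand, $H_p$ is by definition spanned by the $r$ elements $\overline{h_\lambda}$ for $\lambda \in P_{k,n}$ with $\ell(\lambda) \leq p$. Thus Lemma~\ref{lem.freemod.3}, applied with $Z = \mathcal{S}/I$, $X = L_p$, $Y$ as above, $U = H_p$, and the rank $r$, gives $L_p = H_p$.

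The main obstacle is verifying the bound $\nu_i \leq n$ in the Jacobi--Trudi expansion; this is where the hypothesis $p \leq k$ is used essentially. The rest is bookkeeping: confirming that the spanning set of $H_p$ and the basis of $L_p$ really have the same cardinality $r$, and that Lemma~\ref{lem.freemod.3} applies cleanly with the direct sum decomposition provided by Theorem~\ref{thm.S/J}.
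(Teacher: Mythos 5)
Your proposal is correct and follows essentially the same route as the paper: the Jacobi--Trudi expansion combined with Lemma~\ref{lem.Hp-nu} (using $p\leq k$ exactly as you indicate) to get $L_p\subseteq H_p$, and then Lemma~\ref{lem.freemod.3} applied to the decomposition $\mathcal{S}/I=L_p\oplus L_p'$ from Theorem~\ref{thm.S/J} together with the basis from Lemma~\ref{lem.Lp-basis} to upgrade the inclusion to equality. The only (harmless) difference is your slightly sharper bound $\lambda_u-u+\sigma(u)\leq n-1$ where the paper settles for $\leq n$.
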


(This lemma holds more generally for all $p\in\mathbb{Z}$, as we shall see in
Lemma \ref{prop.Lp=Hp} below.)

\begin{proof}
[Proof of Lemma \ref{lem.Lp=Hp.restr}.]Let $\lambda\in P_{k,n}$ be such that
$\ell\left(  \lambda\right)  \leq p$. We shall show that $\overline
{s_{\lambda}}\in H_{p}$.

Indeed, let $S_{p}$ denote the group of permutations of $\left\{
1,2,\ldots,p\right\}  $. For each $\sigma\in S_{p}$, let $\left(  -1\right)
^{\sigma}$ denote the sign of $\sigma$.

For each $\sigma\in S_{p}$, we have%
\begin{equation}
\overline{\prod_{i=1}^{p}h_{\lambda_{i}-i+\sigma\left(  i\right)  }}\in H_{p}.
\label{pf.prop.Lp=Hp.1}%
\end{equation}

[\textit{Proof of (\ref{pf.prop.Lp=Hp.1}):} Let $\sigma\in S_{p}$. Then, each
$i\in\left\{  1,2,\ldots,p\right\}  $ satisfies
\[
\underbrace{\lambda_{i}}_{\substack{\leq n-k\\\text{(since }\lambda\in
P_{k,n}\text{)}}}-\underbrace{i}_{\geq0}+\underbrace{\sigma\left(  i\right)
}_{\leq p\leq k}\leq n-k+0+k=n.
\]
Thus, Lemma \ref{lem.Hp-nu} (applied to $\left(  \lambda_{1}-1+\sigma\left(
1\right)  ,\lambda_{2}-2+\sigma\left(  2\right)  ,\ldots,\lambda_{p}%
-p+\sigma\left(  p\right)  \right)  $ and $\lambda_{i}-i+\sigma\left(
i\right)  $ instead of $\nu$ and $\nu_{i}$) yields
\[
\overline{h_{\left(  \lambda_{1}-1+\sigma\left(  1\right)  ,\lambda
_{2}-2+\sigma\left(  2\right)  ,\ldots,\lambda_{p}-p+\sigma\left(  p\right)
\right)  }}\in H_{p}%
\]
(since $p\leq k$). In view of%
\[
h_{\left(  \lambda_{1}-1+\sigma\left(  1\right)  ,\lambda_{2}-2+\sigma\left(
2\right)  ,\ldots,\lambda_{p}-p+\sigma\left(  p\right)  \right)  }=\prod
_{i=1}^{p}h_{\lambda_{i}-i+\sigma\left(  i\right)  },
\]
this rewrites as $\overline{\prod_{i=1}^{p}h_{\lambda_{i}-i+\sigma\left(
i\right)  }}\in H_{p}$. Thus, (\ref{pf.prop.Lp=Hp.1}) is proven.]

We have $\ell\left(  \lambda\right)  \leq p$ and thus $\lambda=\left(
\lambda_{1},\lambda_{2},\ldots,\lambda_{p}\right)  $. Hence, Proposition
\ref{prop.jacobi-trudi.Sh} \textbf{(b)} yields%
\[
s_{\lambda}=\det\left(  \left(  h_{\lambda_{u}-u+v}\right)  _{1\leq u\leq
p,\ 1\leq v\leq p}\right)  =\sum_{\sigma\in S_{p}}\left(  -1\right)  ^{\sigma
}\prod_{i=1}^{p}h_{\lambda_{i}-i+\sigma\left(  i\right)  }%
\]
(by the definition of a determinant). Projecting both sides of this equality
onto $\mathcal{S}/I$, we obtain%
\[
\overline{s_{\lambda}}=\overline{\sum_{\sigma\in S_{p}}\left(  -1\right)
^{\sigma}\prod_{i=1}^{p}h_{\lambda_{i}-i+\sigma\left(  i\right)  }}%
=\sum_{\sigma\in S_{p}}\left(  -1\right)  ^{\sigma}\underbrace{\overline
{\prod_{i=1}^{p}h_{\lambda_{i}-i+\sigma\left(  i\right)  }}}_{\substack{\in
H_{p}\\\text{(by (\ref{pf.prop.Lp=Hp.1}))}}}\in H_{p}.
\]

Now, forget that we fixed $\lambda$. We thus have proven that%
\[
\overline{s_{\lambda}}\in H_{p}\ \ \ \ \ \ \ \ \ \ \text{for each }\lambda\in
P_{k,n}\text{ satisfying }\ell\left(  \lambda\right)  \leq p\text{.}%
\]
Therefore, $L_{p}\subseteq H_{p}$ (since $L_{p}$ is the $\mathbf{k}$-submodule
of $\mathcal{S}/I$ spanned by the $\overline{s_{\lambda}}$ with $\lambda\in
P_{k,n}$ satisfying $\ell\left(  \lambda\right)  \leq p$).

Lemma \ref{lem.Lp-basis} yields that the family $\left(  \overline{s_{\lambda
}}\right)  _{\lambda\in P_{k,n};\ \ell\left(  \lambda\right)  \leq p}$ is a
basis of the $\mathbf{k}$-module $L_{p}$.

Now, let $L_{p}^{\prime}$ be the $\mathbf{k}$-submodule of $\mathcal{S}/I$
spanned by the $\overline{s_{\lambda}}$ with $\lambda\in P_{k,n}$ satisfying
$\ell\left(  \lambda\right)  >p$. Recall (from Theorem \ref{thm.S/J}) that
$\left(  \overline{s_{\lambda}}\right)  _{\lambda\in P_{k,n}}$ is a basis of
the $\mathbf{k}$-module $\mathcal{S}/I$. Hence, $\mathcal{S}/I=L_{p}\oplus
L_{p}^{\prime}$ (since each $\lambda\in P_{k,n}$ satisfies either $\ell\left(
\lambda\right)  \leq p$ or $\ell\left(  \lambda\right)  >p$ but not both). Let
$r$ be the number of all $\lambda\in P_{k,n}$ satisfying $\ell\left(
\lambda\right)  \leq p$. Then, the $\mathbf{k}$-module $H_{p}$ can be spanned
by $r$ elements (namely, by the $\overline{h_{\lambda}}$ with $\lambda\in
P_{k,n}$ satisfying $\ell\left(  \lambda\right)  \leq p$), whereas the
$\mathbf{k}$-module $L_{p}$ has a basis with $r$ elements (namely, the family
$\left(  \overline{s_{\lambda}}\right)  _{\lambda\in P_{k,n};\ \ell\left(
\lambda\right)  \leq p}$). Thus, Lemma \ref{lem.freemod.3} (applied to
$Z=\mathcal{S}/I$, $X=L_{p}$, $Y=L_{p}^{\prime}$ and $U=H_{p}$) yields
$L_{p}=H_{p}$. This proves Lemma \ref{lem.Lp=Hp.restr}.
\end{proof}

\begin{proposition}
\label{prop.Lp=Hp}Let $p\in\mathbb{Z}$. Then, $L_{p}=H_{p}$.
\end{proposition}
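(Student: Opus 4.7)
The restricted case $p \in \{0, 1, \ldots, k\}$ is already established in Lemma \ref{lem.Lp=Hp.restr}, so the plan is simply to extend this to all integers $p$ by observing that both filtrations $(L_p)_{p \in \mathbb{Z}}$ and $(H_p)_{p \in \mathbb{Z}}$ stabilize outside the range $\{0, 1, \ldots, k\}$.

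First I would dispose of the case $p < 0$: since every partition $\lambda$ satisfies $\ell(\lambda) \geq 0 > p$, the spanning families defining $L_p$ and $H_p$ are both empty, so $L_p = 0 = H_p$ trivially. Next, for $p > k$, I would observe that every $\lambda \in P_{k,n}$ has at most $k$ parts (by the definition of $P_{k,n}$), so $\ell(\lambda) \leq k \leq p$. Hence the conditions ``$\lambda \in P_{k,n}$ and $\ell(\lambda) \leq p$'' and ``$\lambda \in P_{k,n}$ and $\ell(\lambda) \leq k$'' define the same set of partitions. This gives $L_p = L_k$ and $H_p = H_k$, and these two modules coincide by Lemma \ref{lem.Lp=Hp.restr} applied with $p = k$. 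The remaining middle range $0 \leq p \leq k$ is precisely Lemma \ref{lem.Lp=Hp.restr}.

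There is no substantial obstacle here: the proposition is an immediate corollary of the restricted version once one notes the trivial stabilization of the filtrations outside $\{0,1,\ldots,k\}$. The only real content has already been absorbed into Lemma \ref{lem.Lp=Hp.restr} (which in turn relies on the Jacobi--Trudi identity of Proposition \ref{prop.jacobi-trudi.Sh} \textbf{(b)}, the rank comparison Lemma \ref{lem.freemod.3}, and Lemma \ref{lem.Hp-nu} to express each $\overline{s_\lambda}$ with $\ell(\lambda) \leq p$ inside $H_p$).
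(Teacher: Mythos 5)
Your proof is correct and follows essentially the same route as the paper: dispose of $p<0$ trivially, quote Lemma \ref{lem.Lp=Hp.restr} for $0\leq p\leq k$, and reduce $p>k$ to the case $p=k$ by stabilization. The only (cosmetic) difference is that for $p>k$ you note the spanning families of $L_p$ and $H_p$ literally coincide with those of $L_k$ and $H_k$, whereas the paper sandwiches both modules between $L_k=H_k=\mathcal{S}/I$ using the inclusions $L_k\subseteq L_p$ and $H_k\subseteq H_p$; both arguments are equally valid.
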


\begin{proof}
[Proof of Proposition \ref{prop.Lp=Hp}.]If $p$ is negative, then both $L_{p}$
and $H_{p}$ equal $0$ (since there exists no $\lambda\in P_{k,n}$ satisfying
$\ell\left(  \lambda\right)  \leq p$ in this case). Thus, if $p$ is negative,
then $L_{p}=H_{p}$ is obviously true. Hence, for the rest of this proof, we
WLOG assume that $p$ is not negative. Thus, $p\in\mathbb{N}$.

If $p\in\left\{  0,1,\ldots,k\right\}  $, then $L_{p}=H_{p}$ follows from
Lemma \ref{lem.Lp=Hp.restr}. Hence, for the rest of this proof, we WLOG assume
that $p\notin\left\{  0,1,\ldots,k\right\}  $. Thus, $p>k$ (since
$p\in\mathbb{N}$). Hence, $k<p$, so that $H_{k}\subseteq H_{p}$ (since
$H_{0}\subseteq H_{1}\subseteq H_{2}\subseteq\cdots$). But Lemma
\ref{lem.Lp=Hp.restr} (applied to $k$ instead of $p$) yields $L_{k}=H_{k}$.

But recall that $L_{k}=\mathcal{S}/I$. Thus, $\mathcal{S}/I=L_{k}%
=H_{k}\subseteq H_{p}$. Thus, $H_{p}\supseteq\mathcal{S}/I\supseteq L_{p}$.

On the other hand, $k<p$ and thus $L_{k}\subseteq L_{p}$ (since $L_{0}%
\subseteq L_{1}\subseteq L_{2}\subseteq\cdots$). Hence, $L_{p}\supseteq
L_{k}=\mathcal{S}/I\supseteq H_{p}$. Combining this with $H_{p}\supseteq
L_{p}$, we obtain $L_{p}=H_{p}$. This proves Proposition \ref{prop.Lp=Hp}.
\end{proof}

\begin{corollary}
\label{cor.Hp-basis}Let $p\in\mathbb{Z}$. Then, the family $\left(
\overline{h_{\lambda}}\right)  _{\lambda\in P_{k,n};\ \ell\left(
\lambda\right)  \leq p}$ is a basis of the $\mathbf{k}$-module $L_{p}$.
\end{corollary}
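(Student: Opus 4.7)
The strategy is to combine Proposition \ref{prop.Lp=Hp} (which identifies $L_p$ with $H_p$) with Lemma \ref{lem.Lp-basis} (which pins down the rank of $L_p$) and then invoke Lemma \ref{lem.freemod-span-basis} to upgrade a spanning family to a basis.

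First, by the very definition of $H_p$, the family $\bigl(\overline{h_{\lambda}}\bigr)_{\lambda\in P_{k,n};\ \ell(\lambda)\leq p}$ spans the $\mathbf{k}$-module $H_p$. Proposition \ref{prop.Lp=Hp} asserts $L_p=H_p$, so this same family spans $L_p$. Second, Lemma \ref{lem.Lp-basis} tells us that the family $\bigl(\overline{s_{\lambda}}\bigr)_{\lambda\in P_{k,n};\ \ell(\lambda)\leq p}$ is a basis of the $\mathbf{k}$-module $L_p$; in particular, $L_p$ is free of finite rank equal to $r := \bigl|\{\lambda\in P_{k,n}\ :\ \ell(\lambda)\leq p\}\bigr|$ (finite because $P_{k,n}$ itself is finite).

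Now observe that the spanning family $\bigl(\overline{h_{\lambda}}\bigr)_{\lambda\in P_{k,n};\ \ell(\lambda)\leq p}$ has exactly $r$ elements as well, indexed by the same set. Thus we have a spanning family of $L_p$ whose size equals the size of a known basis of $L_p$. Applying Lemma \ref{lem.freemod-span-basis} (with $M = L_p$, the basis $\bigl(\overline{s_{\lambda}}\bigr)_{\lambda\in P_{k,n};\ \ell(\lambda)\leq p}$ in the role of $(b_s)_{s\in S}$, and the spanning family $\bigl(\overline{h_{\lambda}}\bigr)_{\lambda\in P_{k,n};\ \ell(\lambda)\leq p}$ in the role of $(a_u)_{u\in U}$) yields that the latter is itself a basis of $L_p$, as desired.

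There is essentially no obstacle: all the work has already been packed into Proposition \ref{prop.Lp=Hp}. The one thing worth double-checking is that both indexing sets are literally the same finite set $\{\lambda\in P_{k,n}\ :\ \ell(\lambda)\leq p\}$, so that the hypothesis $|U|=|S|$ of Lemma \ref{lem.freemod-span-basis} is satisfied on the nose; this is immediate from the definitions.
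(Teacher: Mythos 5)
Your proposal is correct and follows essentially the same route as the paper: spanning of $H_p$ by definition, the identification $L_p=H_p$ from Proposition \ref{prop.Lp=Hp}, the basis from Lemma \ref{lem.Lp-basis}, and the size-comparison Lemma \ref{lem.freemod-span-basis} to conclude. No gaps.
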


\begin{proof}
[Proof of Corollary \ref{cor.Hp-basis}.]Lemma \ref{lem.Lp-basis} yields that
the family $\left(  \overline{s_{\lambda}}\right)  _{\lambda\in P_{k,n}%
;\ \ell\left(  \lambda\right)  \leq p}$ is a basis of the $\mathbf{k}$-module
$L_{p}$. On the other hand, the family $\left(  \overline{h_{\lambda}}\right)
_{\lambda\in P_{k,n};\ \ell\left(  \lambda\right)  \leq p}$ spans the
$\mathbf{k}$-module $H_{p}$ (by the definition of $H_{p}$). In other words,
the family $\left(  \overline{h_{\lambda}}\right)  _{\lambda\in P_{k,n}%
;\ \ell\left(  \lambda\right)  \leq p}$ spans the $\mathbf{k}$-module $L_{p}$
(since Proposition \ref{prop.Lp=Hp} yields $L_{p}=H_{p}$). Since $\left\vert
\left\{  \lambda\in P_{k,n}\ \mid\ \ell\left(  \lambda\right)  \leq p\right\}
\right\vert =\left\vert \left\{  \lambda\in P_{k,n}\ \mid\ \ell\left(
\lambda\right)  \leq p\right\}  \right\vert $, we can therefore apply Lemma
\ref{lem.freemod-span-basis} to $L_{p}$, $\left(  \overline{s_{\lambda}%
}\right)  _{\lambda\in P_{k,n};\ \ell\left(  \lambda\right)  \leq p}$ and
$\left(  \overline{h_{\lambda}}\right)  _{\lambda\in P_{k,n};\ \ell\left(
\lambda\right)  \leq p}$ instead of $M$, $\left(  b_{s}\right)  _{s\in S}$ and
$\left(  a_{u}\right)  _{u\in U}$. We thus conclude that $\left(
\overline{h_{\lambda}}\right)  _{\lambda\in P_{k,n};\ \ell\left(
\lambda\right)  \leq p}$ is a basis of the $\mathbf{k}$-module $L_{p}$. This
proves Corollary \ref{cor.Hp-basis}.
\end{proof}

\begin{theorem}
\label{thm.S/J-h-basis}The family $\left(  \overline{h_{\lambda}}\right)
_{\lambda\in P_{k,n}}$ is a basis of the $\mathbf{k}$-module $\mathcal{S}/I$.
\end{theorem}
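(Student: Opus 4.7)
The plan is to simply specialize Corollary \ref{cor.Hp-basis} to the case $p = k$. That corollary already provides, for every $p \in \mathbb{Z}$, a basis $\left(\overline{h_\lambda}\right)_{\lambda \in P_{k,n};\ \ell(\lambda) \leq p}$ of the submodule $L_p$ of $\mathcal{S}/I$, so essentially all that remains is to translate the $p = k$ instance into the desired statement.

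The first observation I would make is that every partition $\lambda \in P_{k,n}$ has at most $k$ parts (this is part of the definition of $P_{k,n}$), which means $\ell(\lambda) \leq k$ is automatic. Consequently, the indexing set $\{\lambda \in P_{k,n} : \ell(\lambda) \leq k\}$ is the same as $P_{k,n}$ itself, and the family $\left(\overline{h_\lambda}\right)_{\lambda \in P_{k,n};\ \ell(\lambda) \leq k}$ coincides literally with $\left(\overline{h_\lambda}\right)_{\lambda \in P_{k,n}}$.

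Next I would invoke the identification $L_k = \mathcal{S}/I$ that was already established in the paragraph following Definition \ref{def.LpHp}: by Theorem \ref{thm.S/J} the family $\left(\overline{s_\lambda}\right)_{\lambda \in P_{k,n}}$ is a basis of $\mathcal{S}/I$, and since every such $\lambda$ automatically satisfies $\ell(\lambda) \leq k$, the submodule $L_k$ (which is the $\mathbf{k}$-span of those $\overline{s_\lambda}$) is all of $\mathcal{S}/I$.

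Combining these two observations, Corollary \ref{cor.Hp-basis} applied to $p = k$ yields that $\left(\overline{h_\lambda}\right)_{\lambda \in P_{k,n}}$ is a basis of $L_k = \mathcal{S}/I$, which is exactly the claim of Theorem \ref{thm.S/J-h-basis}. There is no real obstacle at this stage; the substance of the argument was already carried out in Lemma \ref{lem.Hp-nu} (the Jacobi--Trudi reduction showing $L_p \subseteq H_p$) and in the finite-rank spanning-versus-basis trick (Lemmas \ref{lem.freemod.2} and \ref{lem.freemod.3}) used to upgrade the inclusion $L_p \subseteq H_p$ to an equality in Lemma \ref{lem.Lp=Hp.restr}.
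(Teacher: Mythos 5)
Your proposal is correct and is essentially identical to the paper's own proof: the paper also deduces Theorem \ref{thm.S/J-h-basis} by applying Corollary \ref{cor.Hp-basis} with $p=k$, observing that every $\lambda\in P_{k,n}$ satisfies $\ell\left(\lambda\right)\leq k$ and that $L_{k}=\mathcal{S}/I$. No gaps; nothing to add.
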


\begin{proof}
[Proof of Theorem \ref{thm.S/J-h-basis}.]Corollary \ref{cor.Hp-basis} (applied
to $p=k$) shows that the family $\left(  \overline{h_{\lambda}}\right)
_{\lambda\in P_{k,n};\ \ell\left(  \lambda\right)  \leq k}$ is a basis of the
$\mathbf{k}$-module $L_{k}$. In view of $\left(  \overline{h_{\lambda}%
}\right)  _{\lambda\in P_{k,n};\ \ell\left(  \lambda\right)  \leq k}=\left(
\overline{h_{\lambda}}\right)  _{\lambda\in P_{k,n}}$ (since each $\lambda\in
P_{k,n}$ satisfies $\ell\left(  \lambda\right)  \leq k$) and $L_{k}%
=\mathcal{S}/I$, this rewrites as follows: The family $\left(  \overline
{h_{\lambda}}\right)  _{\lambda\in P_{k,n}}$ is a basis of the $\mathbf{k}%
$-module $\mathcal{S}/I$. This proves Theorem \ref{thm.S/J-h-basis}.
\end{proof}

\begin{proposition}
\label{prop.Hp-nu2}Let $p\in\mathbb{N}$. Let $\nu=\left(  \nu_{1},\nu
_{2},\ldots,\nu_{p}\right)  \in\mathbb{Z}^{p}$. Assume that $\nu_{i}\leq n$
for each $i\in\left\{  1,2,\ldots,p\right\}  $. Then, $\overline{h_{\nu}}\in
H_{p}$.
\end{proposition}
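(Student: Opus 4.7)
The plan is to reduce to Lemma \ref{lem.Hp-nu} by case analysis on whether $p \leq k$ or $p > k$; the hypothesis $\nu_i \leq n$ is only genuinely needed in the first case.

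\textbf{Case 1: $p \leq k$.} In this regime, I would simply invoke Lemma \ref{lem.Hp-nu} verbatim (it assumes precisely $p \leq k$, $\nu_i \leq n$, and concludes $\overline{h_\nu} \in H_p$). So nothing new has to be proved here.

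\textbf{Case 2: $p > k$.} The key observation is that in this range $H_p$ has already grown to fill the whole quotient ring. Indeed, every partition $\lambda \in P_{k,n}$ satisfies $\ell(\lambda) \leq k < p$, so the defining spanning family of $H_p$ (namely the $\overline{h_\lambda}$ with $\lambda \in P_{k,n}$ and $\ell(\lambda) \leq p$) is actually indexed by \emph{all} of $P_{k,n}$. By Theorem \ref{thm.S/J-h-basis}, this family is a basis of $\mathcal{S}/I$, so $H_p = \mathcal{S}/I$. Since $\overline{h_\nu}$ trivially lies in $\mathcal{S}/I$, we get $\overline{h_\nu} \in H_p$ immediately, with no condition on $\nu$ needed at all.

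There is no real obstacle: the proposition is essentially a repackaging of Lemma \ref{lem.Hp-nu} together with the observation that $H_p$ stabilizes at $\mathcal{S}/I$ once $p \geq k$. The one thing to be careful about is writing the case split cleanly and pointing out that the $\nu_i \leq n$ hypothesis is only used to invoke Lemma \ref{lem.Hp-nu} in Case 1. I would also, if desired, note explicitly (as a small remark before the proof) that Proposition \ref{prop.Hp-nu2} therefore subsumes Lemma \ref{lem.Hp-nu} and removes the parenthetical restriction \textquotedblleft$p \leq k$\textquotedblright\ announced after that lemma's statement.
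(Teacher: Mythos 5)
Your proof is correct and follows essentially the same route as the paper: the case $p\leq k$ is Lemma \ref{lem.Hp-nu}, and for $p>k$ one observes that $H_p$ is all of $\mathcal{S}/I$. The only (immaterial) difference is that you justify $H_p=\mathcal{S}/I$ via Theorem \ref{thm.S/J-h-basis}, whereas the paper uses $H_k\subseteq H_p$ together with $H_k=L_k=\mathcal{S}/I$ from Proposition \ref{prop.Lp=Hp}.
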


\begin{proof}
[Proof of Proposition \ref{prop.Hp-nu2}.]If $p\leq k$, then this follows from
Lemma \ref{lem.Hp-nu}. Thus, for the rest of this proof, we WLOG assume that
$p>k$. Hence, $k<p$, so that $H_{k}\subseteq H_{p}$ (since $H_{0}\subseteq
H_{1}\subseteq H_{2}\subseteq\cdots$). But Proposition \ref{prop.Lp=Hp}
(applied to $k$ instead of $p$) yields $H_{k}=L_{k}=\mathcal{S}/I$. Now,
$\overline{h_{\nu}}\in\mathcal{S}/I=H_{k}\subseteq H_{p}$. This proves
Proposition \ref{prop.Hp-nu2}.
\end{proof}

We recall that the $\mathbf{k}$-submodules of a given $\mathbf{k}$-algebra $A$
form a monoid under multiplication: The product $XY$ of two $\mathbf{k}%
$-submodules $X$ and $Y$ of $A$ is defined as the $\mathbf{k}$-linear span of
all products $xy$ with $x\in X$ and $y\in Y$. The neutral element of this
monoid is $\mathbf{k}\cdot1_{A}$. We shall specifically use this monoid in the
case when $A=\mathcal{S}/I$.

\begin{proposition}
\label{prop.Lpalg}The family $\left(  L_{p}\right)  _{p\in\mathbb{N}}$ is a
filtration of the $\mathbf{k}$-algebra $\mathcal{S}/I$; that is, we have%
\begin{align}
L_{0}  &  \subseteq L_{1}\subseteq L_{2}\subseteq\cdots
,\ \ \ \ \ \ \ \ \ \ \bigcup_{p\in\mathbb{N}}L_{p}=\mathcal{S}/I,\nonumber\\
1  &  \in L_{0},\ \ \ \ \ \ \ \ \ \ \text{and}\nonumber\\
L_{a}L_{b}  &  \subseteq L_{a+b}\ \ \ \ \ \ \ \ \ \ \text{for every }%
a,b\in\mathbb{N}. \label{eq.prop.Lpalg.3}%
\end{align}

\end{proposition}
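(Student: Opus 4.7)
The plan is to dispatch the four filtration axioms in order, observing that the first three are essentially immediate from the definitions and earlier results, so that the only content lies in the multiplicativity claim $L_a L_b \subseteq L_{a+b}$.

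For the trivial items: the chain $L_0 \subseteq L_1 \subseteq L_2 \subseteq \cdots$ is built into Definition \ref{def.LpHp}, since enlarging $p$ only adds spanning vectors. The equality $\bigcup_{p} L_p = \mathcal{S}/I$ follows because every $\lambda \in P_{k,n}$ satisfies $\ell(\lambda) \leq k$, hence $L_k = \mathcal{S}/I$ (this was already observed in the text just after Definition \ref{def.LpHp}). And $1 = \overline{s_{\varnothing}} \in L_0$, since $\varnothing \in P_{k,n}$ has length $0$.

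For the multiplicativity $L_a L_b \subseteq L_{a+b}$, the key idea is to switch from the Schur basis to the complete-homogeneous basis via Proposition \ref{prop.Lp=Hp}, which identifies $L_p = H_p$. Then $L_a L_b = H_a H_b$ is spanned by products $\overline{h_\lambda} \cdot \overline{h_\mu}$ where $\lambda, \mu \in P_{k,n}$ satisfy $\ell(\lambda) \leq a$ and $\ell(\mu) \leq b$. The crucial observation is that $h_\lambda h_\mu = h_\nu$, where $\nu$ is the tuple obtained by concatenating the positive parts of $\lambda$ and $\mu$ (in any order; recall that $h_\nu$ is symmetric in the entries of $\nu$). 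This $\nu$ has length at most $\ell(\lambda)+\ell(\mu) \leq a+b$, and every entry of $\nu$ is a part of either $\lambda$ or $\mu$, hence is at most $n-k \leq n$. Proposition \ref{prop.Hp-nu2}, applied to $p = a+b$ and this $\nu$, therefore gives $\overline{h_\nu} \in H_{a+b} = L_{a+b}$, as required.

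The only nontrivial step in this plan is the passage $L_p = H_p$ together with the application of Proposition \ref{prop.Hp-nu2}, but both are results already established in the excerpt, so no new difficulty arises. The cleanest obstacle to watch for is merely bookkeeping: making sure that when $\ell(\lambda) < a$ or $\ell(\mu) < b$ one still records $\nu$ as a tuple of the correct length bound (padding with no entries is fine), and that the bound $\nu_i \leq n-k \leq n$ (rather than just $\leq n$) holds so that Proposition \ref{prop.Hp-nu2}'s hypothesis is satisfied.
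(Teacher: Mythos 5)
Your proof is correct and takes essentially the same route as the paper: the only nontrivial axiom $L_aL_b\subseteq L_{a+b}$ is handled by passing to the $h$-generators via $L_p=H_p$ (Proposition \ref{prop.Lp=Hp}) and concatenating the tuples, exactly as in the paper's argument. The one (harmless) difference is that you invoke Proposition \ref{prop.Hp-nu2} directly, whereas the paper first disposes of the trivial case $a+b\geq k$ and then applies Lemma \ref{lem.Hp-nu} with $a+b\leq k$; since Proposition \ref{prop.Hp-nu2} is established before this proposition and does not depend on it, your shortcut is legitimate (just pad $\nu$ with zero entries, which are $\leq n$, to make it an $(a+b)$-tuple).
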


\begin{proof}
[Proof of Proposition \ref{prop.Lpalg}.]We already know that $L_{0}\subseteq
L_{1}\subseteq L_{2}\subseteq\cdots$. Also, $1\in L_{0}$ (since $L_{0}$ is the
$\mathbf{k}$-submodule of $\mathcal{S}/I$ spanned by $1$). Also,
$L_{k}=\mathcal{S}/I$, so that $\mathcal{S}/I=L_{k}\subseteq\bigcup
_{p\in\mathbb{N}}L_{p}$. Combining this with $\bigcup_{p\in\mathbb{N}}%
L_{p}\subseteq\mathcal{S}/I$, we obtain $\bigcup_{p\in\mathbb{N}}%
L_{p}=\mathcal{S}/I$.

Hence, it remains to prove that $L_{a}L_{b}\subseteq L_{a+b}$ for every
$a,b\in\mathbb{N}$. So let us fix $a,b\in\mathbb{N}$. We must prove that
$L_{a}L_{b}\subseteq L_{a+b}$.

If $a+b\geq k$, then this is obvious (because if $a+b\geq k$, then $k\leq
a+b$, hence $L_{k}\subseteq L_{a+b}$ (since $L_{0}\subseteq L_{1}\subseteq
L_{2}\subseteq\cdots$), hence $L_{a}L_{b}\subseteq\mathcal{S}/I=L_{k}\subseteq
L_{a+b}$). Hence, we WLOG assume that $a+b<k$.

We must prove that $L_{a}L_{b}\subseteq L_{a+b}$. It clearly suffices to show
that $fg\in L_{a+b}$ for each $f\in L_{a}$ and $g\in L_{b}$. So let us fix
$f\in L_{a}$ and $g\in L_{b}$; we must prove that $fg\in L_{a+b}$.

Proposition \ref{prop.Lp=Hp} yields that $L_{a}=H_{a}$. Thus, $f\in
L_{a}=H_{a}$, so that $f$ is a $\mathbf{k}$-linear combination of the
$\overline{h_{\lambda}}$ with $\lambda\in P_{k,n}$ satisfying $\ell\left(
\lambda\right)  \leq a$ (because $H_{a}$ is the $\mathbf{k}$-submodule of
$\mathcal{S}/I$ spanned by these $\overline{h_{\lambda}}$). Since the claim we
are proving (that is, $fg\in L_{a+b}$) depends $\mathbf{k}$-linearly on $f$,
we can thus WLOG assume that $f$ is one of those $\overline{h_{\lambda}}$. In
other words, we can WLOG assume that $f=\overline{h_{\alpha}}$ for some
$\alpha\in P_{k,n}$ satisfying $\ell\left(  \alpha\right)  \leq a$. Assume
this, and consider this $\alpha$. For similar reasons, we WLOG assume that
$g=\overline{h_{\beta}}$ for some $\beta\in P_{k,n}$ satisfying $\ell\left(
\beta\right)  \leq b$. Consider this $\beta$.

Note that each entry of $\alpha$ is $\leq n-k$ (since $\alpha\in P_{k,n}$),
and therefore $\leq n$. Thus, we can consider $\alpha$ as an $a$-tuple of
elements of $\left\{  0,1,\ldots,n\right\}  $ (since $\ell\left(
\alpha\right)  \leq a$). Likewise, consider $\beta$ as a $b$-tuple of elements
of $\left\{  0,1,\ldots,n\right\}  $.

Let $\gamma$ be the concatenation of the $a$-tuple $\alpha$ with the $b$-tuple
$\beta$. Thus, $\gamma$ is an $\left(  a+b\right)  $-tuple of elements of
$\left\{  0,1,\ldots,n\right\}  $ (since $\alpha$ is an $a$-tuple of elements
of $\left\{  0,1,\ldots,n\right\}  $ and since $\beta$ is a $b$-tuple of
elements of $\left\{  0,1,\ldots,n\right\}  $), and satisfies $h_{\gamma
}=h_{\alpha}h_{\beta}$. (But $\gamma$ is not necessarily a partition.)
Moreover, $a+b\leq k$ (since $a+b<k$). Finally, write $\gamma$ in the form
$\gamma=\left(  \gamma_{1},\gamma_{2},\ldots,\gamma_{a+b}\right)  $; then, we
have $\gamma_{i}\leq n$ for each $i\in\left\{  1,2,\ldots,a+b\right\}  $
(because $\gamma$ is an $\left(  a+b\right)  $-tuple of elements of $\left\{
0,1,\ldots,n\right\}  $). Hence, Lemma \ref{lem.Hp-nu} (applied to $p=a+b$,
$\nu=\gamma$ and $\nu_{i}=\gamma_{i}$) yields $\overline{h_{\gamma}}\in
H_{a+b}$. But Proposition \ref{prop.Lp=Hp} yields that $L_{a+b}=H_{a+b}$.

From $f=\overline{h_{\alpha}}$ and $g=\overline{h_{\beta}}$, we obtain
$fg=\overline{h_{\alpha}}\overline{h_{\beta}}=\overline{h_{\alpha}h_{\beta}%
}=\overline{h_{\gamma}}$ (since $h_{\alpha}h_{\beta}=h_{\gamma}$). Thus,
$fg=\overline{h_{\gamma}}\in H_{a+b}=L_{a+b}$ (since $L_{a+b}=H_{a+b}$). This
completes our proof of Proposition \ref{prop.Lpalg}.
\end{proof}

\begin{corollary}
\label{cor.Lpalgfil}We have $\left(  L_{1}\right)  ^{m}\subseteq L_{m}$ for
each $m\in\mathbb{N}$.
\end{corollary}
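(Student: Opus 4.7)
The plan is to prove Corollary \ref{cor.Lpalgfil} by a straightforward induction on $m$, using Proposition \ref{prop.Lpalg} as the sole nontrivial input. The point of the corollary is that once we know $(L_p)_{p \in \mathbb{N}}$ is an algebra filtration, iterated products of $L_1$ land in successively higher layers, so there is no new combinatorics to do.

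For the base case $m = 0$, we interpret $(L_1)^0$ as the neutral element of the monoid of $\mathbf{k}$-submodules of $\mathcal{S}/I$, namely $\mathbf{k} \cdot 1 \subseteq \mathcal{S}/I$. Since $1 \in L_0$ (this is part of Proposition \ref{prop.Lpalg}) and $L_0$ is a $\mathbf{k}$-submodule, we get $\mathbf{k} \cdot 1 \subseteq L_0$, so $(L_1)^0 \subseteq L_0$.

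For the induction step, I assume that $(L_1)^m \subseteq L_m$ and wish to deduce $(L_1)^{m+1} \subseteq L_{m+1}$. By definition of the monoid structure, $(L_1)^{m+1} = (L_1)^m \cdot L_1$. Applying the induction hypothesis followed by (\ref{eq.prop.Lpalg.3}) with $a = m$ and $b = 1$ gives
\[
(L_1)^{m+1} = (L_1)^m \cdot L_1 \subseteq L_m \cdot L_1 \subseteq L_{m+1},
\]
completing the induction.

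There is no serious obstacle here; the only thing to be careful about is the convention for $(L_1)^0$ (it must be the neutral element $\mathbf{k} \cdot 1$, not $\{0\}$ or $\mathcal{S}/I$), and the fact that $L_0$ contains this neutral element — both of which are immediate from the definitions and from Proposition \ref{prop.Lpalg}.
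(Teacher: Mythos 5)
Your proof is correct and is essentially identical to the paper's (the paper proves the corollary by the same induction on $m$, citing exactly the two facts $1\in L_{0}$ and $L_{a}L_{b}\subseteq L_{a+b}$ from Proposition \ref{prop.Lpalg}). Your explicit handling of the base case, with $\left(  L_{1}\right)  ^{0}=\mathbf{k}\cdot1\subseteq L_{0}$, just spells out what the paper leaves implicit.
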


\begin{proof}
[Proof of Corollary \ref{cor.Lpalgfil}.]This follows by induction on $m$,
using the facts (which we proved in Proposition \ref{prop.Lpalg}) that $1\in
L_{0}$ and that $L_{a}L_{b}\subseteq L_{a+b}$ for every $a,b\in\mathbb{N}$.
\end{proof}

\subsection{A formula for hook-shaped Schur functions}

\begin{lemma}
\label{lem.sm1j.1}Let $m$ be a positive integer. Let $j\in\mathbb{N}$. Then,%
\[
\mathbf{s}_{\left(  m,1^{j}\right)  }=\sum_{i=1}^{m}\left(  -1\right)
^{i-1}\mathbf{h}_{m-i}\mathbf{e}_{j+i}.
\]

\end{lemma}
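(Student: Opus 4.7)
The plan is to deduce this from the two identities already established in the proof of Proposition~\ref{prop.redh.Lam}: equation~(\ref{pf.prop.redh.Lam.1}), which states
\[
\mathbf{s}_{(m,1^j)} = \sum_{i=0}^j (-1)^i \mathbf{h}_{m+i}\mathbf{e}_{j-i},
\]
and equation~(\ref{pf.prop.redh.Lam.3}), which says $\sum_{\ell=0}^N (-1)^\ell \mathbf{h}_{N-\ell}\mathbf{e}_\ell = \delta_{0,N}$. Combining these, the claim reduces to the assertion that the two sums
\[
\sum_{i=0}^j (-1)^i \mathbf{h}_{m+i}\mathbf{e}_{j-i} \quad\text{and}\quad \sum_{i=1}^m (-1)^{i-1}\mathbf{h}_{m-i}\mathbf{e}_{j+i}
\]
are equal.

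To prove this, I would rearrange the desired equality as
\[
\sum_{i=0}^j (-1)^i \mathbf{h}_{m+i}\mathbf{e}_{j-i} + \sum_{i=1}^m (-1)^i \mathbf{h}_{m-i}\mathbf{e}_{j+i} = 0,
\]
and then substitute $i \mapsto -i$ in the second sum (noting that $(-1)^{-i}=(-1)^i$) to turn it into $\sum_{i=-m}^{-1} (-1)^i \mathbf{h}_{m+i}\mathbf{e}_{j-i}$. The two sums merge into the single full-range sum
\[
\sum_{i=-m}^{j} (-1)^i \mathbf{h}_{m+i}\mathbf{e}_{j-i}.
\]
Substituting $k = m+i$ converts this into $(-1)^{-m}\sum_{k=0}^{m+j} (-1)^k \mathbf{h}_k \mathbf{e}_{m+j-k}$, which is (up to an overall sign) exactly the expression in (\ref{pf.prop.redh.Lam.3}) with $N = m+j$ (after a further trivial re-indexing $k \leftrightarrow m+j-k$, using the commutativity of $\mathbf{h}$ and $\mathbf{e}$). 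Since $m \geq 1$ forces $m+j > 0$, we have $\delta_{0,m+j} = 0$, and the vanishing follows.

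There is no real obstacle here: the argument is pure sign and index bookkeeping. The one detail to keep a careful eye on is that extending the sum from $\{0,1,\ldots,j\}$ to $\{-m,-m+1,\ldots,j\}$ is legitimate precisely because the negative terms, after the substitution $i \mapsto -i$, account exactly for the right-hand side of the claim (and no more), so that no extra terms sneak in that would force one to appeal to $\mathbf{h}_{m+i}=0$ for $i<-m$ or $\mathbf{e}_{j-i}=0$ for $i>j$. The positivity hypothesis $m \geq 1$ is used in exactly one place, to ensure $m+j > 0$ so that the Kronecker delta on the right of (\ref{pf.prop.redh.Lam.3}) vanishes.
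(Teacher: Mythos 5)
Your proposal is correct and is essentially the paper's own argument, just run in the opposite direction: the paper starts from (\ref{pf.prop.redh.Lam.3}) with $N=m+j$ (which is $0$ since $m+j>0$), re-indexes via $p=j-i$, splits the resulting sum over $i\in\left\{ -m,\ldots,j\right\}$ into its negative and nonnegative halves, identifies the latter with $\left(-1\right)^{j}\mathbf{s}_{\left(m,1^{j}\right)}$ using (\ref{pf.prop.redh.Lam.1}), and solves. Your version merges the two halves instead of splitting them, but the ingredients, index substitutions and the single use of $m+j>0$ are identical, so this is the same proof.
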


\begin{proof}
[Proof of Lemma \ref{lem.sm1j.1}.]For each $N\in\mathbb{N}$, we have%
\begin{equation}
\sum_{p=0}^{N}\left(  -1\right)  ^{p}\mathbf{h}_{N-p}\mathbf{e}_{p}%
=\delta_{0,N}. \label{pf.lem.sm1j.1.1}%
\end{equation}
(This is just the equality (\ref{pf.prop.redh.Lam.3}), with $j$ renamed as $p$.)

From $m>0$ and $j\geq0$, we obtain $m+j>0$, so that $\delta_{0,m+j}=0$. The
equality (\ref{pf.lem.sm1j.1.1}) (applied to $N=m+j$) becomes%
\[
\sum_{p=0}^{m+j}\left(  -1\right)  ^{p}\mathbf{h}_{m+j-p}\mathbf{e}_{p}%
=\delta_{0,m+j}=0.
\]
Thus,%
\begin{align*}
0  &  =\sum_{p=0}^{m+j}\left(  -1\right)  ^{p}\mathbf{h}_{m+j-p}\mathbf{e}%
_{p}\\
&  =\sum_{i=-m}^{j}\left(  -1\right)  ^{j-i}\mathbf{h}_{m+i}\mathbf{e}%
_{j-i}\ \ \ \ \ \ \ \ \ \ \left(
\begin{array}
[c]{c}%
\text{here, we have substituted }j-i\\
\text{for }p\text{ in the sum}%
\end{array}
\right) \\
&  =\underbrace{\sum_{i=-m}^{-1}\left(  -1\right)  ^{j-i}\mathbf{h}%
_{m+i}\mathbf{e}_{j-i}}_{\substack{=\sum_{i=1}^{m}\left(  -1\right)
^{j+i}\mathbf{h}_{m-i}\mathbf{e}_{j+i}\\\text{(here, we have substituted
}-i\text{ for }i\\\text{in the sum)}}}+\sum_{i=0}^{j}\underbrace{\left(
-1\right)  ^{j-i}}_{=\left(  -1\right)  ^{j}\left(  -1\right)  ^{i}}%
\mathbf{h}_{m+i}\mathbf{e}_{j-i}\\
&  =\sum_{i=1}^{m}\left(  -1\right)  ^{j+i}\mathbf{h}_{m-i}\mathbf{e}%
_{j+i}+\left(  -1\right)  ^{j}\underbrace{\sum_{i=0}^{j}\left(  -1\right)
^{i}\mathbf{h}_{m+i}\mathbf{e}_{j-i}}_{\substack{=\mathbf{s}_{\left(
m,1^{j}\right)  }\\\text{(by (\ref{pf.prop.redh.Lam.1}))}}}\\
&  =\sum_{i=1}^{m}\left(  -1\right)  ^{j+i}\mathbf{h}_{m-i}\mathbf{e}%
_{j+i}+\left(  -1\right)  ^{j}\mathbf{s}_{\left(  m,1^{j}\right)  }.
\end{align*}
Solving this equality for $\mathbf{s}_{\left(  m,1^{j}\right)  }$, we obtain%
\[
\mathbf{s}_{\left(  m,1^{j}\right)  }=-\dfrac{1}{\left(  -1\right)  ^{j}}%
\sum_{i=1}^{m}\left(  -1\right)  ^{j+i}\mathbf{h}_{m-i}\mathbf{e}_{j+i}%
=\sum_{i=1}^{m}\left(  -1\right)  ^{i-1}\mathbf{h}_{m-i}\mathbf{e}_{j+i}.
\]
This proves Lemma \ref{lem.sm1j.1}.
\end{proof}

\subsection{The submodules $C$ and $R_{p}$ of $\mathcal{S}/I$}

Next, we introduce some more $\mathbf{k}$-submodules of $\mathcal{S}/I$:

\begin{definition}
\textbf{(a)} Let $C$ be the $\mathbf{k}$-submodule of $\mathcal{S}/I$ spanned
by the $\overline{e_{i}}$ with $i\in\mathbb{N}$.

\textbf{(b)} For each $p\in\mathbb{Z}$, we let $R_{p}$ be the $\mathbf{k}%
$-submodule of $\mathcal{S}/I$ spanned by the $\overline{h_{i}}$ with
$i\in\mathbb{N}$ satisfying $i\leq p$.
\end{definition}

We recall that $e_{i}=0$ for every $i>k$. Thus, $\overline{e_{i}}=0$ for every
$i>k$. Hence, the $\mathbf{k}$-module $C$ is spanned by $\overline{e_{0}%
},\overline{e_{1}},\ldots,\overline{e_{k}}$ (because all the other among its
designated generators $\overline{e_{i}}$ are $0$). Also, the definition of $C$
yields $\overline{e_{0}}\in C$, so that $1=\overline{e_{0}}\in C$. Thus, each
$i\in\mathbb{N}$ satisfies $C^{i}=\underbrace{1}_{\in C}C^{i}\subseteq
CC^{i}=C^{i+1}$. In other words, $C^{0}\subseteq C^{1}\subseteq C^{2}%
\subseteq\cdots$.

Note that $R_{0}\subseteq R_{1}\subseteq R_{2}\subseteq\cdots$. Also:

\begin{proposition}
\label{prop.R.Rn-k}We have $R_{n-k}=L_{1}$.
\end{proposition}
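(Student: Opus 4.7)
The plan is essentially a direct unwinding of the two definitions, using the well-known identity $s_{(m)} = h_m$ for one-row Schur polynomials. First I would describe the spanning set of $L_1$: by Definition \ref{def.LpHp} \textbf{(b)}, $L_1$ is spanned by the $\overline{s_\lambda}$ with $\lambda \in P_{k,n}$ and $\ell(\lambda) \leq 1$. The partitions $\lambda$ with $\ell(\lambda) \leq 1$ are precisely the empty partition $\varnothing$ and the one-row partitions $(m)$ for positive integers $m$; the condition $\lambda \in P_{k,n}$ then forces (in the nontrivial case) $m \leq n-k$ (and the condition $\ell(\lambda) \leq k$ is automatic as $k \geq 1$). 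So $\lambda$ ranges exactly over $\varnothing, (1), (2), \ldots, (n-k)$.

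Next I would invoke the identity $s_{(m)} = h_m$ for every $m \in \mathbb{N}$ (with $m = 0$ giving $s_\varnothing = 1 = h_0$); this is immediate from the Jacobi--Trudi formula (Proposition \ref{prop.jacobi-trudi.Sh} \textbf{(b)} with $p = 1$), or simply from the definition of Schur polynomials. Consequently the spanning family of $L_1$ rewrites as $\overline{h_0}, \overline{h_1}, \ldots, \overline{h_{n-k}}$.

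But this is precisely the spanning family of $R_{n-k}$ as given in its definition (which takes the $\overline{h_i}$ with $i \in \mathbb{N}$ and $i \leq n-k$). Therefore $L_1$ and $R_{n-k}$ are both equal to the $\mathbf{k}$-linear span of $\overline{h_0}, \overline{h_1}, \ldots, \overline{h_{n-k}}$ inside $\mathcal{S}/I$, and hence $R_{n-k} = L_1$, completing the proof.

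There is no real obstacle: the one thing to be mindful of is matching the index ranges correctly (the $m = 0$ case on the $L_1$ side corresponds to $\varnothing$ and to $h_0 = 1$ on the $R_{n-k}$ side), so it is worth stating explicitly that both spanning families consist of exactly the same elements of $\mathcal{S}/I$.
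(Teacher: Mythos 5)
Your argument is essentially the paper's own proof: identify the spanning family of $L_{1}$ as the $\overline{s_{(i)}}$ with $i\in\left\{ 0,1,\ldots,n-k\right\}$, use $s_{(i)}=h_{i}$, and observe that this is exactly the defining spanning family of $R_{n-k}$. The only point you gloss over is your parenthetical assumption that $k\geq1$: the blanket convention $k>0$ is introduced in the paper only \emph{after} this proposition, so the case $k=0$ is still possible here, and in that case your identification of the two spanning families breaks down (for $k=0$ the one-row partitions $(m)$ with $m\geq1$ do not lie in $P_{k,n}$, while $R_{n-k}=R_{n}$ is nominally spanned by $\overline{h_{0}},\ldots,\overline{h_{n}}$). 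The proposition still holds there for trivial reasons ($\mathcal{S}/I=\mathbf{k}\cdot1$ and both submodules contain $1$), which is exactly how the paper disposes of it in a footnote before running your argument for $k>0$; you should add the same one-line reduction.
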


\begin{proof}
[Proof of Proposition \ref{prop.R.Rn-k}.]We WLOG assume that $k\neq0$, because
the case when $k=0$ is trivial for its own reasons\footnote{\textit{Proof.}
Assume that $k=0$. Then, $\mathcal{S}=\mathbf{k}$ and $I=0$, whence
$\mathcal{S}/I=\mathbf{k}\cdot1$. Both $\mathbf{k}$-submodules $R_{n-k}$ and
$L_{1}$ contain $1$ (since $1=\overline{h_{0}}$ and since $1=\overline
{s_{\varnothing}}$); hence, both of these $\mathbf{k}$-submodules must be the
whole $\mathcal{S}/I$ (since $\mathcal{S}/I=\mathbf{k}\cdot1$) and therefore
must be equal. So we have proven $R_{n-k}=L_{1}$. In other words, we have
proven Proposition \ref{prop.R.Rn-k} under the assumption that $k=0$.}. Thus,
$k>0$, and therefore the partition $\left(  i\right)  $ belongs to $P_{k,n}$
for each $i\in\left\{  0,1,\ldots,n-k\right\}  $.

Recall that $L_{1}$ was defined as the $\mathbf{k}$-submodule of
$\mathcal{S}/I$ spanned by the $\overline{s_{\lambda}}$ with $\lambda\in
P_{k,n}$ satisfying $\ell\left(  \lambda\right)  \leq1$. But the $\lambda\in
P_{k,n}$ satisfying $\ell\left(  \lambda\right)  \leq1$ are exactly the
partitions of the form $\left(  i\right)  $ for $i\in\left\{  0,1,\ldots
,n-k\right\}  $. Hence, $L_{1}$ is the $\mathbf{k}$-submodule of
$\mathcal{S}/I$ spanned by the $\overline{s_{\left(  i\right)  }}$ with
$i\in\left\{  0,1,\ldots,n-k\right\}  $. Since we have $s_{\left(  i\right)
}=h_{i}$ for each $i\in\left\{  0,1,\ldots,n-k\right\}  $, we can rewrite this
as follows: $L_{1}$ is the $\mathbf{k}$-submodule of $\mathcal{S}/I$ spanned
by the $\overline{h_{i}}$ with $i\in\left\{  0,1,\ldots,n-k\right\}  $. In
other words, $L_{1}$ is the $\mathbf{k}$-submodule of $\mathcal{S}/I$ spanned
by the $\overline{h_{i}}$ with $i\in\mathbb{N}$ satisfying $i\leq n-k$. But
this is precisely the definition of the $\mathbf{k}$-submodule $R_{n-k}$.
Hence, $L_{1}=R_{n-k}$. This proves Proposition \ref{prop.R.Rn-k}.
\end{proof}

It is easy to see that $R_{n-k}=R_{n-k+1}=\cdots=R_{n}$, but the sequence
$\left(  R_{0},R_{1},R_{2},\ldots\right)  $ may and may not grow after its
$n$-th term depending on the choice of $a_{1},a_{2},\ldots,a_{k}$. So the
family $\left(  R_{p}\right)  _{p\in\mathbb{Z}}$ is a filtration of some
$\mathbf{k}$-submodule of $\mathcal{S}/I$, but it isn't easy to say which
specific $\mathbf{k}$-submodule it is.

\begin{lemma}
\label{lem.R.Cp}We have $R_{p}\subseteq C^{p}$ for each $p\in\mathbb{N}$.
\end{lemma}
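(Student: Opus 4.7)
The plan is to prove the inclusion $R_p \subseteq C^p$ by induction on $p \in \mathbb{N}$, using Corollary~\ref{cor.heh-id.0} as the key algebraic tool to express $\overline{h_p}$ in terms of the $\overline{e_t}$ and lower-degree $\overline{h_{p-t}}$.

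For the base case $p = 0$, the $\mathbf{k}$-submodule $R_0$ is spanned by $\overline{h_0} = 1$, while $C^0 = \mathbf{k} \cdot 1$ (the neutral element of the monoid of $\mathbf{k}$-submodules under multiplication). So $R_0 \subseteq C^0$ is immediate.

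For the induction step, I assume $R_{p-1} \subseteq C^{p-1}$ (for $p \geq 1$) and aim to show $R_p \subseteq C^p$. Since $R_p$ is spanned by the $\overline{h_i}$ with $0 \leq i \leq p$, and since $C^{p-1} \subseteq C^p$ (using the chain $C^0 \subseteq C^1 \subseteq C^2 \subseteq \cdots$ that follows from $1 \in C$), the induction hypothesis already handles $\overline{h_i}$ for $i \leq p-1$. Thus it remains only to show $\overline{h_p} \in C^p$. This is where Corollary~\ref{cor.heh-id.0} enters: applied to the positive integer $p$, it yields
\[
\overline{h_p} = -\sum_{t=1}^{k} (-1)^t \overline{e_t}\,\overline{h_{p-t}}.
\]
In this sum, any term with $t > p$ vanishes (since $h_{p-t} = 0$ for $p - t < 0$), so we may restrict to $t \in \{1, 2, \ldots, \min(k,p)\}$. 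For each such $t$, we have $p - t \in \{0, 1, \ldots, p-1\}$, so $\overline{h_{p-t}} \in R_{p-1} \subseteq C^{p-1}$ by the induction hypothesis; and $\overline{e_t} \in C$ by the definition of $C$. Consequently each summand lies in $C \cdot C^{p-1} \subseteq C^p$, hence $\overline{h_p} \in C^p$.

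There is no real obstacle here: the induction is mechanical once Corollary~\ref{cor.heh-id.0} is invoked, and the only subtlety is bookkeeping the two chains $R_0 \subseteq R_1 \subseteq \cdots$ and $C^0 \subseteq C^1 \subseteq \cdots$ correctly so that ``lower-degree'' $\overline{h_{p-t}}$ land in $C^{p-1}$ and combine via one factor of $\overline{e_t} \in C$ to sit in $C^p$.
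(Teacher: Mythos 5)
Your proof is correct and follows essentially the same route as the paper: the key step in both is to expand $\overline{h_p}$ via Corollary \ref{cor.heh-id.0} as $-\sum_{t=1}^{k}(-1)^{t}\overline{e_t}\,\overline{h_{p-t}}$ and absorb each summand into $C\cdot C^{p-1}\subseteq C^{p}$. The only (immaterial) difference is bookkeeping: the paper first proves the pointwise claim $\overline{h_i}\in C^{i}$ for all $i\in\mathbb{N}$ by strong induction and then deduces $R_p\subseteq C^p$, whereas you induct directly on the inclusion $R_p\subseteq C^p$.
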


\begin{proof}
[Proof of Lemma \ref{lem.R.Cp}.]We have%
\begin{equation}
\overline{e_{i}}\in C\ \ \ \ \ \ \ \ \ \ \text{for each }i\in\mathbb{N}
\label{pf.lem.R.Cp.eiinC}%
\end{equation}
(by the definition of $C$).

Let $p\in\mathbb{N}$. Recall that $R_{p}$ is the $\mathbf{k}$-submodule of
$\mathcal{S}/I$ spanned by the $\overline{h_{i}}$ with $i\in\mathbb{N}$
satisfying $i\leq p$. Hence, in order to prove that $R_{p}\subseteq C^{p}$, it
suffices to show that $\overline{h_{i}}\in C^{p}$ for each $i\in\mathbb{N}$
satisfying $i\leq p$.

We first claim that%
\begin{equation}
\overline{h_{i}}\in C^{i}\ \ \ \ \ \ \ \ \ \ \text{for each }i\in\mathbb{N}.
\label{pf.lem.R.Cp.1}%
\end{equation}

[\textit{Proof of (\ref{pf.lem.R.Cp.1}):} We shall prove (\ref{pf.lem.R.Cp.1})
by strong induction on $i$. So we fix $j\in\mathbb{N}$, and we assume (as
induction hypothesis) that (\ref{pf.lem.R.Cp.1}) holds for all $i<j$. We must
now prove that (\ref{pf.lem.R.Cp.1}) holds for $i=j$. In other words, we must
prove that $\overline{h_{j}}\in C^{j}$.

If $j=0$, then this is obvious (because in this case, we have $\overline
{h_{j}}=\overline{h_{0}}=\overline{1}=1\in C^{0}$). Thus, we WLOG assume that
$j\neq0$. Hence, $j$ is a positive integer. Thus, Corollary \ref{cor.heh-id.0}
(applied to $j$ instead of $p$) yields%
\[
h_{j}=-\sum_{t=1}^{k}\left(  -1\right)  ^{t}e_{t}h_{j-t}.
\]
Hence,%
\begin{align*}
\overline{h_{j}}  &  =\overline{-\sum_{t=1}^{k}\left(  -1\right)  ^{t}%
e_{t}h_{j-t}}=-\sum_{t=1}^{k}\left(  -1\right)  ^{t}\underbrace{\overline
{e_{t}}}_{\substack{\in C\\\text{(by (\ref{pf.lem.R.Cp.eiinC}))}%
}}\underbrace{\overline{h_{j-t}}}_{\substack{\in C^{j-t}\\\text{(by the
induction}\\\text{hypothesis, since }j-t<j\text{)}}}\\
&  \in-\sum_{t=1}^{k}\left(  -1\right)  ^{t}\underbrace{CC^{j-t}%
}_{\substack{=C^{j-t+1}\subseteq C^{j}\\\text{(since }j-t+1\leq j\text{
and}\\C^{0}\subseteq C^{1}\subseteq C^{2}\subseteq\cdots\text{)}}%
}\subseteq-\sum_{t=1}^{k}\left(  -1\right)  ^{t}C^{j}\subseteq C^{j}.
\end{align*}
In other words, (\ref{pf.lem.R.Cp.1}) holds for $i=j$. This completes the
induction step. Thus, (\ref{pf.lem.R.Cp.1}) is proven.]

Now, let us fix $i\in\mathbb{N}$ satisfying $i\leq p$. Then, $C^{i}\subseteq
C^{p}$ (since $i\leq p$ and $C^{0}\subseteq C^{1}\subseteq C^{2}%
\subseteq\cdots$). But (\ref{pf.lem.R.Cp.1}) yields $\overline{h_{i}}\in
C^{i}\subseteq C^{p}$.

Now, forget that we fixed $i$. We thus have shown that $\overline{h_{i}}\in
C^{p}$ for each $i\in\mathbb{N}$ satisfying $i\leq p$. As we have said, this
proves Lemma \ref{lem.R.Cp}.
\end{proof}

\begin{lemma}
\label{lem.RC.sm1j}Let $m$ be a positive integer. Let $j\in\mathbb{N}$. Then,
$\overline{s_{\left(  m,1^{j}\right)  }}\in R_{m-1}C$.
\end{lemma}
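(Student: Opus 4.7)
The plan is to invoke Lemma \ref{lem.sm1j.1} and specialize to the $k$ variables $x_{1},x_{2},\ldots,x_{k}$, then project the resulting identity to $\mathcal{S}/I$. More precisely, Lemma \ref{lem.sm1j.1} gives the identity
\[
\mathbf{s}_{\left(  m,1^{j}\right)  }=\sum_{i=1}^{m}\left(  -1\right)  ^{i-1}\mathbf{h}_{m-i}\mathbf{e}_{j+i}
\]
in $\Lambda$. Evaluating both sides at $x_{1},x_{2},\ldots,x_{k}$ yields the analogous identity $s_{\left(  m,1^{j}\right)  }=\sum_{i=1}^{m}\left(  -1\right)  ^{i-1}h_{m-i}e_{j+i}$ in $\mathcal{S}$, and projecting onto $\mathcal{S}/I$ gives
\[
\overline{s_{\left(  m,1^{j}\right)  }}=\sum_{i=1}^{m}\left(  -1\right)  ^{i-1}\overline{h_{m-i}}\cdot\overline{e_{j+i}}.
\]

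Next, I would check term-by-term that each summand lies in $R_{m-1}C$. For each $i\in\left\{  1,2,\ldots,m\right\}  $, the index $m-i$ lies in $\left\{  0,1,\ldots,m-1\right\}  $, so $m-i\in\mathbb{N}$ satisfies $m-i\leq m-1$, and hence $\overline{h_{m-i}}\in R_{m-1}$ by the definition of $R_{m-1}$. Likewise, $j+i\in\mathbb{N}$, so $\overline{e_{j+i}}\in C$ by the definition of $C$. Therefore $\overline{h_{m-i}}\cdot\overline{e_{j+i}}\in R_{m-1}C$ for each $i$, and the full sum lies in $R_{m-1}C$ (since $R_{m-1}C$ is a $\mathbf{k}$-submodule).

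There is no real obstacle here; the content of the proof is entirely absorbed by Lemma \ref{lem.sm1j.1}, and what remains is just bookkeeping of which generator each factor in each summand belongs to. The only subtle point worth flagging is that Lemma \ref{lem.sm1j.1} is stated in $\Lambda$ (symmetric functions in infinitely many variables), so one must pass to $\mathcal{S}$ via the evaluation homomorphism $\mathbf{f}\mapsto\mathbf{f}\left(  x_{1},x_{2},\ldots,x_{k}\right)  $ before projecting to $\mathcal{S}/I$; this is the same maneuver used in the proofs of Lemma \ref{lem.coeffw.0}, Lemma \ref{lem.coeffw.eisl}, and Proposition \ref{prop.redh.1}, so it fits the pattern already established.
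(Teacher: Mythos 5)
Your proof is correct and is essentially identical to the paper's own argument: both invoke Lemma \ref{lem.sm1j.1}, evaluate at $x_{1},x_{2},\ldots,x_{k}$, project onto $\mathcal{S}/I$, and then observe that each summand $\overline{h_{m-i}}\cdot\overline{e_{j+i}}$ lies in $R_{m-1}C$ by the definitions of $R_{m-1}$ and $C$. No differences worth noting.
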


\begin{proof}
[Proof of Lemma \ref{lem.RC.sm1j}.]Lemma \ref{lem.sm1j.1} yields%
\[
\mathbf{s}_{\left(  m,1^{j}\right)  }=\sum_{i=1}^{m}\left(  -1\right)
^{i-1}\mathbf{h}_{m-i}\mathbf{e}_{j+i}.
\]
This is an equality in $\Lambda$. If we evaluate both of its sides at
$x_{1},x_{2},\ldots,x_{k}$, then we obtain%
\[
s_{\left(  m,1^{j}\right)  }=\sum_{i=1}^{m}\left(  -1\right)  ^{i-1}%
h_{m-i}e_{j+i}.
\]
Thus,%
\begin{align*}
\overline{s_{\left(  m,1^{j}\right)  }}  &  =\overline{\sum_{i=1}^{m}\left(
-1\right)  ^{i-1}h_{m-i}e_{j+i}}=\sum_{i=1}^{m}\left(  -1\right)
^{i-1}\underbrace{\overline{h_{m-i}}}_{\substack{\in R_{m-1}\\\text{(by the
definition of }R_{m-1}\text{,}\\\text{since }m-i\leq m-1\text{)}%
}}\underbrace{\overline{e_{j+i}}}_{\substack{\in C\\\text{(by the definition
of }C\text{)}}}\\
&  \in\sum_{i=1}^{m}\left(  -1\right)  ^{i-1}R_{m-1}C\subseteq R_{m-1}C.
\end{align*}
This proves Lemma \ref{lem.RC.sm1j}.
\end{proof}

\begin{corollary}
\label{cor.RC.h}Let $m$ be a positive integer. Then, $\overline{h_{n+m}}\in
R_{m-1}C$.
\end{corollary}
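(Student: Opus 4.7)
The plan is to combine Proposition \ref{prop.redh.1} with Lemma \ref{lem.RC.sm1j} directly. Since Convention \ref{conv.symmetry-conv} is in force throughout Section \ref{sect.redh}, the scalars $a_1, a_2, \ldots, a_k$ lie in $\mathbf{k}$, so the expression from Proposition \ref{prop.redh.1} becomes a $\mathbf{k}$-linear combination of the $\overline{s_{(m,1^j)}}$ rather than something involving polynomial coefficients.

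More concretely, I would first invoke Proposition \ref{prop.redh.1} and pass to $\mathcal{S}/I$ to obtain
\[
\overline{h_{n+m}} = \sum_{j=0}^{k-1} (-1)^j\, a_{k-j}\, \overline{s_{(m,1^j)}},
\]
where each $a_{k-j} \in \mathbf{k}$. Then I would apply Lemma \ref{lem.RC.sm1j} to each $j \in \{0,1,\ldots,k-1\}$ (note $m$ is a positive integer, as required) to get $\overline{s_{(m,1^j)}} \in R_{m-1} C$. Since $R_{m-1} C$ is a $\mathbf{k}$-submodule of $\mathcal{S}/I$ and each term $(-1)^j a_{k-j} \overline{s_{(m,1^j)}}$ lies in $\mathbf{k} \cdot R_{m-1} C \subseteq R_{m-1} C$, the sum lies in $R_{m-1} C$ as well. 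This yields $\overline{h_{n+m}} \in R_{m-1} C$, as desired.

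There is essentially no obstacle here: the corollary is a one-line consequence of combining the two preceding results, and the only thing to check is that the coefficients $a_{k-j}$ are scalars (so that they can be absorbed into the $\mathbf{k}$-module structure of $R_{m-1}C$), which is guaranteed by the section's standing convention. No induction or additional identities are needed.
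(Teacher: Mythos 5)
Your proposal is correct and matches the paper's own proof exactly: both pass Proposition \ref{prop.redh.1} to the quotient $\mathcal{S}/I$ and then apply Lemma \ref{lem.RC.sm1j} to each addend, using that the coefficients $a_{k-j}$ lie in $\mathbf{k}$ so the $\mathbf{k}$-submodule $R_{m-1}C$ absorbs the linear combination. Nothing further is needed.
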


\begin{proof}
[Proof of Corollary \ref{cor.RC.h}.]Proposition \ref{prop.redh.1} yields%
\[
h_{n+m}\equiv\sum_{j=0}^{k-1}\left(  -1\right)  ^{j}a_{k-j}s_{\left(
m,1^{j}\right)  }\operatorname{mod}I.
\]
Thus,%
\begin{align*}
\overline{h_{n+m}}  &  =\overline{\sum_{j=0}^{k-1}\left(  -1\right)
^{j}a_{k-j}s_{\left(  m,1^{j}\right)  }}=\sum_{j=0}^{k-1}\left(  -1\right)
^{j}a_{k-j}\underbrace{\overline{s_{\left(  m,1^{j}\right)  }}}_{\substack{\in
R_{m-1}C\\\text{(by Lemma \ref{lem.RC.sm1j})}}}\\
&  \in\sum_{j=0}^{k-1}\left(  -1\right)  ^{j}a_{k-j}R_{m-1}C\subseteq
R_{m-1}C.
\end{align*}
This proves Corollary \ref{cor.RC.h}.
\end{proof}

\begin{lemma}
\label{lem.hi-again}Let $j\in\mathbb{N}$ be such that $j\leq n$.

\textbf{(a)} We have $\overline{h_{j}}\in L_{1}$.

\textbf{(b)} Assume that $n>k$ and $j\neq n-k$. Then, $\overline{h_{j}}\in
R_{n-k-1}$.
\end{lemma}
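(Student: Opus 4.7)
The plan is to split on whether $j$ is small (at most $n-k$) or large (strictly greater than $n-k$), since these are exactly the two regimes in which $h_j$ has a direct interpretation modulo $I$. The key input is the identity $h_j = s_{(j)}$ valid for every $j \in \mathbb{N}$, together with the congruence (\ref{eq.h=amodI2}), which states that $h_i \equiv a_{i-n+k} \pmod{I}$ for $i \in \{n-k+1, \ldots, n\}$. Note that under Convention \ref{conv.symmetry-conv}, each $a_\ell$ is an element of $\mathbf{k}$, so its image in $\mathcal{S}/I$ is just a scalar multiple of $\overline{1} = \overline{h_0} = \overline{s_\varnothing}$.

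For part (a), I would first dispose of the trivial corner case $k = 0$ (where $\mathcal{S}/I = \mathbf{k}\cdot 1 = L_0 \subseteq L_1$), and then assume $k \geq 1$. If $j \leq n-k$, then the one-part tuple $(j)$ lies in $P_{k,n}$ and has length $\leq 1$, so $\overline{h_j} = \overline{s_{(j)}} \in L_1$ directly from the definition of $L_1$. If instead $n-k < j \leq n$, then $j \in \{n-k+1, \ldots, n\}$ and (\ref{eq.h=amodI2}) gives $\overline{h_j} = \overline{a_{j-n+k}} = a_{j-n+k}\cdot\overline{1} \in L_0 \subseteq L_1$, since $\varnothing \in P_{k,n}$ has length $0$.

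For part (b), I would use the additional hypotheses $n > k$ (so that $n-k-1 \geq 0$ and $R_{n-k-1}$ contains $\overline{h_0}$) and $j \neq n-k$ to rule out the only case where the above argument breaks down. Concretely: if $j \leq n-k-1$, then $\overline{h_j} \in R_{n-k-1}$ is immediate from the definition of $R_{n-k-1}$. The case $j = n-k$ is excluded by hypothesis. Otherwise $n-k < j \leq n$, and (\ref{eq.h=amodI2}) again gives $\overline{h_j} = a_{j-n+k}\cdot\overline{h_0} \in \mathbf{k}\,\overline{h_0} \subseteq R_{n-k-1}$.

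There is no real obstacle here: the whole content is a case split driven by the two descriptions of $h_j$ available to us (as a Schur polynomial for small $j$, and modulo $I$ as a scalar for large $j$). The only thing one has to be careful about is that the excluded value $j = n-k$ in part (b) is exactly the boundary case $s_{(n-k)} = h_{n-k}$, which is an element of $L_1$ but has no a priori reason to lie in $R_{n-k-1}$, explaining why the hypothesis $j \neq n-k$ is needed there but not in (a).
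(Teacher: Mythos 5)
Your proof is correct and follows essentially the same route as the paper's: a case split at $j\leq n-k$ versus $n-k<j\leq n$, handling the small case via the definition of $L_1$ (respectively $R_{n-k-1}$) and the large case via $h_{j}\equiv a_{j-(n-k)}\operatorname{mod}I$ together with $a_{j-(n-k)}\in\mathbf{k}$. The only cosmetic difference is that you inline the identity $h_{j}=s_{(j)}$ where the paper instead invokes Proposition \ref{prop.R.Rn-k} ($R_{n-k}=L_{1}$), whose proof is exactly that observation.
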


\begin{proof}
[Proof of Lemma \ref{lem.hi-again}.]\textbf{(a)} We are in one of the
following two cases:

\textit{Case 1:} We have $j\leq n-k$.

\textit{Case 2:} We have $j>n-k$.

Let us first consider Case 1. In this case, we have $j\leq n-k$. Recall that
$R_{n-k}$ was defined as the $\mathbf{k}$-submodule of $\mathcal{S}/I$ spanned
by the $\overline{h_{i}}$ with $i\in\mathbb{N}$ satisfying $i\leq n-k$. Hence,
$\overline{h_{j}}\in R_{n-k}$ (since $j\in\mathbb{N}$ and $j\leq n-k$). Thus,
$\overline{h_{j}}\in R_{n-k}=L_{1}$ (by Proposition \ref{prop.R.Rn-k}). Thus,
Lemma \ref{lem.hi-again} \textbf{(a)} is proven in Case 1.

Let us now consider Case 2. In this case, we have $j>n-k$. Hence, $n-k<j\leq
n$, so that $j\in\left\{  n-k+1,n-k+2,\ldots,n\right\}  $ and therefore
$j-\left(  n-k\right)  \in\left\{  1,2,\ldots,k\right\}  $. Hence,
(\ref{eq.h=amodI}) (applied to $j-\left(  n-k\right)  $ instead of $j$) yields
$h_{j}\equiv a_{j-\left(  n-k\right)  }\operatorname{mod}I$. Hence,
$\overline{h_{j}}=\overline{a_{j-\left(  n-k\right)  }}\in\mathbf{k}$.

But $0\leq n-k$ and thus $\overline{h_{0}}\in R_{n-k}$ (by the definition of
$R_{n-k}$). Hence, $1=\overline{h_{0}}\in R_{n-k}$, so that $\mathbf{k}%
\subseteq R_{n-k}$ and thus $\overline{h_{j}}\in\mathbf{k}\subseteq
R_{n-k}=L_{1}$ (by Proposition \ref{prop.R.Rn-k}). Thus, Lemma
\ref{lem.hi-again} \textbf{(a)} is proven in Case 2.

We have now proven Lemma \ref{lem.hi-again} \textbf{(a)} in each of the two
Cases 1 and 2. Thus, Lemma \ref{lem.hi-again} \textbf{(a)} is proven.

\textbf{(b)} We are in one of the following two cases:

\textit{Case 1:} We have $j\leq n-k$.

\textit{Case 2:} We have $j>n-k$.

Let us first consider Case 1. In this case, we have $j\leq n-k$. Thus, $j<n-k$
(since $j\neq n-k$), so that $j\leq n-k-1$. Thus, $n-k-1\geq j\geq0$, so that
$n-k-1\in\mathbb{N}$. Recall that $R_{n-k-1}$ is defined as the $\mathbf{k}%
$-submodule of $\mathcal{S}/I$ spanned by the $\overline{h_{i}}$ with
$i\in\mathbb{N}$ satisfying $i\leq n-k-1$. Hence, $\overline{h_{j}}\in
R_{n-k-1}$ (since $j\in\mathbb{N}$ and $j\leq n-k-1$). Thus, Lemma
\ref{lem.hi-again} \textbf{(b)} is proven in Case 1.

Let us now consider Case 2. In this case, we have $j>n-k$. Hence, $n-k<j\leq
n$, so that $j\in\left\{  n-k+1,n-k+2,\ldots,n\right\}  $ and therefore
$j-\left(  n-k\right)  \in\left\{  1,2,\ldots,k\right\}  $. Hence,
(\ref{eq.h=amodI}) (applied to $j-\left(  n-k\right)  $ instead of $j$) yields
$h_{j}\equiv a_{j-\left(  n-k\right)  }\operatorname{mod}I$. Hence,
$\overline{h_{j}}=\overline{a_{j-\left(  n-k\right)  }}\in\mathbf{k}$.

But $n-k>0$ (since $n>k$), and thus $1\leq n-k$, so that $0\leq n-k-1$. Hence,
$\overline{h_{0}}\in R_{n-k-1}$ (by the definition of $R_{n-k-1}$). Hence,
$1=\overline{h_{0}}\in R_{n-k-1}$, so that $\mathbf{k}\subseteq R_{n-k-1}$ and
thus $\overline{h_{j}}\in\mathbf{k}\subseteq R_{n-k-1}$. Thus, Lemma
\ref{lem.hi-again} \textbf{(b)} is proven in Case 2.

We have now proven Lemma \ref{lem.hi-again} \textbf{(b)} in each of the two
Cases 1 and 2. Thus, Lemma \ref{lem.hi-again} \textbf{(b)} is proven.
\end{proof}

\subsection{Connection to the $Q_{p}$}

\begin{convention}
We WLOG assume that $k>0$ from now on.
\end{convention}

Now, let us recall Definition \ref{def.Qp}.

\begin{proposition}
\label{prop.Lk-1}We have $L_{k-1}=Q_{0}$.
\end{proposition}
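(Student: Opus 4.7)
The plan is to observe that both $L_{k-1}$ and $Q_0$ are, by definition, the $\mathbf{k}$-submodule of $\mathcal{S}/I$ spanned by the same family of generators, namely the $\overline{s_\lambda}$ ranging over a certain subset of $P_{k,n}$. So the task reduces to showing that the two indexing subsets coincide.

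More precisely, $L_{k-1}$ is spanned by $\overline{s_\lambda}$ for $\lambda \in P_{k,n}$ with $\ell(\lambda) \leq k-1$, while $Q_0$ is spanned by $\overline{s_\lambda}$ for $\lambda \in P_{k,n}$ with $\lambda_k \leq 0$. I would check the set equality
\[
\{\lambda \in P_{k,n} : \ell(\lambda) \leq k-1\} = \{\lambda \in P_{k,n} : \lambda_k \leq 0\}
\]
by noting that every $\lambda \in P_{k,n}$ has at most $k$ parts, so the condition $\ell(\lambda) \leq k-1$ is equivalent to $\lambda_k = 0$. On the other hand, $\lambda_k$ is a nonnegative integer (since $\lambda$ is a partition), so $\lambda_k \leq 0$ is equivalent to $\lambda_k = 0$. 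Both conditions thus coincide with $\lambda_k = 0$, and the two indexing sets agree.

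Since $L_{k-1}$ and $Q_0$ are defined as the $\mathbf{k}$-spans of the same family, they are equal, proving $L_{k-1} = Q_0$. There is no real obstacle here; the proposition is essentially a bookkeeping statement reconciling the two filtrations at this specific index. (The interesting identifications between the filtrations $(L_p)$ and $(Q_p)$ at other indices, if any, would require genuine work, but at $p=k-1$ versus $p=0$ the match is purely definitional.)
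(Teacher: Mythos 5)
Your proof is correct and is essentially the same as the paper's: both arguments observe that $L_{k-1}$ and $Q_{0}$ are spanned by the $\overline{s_{\lambda}}$ over the same subset of $P_{k,n}$, since for $\lambda\in P_{k,n}$ (with $k>0$, as is assumed in this part of the paper) the conditions $\ell\left(\lambda\right)\leq k-1$ and $\lambda_{k}\leq0$ are both equivalent to $\lambda_{k}=0$. Nothing further is needed.
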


\begin{proof}
[Proof of Proposition \ref{prop.Lk-1}.]Recall the following:

\begin{itemize}
\item We have defined $L_{k-1}$ as the $\mathbf{k}$-submodule of
$\mathcal{S}/I$ spanned by the $\overline{s_{\lambda}}$ with $\lambda\in
P_{k,n}$ satisfying $\ell\left(  \lambda\right)  \leq k-1$.

\item We have defined $Q_{0}$ as the $\mathbf{k}$-submodule of $\mathcal{S}/I$
spanned by the $\overline{s_{\lambda}}$ with $\lambda\in P_{k,n}$ satisfying
$\lambda_{k}\leq0$.
\end{itemize}

Comparing these two definitions, we conclude that $L_{k-1}=Q_{0}$ (because for
any $\lambda\in P_{k,n}$, the statement $\left(  \ell\left(  \lambda\right)
\leq k-1\right)  $ is equivalent to the statement $\left(  \lambda_{k}%
\leq0\right)  $). This proves Proposition \ref{prop.Lk-1}.
\end{proof}

\begin{lemma}
\label{lem.R.1}We have $\left(  L_{1}\right)  ^{k-1}\subseteq Q_{0}$.
\end{lemma}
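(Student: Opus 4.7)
The plan is to derive this as an immediate consequence of results already in place. First I would recall that Corollary \ref{cor.Lpalgfil} tells us $(L_1)^m \subseteq L_m$ for every $m \in \mathbb{N}$, which is itself a straightforward induction from the filtration property $L_a L_b \subseteq L_{a+b}$ established in Proposition \ref{prop.Lpalg}. Specializing to $m = k-1$ yields $(L_1)^{k-1} \subseteq L_{k-1}$.

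Next I would invoke Proposition \ref{prop.Lk-1}, which identifies $L_{k-1} = Q_0$ (the key observation being that a partition $\lambda \in P_{k,n}$ satisfies $\ell(\lambda) \leq k-1$ if and only if $\lambda_k \leq 0$). Chaining these two inclusions gives $(L_1)^{k-1} \subseteq L_{k-1} = Q_0$, which is exactly what the lemma asserts.

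There is no real obstacle here; the entire content of the lemma has been pre-assembled in the immediately preceding results. The only thing to be mindful of is the edge case $k = 1$: then $k - 1 = 0$, so $(L_1)^{0} = \mathbf{k}\cdot 1 \subseteq L_0 = Q_0$, which still holds because $1 = \overline{s_\varnothing}$ and $\varnothing \in P_{k,n}$ has $\varnothing_k = 0$. The working convention $k > 0$ is in force, so no further case split is required.
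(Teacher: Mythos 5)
Your argument is correct and is exactly the paper's own proof: apply Corollary \ref{cor.Lpalgfil} with $m = k-1$ to get $\left(L_1\right)^{k-1} \subseteq L_{k-1}$, then use Proposition \ref{prop.Lk-1} to identify $L_{k-1} = Q_0$. The remark about the case $k=1$ is a harmless extra check; nothing more is needed.
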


\begin{proof}
[Proof of Lemma \ref{lem.R.1}.]Corollary \ref{cor.Lpalgfil} yields $\left(
L_{1}\right)  ^{k-1}\subseteq L_{k-1}=Q_{0}$ (by Proposition \ref{prop.Lk-1}).
This proves Lemma \ref{lem.R.1}.
\end{proof}

\begin{lemma}
\label{lem.C.Qp}Let $p\in\mathbb{Z}$. Then, $CQ_{p}\subseteq Q_{p+1}$.
\end{lemma}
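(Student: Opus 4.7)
The plan is to reduce the claim to the already-established Lemma \ref{lem.coeffw.eiQ}. Since $C$ is by definition the $\mathbf{k}$-linear span of the $\overline{e_i}$ with $i\in\mathbb{N}$, the product $CQ_p$ is the $\mathbf{k}$-linear span of all products $\overline{e_i}\cdot q$ with $i\in\mathbb{N}$ and $q\in Q_p$. Hence it suffices to show that $\overline{e_i}\cdot q\in Q_{p+1}$ for every such $i$ and $q$, i.e., that $\overline{e_i}Q_p\subseteq Q_{p+1}$ for every $i\in\mathbb{N}$.

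This, however, is precisely Lemma \ref{lem.coeffw.eiQ} (which was stated for all $i\in\mathbb{Z}$, so in particular for $i\in\mathbb{N}$). So the proof is essentially a one-line deduction: apply Lemma \ref{lem.coeffw.eiQ} to each generator $\overline{e_i}$ of $C$ and invoke $\mathbf{k}$-bilinearity of multiplication in $\mathcal{S}/I$ to extend the containment from generators of $C$ to all of $C$, and from individual elements $q\in Q_p$ to the submodule $Q_p$ itself.

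There is no real obstacle; the only thing to be careful about is simply to spell out the linearity argument correctly, namely that if a submodule $M$ is spanned by a family $(m_\alpha)$ and $m_\alpha N\subseteq N'$ for each $\alpha$, then $MN\subseteq N'$ (because $N'$ is a $\mathbf{k}$-submodule and multiplication distributes over addition). So the proof will read, in essence: let $x\in CQ_p$; writing $x$ as a finite $\mathbf{k}$-linear combination of products $\overline{e_i}q$ with $q\in Q_p$, Lemma \ref{lem.coeffw.eiQ} gives $\overline{e_i}q\in Q_{p+1}$ for each such term, and since $Q_{p+1}$ is a $\mathbf{k}$-submodule we conclude $x\in Q_{p+1}$.
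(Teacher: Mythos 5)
Your proposal is correct and coincides with the paper's own argument: the paper likewise invokes Lemma \ref{lem.coeffw.eiQ} for each $i\in\mathbb{N}$ and concludes $CQ_{p}\subseteq Q_{p+1}$ by noting that $C$ is spanned by the $\overline{e_{i}}$. The explicit linearity step you spell out is exactly what the paper leaves implicit.
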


\begin{proof}
[Proof of Lemma \ref{lem.C.Qp}.]Lemma \ref{lem.coeffw.eiQ} shows that
$\overline{e_{i}}Q_{p}\subseteq Q_{p+1}$ for each $i\in\mathbb{N}$. Thus,
$CQ_{p}\subseteq Q_{p+1}$ (since the $\mathbf{k}$-module $C$ is spanned by the
$\overline{e_{i}}$ with $i\in\mathbb{N}$). This proves Lemma \ref{lem.C.Qp}.
\end{proof}

\begin{corollary}
\label{cor.Cq.Qp}Let $p\in\mathbb{Z}$ and $q\in\mathbb{N}$. Then, $C^{q}%
Q_{p}\subseteq Q_{p+q}$.
\end{corollary}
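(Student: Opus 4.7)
The plan is to prove Corollary \ref{cor.Cq.Qp} by straightforward induction on $q$, using Lemma \ref{lem.C.Qp} as the key step. The statement is of a standard iterative flavor: once we know that multiplication by $C$ raises the $Q$-index by one, iterating this $q$ times raises it by $q$.

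First I would handle the base case $q = 0$. Here $C^{0} = \mathbf{k}\cdot 1$ (the neutral element of the monoid of $\mathbf{k}$-submodules of $\mathcal{S}/I$), so $C^{0}Q_{p} = Q_{p} = Q_{p+0}$, which gives the claim trivially.

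For the inductive step, I would fix $q \in \mathbb{N}$ and assume as induction hypothesis that $C^{q}Q_{p} \subseteq Q_{p+q}$ for every $p \in \mathbb{Z}$ (note that the hypothesis is uniform in $p$, which is convenient). Then
\[
C^{q+1}Q_{p} = C\cdot\bigl(C^{q}Q_{p}\bigr) \subseteq C\cdot Q_{p+q} \subseteq Q_{p+q+1} = Q_{p+(q+1)},
\]
where the first inclusion uses the induction hypothesis and monotonicity of multiplication by $C$, and the second inclusion is Lemma \ref{lem.C.Qp} applied with $p+q$ in place of $p$. This completes the induction and hence establishes the corollary.

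There is essentially no obstacle here: the lemma already does all of the real work (showing that one application of $C$ raises the filtration index by one), and the corollary is simply the $q$-fold iteration packaged as an induction. The only thing to be a little careful about is treating the base case $q = 0$ correctly, since $C^{0}$ is the unit $\mathbf{k}\cdot 1_{\mathcal{S}/I}$ rather than $C$ itself.
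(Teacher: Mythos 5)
Your proof is correct and follows exactly the route the paper takes: induction on $q$ with Lemma \ref{lem.C.Qp} supplying the induction step (the paper leaves the base case and the details implicit, but they are as you state them).
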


\begin{proof}
[Proof of Corollary \ref{cor.Cq.Qp}.]This follows by induction on $q$, where
the induction step uses Lemma \ref{lem.C.Qp}.
\end{proof}

\subsection{Criteria for $\operatorname*{coeff}\nolimits_{\omega}\left(
\overline{h_{\nu}}\right)  =0$}

We shall now show two sufficient criteria for when a $p$-tuple $\nu
\in\mathbb{Z}^{p}$ satisfies $\operatorname*{coeff}\nolimits_{\omega}\left(
\overline{h_{\nu}}\right)  =0$.

\begin{theorem}
\label{thm.coeff-h.1}Let $p\in\mathbb{N}$ be such that $p\leq k$. Let
$\nu=\left(  \nu_{1},\nu_{2},\ldots,\nu_{p}\right)  \in\mathbb{Z}^{p}$ be an
$p$-tuple of integers. Let $q\in\left\{  1,2,\ldots,p\right\}  $ be such that%
\[
\nu_{1}\geq\nu_{2}\geq\cdots\geq\nu_{q}>n\geq\nu_{q+1}\geq\nu_{q+2}\geq
\cdots\geq\nu_{p}%
\]
and $\nu_{q}\leq2n-k-q$.

Assume also that
\begin{equation}
\nu_{i}\leq2n-k+1\ \ \ \ \ \ \ \ \ \ \text{for each }i\in\left\{
1,2,\ldots,p\right\}  . \label{eq.thm.coeff-h.1.ass1}%
\end{equation}

Then, $\operatorname*{coeff}\nolimits_{\omega}\left(  \overline{h_{\nu}%
}\right)  =0$.
\end{theorem}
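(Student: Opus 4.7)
\textbf{Proof plan for Theorem \ref{thm.coeff-h.1}.}

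The plan is to show that $\overline{h_{\nu}} \in Q_{n-k-1}$; once this is established, Lemma \ref{lem.coeffw.coeffQ} immediately yields the claim. The strategy is to split the product $\overline{h_{\nu}} = \prod_{i=1}^{p} \overline{h_{\nu_i}}$ into the ``large'' factors (with $i \leq q$, where $\nu_i > n$) and the ``small'' factors (with $j > q$, where $\nu_j \leq n$), bound each factor using the machinery built in Section \ref{sect.redh}, and then collapse everything into the filtration $\left(Q_r\right)_r$ via Corollary \ref{cor.Cq.Qp}.

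First, WLOG assume every $\nu_j$ is nonnegative (otherwise $h_{\nu} = 0$ and the statement is trivial). Note also that $\nu_q > n$ together with $\nu_q \leq 2n-k-q$ forces $q \leq n-k-1$, so in particular $n-k-q-1 \in \mathbb{N}$. For each $i \leq q$, set $m_i := \nu_i - n \geq 1$; Corollary \ref{cor.RC.h} then gives $\overline{h_{\nu_i}} = \overline{h_{n+m_i}} \in R_{m_i - 1} C$. The hypothesis $\nu_i \leq 2n-k+1$ translates to $m_i - 1 \leq n-k$, hence $R_{m_i-1} \subseteq R_{n-k} = L_1$ (using Proposition \ref{prop.R.Rn-k}); while the sharper bound $\nu_q \leq 2n-k-q$ yields $m_q - 1 \leq n-k-q-1$ and thus $R_{m_q - 1} \subseteq R_{n-k-q-1}$. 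For each $j > q$, Lemma \ref{lem.hi-again}\textbf{(a)} gives $\overline{h_{\nu_j}} \in L_1$.

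Multiplying these inclusions (and using commutativity of $\mathcal{S}/I$ to reorder factors freely), we obtain
\begin{align*}
\overline{h_{\nu}} &\in R_{n-k-q-1}\, C \cdot \left(L_1 C\right)^{q-1} \cdot L_1^{p-q} = R_{n-k-q-1}\, C^{q}\, L_1^{p-1}.
\end{align*}
Applying Lemma \ref{lem.R.Cp} to replace $R_{n-k-q-1}$ with $C^{n-k-q-1}$ gives $\overline{h_\nu} \in C^{n-k-1} L_1^{p-1}$. By Corollary \ref{cor.Lpalgfil} we have $L_1^{p-1} \subseteq L_{p-1} \subseteq L_{k-1}$ (since $p \leq k$), and Proposition \ref{prop.Lk-1} identifies $L_{k-1} = Q_0$. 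Hence
\begin{equation*}
\overline{h_{\nu}} \in C^{n-k-1}\, Q_0 \subseteq Q_{n-k-1}
\end{equation*}
by Corollary \ref{cor.Cq.Qp}. Finally, Lemma \ref{lem.coeffw.coeffQ} gives $\operatorname{coeff}_{\omega}\bigl(\overline{h_{\nu}}\bigr) \in \operatorname{coeff}_{\omega}(Q_{n-k-1}) = 0$, as desired.

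The one nontrivial step is recognizing that the hypotheses on $\nu$ are tuned \emph{exactly} so that the total ``$C$-weight'' accumulated from the $q$ large factors is $(n-k-q-1) + q = n-k-1$: the peculiar bound $\nu_q \leq 2n-k-q$ supplies the extra $-q$ that compensates for having only $q$ copies of the right-hand $C$ in Corollary \ref{cor.RC.h}. Once this bookkeeping is spotted, the remaining work is just an inclusion chase through the filtrations $L_p$, $R_p$, $C^p$, and $Q_p$ using the lemmas of Section \ref{sect.redh}.
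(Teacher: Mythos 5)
Your proposal is correct and takes essentially the same route as the paper's proof: the same split of $\overline{h_{\nu}}$ into the factors with $\nu_i>n$ (handled via Corollary \ref{cor.RC.h}, with the sharper bound $\nu_q\leq 2n-k-q$ used only for the $q$-th factor) and the remaining factors, followed by the same inclusion chase through $R_{\bullet}\subseteq C^{\bullet}$, $L_{k-1}=Q_{0}$, Corollary \ref{cor.Cq.Qp}, and Lemma \ref{lem.coeffw.coeffQ}. The only (immaterial) difference is that you place each small factor in $L_{1}$ via Lemma \ref{lem.hi-again} \textbf{(a)} and use $\left(L_{1}\right)^{p-1}\subseteq L_{p-1}$, whereas the paper puts the whole tail into $L_{p-q}$ at once via Proposition \ref{prop.Hp-nu2}.
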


\begin{proof}
[Proof of Theorem \ref{thm.coeff-h.1}.]From $\nu_{q}\leq2n-k-q$, we obtain
$2n-k-q\geq\nu_{q}>n$, so that $n-k-q>0$. Thus, $n-k-q-1\in\mathbb{N}$.

If any of the entries $\nu_{1},\nu_{2},\ldots,\nu_{p}$ of $\nu$ is negative,
then Theorem \ref{thm.coeff-h.1} holds for easy reasons\footnote{Indeed, in
this case we have $\nu_{i}<0$ for some $i\in\left\{  1,2,\ldots,p\right\}  $,
and therefore $h_{\nu_{i}}=0$ for this $i$, and therefore
\[
h_{\nu}=h_{\nu_{1}}h_{\nu_{2}}\cdots h_{\nu_{p}}=\left(  h_{\nu_{1}}h_{\nu
_{2}}\cdots h_{\nu_{i-1}}\right)  \underbrace{h_{\nu_{i}}}_{=0}\left(
h_{\nu_{i+1}}h_{\nu_{i+2}}\cdots h_{\nu_{p}}\right)  =0,
\]
and therefore $\operatorname*{coeff}\nolimits_{\omega}\left(  \overline
{h_{\nu}}\right)  =0$, qed.}. Hence, we WLOG assume that none of the entries
$\nu_{1},\nu_{2},\ldots,\nu_{p}$ of $\nu$ is negative. Thus, all of the
entries $\nu_{1},\nu_{2},\ldots,\nu_{p}$ are nonnegative integers.

From $p\leq k$, we obtain $p-1\leq k-1$ and thus $L_{p-1}\subseteq L_{k-1}$
(since $L_{0}\subseteq L_{1}\subseteq L_{2}\subseteq\cdots$). Thus,%
\begin{equation}
L_{p-1}\subseteq L_{k-1}=Q_{0} \label{pf.thm.coeff-h.1.Lp-1}%
\end{equation}
(by Proposition \ref{prop.Lk-1}).

From $n\geq\nu_{q+1}\geq\nu_{q+2}\geq\cdots\geq\nu_{p}$, we conclude that
$\nu_{j}\leq n$ for each $j\in\left\{  q+1,q+2,\ldots,p\right\}  $. In other
words, $\nu_{q+i}\leq n$ for each $i\in\left\{  1,2,\ldots,p-q\right\}  $.
Hence, Proposition \ref{prop.Hp-nu2} (applied to $p-q$, $\left(  \nu_{q+1}%
,\nu_{q+2},\ldots,\nu_{p}\right)  $ and $\nu_{q+i}$ instead of $p$, $\nu$ and
$\nu_{i}$) yields $\overline{h_{\left(  \nu_{q+1},\nu_{q+2},\ldots,\nu
_{p}\right)  }}\in H_{p-q}$. But Proposition \ref{prop.Lp=Hp} (applied to
$p-q$ instead of $p$) yields $L_{p-q}=H_{p-q}$. Thus,%
\begin{equation}
\overline{h_{\left(  \nu_{q+1},\nu_{q+2},\ldots,\nu_{p}\right)  }}\in
H_{p-q}=L_{p-q}. \label{pf.thm.coeff-h.1.1}%
\end{equation}

Next, we claim that%
\begin{equation}
\overline{h_{\nu_{i}}}\in L_{1}C\ \ \ \ \ \ \ \ \ \ \text{for each }%
i\in\left\{  1,2,\ldots,q-1\right\}  . \label{pf.thm.coeff-h.1.2}%
\end{equation}

[\textit{Proof of (\ref{pf.thm.coeff-h.1.2}):} Let $i\in\left\{
1,2,\ldots,q-1\right\}  $. Then, $\nu_{i}>n$ (since $\nu_{1}\geq\nu_{2}%
\geq\cdots\geq\nu_{q}>n$), so that $\nu_{i}-n$ is a positive integer. Thus,
Corollary \ref{cor.RC.h} (applied to $m=\nu_{i}-n$) yields $\overline
{h_{\nu_{i}}}\in R_{\nu_{i}-n-1}C$.

But $\nu_{i}\leq2n-k+1$ (by (\ref{eq.thm.coeff-h.1.ass1})), so that $\nu
_{i}-n-1\leq n-k$. Thus, $R_{\nu_{i}-n-1}\subseteq R_{n-k}$ (since
$R_{0}\subseteq R_{1}\subseteq R_{2}\subseteq\cdots$). Thus, $R_{\nu_{i}%
-n-1}\subseteq R_{n-k}=L_{1}$ (by Proposition \ref{prop.R.Rn-k}). Hence,
$\overline{h_{\nu_{i}}}\in\underbrace{R_{\nu_{i}-n-1}}_{\subseteq L_{1}%
}C\subseteq L_{1}C$. This proves (\ref{pf.thm.coeff-h.1.2}).]

From (\ref{pf.thm.coeff-h.1.2}), we obtain%
\[
\overline{h_{\nu_{1}}}\overline{h_{\nu_{2}}}\cdots\overline{h_{\nu_{q-1}}}%
\in\left(  L_{1}C\right)  ^{q-1}=\underbrace{\left(  L_{1}\right)  ^{q-1}%
}_{\substack{\subseteq L_{q-1}\\\text{(by Corollary \ref{cor.Lpalgfil})}%
}}C^{q-1}\subseteq L_{q-1}C^{q-1}.
\]

Also, $\nu_{q}>n$, so that $\nu_{q}-n$ is a positive integer. Thus, Corollary
\ref{cor.RC.h} (applied to $m=\nu_{q}-n$) yields $\overline{h_{\nu_{q}}}\in
R_{\nu_{q}-n-1}C$. But $\nu_{q}\leq2n-k-q$ and thus $\nu_{q}-n-1\leq n-k-q-1$.
Hence, $R_{\nu_{q}-n-1}\subseteq R_{n-k-q-1}$ (since $R_{0}\subseteq
R_{1}\subseteq R_{2}\subseteq\cdots$). Thus, $\overline{h_{\nu_{q}}}%
\in\underbrace{R_{\nu_{q}-n-1}}_{\subseteq R_{n-k-q-1}}C\subseteq
R_{n-k-q-1}C$.

Recall that $h_{\nu}=h_{\nu_{1}}h_{\nu_{2}}\cdots h_{\nu_{p}}$. Thus,%
\begin{align*}
\overline{h_{\nu}} &  =\overline{h_{\nu_{1}}h_{\nu_{2}}\cdots h_{\nu_{p}}%
}=\overline{h_{\nu_{1}}}\overline{h_{\nu_{2}}}\cdots\overline{h_{\nu_{p}}}\\
&  =\underbrace{\left(  \overline{h_{\nu_{1}}}\overline{h_{\nu_{2}}}%
\cdots\overline{h_{\nu_{q-1}}}\right)  }_{\in L_{q-1}C^{q-1}}%
\underbrace{\overline{h_{\nu_{q}}}}_{\in R_{n-k-q-1}C}\underbrace{\left(
\overline{h_{\nu_{q+1}}}\overline{h_{\nu_{q+2}}}\cdots\overline{h_{\nu_{p}}%
}\right)  }_{\substack{=\overline{h_{\nu_{q+1}}h_{\nu_{q+2}}\cdots h_{\nu_{p}%
}}\\=\overline{h_{\left(  \nu_{q+1},\nu_{q+2},\ldots,\nu_{p}\right)  }}\in
L_{p-q}\\\text{(by (\ref{pf.thm.coeff-h.1.1}))}}}\\
&  \in L_{q-1}C^{q-1}R_{n-k-q-1}CL_{p-q}=\underbrace{C^{q-1}C}_{=C^{q}%
}\underbrace{R_{n-k-q-1}}_{\substack{\subseteq C^{n-k-q-1}\\\text{(by Lemma
\ref{lem.R.Cp},}\\\text{applied to }n-k-q-1\text{ instead of }p\text{)}%
}}\underbrace{L_{q-1}L_{p-q}}_{\substack{\subseteq L_{\left(  q-1\right)
+\left(  p-q\right)  }\\\text{(by (\ref{eq.prop.Lpalg.3}))}}}\\
&  \subseteq\underbrace{C^{q}C^{n-k-q-1}}_{=C^{q+\left(  n-k-q-1\right)
}=C^{n-k-1}}\underbrace{L_{\left(  q-1\right)  +\left(  p-q\right)  }%
}_{\substack{=L_{p-1}\subseteq Q_{0}\\\text{(by (\ref{pf.thm.coeff-h.1.Lp-1}%
))}}}\subseteq C^{n-k-1}Q_{0}\subseteq Q_{0+\left(  n-k-1\right)  }%
\end{align*}
(by Corollary \ref{cor.Cq.Qp}, applied to $n-k-1$ and $0$ instead of $q$ and
$p$). In other words, $\overline{h_{\nu}}\in Q_{n-k-1}$. Hence,
$\operatorname*{coeff}\nolimits_{\omega}\left(  \overline{h_{\nu}}\right)
\in\operatorname*{coeff}\nolimits_{\omega}\left(  Q_{n-k-1}\right)  =0$ (by
Lemma \ref{lem.coeffw.coeffQ}), and thus $\operatorname*{coeff}%
\nolimits_{\omega}\left(  \overline{h_{\nu}}\right)  =0$. This proves Theorem
\ref{thm.coeff-h.1}.
\end{proof}

\begin{theorem}
\label{thm.coeff-h.2}Assume that $n>k$. Let $\gamma=\left(  \gamma_{1}%
,\gamma_{2},\ldots,\gamma_{k}\right)  \in\mathbb{Z}^{k}$ be a $k$-tuple of
integers such that $\gamma\neq\omega$.

Assume that
\begin{equation}
\gamma_{i}\leq2n-k-i\ \ \ \ \ \ \ \ \ \ \text{for each }i\in\left\{
1,2,\ldots,k\right\}  . \label{eq.lem.coeff-h.2.ass1}%
\end{equation}
Then, $\operatorname*{coeff}\nolimits_{\omega}\left(  \overline{h_{\gamma}%
}\right)  =0$.
\end{theorem}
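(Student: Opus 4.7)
The plan is to reduce to Theorem \ref{thm.coeff-h.1} by reordering the entries of $\gamma$, with a separate direct argument in the case when no entry exceeds $n$. At the outset I would WLOG assume every $\gamma_i$ is nonnegative, since otherwise $h_{\gamma_i}=0$ and the claim is trivial. Then I split into two cases depending on whether some entry of $\gamma$ exceeds $n$.

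In the first case, where some $\gamma_j > n$, I exploit the fact that $h_\gamma$ is invariant under permutations of its entries: reorder them into a weakly decreasing tuple $\nu = (\nu_1 \geq \cdots \geq \nu_k)$ and let $q$ be the number of indices with $\nu_i > n$ (so $1 \leq q \leq k$). I would then apply Theorem \ref{thm.coeff-h.1} with $p=k$. The global bound $\nu_i \leq 2n-k+1$ is immediate from $\nu_i \leq \max_j \gamma_j \leq \max_j(2n-k-j) = 2n-k-1$. The delicate hypothesis $\nu_q \leq 2n-k-q$ follows from a pigeonhole argument: at least $q$ of the $\gamma_j$'s are $\geq \nu_q$, so at least one such $j$ must lie in $\{q,q+1,\ldots,k\}$, whence $\nu_q \leq \gamma_j \leq 2n-k-j \leq 2n-k-q$.

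In the second case, where every $\gamma_i \leq n$, the assumption $\gamma \neq \omega$ provides some $i^*$ with $\gamma_{i^*} \neq n-k$. Lemma \ref{lem.hi-again}\textbf{(b)} (whose hypotheses $n > k$, $\gamma_{i^*} \leq n$, $\gamma_{i^*} \neq n-k$ are exactly satisfied) yields $\overline{h_{\gamma_{i^*}}} \in R_{n-k-1}$, while Lemma \ref{lem.hi-again}\textbf{(a)} gives $\overline{h_{\gamma_i}} \in L_1$ for each $i \neq i^*$. Multiplying these inclusions together,
\[
\overline{h_\gamma} \in R_{n-k-1}\cdot L_1^{k-1} \subseteq C^{n-k-1}\cdot L_{k-1} = C^{n-k-1}\cdot Q_0 \subseteq Q_{n-k-1},
\]
using Lemma \ref{lem.R.Cp}, Corollary \ref{cor.Lpalgfil}, Proposition \ref{prop.Lk-1}, and Corollary \ref{cor.Cq.Qp} in turn. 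Lemma \ref{lem.coeffw.coeffQ} then yields $\operatorname*{coeff}\nolimits_\omega(\overline{h_\gamma}) = 0$.

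The main obstacle is the pigeonhole step in the first case: the index-wise bound $\gamma_j \leq 2n-k-j$ is not preserved by sorting, but the one consequence needed by Theorem \ref{thm.coeff-h.1}, namely the bound on $\nu_q$ at the boundary between large and small entries, is recoverable because $\nu_q$ must be witnessed by some original index $j \geq q$. The second case is essentially bookkeeping in the filtrations already established.
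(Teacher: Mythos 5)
Your proposal is correct and follows essentially the same route as the paper's proof: reduce to nonnegative entries, split on whether the maximal entry exceeds $n$, invoke Theorem \ref{thm.coeff-h.1} with $p=k$ in the large case (your pigeonhole argument for $\nu_q\leq 2n-k-q$ is the same counting step the paper phrases via the sorting permutation $\sigma$), and use Lemma \ref{lem.hi-again} together with the $L$, $R$, $C$, $Q$ filtration inclusions in the small case. The only cosmetic difference is that you skip sorting in the second case, which changes nothing.
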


\begin{proof}
[Proof of Theorem \ref{thm.coeff-h.2}.]We have $k\neq0$%
\ \ \ \ \footnote{\textit{Proof.} Assume the contrary. Thus, $k=0$. Now,
$\gamma\in\mathbb{Z}^{k}=\mathbb{Z}^{0}$ (since $k=0$), whence $\gamma=\left(
{}\right)  $. But $k=0$ also leads to $\omega=\left(  {}\right)  $, and thus
$\gamma=\left(  {}\right)  =\omega$. But this contradicts $\gamma\neq\omega$.
This contradiction shows that our assumption was false. Qed.}. Thus, $k>0$;
hence, $\gamma_{1}$ is well-defined.

If any of the entries $\gamma_{1},\gamma_{2},\ldots,\gamma_{k}$ of $\gamma$ is
negative, then Theorem \ref{thm.coeff-h.2} holds for easy
reasons\footnote{Indeed, in this case we have $\gamma_{i}<0$ for some
$i\in\left\{  1,2,\ldots,k\right\}  $, and therefore $h_{\gamma_{i}}=0$ for
this $i$, and therefore
\[
h_{\gamma}=h_{\gamma_{1}}h_{\gamma_{2}}\cdots h_{\gamma_{k}}=\left(
h_{\gamma_{1}}h_{\gamma_{2}}\cdots h_{\gamma_{i-1}}\right)
\underbrace{h_{\gamma_{i}}}_{=0}\left(  h_{\gamma_{i+1}}h_{\gamma_{i+2}}\cdots
h_{\gamma_{k}}\right)  =0,
\]
and therefore $\operatorname*{coeff}\nolimits_{\omega}\left(  \overline
{h_{\gamma}}\right)  =0$, qed.}. Hence, we WLOG assume that none of the
entries $\gamma_{1},\gamma_{2},\ldots,\gamma_{k}$ of $\gamma$ is negative.
Thus, all of the entries $\gamma_{1},\gamma_{2},\ldots,\gamma_{k}$ are
nonnegative integers. In other words, $\left(  \gamma_{1},\gamma_{2}%
,\ldots,\gamma_{k}\right)  \in\mathbb{N}^{k}$.

Let $\nu=\left(  \nu_{1},\nu_{2},\ldots,\nu_{k}\right)  \in\mathbb{Z}^{k}$ be
the weakly decreasing permutation of the $k$-tuple $\gamma=\left(  \gamma
_{1},\gamma_{2},\ldots,\gamma_{k}\right)  $. Thus, $h_{\nu_{1}}h_{\nu_{2}%
}\cdots h_{\nu_{k}}=h_{\gamma_{1}}h_{\gamma_{2}}\cdots h_{\gamma_{k}}$. Hence,
$h_{\nu}=h_{\nu_{1}}h_{\nu_{2}}\cdots h_{\nu_{k}}=h_{\gamma_{1}}h_{\gamma_{2}%
}\cdots h_{\gamma_{k}}=h_{\gamma}$.

Recall that $\left(  \nu_{1},\nu_{2},\ldots,\nu_{k}\right)  $ is a permutation
of $\left(  \gamma_{1},\gamma_{2},\ldots,\gamma_{k}\right)  $. In other words,
there exists a permutation $\sigma\in S_{k}$ such that
\begin{equation}
\left(  \nu_{i}=\gamma_{\sigma\left(  i\right)  }\text{ for each }i\in\left\{
1,2,\ldots,k\right\}  \right)  . \label{pf.lem.coeff-h.2.sig}%
\end{equation}
Consider this $\sigma$.

Recall that $\left(  \nu_{1},\nu_{2},\ldots,\nu_{k}\right)  $ is weakly
decreasing. Thus, $\nu_{1}\geq\nu_{2}\geq\cdots\geq\nu_{k}$. Also, $\left(
\nu_{1},\nu_{2},\ldots,\nu_{k}\right)  \in\mathbb{N}^{k}$ (since $\left(
\nu_{1},\nu_{2},\ldots,\nu_{k}\right)  $ is a permutation of $\left(
\gamma_{1},\gamma_{2},\ldots,\gamma_{k}\right)  \in\mathbb{N}^{k}$).

For each $i\in\left\{  1,2,\ldots,k\right\}  $, we have%
\begin{align}
\nu_{i}  &  =\gamma_{\sigma\left(  i\right)  }\ \ \ \ \ \ \ \ \ \ \left(
\text{by (\ref{pf.lem.coeff-h.2.sig})}\right) \nonumber\\
&  \leq2n-k-\sigma\left(  i\right)  \label{pf.lem.coeff-h.2.ineq}%
\end{align}
(by (\ref{eq.lem.coeff-h.2.ass1}), applied to $\sigma\left(  i\right)  $
instead of $i$).

We are in one of the following two cases:

\textit{Case 1:} We have $\nu_{1}\leq n$.

\textit{Case 2:} We have $\nu_{1}>n$.

Let us first consider Case 1. In this case, we have $\nu_{1}\leq n$. But
recall that $\gamma\neq\omega$. Hence, there exists at least one $q\in\left\{
1,2,\ldots,k\right\}  $ satisfying $\nu_{q}\neq n-k$%
\ \ \ \ \footnote{\textit{Proof.} Assume the contrary. Thus, $\nu_{i}=n-k$ for
each $i\in\left\{  1,2,\ldots,k\right\}  $. Now, let $j\in\left\{
1,2,\ldots,k\right\}  $ be arbitrary. Then, $\nu_{\sigma^{-1}\left(  j\right)
}=n-k$ (since $\nu_{i}=n-k$ for each $i\in\left\{  1,2,\ldots,k\right\}  $).
But (\ref{pf.lem.coeff-h.2.sig}) (applied to $i=\sigma^{-1}\left(  j\right)
$) yields $\nu_{\sigma^{-1}\left(  j\right)  }=\gamma_{\sigma\left(
\sigma^{-1}\left(  j\right)  \right)  }=\gamma_{j}$. Hence, $\gamma_{j}%
=\nu_{\sigma^{-1}\left(  j\right)  }=n-k$. Now, forget that we fixed $j$. We
thus have proven that $\gamma_{j}=n-k$ for each $j\in\left\{  1,2,\ldots
,k\right\}  $. Hence, $\gamma=\left(  n-k,n-k,\ldots,n-k\right)  =\omega$.
This contradicts $\gamma\neq\omega$. This contradiction shows that our
assumption was false, qed.}. Consider such a $q$.

Next, we claim that%
\begin{equation}
\overline{h_{\nu_{i}}}\in L_{1}\ \ \ \ \ \ \ \ \ \ \text{for each }%
i\in\left\{  1,2,\ldots,k\right\}  . \label{pf.lem.coeff-h.2.1}%
\end{equation}

[\textit{Proof of (\ref{pf.lem.coeff-h.2.1}):} Let $i\in\left\{
1,2,\ldots,k\right\}  $. We have $\nu_{1}\geq\nu_{2}\geq\cdots\geq\nu_{k}$,
thus $\nu_{i}\leq\nu_{1}\leq n$. Now, $\nu_{i}\leq n$ and $\nu_{i}%
\in\mathbb{N}$ (since $\left(  \nu_{1},\nu_{2},\ldots,\nu_{k}\right)
\in\mathbb{N}^{k}$). Hence, Lemma \ref{lem.hi-again} \textbf{(a)} (applied to
$j=\nu_{i}$) yields $\overline{h_{\nu_{i}}}\in L_{1}$. This proves
(\ref{pf.lem.coeff-h.2.1}).]

Also, $\nu_{1}\geq\nu_{2}\geq\cdots\geq\nu_{k}$, thus $\nu_{q}\leq\nu_{1}\leq
n$. Also, $n>k$ and $\nu_{q}\in\mathbb{N}$ (since $\left(  \nu_{1},\nu
_{2},\ldots,\nu_{k}\right)  \in\mathbb{N}^{k}$) and $\nu_{q}\neq n-k$. Hence,
Lemma \ref{lem.hi-again} \textbf{(b)} (applied to $j=\nu_{q}$) yields
$\overline{h_{\nu_{q}}}\in R_{n-k-1}$. From $n>k$, we obtain $n-k>0$, so that
$n-k\geq1$, and thus $n-k-1\in\mathbb{N}$.

Now, $h_{\nu}=h_{\nu_{1}}h_{\nu_{2}}\cdots h_{\nu_{k}}=\prod_{i=1}^{k}%
h_{\nu_{i}}$, so that%
\begin{align*}
\overline{h_{\nu}}  &  =\overline{\prod_{i=1}^{k}h_{\nu_{i}}}=\prod_{i=1}%
^{k}\overline{h_{\nu_{i}}}=\left(  \prod_{\substack{i\in\left\{
1,2,\ldots,k\right\}  ;\\i\neq q}}\underbrace{\overline{h_{\nu_{i}}}%
}_{\substack{\in L_{1}\\\text{(by (\ref{pf.lem.coeff-h.2.1}))}}}\right)
\underbrace{\overline{h_{\nu_{q}}}}_{\in R_{n-k-1}}\\
&  \in\underbrace{\left(  \prod_{\substack{i\in\left\{  1,2,\ldots,k\right\}
;\\i\neq q}}L_{1}\right)  }_{\substack{=\left(  L_{1}\right)  ^{k-1}\subseteq
L_{k-1}\\\text{(by Corollary \ref{cor.Lpalgfil})}}}\underbrace{R_{n-k-1}%
}_{\substack{\subseteq C^{n-k-1}\\\text{(by Lemma \ref{lem.R.Cp}%
,}\\\text{applied to }n-k-1\text{ instead of }p\text{)}}}\subseteq
\underbrace{L_{k-1}}_{\substack{=Q_{0}\\\text{(by Proposition \ref{prop.Lk-1}%
)}}}C^{n-k-1}\\
&  =Q_{0}C^{n-k-1}=C^{n-k-1}Q_{0}\subseteq Q_{0+\left(  n-k-1\right)  }%
\end{align*}
(by Corollary \ref{cor.Cq.Qp}, applied to $n-k-1$ and $0$ instead of $q$ and
$p$). In other words, $\overline{h_{\nu}}\in Q_{n-k-1}$. In view of $h_{\nu
}=h_{\gamma}$, this rewrites as $\overline{h_{\gamma}}\in Q_{n-k-1}$. Hence,
$\operatorname*{coeff}\nolimits_{\omega}\left(  \overline{h_{\gamma}}\right)
\in\operatorname*{coeff}\nolimits_{\omega}\left(  Q_{n-k-1}\right)  =0$ (by
Lemma \ref{lem.coeffw.coeffQ}), and thus $\operatorname*{coeff}%
\nolimits_{\omega}\left(  \overline{h_{\gamma}}\right)  =0$. Thus, Theorem
\ref{thm.coeff-h.2} is proven in Case 1.

Let us now consider Case 2. In this case, we have $\nu_{1}>n$. Hence, there
exists at least one $r\in\left\{  1,2,\ldots,k\right\}  $ such that $\nu
_{r}>n$ (namely, $r=1$). Let $q$ be the \textbf{largest} such $r$. Thus,
$\nu_{q}>n$, but each $r>q$ satisfies $\nu_{r}\leq n$. Hence,%
\[
\nu_{1}\geq\nu_{2}\geq\cdots\geq\nu_{q}>n\geq\nu_{q+1}\geq\nu_{q+2}\geq
\cdots\geq\nu_{k}%
\]
(since $\nu_{1}\geq\nu_{2}\geq\cdots\geq\nu_{k}$). Also, $\nu_{q}\leq
2n-k-q$\ \ \ \ \footnote{\textit{Proof.} The map $\sigma$ is a permutation,
and thus injective. Hence, $\left\vert \sigma\left(  \left\{  1,2,\ldots
,q\right\}  \right)  \right\vert =\left\vert \left\{  1,2,\ldots,q\right\}
\right\vert =q$. Thus, $\sigma\left(  \left\{  1,2,\ldots,q\right\}  \right)
$ cannot be a subset of $\left\{  1,2,\ldots,q-1\right\}  $ (because this
would lead to $\left\vert \sigma\left(  \left\{  1,2,\ldots,q\right\}
\right)  \right\vert \leq\left\vert \left\{  1,2,\ldots,q-1\right\}
\right\vert =q-1<q$, which would contradict $\left\vert \sigma\left(  \left\{
1,2,\ldots,q\right\}  \right)  \right\vert =q$). In other words, not every
$i\in\left\{  1,2,\ldots,q\right\}  $ satisfies $\sigma\left(  i\right)
\in\left\{  1,2,\ldots,q-1\right\}  $. In other words, there exists some
$i\in\left\{  1,2,\ldots,q\right\}  $ that satisfies $\sigma\left(  i\right)
\notin\left\{  1,2,\ldots,q-1\right\}  $. Consider such an $i$.
\par
From $i\in\left\{  1,2,\ldots,q\right\}  $, we obtain $i\leq q$ and thus
$\nu_{i}\geq\nu_{q}$ (since $\nu_{1}\geq\nu_{2}\geq\cdots\geq\nu_{k}$). Hence,
$\nu_{q}\leq\nu_{i}$. From $\sigma\left(  i\right)  \notin\left\{
1,2,\ldots,q-1\right\}  $, we obtain $\sigma\left(  i\right)  >q-1$, so that
$\sigma\left(  i\right)  \geq q$. Now, (\ref{pf.lem.coeff-h.2.ineq}) yields
$\nu_{i}\leq2n-k-\underbrace{\sigma\left(  i\right)  }_{\geq q}\leq2n-k-q$.
Now, $\nu_{q}\leq\nu_{i}\leq2n-k-q$, qed.}. Furthermore,
(\ref{pf.lem.coeff-h.2.ineq}) shows that%
\[
\nu_{i}\leq2n-k-\underbrace{\sigma\left(  i\right)  }_{\geq-1}\leq2n-k+1
\]
for each $i\in\left\{  1,2,\ldots,k\right\}  $. Hence, Theorem
\ref{thm.coeff-h.1} (applied to $p=k$) yields $\operatorname*{coeff}%
\nolimits_{\omega}\left(  \overline{h_{\nu}}\right)  =0$. In view of $h_{\nu
}=h_{\gamma}$, this rewrites as $\operatorname*{coeff}\nolimits_{\omega
}\left(  \overline{h_{\gamma}}\right)  =0$. Thus, Theorem \ref{thm.coeff-h.2}
is proven in Case 2.

We have now proven Theorem \ref{thm.coeff-h.2} in both Cases 1 and 2. Hence,
Theorem \ref{thm.coeff-h.2} always holds.
\end{proof}

\subsection{A criterion for $\operatorname*{coeff}\nolimits_{\omega}\left(
\overline{s_{\lambda}}\right)  =0$}

\begin{theorem}
\label{thm.coeff-s.2}Let $\lambda$ be a partition with at most $k$ parts.
Assume that $\lambda_{1}\leq2\left(  n-k\right)  $ and $\lambda\neq\omega$.
Then, $\operatorname*{coeff}\nolimits_{\omega}\left(  \overline{s_{\lambda}%
}\right)  =0$.
\end{theorem}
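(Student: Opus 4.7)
The plan is to reduce the claim to Theorem~\ref{thm.coeff-h.2} via the Jacobi--Trudi identity. Specifically, since $\lambda$ has at most $k$ parts, we may write $\lambda = (\lambda_1, \lambda_2, \ldots, \lambda_k)$, and Proposition~\ref{prop.jacobi-trudi.Sh}~\textbf{(a)} together with the combinatorial definition of the determinant gives
\[
s_\lambda \;=\; \sum_{\sigma \in S_k} (-1)^\sigma \prod_{i=1}^{k} h_{\lambda_i - i + \sigma(i)} \;=\; \sum_{\sigma \in S_k} (-1)^\sigma\, h_{\gamma^\sigma},
\]
where $\gamma^\sigma = (\lambda_1 - 1 + \sigma(1), \lambda_2 - 2 + \sigma(2), \ldots, \lambda_k - k + \sigma(k)) \in \mathbb{Z}^k$. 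Projecting onto $\mathcal{S}/I$ and applying the $\mathbf{k}$-linear map $\operatorname*{coeff}\nolimits_\omega$, I would obtain
\[
\operatorname*{coeff}\nolimits_\omega\bigl(\overline{s_\lambda}\bigr) \;=\; \sum_{\sigma \in S_k} (-1)^\sigma\, \operatorname*{coeff}\nolimits_\omega\bigl(\overline{h_{\gamma^\sigma}}\bigr),
\]
so it suffices to prove that every term $\operatorname*{coeff}\nolimits_\omega(\overline{h_{\gamma^\sigma}})$ vanishes.

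First I would dispatch the trivial case $n = k$: then $\omega = \varnothing$, and the hypothesis $\lambda_1 \leq 2(n-k) = 0$ forces $\lambda = \varnothing = \omega$, contradicting $\lambda \neq \omega$, so the statement is vacuous. Thus I may assume $n > k$, which is the hypothesis of Theorem~\ref{thm.coeff-h.2}. Next I would verify the inequality hypothesis (\ref{eq.lem.coeff-h.2.ass1}) of Theorem~\ref{thm.coeff-h.2} for $\gamma = \gamma^\sigma$: for each $i \in \{1, 2, \ldots, k\}$,
\[
\gamma^\sigma_i \;=\; \lambda_i - i + \sigma(i) \;\leq\; \lambda_1 - i + \sigma(i) \;\leq\; 2(n-k) - i + k \;=\; 2n - k - i,
\]
using $\lambda_i \leq \lambda_1 \leq 2(n-k)$ and $\sigma(i) \leq k$.

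The main obstacle, such as it is, is to verify the remaining hypothesis $\gamma^\sigma \neq \omega$. Suppose for contradiction that $\gamma^\sigma = \omega$, so that $\lambda_i - i + \sigma(i) = n-k$ for every $i \in \{1, 2, \ldots, k\}$; equivalently, $\lambda_i = n - k + i - \sigma(i)$. The weak decrease $\lambda_i \geq \lambda_{i+1}$ then becomes $\sigma(i+1) \geq \sigma(i) + 1$, which forces $\sigma$ to be strictly increasing on $\{1,2,\ldots,k\}$ and hence $\sigma = \operatorname{id}$. But then $\lambda_i = n-k$ for all $i$, giving $\lambda = \omega$, which contradicts our assumption $\lambda \neq \omega$. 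Hence $\gamma^\sigma \neq \omega$ for every $\sigma \in S_k$.

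With all hypotheses verified, Theorem~\ref{thm.coeff-h.2} applies to each $\gamma = \gamma^\sigma$ and yields $\operatorname*{coeff}\nolimits_\omega(\overline{h_{\gamma^\sigma}}) = 0$ for every $\sigma \in S_k$. Summing over $\sigma$ in the Jacobi--Trudi expansion gives $\operatorname*{coeff}\nolimits_\omega(\overline{s_\lambda}) = 0$, completing the proof of Theorem~\ref{thm.coeff-s.2}.
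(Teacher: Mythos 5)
Your proposal is correct and follows essentially the same route as the paper's own proof: expand $s_{\lambda}$ via the Jacobi--Trudi determinant, check for each $\sigma\in S_{k}$ that $\gamma^{\sigma}$ satisfies the hypotheses of Theorem~\ref{thm.coeff-h.2} (including the $\gamma^{\sigma}\neq\omega$ argument forcing $\sigma=\operatorname*{id}$), and sum. The only cosmetic difference is that you handle the $n=k$ case by noting the statement is vacuous, whereas the paper derives $n>k$ directly by contradiction -- the same argument in different packaging.
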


\begin{proof}
[Proof of Theorem \ref{thm.coeff-s.2}.]We have $n>k$%
\ \ \ \ \footnote{\textit{Proof.} Assume the contrary. Thus, $n\leq k$ and
therefore $n=k$ (since $n\geq k$). Hence, $n-k=0$. Thus, $\lambda_{1}%
\leq2\underbrace{\left(  n-k\right)  }_{=0}=0$, so that $\lambda_{1}=0$ and
thus $\lambda=\varnothing$ (since $\lambda$ is a partition). But from $n-k=0$,
we also obtain $\omega=\varnothing$ (since $\omega=\left(  n-k,n-k,\ldots
,n-k\right)  $). Thus, $\lambda=\varnothing=\omega$. But this contradicts
$\lambda\neq\omega$. This contradiction shows that our assumption was wrong,
qed.}.

We have $\lambda=\left(  \lambda_{1},\lambda_{2},\ldots,\lambda_{k}\right)  $
(since the partition $\lambda$ has at most $k$ parts). Proposition
\ref{prop.jacobi-trudi.Sh} \textbf{(a)} yields%
\[
s_{\lambda}=\det\left(  \left(  h_{\lambda_{u}-u+v}\right)  _{1\leq u\leq
k,\ 1\leq v\leq k}\right)  =\sum_{\sigma\in S_{k}}\left(  -1\right)  ^{\sigma
}\prod_{i=1}^{k}h_{\lambda_{i}-i+\sigma\left(  i\right)  }%
\]
(by the definition of a determinant). Hence,%
\begin{equation}
\overline{s_{\lambda}}=\overline{\sum_{\sigma\in S_{k}}\left(  -1\right)
^{\sigma}\prod_{i=1}^{k}h_{\lambda_{i}-i+\sigma\left(  i\right)  }}%
=\sum_{\sigma\in S_{k}}\left(  -1\right)  ^{\sigma}\overline{\prod_{i=1}%
^{k}h_{\lambda_{i}-i+\sigma\left(  i\right)  }}. \label{pf.thm.coeff-s.2.jt2}%
\end{equation}

Now, we claim that each $\sigma\in S_{k}$ satisfies%
\begin{equation}
\operatorname*{coeff}\nolimits_{\omega}\left(  \overline{\prod_{i=1}%
^{k}h_{\lambda_{i}-i+\sigma\left(  i\right)  }}\right)  =0.
\label{pf.thm.coeff-s.2.claim}%
\end{equation}

[\textit{Proof of (\ref{pf.thm.coeff-s.2.claim}):} Let $\sigma\in S_{k}$.
Define a $k$-tuple $\gamma=\left(  \gamma_{1},\gamma_{2},\ldots,\gamma
_{k}\right)  \in\mathbb{Z}^{k}$ of integers by%
\begin{equation}
\left(  \gamma_{i}=\lambda_{i}-i+\sigma\left(  i\right)
\ \ \ \ \ \ \ \ \ \ \text{for each }i\in\left\{  1,2,\ldots,k\right\}
\right)  . \label{pf.thm.coeff-s.2.claim.pf.gammai=}%
\end{equation}
Then, $\gamma\neq\omega$\ \ \ \ \footnote{\textit{Proof.} Assume the contrary.
Thus, $\gamma=\omega$.
\par
Let $i\in\left\{  1,2,\ldots,k-1\right\}  $. From $\gamma=\omega$, we obtain
$\gamma_{i}=\omega_{i}=n-k$. Comparing this with
(\ref{pf.thm.coeff-s.2.claim.pf.gammai=}), we find $\lambda_{i}-i+\sigma
\left(  i\right)  =n-k$. The same argument (applied to $i+1$ instead of $i$)
yields $\lambda_{i+1}-\left(  i+1\right)  +\sigma\left(  i+1\right)  =n-k$.
But $\lambda_{i}\geq\lambda_{i+1}$ (since $\lambda$ is a partition). Hence,%
\[
\underbrace{\lambda_{i}}_{\geq\lambda_{i+1}}-\underbrace{i}_{<i+1}%
+\sigma\left(  i+1\right)  >\lambda_{i+1}-\left(  i+1\right)  +\sigma\left(
i+1\right)  =n-k=\lambda_{i}-i+\sigma\left(  i\right)
\]
(since $\lambda_{i}-i+\sigma\left(  i\right)  =n-k$). If we subtract
$\lambda_{i}-i$ from this inequality, we obtain $\sigma\left(  i+1\right)
>\sigma\left(  i\right)  $. In other words, $\sigma\left(  i\right)
<\sigma\left(  i+1\right)  $.
\par
Now, forget that we fixed $i$. We thus have shown that each $i\in\left\{
1,2,\ldots,k-1\right\}  $ satisfies $\sigma\left(  i\right)  <\sigma\left(
i+1\right)  $. In other words, we have $\sigma\left(  1\right)  <\sigma\left(
2\right)  <\cdots<\sigma\left(  k\right)  $. Hence, $\sigma$ is a strictly
increasing map from $\left\{  1,2,\ldots,k\right\}  $ to $\left\{
1,2,\ldots,k\right\}  $. But the only such map is $\operatorname*{id}$. Thus,
$\sigma=\operatorname*{id}$. Hence, for each $i\in\left\{  1,2,\ldots
,k\right\}  $, we have
\begin{align*}
\gamma_{i}  &  =\lambda_{i}-i+\underbrace{\sigma}_{=\operatorname*{id}}\left(
i\right)  \ \ \ \ \ \ \ \ \ \ \left(  \text{by
(\ref{pf.thm.coeff-s.2.claim.pf.gammai=})}\right) \\
&  =\lambda_{i}-i+\operatorname*{id}\left(  i\right)  =\lambda_{i}%
-i+i=\lambda_{i}.
\end{align*}
Thus, $\gamma=\lambda$. Comparing this with $\gamma=\omega$, we obtain
$\lambda=\omega$. This contradicts $\lambda\neq\omega$. This contradiction
shows that our assumption was wrong, qed.}. Moreover,%
\[
\gamma_{i}\leq2n-k-i\ \ \ \ \ \ \ \ \ \ \text{for each }i\in\left\{
1,2,\ldots,k\right\}
\]
\footnote{\textit{Proof.} Let $i\in\left\{  1,2,\ldots,k\right\}  $. Then,
$\lambda_{1}\geq\lambda_{i}$ (since $\lambda$ is a partition), so that
$\lambda_{i}\leq\lambda_{1}\leq2\left(  n-k\right)  $. Now,
(\ref{pf.thm.coeff-s.2.claim.pf.gammai=}) yields%
\[
\gamma_{i}=\underbrace{\lambda_{i}}_{\leq2\left(  n-k\right)  }%
-i+\underbrace{\sigma\left(  i\right)  }_{\leq k}\leq2\left(  n-k\right)
-i+k=2n-k-i,
\]
qed.}. Hence, Theorem \ref{thm.coeff-h.2} yields $\operatorname*{coeff}%
\nolimits_{\omega}\left(  \overline{h_{\gamma}}\right)  =0$. In view of%
\[
h_{\gamma}=h_{\gamma_{1}}h_{\gamma_{2}}\cdots h_{\gamma_{k}}=\prod_{i=1}%
^{k}\underbrace{h_{\gamma_{i}}}_{\substack{=h_{\lambda_{i}-i+\sigma\left(
i\right)  }\\\text{(by (\ref{pf.thm.coeff-s.2.claim.pf.gammai=}))}}%
}=\prod_{i=1}^{k}h_{\lambda_{i}-i+\sigma\left(  i\right)  },
\]
this rewrites as $\operatorname*{coeff}\nolimits_{\omega}\left(
\overline{\prod_{i=1}^{k}h_{\lambda_{i}-i+\sigma\left(  i\right)  }}\right)
=0$. Thus, (\ref{pf.thm.coeff-s.2.claim}) is proven.]

From (\ref{pf.thm.coeff-s.2.jt2}), we obtain%
\begin{align*}
\operatorname*{coeff}\nolimits_{\omega}\left(  \overline{s_{\lambda}}\right)
&  =\operatorname*{coeff}\nolimits_{\omega}\left(  \sum_{\sigma\in S_{k}%
}\left(  -1\right)  ^{\sigma}\overline{\prod_{i=1}^{k}h_{\lambda_{i}%
-i+\sigma\left(  i\right)  }}\right) \\
&  =\sum_{\sigma\in S_{k}}\left(  -1\right)  ^{\sigma}%
\underbrace{\operatorname*{coeff}\nolimits_{\omega}\left(  \overline
{\prod_{i=1}^{k}h_{\lambda_{i}-i+\sigma\left(  i\right)  }}\right)
}_{\substack{=0\\\text{(by (\ref{pf.thm.coeff-s.2.claim}))}}}=0.
\end{align*}
This proves Theorem \ref{thm.coeff-s.2}.
\end{proof}

\section{\label{sect.redh-another-proof}Another proof of Theorem
\ref{thm.coeffw}}

We can use Theorem \ref{thm.coeff-s.2} to obtain a second proof of Theorem
\ref{thm.coeffw}. To that end, we shall use a few more basic facts about
Littlewood-Richardson coefficients. First we introduce a few notations (only
for this section):

\begin{convention}
Convention \ref{conv.symmetry-conv} remains in place for the whole Section
\ref{sect.redh-another-proof}.

We shall also use all the notations introduced in Section \ref{sect.symmetry}.
\end{convention}

\subsection{Some basics on Littlewood-Richardson coefficients}

\begin{definition}
\label{def.LRcoeffs.domi}Let $a\in\mathbb{N}$.

\textbf{(a)} We let $\operatorname*{Par}\nolimits_{a}$ denote the set of all
partitions with size $a$. (That is, $\operatorname*{Par}\nolimits_{a}=\left\{
\lambda\text{ is a partition}\ \mid\ \left\vert \lambda\right\vert =a\right\}
$.)

\textbf{(b)} If $\lambda$ and $\mu$ are two partitions with size $a$, then we
write $\lambda\triangleright\mu$ if and only if we have%
\[
\lambda_{1}+\lambda_{2}+\cdots+\lambda_{i}\geq\mu_{1}+\mu_{2}+\cdots+\mu
_{i}\ \ \ \ \ \ \ \ \ \ \text{for each }i\in\left\{  1,2,\ldots,a\right\}  .
\]
This defines a binary relation $\triangleright$ on $\operatorname*{Par}%
\nolimits_{a}$. This relation is the smaller-or-equal relation of a partial
order on $\operatorname*{Par}\nolimits_{a}$, which is called the
\textit{dominance order}.
\end{definition}

Here is another way to describe the dominance order:

\begin{remark}
\label{rmk.LRcoeffs.domi=domi}Let $a\in\mathbb{N}$. Let $\lambda$ and $\mu$ be
two partitions with size $a$. Then, we have $\lambda\triangleright\mu$ if and
only if we have%
\begin{equation}
\lambda_{1}+\lambda_{2}+\cdots+\lambda_{i}\geq\mu_{1}+\mu_{2}+\cdots+\mu
_{i}\ \ \ \ \ \ \ \ \ \ \text{for each }i\geq1.
\label{eq.rmk.LRcoeffs.domi=domi.1}%
\end{equation}

\end{remark}

\begin{proof}
[Proof of Remark \ref{rmk.LRcoeffs.domi=domi}.]$\Longleftarrow:$ Assume that
we have (\ref{eq.rmk.LRcoeffs.domi=domi.1}). We must prove that $\lambda
\triangleright\mu$.

For each $i\in\left\{  1,2,\ldots,a\right\}  $, we have $i\geq1$ and therefore
$\lambda_{1}+\lambda_{2}+\cdots+\lambda_{i}\geq\mu_{1}+\mu_{2}+\cdots+\mu_{i}$
(by (\ref{eq.rmk.LRcoeffs.domi=domi.1})). In other words, we have%
\[
\lambda_{1}+\lambda_{2}+\cdots+\lambda_{i}\geq\mu_{1}+\mu_{2}+\cdots+\mu
_{i}\ \ \ \ \ \ \ \ \ \ \text{for each }i\in\left\{  1,2,\ldots,a\right\}  .
\]
In other words, we have $\lambda\triangleright\mu$ (by the definition of the
relation $\triangleright$). This proves the \textquotedblleft$\Longleftarrow
$\textquotedblright\ direction of Remark \ref{rmk.LRcoeffs.domi=domi}.

$\Longrightarrow:$ Assume that $\lambda\triangleright\mu$. We must prove that
we have (\ref{eq.rmk.LRcoeffs.domi=domi.1}).

We have assumed that $\lambda\triangleright\mu$. In other words, we have%
\begin{equation}
\lambda_{1}+\lambda_{2}+\cdots+\lambda_{i}\geq\mu_{1}+\mu_{2}+\cdots+\mu
_{i}\ \ \ \ \ \ \ \ \ \ \text{for each }i\in\left\{  1,2,\ldots,a\right\}
\label{pf.rmk.LRcoeffs.domi=domi.1}%
\end{equation}
(by the definition of the relation $\triangleright$).

Now, let $i\geq1$. Our goal is to show that $\lambda_{1}+\lambda_{2}%
+\cdots+\lambda_{i}\geq\mu_{1}+\mu_{2}+\cdots+\mu_{i}$. If $i\in\left\{
1,2,\ldots,a\right\}  $, then this follows from
(\ref{pf.rmk.LRcoeffs.domi=domi.1}). Hence, for the rest of this proof, we
WLOG assume that we don't have $i\in\left\{  1,2,\ldots,a\right\}  $. Hence,
$i\geq a+1$ (because $i\geq1$), so that $a+1\leq i<i+1$. But $\lambda$ is a
partition; thus, $\lambda_{1}\geq\lambda_{2}\geq\lambda_{3}\geq\cdots$. Now,
recall that $\lambda$ is a partition of size $a$; hence, $\left\vert
\lambda\right\vert =a$. Thus,%
\begin{align*}
a  &  =\left\vert \lambda\right\vert =\lambda_{1}+\lambda_{2}+\lambda
_{3}+\cdots=\sum_{p=1}^{\infty}\lambda_{p}=\sum_{p=1}^{i+1}\underbrace{\lambda
_{p}}_{\substack{\geq\lambda_{i+1}\\\text{(since }p\leq i+1\\\text{and
}\lambda_{1}\geq\lambda_{2}\geq\lambda_{3}\geq\cdots\text{)}}%
}+\underbrace{\sum_{p=i+2}^{\infty}\lambda_{p}}_{\substack{\geq0\\\text{(since
all }\lambda_{p}\text{ are }\geq0\text{)}}}\\
&  \geq\sum_{p=1}^{i+1}\lambda_{i+1}=\left(  i+1\right)  \lambda_{i+1}.
\end{align*}
Hence, $\lambda_{i+1}\leq\dfrac{a}{i+1}<1$ (since $a<a+1<i+1$). Thus,
$\lambda_{i+1}=0$ (since $\lambda_{i+1}\in\mathbb{N}$). Furthermore, from
$\lambda_{1}\geq\lambda_{2}\geq\lambda_{3}\geq\cdots$, we conclude that each
$p\in\left\{  i+1,i+2,i+3,\ldots\right\}  $ satisfies $\lambda_{i+1}%
\geq\lambda_{p}$ and thus $\lambda_{p}\leq\lambda_{i+1}=0$ and therefore
$\lambda_{p}=0$ (since $\lambda_{p}\in\mathbb{N}$). Hence, $\sum
_{p=i+1}^{\infty}\underbrace{\lambda_{p}}_{=0}=\sum_{p=i+1}^{\infty}0=0$.
Thus,%
\[
a=\sum_{p=1}^{\infty}\lambda_{p}=\sum_{p=1}^{i}\lambda_{p}+\underbrace{\sum
_{p=i+1}^{\infty}\lambda_{p}}_{=0}=\sum_{p=1}^{i}\lambda_{p}=\lambda
_{1}+\lambda_{2}+\cdots+\lambda_{i}.
\]
The same argument (applied to the partition $\mu$ instead of $\lambda$) yields%
\[
a=\mu_{1}+\mu_{2}+\cdots+\mu_{i}.
\]
Comparing these two equalities, we find $\lambda_{1}+\lambda_{2}%
+\cdots+\lambda_{i}=\mu_{1}+\mu_{2}+\cdots+\mu_{i}$. Hence, $\lambda
_{1}+\lambda_{2}+\cdots+\lambda_{i}\geq\mu_{1}+\mu_{2}+\cdots+\mu_{i}$.

Now, forget that we fixed $i$. We thus have shown that
\[
\lambda_{1}+\lambda_{2}+\cdots+\lambda_{i}\geq\mu_{1}+\mu_{2}+\cdots+\mu
_{i}\ \ \ \ \ \ \ \ \ \ \text{for each }i\geq1.
\]
In other words, (\ref{eq.rmk.LRcoeffs.domi=domi.1}). This proves the
\textquotedblleft$\Longrightarrow$\textquotedblright\ direction of Remark
\ref{rmk.LRcoeffs.domi=domi}.
\end{proof}

\begin{definition}
Let $\mu$ and $\nu$ be two partitions. Then, we define two new partitions
$\mu+\nu$ and $\mu\sqcup\nu$ as follows:

\begin{itemize}
\item The partition $\mu+\nu$ is defined as $\left(  \mu_{1}+\nu_{1},\mu
_{2}+\nu_{2},\mu_{3}+\nu_{3},\ldots\right)  $.

\item The partition $\mu\sqcup\nu$ is defined as the result of sorting the
list $\left(  \mu_{1},\mu_{2},\ldots,\mu_{\ell\left(  \mu\right)  },\nu
_{1},\nu_{2},\ldots,\nu_{\ell\left(  \nu\right)  }\right)  $ in decreasing order.
\end{itemize}
\end{definition}

We shall use the following fact:

\begin{proposition}
\label{prop.LRcoeffs.dom}Let $a\in\mathbb{N}$ and $b\in\mathbb{N}$ be such
that $a\leq b$. Let $\mu\in\operatorname*{Par}\nolimits_{a}$, $\nu
\in\operatorname*{Par}\nolimits_{b-a}$ and $\lambda\in\operatorname*{Par}%
\nolimits_{b}$ be such that $c_{\mu,\nu}^{\lambda}\neq0$. Then, $\mu
+\nu\triangleright\lambda\triangleright\mu\sqcup\nu$.
\end{proposition}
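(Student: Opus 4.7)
The plan is to deduce both inequalities from the Littlewood-Richardson rule, which asserts that $c_{\mu,\nu}^{\lambda}$ counts the semistandard skew tableaux of shape $\lambda/\mu$ and content $\nu$ whose reverse reading word is a lattice (Yamanouchi) word. So fix such a tableau $T$; its existence is guaranteed by $c_{\mu,\nu}^{\lambda}\neq 0$.

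First I would establish the upper bound $\mu+\nu\triangleright\lambda$. The key lemma I would prove is: every entry in row $r$ of $T$ is $\leq r$. This goes by induction on $r$. For $r=1$, the rightmost cell of row $1$ is the first letter of the reading word, which must be $1$ by the lattice condition; weak increase along the row then forces every entry of row $1$ to be $1$. For the step, suppose row $r+1$ contained an entry $k\geq r+2$, and take the rightmost such cell $(r+1,c)$. Row-weak-increase forces $c=\lambda_{r+1}$ (else entries to its right would need to be both $\geq k$ and $<k$). By the induction hypothesis, the reading-word prefix ending at $(r+1,c)$ sees only entries from rows $1,\dots,r$ (all $\leq r$) plus this one occurrence of $k$; hence the prefix contains one $k$ and no $(k-1)$, contradicting the lattice condition. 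With the lemma in hand, counting cells in the top $i$ rows of $\lambda/\mu$ gives $\sum_{r=1}^{i}(\lambda_r-\mu_r)\leq\nu_1+\cdots+\nu_i$ (since each such cell carries a value $\leq i$, and there are $\nu_1+\cdots+\nu_i$ such values in total). Rearranging, $\sum_{r=1}^{i}\lambda_r\leq\sum_{r=1}^{i}(\mu+\nu)_r$ for every $i$, and since $|\lambda|=|\mu+\nu|=b$, Remark \ref{rmk.LRcoeffs.domi=domi} gives $\mu+\nu\triangleright\lambda$.

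For the lower bound $\lambda\triangleright\mu\sqcup\nu$, I would leverage conjugation instead of redoing the tableau analysis. Three ingredients suffice: (i) the symmetry $c_{\mu,\nu}^{\lambda}=c_{\mu^{t},\nu^{t}}^{\lambda^{t}}$, obtained by applying the $\omega$-involution to $s_{\mu}s_{\nu}=\sum_{\lambda}c_{\mu,\nu}^{\lambda}s_{\lambda}$; (ii) the identity $(\mu\sqcup\nu)^{t}=\mu^{t}+\nu^{t}$, immediate from counting parts $\geq j$ on both sides; and (iii) the standard fact that conjugation reverses the dominance order on partitions of a fixed size. Applying the already-proven upper bound to $\mu^{t},\nu^{t},\lambda^{t}$ (which have a nonvanishing LR coefficient by (i)) yields $\mu^{t}+\nu^{t}\triangleright\lambda^{t}$; rewriting via (ii) gives $(\mu\sqcup\nu)^{t}\triangleright\lambda^{t}$; and (iii) converts this to $\lambda\triangleright\mu\sqcup\nu$.

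The main obstacle is the tableau lemma that entries in row $r$ are bounded by $r$. Everything else is essentially bookkeeping once this is in hand. Since this is a classical fact, I would likely give the short inductive argument sketched above but acknowledge that an alternative is to cite the relevant exercise in \cite{GriRei18}; the subsequent counting and conjugation steps are then routine.
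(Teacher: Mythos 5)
Your argument is correct, but it differs from the paper in an important practical sense: the paper does not prove Proposition \ref{prop.LRcoeffs.dom} at all, it simply cites \cite[Exercise 2.9.17(c)]{GriRei18}, whereas you supply a self-contained proof. Your proof rests on the combinatorial Littlewood--Richardson rule: the key lemma that every entry in row $r$ of a lattice (Yamanouchi) skew tableau is at most $r$ is proved correctly by your induction (base case: the first letter of the reverse reading word must be $1$, and weak increase along row $1$ does the rest; step: the rightmost offending cell of row $r+1$ must be the last cell of its row, and the prefix of the reading word ending there contains one $k$ but no $k-1$, violating the lattice condition), and the cell count $\sum_{r\le i}(\lambda_r-\mu_r)\le\nu_1+\cdots+\nu_i$ then gives $\mu+\nu\triangleright\lambda$ via Remark \ref{rmk.LRcoeffs.domi=domi}. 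The derivation of $\lambda\triangleright\mu\sqcup\nu$ by conjugation is a nice economy: the three ingredients you invoke ($c_{\mu,\nu}^{\lambda}=c_{\mu^{t},\nu^{t}}^{\lambda^{t}}$ via the $\omega$-involution, $(\mu\sqcup\nu)^{t}=\mu^{t}+\nu^{t}$, and the order-reversal of conjugation on $\operatorname*{Par}\nolimits_{b}$) are all standard and correctly applied, and they spare you a second tableau analysis. The trade-off relative to the paper's citation is that your route requires the full LR rule as input, which is a heavier theorem than the statement being proved; a lighter alternative for the first inequality is to expand $s_{\mu}s_{\nu}$ in monomial symmetric functions and use the unitriangularity of the Kostka matrix, which avoids the LR rule entirely. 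Either way, your write-up would be a legitimate replacement for the citation.
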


Proposition \ref{prop.LRcoeffs.dom} is precisely \cite[Exercise 2.9.17(c)]%
{GriRei18} (with $k$ and $n$ renamed as $a$ and $b$).

\begin{corollary}
\label{cor.LRcoeffs.dom0}Let $\lambda$, $\mu$ and $\nu$ be three partitions
such that $\lambda_{1}>\mu_{1}+\nu_{1}$. Then, $c_{\mu,\nu}^{\lambda}=0$.
\end{corollary}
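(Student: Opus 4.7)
The plan is to derive Corollary \ref{cor.LRcoeffs.dom0} as an immediate specialization of the dominance bound on Littlewood-Richardson coefficients provided by Proposition \ref{prop.LRcoeffs.dom}, applied at the first coordinate only.

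First, I would dispose of the trivial size-mismatch case. If $\left\vert \mu\right\vert +\left\vert \nu\right\vert \neq\left\vert \lambda\right\vert $, then Proposition \ref{prop.LR.props} \textbf{(c)} (applied with the appropriate roles) directly gives $c_{\mu,\nu}^{\lambda}=0$, so the claim holds. Hence I may assume $\left\vert \mu\right\vert +\left\vert \nu\right\vert =\left\vert \lambda\right\vert $.

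Next, set $a=\left\vert \mu\right\vert $ and $b=\left\vert \lambda\right\vert $, so that $\mu\in\operatorname*{Par}\nolimits_{a}$, $\nu\in\operatorname*{Par}\nolimits_{b-a}$, $\lambda\in\operatorname*{Par}\nolimits_{b}$, and $a\leq b$ (the latter because $\left\vert \nu\right\vert \geq0$). Arguing by contradiction, I would assume $c_{\mu,\nu}^{\lambda}\neq0$ and invoke Proposition \ref{prop.LRcoeffs.dom} to conclude $\mu+\nu\triangleright\lambda$. By Remark \ref{rmk.LRcoeffs.domi=domi}, this implies in particular the $i=1$ inequality
\[
\left(  \mu+\nu\right)  _{1}\geq\lambda_{1},
\]
which rewrites as $\mu_{1}+\nu_{1}\geq\lambda_{1}$. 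This flatly contradicts the hypothesis $\lambda_{1}>\mu_{1}+\nu_{1}$, and therefore $c_{\mu,\nu}^{\lambda}=0$, as desired.

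There is no real obstacle here: the argument is a one-line consequence of the already-established dominance bound. The only minor subtlety is that one needs Remark \ref{rmk.LRcoeffs.domi=domi} to guarantee that the inequality at index $i=1$ is accessible regardless of the value of $b$ (this being important only in the degenerate case $b=0$, which is itself vacuous since then $\lambda=\varnothing$ forces $\lambda_{1}=0$, whereas $\lambda_{1}>\mu_{1}+\nu_{1}\geq0$ cannot hold).
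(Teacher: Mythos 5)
Your proof is correct and follows essentially the same route as the paper's: reduce to the case $\left\vert \mu\right\vert +\left\vert \nu\right\vert =\left\vert \lambda\right\vert$ via Proposition \ref{prop.LR.props} \textbf{(c)}, apply Proposition \ref{prop.LRcoeffs.dom} to get $\mu+\nu\triangleright\lambda$, and extract the $i=1$ inequality to contradict $\lambda_{1}>\mu_{1}+\nu_{1}$. The only cosmetic difference is that you invoke Remark \ref{rmk.LRcoeffs.domi=domi} to access the index $i=1$, while the paper instead observes directly that $b=\left\vert \lambda\right\vert \geq\lambda_{1}>\mu_{1}+\nu_{1}\geq0$ guarantees $1\in\left\{ 1,2,\ldots,b\right\}$; both handle the degenerate case adequately.
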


\begin{proof}
[Proof of Corollary \ref{cor.LRcoeffs.dom0}.]Assume the contrary. Thus,
$c_{\mu,\nu}^{\lambda}\neq0$.

Let $a=\left\vert \mu\right\vert $; thus, $\mu\in\operatorname*{Par}%
\nolimits_{a}$. Let $b=\left\vert \lambda\right\vert $; thus, $\lambda
\in\operatorname*{Par}\nolimits_{b}$.

Proposition \ref{prop.LR.props} \textbf{(c)} shows that $c_{\mu,\nu}^{\lambda
}=0$ unless $\left\vert \mu\right\vert +\left\vert \nu\right\vert =\left\vert
\lambda\right\vert $. Hence, $\left\vert \mu\right\vert +\left\vert
\nu\right\vert =\left\vert \lambda\right\vert $ (since $c_{\mu,\nu}^{\lambda
}\neq0$). Thus, $\left\vert \nu\right\vert =\underbrace{\left\vert
\lambda\right\vert }_{=b}-\underbrace{\left\vert \mu\right\vert }_{=a}=b-a$.
Hence, $b-a=\left\vert \nu\right\vert \geq0$, so that $a\leq b$. Also, from
$\left\vert \nu\right\vert =b-a$, we obtain $\nu\in\operatorname*{Par}%
\nolimits_{b-a}$. Thus, Proposition \ref{prop.LRcoeffs.dom} yields $\mu
+\nu\triangleright\lambda\triangleright\mu\sqcup\nu$.

But $b=\left\vert \lambda\right\vert \geq\lambda_{1}>\mu_{1}+\nu_{1}\geq0$, so
that $1\in\left\{  1,2,\ldots,b\right\}  $.

Now, from $\mu+\nu\triangleright\lambda$, we conclude that%
\begin{align*}
\left(  \mu+\nu\right)  _{1}+\left(  \mu+\nu\right)  _{2}+\cdots+\left(
\mu+\nu\right)  _{i}  &  \geq\lambda_{1}+\lambda_{2}+\cdots+\lambda_{i}\\
&  \ \ \ \ \ \ \ \ \ \ \text{for each }i\in\left\{  1,2,\ldots,b\right\}
\end{align*}
(by the definition of the relation $\triangleright$, since $\mu+\nu$ and
$\lambda$ are two partitions of size $b$). Applying this to $i=1$, we obtain
$\left(  \mu+\nu\right)  _{1}\geq\lambda_{1}$ (since $1\in\left\{
1,2,\ldots,b\right\}  $). But the definition of $\mu+\nu$ yields $\left(
\mu+\nu\right)  _{1}=\mu_{1}+\nu_{1}<\lambda_{1}$ (since $\lambda_{1}>\mu
_{1}+\nu_{1}$). This contradicts $\left(  \mu+\nu\right)  _{1}\geq\lambda_{1}%
$. This contradiction shows that our assumption was false. Hence, Corollary
\ref{cor.LRcoeffs.dom0} is proven.
\end{proof}

Next, we recall the Littlewood-Richardson rule itself:

\begin{proposition}
\label{prop.LRrule}Let $\lambda$ and $\mu$ be two partitions. Then,%
\[
\mathbf{s}_{\lambda}\mathbf{s}_{\mu}=\sum_{\rho\text{ is a partition}%
}c_{\lambda,\mu}^{\rho}\mathbf{s}_{\rho}.
\]

\end{proposition}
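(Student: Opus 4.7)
The plan is to prove Proposition \ref{prop.LRrule} by invoking the Hall inner product on $\Lambda$ and exploiting the adjointness between multiplication and skewing. Recall that the Hall inner product $\langle \cdot, \cdot \rangle$ on $\Lambda$ is the symmetric bilinear form for which the family $(\mathbf{s}_{\rho})_{\rho \text{ a partition}}$ is an orthonormal basis (see \cite[\S 2.5]{GriRei18}). The key fact we shall need, beyond orthonormality, is that for every $\mathbf{f} \in \Lambda$ the skewing operator $\mathbf{f}^{\perp}$ is the adjoint of multiplication by $\mathbf{f}$; that is,
\[
\langle \mathbf{f}\mathbf{g}, \mathbf{h}\rangle = \langle \mathbf{g}, \mathbf{f}^{\perp}\mathbf{h}\rangle \qquad \text{for all } \mathbf{f}, \mathbf{g}, \mathbf{h} \in \Lambda.
\]
This is the defining property of $\mathbf{f}^{\perp}$ (see \cite[Definition 2.8.1]{GriRei18}).

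With these tools, the proof is a short computation. Let $\lambda$ and $\mu$ be two partitions, and let $\rho$ be any partition. First I would compute
\[
\langle \mathbf{s}_{\lambda}\mathbf{s}_{\mu}, \mathbf{s}_{\rho}\rangle = \langle \mathbf{s}_{\mu}, (\mathbf{s}_{\lambda})^{\perp}\mathbf{s}_{\rho}\rangle = \langle \mathbf{s}_{\mu}, \mathbf{s}_{\rho/\lambda}\rangle,
\]
where the first equality uses adjointness and the second uses (\ref{eq.skewing.ss}) (applied with roles swapped). Next, by Proposition \ref{prop.LR.props} \textbf{(a)} (applied to $\rho$ and $\lambda$ in place of $\lambda$ and $\mu$), we have
\[
\mathbf{s}_{\rho/\lambda} = \sum_{\nu \text{ is a partition}} c_{\lambda,\nu}^{\rho}\mathbf{s}_{\nu}.
\]
Substituting this into the previous line and invoking the orthonormality of the Schur basis, we obtain
\[
\langle \mathbf{s}_{\lambda}\mathbf{s}_{\mu}, \mathbf{s}_{\rho}\rangle = \sum_{\nu \text{ is a partition}} c_{\lambda,\nu}^{\rho} \langle \mathbf{s}_{\mu}, \mathbf{s}_{\nu}\rangle = c_{\lambda,\mu}^{\rho}.
\]

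To conclude, I would use once more that $(\mathbf{s}_{\rho})_{\rho}$ is an orthonormal basis of $\Lambda$: every element $\mathbf{f} \in \Lambda$ expands as $\mathbf{f} = \sum_{\rho} \langle \mathbf{f}, \mathbf{s}_{\rho}\rangle \mathbf{s}_{\rho}$. Applied to $\mathbf{f} = \mathbf{s}_{\lambda}\mathbf{s}_{\mu}$, this yields
\[
\mathbf{s}_{\lambda}\mathbf{s}_{\mu} = \sum_{\rho \text{ is a partition}} \langle \mathbf{s}_{\lambda}\mathbf{s}_{\mu}, \mathbf{s}_{\rho}\rangle \mathbf{s}_{\rho} = \sum_{\rho \text{ is a partition}} c_{\lambda,\mu}^{\rho} \mathbf{s}_{\rho},
\]
which is precisely the claim. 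There is no real obstacle here; the only subtlety is citing the correct adjointness statement from \cite{GriRei18} and being careful with the order of the two subscripts on $c_{\lambda,\mu}^{\rho}$ (which is symmetric anyway, but this symmetry is not needed for the argument above). Alternatively, if one prefers to avoid the Hall inner product entirely, one can simply quote the Littlewood-Richardson rule as stated in \cite[Theorem 2.5.11 (or the equivalent product formulation)]{GriRei18}, since Proposition \ref{prop.LRrule} is just a restatement of that classical theorem.
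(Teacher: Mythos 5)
Your argument is correct, but it is genuinely different from what the paper does: the paper's entire proof of Proposition \ref{prop.LRrule} is a citation, namely that the statement is precisely \cite[(2.5.6)]{GriRei18} (with $\lambda$, $\mu$, $\nu$ renamed as $\rho$, $\lambda$, $\mu$) -- exactly the fallback you mention in your last sentence. What you do instead is derive the product expansion from the skew expansion of Proposition \ref{prop.LR.props} \textbf{(a)} via the Hall inner product: using the adjointness $\left\langle \mathbf{f}\mathbf{g},\mathbf{h}\right\rangle =\left\langle \mathbf{g},\mathbf{f}^{\perp}\mathbf{h}\right\rangle$ and (\ref{eq.skewing.ss}) you get $\left\langle \mathbf{s}_{\lambda}\mathbf{s}_{\mu},\mathbf{s}_{\rho}\right\rangle =\left\langle \mathbf{s}_{\mu},\mathbf{s}_{\rho/\lambda}\right\rangle =c_{\lambda,\mu}^{\rho}$, and then orthonormal expansion finishes the job. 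This is sound, including over an arbitrary commutative base ring $\mathbf{k}$ (the Hall form, the orthonormality of the Schur basis, and the adjoint characterization of $\mathbf{f}^{\perp}$ all hold in that generality in \cite{GriRei18}), and it has the virtue of exhibiting Proposition \ref{prop.LRrule} as a formal consequence of facts the paper already quotes, rather than an independent import. The only caveat is that the adjointness property of skewing is \emph{not} among the properties of $\mathbf{f}^{\perp}$ listed in Section \ref{sect.symmetry} (the paper only records (\ref{eq.skewing.ss}), (\ref{eq.skewing.fg}), linearity and $1^{\perp}=\operatorname*{id}$), so you would need to add that one citation to \cite{GriRei18} explicitly; with it, your derivation is complete, and it costs no more than the paper's direct citation while making the logical dependence on Proposition \ref{prop.LR.props} \textbf{(a)} transparent.
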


Proposition \ref{prop.LRrule} is precisely \cite[(2.5.6)]{GriRei18} (with
$\lambda$, $\mu$ and $\nu$ renamed as $\rho$, $\lambda$ and $\mu$).

\begin{corollary}
\label{cor.LRcoeffs.reduced}Let $\lambda\in P_{k,n}$ and $\mu\in P_{k,n}$.
Then,%
\[
s_{\lambda}s_{\mu}=\sum_{\substack{\rho\text{ is a partition with at most
}k\text{ parts;}\\\rho_{1}\leq2\left(  n-k\right)  }}c_{\lambda,\mu}^{\rho
}s_{\rho}.
\]

\end{corollary}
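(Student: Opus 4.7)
The plan is to start from the general Littlewood--Richardson expansion in $\Lambda$ given by Proposition \ref{prop.LRrule}, specialize to the $k$ variables $x_{1},x_{2},\ldots,x_{k}$, and then use two vanishing criteria to trim the resulting sum down to the stated range. No hard new input is required; everything needed has been established earlier in the excerpt.

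Concretely, I would first apply Proposition \ref{prop.LRrule} to write
\[
\mathbf{s}_{\lambda}\mathbf{s}_{\mu}=\sum_{\rho\text{ a partition}}c_{\lambda,\mu}^{\rho}\mathbf{s}_{\rho}
\]
and then evaluate both sides at $(x_{1},x_{2},\ldots,x_{k})$. This gives
\[
s_{\lambda}s_{\mu}=\sum_{\rho\text{ a partition}}c_{\lambda,\mu}^{\rho}s_{\rho}.
\]
Next, I would split this sum according to the two conditions appearing on the right-hand side of the claim. For partitions $\rho$ with more than $k$ parts, the identity (\ref{eq.slam=0-too-long}) gives $s_{\rho}=0$, so those terms drop out. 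This reduces the sum to partitions $\rho$ with at most $k$ parts.

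It then remains to discard the partitions $\rho$ with $\rho_{1}>2(n-k)$. Since $\lambda\in P_{k,n}$ and $\mu\in P_{k,n}$, we have $\lambda_{1}\leq n-k$ and $\mu_{1}\leq n-k$, hence $\lambda_{1}+\mu_{1}\leq 2(n-k)<\rho_{1}$. Corollary \ref{cor.LRcoeffs.dom0} (applied to $\lambda$ and $\mu$ in place of $\mu$ and $\nu$) then yields $c_{\lambda,\mu}^{\rho}=0$, so these terms also contribute zero. Assembling the surviving terms gives exactly
\[
s_{\lambda}s_{\mu}=\sum_{\substack{\rho\text{ a partition with at most }k\text{ parts;}\\\rho_{1}\leq 2(n-k)}}c_{\lambda,\mu}^{\rho}s_{\rho},
\]
as desired. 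There is no real obstacle here; the only thing to watch is bookkeeping --- making sure that the two pruning steps cover exactly the partitions \emph{outside} the indexing set of the claim, which they do since the complement consists of partitions that either have more than $k$ parts (killed by (\ref{eq.slam=0-too-long})) or have at most $k$ parts and $\rho_{1}>2(n-k)$ (killed by Corollary \ref{cor.LRcoeffs.dom0}).
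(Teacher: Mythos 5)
Your proposal is correct and follows essentially the same route as the paper: expand via Proposition \ref{prop.LRrule}, evaluate at $x_{1},\ldots,x_{k}$, kill the terms with $\rho_{1}>2(n-k)$ using Corollary \ref{cor.LRcoeffs.dom0} (since $\lambda_{1}+\mu_{1}\leq 2(n-k)$), and kill the terms with more than $k$ parts using (\ref{eq.slam=0-too-long}). The only difference is the order of the two pruning steps, which is immaterial.
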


\begin{proof}
[Proof of Corollary \ref{cor.LRcoeffs.reduced}.]If $\rho$ is a partition
satisfying $\rho_{1}>2\left(  n-k\right)  $, then%
\begin{equation}
c_{\lambda,\mu}^{\rho}=0. \label{pf.cor.LRcoeffs.reduced.1}%
\end{equation}

[\textit{Proof of (\ref{pf.cor.LRcoeffs.reduced.1}):} Let $\rho$ be a
partition satisfying $\rho_{1}>2\left(  n-k\right)  $.

We have $\lambda\in P_{k,n}$; thus, each part of $\lambda$ is $\leq n-k$.
Thus, $\lambda_{1}\leq n-k$. Similarly, $\mu_{1}\leq n-k$. Hence,
$\underbrace{\lambda_{1}}_{\leq n-k}+\underbrace{\mu_{1}}_{\leq n-k}%
\leq2\left(  n-k\right)  <\rho_{1}$. In other words, $\rho_{1}>\lambda_{1}%
+\mu_{1}$. Hence, Corollary \ref{cor.LRcoeffs.dom0} (applied to $\rho$,
$\lambda$ and $\mu$ instead of $\lambda$, $\mu$ and $\nu$) yields
$c_{\lambda,\mu}^{\rho}=0$. This proves (\ref{pf.cor.LRcoeffs.reduced.1}).]

Proposition \ref{prop.LRrule} yields%
\[
\mathbf{s}_{\lambda}\mathbf{s}_{\mu}=\sum_{\rho\text{ is a partition}%
}c_{\lambda,\mu}^{\rho}\mathbf{s}_{\rho}.
\]
This is an equality in $\Lambda$. Evaluating both of its sides at the $k$
indeterminates $x_{1},x_{2},\ldots,x_{k}$, we find%
\begin{align*}
s_{\lambda}s_{\mu}  &  =\sum_{\rho\text{ is a partition}}c_{\lambda,\mu}%
^{\rho}s_{\rho}\\
&  =\sum_{\substack{\rho\text{ is a partition;}\\\rho_{1}\leq2\left(
n-k\right)  }}c_{\lambda,\mu}^{\rho}s_{\rho}+\sum_{\substack{\rho\text{ is a
partition;}\\\rho_{1}>2\left(  n-k\right)  }}\underbrace{c_{\lambda,\mu}%
^{\rho}}_{\substack{=0\\\text{(by (\ref{pf.cor.LRcoeffs.reduced.1}))}}%
}s_{\rho}\\
&  \ \ \ \ \ \ \ \ \ \ \left(
\begin{array}
[c]{c}%
\text{since each partition }\rho\text{ satisfies either }\rho_{1}\leq2\left(
n-k\right) \\
\text{or }\rho_{1}>2\left(  n-k\right)  \text{ (but not both)}%
\end{array}
\right) \\
&  =\sum_{\substack{\rho\text{ is a partition;}\\\rho_{1}\leq2\left(
n-k\right)  }}c_{\lambda,\mu}^{\rho}s_{\rho}\\
&  =\sum_{\substack{\rho\text{ is a partition with at most }k\text{
parts;}\\\rho_{1}\leq2\left(  n-k\right)  }}c_{\lambda,\mu}^{\rho}s_{\rho
}+\sum_{\substack{\rho\text{ is a partition with more than }k\text{
parts;}\\\rho_{1}\leq2\left(  n-k\right)  }}c_{\lambda,\mu}^{\rho
}\underbrace{s_{\rho}}_{\substack{=0\\\text{(by (\ref{eq.slam=0-too-long}))}%
}}\\
&  =\sum_{\substack{\rho\text{ is a partition with at most }k\text{
parts;}\\\rho_{1}\leq2\left(  n-k\right)  }}c_{\lambda,\mu}^{\rho}s_{\rho}.
\end{align*}
This proves Corollary \ref{cor.LRcoeffs.reduced}.
\end{proof}

Next, let us recall another known fact on skew Schur functions:

\begin{proposition}
\label{prop.skew-schur.complement}Let $\lambda$ be any partition. Then,
$\mathbf{s}_{\omega/\lambda^{\vee}}=\mathbf{s}_{\lambda}$.
\end{proposition}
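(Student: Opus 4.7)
The plan is to prove this identity via two applications of Jacobi--Trudi, connected by elementary determinant manipulations. First, I would invoke the Jacobi--Trudi formula for skew Schur functions (cf.\ \cite[(2.4.16)]{GriRei18}) applied to the shape $\omega/\lambda^{\vee}$. Since $\omega$ has exactly $k$ parts and $\lambda^{\vee}$ has at most $k$ parts, this gives a $k\times k$ determinant whose $(i,j)$-entry is $\mathbf{h}_{\omega_{i}-\lambda^{\vee}_{j}-i+j}$. Plugging in $\omega_{i}=n-k$ and $\lambda^{\vee}_{j}=n-k-\lambda_{k+1-j}$ (from Definition \ref{def.omega-and-complement}), this entry simplifies to $\mathbf{h}_{\lambda_{k+1-j}-i+j}$, yielding
\[
\mathbf{s}_{\omega/\lambda^{\vee}} = \det\left(\left(\mathbf{h}_{\lambda_{k+1-j}-i+j}\right)_{1\le i,j\le k}\right).
\]

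Next, I would transform this determinant into the classical Jacobi--Trudi expression for $\mathbf{s}_{\lambda}$ by a short chain of elementary operations. Reversing the order of the $k$ rows (via $i\mapsto k+1-i$) multiplies the determinant by $(-1)^{\binom{k}{2}}$, and reversing the order of the $k$ columns (via $j\mapsto k+1-j$) multiplies it by another factor of $(-1)^{\binom{k}{2}}$; the two signs cancel. The resulting matrix has $(i,j)$-entry $\mathbf{h}_{\lambda_{j}+i-j}$. Transposing (which preserves the determinant) then produces a matrix with $(i,j)$-entry $\mathbf{h}_{\lambda_{i}-i+j}$, which is precisely the Jacobi--Trudi matrix from Proposition \ref{prop.jacobi-trudi.Sh} \textbf{(a)} (the same identity holds in $\Lambda$, not just $\mathcal{S}$); its determinant equals $\mathbf{s}_{\lambda}$.

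Chaining these equalities gives $\mathbf{s}_{\omega/\lambda^{\vee}}=\mathbf{s}_{\lambda}$, as desired. I note that ``any partition $\lambda$'' here must be interpreted as $\lambda\in P_{k,n}$, since $\lambda^{\vee}$ is only defined in that case. The only real subtlety in the plan is bookkeeping: tracking the index substitutions $i\mapsto k+1-i$ and $j\mapsto k+1-j$ carefully, verifying the entry-by-entry simplification, and confirming that the two row/column-reversal signs really do cancel. No combinatorial input beyond the two Jacobi--Trudi identities (already cited in the paper) is needed.
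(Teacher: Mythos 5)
Your proof is correct, but it follows a different route from the paper's. The paper handles this in one line by invoking the general rectangle-complementation identity $\mathbf{s}_{\lambda/\mu}=\mathbf{s}_{\mu^{\vee}/\lambda^{\vee}}$ of \cite[Exercise 2.9.15(a)]{GriRei18} with $\mu=\varnothing$ and using $\varnothing^{\vee}=\omega$; you instead give a self-contained derivation from the skew Jacobi--Trudi formula \cite[(2.4.16)]{GriRei18} (restated later in the paper as Proposition \ref{prop.jt.h}). Your bookkeeping checks out: with $\omega_{i}=n-k$ and $\left(\lambda^{\vee}\right)_{j}=n-k-\lambda_{k+1-j}$, the $(i,j)$-entry of the Jacobi--Trudi matrix for $\omega/\lambda^{\vee}$ is indeed $\mathbf{h}_{\lambda_{k+1-j}-i+j}$; reversing rows and columns contributes two factors of $(-1)^{\binom{k}{2}}$ that cancel, and transposing yields $\det\left(\left(\mathbf{h}_{\lambda_{i}-i+j}\right)_{1\leq i,j\leq k}\right)=\mathbf{s}_{\lambda}$ --- for this last step the cleaner citation is the $\Lambda$-version (Proposition \ref{prop.jt.h} with $\mu=\varnothing$) rather than Proposition \ref{prop.jacobi-trudi.Sh}, which lives in $\mathcal{S}$, though your parenthetical remark covers this. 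You are also right that ``any partition'' must be read as $\lambda\in P_{k,n}$; this hypothesis is implicit in the paper's statement as well, since $\lambda^{\vee}$ is only defined there. Comparing the two routes: yours is fully self-contained within identities the paper already uses, at the cost of some index manipulation; the paper's is shorter and rests on a stronger cited fact --- indeed, running your determinant computation with a general $\mu$ in place of $\varnothing$ would prove exactly that identity $\mathbf{s}_{\lambda/\mu}=\mathbf{s}_{\mu^{\vee}/\lambda^{\vee}}$ with no additional work.
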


\begin{proof}
[Proof of Proposition \ref{prop.skew-schur.complement}.]From \cite[Exercise
2.9.15(a)]{GriRei18} (applied to $n-k$ and $\varnothing$ instead of $m$ and
$\mu$), we obtain $\mathbf{s}_{\lambda/\varnothing}=\mathbf{s}_{\varnothing
^{\vee}/\lambda^{\vee}}$. In view of $\varnothing^{\vee}=\omega$, this
rewrites as $\mathbf{s}_{\lambda/\varnothing}=\mathbf{s}_{\omega/\lambda
^{\vee}}$. Thus, $\mathbf{s}_{\omega/\lambda^{\vee}}=\mathbf{s}_{\lambda
/\varnothing}=\mathbf{s}_{\lambda}$. This proves Proposition
\ref{prop.skew-schur.complement}.
\end{proof}

\begin{corollary}
\label{cor.LRcoeffs.omega}Let $\lambda$ and $\mu$ be two partitions. Then,%
\[
c_{\lambda,\mu}^{\omega}=%
\begin{cases}
1, & \text{if }\lambda\in P_{k,n}\text{ and }\mu=\lambda^{\vee};\\
0, & \text{else}%
\end{cases}
.
\]

\end{corollary}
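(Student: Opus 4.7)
The plan is to split on whether $\lambda\in P_{k,n}$ and handle each case with a short argument using only the Littlewood--Richardson facts and the skew-Schur identity already in the paper.

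For the case $\lambda\in P_{k,n}$, I would first note that $\lambda^{\vee}$ is then defined and lies in $P_{k,n}$, with $(\lambda^{\vee})^{\vee}=\lambda$. Applying Proposition \ref{prop.skew-schur.complement} to $\lambda^{\vee}$ in place of $\lambda$ yields
\[
\mathbf{s}_{\omega/\lambda}=\mathbf{s}_{\omega/(\lambda^{\vee})^{\vee}}=\mathbf{s}_{\lambda^{\vee}}.
\]
On the other hand, Proposition \ref{prop.LR.props}\textbf{(a)} (with its $\lambda,\mu$ replaced by $\omega,\lambda$) gives
\[
\mathbf{s}_{\omega/\lambda}=\sum_{\nu}c_{\lambda,\nu}^{\omega}\mathbf{s}_{\nu}.
\]
Equating these two expansions and using the $\mathbf{k}$-linear independence of the Schur basis $(\mathbf{s}_{\nu})_{\nu}$ of $\Lambda$ forces $c_{\lambda,\nu}^{\omega}=\delta_{\nu,\lambda^{\vee}}$ for every partition $\nu$. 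Renaming $\nu$ as $\mu$ yields the desired formula in this case.

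For the case $\lambda\notin P_{k,n}$, observe that $\lambda\in P_{k,n}$ is equivalent to $\lambda\subseteq\omega$ (since $\omega=(n-k,n-k,\ldots,n-k)$), so the hypothesis becomes $\lambda\not\subseteq\omega$. Proposition \ref{prop.LRrule} together with the commutativity of multiplication in $\Lambda$ yields the symmetry $c_{\lambda,\mu}^{\omega}=c_{\mu,\lambda}^{\omega}$ by comparing coefficients in $\mathbf{s}_{\lambda}\mathbf{s}_{\mu}=\mathbf{s}_{\mu}\mathbf{s}_{\lambda}$. Proposition \ref{prop.LR.props}\textbf{(b)} (applied with its $\lambda,\mu,\nu$ replaced by $\omega,\mu,\lambda$) then forces $c_{\mu,\lambda}^{\omega}=0$, since $\lambda\not\subseteq\omega$. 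Hence $c_{\lambda,\mu}^{\omega}=0$, as required.

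Combining the two cases finishes the proof. I do not expect any real obstacle: the argument uses only Propositions \ref{prop.skew-schur.complement}, \ref{prop.LR.props}, and \ref{prop.LRrule} in direct ways. The only mildly clever step is recognising that Proposition \ref{prop.skew-schur.complement} should be applied to $\lambda^{\vee}$ (rather than to $\lambda$ itself), so that the resulting skew Schur function $\mathbf{s}_{\omega/\lambda}$ is precisely the one whose Littlewood--Richardson expansion via Proposition \ref{prop.LR.props}\textbf{(a)} has the coefficients $c_{\lambda,\nu}^{\omega}$ that we need to identify.
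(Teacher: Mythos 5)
Your proof is correct. In the main case $\lambda\in P_{k,n}$ it is essentially the paper's argument: apply Proposition \ref{prop.skew-schur.complement} to $\lambda^{\vee}$ to get $\mathbf{s}_{\omega/\lambda}=\mathbf{s}_{\lambda^{\vee}}$, expand $\mathbf{s}_{\omega/\lambda}$ via Proposition \ref{prop.LR.props} \textbf{(a)}, and compare coefficients in the Schur basis. The only divergence is in the case $\lambda\notin P_{k,n}$: the paper observes directly that $\lambda\not\subseteq\omega$ forces $\mathbf{s}_{\omega/\lambda}=0$ and then compares coefficients, whereas you first deduce the symmetry $c_{\lambda,\mu}^{\omega}=c_{\mu,\lambda}^{\omega}$ from Proposition \ref{prop.LRrule} and commutativity (needed because Proposition \ref{prop.LR.props} \textbf{(b)}, applied to $c_{\lambda,\mu}^{\omega}$ as written, only constrains $\mu$, not $\lambda$), and then invoke Proposition \ref{prop.LR.props} \textbf{(b)} for $c_{\mu,\lambda}^{\omega}$. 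Both routes are valid and of comparable length; the paper's avoids the symmetry step by staying entirely inside the skew Schur expansion, while yours avoids discussing $\mathbf{s}_{\omega/\lambda}$ at all in that case.
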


\begin{proof}
[Proof of Corollary \ref{cor.LRcoeffs.omega}.]Proposition \ref{prop.LR.props}
\textbf{(a)} (applied to $\omega$ and $\lambda$ instead of $\lambda$ and $\mu
$) shows that
\begin{equation}
\mathbf{s}_{\omega/\lambda}=\sum_{\nu\text{ is a partition}}c_{\lambda,\nu
}^{\omega}\mathbf{s}_{\nu}. \label{pf.cor.LRcoeffs.omega.1}%
\end{equation}
On the other hand, it is easy to see that%
\begin{equation}
\mathbf{s}_{\omega/\lambda}=\sum_{\nu\text{ is a partition}}%
\begin{cases}
1, & \text{if }\lambda\in P_{k,n}\text{ and }\nu=\lambda^{\vee};\\
0, & \text{else}%
\end{cases}
\mathbf{s}_{\nu}. \label{pf.cor.LRcoeffs.omega.2}%
\end{equation}

[\textit{Proof of (\ref{pf.cor.LRcoeffs.omega.2}):} We are in one of the
following two cases:

\textit{Case 1:} We have $\lambda\in P_{k,n}$.

\textit{Case 2:} We have $\lambda\notin P_{k,n}$.

Let us first consider Case 1. In this case, we have $\lambda\in P_{k,n}$.
Thus, $\lambda^{\vee}$ is well-defined, and we have $\left(  \lambda^{\vee
}\right)  ^{\vee}=\lambda$. Hence, Proposition
\ref{prop.skew-schur.complement} (applied to $\lambda^{\vee}$ instead of
$\lambda$) yields
\begin{align*}
\mathbf{s}_{\omega/\left(  \lambda^{\vee}\right)  ^{\vee}}  &  =\mathbf{s}%
_{\lambda^{\vee}}=\sum_{\nu\text{ is a partition}}\underbrace{%
\begin{cases}
1, & \text{if }\nu=\lambda^{\vee};\\
0, & \text{else}%
\end{cases}
}_{\substack{=%
\begin{cases}
1, & \text{if }\lambda\in P_{k,n}\text{ and }\nu=\lambda^{\vee};\\
0, & \text{else}%
\end{cases}
\\\text{(since }\lambda\in P_{k,n}\text{ holds)}}}\mathbf{s}_{\nu}\\
&  =\sum_{\nu\text{ is a partition}}%
\begin{cases}
1, & \text{if }\lambda\in P_{k,n}\text{ and }\nu=\lambda^{\vee};\\
0, & \text{else}%
\end{cases}
\mathbf{s}_{\nu}.
\end{align*}
In view of $\left(  \lambda^{\vee}\right)  ^{\vee}=\lambda$, this rewrites as
\[
\mathbf{s}_{\omega/\lambda}=\sum_{\nu\text{ is a partition}}%
\begin{cases}
1, & \text{if }\lambda\in P_{k,n}\text{ and }\nu=\lambda^{\vee};\\
0, & \text{else}%
\end{cases}
\mathbf{s}_{\nu}.
\]
Thus, (\ref{pf.cor.LRcoeffs.omega.2}) is proven in Case 1.

Now, let us consider Case 2. In this case, we have $\lambda\notin P_{k,n}$.
Hence, $\lambda\not \subseteq \omega$ (since $\lambda\subseteq\omega$ holds if
and only if $\lambda\in P_{k,n}$). Thus, $\mathbf{s}_{\omega/\lambda}=0$.
Comparing this with%
\[
\sum_{\nu\text{ is a partition}}\underbrace{%
\begin{cases}
1, & \text{if }\lambda\in P_{k,n}\text{ and }\nu=\lambda^{\vee};\\
0, & \text{else}%
\end{cases}
}_{\substack{=0\\\text{(since }\lambda\notin P_{k,n}\text{)}}}\mathbf{s}_{\nu
}=0,
\]
we obtain%
\[
\mathbf{s}_{\omega/\lambda}=\sum_{\nu\text{ is a partition}}%
\begin{cases}
1, & \text{if }\lambda\in P_{k,n}\text{ and }\nu=\lambda^{\vee};\\
0, & \text{else}%
\end{cases}
\mathbf{s}_{\nu}.
\]
Thus, (\ref{pf.cor.LRcoeffs.omega.2}) is proven in Case 2.

We have now proven (\ref{pf.cor.LRcoeffs.omega.2}) in each of the two Cases 1
and 2. Thus, (\ref{pf.cor.LRcoeffs.omega.2}) always holds.]

Now, comparing (\ref{pf.cor.LRcoeffs.omega.2}) with
(\ref{pf.cor.LRcoeffs.omega.1}), we obtain%
\[
\sum_{\nu\text{ is a partition}}c_{\lambda,\nu}^{\omega}\mathbf{s}_{\nu}%
=\sum_{\nu\text{ is a partition}}%
\begin{cases}
1, & \text{if }\lambda\in P_{k,n}\text{ and }\nu=\lambda^{\vee};\\
0, & \text{else}%
\end{cases}
\mathbf{s}_{\nu}.
\]
Since the family $\left(  \mathbf{s}_{\nu}\right)  _{\nu\text{ is a
partition}}$ is a basis of the $\mathbf{k}$-module $\Lambda$, we can compare
the coefficients of $\mathbf{s}_{\mu}$ on both sides of this equality. We thus
obtain%
\[
c_{\lambda,\mu}^{\omega}=%
\begin{cases}
1, & \text{if }\lambda\in P_{k,n}\text{ and }\mu=\lambda^{\vee};\\
0, & \text{else}%
\end{cases}
.
\]
This proves Corollary \ref{cor.LRcoeffs.omega}.
\end{proof}

\subsection{Another proof of Theorem \ref{thm.coeffw}}

We are now ready to prove Theorem \ref{thm.coeffw} again. More precisely, we
shall prove Lemma \ref{lem.coeffw.prod2} (as we know that Theorem
\ref{thm.coeffw} quickly follows from Lemma \ref{lem.coeffw.prod2}).

\begin{proof}
[Second proof of Lemma \ref{lem.coeffw.prod2}.]If $k=0$, then Lemma
\ref{lem.coeffw.prod2} holds\footnote{\textit{Proof.} Assume that $k=0$. Then,
$P_{k,n}=\left\{  \varnothing\right\}  $, so that $\lambda\in P_{k,n}=\left\{
\varnothing\right\}  $ and thus $\lambda=\varnothing$. Similarly,
$\mu=\varnothing$. Therefore, $\lambda=\mu^{\vee}$ holds. Also, $\omega
=\varnothing$. Moreover, from $\lambda=\varnothing$, we obtain $s_{\lambda
}=s_{\varnothing}=1$; similarly, $s_{\mu}=1$. Thus, $\underbrace{s_{\lambda}%
}_{=1}\underbrace{s_{\mu}}_{=1}=1=s_{\varnothing}=s_{\omega}$ (since
$\varnothing=\omega$). Hence, $\operatorname*{coeff}\nolimits_{\omega}\left(
\overline{s_{\lambda}s_{\mu}}\right)  =\operatorname*{coeff}\nolimits_{\omega
}\left(  \overline{s_{\omega}}\right)  =1$. Comparing this with $%
\begin{cases}
1, & \text{if }\lambda=\mu^{\vee};\\
0, & \text{if }\lambda\neq\mu^{\vee}%
\end{cases}
=1$ (since $\lambda=\mu^{\vee}$ holds), we obtain $\operatorname*{coeff}%
\nolimits_{\omega}\left(  \overline{s_{\lambda}s_{\mu}}\right)  =%
\begin{cases}
1, & \text{if }\lambda=\mu^{\vee};\\
0, & \text{if }\lambda\neq\mu^{\vee}%
\end{cases}
$. Thus, Lemma \ref{lem.coeffw.prod2} holds. Qed.}. Hence, for the rest of
this proof, we WLOG assume that $k\neq0$. Thus, $k>0$. Hence, $\omega
_{1}=n-k\leq2\left(  n-k\right)  $. Thus, $\omega$ is a partition $\rho$ with
at most $k$ parts that satisfies $\rho_{1}\leq2\left(  n-k\right)  $ (since
$\omega_{1}\leq2\left(  n-k\right)  $).

Corollary \ref{cor.LRcoeffs.reduced} yields%
\[
s_{\lambda}s_{\mu}=\sum_{\substack{\rho\text{ is a partition with at most
}k\text{ parts;}\\\rho_{1}\leq2\left(  n-k\right)  }}c_{\lambda,\mu}^{\rho
}s_{\rho}.
\]
Hence,%
\[
\overline{s_{\lambda}s_{\mu}}=\overline{\sum_{\substack{\rho\text{ is a
partition with at most }k\text{ parts;}\\\rho_{1}\leq2\left(  n-k\right)
}}c_{\lambda,\mu}^{\rho}s_{\rho}}=\sum_{\substack{\rho\text{ is a partition
with at most }k\text{ parts;}\\\rho_{1}\leq2\left(  n-k\right)  }%
}c_{\lambda,\mu}^{\rho}\overline{s_{\rho}}.
\]
Thus,%
\begin{align*}
\operatorname*{coeff}\nolimits_{\omega}\left(  \overline{s_{\lambda}s_{\mu}%
}\right)   &  =\operatorname*{coeff}\nolimits_{\omega}\left(  \sum
_{\substack{\rho\text{ is a partition with at most }k\text{ parts;}\\\rho
_{1}\leq2\left(  n-k\right)  }}c_{\lambda,\mu}^{\rho}\overline{s_{\rho}%
}\right) \\
&  =\sum_{\substack{\rho\text{ is a partition with at most }k\text{
parts;}\\\rho_{1}\leq2\left(  n-k\right)  }}c_{\lambda,\mu}^{\rho
}\operatorname*{coeff}\nolimits_{\omega}\left(  \overline{s_{\rho}}\right) \\
&  =c_{\lambda,\mu}^{\omega}\operatorname*{coeff}\nolimits_{\omega}\left(
\overline{s_{\omega}}\right)  +\sum_{\substack{\rho\text{ is a partition with
at most }k\text{ parts;}\\\rho_{1}\leq2\left(  n-k\right)  ;\\\rho\neq\omega
}}c_{\lambda,\mu}^{\rho}\underbrace{\operatorname*{coeff}\nolimits_{\omega
}\left(  \overline{s_{\rho}}\right)  }_{\substack{=0\\\text{(by Theorem
\ref{thm.coeff-s.2},}\\\text{applied to }\rho\text{ instead of }%
\lambda\text{)}}}\\
&  \ \ \ \ \ \ \ \ \ \ \left(
\begin{array}
[c]{c}%
\text{here, we have split off the addend for }\rho=\omega\\
\text{from the sum, since }\omega\text{ is a partition }\rho\text{ with}\\
\text{at most }k\text{ parts that satisfies }\rho_{1}\leq2\left(  n-k\right)
\end{array}
\right) \\
&  =c_{\lambda,\mu}^{\omega}\underbrace{\operatorname*{coeff}\nolimits_{\omega
}\left(  \overline{s_{\omega}}\right)  }_{\substack{=1\\\text{(by the
definition of }\operatorname*{coeff}\nolimits_{\omega}\text{)}}}\\
&  =c_{\lambda,\mu}^{\omega}=%
\begin{cases}
1, & \text{if }\lambda\in P_{k,n}\text{ and }\mu=\lambda^{\vee};\\
0, & \text{else}%
\end{cases}
\ \ \ \ \ \ \ \ \ \ \left(  \text{by Corollary \ref{cor.LRcoeffs.omega}%
}\right) \\
&  =%
\begin{cases}
1, & \text{if }\mu=\lambda^{\vee};\\
0, & \text{if }\mu\neq\lambda^{\vee}%
\end{cases}
\ \ \ \ \ \ \ \ \ \ \left(  \text{since }\lambda\in P_{k,n}\text{
holds}\right) \\
&  =%
\begin{cases}
1, & \text{if }\lambda=\mu^{\vee};\\
0, & \text{if }\lambda\neq\mu^{\vee}%
\end{cases}
\end{align*}
(since $\mu=\lambda^{\vee}$ holds if and only if $\lambda=\mu^{\vee}$). Thus,
Lemma \ref{lem.coeffw.prod2} is proven again.
\end{proof}

\section{\label{sect.hm}The $h$-basis and the $m$-basis}

\begin{convention}
For the rest of Section \ref{sect.hm}, \textbf{we assume that }$a_{1}%
,a_{2},\ldots,a_{k}$ \textbf{belong to }$\mathcal{S}$\textbf{.}
\end{convention}

\subsection{A lemma on the $s$-basis}

For future use, we shall show a technical lemma, which improves on Lemma
\ref{lem.I.sl-red2}:

\begin{lemma}
\label{lem.sm-as-Pkn}Let $N\in\mathbb{N}$. Let $f\in\mathcal{S}$ be a
symmetric polynomial of degree $<N$. Then, in $\mathcal{S}/I$, we have
\[
\overline{f}\in\sum_{\substack{\kappa\in P_{k,n};\\\left\vert \kappa
\right\vert <N}}\mathbf{k}\overline{s_{\kappa}}.
\]

\end{lemma}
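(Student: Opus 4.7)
The plan is to refine the argument that proved Lemma \ref{lem.I.sl-red2} by carrying along a bound on the size of the partitions involved. More precisely, I will first establish, by strong induction on $\left\vert \mu\right\vert $, the following strengthening of Lemma \ref{lem.I.sl-red2}:
\[
\overline{s_{\mu}}\in\sum_{\substack{\kappa\in P_{k,n};\\\left\vert
\kappa\right\vert \leq\left\vert \mu\right\vert }}\mathbf{k}\overline
{s_{\kappa}}\ \ \ \ \ \ \ \ \ \ \text{for every }\mu\in P_{k}.
\]
Once this is proven, the lemma will follow quickly: by Lemma \ref{lem.I.sm-as-smaller}, one can write $f=\sum_{\mu\in P_{k};\ \left\vert \mu\right\vert <N}c_{\mu}s_{\mu}$ for some $c_{\mu}\in\mathbf{k}$, so that $\overline{f}=\sum_{\mu\in P_{k};\ \left\vert \mu\right\vert <N}c_{\mu}\overline{s_{\mu}}$, and the strengthened claim (applied to each such $\mu$) will place $\overline{s_{\mu}}$ in $\sum_{\kappa\in P_{k,n};\ \left\vert \kappa\right\vert \leq\left\vert \mu\right\vert <N}\mathbf{k}\overline{s_{\kappa}}\subseteq\sum_{\kappa\in P_{k,n};\ \left\vert \kappa\right\vert <N}\mathbf{k}\overline{s_{\kappa}}$, giving the desired conclusion.

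For the inductive proof of the strengthened claim, I would split into two cases exactly as in the proof of Lemma \ref{lem.I.sl-red2}. If $\mu\in P_{k,n}$, then the claim is immediate since $\overline{s_{\mu}}$ is itself one of the allowed basis elements (taking $\kappa=\mu$, which satisfies $\left\vert \kappa\right\vert =\left\vert \mu\right\vert \leq\left\vert \mu\right\vert $). If instead $\mu\notin P_{k,n}$, then Lemma \ref{lem.I.sl-red} produces a symmetric polynomial $g\in\mathcal{S}$ of degree $<\left\vert \mu\right\vert $ with $s_{\mu}\equiv g\operatorname{mod}I$. Applying Lemma \ref{lem.I.sm-as-smaller} to $g$ and $\left\vert \mu\right\vert $, we write $g=\sum_{\tau\in P_{k};\ \left\vert \tau\right\vert <\left\vert \mu\right\vert }d_{\tau}s_{\tau}$ for some $d_{\tau}\in\mathbf{k}$. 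Hence
\[
\overline{s_{\mu}}=\overline{g}=\sum_{\substack{\tau\in P_{k};\\\left\vert
\tau\right\vert <\left\vert \mu\right\vert }}d_{\tau}\overline{s_{\tau}},
\]
and the induction hypothesis applies to each $\tau$ (since $\left\vert \tau\right\vert <\left\vert \mu\right\vert $), placing each $\overline{s_{\tau}}$ in $\sum_{\kappa\in P_{k,n};\ \left\vert \kappa\right\vert \leq\left\vert \tau\right\vert }\mathbf{k}\overline{s_{\kappa}}$. Since $\left\vert \kappa\right\vert \leq\left\vert \tau\right\vert <\left\vert \mu\right\vert $, this gives $\overline{s_{\mu}}\in\sum_{\kappa\in P_{k,n};\ \left\vert \kappa\right\vert <\left\vert \mu\right\vert }\mathbf{k}\overline{s_{\kappa}}\subseteq\sum_{\kappa\in P_{k,n};\ \left\vert \kappa\right\vert \leq\left\vert \mu\right\vert }\mathbf{k}\overline{s_{\kappa}}$, completing the induction step.

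There is essentially no genuine obstacle here; the argument is a bookkeeping refinement of the proof of Lemma \ref{lem.I.sl-red2}. The only point to be careful about is making sure that the bound on $\left\vert \kappa\right\vert $ is propagated correctly through the induction: the reduction in Lemma \ref{lem.I.sl-red} strictly decreases the degree, which is precisely what lets the induction hypothesis give the sharp bound $\left\vert \kappa\right\vert \leq\left\vert \mu\right\vert $ rather than just membership in $P_{k,n}$.
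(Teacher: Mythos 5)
Your proof is correct and follows essentially the same route as the paper: both rest on the strict degree drop from Lemma \ref{lem.I.sl-red} together with the Schur expansion of Lemma \ref{lem.I.sm-as-smaller}, run through a strong induction. The only difference is bookkeeping — you induct on $\left\vert \mu\right\vert$ for a sharpened per-$\overline{s_{\mu}}$ statement and then aggregate, whereas the paper inducts directly on the degree bound $N$ and applies its induction hypothesis to the reducing polynomial $g$ without re-expanding it; both are valid.
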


\begin{proof}
[Proof of Lemma \ref{lem.sm-as-Pkn}.]We shall prove Lemma \ref{lem.sm-as-Pkn}
by strong induction on $N$. Thus, we fix some $M\in\mathbb{N}$, and we assume
(as the induction hypothesis) that Lemma \ref{lem.sm-as-Pkn} holds whenever
$N<M$. We now must prove that Lemma \ref{lem.sm-as-Pkn} holds for $N=M$.

Let $f\in\mathcal{S}$ be a symmetric polynomial of degree $<M$. Then, in
$\mathcal{S}/I$, we shall show that $\overline{f}\in\sum_{\substack{\kappa\in
P_{k,n};\\\left\vert \kappa\right\vert <M}}\mathbf{k}\overline{s_{\kappa}}$.

Indeed, let $U$ be the $\mathbf{k}$-submodule $\sum_{\substack{\kappa\in
P_{k,n};\\\left\vert \kappa\right\vert <M}}\mathbf{k}\overline{s_{\kappa}}$ of
$\mathcal{S}/I$. Hence, $U$ is the $\mathbf{k}$-submodule of $\mathcal{S}/I$
spanned by the family $\left(  \overline{s_{\kappa}}\right)  _{\kappa\in
P_{k,n};\ \left\vert \kappa\right\vert <M}$. Hence,%
\begin{equation}
\overline{s_{\kappa}}\in U\ \ \ \ \ \ \ \ \ \ \text{for each }\kappa\in
P_{k,n}\text{ satisfying }\left\vert \kappa\right\vert <M.
\label{pf.lem.sm-as-Pkn.inU}%
\end{equation}
We are going to show that $\overline{f}\in U$.

Lemma \ref{lem.I.sm-as-smaller} (applied to $N=M$) shows that there exists a
family $\left(  c_{\kappa}\right)  _{\kappa\in P_{k};\ \left\vert
\kappa\right\vert <M}$ of elements of $\mathbf{k}$ such that $f=\sum
_{\substack{\kappa\in P_{k};\\\left\vert \kappa\right\vert <M}}c_{\kappa
}s_{\kappa}$. Consider this family. Thus,%
\begin{equation}
f=\sum_{\substack{\kappa\in P_{k};\\\left\vert \kappa\right\vert <M}%
}c_{\kappa}s_{\kappa}=\sum_{\substack{\mu\in P_{k};\\\left\vert \mu\right\vert
<M}}c_{\mu}s_{\mu} \label{pf.lem.sm-as-Pkn.f=1}%
\end{equation}
(here, we have renamed the summation index $\kappa$ as $\mu$).

Now, let $\mu\in P_{k}$ satisfy $\left\vert \mu\right\vert <M$. We shall show
that $\overline{s_{\mu}}\in U$.

[\textit{Proof:} If $\mu\in P_{k,n}$, then this follows directly from
(\ref{pf.lem.sm-as-Pkn.inU}) (applied to $\kappa=\mu$). Hence, for the rest of
this proof, we WLOG assume that $\mu\notin P_{k,n}$. Thus, Lemma
\ref{lem.I.sl-red} (applied to $\mu$ instead of $\lambda$) shows that%
\[
s_{\mu}\equiv\left(  \text{some symmetric polynomial of degree }<\left\vert
\mu\right\vert \right)  \operatorname{mod}I.
\]
In other words, there exists a symmetric polynomial $g\in\mathcal{S}$ of
degree $<\left\vert \mu\right\vert $ such that $s_{\mu}\equiv
g\operatorname{mod}I$. Consider this $g$. We have $\left\vert \mu\right\vert
<M$. Hence, Lemma \ref{lem.sm-as-Pkn} holds for $N=\left\vert \mu\right\vert $
(by our induction hypothesis). Thus, we can apply Lemma \ref{lem.sm-as-Pkn} to
$g$ and $\left\vert \mu\right\vert $ instead of $f$ and $N$. We thus conclude
that%
\[
\overline{g}\in\sum_{\substack{\kappa\in P_{k,n};\\\left\vert \kappa
\right\vert <\left\vert \mu\right\vert }}\mathbf{k}\overline{s_{\kappa}}.
\]
But from $s_{\mu}\equiv g\operatorname{mod}I$, we obtain%
\[
\overline{s_{\mu}}=\overline{g}\in\sum_{\substack{\kappa\in P_{k,n}%
;\\\left\vert \kappa\right\vert <\left\vert \mu\right\vert }}\mathbf{k}%
\overline{s_{\kappa}}\subseteq\sum_{\substack{\kappa\in P_{k,n};\\\left\vert
\kappa\right\vert <M}}\mathbf{k}\overline{s_{\kappa}}%
\]
(since each $\kappa\in P_{k,n}$ satisfying $\left\vert \kappa\right\vert
<\left\vert \mu\right\vert $ must also satisfy $\left\vert \kappa\right\vert
<M$ (because $\left\vert \kappa\right\vert <\left\vert \mu\right\vert <M$),
and therefore the sum $\sum_{\substack{\kappa\in P_{k,n};\\\left\vert
\kappa\right\vert <\left\vert \mu\right\vert }}\mathbf{k}\overline{s_{\kappa}%
}$ is a subsum of the sum $\sum_{\substack{\kappa\in P_{k,n};\\\left\vert
\kappa\right\vert <M}}\mathbf{k}\overline{s_{\kappa}}$). Hence,%
\[
\overline{s_{\mu}}\in\sum_{\substack{\kappa\in P_{k,n};\\\left\vert
\kappa\right\vert <M}}\mathbf{k}\overline{s_{\kappa}}%
=U\ \ \ \ \ \ \ \ \ \ \left(  \text{since }U\text{ is defined as }%
\sum_{\substack{\kappa\in P_{k,n};\\\left\vert \kappa\right\vert
<M}}\mathbf{k}\overline{s_{\kappa}}\right)  ,
\]
qed.]

Forget that we fixed $\mu$. We thus have shown that%
\begin{equation}
\overline{s_{\mu}}\in U\ \ \ \ \ \ \ \ \ \ \text{for each }\mu\in P_{k}\text{
satisfying }\left\vert \mu\right\vert <M. \label{pf.lem.sm-as-Pkn.4}%
\end{equation}

Now, (\ref{pf.lem.sm-as-Pkn.f=1}) yields%
\[
\overline{f}=\overline{\sum_{\substack{\mu\in P_{k};\\\left\vert
\mu\right\vert <M}}c_{\mu}s_{\mu}}=\sum_{\substack{\mu\in P_{k};\\\left\vert
\mu\right\vert <M}}c_{\mu}\underbrace{\overline{s_{\mu}}}_{\substack{\in
U\\\text{(by (\ref{pf.lem.sm-as-Pkn.4}))}}}\in\sum_{\substack{\mu\in
P_{k};\\\left\vert \mu\right\vert <M}}c_{\mu}U\subseteq U
\]
(since $U$ is a $\mathbf{k}$-module). In other words, $\overline{f}\in
\sum_{\substack{\kappa\in P_{k,n};\\\left\vert \kappa\right\vert
<M}}\mathbf{k}\overline{s_{\kappa}}$ (since $U=\sum_{\substack{\kappa\in
P_{k,n};\\\left\vert \kappa\right\vert <M}}\mathbf{k}\overline{s_{\kappa}}$).

Forget that we fixed $f$. We thus have shown that if $f\in\mathcal{S}$ is a
symmetric polynomial of degree $<M$, then $\overline{f}\in\sum
_{\substack{\kappa\in P_{k,n};\\\left\vert \kappa\right\vert <M}%
}\mathbf{k}\overline{s_{\kappa}}$. In other words, Lemma \ref{lem.sm-as-Pkn}
holds for $N=M$. This completes the induction step. Hence, Lemma
\ref{lem.sm-as-Pkn} is proven by induction.
\end{proof}

\subsection{The $h$-basis}

In Theorem \ref{thm.S/J-h-basis}, we have shown that the family $\left(
\overline{h_{\lambda}}\right)  _{\lambda\in P_{k,n}}$ is a basis of the
$\mathbf{k}$-module $\mathcal{S}/I$ under the condition that $a_{1}%
,a_{2},\ldots,a_{k}\in\mathbf{k}$. We shall soon prove this again, this time
under the weaker condition that $a_{1},a_{2},\ldots,a_{k}\in\mathcal{S}$. The
vehicle of the proof will be a triangularity property for the change-of-basis
matrix between the bases $\left(  \overline{h_{\lambda}}\right)  _{\lambda\in
P_{k,n}}$ and $\left(  \overline{s_{\lambda}}\right)  _{\lambda\in P_{k,n}}$
of $\mathcal{S}/I$. We refer to \cite[Definition 11.1.16(c)]{GriRei18} for the
concepts that we shall be using. The triangularity is defined with respect to
a certain partial order on the set $P_{k,n}$:

\begin{definition}
\label{def.size-antidomin}We define a binary relation $\geq^{\ast}$ on the set
$P_{k,n}$ as follows: For two partitions $\lambda\in P_{k,n}$ and $\mu\in
P_{k,n}$, we set $\lambda\geq^{\ast}\mu$ if and only if

\begin{itemize}
\item \textbf{either} $\left\vert \lambda\right\vert >\left\vert
\mu\right\vert $

\item \textbf{or} $\left\vert \lambda\right\vert =\left\vert \mu\right\vert $
and $\lambda_{1}+\lambda_{2}+\cdots+\lambda_{i}\leq\mu_{1}+\mu_{2}+\cdots
+\mu_{i}$ for all $i\geq1$.
\end{itemize}

It is clear that this relation $\geq^{\ast}$ is the greater-or-equal relation
of a partial order on $P_{k,n}$. This order will be called the
\textit{size-then-antidominance order}.
\end{definition}

Note that the condition \textquotedblleft$\left\vert \lambda\right\vert
=\left\vert \mu\right\vert $ and $\lambda_{1}+\lambda_{2}+\cdots+\lambda
_{i}\leq\mu_{1}+\mu_{2}+\cdots+\mu_{i}$ for all $i\geq1$\textquotedblright\ in
Definition \ref{def.size-antidomin} can also be restated as \textquotedblleft%
$\mu\triangleright\lambda$\textquotedblright, where $\triangleright$ means the
dominance relation (defined in Definition \ref{def.LRcoeffs.domi}
\textbf{(b)}). Indeed, this follows easily from Remark
\ref{rmk.LRcoeffs.domi=domi} (applied to $\mu$ and $\lambda$ instead of
$\lambda$ and $\mu$).

For future reference, we need two simple criteria for the $\geq^{\ast}$ relation:

\begin{remark}
\label{rmk.size-antidomin.domin}Let $\lambda\in P_{k,n}$ and $\mu\in P_{k,n}$.

\textbf{(a)} If $\left\vert \lambda\right\vert >\left\vert \mu\right\vert $,
then $\lambda\geq^{\ast}\mu$.

\textbf{(b)} Let $a\in\mathbb{N}$. If both $\lambda$ and $\mu$ are partitions
of size $a$ and satisfy $\mu\triangleright\lambda$, then $\lambda\geq^{\ast
}\mu$. (See Definition \ref{def.LRcoeffs.domi} \textbf{(b)} for the meaning of
\textquotedblleft$\triangleright$\textquotedblright.)
\end{remark}

\begin{proof}
[Proof of Remark \ref{rmk.size-antidomin.domin}.]\textbf{(a)} This follows
immediately from the definition of the relation $\geq^{\ast}$.

\textbf{(b)} Assume that both $\lambda$ and $\mu$ are partitions of size $a$
and satisfy $\mu\triangleright\lambda$. Now, both partitions $\lambda$ and
$\mu$ have size $a$; in other words, $\left\vert \lambda\right\vert =a$ and
$\left\vert \mu\right\vert =a$. Hence, $\left\vert \lambda\right\vert
=a=\left\vert \mu\right\vert $.

We have $\mu\triangleright\lambda$. In other words, we have
\[
\mu_{1}+\mu_{2}+\cdots+\mu_{i}\geq\lambda_{1}+\lambda_{2}+\cdots+\lambda
_{i}\ \ \ \ \ \ \ \ \ \ \text{for each }i\geq1
\]
(by Remark \ref{rmk.LRcoeffs.domi=domi}, applied to $\mu$ and $\lambda$
instead of $\lambda$ and $\mu$). In other words, $\lambda_{1}+\lambda
_{2}+\cdots+\lambda_{i}\leq\mu_{1}+\mu_{2}+\cdots+\mu_{i}$ for all $i\geq1$.
Hence, we have $\left\vert \lambda\right\vert =\left\vert \mu\right\vert $ and
$\lambda_{1}+\lambda_{2}+\cdots+\lambda_{i}\leq\mu_{1}+\mu_{2}+\cdots+\mu_{i}$
for all $i\geq1$. Therefore, $\lambda\geq^{\ast}\mu$ (by the definition of the
relation $\geq^{\ast}$). This proves Remark \ref{rmk.size-antidomin.domin}
\textbf{(b)}.
\end{proof}

Now, we can put the size-then-antidominance order to use. Recall that Theorem
\ref{thm.S/J} yields that the family $\left(  \overline{s_{\lambda}}\right)
_{\lambda\in P_{k,n}}$ is a basis of the $\mathbf{k}$-module $\mathcal{S}/I$.

\begin{theorem}
\label{thm.s-h-triangularity}The family $\left(  \overline{h_{\lambda}%
}\right)  _{\lambda\in P_{k,n}}$ expands unitriangularly in the family
$\left(  \overline{s_{\lambda}}\right)  _{\lambda\in P_{k,n}}$. Here, the word
\textquotedblleft expands unitriangularly\textquotedblright\ is understood
according to \cite[Definition 11.1.16(c)]{GriRei18}, with the poset structure
on $P_{k,n}$ being given by the size-then-antidominance order.
\end{theorem}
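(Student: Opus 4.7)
The plan is to combine the classical Kostka expansion of $\mathbf{h}_\lambda$ in $\Lambda$ with the reduction tools that have already been established for $\mathcal{S}/I$. Recall the well-known identity
\[
\mathbf{h}_\lambda = \sum_\mu K_{\mu,\lambda}\,\mathbf{s}_\mu
\]
in $\Lambda$, where the $K_{\mu,\lambda}$ are the Kostka numbers; these satisfy $K_{\lambda,\lambda}=1$, and $K_{\mu,\lambda}=0$ unless $\mu \trianglerighteq \lambda$ in the dominance order (see, e.g., \cite{GriRei18}). Fix $\lambda \in P_{k,n}$. Evaluating this identity at the $k$ variables $x_1,x_2,\ldots,x_k$ and discarding the terms with $\ell(\mu) > k$ (which vanish by (\ref{eq.slam=0-too-long})), and then projecting the result into $\mathcal{S}/I$, yields
\[
\overline{h_\lambda} = \overline{s_\lambda} + \sum_{\substack{\mu \in P_k;\\\mu \triangleright \lambda;\\\mu \ne \lambda}} K_{\mu,\lambda}\, \overline{s_\mu}.
\]

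Next I would split the remaining sum according to whether $\mu$ belongs to $P_{k,n}$ or only to $P_k \setminus P_{k,n}$, and show that in both cases the contributions lie in the $\mathbf{k}$-span of the $\overline{s_\kappa}$ with $\kappa \in P_{k,n}$ and $\kappa <^{\ast}\lambda$. For $\mu \in P_{k,n}$ with $\mu \triangleright \lambda$ and $\mu \ne \lambda$, we have $|\lambda|=|\mu|$, so Remark~\ref{rmk.size-antidomin.domin}~\textbf{(b)} gives $\lambda \ge^{\ast}\mu$; combined with $\mu \ne \lambda$, this is the strict inequality $\mu <^{\ast}\lambda$. For $\mu \in P_k \setminus P_{k,n}$, Lemma~\ref{lem.I.sl-red} expresses $s_\mu$ modulo $I$ as a symmetric polynomial of degree $< |\mu|=|\lambda|$; then Lemma~\ref{lem.sm-as-Pkn} (applied with $N=|\lambda|$) writes $\overline{s_\mu}$ as a $\mathbf{k}$-linear combination of $\overline{s_\kappa}$ with $\kappa \in P_{k,n}$ and $|\kappa| < |\lambda|$, and by Remark~\ref{rmk.size-antidomin.domin}~\textbf{(a)} each such $\kappa$ satisfies $\lambda >^{\ast}\kappa$.

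Assembling the two cases produces an expansion of the form
\[
\overline{h_\lambda} = \overline{s_\lambda} + \sum_{\substack{\kappa \in P_{k,n};\\\kappa <^{\ast}\lambda}} d_{\lambda,\kappa}\, \overline{s_\kappa} \qquad \text{with } d_{\lambda,\kappa}\in\mathbf{k},
\]
which is precisely the unitriangularity claim in the sense of \cite[Definition 11.1.16(c)]{GriRei18} with respect to the size-then-antidominance order. I do not anticipate any serious obstacle: the argument rests on three already-available ingredients (the Kostka triangularity in $\Lambda$, the vanishing $s_\mu = 0$ for $\ell(\mu)>k$, and the reduction lemmas \ref{lem.I.sl-red} and \ref{lem.sm-as-Pkn}). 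The only point that truly merits a careful check is the strictness of the size-then-antidominance inequality $\mu <^{\ast}\lambda$ in the case when $\mu \triangleright \lambda$ is a strict same-size dominance, but this follows directly from unwinding Definition~\ref{def.size-antidomin}.
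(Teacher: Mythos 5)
Your proposal is correct and follows essentially the same route as the paper's proof: the Kostka expansion of $h_\lambda$ restricted to partitions with at most $k$ parts (the paper's Lemma \ref{lem.kostka.facts} \textbf{(f)}), unitriangularity of Kostka numbers with respect to dominance, and then the same case split with Remark \ref{rmk.size-antidomin.domin} \textbf{(b)} for $\mu\in P_{k,n}$ and Lemmas \ref{lem.I.sl-red} and \ref{lem.sm-as-Pkn} together with Remark \ref{rmk.size-antidomin.domin} \textbf{(a)} for $\mu\notin P_{k,n}$. No gaps.
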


\begin{example}
For this example, let $n=5$ and $k=3$. Assume that $a_{1},a_{2}\in\mathbf{k}$.
Then, the expansion of the $\overline{h_{\lambda}}$ in the basis $\left(
\overline{s_{\lambda}}\right)  _{\lambda\in P_{k,n}}$ looks as follows:%
\begin{align*}
\overline{h_{\varnothing}}  &  =\overline{s_{\varnothing}};\\
\overline{h_{\left(  1\right)  }}  &  =\overline{s_{\left(  1\right)  }};\\
\overline{h_{\left(  2\right)  }}  &  =\overline{s_{\left(  2\right)  }};\\
\overline{h_{\left(  1,1\right)  }}  &  =\overline{s_{\left(  2\right)  }%
}+\overline{s_{\left(  1,1\right)  }};\\
\overline{h_{\left(  2,1\right)  }}  &  =a_{1}\overline{s_{\varnothing}%
}+\overline{s_{\left(  2,1\right)  }};\\
\overline{h_{\left(  1,1,1\right)  }}  &  =a_{1}\overline{s_{\varnothing}%
}+\overline{s_{\left(  1,1,1\right)  }}+2\overline{s_{\left(  2,1\right)  }%
};\\
\overline{h_{\left(  2,2\right)  }}  &  =a_{1}\overline{s_{\left(  1\right)
}}+\overline{s_{\left(  2,2\right)  }};\\
\overline{h_{\left(  2,1,1\right)  }}  &  =-a_{2}\overline{s_{\varnothing}%
}+2a_{1}\overline{s_{\left(  1\right)  }}+\overline{s_{\left(  2,1,1\right)
}}+\overline{s_{\left(  2,2\right)  }};\\
\overline{h_{\left(  2,2,1\right)  }}  &  =-a_{2}\overline{s_{\left(
1\right)  }}+a_{1}\overline{s_{\left(  1,1\right)  }}+2a_{1}\overline
{s_{\left(  2\right)  }}+\overline{s_{\left(  2,2,1\right)  }};\\
\overline{h_{\left(  2,2,2\right)  }}  &  =a_{1}^{2}\overline{s_{\varnothing}%
}-a_{2}\overline{s_{\left(  1,1\right)  }}+2a_{1}\overline{s_{\left(
2,1\right)  }}+\overline{s_{\left(  2,2,2\right)  }}.
\end{align*}
These equalities hold for arbitrary $a_{1},a_{2}\in\mathcal{S}$, not only for
$a_{1},a_{2}\in\mathbf{k}$; but in the general case they are not expansions in
the basis $\left(  \overline{s_{\lambda}}\right)  _{\lambda\in P_{k,n}}$,
since $a_{1},a_{2}$ themselves can be expanded further.
\end{example}

Our proof of Theorem \ref{thm.s-h-triangularity} will use the concept of
Kostka numbers. Let us recall their definition:

\begin{definition}
\textbf{(a)} See \cite[\S 2.2]{GriRei18} for the definition of a
\textit{column-strict tableau of shape }$\lambda$ (where $\lambda$ is a
partition), and also for a definition of $\operatorname*{cont}\left(
T\right)  $ where $T$ is such a tableau.

\textbf{(b)} Let $\lambda$ and $\mu$ be two partitions. Then, the
\textit{Kostka number }$K_{\lambda,\mu}$ is defined to be the number of all
column-strict tableaux $T$ of shape $\lambda$ having $\operatorname*{cont}%
\left(  T\right)  =\mu$.
\end{definition}

This definition of $K_{\lambda,\mu}$ is a particular case of the definition of
$K_{\lambda,\mu}$ in \cite[Exercise 2.2.13]{GriRei18}.

We shall use the following properties of Kostka numbers:

\begin{lemma}
\label{lem.kostka.facts}\textbf{(a)} If $a\in\mathbb{N}$, then we have
$K_{\lambda,\mu}=0$ for any partitions $\lambda\in\operatorname*{Par}%
\nolimits_{a}$ and $\mu\in\operatorname*{Par}\nolimits_{a}$ that don't satisfy
$\lambda\triangleright\mu$.

\textbf{(b)} If $a\in\mathbb{N}$, then we have $K_{\lambda,\lambda}=1$ for any
$\lambda\in\operatorname*{Par}\nolimits_{a}$.

\textbf{(c)} If $\lambda$ and $\mu$ are two partitions such that $\left\vert
\lambda\right\vert \neq\left\vert \mu\right\vert $, then $K_{\lambda,\mu}=0$.

\textbf{(d)} For any partition $\mu$, we have%
\[
\mathbf{h}_{\mu}=\sum_{\lambda\in\operatorname*{Par}}K_{\lambda,\mu}%
\mathbf{s}_{\lambda},
\]
where $\operatorname*{Par}$ denotes the set of all partitions.

\textbf{(e)} For any $a\in\mathbb{N}$ and any $\lambda\in\operatorname*{Par}%
\nolimits_{a}$, we have%
\[
\mathbf{h}_{\lambda}=\sum_{\mu\in\operatorname*{Par}\nolimits_{a}}%
K_{\mu,\lambda}\mathbf{s}_{\mu}.
\]

\textbf{(f)} For any $a\in\mathbb{N}$ and any $\lambda\in\operatorname*{Par}%
\nolimits_{a}$, we have%
\[
h_{\lambda}=\sum_{\substack{\mu\in P_{k};\\\mu\in\operatorname*{Par}%
\nolimits_{a}}}K_{\mu,\lambda}s_{\mu}.
\]

\end{lemma}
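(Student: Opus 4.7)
The plan is to prove the six parts essentially in the order given, since each later part rests on earlier ones, and none of them is more than a standard fact about Kostka numbers and the expansion of complete homogeneous symmetric functions in the Schur basis.

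For part \textbf{(a)}, I will use the standard counting argument: given a column-strict tableau $T$ of shape $\lambda$ with $\operatorname{cont}(T) = \mu$, the entries of $T$ that are $\leq i$ are forced (by the strict-column and weak-row conditions) to occupy only the first $i$ rows, so the total number of such entries, which equals $\mu_1 + \mu_2 + \cdots + \mu_i$, is at most $\lambda_1 + \lambda_2 + \cdots + \lambda_i$. Taking this inequality for all $i \geq 1$ gives $\lambda \triangleright \mu$, so the contrapositive yields (a). Part \textbf{(b)} is immediate: when $\operatorname{cont}(T) = \lambda$, the argument just given forces every entry $i$ to lie in row $i$, so the only column-strict tableau with that shape and content is the one whose $i$-th row consists of $\lambda_i$ copies of $i$. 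Part \textbf{(c)} is trivial from the definitions, since $|\lambda|$ is the number of cells of $T$ and $|\mu|$ is the sum of the multiplicities in $\operatorname{cont}(T)$.

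For part \textbf{(d)}, I will cite the standard expansion $\mathbf{h}_\mu = \sum_\lambda K_{\lambda,\mu} \mathbf{s}_\lambda$ from the theory of symmetric functions; this is, for instance, \cite[Corollary 2.5.17]{GriRei18} (or follows from the combinatorial definition of $\mathbf{s}_\lambda$ via semistandard tableaux together with the expansion of $\mathbf{h}_\mu$ as a sum of monomial quasisymmetric terms). Part \textbf{(e)} is then a direct consequence of \textbf{(d)} combined with \textbf{(c)}: only the $\lambda$ with $|\lambda| = |\mu| = a$ contribute to the sum in \textbf{(d)}, so
\[
\mathbf{h}_\lambda = \sum_{\mu \in \operatorname{Par}} K_{\mu,\lambda} \mathbf{s}_\mu = \sum_{\mu \in \operatorname{Par}_a} K_{\mu,\lambda} \mathbf{s}_\mu,
\]
where I have first renamed the summation index in \textbf{(d)} to match the statement of \textbf{(e)}.

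For part \textbf{(f)}, I will evaluate the identity of \textbf{(e)} at $x_1, x_2, \ldots, x_k$. This sends $\mathbf{h}_\lambda$ to $h_\lambda$ and $\mathbf{s}_\mu$ to $s_\mu$, giving $h_\lambda = \sum_{\mu \in \operatorname{Par}_a} K_{\mu,\lambda} s_\mu$. By (\ref{eq.slam=0-too-long}), the terms with $\mu \notin P_k$ (i.e.\ $\mu$ has more than $k$ parts) contribute $0$, so the sum can be restricted to $\mu \in P_k \cap \operatorname{Par}_a$, which is exactly the asserted formula. No step here poses a real obstacle; the only care needed is to keep the bookkeeping of indices and to point to the correct classical references in \cite{GriRei18} for \textbf{(d)}.
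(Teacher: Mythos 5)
Your proposal is correct and follows essentially the same route as the paper: parts (c), (e), (f) are argued identically (degree restriction via (c), then evaluation at $x_1,\ldots,x_k$ with (\ref{eq.slam=0-too-long}) killing the long partitions), while for (a), (b), (d) you supply the standard tableau-counting arguments where the paper simply cites \cite[Exercises 2.2.13(d),(e) and 2.7.10(a)]{GriRei18}. The only nitpick is that your pointer for (d) (``Corollary 2.5.17'') is not the location used by the paper, but the fact itself is standard and your sketched derivation suffices.
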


\begin{proof}
[Proof of Lemma \ref{lem.kostka.facts}.]\textbf{(a)} This is \cite[Exercise
2.2.13(d)]{GriRei18}, applied to $a$ instead of $n$.

\textbf{(b)} This is \cite[Exercise 2.2.13(e)]{GriRei18}, applied to $a$
instead of $n$.

\textbf{(c)} Let $\lambda$ and $\mu$ be two partitions such that $\left\vert
\lambda\right\vert \neq\left\vert \mu\right\vert $. Let $T$ be a column-strict
tableau of shape $\lambda$ having $\operatorname*{cont}\left(  T\right)  =\mu
$. We shall derive a contradiction.

Indeed, the tableau $T$ has shape $\lambda$, and thus has $\left\vert
\lambda\right\vert $ many cells. Hence,%
\begin{align*}
\left\vert \lambda\right\vert  &  =\left(  \text{the number of cells of
}T\right)  =\left(  \text{the number of entries of }T\right) \\
&  =\left\vert \operatorname*{cont}\left(  T\right)  \right\vert =\left\vert
\mu\right\vert \ \ \ \ \ \ \ \ \ \ \left(  \text{since }\operatorname*{cont}%
\left(  T\right)  =\mu\right)  .
\end{align*}
This contradicts $\left\vert \lambda\right\vert \neq\left\vert \mu\right\vert
$.

Now, forget that we fixed $T$. We thus have found a contradiction whenever $T$
is a column-strict tableau of shape $\lambda$ having $\operatorname*{cont}%
\left(  T\right)  =\mu$. Hence, there exists no such tableau. In other words,
the number of such tableaux is $0$. In other words, $K_{\lambda,\mu}=0$ (since
$K_{\lambda,\mu}$ is defined to be the number of such tableaux). This proves
Lemma \ref{lem.kostka.facts} \textbf{(c)}.

\textbf{(d)} This is \cite[Exercise 2.7.10(a)]{GriRei18}.

\textbf{(e)} Let $\operatorname*{Par}$ denote the set of all partitions. Then,
Lemma \ref{lem.kostka.facts} \textbf{(d)} yields that%
\[
\mathbf{h}_{\mu}=\sum_{\lambda\in\operatorname*{Par}}K_{\lambda,\mu}%
\mathbf{s}_{\lambda}\ \ \ \ \ \ \ \ \ \ \text{for any partition }\mu.
\]
Hence, for any partition $\mu$, we have%
\begin{align*}
\mathbf{h}_{\mu}  &  =\sum_{\lambda\in\operatorname*{Par}}K_{\lambda,\mu
}\mathbf{s}_{\lambda}=\sum_{\substack{\lambda\in\operatorname*{Par}%
;\\\left\vert \lambda\right\vert =\left\vert \mu\right\vert }}K_{\lambda,\mu
}\mathbf{s}_{\lambda}+\sum_{\substack{\lambda\in\operatorname*{Par}%
;\\\left\vert \lambda\right\vert \neq\left\vert \mu\right\vert }%
}\underbrace{K_{\lambda,\mu}}_{\substack{=0\\\text{(by Lemma
\ref{lem.kostka.facts} \textbf{(c)})}}}\mathbf{s}_{\lambda}\\
&  =\sum_{\substack{\lambda\in\operatorname*{Par};\\\left\vert \lambda
\right\vert =\left\vert \mu\right\vert }}K_{\lambda,\mu}\mathbf{s}_{\lambda
}+\underbrace{\sum_{\substack{\lambda\in\operatorname*{Par};\\\left\vert
\lambda\right\vert \neq\left\vert \mu\right\vert }}0\mathbf{s}_{\lambda}}%
_{=0}=\sum_{\substack{\lambda\in\operatorname*{Par};\\\left\vert
\lambda\right\vert =\left\vert \mu\right\vert }}K_{\lambda,\mu}\mathbf{s}%
_{\lambda}.
\end{align*}
Renaming $\mu$ and $\lambda$ as $\lambda$ and $\mu$ in this equality, we
obtain the following: For any partition $\lambda$, we have%
\begin{equation}
\mathbf{h}_{\lambda}=\sum_{\substack{\mu\in\operatorname*{Par};\\\left\vert
\mu\right\vert =\left\vert \lambda\right\vert }}K_{\mu,\lambda}\mathbf{s}%
_{\mu}. \label{pf.lem.kostka.facts.e.1}%
\end{equation}

Now, let $a\in\mathbb{N}$ and $\lambda\in\operatorname*{Par}\nolimits_{a}$.
Then, $\left\vert \lambda\right\vert =a$. Now, (\ref{pf.lem.kostka.facts.e.1})
becomes%
\begin{align*}
\mathbf{h}_{\lambda}  &  =\sum_{\substack{\mu\in\operatorname*{Par}%
;\\\left\vert \mu\right\vert =\left\vert \lambda\right\vert }}K_{\mu,\lambda
}\mathbf{s}_{\mu}=\underbrace{\sum_{\substack{\mu\in\operatorname*{Par}%
;\\\left\vert \mu\right\vert =a}}}_{=\sum_{\mu\in\operatorname*{Par}%
\nolimits_{a}}}K_{\mu,\lambda}\mathbf{s}_{\mu}\ \ \ \ \ \ \ \ \ \ \left(
\text{since }\left\vert \lambda\right\vert =a\right) \\
&  =\sum_{\mu\in\operatorname*{Par}\nolimits_{a}}K_{\mu,\lambda}%
\mathbf{s}_{\mu}.
\end{align*}
This proves Lemma \ref{lem.kostka.facts} \textbf{(e)}.

\textbf{(f)} Let $a\in\mathbb{N}$ and $\lambda\in\operatorname*{Par}%
\nolimits_{a}$. Lemma \ref{lem.kostka.facts} \textbf{(e)} yields
$\mathbf{h}_{\lambda}=\sum_{\mu\in\operatorname*{Par}\nolimits_{a}}%
K_{\mu,\lambda}\mathbf{s}_{\mu}$. This is an identity in $\Lambda$. Evaluating
both of its sides at the $k$ variables $x_{1},x_{2},\ldots,x_{k}$, we obtain%
\begin{align*}
h_{\lambda}  &  =\sum_{\mu\in\operatorname*{Par}\nolimits_{a}}K_{\mu,\lambda
}s_{\mu}=\underbrace{\sum_{\substack{\mu\in\operatorname*{Par}\nolimits_{a}%
;\\\mu\text{ has at most }k\text{ parts}}}}_{=\sum_{\substack{\mu
\in\operatorname*{Par}\nolimits_{a};\\\mu\in P_{k}}}=\sum_{\substack{\mu\in
P_{k};\\\mu\in\operatorname*{Par}\nolimits_{a}}}}K_{\mu,\lambda}s_{\mu}%
+\sum_{\substack{\mu\in\operatorname*{Par}\nolimits_{a};\\\mu\text{ has more
than }k\text{ parts}}}K_{\mu,\lambda}\underbrace{s_{\mu}}%
_{\substack{=0\\\text{(by (\ref{eq.slam=0-too-long}), applied to }%
\mu\\\text{instead of }\lambda\text{)}}}\\
&  =\sum_{\substack{\mu\in P_{k};\\\mu\in\operatorname*{Par}\nolimits_{a}%
}}K_{\mu,\lambda}s_{\mu}+\underbrace{\sum_{\substack{\mu\in\operatorname*{Par}%
\nolimits_{a};\\\mu\text{ has more than }k\text{ parts}}}K_{\mu,\lambda}%
0}_{=0}=\sum_{\substack{\mu\in P_{k};\\\mu\in\operatorname*{Par}\nolimits_{a}%
}}K_{\mu,\lambda}s_{\mu}.
\end{align*}
This proves Lemma \ref{lem.kostka.facts} \textbf{(f)}.
\end{proof}

\begin{proof}
[Proof of Theorem \ref{thm.s-h-triangularity}.]Let $<^{\ast}$ denote the
smaller relation of the size-then-antidominance order on $P_{k,n}$. Thus, two
partitions $\lambda$ and $\mu$ satisfy $\mu<^{\ast}\lambda$ if and only if
$\mu\neq\lambda$ and $\lambda\geq^{\ast}\mu$.

Our goal is to show that the family $\left(  \overline{h_{\lambda}}\right)
_{\lambda\in P_{k,n}}$ expands unitriangularly in the family $\left(
\overline{s_{\lambda}}\right)  _{\lambda\in P_{k,n}}$. In other words, our
goal is to show that each $\lambda\in P_{k,n}$ satisfies%
\begin{equation}
\overline{h_{\lambda}}=\overline{s_{\lambda}}+\left(  \text{a }\mathbf{k}%
\text{-linear combination of the elements }\overline{s_{\mu}}\text{ for }%
\mu\in P_{k,n}\text{ satisfying }\mu<^{\ast}\lambda\right)
\label{pf.thm.s-h-triangularity.tria}%
\end{equation}
(because \cite[Remark 11.1.17(c)]{GriRei18} shows that the family $\left(
\overline{h_{\lambda}}\right)  _{\lambda\in P_{k,n}}$ expands unitriangularly
in the family $\left(  \overline{s_{\lambda}}\right)  _{\lambda\in P_{k,n}}$
if and only if every $\lambda\in P_{k,n}$ satisfies
(\ref{pf.thm.s-h-triangularity.tria})). So let us prove
(\ref{pf.thm.s-h-triangularity.tria}).

Fix $\lambda\in P_{k,n}$. Define $a\in\mathbb{N}$ by $a=\left\vert
\lambda\right\vert $. Thus, $\lambda\in\operatorname*{Par}\nolimits_{a}$.
Hence, Lemma \ref{lem.kostka.facts} \textbf{(f)} yields%
\begin{equation}
h_{\lambda}=\sum_{\substack{\mu\in P_{k};\\\mu\in\operatorname*{Par}%
\nolimits_{a}}}K_{\mu,\lambda}s_{\mu}=K_{\lambda,\lambda}s_{\lambda}%
+\sum_{\substack{\mu\in P_{k};\\\mu\in\operatorname*{Par}\nolimits_{a}%
;\\\mu\neq\lambda}}K_{\mu,\lambda}s_{\mu} \label{pf.thm.s-h-triangularity.3}%
\end{equation}
(here, we have split off the addend for $\mu=\lambda$, since $\lambda\in
P_{k,n}\subseteq P_{k}$ and $\lambda\in\operatorname*{Par}\nolimits_{a}$).

Now, let $M$ be the $\mathbf{k}$-submodule of $\mathcal{S}/I$ spanned by the
elements $\overline{s_{\mu}}$ for $\mu\in P_{k,n}$ satisfying $\mu<^{\ast
}\lambda$. Thus, we have%
\begin{equation}
\overline{s_{\mu}}\in M\ \ \ \ \ \ \ \ \ \ \text{for each }\mu\in
P_{k,n}\text{ satisfying }\mu<^{\ast}\lambda.
\label{pf.thm.s-h-triangularity.inM}%
\end{equation}
Also, $0\in M$ (since $M$ is a $\mathbf{k}$-submodule of $\mathcal{S}/I$).

We shall next show that%
\begin{equation}
K_{\mu,\lambda}\overline{s_{\mu}}\in M\ \ \ \ \ \ \ \ \ \ \text{for each }%
\mu\in P_{k}\text{ satisfying }\mu\in\operatorname*{Par}\nolimits_{a}\text{
and }\mu\neq\lambda. \label{pf.thm.s-h-triangularity.4}%
\end{equation}

[\textit{Proof of (\ref{pf.thm.s-h-triangularity.4}):} Let $\mu\in P_{k}$ be
such that $\mu\in\operatorname*{Par}\nolimits_{a}$ and $\mu\neq\lambda$. We
must prove that $K_{\mu,\lambda}\overline{s_{\mu}}\in M$.

If $K_{\mu,\lambda}=0$, then this follows immediately from $\underbrace{K_{\mu
,\lambda}}_{=0}\overline{s_{\mu}}=0\overline{s_{\mu}}=0\in M$. Hence, for the
rest of this proof, we WLOG assume that $K_{\mu,\lambda}\neq0$.

If $\mu$ and $\lambda$ would not satisfy $\mu\triangleright\lambda$, then we
would have $K_{\mu,\lambda}=0$ (by Lemma \ref{lem.kostka.facts} \textbf{(a)},
applied to $\mu$ and $\lambda$ instead of $\lambda$ and $\mu$), which would
contradict $K_{\mu,\lambda}\neq0$. Hence, $\mu$ and $\lambda$ must satisfy
$\mu\triangleright\lambda$. Both $\lambda$ and $\mu$ are partitions of size
$a$ (since $\lambda\in\operatorname*{Par}\nolimits_{a}$ and $\mu
\in\operatorname*{Par}\nolimits_{a}$). Thus, $\left\vert \lambda\right\vert
=a$ and $\left\vert \mu\right\vert =a$.

Now, we are in one of the following two cases:

\textit{Case 1:} We have $\mu\in P_{k,n}$.

\textit{Case 2:} We have $\mu\notin P_{k,n}$.

Let us first consider Case 1. In this case, we have $\mu\in P_{k,n}$. Thus,
$\lambda\geq^{\ast}\mu$ (by Remark \ref{rmk.size-antidomin.domin}
\textbf{(b)}) and thus $\mu<^{\ast}\lambda$ (since $\mu\neq\lambda$). Hence,
(\ref{pf.thm.s-h-triangularity.inM}) shows that $\overline{s_{\mu}}\in M$.
Thus, $K_{\mu,\lambda}\underbrace{\overline{s_{\mu}}}_{\in M}\in
K_{\mu,\lambda}M\subseteq M$ (since $M$ is a $\mathbf{k}$-submodule of
$\mathcal{S}/I$). Thus, (\ref{pf.thm.s-h-triangularity.4}) is proven in Case 1.

Let us next consider Case 2. In this case, we have $\mu\notin P_{k,n}$. Hence,
Lemma \ref{lem.I.sl-red} (applied to $\mu$ instead of $\lambda$) shows that%
\[
s_{\mu}\equiv\left(  \text{some symmetric polynomial of degree }<\left\vert
\mu\right\vert \right)  \operatorname{mod}I.
\]
In other words, there exists some symmetric polynomial $f\in\mathcal{S}$ of
degree $<\left\vert \mu\right\vert $ such that $s_{\mu}\equiv
f\operatorname{mod}I$. Consider this $f$. Lemma \ref{lem.sm-as-Pkn} (applied
to $N=\left\vert \mu\right\vert $) yields that in $\mathcal{S}/I$, we have
\begin{equation}
\overline{f}\in\sum_{\substack{\kappa\in P_{k,n};\\\left\vert \kappa
\right\vert <\left\vert \mu\right\vert }}\mathbf{k}\overline{s_{\kappa}}.
\label{pf.thm.s-h-triangularity.4.pf.c2.5}%
\end{equation}

Now, let $\kappa\in P_{k,n}$ be such that $\left\vert \kappa\right\vert
<\left\vert \mu\right\vert $. Then, $\left\vert \kappa\right\vert <\left\vert
\mu\right\vert =a=\left\vert \lambda\right\vert $, so that $\left\vert
\lambda\right\vert >\left\vert \kappa\right\vert $. Thus, Remark
\ref{rmk.size-antidomin.domin} \textbf{(a)} (applied to $\kappa$ instead of
$\mu$) yields $\lambda\geq^{\ast}\kappa$. Also, $\left\vert \kappa\right\vert
\neq\left\vert \lambda\right\vert $ (since $\left\vert \kappa\right\vert
<\left\vert \lambda\right\vert $) and thus $\kappa\neq\lambda$.\ Combining
this with $\lambda\geq^{\ast}\kappa$, we obtain $\kappa<^{\ast}\lambda$.
Hence, (\ref{pf.thm.s-h-triangularity.inM}) (applied to $\kappa$ instead of
$\mu$) yields $\overline{s_{\kappa}}\in M$. Hence, $\mathbf{k}%
\underbrace{\overline{s_{\kappa}}}_{\in M}\subseteq\mathbf{k}M\subseteq M$
(since $M$ is a $\mathbf{k}$-module).

Forget that we fixed $\kappa$. We thus have shown that $\mathbf{k}%
\overline{s_{\kappa}}\subseteq M$ for each $\kappa\in P_{k,n}$ satisfying
$\left\vert \kappa\right\vert <\left\vert \mu\right\vert $. Hence,
$\sum_{\substack{\kappa\in P_{k,n};\\\left\vert \kappa\right\vert <\left\vert
\mu\right\vert }}\underbrace{\mathbf{k}\overline{s_{\kappa}}}_{\subseteq
M}\subseteq\sum_{\substack{\kappa\in P_{k,n};\\\left\vert \kappa\right\vert
<\left\vert \mu\right\vert }}M\subseteq M$ (since $M$ is a $\mathbf{k}%
$-module). Thus, (\ref{pf.thm.s-h-triangularity.4.pf.c2.5}) becomes
$\overline{f}\in\sum_{\substack{\kappa\in P_{k,n};\\\left\vert \kappa
\right\vert <\left\vert \mu\right\vert }}\mathbf{k}\overline{s_{\kappa}}=M$.
But $s_{\mu}\equiv f\operatorname{mod}I$ and thus $\overline{s_{\mu}%
}=\overline{f}\in M$. Thus, $K_{\mu,\lambda}\underbrace{\overline{s_{\mu}}%
}_{\in M}\in K_{\mu,\lambda}M\subseteq M$ (since $M$ is a $\mathbf{k}%
$-submodule of $\mathcal{S}/I$). Hence, (\ref{pf.thm.s-h-triangularity.4}) is
proven in Case 2.

We have now proven (\ref{pf.thm.s-h-triangularity.4}) in both Cases 1 and 2.
Hence, (\ref{pf.thm.s-h-triangularity.4}) always holds.]

Now, from (\ref{pf.thm.s-h-triangularity.3}), we obtain%
\begin{align*}
\overline{h_{\lambda}}  &  =\overline{K_{\lambda,\lambda}s_{\lambda}%
+\sum_{\substack{\mu\in P_{k};\\\mu\in\operatorname*{Par}\nolimits_{a}%
;\\\mu\neq\lambda}}K_{\mu,\lambda}s_{\mu}}=\underbrace{K_{\lambda,\lambda}%
}_{\substack{=1\\\text{(by Lemma \ref{lem.kostka.facts} \textbf{(b)})}%
}}\overline{s_{\lambda}}+\sum_{\substack{\mu\in P_{k};\\\mu\in
\operatorname*{Par}\nolimits_{a};\\\mu\neq\lambda}}\underbrace{K_{\mu,\lambda
}\overline{s_{\mu}}}_{\substack{\in M\\\text{(by
(\ref{pf.thm.s-h-triangularity.4}))}}}\\
&  \in\overline{s_{\lambda}}+\underbrace{\sum_{\substack{\mu\in P_{k};\\\mu
\in\operatorname*{Par}\nolimits_{a};\\\mu\neq\lambda}}M}_{\substack{\subseteq
M\\\text{(since }M\text{ is a }\mathbf{k}\text{-module)}}}\subseteq
\overline{s_{\lambda}}+M.
\end{align*}
In other words, $\overline{h_{\lambda}}-\overline{s_{\lambda}}\in M$. In other
words, $\overline{h_{\lambda}}-\overline{s_{\lambda}}$ is a $\mathbf{k}%
$-linear combination of the elements $\overline{s_{\mu}}$ for $\mu\in P_{k,n}$
satisfying $\mu<^{\ast}\lambda$ (since $M$ was defined as the $\mathbf{k}%
$-submodule of $\mathcal{S}/I$ spanned by these elements). In other words,%
\[
\overline{h_{\lambda}}=\overline{s_{\lambda}}+\left(  \text{a }\mathbf{k}%
\text{-linear combination of the elements }\overline{s_{\mu}}\text{ for }%
\mu\in P_{k,n}\text{ satisfying }\mu<^{\ast}\lambda\right)  .
\]
Thus, (\ref{pf.thm.s-h-triangularity.tria}) is proven. As we already have
explained, this completes the proof of Theorem \ref{thm.s-h-triangularity}.
\end{proof}

We can now prove Theorem \ref{thm.S/J-h-basis} again. Better yet, we can prove
the following more general fact:

\begin{theorem}
\label{thm.S/J-h-basis-ainS}The family $\left(  \overline{h_{\lambda}}\right)
_{\lambda\in P_{k,n}}$ is a basis of the $\mathbf{k}$-module $\mathcal{S}/I$.
\end{theorem}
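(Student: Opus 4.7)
The plan is to deduce this directly from the two main ingredients already in place: Theorem \ref{thm.S/J} and Theorem \ref{thm.s-h-triangularity}. First, I would invoke Theorem \ref{thm.S/J} (which applies since we are assuming $a_1,a_2,\ldots,a_k\in\mathcal{S}$) to conclude that the family $\bigl(\overline{s_\lambda}\bigr)_{\lambda\in P_{k,n}}$ is a basis of the $\mathbf{k}$-module $\mathcal{S}/I$. Next, I would invoke Theorem \ref{thm.s-h-triangularity}, which says that the family $\bigl(\overline{h_\lambda}\bigr)_{\lambda\in P_{k,n}}$ expands unitriangularly in $\bigl(\overline{s_\lambda}\bigr)_{\lambda\in P_{k,n}}$ with respect to the size-then-antidominance order on $P_{k,n}$ (Definition \ref{def.size-antidomin}).

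The key point is then a standard general fact: if a family $(a_\lambda)_{\lambda\in L}$ indexed by a poset $L$ expands unitriangularly in a basis $(b_\lambda)_{\lambda\in L}$ of a $\mathbf{k}$-module $M$, then $(a_\lambda)_{\lambda\in L}$ is itself a basis of $M$. (This is, for example, \cite[Corollary 11.1.19(c)]{GriRei18}, applied to the poset $P_{k,n}$ equipped with the size-then-antidominance order.) Applying this to $M=\mathcal{S}/I$, $L=P_{k,n}$, $b_\lambda=\overline{s_\lambda}$, and $a_\lambda=\overline{h_\lambda}$ yields the desired claim.

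There is no real obstacle: both ingredients have been proved already, and the only nontrivial content has been folded into Theorem \ref{thm.s-h-triangularity}. The only small point to double-check is that the relation $\geq^{\ast}$ from Definition \ref{def.size-antidomin} is indeed the greater-or-equal relation of a partial order on $P_{k,n}$ (so that the general unitriangularity lemma applies); this was already observed in Definition \ref{def.size-antidomin}.
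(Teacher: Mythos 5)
Your proposal is correct and follows essentially the same route as the paper: the paper likewise combines Theorem \ref{thm.S/J} with the unitriangularity statement of Theorem \ref{thm.s-h-triangularity} and then invokes the general triangular-change-of-basis fact from \cite{GriRei18} (the paper cites Corollary 11.1.19(e) there, in its ``invertibly triangular'' form, rather than part (c), but this is the same argument). No gaps.
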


Theorem \ref{thm.S/J-h-basis-ainS} makes the exact same claim as Theorem
\ref{thm.S/J-h-basis}, but is nevertheless more general because we have stated
it in a more general context (namely, $a_{1},a_{2},\ldots,a_{k}\in\mathcal{S}$
rather than $a_{1},a_{2},\ldots,a_{k}\in\mathbf{k}$).

\begin{proof}
[Proof of Theorem \ref{thm.S/J-h-basis-ainS}.]Consider the finite set
$P_{k,n}$ as a poset (using the size-then-antidominance order).

Theorem \ref{thm.s-h-triangularity} says that the family $\left(
\overline{h_{\lambda}}\right)  _{\lambda\in P_{k,n}}$ expands unitriangularly
in the family $\left(  \overline{s_{\lambda}}\right)  _{\lambda\in P_{k,n}}$.
Hence, the family $\left(  \overline{h_{\lambda}}\right)  _{\lambda\in
P_{k,n}}$ expands invertibly triangularly\footnote{See \cite[Definition
11.1.16(b)]{GriRei18} for the meaning of this word.} in the family $\left(
\overline{s_{\lambda}}\right)  _{\lambda\in P_{k,n}}$. Thus, \cite[Corollary
11.1.19(e)]{GriRei18} (applied to $\mathcal{S}/I$, $P_{k,n}$, $\left(
\overline{h_{\lambda}}\right)  _{\lambda\in P_{k,n}}$ and $\left(
\overline{s_{\lambda}}\right)  _{\lambda\in P_{k,n}}$ instead of $M$, $S$,
$\left(  e_{s}\right)  _{s\in S}$ and $\left(  f_{s}\right)  _{s\in S}$) shows
that the family $\left(  \overline{h_{\lambda}}\right)  _{\lambda\in P_{k,n}}$
is a basis of the $\mathbf{k}$-module $\mathcal{S}/I$ if and only if the
family $\left(  \overline{s_{\lambda}}\right)  _{\lambda\in P_{k,n}}$ is a
basis of the $\mathbf{k}$-module $\mathcal{S}/I$. Hence, the family $\left(
\overline{h_{\lambda}}\right)  _{\lambda\in P_{k,n}}$ is a basis of the
$\mathbf{k}$-module $\mathcal{S}/I$ (since the family $\left(  \overline
{s_{\lambda}}\right)  _{\lambda\in P_{k,n}}$ is a basis of the $\mathbf{k}%
$-module $\mathcal{S}/I$). Thus, Theorem \ref{thm.S/J-h-basis-ainS} is proven.
(And therefore, Theorem \ref{thm.S/J-h-basis} is proven again.)
\end{proof}

\subsection{The $m$-basis}

Next, we recall another well-known family of symmetric polynomials:

\begin{definition}
\label{def.mlam}For any partition $\lambda$, we let $m_{\lambda}$ denote the
monomial symmetric polynomial in $x_{1},x_{2},\ldots,x_{k}$ corresponding to
the partition $\lambda$. This monomial symmetric polynomial is what is called
$m_{\lambda}\left(  x_{1},x_{2},\ldots,x_{k}\right)  $ in \cite[Chapter
2]{GriRei18}. Note that%
\begin{equation}
m_{\lambda}=0\ \ \ \ \ \ \ \ \ \ \text{if }\lambda\text{ has more than
}k\text{ parts.} \label{eq.mlam=0-too-long}%
\end{equation}

\end{definition}

If $\lambda$ is any partition, then the monomial symmetric polynomial
$m_{\lambda}=m_{\lambda}\left(  x_{1},x_{2},\ldots,x_{k}\right)  $ is
symmetric and thus belongs to $\mathcal{S}$.

We now claim the following:

\begin{theorem}
\label{thm.S/J-m-basis}The family $\left(  \overline{m_{\lambda}}\right)
_{\lambda\in P_{k,n}}$ is a basis of the $\mathbf{k}$-module $\mathcal{S}/I$.
\end{theorem}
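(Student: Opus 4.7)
The plan is to mirror the proof of Theorem \ref{thm.S/J-h-basis-ainS}: I will exhibit a unitriangular expansion of $\left(\overline{m_{\lambda}}\right)_{\lambda\in P_{k,n}}$ in the basis $\left(\overline{s_{\lambda}}\right)_{\lambda\in P_{k,n}}$ (a basis by Theorem \ref{thm.S/J}) with respect to a suitable partial order on $P_{k,n}$, and then invoke \cite[Corollary 11.1.19(e)]{GriRei18} to transfer the basis property.

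The classical input is the Schur-to-monomial transition $s_{\lambda}=\sum_{\mu}K_{\lambda,\mu}m_{\mu}$ in $\mathcal{S}$, which, for $\lambda\in\operatorname*{Par}\nolimits_{a}$, ranges over $\mu\in\operatorname*{Par}\nolimits_{a}$ having at most $k$ parts (since $m_{\mu}=0$ otherwise). By Lemma \ref{lem.kostka.facts} \textbf{(a)} and \textbf{(b)}, the Kostka matrix $\left(K_{\lambda,\mu}\right)_{\lambda,\mu\in\operatorname*{Par}\nolimits_{a}}$ is unitriangular with respect to the dominance order $\triangleright$: its diagonal entries are $K_{\lambda,\lambda}=1$, and $K_{\lambda,\mu}\neq 0$ forces $\lambda\triangleright\mu$. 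Inverting this matrix produces integers $c_{\lambda,\mu}\in\mathbb{Z}$ with the same triangularity, so that in $\mathcal{S}$ one obtains
\[
m_{\lambda}=s_{\lambda}+\sum_{\substack{\mu\in\operatorname*{Par}\nolimits_{|\lambda|}\\\lambda\triangleright\mu,\ \mu\neq\lambda}}c_{\lambda,\mu}s_{\mu}.
\]

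Passing to $\mathcal{S}/I$, the indices $\mu$ above need not lie in $P_{k,n}$. For each $\mu\in P_{k}\setminus P_{k,n}$ that appears, Lemma \ref{lem.I.sl-red} writes $\overline{s_{\mu}}$ as the class of a symmetric polynomial of degree $<\left|\mu\right|=\left|\lambda\right|$, and Lemma \ref{lem.sm-as-Pkn} then expresses that class as a $\mathbf{k}$-linear combination of elements $\overline{s_{\kappa}}$ with $\kappa\in P_{k,n}$ and $\left|\kappa\right|<\left|\lambda\right|$. Substituting these reductions into the displayed equation and collecting the remaining (already-in-$P_{k,n}$) terms, I arrive at
\[
\overline{m_{\lambda}}=\overline{s_{\lambda}}+\left(\text{a }\mathbf{k}\text{-linear combination of }\overline{s_{\mu}}\text{ for }\mu\in P_{k,n}\text{ with }\mu<^{\dagger}\lambda\right),
\]
where $\leq^{\dagger}$ denotes the \emph{size-then-dominance} partial order on $P_{k,n}$: $\mu\leq^{\dagger}\lambda$ iff $\left|\mu\right|<\left|\lambda\right|$, or $\left|\mu\right|=\left|\lambda\right|$ and $\lambda\triangleright\mu$. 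Both sources of nondiagonal terms -- the inverse-Kostka terms (same size, with $\lambda\triangleright\mu$) and the Lemma \ref{lem.I.sl-red} reductions (strictly smaller size) -- lie strictly below $\lambda$ in $\leq^{\dagger}$, so this is a genuine unitriangular expansion.

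Applying \cite[Corollary 11.1.19(e)]{GriRei18} (with the poset $P_{k,n}$ endowed with $\leq^{\dagger}$) and Theorem \ref{thm.S/J} then yields Theorem \ref{thm.S/J-m-basis}. The main obstacle is the bookkeeping choice of partial order: since the $m$-to-$s$ change of basis is unitriangular with respect to \emph{dominance} rather than antidominance, I must replace the size-then-antidominance order $\leq^{*}$ used for the $h$-basis in Theorem \ref{thm.s-h-triangularity} by its dominance-side twin $\leq^{\dagger}$, and verify that the Lemma \ref{lem.I.sl-red} reductions still point downward in the new order (they do, because they reduce size). Once this adjustment is made, the argument is structurally identical to the proof of Theorem \ref{thm.S/J-h-basis-ainS}.
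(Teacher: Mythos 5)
Your proposal is correct, but it is worth comparing it to the paper's argument, which runs the triangularity in the opposite direction and is lighter. The paper expands $\overline{s_{\lambda}}$ in the $\overline{m_{\mu}}$ using the Kostka numbers themselves (Theorem \ref{thm.s-m-triangularity}, via Lemma \ref{lem.kostka.m} \textbf{(c)}), with respect to the graded dominance order; the key point is Lemma \ref{lem.dominance.Pkn}, which shows that for $\lambda\in P_{k,n}$ any $\mu\in P_{k}$ of the same size with $\lambda\triangleright\mu$ already lies in $P_{k,n}$, so the expansion is an identity in $\mathcal{S}$ itself, never leaves $P_{k,n}$, and requires no reduction modulo $I$. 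The same observation applies to your inverse-Kostka expansion of $m_{\lambda}$: since $c_{\lambda,\mu}\neq 0$ forces $\lambda\triangleright\mu$, every $\mu\in P_{k}$ that actually contributes satisfies $\mu_{1}\leq\lambda_{1}\leq n-k$, hence $\mu\in P_{k,n}$; so the step where you invoke Lemmas \ref{lem.I.sl-red} and \ref{lem.sm-as-Pkn}, as well as the size component of your order $\leq^{\dagger}$, is applied to an empty set of indices and could be deleted (it is valid, just vacuous) -- the ``main obstacle'' you describe is not really where the work lies. Both routes then finish identically with \cite[Corollary 11.1.19(e)]{GriRei18} and Theorem \ref{thm.S/J}; what the paper's direction buys is that one never has to invert the Kostka matrix or work in the quotient at all, and the resulting coefficients are plain Kostka numbers independent of $a_{1},a_{2},\ldots,a_{k}$, while your direction costs the (standard) unitriangular-inverse argument but is otherwise equally sound.
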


We shall prove this further below; a different proof has been given in by
Weinfeld in \cite[Corollary 6.2]{Weinfe19}.

Our proof of Theorem \ref{thm.S/J-m-basis} will again rely on the concept of
unitriangularity and on a partial order on the set $P_{k,n}$. The partial
order, this time, is not the size-then-antidominance order, but a simpler one
(the \textquotedblleft graded dominance order\textquotedblright):

\begin{definition}
\label{def.size-domin}We define a binary relation $\geq_{\ast}$ on the set
$P_{k,n}$ as follows: For two partitions $\lambda\in P_{k,n}$ and $\mu\in
P_{k,n}$, we set $\lambda\geq_{\ast}\mu$ if and only if

\begin{itemize}
\item $\left\vert \lambda\right\vert =\left\vert \mu\right\vert $ and
$\lambda_{1}+\lambda_{2}+\cdots+\lambda_{i}\geq\mu_{1}+\mu_{2}+\cdots+\mu_{i}$
for all $i\geq1$.
\end{itemize}

It is clear that this relation $\geq_{\ast}$ is the greater-or-equal relation
of a partial order on $P_{k,n}$. This order will be called the \textit{graded
dominance order}.
\end{definition}

Note that the condition \textquotedblleft$\left\vert \lambda\right\vert
=\left\vert \mu\right\vert $ and $\lambda_{1}+\lambda_{2}+\cdots+\lambda
_{i}\geq\mu_{1}+\mu_{2}+\cdots+\mu_{i}$ for all $i\geq1$\textquotedblright\ in
Definition \ref{def.size-domin} can also be restated as \textquotedblleft%
$\lambda\triangleright\mu$\textquotedblright, where $\triangleright$ means the
dominance relation (defined in Definition \ref{def.LRcoeffs.domi}
\textbf{(b)}). Indeed, this follows easily from Remark
\ref{rmk.LRcoeffs.domi=domi}.

For future reference, we need a simple criterion for the $\geq_{\ast}$ relation:

\begin{remark}
\label{rmk.size-domin.domin}Let $\lambda\in P_{k,n}$ and $\mu\in P_{k,n}$.

Let $a\in\mathbb{N}$. If both $\lambda$ and $\mu$ are partitions of size $a$
and satisfy $\lambda\triangleright\mu$, then $\lambda\geq_{\ast}\mu$. (See
Definition \ref{def.LRcoeffs.domi} \textbf{(b)} for the meaning of
\textquotedblleft$\triangleright$\textquotedblright.)
\end{remark}

\begin{proof}
[Proof of Remark \ref{rmk.size-domin.domin}.]Assume that both $\lambda$ and
$\mu$ are partitions of size $a$ and satisfy $\lambda\triangleright\mu$. Now,
both partitions $\lambda$ and $\mu$ have size $a$; in other words, $\left\vert
\lambda\right\vert =a$ and $\left\vert \mu\right\vert =a$. Hence, $\left\vert
\lambda\right\vert =a=\left\vert \mu\right\vert $.

We have $\lambda\triangleright\mu$. In other words, we have
\[
\lambda_{1}+\lambda_{2}+\cdots+\lambda_{i}\geq\mu_{1}+\mu_{2}+\cdots+\mu
_{i}\ \ \ \ \ \ \ \ \ \ \text{for each }i\geq1
\]
(by Remark \ref{rmk.LRcoeffs.domi=domi}). Hence, we have $\left\vert
\lambda\right\vert =\left\vert \mu\right\vert $ and $\lambda_{1}+\lambda
_{2}+\cdots+\lambda_{i}\geq\mu_{1}+\mu_{2}+\cdots+\mu_{i}$ for all $i\geq1$.
Therefore, $\lambda\geq_{\ast}\mu$ (by the definition of the relation
$\geq_{\ast}$). This proves Remark \ref{rmk.size-domin.domin}.
\end{proof}

Now, we can put the graded dominance order to use. Recall that Theorem
\ref{thm.S/J} yields that the family $\left(  \overline{s_{\lambda}}\right)
_{\lambda\in P_{k,n}}$ is a basis of the $\mathbf{k}$-module $\mathcal{S}/I$.

\begin{theorem}
\label{thm.s-m-triangularity}The family $\left(  \overline{s_{\lambda}%
}\right)  _{\lambda\in P_{k,n}}$ expands unitriangularly in the family
$\left(  \overline{m_{\lambda}}\right)  _{\lambda\in P_{k,n}}$. Here, the word
\textquotedblleft expands unitriangularly\textquotedblright\ is understood
according to \cite[Definition 11.1.16(c)]{GriRei18}, with the poset structure
on $P_{k,n}$ being given by the graded dominance order.
\end{theorem}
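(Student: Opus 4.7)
The plan is to mimic the proof of Theorem \ref{thm.s-h-triangularity}, but this turns out to be considerably easier because monomial symmetric polynomials of partitions dominated by $\lambda$ automatically stay inside $P_{k,n}$ (no reduction step via Lemma \ref{lem.I.sl-red} or Lemma \ref{lem.sm-as-Pkn} is needed).

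First, I would recall the classical Kostka expansion of a Schur function in terms of monomial symmetric functions: $\mathbf{s}_\lambda = \sum_{\mu \in \operatorname{Par}} K_{\lambda,\mu}\mathbf{m}_\mu$ (see, e.g., \cite[Proposition 2.5.12]{GriRei18}). Evaluating this at $x_1,x_2,\ldots,x_k$ and using the fact that $m_\mu = 0$ whenever $\mu$ has more than $k$ parts (i.e.\ \eqref{eq.mlam=0-too-long}), along with Lemma \ref{lem.kostka.facts} \textbf{(c)} (to force $|\mu|=|\lambda|$) and Lemma \ref{lem.kostka.facts} \textbf{(a)} (to force $\mu \triangleleft \lambda$), one obtains, for each $\lambda \in P_{k,n}$, an equality
\[
s_\lambda \;=\; K_{\lambda,\lambda}\,m_\lambda \;+\; \sum_{\substack{\mu \in P_k;\ \mu \in \operatorname*{Par}\nolimits_a;\\ \mu \neq \lambda;\ \mu \triangleleft \lambda}} K_{\lambda,\mu}\, m_\mu,
\]
where $a = |\lambda|$. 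Lemma \ref{lem.kostka.facts} \textbf{(b)} gives $K_{\lambda,\lambda}=1$.

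The critical observation (and the only substantive content beyond the proof of Theorem \ref{thm.s-h-triangularity}) is that every partition $\mu$ appearing in this sum automatically lies in $P_{k,n}$. Indeed, $\mu \in P_k$ means $\mu$ has at most $k$ parts, and $\mu \triangleleft \lambda$ gives $\mu_1 \leq \lambda_1 \leq n-k$ (since $\lambda \in P_{k,n}$); so every part of $\mu$ is $\leq n-k$, hence $\mu \in P_{k,n}$. Moreover, since $|\mu|=|\lambda|$ and $\lambda \triangleright \mu$, Remark \ref{rmk.size-domin.domin} yields $\lambda \geq_\ast \mu$, and combined with $\mu \neq \lambda$ this gives $\mu <_\ast \lambda$ (where $<_\ast$ denotes the strict graded dominance order). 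Thus, projecting the displayed equation onto $\mathcal{S}/I$ yields
\[
\overline{s_\lambda} \;=\; \overline{m_\lambda} \;+\; \sum_{\substack{\mu \in P_{k,n};\\ \mu <_\ast \lambda}} K_{\lambda,\mu}\,\overline{m_\mu},
\]
which is exactly the unitriangularity statement in the sense of \cite[Definition 11.1.16(c), Remark 11.1.17(c)]{GriRei18}.

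There is essentially no main obstacle here, since unlike in the proof of Theorem \ref{thm.s-h-triangularity}, we do not need to handle a case where $\mu$ leaves $P_{k,n}$ (and hence Lemma \ref{lem.I.sl-red} plays no role). The only non-automatic input is the standard fact $\mathbf{s}_\lambda = \sum_{\mu} K_{\lambda,\mu} \mathbf{m}_\mu$, which is completely classical.
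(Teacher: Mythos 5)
Your proposal is correct and follows essentially the same route as the paper: expand $s_\lambda$ via Kostka numbers in the monomial basis, note that dominance forces every surviving $\mu$ into $P_{k,n}$ (this is exactly the paper's Lemma \ref{lem.dominance.Pkn}, packaged there into Lemma \ref{lem.kostka.m} \textbf{(c)}), and conclude unitriangularity for the graded dominance order via Remark \ref{rmk.size-domin.domin}. Your observation that no reduction lemma is needed here, in contrast to Theorem \ref{thm.s-h-triangularity}, matches the paper's proof as well.
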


\begin{example}
For this example, let $n=5$ and $k=3$. Then, the expansion of the
$\overline{s_{\lambda}}$ in the basis $\left(  \overline{m_{\lambda}}\right)
_{\lambda\in P_{k,n}}$ looks as follows:%
\begin{align*}
\overline{s_{\varnothing}}  &  =\overline{m_{\varnothing}};\\
\overline{s_{\left(  1\right)  }}  &  =\overline{m_{\left(  1\right)  }};\\
\overline{s_{\left(  2\right)  }}  &  =\overline{m_{\left(  1,1\right)  }%
}+\overline{m_{\left(  2\right)  }};\\
\overline{s_{\left(  1,1\right)  }}  &  =\overline{m_{\left(  1,1\right)  }%
};\\
\overline{s_{\left(  2,1\right)  }}  &  =2\overline{m_{\left(  1,1,1\right)
}}+\overline{m_{\left(  2,1\right)  }};\\
\overline{s_{\left(  1,1,1\right)  }}  &  =\overline{m_{\left(  1,1,1\right)
}};\\
\overline{s_{\left(  2,2\right)  }}  &  =\overline{m_{\left(  2,1,1\right)  }%
}+\overline{m_{\left(  2,2\right)  }};\\
\overline{s_{\left(  2,1,1\right)  }}  &  =\overline{m_{\left(  2,1,1\right)
}};\\
\overline{s_{\left(  2,2,1\right)  }}  &  =\overline{m_{\left(  2,2,1\right)
}};\\
\overline{s_{\left(  2,2,2\right)  }}  &  =\overline{m_{\left(  2,2,2\right)
}}.
\end{align*}
The coefficients in these expansions are Kostka numbers; the $a_{1}%
,a_{2},\ldots,a_{k}$ do not appear in them. (This will become clear in the
proof of Theorem \ref{thm.s-m-triangularity}.)
\end{example}

To prove Theorem \ref{thm.s-m-triangularity}, we shall use the monomial
symmetric functions $\mathbf{m}_{\lambda}$:

\begin{itemize}
\item For any partition $\lambda$, we let $\mathbf{m}_{\lambda}$ be the
corresponding monomial symmetric function in $\Lambda$. (This is called
$m_{\lambda}$ in \cite[(2.1.1)]{GriRei18}.)
\end{itemize}

We shall furthermore use the following property of the dominance order:

\begin{lemma}
\label{lem.dominance.Pkn}Let $a\in\mathbb{N}$. Let $\lambda\in
\operatorname*{Par}\nolimits_{a}$ and $\mu\in\operatorname*{Par}\nolimits_{a}$
be such that $\lambda\triangleright\mu$. Assume that $\lambda\in P_{k,n}$ and
$\mu\in P_{k}$. Then, $\mu\in P_{k,n}$.
\end{lemma}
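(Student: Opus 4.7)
The plan is to observe that the claim reduces immediately to comparing the first parts of $\lambda$ and $\mu$. Since $\mu$ is already known to lie in $P_k$ (i.e., has at most $k$ parts), showing $\mu\in P_{k,n}$ amounts to showing that every part of $\mu$ is $\leq n-k$, and since $\mu$ is weakly decreasing it suffices to show $\mu_1\leq n-k$.

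To get this bound, I would apply the hypothesis $\lambda\triangleright\mu$ with $i=1$: this yields $\lambda_1\geq\mu_1$. The assumption $\lambda\in P_{k,n}$ gives $\lambda_1\leq n-k$, whence $\mu_1\leq\lambda_1\leq n-k$. Combining this with $\mu\in P_k$ immediately yields $\mu\in P_{k,n}$.

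There is no real obstacle here; the lemma is essentially a one-line consequence of the definitions of $P_{k,n}$, of $P_k$, and of the dominance relation $\triangleright$ (as reformulated via Remark \ref{rmk.LRcoeffs.domi=domi}, which ensures that $\lambda\triangleright\mu$ indeed implies the inequality $\lambda_1+\cdots+\lambda_i\geq\mu_1+\cdots+\mu_i$ for all $i\geq 1$, in particular for $i=1$). The role of the assumption $|\lambda|=|\mu|=a$ is only to make the relation $\triangleright$ well-defined in the first place; it is not used otherwise.
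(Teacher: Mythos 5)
Your proposal is correct and is essentially identical to the paper's own proof: both apply the dominance inequality at $i=1$ (via Remark \ref{rmk.LRcoeffs.domi=domi}) to get $\mu_{1}\leq\lambda_{1}\leq n-k$, and then conclude $\mu\in P_{k,n}$ from $\mu\in P_{k}$. No gaps.
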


\begin{proof}
[Proof of Lemma \ref{lem.dominance.Pkn}.]We have $\lambda\triangleright\mu$.
In other words, we have
\[
\lambda_{1}+\lambda_{2}+\cdots+\lambda_{i}\geq\mu_{1}+\mu_{2}+\cdots+\mu
_{i}\ \ \ \ \ \ \ \ \ \ \text{for each }i\geq1
\]
(by Remark \ref{rmk.LRcoeffs.domi=domi}). Applying this to $i=1$, we obtain
$\lambda_{1}\geq\mu_{1}$. Hence, $\mu_{1}\leq\lambda_{1}\leq n-k$ (since
$\lambda\in P_{k,n}$). Thus, all parts of the partition $\mu$ are $\leq n-k$
(since $\mu_{1}\geq\mu_{2}\geq\mu_{3}\geq\cdots$). Hence, $\mu\in P_{k,n}$
(since $\mu\in P_{k}$). This proves Lemma \ref{lem.dominance.Pkn}.
\end{proof}

Also, we shall again use Kostka numbers, specifically their following properties:

\begin{lemma}
\label{lem.kostka.m}\textbf{(a)} For any $a\in\mathbb{N}$ and any $\lambda
\in\operatorname*{Par}\nolimits_{a}$, we have%
\[
\mathbf{s}_{\lambda}=\sum_{\mu\in\operatorname*{Par}\nolimits_{a}}%
K_{\lambda,\mu}\mathbf{m}_{\mu}.
\]

\textbf{(b)} For any $a\in\mathbb{N}$ and $\lambda\in\operatorname*{Par}%
\nolimits_{a}$, we have%
\[
s_{\lambda}=\sum_{\substack{\mu\in P_{k};\\\mu\in\operatorname*{Par}%
\nolimits_{a}}}K_{\lambda,\mu}m_{\mu}.
\]

\textbf{(c)} For any $a\in\mathbb{N}$ and $\lambda\in\operatorname*{Par}%
\nolimits_{a}$ satisfying $\lambda\in P_{k,n}$, we have%
\[
s_{\lambda}=\sum_{\substack{\mu\in P_{k,n};\\\mu\in\operatorname*{Par}%
\nolimits_{a}}}K_{\lambda,\mu}m_{\mu}.
\]

\end{lemma}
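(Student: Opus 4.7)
The plan is to prove the three parts in order, with each leveraging the previous one.

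For part (a), this is the well-known expansion of a Schur function in the monomial basis of $\Lambda$: the coefficient of $\mathbf{m}_\mu$ in $\mathbf{s}_\lambda$ equals the number of column-strict (semistandard) tableaux of shape $\lambda$ and content $\mu$, which is precisely $K_{\lambda,\mu}$. I would cite this as \cite[(2.4.6)]{GriRei18} or derive it from the combinatorial definition of $\mathbf{s}_\lambda$. One then truncates the (a priori infinite) sum over all partitions $\mu$ to partitions of size $a$ by invoking Lemma \ref{lem.kostka.facts} \textbf{(c)}, which kills every term with $|\mu| \neq |\lambda| = a$.

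For part (b), I would take the identity of part (a) in $\Lambda$ and evaluate both sides at the $k$ variables $x_1, x_2, \ldots, x_k$. Under this evaluation, $\mathbf{s}_\lambda \mapsto s_\lambda$ and $\mathbf{m}_\mu \mapsto m_\mu$. By the analogue of (\ref{eq.mlam=0-too-long}) (namely, $m_\mu = 0$ whenever $\mu$ has more than $k$ parts, equivalently $\mu \notin P_k$), the terms with $\mu \notin P_k$ vanish, leaving exactly the sum over $\mu \in P_k \cap \operatorname{Par}_a$ claimed in part (b).

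For part (c), start from part (b) and split the sum according to whether $\mu \in P_{k,n}$ or $\mu \in P_k \setminus P_{k,n}$. It suffices to show every term of the second type vanishes. If $K_{\lambda,\mu} \neq 0$, then Lemma \ref{lem.kostka.facts} \textbf{(a)} forces $\lambda \triangleright \mu$; combined with $\lambda \in P_{k,n}$, $\mu \in P_k$, and $\lambda, \mu \in \operatorname{Par}_a$, Lemma \ref{lem.dominance.Pkn} then yields $\mu \in P_{k,n}$, a contradiction. Hence every surviving term has $\mu \in P_{k,n}$, proving the desired formula. The proof is essentially bookkeeping — the only nontrivial ingredient is the implication ``$\lambda \triangleright \mu$ and $\lambda \in P_{k,n}$ imply $\mu_1 \leq n-k$'', which has already been extracted as Lemma \ref{lem.dominance.Pkn}, so no real obstacle remains.
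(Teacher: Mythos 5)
Your proposal is correct and follows essentially the same route as the paper: part (a) is the standard Kostka expansion (the paper cites \cite[Exercise 2.2.13(c)]{GriRei18}, and your truncation via Lemma \ref{lem.kostka.facts} \textbf{(c)} is harmless), part (b) is obtained by evaluating at $x_{1},\ldots,x_{k}$ and discarding the terms with more than $k$ parts via (\ref{eq.mlam=0-too-long}) (which is literally the needed statement, not merely an analogue), and part (c) uses exactly the paper's argument combining Lemma \ref{lem.kostka.facts} \textbf{(a)} with Lemma \ref{lem.dominance.Pkn} to kill the terms with $\mu\notin P_{k,n}$.
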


\begin{proof}
[Proof of Lemma \ref{lem.kostka.m}.]\textbf{(a)} This is \cite[Exercise
2.2.13(c)]{GriRei18}.

\textbf{(b)} Let $a\in\mathbb{N}$ and $\lambda\in\operatorname*{Par}%
\nolimits_{a}$. Lemma \ref{lem.kostka.m} \textbf{(a)} yields $\mathbf{s}%
_{\lambda}=\sum_{\mu\in\operatorname*{Par}\nolimits_{a}}K_{\lambda,\mu
}\mathbf{m}_{\mu}$. This is an identity in $\Lambda$. Evaluating both of its
sides at the $k$ variables $x_{1},x_{2},\ldots,x_{k}$, we obtain%
\begin{align*}
s_{\lambda}  &  =\sum_{\mu\in\operatorname*{Par}\nolimits_{a}}K_{\lambda,\mu
}m_{\mu}=\underbrace{\sum_{\substack{\mu\in\operatorname*{Par}\nolimits_{a}%
;\\\mu\text{ has at most }k\text{ parts}}}}_{=\sum_{\substack{\mu
\in\operatorname*{Par}\nolimits_{a};\\\mu\in P_{k}}}=\sum_{\substack{\mu\in
P_{k};\\\mu\in\operatorname*{Par}\nolimits_{a}}}}K_{\lambda,\mu}m_{\mu}%
+\sum_{\substack{\mu\in\operatorname*{Par}\nolimits_{a};\\\mu\text{ has more
than }k\text{ parts}}}K_{\lambda,\mu}\underbrace{m_{\mu}}%
_{\substack{=0\\\text{(by (\ref{eq.mlam=0-too-long}), applied to }%
\mu\\\text{instead of }\lambda\text{)}}}\\
&  =\sum_{\substack{\mu\in P_{k};\\\mu\in\operatorname*{Par}\nolimits_{a}%
}}K_{\lambda,\mu}m_{\mu}+\underbrace{\sum_{\substack{\mu\in\operatorname*{Par}%
\nolimits_{a};\\\mu\text{ has more than }k\text{ parts}}}K_{\lambda,\mu}%
0}_{=0}=\sum_{\substack{\mu\in P_{k};\\\mu\in\operatorname*{Par}\nolimits_{a}%
}}K_{\lambda,\mu}m_{\mu}.
\end{align*}
This proves Lemma \ref{lem.kostka.m} \textbf{(b)}.

\textbf{(c)} Let $a\in\mathbb{N}$ and $\lambda\in\operatorname*{Par}%
\nolimits_{a}$ satisfy $\lambda\in P_{k,n}$.

Fix some $\mu\in P_{k}$ such that $\mu\in\operatorname*{Par}\nolimits_{a}$ and
$\mu\notin P_{k,n}$. Then, we don't have $\lambda\triangleright\mu$ (since
otherwise, Lemma \ref{lem.dominance.Pkn} would yield that $\mu\in P_{k,n}$;
but this would contradict $\mu\notin P_{k,n}$). Hence, Lemma
\ref{lem.kostka.facts} \textbf{(a)} yields $K_{\lambda,\mu}=0$.

Forget that we fixed $\mu$. We thus have shown that%
\begin{equation}
K_{\lambda,\mu}=0\ \ \ \ \ \ \ \ \ \ \text{for every }\mu\in P_{k}\text{
satisfying }\mu\in\operatorname*{Par}\nolimits_{a}\text{ and }\mu\notin
P_{k,n}. \label{pf.lem.kostka.m.c.1}%
\end{equation}

Now, Lemma \ref{lem.kostka.m} \textbf{(b)} yields%
\begin{align*}
s_{\lambda}  &  =\sum_{\substack{\mu\in P_{k};\\\mu\in\operatorname*{Par}%
\nolimits_{a}}}K_{\lambda,\mu}m_{\mu}=\underbrace{\sum_{\substack{\mu\in
P_{k};\\\mu\in\operatorname*{Par}\nolimits_{a};\\\mu\in P_{k,n}}%
}}_{\substack{=\sum_{\substack{\mu\in P_{k,n};\\\mu\in\operatorname*{Par}%
\nolimits_{a}}}\\\text{(since }P_{k,n}\subseteq P_{k}\text{)}}}K_{\lambda,\mu
}m_{\mu}+\sum_{\substack{\mu\in P_{k};\\\mu\in\operatorname*{Par}%
\nolimits_{a};\\\mu\notin P_{k,n}}}\underbrace{K_{\lambda,\mu}}%
_{\substack{=0\\\text{(by (\ref{pf.lem.kostka.m.c.1}))}}}m_{\mu}\\
&  =\sum_{\substack{\mu\in P_{k,n};\\\mu\in\operatorname*{Par}\nolimits_{a}%
}}K_{\lambda,\mu}m_{\mu}+\underbrace{\sum_{\substack{\mu\in P_{k};\\\mu
\in\operatorname*{Par}\nolimits_{a};\\\mu\notin P_{k,n}}}0m_{\mu}}_{=0}%
=\sum_{\substack{\mu\in P_{k,n};\\\mu\in\operatorname*{Par}\nolimits_{a}%
}}K_{\lambda,\mu}m_{\mu}.
\end{align*}
This proves Lemma \ref{lem.kostka.m} \textbf{(c)}.
\end{proof}

\begin{proof}
[Proof of Theorem \ref{thm.s-m-triangularity}.]Let $<_{\ast}$ denote the
smaller relation of the graded dominance order on $P_{k,n}$. Thus, two
partitions $\lambda$ and $\mu$ satisfy $\mu<_{\ast}\lambda$ if and only if
$\mu\neq\lambda$ and $\lambda\geq_{\ast}\mu$.

Our goal is to show that the family $\left(  \overline{s_{\lambda}}\right)
_{\lambda\in P_{k,n}}$ expands unitriangularly in the family $\left(
\overline{m_{\lambda}}\right)  _{\lambda\in P_{k,n}}$. In other words, our
goal is to show that each $\lambda\in P_{k,n}$ satisfies%
\begin{equation}
\overline{s_{\lambda}}=\overline{m_{\lambda}}+\left(  \text{a }\mathbf{k}%
\text{-linear combination of the elements }\overline{m_{\mu}}\text{ for }%
\mu\in P_{k,n}\text{ satisfying }\mu<_{\ast}\lambda\right)
\label{pf.thm.s-m-triangularity.tria}%
\end{equation}
(because \cite[Remark 11.1.17(c)]{GriRei18} shows that the family $\left(
\overline{s_{\lambda}}\right)  _{\lambda\in P_{k,n}}$ expands unitriangularly
in the family $\left(  \overline{m_{\lambda}}\right)  _{\lambda\in P_{k,n}}$
if and only if every $\lambda\in P_{k,n}$ satisfies
(\ref{pf.thm.s-m-triangularity.tria})). So let us prove
(\ref{pf.thm.s-m-triangularity.tria}).

Fix $\lambda\in P_{k,n}$. Define $a\in\mathbb{N}$ by $a=\left\vert
\lambda\right\vert $. Thus, $\lambda\in\operatorname*{Par}\nolimits_{a}$.
Hence, Lemma \ref{lem.kostka.m} \textbf{(c)} yields%
\begin{equation}
s_{\lambda}=\sum_{\substack{\mu\in P_{k,n};\\\mu\in\operatorname*{Par}%
\nolimits_{a}}}K_{\lambda,\mu}m_{\mu}=K_{\lambda,\lambda}m_{\lambda}%
+\sum_{\substack{\mu\in P_{k,n};\\\mu\in\operatorname*{Par}\nolimits_{a}%
;\\\mu\neq\lambda}}K_{\lambda,\mu}m_{\mu} \label{pf.thm.s-m-triangularity.3}%
\end{equation}
(here, we have split off the addend for $\mu=\lambda$, since $\lambda\in
P_{k,n}$ and $\lambda\in\operatorname*{Par}\nolimits_{a}$).

Now, let $M$ be the $\mathbf{k}$-submodule of $\mathcal{S}/I$ spanned by the
elements $\overline{m_{\mu}}$ for $\mu\in P_{k,n}$ satisfying $\mu<_{\ast
}\lambda$. Thus, we have%
\begin{equation}
\overline{m_{\mu}}\in M\ \ \ \ \ \ \ \ \ \ \text{for each }\mu\in
P_{k,n}\text{ satisfying }\mu<_{\ast}\lambda.
\label{pf.thm.s-m-triangularity.inM}%
\end{equation}
Also, $0\in M$ (since $M$ is a $\mathbf{k}$-submodule of $\mathcal{S}/I$).

We shall next show that%
\begin{equation}
K_{\lambda,\mu}\overline{m_{\mu}}\in M\ \ \ \ \ \ \ \ \ \ \text{for each }%
\mu\in P_{k,n}\text{ satisfying }\mu\in\operatorname*{Par}\nolimits_{a}\text{
and }\mu\neq\lambda. \label{pf.thm.s-m-triangularity.4}%
\end{equation}

[\textit{Proof of (\ref{pf.thm.s-m-triangularity.4}):} Let $\mu\in P_{k,n}$ be
such that $\mu\in\operatorname*{Par}\nolimits_{a}$ and $\mu\neq\lambda$. We
must prove that $K_{\lambda,\mu}\overline{m_{\mu}}\in M$.

If $K_{\lambda,\mu}=0$, then this follows immediately from
$\underbrace{K_{\lambda,\mu}}_{=0}\overline{m_{\mu}}=0\overline{m_{\mu}}=0\in
M$. Hence, for the rest of this proof, we WLOG assume that $K_{\lambda,\mu
}\neq0$.

If $\lambda$ and $\mu$ would not satisfy $\lambda\triangleright\mu$, then we
would have $K_{\lambda,\mu}=0$ (by Lemma \ref{lem.kostka.facts} \textbf{(a)}),
which would contradict $K_{\lambda,\mu}\neq0$. Hence, $\lambda$ and $\mu$ must
satisfy $\lambda\triangleright\mu$. Both $\lambda$ and $\mu$ are partitions of
size $a$ (since $\lambda\in\operatorname*{Par}\nolimits_{a}$ and $\mu
\in\operatorname*{Par}\nolimits_{a}$). Thus, $\left\vert \lambda\right\vert
=a$ and $\left\vert \mu\right\vert =a$. Thus, $\lambda\geq_{\ast}\mu$ (by
Remark \ref{rmk.size-domin.domin}) and thus $\mu<_{\ast}\lambda$ (since
$\mu\neq\lambda$). Hence, (\ref{pf.thm.s-m-triangularity.inM}) shows that
$\overline{m_{\mu}}\in M$. Thus, $K_{\lambda,\mu}\underbrace{\overline{m_{\mu
}}}_{\in M}\in K_{\lambda,\mu}M\subseteq M$ (since $M$ is a $\mathbf{k}%
$-submodule of $\mathcal{S}/I$). Thus, (\ref{pf.thm.s-m-triangularity.4}) is proven.]

Now, from (\ref{pf.thm.s-m-triangularity.3}), we obtain%
\begin{align*}
\overline{s_{\lambda}}  &  =\overline{K_{\lambda,\lambda}m_{\lambda}%
+\sum_{\substack{\mu\in P_{k,n};\\\mu\in\operatorname*{Par}\nolimits_{a}%
;\\\mu\neq\lambda}}K_{\lambda,\mu}m_{\mu}}=\underbrace{K_{\lambda,\lambda}%
}_{\substack{=1\\\text{(by Lemma \ref{lem.kostka.facts} \textbf{(b)})}%
}}\overline{m_{\lambda}}+\sum_{\substack{\mu\in P_{k,n};\\\mu\in
\operatorname*{Par}\nolimits_{a};\\\mu\neq\lambda}}\underbrace{K_{\lambda,\mu
}\overline{m_{\mu}}}_{\substack{\in M\\\text{(by
(\ref{pf.thm.s-m-triangularity.4}))}}}\\
&  \in\overline{m_{\lambda}}+\underbrace{\sum_{\substack{\mu\in P_{k,n}%
;\\\mu\in\operatorname*{Par}\nolimits_{a};\\\mu\neq\lambda}}M}%
_{\substack{\subseteq M\\\text{(since }M\text{ is a }\mathbf{k}\text{-module)}%
}}\subseteq\overline{m_{\lambda}}+M.
\end{align*}
In other words, $\overline{s_{\lambda}}-\overline{m_{\lambda}}\in M$. In other
words, $\overline{s_{\lambda}}-\overline{m_{\lambda}}$ is a $\mathbf{k}%
$-linear combination of the elements $\overline{m_{\mu}}$ for $\mu\in P_{k,n}$
satisfying $\mu<_{\ast}\lambda$ (since $M$ was defined as the $\mathbf{k}%
$-submodule of $\mathcal{S}/I$ spanned by these elements). In other words,%
\[
\overline{s_{\lambda}}=\overline{m_{\lambda}}+\left(  \text{a }\mathbf{k}%
\text{-linear combination of the elements }\overline{m_{\mu}}\text{ for }%
\mu\in P_{k,n}\text{ satisfying }\mu<_{\ast}\lambda\right)  .
\]
Thus, (\ref{pf.thm.s-m-triangularity.tria}) is proven. As we already have
explained, this completes the proof of Theorem \ref{thm.s-m-triangularity}.
\end{proof}

\begin{proof}
[Proof of Theorem \ref{thm.S/J-m-basis}.]Consider the finite set $P_{k,n}$ as
a poset (using the graded dominance order).

Theorem \ref{thm.s-m-triangularity} says that the family $\left(
\overline{s_{\lambda}}\right)  _{\lambda\in P_{k,n}}$ expands unitriangularly
in the family $\left(  \overline{m_{\lambda}}\right)  _{\lambda\in P_{k,n}}$.
Hence, the family $\left(  \overline{s_{\lambda}}\right)  _{\lambda\in
P_{k,n}}$ expands invertibly triangularly\footnote{See \cite[Definition
11.1.16(b)]{GriRei18} for the meaning of this word.} in the family $\left(
\overline{m_{\lambda}}\right)  _{\lambda\in P_{k,n}}$. Thus, \cite[Corollary
11.1.19(e)]{GriRei18} (applied to $\mathcal{S}/I$, $P_{k,n}$, $\left(
\overline{s_{\lambda}}\right)  _{\lambda\in P_{k,n}}$ and $\left(
\overline{m_{\lambda}}\right)  _{\lambda\in P_{k,n}}$ instead of $M$, $S$,
$\left(  e_{s}\right)  _{s\in S}$ and $\left(  f_{s}\right)  _{s\in S}$) shows
that the family $\left(  \overline{s_{\lambda}}\right)  _{\lambda\in P_{k,n}}$
is a basis of the $\mathbf{k}$-module $\mathcal{S}/I$ if and only if the
family $\left(  \overline{m_{\lambda}}\right)  _{\lambda\in P_{k,n}}$ is a
basis of the $\mathbf{k}$-module $\mathcal{S}/I$. Hence, the family $\left(
\overline{m_{\lambda}}\right)  _{\lambda\in P_{k,n}}$ is a basis of the
$\mathbf{k}$-module $\mathcal{S}/I$ (since the family $\left(  \overline
{s_{\lambda}}\right)  _{\lambda\in P_{k,n}}$ is a basis of the $\mathbf{k}%
$-module $\mathcal{S}/I$). Thus, Theorem \ref{thm.S/J-m-basis} is proven.
\end{proof}

\subsection{The e-basis}

We recall one more classical basis of $\mathcal{S}$:

\begin{definition}
\label{def.elam}For any partition $\lambda$, we let $e_{\lambda}$ denote the
elementary symmetric polynomial in $x_{1},x_{2},\ldots,x_{k}$ corresponding to
the partition $\lambda$. This elementary symmetric polynomial is what is
called $e_{\lambda}\left(  x_{1},x_{2},\ldots,x_{k}\right)  $ in \cite[Chapter
2]{GriRei18}. It is explicitly given by%
\[
e_{\lambda}=e_{\lambda_{1}}e_{\lambda_{2}}e_{\lambda_{3}}\cdots.
\]
Note that%
\[
e_{\lambda}=0\ \ \ \ \ \ \ \ \ \ \text{if }\lambda_{1}>k.
\]

\end{definition}

If $\lambda$ is any partition, then the elementary symmetric polynomial
$e_{\lambda}=e_{\lambda}\left(  x_{1},x_{2},\ldots,x_{k}\right)  $ is
symmetric and thus belongs to $\mathcal{S}$.

It is well-known (and goes back to Gauss) that the $\mathbf{k}$-algebra
$\mathcal{S}$ is generated by the algebraically independent elements
$e_{1},e_{2},\ldots,e_{k}$. Equivalently, the family $\left(  e_{\lambda^{t}%
}\right)  _{\lambda\in P_{k}}$ is a basis of the $\mathbf{k}$-module
$\mathcal{S}$ (see Definition \ref{def.Pk} for the meaning of $P_{k}$). Again,
we can obtain a basis of $\mathcal{S}/I$ by restricting this family:

\begin{theorem}
\label{thm.S/J-e-basis}The family $\left(  \overline{e_{\lambda^{t}}}\right)
_{\lambda\in P_{k,n}}$ is a basis of the $\mathbf{k}$-module $\mathcal{S}/I$.
\end{theorem}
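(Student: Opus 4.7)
The plan is to mirror the proofs of Theorems \ref{thm.S/J-h-basis-ainS} and \ref{thm.S/J-m-basis}: I will show that the family $\left(\overline{e_{\lambda^{t}}}\right)_{\lambda\in P_{k,n}}$ expands unitriangularly in the basis $\left(\overline{s_{\lambda}}\right)_{\lambda\in P_{k,n}}$ with respect to the graded dominance order on $P_{k,n}$ (Definition \ref{def.size-domin}), and then conclude via \cite[Corollary 11.1.19(e)]{GriRei18} combined with Theorem \ref{thm.S/J}.

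First I would record the classical expansion of the elementary symmetric functions in Schur functions, namely the identity
\[
\mathbf{e}_{\mu}=\sum_{\nu\text{ is a partition}}K_{\nu^{t},\mu}\,\mathbf{s}_{\nu}
\]
in $\Lambda$. This is the image under the $\omega$-involution of the formula in Lemma \ref{lem.kostka.facts} \textbf{(d)}, and can be quoted from \cite{GriRei18}; alternatively, it may be derived from Corollary \ref{cor.jt.el} together with a Cramer-style inversion of the $\mathbf{h}_{\mu}$-to-$\mathbf{s}_{\nu}$ matrix. Setting $\mu=\lambda^{t}$, evaluating at $x_{1},x_{2},\ldots,x_{k}$, and dropping the terms where $\nu$ has more than $k$ parts (these vanish by \eqref{eq.slam=0-too-long}) produces
\[
e_{\lambda^{t}}=\sum_{\substack{\nu\in P_{k}\\ \left\vert \nu\right\vert =\left\vert \lambda\right\vert }}K_{\nu^{t},\lambda^{t}}\,s_{\nu}.
\]

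The next step is to verify unitriangularity. By Lemma \ref{lem.kostka.facts} \textbf{(b)} (applied to $\lambda^{t}$), the diagonal coefficient $K_{\lambda^{t},\lambda^{t}}$ equals $1$, contributing $s_{\lambda}$. For $\nu\neq\lambda$ with $\left\vert \nu\right\vert =\left\vert \lambda\right\vert $, Lemma \ref{lem.kostka.facts} \textbf{(a)} (applied to $\nu^{t}$ and $\lambda^{t}$ in place of $\lambda$ and $\mu$) forces $K_{\nu^{t},\lambda^{t}}=0$ unless $\nu^{t}\triangleright\lambda^{t}$, which is the standard conjugate of the condition $\lambda\triangleright\nu$. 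Then, $\lambda\in P_{k,n}$ together with $\lambda\triangleright\nu$ and $\nu\in P_{k}$ yields $\nu\in P_{k,n}$ by Lemma \ref{lem.dominance.Pkn}, and Remark \ref{rmk.size-domin.domin} further upgrades $\lambda\triangleright\nu$ to $\nu<_{\ast}\lambda$ in the graded dominance order. Projecting to $\mathcal{S}/I$, I would therefore obtain
\[
\overline{e_{\lambda^{t}}}=\overline{s_{\lambda}}+\sum_{\substack{\nu\in P_{k,n}\\ \nu<_{\ast}\lambda}}K_{\nu^{t},\lambda^{t}}\,\overline{s_{\nu}},
\]
which is exactly the unitriangularity condition of \cite[Definition 11.1.16(c)]{GriRei18}.

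Finally, I would invoke \cite[Corollary 11.1.19(e)]{GriRei18} to promote unitriangularity to preservation of the basis property: since Theorem \ref{thm.S/J} asserts that $\left(\overline{s_{\lambda}}\right)_{\lambda\in P_{k,n}}$ is a basis of $\mathcal{S}/I$, the unitriangularly expanding family $\left(\overline{e_{\lambda^{t}}}\right)_{\lambda\in P_{k,n}}$ is a basis too. The main obstacle I anticipate is justifying the initial identity $\mathbf{e}_{\mu}=\sum_{\nu}K_{\nu^{t},\mu}\,\mathbf{s}_{\nu}$: the paper does not introduce the $\omega$-involution, so I would either cite this identity verbatim from \cite{GriRei18} or reprove it from scratch by inverting the triangular expansion of $\mathbf{s}_{\nu^{t}}$ in elementary symmetric functions supplied by Corollary \ref{cor.jt.el}. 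Once that identity is in hand, the remainder of the argument is a direct transcription of the template already established for the $h$- and $m$-bases.
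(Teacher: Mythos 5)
Your argument is correct, but note that the paper itself does not prove Theorem \ref{thm.S/J-e-basis} at all: it simply cites Weinfeld \cite[Theorem 6.2]{Weinfe19}. Your proof is therefore a genuinely self-contained alternative, and it follows precisely the unitriangularity template the paper uses for the $h$- and $m$-bases (Theorems \ref{thm.s-h-triangularity} and \ref{thm.s-m-triangularity} combined with \cite[Corollary 11.1.19(e)]{GriRei18} and Theorem \ref{thm.S/J}). In fact your situation is structurally closer to the $m$-basis case than to the $h$-basis case, and simpler than both: since $K_{\nu^{t},\lambda^{t}}\neq 0$ forces $\lambda\triangleright\nu$, and $\lambda\in P_{k,n}$ together with $\lambda\triangleright\nu$ and $\nu\in P_{k}$ forces $\nu\in P_{k,n}$ (Lemma \ref{lem.dominance.Pkn}), the identity $e_{\lambda^{t}}=s_{\lambda}+\sum_{\nu\in P_{k,n},\ \nu<_{\ast}\lambda}K_{\nu^{t},\lambda^{t}}s_{\nu}$ already holds in $\mathcal{S}$ itself, so no reduction modulo $I$ (no analogue of Lemma \ref{lem.I.sl-red} or Lemma \ref{lem.sm-as-Pkn}) is needed, and the argument works for arbitrary $a_{1},a_{2},\ldots,a_{k}\in\mathcal{S}$, as the theorem requires. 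The only two ingredients you use that the paper does not state are the expansion $\mathbf{e}_{\mu}=\sum_{\nu}K_{\nu^{t},\mu}\mathbf{s}_{\nu}$ and the fact that conjugation reverses the dominance order on partitions of equal size; both are classical and can be quoted from \cite{GriRei18} (or proved as you indicate), so neither constitutes a gap -- just make sure to supply the citations rather than leave them as "standard".
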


This is a result of Weinfeld, proved in \cite[Theorem 6.2]{Weinfe19}.

\subsection{Non-bases}

What other known families of symmetric functions give rise to bases of
$\mathcal{S}/I$ ? Here is an example of a family that does not lead to such a
basis (at least not in an obvious way):

\begin{remark}
\label{rmk.S/J-p-not-basis}Let $n=4$ and $k=2$. Let $a_{1},a_{2}\in\mathbf{k}%
$. For each partition $\lambda$, let $p_{\lambda}$ be the corresponding power
sum symmetric polynomial, i.e., the $p_{\lambda}\left(  x_{1},x_{2}%
,\ldots,x_{k}\right)  $ from \cite[Definition 2.2.1]{GriRei18}. Then, the
family $\left(  \overline{p_{\lambda}}\right)  _{\lambda\in P_{k,n}}$ is not a
basis of the $\mathbf{k}$-module $\mathcal{S}/I$ (unless $\mathbf{k}=0$).
\end{remark}

\begin{proof}
[Proof of Remark \ref{rmk.S/J-p-not-basis}.]Straightforward computations yield
the following expansions of the $\overline{p_{\lambda}}$ in the basis $\left(
\overline{s_{\lambda}}\right)  _{\lambda\in P_{k,n}}$ of $\mathcal{S}/I$:%
\begin{align*}
\overline{p_{\varnothing}}  &  =\overline{s_{\varnothing}};\\
\overline{p_{\left(  1\right)  }}  &  =\overline{s_{\left(  1\right)  }};\\
\overline{p_{\left(  2\right)  }}  &  =-\overline{s_{\left(  1,1\right)  }%
}+\overline{s_{\left(  2\right)  }};\\
\overline{p_{\left(  1,1\right)  }}  &  =\overline{s_{\left(  1,1\right)  }%
}+\overline{s_{\left(  2\right)  }};\\
\overline{p_{\left(  2,1\right)  }}  &  =a_{1}\overline{s_{\varnothing}};\\
\overline{p_{\left(  2,2\right)  }}  &  =2a_{2}\overline{s_{\varnothing}%
}-a_{1}\overline{s_{\left(  1\right)  }}+2\overline{s_{\left(  2,2\right)  }}.
\end{align*}
Thus, $\overline{p_{\left(  2,1\right)  }}-a_{1}\overline{p_{\varnothing}}=0$.
Hence, the family $\left(  \overline{p_{\lambda}}\right)  _{\lambda\in
P_{k,n}}$ fails to be $\mathbf{k}$-linearly independent, and thus cannot be a
basis of $\mathcal{S}/I$. This proves Remark \ref{rmk.S/J-p-not-basis}.
\end{proof}

It is natural to wonder for which pairs $\left(  n,k\right)  $ the family
$\left(  \overline{p_{\lambda}}\right)  _{\lambda\in P_{k,n}}$ is a basis of
$\mathcal{S}/I$. The following table (made using SageMath) collects some
answers:%
\[%
\begin{tabular}
[c]{c||ccccccc}
& $k=1$ & $k=2$ & $k=3$ & $k=4$ & $k=5$ & $k=6$ & $k=7$\\\hline\hline
$n=2$ & yes &  &  &  &  &  & \\
$n=3$ & yes & yes &  &  &  &  & \\
$n=4$ & yes & no & yes &  &  &  & \\
$n=5$ & yes & st & st & yes &  &  & \\
$n=6$ & yes & st & no & no & yes &  & \\
$n=7$ & yes & st & st & st & st & yes & \\
$n=8$ & yes & st & st & no & st & no & yes
\end{tabular}
\ \ \ .
\]
Here, \textquotedblleft yes\textquotedblright\ means that the family is a
basis; \textquotedblleft no\textquotedblright\ means that the family is not a
basis; \textquotedblleft st\textquotedblright\ means that the answer depends
on the characteristic of $\mathbf{k}$. Interestingly, the answer never depends
on $a_{1},a_{2},\ldots,a_{k}$ in the cases tabulated above. (We have omitted
the trivial cases $k=0$ and $k=n$ from the table, since $\mathcal{S}%
/I\cong\mathbf{k}$ in these cases. We note that the \textquotedblleft
yes\textquotedblright es for $k=n-1$ are fairly obvious, since $\left(
\overline{p_{\lambda}}\right)  _{\lambda\in P_{k,n}}=\left(  \overline
{h_{\lambda}}\right)  _{\lambda\in P_{k,n}}$ in this case. The
\textquotedblleft yes\textquotedblright es for $k=1$ hold for the same reason.)

\begin{question}
Which other patterns in the above table can be explained? Is there a reason
why the \textquotedblleft no\textquotedblright s appear for even $n$'s?
\end{question}

Another non-basis is the family $\left(  \overline{h_{\lambda^{t}}}\right)
_{\lambda\in P_{k,n}}$:

\begin{remark}
\label{rmk.S/J-ht-not-basis}Let $n=3$ and $k=2$. Let $a_{1},a_{2}\in
\mathbf{k}$. Then, the family $\left(  \overline{h_{\lambda^{t}}}\right)
_{\lambda\in P_{k,n}}$ is not a basis of the $\mathbf{k}$-module
$\mathcal{S}/I$ (unless $\mathbf{k}=0$).
\end{remark}

\begin{proof}
[Proof of Remark \ref{rmk.S/J-ht-not-basis}.]It is easy to see that
$\overline{h_{\left(  1,1\right)  ^{t}}}-a_{1}\overline{h_{\varnothing^{t}}%
}=0$ (indeed, this follows from $h_{\left(  1,1\right)  ^{t}}-a_{1}%
h_{\varnothing^{t}}=h_{2}-a_{1}\in I$). Hence, the family $\left(
\overline{h_{\lambda^{t}}}\right)  _{\lambda\in P_{k,n}}$ fails to be
$\mathbf{k}$-linearly independent, and thus cannot be a basis of
$\mathcal{S}/I$. This proves Remark \ref{rmk.S/J-ht-not-basis}.
\end{proof}

The followng table shows for which pairs $\left(  n,k\right)  $ the family
$\left(  \overline{h_{\lambda^{t}}}\right)  _{\lambda\in P_{k,n}}$ is a basis
of $\mathcal{S}/I$. The following table collects some answers:%
\[%
\begin{tabular}
[c]{c||ccccccc}
& $k=1$ & $k=2$ & $k=3$ & $k=4$ & $k=5$ & $k=6$ & $k=7$\\\hline\hline
$n=2$ & yes &  &  &  &  &  & \\
$n=3$ & yes & no &  &  &  &  & \\
$n=4$ & yes & yes & no &  &  &  & \\
$n=5$ & yes & st & no & no &  &  & \\
$n=6$ & yes & no & yes & no & no &  & \\
$n=7$ & yes & st & st & no & no & no & \\
$n=8$ & yes & st & no & yes & no & no & no
\end{tabular}
\ \ \ .
\]
The \textquotedblleft yes\textquotedblright s in the $k=1$ column are easily
explained (they are saying that $\left(  1,x,x^{2},\ldots,x^{n-k}\right)  $ is
a basis of $\mathbf{k}\left[  x\right]  /\left(  x^{n-k+1}-a_{1}\right)  $),
and so are the \textquotedblleft no\textquotedblright s in the $k>n/2$ region
(indeed, in these cases, $\overline{h_{n-k+1}}-a_{1}\overline{h_{\varnothing}%
}=0$ provides a $\mathbf{k}$-linear dependence relation between the
$\overline{h_{\lambda^{t}}}$, as in our above proof of Remark
\ref{rmk.S/J-ht-not-basis}). The \textquotedblleft yes\textquotedblright s in
the $k=n/2$ cases follow from Theorem \ref{thm.S/J-h-basis} (since the family
$\left(  \overline{h_{\lambda^{t}}}\right)  _{\lambda\in P_{k,n}}$ is a
relabeling of the family $\left(  \overline{h_{\lambda}}\right)  _{\lambda\in
P_{k,n}}$ in these cases).

\begin{question}
Which other patterns exist in the above table?
\end{question}

\section{\label{sect.pieri}Pieri rules for multiplying by $\overline{h_{j}}$}

\begin{convention}
Convention \ref{conv.symmetry-conv} remains in place for the whole Section
\ref{sect.pieri}.

We shall also use all the notations introduced in Section \ref{sect.symmetry}.
\end{convention}

In this section, we shall explore formulas for expanding products of the form
$\overline{s_{\lambda}h_{j}}$ in the basis $\left(  \overline{s_{\mu}}\right)
_{\mu\in P_{k,n}}$. We begin with the simplest case -- that of $j=1$:

\subsection{Multiplying by $\overline{h_{1}}$}

\begin{proposition}
\label{prop.pieri-h1}Let $\lambda\in P_{k,n}$. Assume that $k>0$.

\textbf{(a)} If $\lambda_{1}<n-k$, then%
\[
\overline{s_{\lambda}h_{1}}=\sum_{\substack{\mu\in P_{k,n};\\\mu/\lambda\text{
is a single box}}}\overline{s_{\mu}}.
\]

\textbf{(b)} Let $\overline{\lambda}$ be the partition $\left(  \lambda
_{2},\lambda_{3},\lambda_{4},\ldots\right)  $. If $\lambda_{1}=n-k$, then%
\[
\overline{s_{\lambda}h_{1}}=\sum_{\substack{\mu\in P_{k,n};\\\mu/\lambda\text{
is a single box}}}\overline{s_{\mu}}+\sum_{i=0}^{k-1}\left(  -1\right)
^{i}a_{1+i}\sum_{\substack{\mu\in P_{k,n};\\\overline{\lambda}/\mu\text{ is a
vertical }i\text{-strip}}}\overline{s_{\mu}}.
\]

\end{proposition}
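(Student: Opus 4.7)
The starting point is the classical Pieri rule $\mathbf{s}_\lambda \mathbf{h}_1 = \mathbf{s}_\lambda \mathbf{e}_1 = \sum_\mu \mathbf{s}_\mu$, summed over partitions $\mu$ such that $\mu/\lambda$ is a single box (a vertical $1$-strip); this is just Proposition~\ref{prop.pieri.e} applied with $i = 1$, since $\mathbf{h}_1 = \mathbf{e}_1$. First I would evaluate this identity in $\mathcal{S}$ at $x_1, \ldots, x_k$, killing every summand with $\ell(\mu) > k$ via (\ref{eq.slam=0-too-long}), and then project to $\mathcal{S}/I$ to obtain
\[
\overline{s_\lambda h_1} \;=\; \sum_{\substack{\mu \text{ is a partition;}\\\mu/\lambda \text{ is a single box;}\\\ell(\mu) \leq k}} \overline{s_\mu}.
\]
The only obstruction to identifying this with a sum over $\mu \in P_{k,n}$ is that some $\mu$ may satisfy $\mu_1 > n-k$. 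Since a single box increases one part by $1$ and $\lambda_1 \leq n-k$, this can occur only when the box is added to row $1$ and $\lambda_1 = n-k$.

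For part \textbf{(a)}, where $\lambda_1 < n-k$, every such $\mu$ satisfies $\mu_1 \leq \lambda_1 + 1 \leq n-k$, so it automatically belongs to $P_{k,n}$, and the projected Pieri identity is the stated formula.

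For part \textbf{(b)}, where $\lambda_1 = n-k$, exactly one summand escapes $P_{k,n}$: the partition $\mu^\ast := (n-k+1,\lambda_2,\lambda_3,\ldots)$ obtained by adjoining a box to the first row. It is a valid partition because $n-k+1 > n-k \geq \lambda_2$, and has at most $k$ parts since $\lambda$ does. To rewrite $\overline{s_{\mu^\ast}}$ in the span of $(\overline{s_\nu})_{\nu \in P_{k,n}}$, I would apply Lemma~\ref{lem.coeffw.0} to $\mu^\ast$ in place of $\lambda$: its first part is $n-k+1$ and $\overline{\mu^\ast} = (\mu^\ast_2,\mu^\ast_3,\ldots) = (\lambda_2,\lambda_3,\ldots) = \overline{\lambda}$, giving
\[
\overline{s_{\mu^\ast}} \;=\; \sum_{i=0}^{k-1} (-1)^i a_{1+i} \sum_{\substack{\nu \text{ is a partition;}\\\overline{\lambda}/\nu \text{ is a vertical } i\text{-strip}}} \overline{s_\nu}.
\]
Every $\nu$ contributing here satisfies $\nu \subseteq \overline{\lambda}$, whence $\nu_1 \leq \lambda_2 \leq n-k$ and $\ell(\nu) \leq \ell(\overline{\lambda}) \leq k-1$, so $\nu \in P_{k,n}$ automatically; the inner sum may thus be restricted to $\nu \in P_{k,n}$ without change. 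Combining this with the $P_{k,n}$-part of the Pieri expansion yields exactly the claimed identity.

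The work is entirely bookkeeping once the two ingredients (the classical Pieri rule and Lemma~\ref{lem.coeffw.0}) are identified. The two verifications I expect to spell out carefully are that $\mu^\ast$ is the unique offending partition in the Pieri sum, and that the sum produced by Lemma~\ref{lem.coeffw.0} automatically lives in the span of $(\overline{s_\nu})_{\nu \in P_{k,n}}$; neither is a real obstacle.
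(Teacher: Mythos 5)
Your proposal is correct and follows essentially the same route as the paper's own proof: the Pieri rule for $\mathbf{e}_1=\mathbf{h}_1$ evaluated in $k$ variables, the observation that in case (b) only the partition $(n-k+1,\lambda_2,\lambda_3,\ldots)$ escapes $P_{k,n}$, and Lemma \ref{lem.coeffw.0} to re-expand that term, with the same check that the resulting $\nu\subseteq\overline{\lambda}$ automatically lie in $P_{k,n}$. No gaps.
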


\begin{proof}
[Proof of Proposition \ref{prop.pieri-h1}.]We have $\mathbf{h}_{1}%
=\mathbf{e}_{1}$, thus%
\[
\mathbf{s}_{\lambda}\mathbf{h}_{1}=\mathbf{s}_{\lambda}\mathbf{e}_{1}%
=\sum_{\substack{\mu\text{ is a partition;}\\\mu/\lambda\text{ is a vertical
}1\text{-strip}}}\mathbf{s}_{\mu}%
\]
(by Proposition \ref{prop.pieri.e}, applied to $i=1$). Evaluating both sides
of this identity at the $k$ variables $x_{1},x_{2},\ldots,x_{k}$, we find%
\[
s_{\lambda}h_{1}=\sum_{\substack{\mu\text{ is a partition;}\\\mu/\lambda\text{
is a vertical }1\text{-strip}}}s_{\mu}=\sum_{\substack{\mu\text{ is a
partition;}\\\mu/\lambda\text{ is a single box}}}s_{\mu}%
\]
(because a skew diagram $\mu/\lambda$ is a vertical $1$-strip if and only if
it is a single box). This becomes%
\begin{align}
s_{\lambda}h_{1}  &  =\sum_{\substack{\mu\text{ is a partition;}\\\mu
/\lambda\text{ is a single box}}}s_{\mu}\nonumber\\
&  =\sum_{\substack{\mu\text{ is a partition;}\\\mu/\lambda\text{ is a single
box;}\\\mu\text{ has at most }k\text{ parts}}}s_{\mu}+\sum_{\substack{\mu
\text{ is a partition;}\\\mu/\lambda\text{ is a single box;}\\\mu\text{ has
more than }k\text{ parts}}}\underbrace{s_{\mu}}_{\substack{=0\\\text{(by
(\ref{eq.slam=0-too-long})}\\\text{(applied to }\mu\text{ instead of }%
\lambda\text{))}}}\nonumber\\
&  =\sum_{\substack{\mu\text{ is a partition;}\\\mu/\lambda\text{ is a single
box;}\\\mu\text{ has at most }k\text{ parts}}}s_{\mu}.
\label{pf.prop.pieri-h1.2}%
\end{align}

\textbf{(a)} Assume that $\lambda_{1}<n-k$. Then, each partition $\mu$
satisfying%
\begin{equation}
\left(  \mu/\lambda\text{ is a single box}\right)  \wedge\left(  \mu\text{ has
at most }k\text{ parts}\right)  \label{pf.prop.pieri-h1.a.1ass}%
\end{equation}
must satisfy%
\begin{equation}
\mu\in P_{k,n}. \label{pf.prop.pieri-h1.a.1}%
\end{equation}

[\textit{Proof of (\ref{pf.prop.pieri-h1.a.1}):} Let $\mu$ be a partition
satisfying (\ref{pf.prop.pieri-h1.a.1ass}). We must prove that $\mu\in
P_{k,n}$.

We have $\mu_{1}\leq\lambda_{1}+1$ (since $\mu/\lambda$ is a single box) and
thus $\mu_{1}\leq\lambda_{1}+1\leq n-k$ (since $\lambda_{1}<n-k$). Hence, each
part of $\mu$ is $\leq n-k$ (since $\mu$ is a partition). Thus, $\mu\in
P_{k,n}$ (since $\mu$ has at most $k$ parts). This proves
(\ref{pf.prop.pieri-h1.a.1}).]

Now, (\ref{pf.prop.pieri-h1.2}) becomes%
\[
s_{\lambda}h_{1}=\sum_{\substack{\mu\text{ is a partition;}\\\mu/\lambda\text{
is a single box;}\\\mu\text{ has at most }k\text{ parts}}}s_{\mu}%
=\sum_{\substack{\mu\in P_{k,n};\\\mu/\lambda\text{ is a single box}}}s_{\mu}%
\]
(because (\ref{pf.prop.pieri-h1.a.1}) yields the equality $\sum_{\substack{\mu
\text{ is a partition;}\\\mu/\lambda\text{ is a single box;}\\\mu\text{ has at
most }k\text{ parts}}}=\sum_{\substack{\mu\in P_{k,n};\\\mu/\lambda\text{ is a
single box}}}$ of summation signs). Projecting both sides of this equality
onto $\mathcal{S}/I$, we obtain%
\[
\overline{s_{\lambda}h_{1}}=\overline{\sum_{\substack{\mu\in P_{k,n}%
;\\\mu/\lambda\text{ is a single box}}}s_{\mu}}=\sum_{\substack{\mu\in
P_{k,n};\\\mu/\lambda\text{ is a single box}}}\overline{s_{\mu}}.
\]
This proves Proposition \ref{prop.pieri-h1} \textbf{(a)}.

\textbf{(b)} Assume that $\lambda_{1}=n-k$. Let $\nu$ be the partition
$\left(  \lambda_{1}+1,\lambda_{2},\lambda_{3},\ldots\right)  $. Then,
$\nu/\lambda$ is a single box, which lies in the first row. The definition of
$\nu$ yields $\nu_{1}=\lambda_{1}+1=n-k+1$ (since $\lambda_{1}=n-k$) and thus
$\nu_{1}>n-k$; hence, not all parts of $\nu$ are $\leq n-k$. Thus, $\nu\notin
P_{k,n}$.

Clearly, $\overline{\lambda}\in P_{k,n}$. Hence, if $i\in\mathbb{N}$, and if
$\mu$ is any partition such that $\overline{\lambda}/\mu$ is a vertical
$i$-strip, then $\mu\in P_{k,n}$ (since $\mu\subseteq\overline{\lambda}$).
Thus, for each $i\in\mathbb{N}$, we have the following equality of summation
signs:%
\begin{equation}
\sum_{\substack{\mu\text{ is a partition;}\\\overline{\lambda}/\mu\text{ is a
vertical }i\text{-strip}}}=\sum_{\substack{\mu\in P_{k,n};\\\overline{\lambda
}/\mu\text{ is a vertical }i\text{-strip}}}. \label{pf.prop.pieri-h1.b.-1}%
\end{equation}

The partition $\nu$ has at most $k$ parts (since $\lambda$ has at most $k$
parts, and since $k>0$). The definition of $\nu$ yields $\nu_{1}=\lambda
_{1}+1=n-k+1$ (since $\lambda_{1}=n-k$) and $\left(  \nu_{2},\nu_{3},\nu
_{4},\ldots\right)  =\left(  \lambda_{2},\lambda_{3},\lambda_{4}%
,\ldots\right)  =\overline{\lambda}$. Hence, Lemma \ref{lem.coeffw.0} (applied
to $\nu$ and $\nu_{i}$ instead of $\lambda$ and $\lambda_{i}$) yields%
\begin{equation}
\overline{s_{\nu}}=\sum_{i=0}^{k-1}\left(  -1\right)  ^{i}a_{1+i}%
\sum_{\substack{\mu\text{ is a partition;}\\\overline{\lambda}/\mu\text{ is a
vertical }i\text{-strip}}}\overline{s_{\mu}}=\sum_{i=0}^{k-1}\left(
-1\right)  ^{i}a_{1+i}\sum_{\substack{\mu\in P_{k,n};\\\overline{\lambda}%
/\mu\text{ is a vertical }i\text{-strip}}}\overline{s_{\mu}}
\label{pf.prop.pieri-h1.b.0}%
\end{equation}
(by (\ref{pf.prop.pieri-h1.b.-1})).

Each partition $\mu$ satisfying%
\begin{equation}
\left(  \mu/\lambda\text{ is a single box}\right)  \wedge\left(  \mu\text{ has
at most }k\text{ parts}\right)  \wedge\left(  \mu\neq\nu\right)
\label{pf.prop.pieri-h1.b.1ass}%
\end{equation}
must satisfy%
\begin{equation}
\mu\in P_{k,n}. \label{pf.prop.pieri-h1.b.1}%
\end{equation}

[\textit{Proof of (\ref{pf.prop.pieri-h1.b.1}):} Let $\mu$ be a partition
satisfying (\ref{pf.prop.pieri-h1.b.1ass}). We must prove that $\mu\in
P_{k,n}$.

We know that $\mu/\lambda$ is a single box. If we had $\mu_{1}>\lambda_{1}$,
then this box would lie in the first row, which would yield that $\mu=\nu$
(because $\nu$ is the partition obtained from $\lambda$ by adding a box in the
first row); but this would contradict $\mu\neq\nu$. Hence, we cannot have
$\mu_{1}>\lambda_{1}$. Thus, we have $\mu_{1}\leq\lambda_{1}=n-k$. Hence, each
part of $\mu$ is $\leq n-k$ (since $\mu$ is a partition). Thus, $\mu\in
P_{k,n}$ (since $\mu$ has at most $k$ parts). This proves
(\ref{pf.prop.pieri-h1.b.1}).]

Conversely, each $\mu\in P_{k,n}$ satisfies $\mu\neq\nu$ (because $\nu\notin
P_{k,n}$) and has at most $k$ parts. Combining this with
(\ref{pf.prop.pieri-h1.b.1}), we obtain the following equality of summation
signs:%
\begin{equation}
\sum_{\substack{\mu\text{ is a partition;}\\\mu/\lambda\text{ is a single
box;}\\\mu\text{ has at most }k\text{ parts;}\\\mu\neq\nu}}=\sum
_{\substack{\mu\in P_{k,n};\\\mu/\lambda\text{ is a single box}}}.
\label{pf.prop.pieri-h1.b.2}%
\end{equation}

Now, (\ref{pf.prop.pieri-h1.2}) becomes%
\begin{align*}
s_{\lambda}h_{1}  &  =\sum_{\substack{\mu\text{ is a partition;}\\\mu
/\lambda\text{ is a single box;}\\\mu\text{ has at most }k\text{ parts}%
}}s_{\mu}=s_{\nu}+\sum_{\substack{\mu\text{ is a partition;}\\\mu
/\lambda\text{ is a single box;}\\\mu\text{ has at most }k\text{ parts;}%
\\\mu\neq\nu}}s_{\mu}\\
&  \ \ \ \ \ \ \ \ \ \ \left(
\begin{array}
[c]{c}%
\text{here, we have split off the addend for }\mu=\nu\text{ from the sum}\\
\text{(since }\nu/\lambda\text{ is a single box, and since }\nu\text{ has at
most }k\text{ parts)}%
\end{array}
\right) \\
&  =s_{\nu}+\sum_{\substack{\mu\in P_{k,n};\\\mu/\lambda\text{ is a single
box}}}s_{\mu}\ \ \ \ \ \ \ \ \ \ \left(  \text{by (\ref{pf.prop.pieri-h1.b.2}%
)}\right)  .
\end{align*}
Projecting both sides of this equality onto $\mathcal{S}/I$, we obtain%
\begin{align*}
\overline{s_{\lambda}h_{1}}  &  =\overline{s_{\nu}+\sum_{\substack{\mu\in
P_{k,n};\\\mu/\lambda\text{ is a single box}}}s_{\mu}}=\overline{s_{\nu}}%
+\sum_{\substack{\mu\in P_{k,n};\\\mu/\lambda\text{ is a single box}%
}}\overline{s_{\mu}}=\sum_{\substack{\mu\in P_{k,n};\\\mu/\lambda\text{ is a
single box}}}\overline{s_{\mu}}+\overline{s_{\nu}}\\
&  =\sum_{\substack{\mu\in P_{k,n};\\\mu/\lambda\text{ is a single box}%
}}\overline{s_{\mu}}+\sum_{i=0}^{k-1}\left(  -1\right)  ^{i}a_{1+i}%
\sum_{\substack{\mu\in P_{k,n};\\\overline{\lambda}/\mu\text{ is a vertical
}i\text{-strip}}}\overline{s_{\mu}}%
\end{align*}
(by (\ref{pf.prop.pieri-h1.b.0})). This proves Proposition \ref{prop.pieri-h1}
\textbf{(b)}.
\end{proof}

\subsection{Multiplying by $\overline{h_{n-k}}$}

On the other end of the spectrum is the case of $j=n-k$; this case also turns
out to have a simple answer:

\begin{proposition}
\label{prop.pieri-hlast}Let $\lambda\in P_{k,n}$. Assume that $k>0$.

\textbf{(a)} We have%
\[
\overline{s_{\lambda}h_{n-k}}=\overline{s_{\left(  n-k,\lambda_{1},\lambda
_{2},\lambda_{3},\ldots\right)  }}-\sum_{i=1}^{k}\left(  -1\right)  ^{i}%
a_{i}\sum_{\substack{\mu\in P_{k,n};\\\lambda/\mu\text{ is a vertical
}i\text{-strip}}}\overline{s_{\mu}}.
\]

\textbf{(b)} If $\lambda_{k}>0$, then%
\[
\overline{s_{\lambda}h_{n-k}}=-\sum_{i=1}^{k}\left(  -1\right)  ^{i}a_{i}%
\sum_{\substack{\mu\in P_{k,n};\\\lambda/\mu\text{ is a vertical
}i\text{-strip}}}\overline{s_{\mu}}.
\]

\end{proposition}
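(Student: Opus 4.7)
The plan is to apply Proposition \ref{prop.bernstein.1} with $m = n-k$, separate off the $i=0$ term, and then reduce modulo $I$.

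Since $\lambda \in P_{k,n}$, we have $\lambda_1 \leq n-k$, so Proposition \ref{prop.bernstein.1} (with $m = n-k$) applies and gives
\[
\sum_{i \in \mathbb{N}} (-1)^i \mathbf{h}_{n-k+i} (\mathbf{e}_i)^{\perp} \mathbf{s}_\lambda \;=\; \mathbf{s}_{(n-k,\lambda_1,\lambda_2,\lambda_3,\ldots)}
\]
in $\Lambda$. A vertical $i$-strip inside $\lambda$ has at most $\ell(\lambda) \leq k$ boxes, so $(\mathbf{e}_i)^{\perp} \mathbf{s}_\lambda = 0$ for $i > k$; combined with $(\mathbf{e}_0)^{\perp} = \operatorname{id}$, splitting off the $i=0$ term yields
\[
\mathbf{h}_{n-k} \mathbf{s}_\lambda \;=\; \mathbf{s}_{(n-k,\lambda_1,\lambda_2,\lambda_3,\ldots)} - \sum_{i=1}^{k} (-1)^i \mathbf{h}_{n-k+i} (\mathbf{e}_i)^{\perp} \mathbf{s}_\lambda .
\]

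Next I would use Corollary \ref{cor.pieri.eskew} to rewrite $(\mathbf{e}_i)^{\perp} \mathbf{s}_\lambda = \sum_{\mu} \mathbf{s}_\mu$ (sum over partitions $\mu$ with $\lambda/\mu$ a vertical $i$-strip), then evaluate the whole identity at $x_1,\ldots,x_k$. Every such $\mu$ satisfies $\mu \subseteq \lambda$, hence $\mu_1 \leq \lambda_1 \leq n-k$ and $\ell(\mu) \leq \ell(\lambda) \leq k$, so $\mu \in P_{k,n}$ automatically; this lets me replace the summation condition ``$\mu$ is a partition'' by ``$\mu \in P_{k,n}$'' without changing the sum. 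Finally, projecting onto $\mathcal{S}/I$ and applying (\ref{eq.h=amodI}) to replace $h_{n-k+i}$ by $a_i$ for each $i \in \{1,\ldots,k\}$ gives part (a).

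For part (b), observe that if $\lambda_k > 0$, then $(n-k,\lambda_1,\lambda_2,\ldots,\lambda_k)$ is a partition with $k+1$ positive parts, so its Schur polynomial in $k$ variables vanishes by (\ref{eq.slam=0-too-long}); thus the leading term $\overline{s_{(n-k,\lambda_1,\ldots)}}$ in (a) is zero, and (b) follows immediately. The only genuine obstacle is the truncation argument — checking that $(\mathbf{e}_i)^{\perp} \mathbf{s}_\lambda = 0$ for $i > k$ and that the $\mu$'s appearing in Corollary \ref{cor.pieri.eskew} all lie in $P_{k,n}$ — and both are routine from the shape of $\lambda$.
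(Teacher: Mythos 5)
Your proposal is correct and follows essentially the same route as the paper's own proof: apply Proposition \ref{prop.bernstein.1} with $m=n-k$, truncate the sum at $i=k$ using the vanishing of $\left(\mathbf{e}_{i}\right)^{\perp}\mathbf{s}_{\lambda}$ for $i>k$, expand via Corollary \ref{cor.pieri.eskew}, evaluate at $x_{1},\ldots,x_{k}$, reduce $h_{n-k+i}$ to $a_{i}$ modulo $I$, and for part (b) kill the leading term by (\ref{eq.slam=0-too-long}). No gaps.
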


\begin{proof}
[Proof of Proposition \ref{prop.pieri-hlast}.]We have $\lambda\in P_{k,n}$,
thus $\lambda_{1}\leq n-k$. Hence, $n-k\geq\lambda_{1}$. Thus, $\left(
n-k,\lambda_{1},\lambda_{2},\lambda_{3},\ldots\right)  $ is a partition.

\textbf{(a)} We have
\begin{equation}
\left(  \mathbf{e}_{i}\right)  ^{\perp}\mathbf{s}_{\lambda}%
=0\ \ \ \ \ \ \ \ \ \ \text{for every integer }i>k.
\label{pf.prop.pieri-hlast.a.1}%
\end{equation}

[\textit{Proof of (\ref{pf.prop.pieri-hlast.a.1}):} Let $i>k$ be an integer.
The partition $\lambda$ has at most $k$ parts (since $\lambda\in P_{k,n}$). In
other words, the Young diagram of $\lambda$ contains at most $k$ rows. Hence,
this diagram contains no vertical $i$-strip (since a vertical $i$-strip would
involve more than $k$ rows (because $i>k$)). Thus, there exists no partition
$\mu$ such that $\lambda/\mu$ is a vertical $i$-strip. Hence, $\sum
_{\substack{\mu\text{ is a partition;}\\\lambda/\mu\text{ is a vertical
}i\text{-strip}}}\mathbf{s}_{\mu}=\left(  \text{empty sum}\right)  =0$. But
Corollary \ref{cor.pieri.eskew} yields $\left(  \mathbf{e}_{i}\right)
^{\perp}\mathbf{s}_{\lambda}=\sum_{\substack{\mu\text{ is a partition;}%
\\\lambda/\mu\text{ is a vertical }i\text{-strip}}}\mathbf{s}_{\mu}=0$. This
proves (\ref{pf.prop.pieri-hlast.a.1}).]

Recall that $\mathbf{e}_{0}=1$ and thus $\left(  \mathbf{e}_{0}\right)
^{\perp}=1^{\perp}=\operatorname*{id}$. Hence, $\left(  \mathbf{e}_{0}\right)
^{\perp}\mathbf{s}_{\lambda}=\operatorname*{id}\mathbf{s}_{\lambda}%
=\mathbf{s}_{\lambda}$.

But $n-k\geq\lambda_{1}$. Hence, Proposition \ref{prop.bernstein.1} (applied
to $m=n-k$) yields%
\[
\sum_{i\in\mathbb{N}}\left(  -1\right)  ^{i}\mathbf{h}_{n-k+i}\left(
\mathbf{e}_{i}\right)  ^{\perp}\mathbf{s}_{\lambda}=\mathbf{s}_{\left(
n-k,\lambda_{1},\lambda_{2},\lambda_{3},\ldots\right)  }.
\]
Hence,%
\begin{align*}
\mathbf{s}_{\left(  n-k,\lambda_{1},\lambda_{2},\lambda_{3},\ldots\right)  }
&  =\sum_{i\in\mathbb{N}}\left(  -1\right)  ^{i}\mathbf{h}_{n-k+i}\left(
\mathbf{e}_{i}\right)  ^{\perp}\mathbf{s}_{\lambda}\\
&  =\sum_{i=0}^{k}\left(  -1\right)  ^{i}\mathbf{h}_{n-k+i}\left(
\mathbf{e}_{i}\right)  ^{\perp}\mathbf{s}_{\lambda}+\sum_{i=k+1}^{\infty
}\left(  -1\right)  ^{i}\mathbf{h}_{n-k+i}\underbrace{\left(  \mathbf{e}%
_{i}\right)  ^{\perp}\mathbf{s}_{\lambda}}_{\substack{=0\\\text{(by
(\ref{pf.prop.pieri-hlast.a.1}))}}}\\
&  =\sum_{i=0}^{k}\left(  -1\right)  ^{i}\mathbf{h}_{n-k+i}\left(
\mathbf{e}_{i}\right)  ^{\perp}\mathbf{s}_{\lambda}\\
&  =\underbrace{\left(  -1\right)  ^{0}}_{=1}\underbrace{\mathbf{h}_{n-k+0}%
}_{=\mathbf{h}_{n-k}}\underbrace{\left(  \mathbf{e}_{0}\right)  ^{\perp
}\mathbf{s}_{\lambda}}_{=\mathbf{s}_{\lambda}}+\sum_{i=1}^{k}\left(
-1\right)  ^{i}\mathbf{h}_{n-k+i}\underbrace{\left(  \mathbf{e}_{i}\right)
^{\perp}\mathbf{s}_{\lambda}}_{\substack{=\sum_{\substack{\mu\text{ is a
partition;}\\\lambda/\mu\text{ is a vertical }i\text{-strip}}}\mathbf{s}_{\mu
}\\\text{(by Corollary \ref{cor.pieri.eskew})}}}\\
&  =\underbrace{\mathbf{h}_{n-k}\mathbf{s}_{\lambda}}_{=\mathbf{s}_{\lambda
}\mathbf{h}_{n-k}}+\sum_{i=1}^{k}\left(  -1\right)  ^{i}\mathbf{h}_{n-k+i}%
\sum_{\substack{\mu\text{ is a partition;}\\\lambda/\mu\text{ is a vertical
}i\text{-strip}}}\mathbf{s}_{\mu}\\
&  =\mathbf{s}_{\lambda}\mathbf{h}_{n-k}+\sum_{i=1}^{k}\left(  -1\right)
^{i}\mathbf{h}_{n-k+i}\sum_{\substack{\mu\text{ is a partition;}\\\lambda
/\mu\text{ is a vertical }i\text{-strip}}}\mathbf{s}_{\mu},
\end{align*}
so that%
\[
\mathbf{s}_{\lambda}\mathbf{h}_{n-k}=\mathbf{s}_{\left(  n-k,\lambda
_{1},\lambda_{2},\lambda_{3},\ldots\right)  }-\sum_{i=1}^{k}\left(  -1\right)
^{i}\mathbf{h}_{n-k+i}\sum_{\substack{\mu\text{ is a partition;}\\\lambda
/\mu\text{ is a vertical }i\text{-strip}}}\mathbf{s}_{\mu}.
\]
This is an equality in $\Lambda$. If we evaluate both of its sides at
$x_{1},x_{2},\ldots,x_{k}$, then we obtain%
\begin{align*}
s_{\lambda}h_{n-k}  &  =s_{\left(  n-k,\lambda_{1},\lambda_{2},\lambda
_{3},\ldots\right)  }-\sum_{i=1}^{k}\left(  -1\right)  ^{i}%
\underbrace{h_{n-k+i}}_{\substack{\equiv a_{i}\operatorname{mod}I\\\text{(by
(\ref{eq.h=amodI}))}}}\underbrace{\sum_{\substack{\mu\text{ is a
partition;}\\\lambda/\mu\text{ is a vertical }i\text{-strip}}}}%
_{\substack{=\sum_{\substack{\mu\in P_{k,n};\\\lambda/\mu\text{ is a vertical
}i\text{-strip}}}\\\text{(because if }\mu\text{ is a partition such}%
\\\text{that }\lambda/\mu\text{ is a vertical }i\text{-strip, then }\mu\in
P_{k,n}\\\text{(since }\mu\subseteq\lambda\text{ and }\lambda\in
P_{k,n}\text{))}}}s_{\mu}\\
&  \equiv s_{\left(  n-k,\lambda_{1},\lambda_{2},\lambda_{3},\ldots\right)
}-\sum_{i=1}^{k}\left(  -1\right)  ^{i}a_{i}\sum_{\substack{\mu\in
P_{k,n};\\\lambda/\mu\text{ is a vertical }i\text{-strip}}}s_{\mu
}\operatorname{mod}I.
\end{align*}
In other words,%
\begin{align*}
\overline{s_{\lambda}h_{n-k}}  &  =\overline{s_{\left(  n-k,\lambda
_{1},\lambda_{2},\lambda_{3},\ldots\right)  }-\sum_{i=1}^{k}\left(  -1\right)
^{i}a_{i}\sum_{\substack{\mu\in P_{k,n};\\\lambda/\mu\text{ is a vertical
}i\text{-strip}}}s_{\mu}}\\
&  =\overline{s_{\left(  n-k,\lambda_{1},\lambda_{2},\lambda_{3}%
,\ldots\right)  }}-\sum_{i=1}^{k}\left(  -1\right)  ^{i}a_{i}\sum
_{\substack{\mu\in P_{k,n};\\\lambda/\mu\text{ is a vertical }i\text{-strip}%
}}\overline{s_{\mu}}.
\end{align*}
This proves Proposition \ref{prop.pieri-hlast} \textbf{(a)}.

\textbf{(b)} Assume that $\lambda_{k}>0$. Hence, the partition $\left(
n-k,\lambda_{1},\lambda_{2},\lambda_{3},\ldots\right)  $ has more than $k$
parts (since its $\left(  k+1\right)  $-st entry is $\lambda_{k}>0$). Thus,
(\ref{eq.slam=0-too-long}) (applied to $\left(  n-k,\lambda_{1},\lambda
_{2},\lambda_{3},\ldots\right)  $ instead of $\lambda$) yields $s_{\left(
n-k,\lambda_{1},\lambda_{2},\lambda_{3},\ldots\right)  }=0$. Hence,
$\overline{s_{\left(  n-k,\lambda_{1},\lambda_{2},\lambda_{3},\ldots\right)
}}=\overline{0}=0$. Now, Proposition \ref{prop.pieri-hlast} \textbf{(a)}
yields%
\begin{align*}
\overline{s_{\lambda}h_{n-k}}  &  =\underbrace{\overline{s_{\left(
n-k,\lambda_{1},\lambda_{2},\lambda_{3},\ldots\right)  }}}_{=0}-\sum_{i=1}%
^{k}\left(  -1\right)  ^{i}a_{i}\sum_{\substack{\mu\in P_{k,n};\\\lambda
/\mu\text{ is a vertical }i\text{-strip}}}\overline{s_{\mu}}\\
&  =-\sum_{i=1}^{k}\left(  -1\right)  ^{i}a_{i}\sum_{\substack{\mu\in
P_{k,n};\\\lambda/\mu\text{ is a vertical }i\text{-strip}}}\overline{s_{\mu}}.
\end{align*}
This proves Proposition \ref{prop.pieri-hlast} \textbf{(b)}.
\end{proof}

\subsection{Multiplying by $\overline{h_{j}}$}

At last, let us give an explicit expansion for $\overline{s_{\lambda}h_{j}}$
in the basis $\left(  \overline{s_{\mu}}\right)  _{\mu\in P_{k,n}}$ that holds
for all $j\in\left\{  0,1,\ldots,n-k\right\}  $. Before we state it, we need a notation:

\begin{definition}
Let $\mathbf{f}\in\Lambda$ be any symmetric function. Then, $\overline
{\mathbf{f}}\in\mathcal{S}/I$ is defined to be $\overline{f}$, where
$f\in\mathcal{S}$ is the result of evaluating the symmetric function
$\mathbf{f}\in\Lambda$ at the $k$ variables $x_{1},x_{2},\ldots,x_{k}$. Thus,
for every partition $\lambda$, we have $\overline{\mathbf{s}_{\lambda}%
}=\overline{s_{\lambda}}$. Likewise, for any $m\in\mathbb{N}$, we have
$\overline{\mathbf{h}_{m}}=h_{m}$ and $\overline{\mathbf{e}_{m}}=e_{m}$.
\end{definition}

\begin{theorem}
\label{thm.pieri-hall}Let $\lambda\in P_{k,n}$. Let $j\in\left\{
0,1,\ldots,n-k\right\}  $. Then,%
\[
\overline{s_{\lambda}h_{j}}=\sum_{\substack{\mu\in P_{k,n};\\\mu/\lambda\text{
is a horizontal }j\text{-strip}}}\overline{s_{\mu}}-\sum_{i=1}^{k}\left(
-1\right)  ^{i}a_{i}\overline{\left(  \mathbf{s}_{\left(  n-k-j+1,1^{i-1}%
\right)  }\right)  ^{\perp}\mathbf{s}_{\lambda}}.
\]

\end{theorem}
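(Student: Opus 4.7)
The plan is to work in the ring $\Lambda$ of symmetric functions, then specialize to $x_1, \ldots, x_k$ and project modulo $I$. Starting from the classical Pieri rule
\[
\mathbf{s}_\lambda \mathbf{h}_j = \sum_{\mu/\lambda \text{ a horizontal } j\text{-strip}} \mathbf{s}_\mu,
\]
I would split the right-hand side into two pieces according to whether $\mu_1 \leq n-k$ or $\mu_1 > n-k$, call them $A_\Lambda$ and $B_\Lambda$. After specializing at $x_1, \ldots, x_k$, any $\mathbf{s}_\mu$ whose partition has more than $k$ parts vanishes by (\ref{eq.slam=0-too-long}); so the specialization of $A_\Lambda$ gives exactly $\sum_{\mu \in P_{k,n},\ \mu/\lambda \text{ horiz. } j\text{-strip}} s_\mu$, which projects to the first sum in the theorem.

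For the bad part $B_\Lambda$, every $\mu$ involved decomposes uniquely as $\mu = (m, \overline{\mu})$ with $m = \mu_1$ and $\overline{\mu} = (\mu_2, \mu_3, \ldots)$. The horizontal-strip condition on $\mu/\lambda$ together with $m > n-k$ rewrites as: $\lambda/\overline{\mu}$ is a horizontal strip of size $s := m - j \geq n-k-j+1$. Since then $s + j \geq n-k+1 > \lambda_1 \geq \overline{\mu}_1$, Proposition \ref{prop.bernstein.1} applies and yields $\mathbf{s}_{(s+j, \overline{\mu})} = \sum_{i \geq 0} (-1)^i \mathbf{h}_{s+j+i}\, (\mathbf{e}_i)^\perp \mathbf{s}_{\overline{\mu}}$. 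Interchanging summations and using the dual Pieri identity $\sum_{\overline{\mu}\colon\,\lambda/\overline{\mu}\ \text{horiz. } s\text{-strip}} \mathbf{s}_{\overline{\mu}} = \mathbf{h}_s^\perp \mathbf{s}_\lambda$ (which follows from (\ref{eq.skewing.ss}) combined with Proposition \ref{prop.LR.props}\textbf{(a)} and the Pieri rule for $\mathbf{h}_s$), together with $(\mathbf{e}_i)^\perp \mathbf{h}_s^\perp = (\mathbf{h}_s \mathbf{e}_i)^\perp$ from (\ref{eq.skewing.fg}), produces
\[
B_\Lambda \;=\; \sum_{s \geq n-k-j+1} \sum_{i \geq 0} (-1)^i \mathbf{h}_{s+j+i}\, (\mathbf{h}_s \mathbf{e}_i)^\perp \mathbf{s}_\lambda.
\]

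The crux is then to telescope this double sum. Using the Pieri identity $\mathbf{h}_s \mathbf{e}_i = \mathbf{s}_{(s, 1^i)} + \mathbf{s}_{(s+1, 1^{i-1})}$ (valid for $s, i \geq 1$, with the conventions $\mathbf{s}_{(s+1, 1^{-1})} := 0$ and $\mathbf{h}_s \mathbf{e}_0 = \mathbf{s}_{(s)}$) and reindexing the contribution of the $\mathbf{s}_{(s+1, 1^{i-1})}$ piece via $(s', i') = (s+1, i-1)$, every summand cancels in pairs except those at the boundary $s = n-k-j+1$, collapsing $B_\Lambda$ to
\[
B_\Lambda \;=\; \sum_{i \geq 0} (-1)^i \mathbf{h}_{n-k+1+i}\, \mathbf{s}_{\lambda/(n-k-j+1,\, 1^i)}.
\]
Finally, specializing to $x_1, \ldots, x_k$ and projecting: for $i \geq k$ the skew Schur $\mathbf{s}_{\lambda/(n-k-j+1, 1^i)}$ vanishes (the inner shape has more than $k$ parts, while $\lambda$ has at most $k$), and for $i \in \{0, 1, \ldots, k-1\}$ equation (\ref{eq.h=amodI}) gives $\overline{h_{n-k+1+i}} = \overline{a_{1+i}}$. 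Reindexing $i \mapsto i-1$ matches the correction term $-\sum_{i=1}^{k}(-1)^i a_i \overline{(\mathbf{s}_{(n-k-j+1, 1^{i-1})})^\perp \mathbf{s}_\lambda}$ exactly. The main obstacle will be executing the telescoping cleanly (carefully handling the boundary cases $i = 0$ and the smallest $s$) and recording the Pieri-type product formula for $\mathbf{h}_s \mathbf{e}_i$ with the right non-partition conventions.
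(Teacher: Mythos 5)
Your proposal is correct, and its overall architecture coincides with the paper's: both prove the statement by first establishing the $\Lambda$-level identity of Theorem \ref{thm.pieri-hall.Lam} (split the Pieri expansion of $\mathbf{s}_\lambda\mathbf{h}_j$ according to whether $\mu_1\le n-k$ or $\mu_1>n-k$, re-express the overflow part via the bijection $\mu\leftrightarrow(s,\overline{\mu})$ with $\lambda/\overline{\mu}$ a horizontal $s$-strip, then apply Proposition \ref{prop.bernstein.1}, Corollary \ref{cor.pieri.hskew} and (\ref{eq.skewing.fg})), and then specializing to $x_1,\ldots,x_k$, discarding Schur functions with more than $k$ rows, and replacing $h_{n-k+i}$ by $a_i$ via (\ref{eq.h=amodI}). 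Where you genuinely diverge is the collapse of the resulting double sum $\sum_{s\ge n-k-j+1}\sum_{i\ge 0}(-1)^i\mathbf{h}_{s+j+i}\left(\mathbf{h}_s\mathbf{e}_i\right)^{\perp}\mathbf{s}_\lambda$: the paper evaluates the inner alternating sum explicitly via Lemma \ref{lem.pieri-hall.Lam2} (which rests on the hook expansions of Lemmas \ref{lem.sm1j.1} and \ref{lem.sm1j.1gen} and Convention \ref{conv.sm1j.jneg}), and then still needs two extra observations -- that the $\left(\mathbf{s}_{(n-k+1,1^{w-j-1})}\right)^{\perp}$ terms annihilate $\mathbf{s}_\lambda$ because $n-k+1>\lambda_1$, and that the $w$-sum may be truncated at $w\le n$ -- whereas your telescoping via $\mathbf{h}_s\mathbf{e}_i=\mathbf{s}_{(s,1^i)}+\mathbf{s}_{(s+1,1^{i-1})}$ and the shift $(s,i)\mapsto(s+1,i-1)$ collapses everything to the boundary $s=n-k-j+1$ in one stroke, with no such side arguments. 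The telescoping is legitimate because only finitely many terms are nonzero (already $\left(\mathbf{h}_s\right)^{\perp}\mathbf{s}_\lambda=0$ for $s>\lambda_1$, and $\left(\mathbf{e}_i\right)^{\perp}$ kills anything of length $\le k$ once $i>k$), so the reindexing causes no convergence issues; your conventions at $i=0$ and at the smallest $s$ are exactly what is needed. In short: same decomposition and same key creation-operator input, but a cleaner, more self-contained cancellation step in place of the paper's Lemma \ref{lem.pieri-hall.Lam2}; the paper's route has the side benefit of recording the explicit hook-sum identity, while yours avoids the auxiliary conventions for hooks with negative leg.
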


\begin{example}
\label{exa.pieri-hall.73}If $n=7$ and $k=3$, then%
\begin{align*}
&  \overline{s_{\left(  4,3,2\right)  }h_{2}}\\
&  =\overline{s_{\left(  4,4,3\right)  }}+a_{1}\left(  \overline{s_{\left(
4,2\right)  }}+\overline{s_{\left(  3,2,1\right)  }}+\overline{s_{\left(
3,3\right)  }}\right)  -a_{2}\left(  \overline{s_{\left(  4,1\right)  }%
}+\overline{s_{\left(  2,2,1\right)  }}+\overline{s_{\left(  3,1,1\right)  }%
}+2\overline{s_{\left(  3,2\right)  }}\right) \\
&  \ \ \ \ \ \ \ \ \ \ +a_{3}\left(  \overline{s_{\left(  2,2\right)  }%
}+\overline{s_{\left(  2,1,1\right)  }}+\overline{s_{\left(  3,1\right)  }%
}\right)  .
\end{align*}

\end{example}

It is not hard to reveal Propositions \ref{prop.pieri-h1} and
\ref{prop.pieri-hlast} as particular cases of Theorem \ref{thm.pieri-hall} (by
setting $j=1$ or $j=n-k$, respectively). Likewise, one can see that Theorem
\ref{thm.pieri-hall} generalizes \cite[(22)]{BeCiFu99}. Indeed, \cite[(22)]%
{BeCiFu99} says that if $a_{1}=a_{2}=\cdots=a_{k-1}=0$, then every $\lambda\in
P_{k,n}$ and $j\in\left\{  0,1,\ldots,n-k\right\}  $ satisfy%
\[
\overline{s_{\lambda}h_{j}}=\sum_{\substack{\mu\in P_{k,n};\\\mu/\lambda\text{
is a horizontal }j\text{-strip}}}\overline{s_{\mu}}-\left(  -1\right)
^{k}a_{k}\sum_{\nu}\overline{s_{\nu}},
\]
where the second sum runs over all $\nu\in P_{k,n}$ satisfying
\begin{align*}
&  \left(  \lambda_{i}-1\geq\nu_{i}\text{ for all }i\in\left\{  1,2,\ldots
,k\right\}  \right)  \ \ \ \ \ \ \ \ \ \ \text{and}\\
&  \left(  \nu_{i}\geq\lambda_{i+1}-1\text{ for all }i\in\left\{
1,2,\ldots,k-1\right\}  \right)  \ \ \ \ \ \ \ \ \ \ \text{and}\\
&  \left\vert \nu\right\vert =\left\vert \lambda\right\vert +j-n.
\end{align*}

Note, however, that the sums in Theorem \ref{thm.pieri-hall} contain
multiplicities (see the \textquotedblleft$2\overline{s_{\left(  3,2\right)  }%
}$\textquotedblright\ in Example \ref{exa.pieri-hall.73}), unlike those in
\cite[(22)]{BeCiFu99}.

We shall prove Theorem \ref{thm.pieri-hall} by deriving it from an identity
between genuine symmetric functions (in $\Lambda$, not in $\mathcal{S}$ or
$\mathcal{S}/I$):

\begin{theorem}
\label{thm.pieri-hall.Lam}Let $\lambda\in P_{k,n}$. Let $j\in\left\{
0,1,\ldots,n-k\right\}  $. Then,%
\[
\mathbf{s}_{\lambda}\mathbf{h}_{j}=\sum_{\substack{\mu\text{ is a
partition;}\\\mu_{1}\leq n-k;\\\mu/\lambda\text{ is a horizontal
}j\text{-strip}}}\mathbf{s}_{\mu}-\sum_{i=1}^{k}\left(  -1\right)
^{i}\mathbf{h}_{n-k+i}\left(  \mathbf{s}_{\left(  n-k-j+1,1^{i-1}\right)
}\right)  ^{\perp}\mathbf{s}_{\lambda}.
\]

\end{theorem}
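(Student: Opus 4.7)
The plan is to start from the classical Pieri rule
\[
\mathbf{s}_{\lambda}\mathbf{h}_{j}=\sum_{\substack{\mu\text{ is a partition;}\\\mu/\lambda\text{ is a horizontal }j\text{-strip}}}\mathbf{s}_{\mu}
\]
and split this sum according to whether $\mu_{1}\leq n-k$ or $\mu_{1}>n-k$. The ``small'' part matches the first sum in the theorem verbatim, so it remains only to identify the overflow sum
\[
A:=\sum_{\substack{\mu\text{ is a partition;}\\ \mu_{1}>n-k;\\ \mu/\lambda\text{ is a horizontal }j\text{-strip}}}\mathbf{s}_{\mu}
\]
with $-\sum_{i=1}^{k}(-1)^{i}\mathbf{h}_{n-k+i}\bigl(\mathbf{s}_{(n-k-j+1,1^{i-1})}\bigr)^{\perp}\mathbf{s}_{\lambda}$.

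For every such $\mu$, I would write $\mu=(m,\nu)$ with $m:=\mu_{1}$ and $\nu:=(\mu_{2},\mu_{3},\ldots)$. The horizontal strip condition rewrites as ``$\lambda/\nu$ is a horizontal $(m-j)$-strip'', and the constraint $\mu_{1}>n-k$ forces $s:=m-j\geq n-k-j+1$. Since $\nu_{1}\leq\lambda_{1}\leq n-k<m$, Proposition \ref{prop.bernstein.1} yields $\mathbf{s}_{(m,\nu)}=\sum_{i\geq0}(-1)^{i}\mathbf{h}_{m+i}(\mathbf{e}_{i})^{\perp}\mathbf{s}_{\nu}$. Summing over $\nu$ and using the $\mathbf{h}$-analogue of Corollary \ref{cor.pieri.eskew}, namely $(\mathbf{h}_{s})^{\perp}\mathbf{s}_{\lambda}=\sum_{\lambda/\nu\text{ horizontal }s\text{-strip}}\mathbf{s}_{\nu}$ (the Hall-dual of the Pieri rule for $\mathbf{h}_{s}$), together with $(\mathbf{e}_{i})^{\perp}(\mathbf{h}_{s})^{\perp}=(\mathbf{h}_{s}\mathbf{e}_{i})^{\perp}$ from (\ref{eq.skewing.fg}), puts $A$ in the form
\[
A=\sum_{s\geq n-k-j+1}\sum_{i\geq0}(-1)^{i}\mathbf{h}_{s+j+i}\bigl(\mathbf{h}_{s}\mathbf{e}_{i}\bigr)^{\perp}\mathbf{s}_{\lambda}.
\]

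Next I would reindex via $r:=s+i-(n-k-j)$, so that $s+j+i=n-k+r$ and the ranges $s\geq n-k-j+1$, $i\geq0$ correspond to $r\geq1$ and $i\in\{0,1,\ldots,r-1\}$ with $s=r+n-k-j-i$. This rewrites $A$ as $\sum_{r\geq1}\mathbf{h}_{n-k+r}(\mathbf{u}_{r})^{\perp}\mathbf{s}_{\lambda}$, where $\mathbf{u}_{r}:=\sum_{i=0}^{r-1}(-1)^{i}\mathbf{h}_{n-k-j+r-i}\mathbf{e}_{i}$. The substitution $i\mapsto r-1-i$ inside $\mathbf{u}_{r}$ followed by the identity (\ref{pf.prop.redh.Lam.ab}) (with $a=n-k-j$ and $b=r-1$) identifies $\mathbf{u}_{r}=(-1)^{r-1}\mathbf{s}_{(n-k-j+1,1^{r-1})}$, whence
\[
A=-\sum_{r\geq1}(-1)^{r}\mathbf{h}_{n-k+r}\bigl(\mathbf{s}_{(n-k-j+1,1^{r-1})}\bigr)^{\perp}\mathbf{s}_{\lambda}.
\]
Finally, for $r>k$ the hook $(n-k-j+1,1^{r-1})$ has $r$ rows while $\lambda\in P_{k,n}$ has at most $k$; hence this hook is not contained in $\lambda$, and $(\mathbf{s}_{(n-k-j+1,1^{r-1})})^{\perp}\mathbf{s}_{\lambda}=\mathbf{s}_{\lambda/(n-k-j+1,1^{r-1})}=0$ by (\ref{eq.skewing.ss}). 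Truncating at $r=k$ and relabeling $r\to i$ then produces the claimed identity. The main obstacle will be getting the parametrization and bounds right in the reindexing step: once the change of variables $r=s+i-(n-k-j)$ is in place, the inner sum lines up with (\ref{pf.prop.redh.Lam.ab}) almost mechanically, and everything else is bookkeeping.
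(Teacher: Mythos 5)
Your argument is correct, and it reaches the identity by the same basic strategy as the paper's proof: classical Pieri rule, split at $\mu_{1}\leq n-k$, the Bernstein creation-operator formula (Proposition \ref{prop.bernstein.1}), the dual Pieri rule for $\left(\mathbf{h}_{s}\right)^{\perp}$ (your ``$\mathbf{h}$-analogue'' is exactly Corollary \ref{cor.pieri.hskew}), the composition rule (\ref{eq.skewing.fg}), and the hook expansion (\ref{pf.prop.redh.Lam.ab}). The difference is in the bookkeeping, and yours is genuinely leaner. The paper parametrizes the overflow by the overshoot $g=\mu_{1}-\left(n-k\right)\in\left\{1,\ldots,j\right\}$ (Lemma \ref{lem.pieri-hall.Lam1}); for fixed $w$ the inner sum over $g$ is then only a partial hook expansion, so the paper needs Lemma \ref{lem.pieri-hall.Lam2} to rewrite it as a difference of two hook Schur functions and must separately show $\left(\mathbf{s}_{\left(n-k+1,1^{w-j-1}\right)}\right)^{\perp}\mathbf{s}_{\lambda}=0$ (step (\ref{pf.thm.pieri-hall.Lam.6})). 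You instead parametrize by the strip size $s=\mu_{1}-j$ and let $s$ run over all integers $\geq n-k-j+1$; the values $s>\lambda_{1}$ contribute nothing (no horizontal $s$-strip $\lambda/\nu$ exists, and correspondingly $\left(\mathbf{h}_{s}\right)^{\perp}\mathbf{s}_{\lambda}=0$), but keeping them in the sum means that after the substitution $r=s+i-\left(n-k-j\right)$ the inner sum over $i\in\left\{0,\ldots,r-1\right\}$ is the \emph{complete} expansion (\ref{pf.prop.redh.Lam.ab}), producing a single hook $\mathbf{s}_{\left(n-k-j+1,1^{r-1}\right)}$ directly; the only vanishing you still need is $\left(\mathbf{s}_{\left(n-k-j+1,1^{r-1}\right)}\right)^{\perp}\mathbf{s}_{\lambda}=0$ for $r>k$, exactly as in the paper's (\ref{pf.thm.pieri-hall.Lam.7}). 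So your route trades Lemmas \ref{lem.pieri-hall.Lam1} and \ref{lem.pieri-hall.Lam2} and one vanishing claim for a single reindexed double sum. In a full write-up you should only make explicit that all but finitely many terms of the double sum over $\left(s,i\right)$ are zero (so the exchange and reindexing of sums is legitimate), and spell out the interlacing argument showing that $\mu\mapsto\left(s,\nu\right)$ is a bijection, which uses $m>n-k\geq\lambda_{1}\geq\nu_{1}$ as you note.
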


Before we prove this theorem, we need several auxiliary results. First, we
recall one of the Pieri rules (\cite[(2.7.1)]{GriRei18}):

\begin{proposition}
\label{prop.pieri.h}Let $\lambda$ be a partition, and let $i\in\mathbb{N}$.
Then,%
\[
\mathbf{s}_{\lambda}\mathbf{h}_{i}=\sum_{\substack{\mu\text{ is a
partition;}\\\mu/\lambda\text{ is a horizontal }i\text{-strip}}}\mathbf{s}%
_{\mu}.
\]

\end{proposition}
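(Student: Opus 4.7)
The plan is to derive Proposition \ref{prop.pieri.h} from the already-stated dual Pieri rule (Proposition \ref{prop.pieri.e}) by applying the fundamental involution $\omega$ on $\Lambda$. Recall that $\omega$ is the $\mathbf{k}$-algebra automorphism of $\Lambda$ that sends $\mathbf{h}_m \mapsto \mathbf{e}_m$ for each $m\in\mathbb{N}$; it is an involution, and it acts on Schur functions by $\omega(\mathbf{s}_{\nu}) = \mathbf{s}_{\nu^{t}}$ for every partition $\nu$ (this is proved in \cite[Chapter 2]{GriRei18}; in the same chapter it is also shown that $\omega$ exchanges $\mathbf{h}$'s and $\mathbf{e}$'s). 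These are exactly the facts I will need.

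First, I would apply Proposition \ref{prop.pieri.e} to the conjugate partition $\lambda^{t}$ (in place of $\lambda$), obtaining the identity
\[
\mathbf{s}_{\lambda^{t}}\mathbf{e}_{i} \;=\; \sum_{\substack{\mu \text{ is a partition};\\ \mu/\lambda^{t}\text{ is a vertical }i\text{-strip}}} \mathbf{s}_{\mu}.
\]
Next I would apply $\omega$ to both sides. Since $\omega$ is a ring homomorphism, the left-hand side becomes $\omega(\mathbf{s}_{\lambda^{t}})\,\omega(\mathbf{e}_{i}) = \mathbf{s}_{\lambda}\,\mathbf{h}_{i}$, using $\omega(\mathbf{s}_{\lambda^{t}}) = \mathbf{s}_{(\lambda^{t})^{t}} = \mathbf{s}_{\lambda}$ and $\omega(\mathbf{e}_{i}) = \mathbf{h}_{i}$. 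Similarly, the right-hand side becomes $\sum_{\mu} \mathbf{s}_{\mu^{t}}$ where $\mu$ ranges over partitions such that $\mu/\lambda^{t}$ is a vertical $i$-strip. Reindexing by $\nu := \mu^{t}$ (which is a bijection on the set of all partitions, since transposition is an involution), this sum becomes $\sum_{\nu: \ \nu^{t}/\lambda^{t}\text{ is a vertical }i\text{-strip}} \mathbf{s}_{\nu}$.

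The final step is the combinatorial observation that $\nu^{t}/\lambda^{t}$ is a vertical $i$-strip if and only if $\nu/\lambda$ is a horizontal $i$-strip. This is immediate from the definitions (conjugation swaps rows with columns in the Young diagram, and under this operation the \textquotedblleft at most one cell per column\textquotedblright\ condition defining a vertical strip translates into the \textquotedblleft at most one cell per row\textquotedblright\ condition defining a horizontal strip). Substituting this equivalence into the indexing set yields precisely the desired identity. No step should present a genuine obstacle, provided one has the three foundational facts about $\omega$ (ring automorphism, interchange of $\mathbf{h}$ and $\mathbf{e}$, action on Schur functions) and the conjugation-swaps-strips observation; all of these are standard and cited from \cite[Chapter 2]{GriRei18}.
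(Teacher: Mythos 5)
Your proof is correct, and it differs from the paper only in that the paper does not prove Proposition \ref{prop.pieri.h} at all: it simply cites it as a known fact (namely \cite[(2.7.1)]{GriRei18}), just as it cites Proposition \ref{prop.pieri.e} as \cite[(2.7.2)]{GriRei18}. What you do instead is reduce one cited Pieri rule to the other via the involution $\omega$: applying Proposition \ref{prop.pieri.e} to $\lambda^{t}$, hitting both sides with $\omega$, using $\omega(\mathbf{e}_{i})=\mathbf{h}_{i}$ and $\omega(\mathbf{s}_{\nu})=\mathbf{s}_{\nu^{t}}$, reindexing by $\nu=\mu^{t}$, and invoking the standard equivalence that $\nu^{t}/\lambda^{t}$ is a vertical $i$-strip if and only if $\nu/\lambda$ is a horizontal $i$-strip (which in particular handles the containment $\lambda\subseteq\nu$ correctly, since $\lambda^{t}\subseteq\nu^{t}$ is equivalent to it). Every step is sound, and the three facts about $\omega$ you rely on are indeed established in \cite[Chapter 2]{GriRei18} independently of the Pieri rules, so there is no circularity. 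The only thing your approach "buys" relative to the paper is economy of citation: you need only one of the two Pieri rules as external input rather than both, at the cost of also importing the properties of $\omega$; within this paper, where both rules are taken as standard background, either treatment is perfectly acceptable.
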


From this, we can easily derive the following:

\begin{corollary}
\label{cor.pieri.hskew}Let $\lambda$ be a partition, and let $i\in\mathbb{N}$.
Then,%
\[
\left(  \mathbf{h}_{i}\right)  ^{\perp}\mathbf{s}_{\lambda}=\sum
_{\substack{\mu\text{ is a partition;}\\\lambda/\mu\text{ is a horizontal
}i\text{-strip}}}\mathbf{s}_{\mu}.
\]

\end{corollary}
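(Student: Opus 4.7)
The plan is to follow the same template as the proof of Corollary \ref{cor.pieri.eskew}, but with the roles of $\mathbf{h}$ and $\mathbf{e}$ swapped. The decisive observation is that $\mathbf{h}_{i}$ equals the one-row Schur function $\mathbf{s}_{(i)}$, so the identity (\ref{eq.skewing.ss}) (applied to $\mu=(i)$) immediately yields
\[
\left(\mathbf{h}_{i}\right)^{\perp}\mathbf{s}_{\lambda}=\left(\mathbf{s}_{(i)}\right)^{\perp}\mathbf{s}_{\lambda}=\mathbf{s}_{\lambda/(i)}.
\]
Thus the corollary reduces to expanding the skew Schur function $\mathbf{s}_{\lambda/(i)}$ in the Schur basis.

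For the expansion, I would apply Proposition \ref{prop.LR.props} \textbf{(a)} with $\lambda$ and $(i)$ in place of $\lambda$ and $\mu$, giving $\mathbf{s}_{\lambda/(i)}=\sum_{\nu}c_{(i),\nu}^{\lambda}\mathbf{s}_{\nu}$. The remaining task is to identify $c_{(i),\nu}^{\lambda}$ with the indicator of ``$\lambda/\nu$ is a horizontal $i$-strip''. Because the product $\mathbf{s}_{(i)}\mathbf{s}_{\nu}=\mathbf{s}_{\nu}\mathbf{s}_{(i)}$ is commutative in $\Lambda$, comparing the two expansions furnished by Proposition \ref{prop.LRrule} yields the symmetry $c_{(i),\nu}^{\lambda}=c_{\nu,(i)}^{\lambda}$. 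But $c_{\nu,(i)}^{\lambda}$ is by definition the coefficient of $\mathbf{s}_{\lambda}$ in $\mathbf{s}_{\nu}\mathbf{s}_{(i)}=\mathbf{s}_{\nu}\mathbf{h}_{i}$, which by Proposition \ref{prop.pieri.h} equals $1$ if $\lambda/\nu$ is a horizontal $i$-strip and $0$ otherwise. Substituting back into the sum completes the proof.

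There is no real obstacle here: everything needed is already on the table. The only mildly delicate point is the symmetry $c_{\mu,\nu}^{\lambda}=c_{\nu,\mu}^{\lambda}$ of Littlewood--Richardson coefficients, but this is a one-line consequence of the commutativity of multiplication in $\Lambda$ together with the linear independence of the $\mathbf{s}_{\lambda}$. An alternative, equally short route that avoids invoking LR coefficients at all would be to use the Hall inner product on $\Lambda$ (for which the $\mathbf{s}_{\lambda}$ form an orthonormal basis and for which $\mathbf{f}^{\perp}$ is adjoint to multiplication by $\mathbf{f}$): then $\langle(\mathbf{h}_{i})^{\perp}\mathbf{s}_{\lambda},\mathbf{s}_{\mu}\rangle=\langle\mathbf{s}_{\lambda},\mathbf{h}_{i}\mathbf{s}_{\mu}\rangle$, which by Proposition \ref{prop.pieri.h} is $1$ or $0$ according to whether $\lambda/\mu$ is a horizontal $i$-strip.
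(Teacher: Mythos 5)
Your proposal is correct and follows exactly the route the paper intends: the paper gives no explicit proof of Corollary \ref{cor.pieri.hskew}, merely asserting that it is "easily derived" from Proposition \ref{prop.pieri.h} and citing \cite[(2.8.3)]{GriRei18}, and your argument (rewriting $\left(\mathbf{h}_{i}\right)^{\perp}\mathbf{s}_{\lambda}=\mathbf{s}_{\lambda/(i)}$ via (\ref{eq.skewing.ss}), expanding with Proposition \ref{prop.LR.props} \textbf{(a)}, and identifying $c_{(i),\nu}^{\lambda}=c_{\nu,(i)}^{\lambda}$ through the Pieri rule) is precisely that easy derivation. The Hall-inner-product alternative you mention is the standard proof in the cited reference, so it is equally fine.
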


Corollary \ref{cor.pieri.hskew} is also proven in \cite[(2.8.3)]{GriRei18}.

Next, let us show some further lemmas:

\begin{lemma}
\label{lem.pieri-hall.Lam1}Let $\lambda\in P_{k,n}$. Let $j\in\left\{
0,1,\ldots,n-k\right\}  $. Let $g$ be a positive integer. Then,%
\[
\sum_{\substack{\mu\text{ is a partition;}\\\mu_{1}=n-k+g;\\\mu/\lambda\text{
is a horizontal }j\text{-strip}}}\mathbf{s}_{\mu}=\sum_{w\geq1}\left(
-1\right)  ^{w-g}\mathbf{h}_{n-k+w}\left(  \mathbf{h}_{n-k+g-j}\mathbf{e}%
_{w-g}\right)  ^{\perp}\mathbf{s}_{\lambda}.
\]

\end{lemma}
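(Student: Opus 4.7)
The plan is to set up a bijection and then apply the Bernstein operator formula (Proposition \ref{prop.bernstein.1}) together with the skew Pieri rule (Corollary \ref{cor.pieri.hskew}). Concretely, I would show that the map $\mu \mapsto \nu := (\mu_2, \mu_3, \mu_4, \ldots)$ is a bijection between the set of partitions $\mu$ indexing the LHS (that is, with $\mu_1 = n-k+g$ and $\mu/\lambda$ a horizontal $j$-strip) and the set of partitions $\nu$ satisfying $\nu \subseteq \lambda$ and $\lambda/\nu$ a horizontal $(n-k+g-j)$-strip. The inverse map sends $\nu$ to $\mu := (n-k+g, \nu_1, \nu_2, \ldots)$; this is a partition because $\nu_1 \leq \lambda_1 \leq n-k < n-k+g$ (using $g \geq 1$). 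The key verification is that, under the assumption $\mu_1 = n-k+g$, the horizontal strip condition $\mu_{i+1} \leq \lambda_i$ becomes $\nu_i \leq \lambda_i$ (i.e.\ $\nu \subseteq \lambda$), while $\lambda \subseteq \mu$ forces $\lambda_{i+1} \leq \mu_{i+1} = \nu_i$, which is precisely the horizontal-strip condition for $\lambda/\nu$. A size check $|\lambda| - |\nu| = |\lambda| - (|\mu| - \mu_1) = -j + (n-k+g)$ confirms that $\lambda/\nu$ has the claimed size.

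Using this bijection and the identity $\mathbf{s}_\mu = \mathbf{s}_{(n-k+g,\nu)}$, I would rewrite the LHS as
\[
\sum_{\substack{\nu \subseteq \lambda;\\ \lambda/\nu \text{ is a horizontal }(n-k+g-j)\text{-strip}}} \mathbf{s}_{(n-k+g,\,\nu)}.
\]
For each such $\nu$, the inequality $\nu_1 \leq \lambda_1 \leq n-k < n-k+g$ allows us to apply Proposition \ref{prop.bernstein.1} with $m = n-k+g$, producing $\mathbf{s}_{(n-k+g,\nu)} = \sum_{i\geq 0}(-1)^i \mathbf{h}_{n-k+g+i} (\mathbf{e}_i)^\perp \mathbf{s}_\nu$. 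After interchanging the two sums, the inner sum $\sum_\nu \mathbf{s}_\nu$ is exactly $(\mathbf{h}_{n-k+g-j})^\perp \mathbf{s}_\lambda$ by Corollary \ref{cor.pieri.hskew}.

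Finally, I would apply the contravariance of skewing, $(\mathbf{fg})^\perp = \mathbf{g}^\perp \circ \mathbf{f}^\perp$ from (\ref{eq.skewing.fg}), with $\mathbf{f} = \mathbf{h}_{n-k+g-j}$ and $\mathbf{g} = \mathbf{e}_i$, to fuse $(\mathbf{e}_i)^\perp (\mathbf{h}_{n-k+g-j})^\perp = (\mathbf{h}_{n-k+g-j} \mathbf{e}_i)^\perp$. Reindexing with $w = g + i$ yields a sum over $w \geq g$; this can be harmlessly extended to $w \geq 1$ because $\mathbf{e}_{w-g} = 0$ for $w < g$. The resulting expression matches the RHS.

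The main obstacle is the bijection: one must be careful to distinguish horizontal from vertical strips, since the natural first guess (that $\lambda/\overline{\mu}$ should be a \emph{vertical} strip) is wrong — it is the \emph{horizontal}-strip condition $\lambda_{i+1} \leq \overline{\mu}_i$ that survives once $\mu_1$ is pinned down. Everything else is a routine application of Bernstein together with skewing-operator algebra, and the degenerate cases ($j > n-k+g$, or no valid $\nu$ existing) are handled automatically because $\mathbf{h}_m = 0$ for $m < 0$.
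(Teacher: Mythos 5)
Your proof is correct and follows essentially the same route as the paper's: the same bijection $\mu \leftrightarrow (n-k+g,\nu_1,\nu_2,\ldots)$ between the index set of the left-hand side and the partitions $\nu$ with $\lambda/\nu$ a horizontal $(n-k+g-j)$-strip, followed by Proposition \ref{prop.bernstein.1}, Corollary \ref{cor.pieri.hskew}, the fusion rule (\ref{eq.skewing.fg}), and the reindexing $w=g+i$ with the vacuous terms $w<g$ killed by $\mathbf{e}_{w-g}=0$. No gaps to report.
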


\begin{proof}
[Proof of Lemma \ref{lem.pieri-hall.Lam1} (sketched).]First, we observe that
$\lambda_{1}\leq n-k$ (since $\lambda\in P_{k,n}$). Now, every partition $\mu$
satisfying $\mu_{1}=n-k+g$ must automatically satisfy $\mu_{1}\geq\lambda_{1}$
(because $\mu_{1}=n-k+\underbrace{g}_{\geq0}\geq n-k\geq\lambda_{1}$).

Let $A$ be the set of all partitions $\mu$ such that $\mu_{1}=n-k+g$ and such
that $\mu/\lambda$ is a horizontal $j$-strip. Let $B$ be the set of all
partitions $\nu$ such that $\lambda/\nu$ is a horizontal $\left(
n-k+g-j\right)  $-strip. Then,\footnote{We are using Definition
\ref{def.omega-and-complement} \textbf{(c)} here.}%
\begin{align*}
A  &  =\left\{  \mu\text{ is a partition}\ \mid\ \mu_{1}=n-k+g\text{ and
}\left\vert \mu\right\vert -\left\vert \lambda\right\vert =j\right. \\
&  \ \ \ \ \ \ \ \ \ \ \ \ \ \ \ \ \ \ \ \ \left.  \text{and }\mu_{1}%
\geq\lambda_{1}\geq\mu_{2}\geq\lambda_{2}\geq\mu_{3}\geq\lambda_{3}\geq
\cdots\right\} \\
&  =\left\{  \mu\text{ is a partition}\ \mid\ \mu_{1}=n-k+g\text{ and
}\left\vert \mu\right\vert -\left\vert \lambda\right\vert =j\right. \\
&  \ \ \ \ \ \ \ \ \ \ \ \ \ \ \ \ \ \ \ \ \left.  \text{and }\lambda_{1}%
\geq\mu_{2}\geq\lambda_{2}\geq\mu_{3}\geq\lambda_{3}\geq\mu_{4}\geq
\cdots\right\}
\end{align*}
(since every partition $\mu$ satisfying $\mu_{1}=n-k+g$ must automatically
satisfy $\mu_{1}\geq\lambda_{1}$) and%
\begin{align*}
B  &  =\left\{  \nu\text{ is a partition}\ \mid\ \left\vert \lambda\right\vert
-\left\vert \nu\right\vert =n-k+g-j\right. \\
&  \ \ \ \ \ \ \ \ \ \ \ \ \ \ \ \ \ \ \ \ \left.  \text{and }\lambda_{1}%
\geq\nu_{1}\geq\lambda_{2}\geq\nu_{2}\geq\lambda_{3}\geq\nu_{3}\geq
\cdots\right\}  .
\end{align*}
Hence, it is easy to check that the map%
\begin{align*}
B  &  \rightarrow A,\\
\nu &  \mapsto\left(  n-k+g,\nu_{1},\nu_{2},\nu_{3},\ldots\right)
\end{align*}
is well-defined (because every $\nu\in B$ satisfies $\lambda_{1}\geq\nu_{1}$
and thus $n-k+\underbrace{g}_{\geq0}\geq n-k\geq\lambda_{1}\geq\nu_{1}$) and
is a bijection (its inverse map just sends each $\mu\in A$ to $\left(  \mu
_{2},\mu_{3},\mu_{4},\ldots\right)  \in B$). Thus, we can substitute $\left(
n-k+g,\nu_{1},\nu_{2},\nu_{3},\ldots\right)  $ for $\mu$ in the sum $\sum
_{\mu\in A}\mathbf{s}_{\mu}$. We thus obtain%
\begin{equation}
\sum_{\mu\in A}\mathbf{s}_{\mu}=\sum_{\nu\in B}\mathbf{s}_{\left(
n-k+g,\nu_{1},\nu_{2},\nu_{3},\ldots\right)  }.
\label{pf.lem.pieri-hall.Lam1.1}%
\end{equation}

But each $\nu\in B$ satisfies $n-k+\underbrace{g}_{\geq0}\geq n-k\geq
\lambda_{1}\geq\nu_{1}$ and thus%
\begin{equation}
\sum_{i\in\mathbb{N}}\left(  -1\right)  ^{i}\mathbf{h}_{n-k+g+i}\left(
\mathbf{e}_{i}\right)  ^{\perp}\mathbf{s}_{\nu}=\mathbf{s}_{\left(
n-k+g,\nu_{1},\nu_{2},\nu_{3},\ldots\right)  }
\label{pf.lem.pieri-hall.Lam1.2}%
\end{equation}
(by Proposition \ref{prop.bernstein.1}, applied to $\nu$ and $n-k+g$ instead
of $\lambda$ and $m$). Hence, (\ref{pf.lem.pieri-hall.Lam1.1}) becomes%
\begin{align}
\sum_{\mu\in A}\mathbf{s}_{\mu}  &  =\sum_{\nu\in B}\underbrace{\mathbf{s}%
_{\left(  n-k+g,\nu_{1},\nu_{2},\nu_{3},\ldots\right)  }}_{\substack{=\sum
_{i\in\mathbb{N}}\left(  -1\right)  ^{i}\mathbf{h}_{n-k+g+i}\left(
\mathbf{e}_{i}\right)  ^{\perp}\mathbf{s}_{\nu}\\\text{(by
(\ref{pf.lem.pieri-hall.Lam1.2}))}}}=\sum_{\nu\in B}\sum_{i\in\mathbb{N}%
}\left(  -1\right)  ^{i}\mathbf{h}_{n-k+g+i}\left(  \mathbf{e}_{i}\right)
^{\perp}\mathbf{s}_{\nu}\nonumber\\
&  =\sum_{i\in\mathbb{N}}\left(  -1\right)  ^{i}\mathbf{h}_{n-k+g+i}\left(
\mathbf{e}_{i}\right)  ^{\perp}\left(  \sum_{\nu\in B}\mathbf{s}_{\nu}\right)
. \label{pf.lem.pieri-hall.Lam1.3}%
\end{align}

But Corollary \ref{cor.pieri.hskew} (applied to $i=n-k+g-j$)
yields\footnote{More precisely: This follows from Corollary
\ref{cor.pieri.hskew} (applied to $i=n-k+g-j$) when $n-k+g-j\in\mathbb{N}$.
But otherwise, it is obvious for trivial reasons ($0=0$).}%
\begin{align}
\left(  \mathbf{h}_{n-k+g-j}\right)  ^{\perp}\mathbf{s}_{\lambda}  &
=\underbrace{\sum_{\substack{\mu\text{ is a partition;}\\\lambda/\mu\text{ is
a horizontal }\left(  n-k+g-j\right)  \text{-strip}}}}_{\substack{=\sum
_{\mu\in B}\\\text{(by the definition of }B\text{)}}}\mathbf{s}_{\mu}%
=\sum_{\mu\in B}\mathbf{s}_{\mu}\nonumber\\
&  =\sum_{\nu\in B}\mathbf{s}_{\nu}. \label{pf.lem.pieri-hall.Lam1.4}%
\end{align}
Hence, (\ref{pf.lem.pieri-hall.Lam1.3}) becomes%
\begin{align*}
\sum_{\mu\in A}\mathbf{s}_{\mu}  &  =\sum_{i\in\mathbb{N}}\left(  -1\right)
^{i}\mathbf{h}_{n-k+g+i}\left(  \mathbf{e}_{i}\right)  ^{\perp}\left(
\underbrace{\sum_{\nu\in B}\mathbf{s}_{\nu}}_{\substack{=\left(
\mathbf{h}_{n-k+g-j}\right)  ^{\perp}\mathbf{s}_{\lambda}\\\text{(by
(\ref{pf.lem.pieri-hall.Lam1.4}))}}}\right) \\
&  =\sum_{i\in\mathbb{N}}\left(  -1\right)  ^{i}\mathbf{h}_{n-k+g+i}%
\underbrace{\left(  \mathbf{e}_{i}\right)  ^{\perp}\left(  \left(
\mathbf{h}_{n-k+g-j}\right)  ^{\perp}\mathbf{s}_{\lambda}\right)  }_{=\left(
\left(  \mathbf{e}_{i}\right)  ^{\perp}\circ\left(  \mathbf{h}_{n-k+g-j}%
\right)  ^{\perp}\right)  \mathbf{s}_{\lambda}}\\
&  =\sum_{i\in\mathbb{N}}\left(  -1\right)  ^{i}\mathbf{h}_{n-k+g+i}%
\underbrace{\left(  \left(  \mathbf{e}_{i}\right)  ^{\perp}\circ\left(
\mathbf{h}_{n-k+g-j}\right)  ^{\perp}\right)  }_{\substack{=\left(
\mathbf{h}_{n-k+g-j}\mathbf{e}_{i}\right)  ^{\perp}\\\text{(since
(\ref{eq.skewing.fg})}\\\text{yields }\left(  \mathbf{h}_{n-k+g-j}%
\mathbf{e}_{i}\right)  ^{\perp}=\left(  \mathbf{e}_{i}\right)  ^{\perp}%
\circ\left(  \mathbf{h}_{n-k+g-j}\right)  ^{\perp}\text{)}}}\mathbf{s}%
_{\lambda}\\
&  =\sum_{i\in\mathbb{N}}\left(  -1\right)  ^{i}\mathbf{h}_{n-k+g+i}\left(
\mathbf{h}_{n-k+g-j}\mathbf{e}_{i}\right)  ^{\perp}\mathbf{s}_{\lambda}\\
&  =\sum_{w\geq g}\left(  -1\right)  ^{w-g}\mathbf{h}_{n-k+w}\left(
\mathbf{h}_{n-k+g-j}\mathbf{e}_{w-g}\right)  ^{\perp}\mathbf{s}_{\lambda}\\
&  \ \ \ \ \ \ \ \ \ \ \left(  \text{here, we have substituted }w-g\text{ for
}i\text{ in the sum}\right)  .
\end{align*}
Comparing this with%
\begin{align*}
&  \sum_{w\geq1}\left(  -1\right)  ^{w-g}\mathbf{h}_{n-k+w}\left(
\mathbf{h}_{n-k+g-j}\mathbf{e}_{w-g}\right)  ^{\perp}\mathbf{s}_{\lambda}\\
&  =\sum_{w=1}^{g-1}\left(  -1\right)  ^{w-g}\mathbf{h}_{n-k+w}\left(
\mathbf{h}_{n-k+g-j}\underbrace{\mathbf{e}_{w-g}}_{\substack{=0\\\text{(since
}w-g<0\\\text{(since }w\leq g-1<g\text{))}}}\right)  ^{\perp}\mathbf{s}%
_{\lambda}\\
&  \ \ \ \ \ \ \ \ \ \ +\sum_{w\geq g}\left(  -1\right)  ^{w-g}\mathbf{h}%
_{n-k+w}\left(  \mathbf{h}_{n-k+g-j}\mathbf{e}_{w-g}\right)  ^{\perp
}\mathbf{s}_{\lambda}\\
&  \ \ \ \ \ \ \ \ \ \ \left(  \text{since }g\text{ is a positive
integer}\right) \\
&  =\underbrace{\sum_{w=1}^{g-1}\left(  -1\right)  ^{w-g}\mathbf{h}%
_{n-k+w}\left(  \mathbf{h}_{n-k+g-j}0\right)  ^{\perp}\mathbf{s}_{\lambda}%
}_{=0}+\sum_{w\geq g}\left(  -1\right)  ^{w-g}\mathbf{h}_{n-k+w}\left(
\mathbf{h}_{n-k+g-j}\mathbf{e}_{w-g}\right)  ^{\perp}\mathbf{s}_{\lambda}\\
&  =\sum_{w\geq g}\left(  -1\right)  ^{w-g}\mathbf{h}_{n-k+w}\left(
\mathbf{h}_{n-k+g-j}\mathbf{e}_{w-g}\right)  ^{\perp}\mathbf{s}_{\lambda},
\end{align*}
we obtain%
\begin{equation}
\sum_{\mu\in A}\mathbf{s}_{\mu}=\sum_{w\geq1}\left(  -1\right)  ^{w-g}%
\mathbf{h}_{n-k+w}\left(  \mathbf{h}_{n-k+g-j}\mathbf{e}_{w-g}\right)
^{\perp}\mathbf{s}_{\lambda}. \label{pf.lem.pieri-hall.Lam1.6}%
\end{equation}
In view of%
\[
\sum_{\mu\in A}=\sum_{\substack{\mu\text{ is a partition;}\\\mu_{1}%
=n-k+g;\\\mu/\lambda\text{ is a horizontal }j\text{-strip}}%
}\ \ \ \ \ \ \ \ \ \ \left(  \text{by the definition of }A\right)  ,
\]
this rewrites as%
\[
\sum_{\substack{\mu\text{ is a partition;}\\\mu_{1}=n-k+g;\\\mu/\lambda\text{
is a horizontal }j\text{-strip}}}\mathbf{s}_{\mu}=\sum_{w\geq1}\left(
-1\right)  ^{w-g}\mathbf{h}_{n-k+w}\left(  \mathbf{h}_{n-k+g-j}\mathbf{e}%
_{w-g}\right)  ^{\perp}\mathbf{s}_{\lambda}.
\]
This proves Lemma \ref{lem.pieri-hall.Lam1}.
\end{proof}

Our next lemma will be a slight generalization of Lemma \ref{lem.sm1j.1}; but
first we extend our definition of $\mathbf{s}_{\left(  m,1^{j}\right)  }$:

\begin{convention}
\label{conv.sm1j.jneg}Let $m\in\mathbb{N}$, and let $j$ be a negative integer.
Then, we shall understand the (otherwise undefined) expression $\mathbf{s}%
_{\left(  m,1^{j}\right)  }$ to mean $0\in\Lambda$.
\end{convention}

We can now generalize Lemma \ref{lem.sm1j.1} as follows:

\begin{lemma}
\label{lem.sm1j.1gen}Let $m$ be a positive integer. Let $j\in\mathbb{Z}$ be
such that $m+j>0$. Then,%
\[
\mathbf{s}_{\left(  m,1^{j}\right)  }=\sum_{i=1}^{m}\left(  -1\right)
^{i-1}\mathbf{h}_{m-i}\mathbf{e}_{j+i}.
\]

\end{lemma}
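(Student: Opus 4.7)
The plan is to prove Lemma \ref{lem.sm1j.1gen} by splitting into two cases according to the sign of $j$, and reducing the genuinely new content to the already-established identity \eqref{pf.prop.redh.Lam.3}.

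First, if $j \geq 0$, then $j \in \mathbb{N}$ and the claim is simply Lemma \ref{lem.sm1j.1} verbatim, so nothing new is needed. The interesting case is $j < 0$, where by Convention \ref{conv.sm1j.jneg} the left-hand side $\mathbf{s}_{(m,1^{j})}$ equals $0$, so the task reduces to showing
\[
\sum_{i=1}^{m} (-1)^{i-1} \mathbf{h}_{m-i} \mathbf{e}_{j+i} = 0
\]
under the hypothesis $j < 0$ and $m + j > 0$.

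For this, I would write $j = -\ell$ with $\ell \geq 1$, so that $m + j > 0$ rewrites as $m > \ell$. Since $\mathbf{e}_{j+i} = \mathbf{e}_{i-\ell} = 0$ for all $i < \ell$, only the terms with $i \geq \ell$ survive, and substituting $p = i - \ell$ gives
\[
\sum_{i=1}^{m} (-1)^{i-1} \mathbf{h}_{m-i} \mathbf{e}_{j+i}
= (-1)^{\ell-1} \sum_{p=0}^{m-\ell} (-1)^{p} \mathbf{h}_{(m-\ell)-p} \mathbf{e}_{p}.
\]
Now apply \eqref{pf.prop.redh.Lam.3} with $N := m - \ell$; since $m > \ell$ we have $N > 0$, hence $\delta_{0,N} = 0$ and the inner sum vanishes. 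This gives the desired identity and concludes the second case.

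The proof is essentially mechanical; the main (very mild) obstacle is just bookkeeping with the sign conventions and making sure the index shift $p = i - \ell$ aligns exactly with the form of \eqref{pf.prop.redh.Lam.3} that was proved earlier. No new ideas beyond those already used in the proof of Lemma \ref{lem.sm1j.1} are required.
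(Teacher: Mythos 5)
Your proof is correct and follows essentially the same route as the paper: the case $j\geq 0$ is Lemma \ref{lem.sm1j.1}, and the case $j<0$ reduces to the identity (\ref{pf.prop.redh.Lam.3}) with $N=m+j>0$. The only difference is presentational — you verify directly that the right-hand side vanishes (dropping the terms with $\mathbf{e}_{j+i}=0$ and reindexing), whereas the paper re-runs the proof of Lemma \ref{lem.sm1j.1} with the empty-sum interpretation; the computation is the same.
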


\begin{proof}
[Proof of Lemma \ref{lem.sm1j.1gen}.]If $j\in\mathbb{N}$, then this follows
directly from Lemma \ref{lem.sm1j.1}. Hence, for the rest of this proof, we
WLOG assume that $j\notin\mathbb{N}$. Hence, $j<0$. Now, the proof of Lemma
\ref{lem.sm1j.1gen} is the same as our above proof of Lemma \ref{lem.sm1j.1},
with two changes:

\begin{itemize}
\item The inequality $m+j>0$ no longer follows from $m>0$ and $j\geq0$, but
rather comes straight from the assumptions.

\item The equality $\sum_{i=0}^{j}\left(  -1\right)  ^{i}\mathbf{h}%
_{m+i}\mathbf{e}_{j-i}=\mathbf{s}_{\left(  m,1^{j}\right)  }$ no longer
follows from (\ref{pf.prop.redh.Lam.1}), but rather comes from comparing
$\sum_{i=0}^{j}\left(  -1\right)  ^{i}\mathbf{h}_{m+i}\mathbf{e}_{j-i}=\left(
\text{empty sum}\right)  =0$ with $\mathbf{s}_{\left(  m,1^{j}\right)  }=0$.
\end{itemize}

Thus, Lemma \ref{lem.sm1j.1gen} is proven.
\end{proof}

\begin{lemma}
\label{lem.pieri-hall.Lam2}Let $j\in\left\{  0,1,\ldots,n-k\right\}  $, and
let $w$ be a positive integer. Then,%
\[
\sum_{g=1}^{j}\left(  -1\right)  ^{w-g}\mathbf{h}_{n-k+g-j}\mathbf{e}%
_{w-g}=\left(  -1\right)  ^{w-j}\mathbf{s}_{\left(  n-k+1,1^{w-j-1}\right)
}-\left(  -1\right)  ^{w}\mathbf{s}_{\left(  n-k-j+1,1^{w-1}\right)  }.
\]

\end{lemma}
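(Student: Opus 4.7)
The plan is to derive the identity by two applications of Lemma \ref{lem.sm1j.1gen}, linked by a single index substitution that changes the LHS summation variable from $g$ to an index $i$ compatible with that lemma, plus a range extension that introduces a correction term.

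First I would substitute $i = j-g+1$ on the LHS. This turns $g=1,\ldots,j$ into $i = j,\ldots,1$ and yields $n-k+g-j = (n-k+1)-i$ and $w-g = (w-j-1)+i$. Pulling out the sign factor $(-1)^{w-j-1}$, one rewrites
\[
\sum_{g=1}^{j}(-1)^{w-g}\mathbf{h}_{n-k+g-j}\mathbf{e}_{w-g}
= (-1)^{w-j-1}\sum_{i=1}^{j}(-1)^{i}\mathbf{h}_{(n-k+1)-i}\mathbf{e}_{(w-j-1)+i}.
\]
The summand now exactly matches the shape appearing in Lemma \ref{lem.sm1j.1gen} for $m = n-k+1$ and (new) second parameter $w-j-1$, except the summation range is truncated at $j$ rather than $m = n-k+1$.

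Next I would split the inner sum as $\sum_{i=1}^{j} = \sum_{i=1}^{n-k+1} - \sum_{i=j+1}^{n-k+1}$ (valid since $j \leq n-k < n-k+1$). For the full-range piece, Lemma \ref{lem.sm1j.1gen} applied with $m = n-k+1$ and second parameter $w-j-1$ gives
\[
\sum_{i=1}^{n-k+1}(-1)^{i-1}\mathbf{h}_{(n-k+1)-i}\mathbf{e}_{(w-j-1)+i} = \mathbf{s}_{(n-k+1,\,1^{w-j-1})},
\]
whose hypotheses $m > 0$ and $m+(w-j-1) = n-k+w-j > 0$ both hold since $j\le n-k$ and $w\ge 1$ (and Convention \ref{conv.sm1j.jneg} covers the case $w-j-1 < 0$). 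For the leftover piece, I would substitute $i' = i - j$ to convert $\sum_{i=j+1}^{n-k+1}$ into $\sum_{i'=1}^{n-k-j+1}$; the arguments become $\mathbf{h}_{(n-k-j+1)-i'}\mathbf{e}_{(w-1)+i'}$, and the sign becomes $(-1)^{j+i'}$. A second application of Lemma \ref{lem.sm1j.1gen}, this time with $m = n-k-j+1$ and second parameter $w-1$ (hypotheses hold trivially since $w \ge 1$ and $n-k-j+1 \ge 1$), identifies
\[
\sum_{i'=1}^{n-k-j+1}(-1)^{i'-1}\mathbf{h}_{(n-k-j+1)-i'}\mathbf{e}_{(w-1)+i'} = \mathbf{s}_{(n-k-j+1,\,1^{w-1})}.
\]

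Finally I would assemble the two pieces: the above identifications give
\[
\sum_{i=1}^{j}(-1)^{i}\mathbf{h}_{(n-k+1)-i}\mathbf{e}_{(w-j-1)+i}
= -\mathbf{s}_{(n-k+1,\,1^{w-j-1})} + (-1)^{j}\,\mathbf{s}_{(n-k-j+1,\,1^{w-1})},
\]
and multiplying through by $(-1)^{w-j-1}$ collapses the signs to $(-1)^{w-j}$ and $-(-1)^{w}$ respectively, recovering exactly the claimed identity. There is no real obstacle beyond careful bookkeeping of the two index substitutions and the sign; the only thing to watch is the degenerate case $w \le j$, where $\mathbf{s}_{(n-k+1,\,1^{w-j-1})} = 0$ by Convention \ref{conv.sm1j.jneg} and the application of Lemma \ref{lem.sm1j.1gen} remains legitimate because its hypothesis is $m + (\text{second parameter}) > 0$, not nonnegativity of the second parameter.
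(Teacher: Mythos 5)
Your proposal is correct and follows essentially the same route as the paper's proof: the same index substitution $g = j+1-i$, the same split of the range at $i=j$ versus the full range $1,\ldots,n-k+1$, and the same two applications of the hook-shape expansion (Lemma \ref{lem.sm1j.1gen} with $m=n-k+1$, parameter $w-j-1$, and with $m=n-k-j+1$, parameter $w-1$), differing only in the direction of presentation and the careful sign bookkeeping, which you carry out correctly, including the degenerate case $w\le j$ via Convention \ref{conv.sm1j.jneg}.
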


\begin{proof}
[Proof of Lemma \ref{lem.pieri-hall.Lam2}.]From $j\in\left\{  0,1,\ldots
,n-k\right\}  $, we obtain $0\leq j\leq n-k\leq n-k+1$.

We have $\underbrace{n}_{\geq k}-k+1\geq k-k+1=1$; thus, $n-k+1$ is a positive
integer. Also, $\left(  n-k+1\right)  +\left(  w-\underbrace{j}_{\leq
n-k}-1\right)  \geq\left(  n-k+1\right)  +\left(  w-\left(  n-k\right)
-1\right)  =w>0$ (since $w$ is a positive integer). Hence, Lemma
\ref{lem.sm1j.1gen} (applied to $n-k+1$ and $w-j-1$ instead of $m$ and $j$)
yields%
\begin{align}
\mathbf{s}_{\left(  n-k+1,1^{w-j-1}\right)  } &  =\sum_{i=1}^{n-k+1}\left(
-1\right)  ^{i-1}\mathbf{h}_{n-k+1-i}\mathbf{e}_{w-j-1+i}\nonumber\\
&  =\sum_{i=1}^{j}\left(  -1\right)  ^{i-1}\mathbf{h}_{n-k+1-i}\mathbf{e}%
_{w-j-1+i}+\sum_{i=j+1}^{n-k+1}\left(  -1\right)  ^{i-1}\mathbf{h}%
_{n-k+1-i}\mathbf{e}_{w-j-1+i}\label{pf.lem.pieri-hall.Lam2.1}%
\end{align}
(since $0\leq j\leq n-k+1$). Also, $n-k-\underbrace{j}_{\leq n-k}+1\geq
n-k-\left(  n-k\right)  +1=1$; thus, $n-k-j+1$ is a positive integer.
Furthermore, $w-1\in\mathbb{N}$ (since $w$ is a positive integer). Hence,
Lemma \ref{lem.sm1j.1} (applied to $n-k-j+1$ and $w-1$ instead of $m$ and $j$)
yields%
\begin{align*}
\mathbf{s}_{\left(  n-k-j+1,1^{w-1}\right)  } &  =\sum_{i=1}^{n-k-j+1}\left(
-1\right)  ^{i-1}\mathbf{h}_{n-k-j+1-i}\mathbf{e}_{w-1+i}\\
&  =\sum_{i=j+1}^{n-k+1}\underbrace{\left(  -1\right)  ^{i-j-1}}_{=\left(
-1\right)  ^{j}\left(  -1\right)  ^{i-1}}\underbrace{\mathbf{h}%
_{n-k-j+1-\left(  i-j\right)  }}_{=\mathbf{h}_{n-k+1-i}}\underbrace{\mathbf{e}%
_{w-1+i-j}}_{=\mathbf{e}_{w-j-1+i}}\\
&  \ \ \ \ \ \ \ \ \ \ \left(  \text{here, we have substituted }i-j\text{ for
}i\text{ in the sum}\right)  \\
&  =\left(  -1\right)  ^{j}\sum_{i=j+1}^{n-k+1}\left(  -1\right)
^{i-1}\mathbf{h}_{n-k+1-i}\mathbf{e}_{w-j-1+i}.
\end{align*}
Multiplying this equality by $\left(  -1\right)  ^{j}$, we find%
\[
\left(  -1\right)  ^{j}\mathbf{s}_{\left(  n-k-j+1,1^{w-1}\right)  }%
=\sum_{i=j+1}^{n-k+1}\left(  -1\right)  ^{i-1}\mathbf{h}_{n-k+1-i}%
\mathbf{e}_{w-j-1+i}.
\]
Subtracting this equality from (\ref{pf.lem.pieri-hall.Lam2.1}), we obtain%
\begin{align}
&  \mathbf{s}_{\left(  n-k+1,1^{w-j-1}\right)  }-\left(  -1\right)
^{j}\mathbf{s}_{\left(  n-k-j+1,1^{w-1}\right)  }\nonumber\\
&  =\left(  \sum_{i=1}^{j}\left(  -1\right)  ^{i-1}\mathbf{h}_{n-k+1-i}%
\mathbf{e}_{w-j-1+i}+\sum_{i=j+1}^{n-k+1}\left(  -1\right)  ^{i-1}%
\mathbf{h}_{n-k+1-i}\mathbf{e}_{w-j-1+i}\right)  \nonumber\\
&  \ \ \ \ \ \ \ \ \ \ -\sum_{i=j+1}^{n-k+1}\left(  -1\right)  ^{i-1}%
\mathbf{h}_{n-k+1-i}\mathbf{e}_{w-j-1+i}\nonumber\\
&  =\sum_{i=1}^{j}\left(  -1\right)  ^{i-1}\mathbf{h}_{n-k+1-i}\mathbf{e}%
_{w-j-1+i}.\label{pf.lem.pieri-hall.Lam2.4}%
\end{align}

On the other hand,%
\begin{align*}
&  \sum_{g=1}^{j}\left(  -1\right)  ^{w-g}\mathbf{h}_{n-k+g-j}\mathbf{e}%
_{w-g}\\
&  =\sum_{i=1}^{j}\underbrace{\left(  -1\right)  ^{w-\left(  j+1-i\right)  }%
}_{=\left(  -1\right)  ^{w-j}\left(  -1\right)  ^{i-1}}\underbrace{\mathbf{h}%
_{n-k+\left(  j+1-i\right)  -j}}_{=\mathbf{h}_{n-k+1-i}}\underbrace{\mathbf{e}%
_{w-\left(  j+1-i\right)  }}_{=\mathbf{e}_{w-j-1+i}}\\
&  \ \ \ \ \ \ \ \ \ \ \left(  \text{here, we have substituted }j+1-i\text{
for }g\text{ in the sum}\right) \\
&  =\left(  -1\right)  ^{w-j}\underbrace{\sum_{i=1}^{j}\left(  -1\right)
^{i-1}\mathbf{h}_{n-k+1-i}\mathbf{e}_{w-j-1+i}}_{\substack{=\mathbf{s}%
_{\left(  n-k+1,1^{w-j-1}\right)  }-\left(  -1\right)  ^{j}\mathbf{s}_{\left(
n-k-j+1,1^{w-1}\right)  }\\\text{(by (\ref{pf.lem.pieri-hall.Lam2.4}))}}}\\
&  =\left(  -1\right)  ^{w-j}\left(  \mathbf{s}_{\left(  n-k+1,1^{w-j-1}%
\right)  }-\left(  -1\right)  ^{j}\mathbf{s}_{\left(  n-k-j+1,1^{w-1}\right)
}\right) \\
&  =\left(  -1\right)  ^{w-j}\mathbf{s}_{\left(  n-k+1,1^{w-j-1}\right)
}-\underbrace{\left(  -1\right)  ^{w-j}\left(  -1\right)  ^{j}}_{=\left(
-1\right)  ^{w}}\mathbf{s}_{\left(  n-k-j+1,1^{w-1}\right)  }\\
&  =\left(  -1\right)  ^{w-j}\mathbf{s}_{\left(  n-k+1,1^{w-j-1}\right)
}-\left(  -1\right)  ^{w}\mathbf{s}_{\left(  n-k-j+1,1^{w-1}\right)  }.
\end{align*}
This proves Lemma \ref{lem.pieri-hall.Lam2}.
\end{proof}

\begin{proof}
[Proof of Theorem \ref{thm.pieri-hall.Lam}.]We have $j\in\left\{
0,1,\ldots,n-k\right\}  $, thus $0\leq j\leq n-k$. Also, we have $\lambda\in
P_{k,n}$; thus, the partition $\lambda$ has at most $k$ parts and satisfies
$\lambda_{1}\leq n-k$.

Let $g$ be an integer such that $g\geq j+1$. If $\mu$ is a partition such that
$\mu/\lambda$ is a horizontal $j$-strip, then $\mu_{1}\leq\underbrace{\lambda
_{1}}_{\leq n-k}+j\leq n-k+\underbrace{j}_{<j+1\leq g}<n-k+g$ and thus
$\mu_{1}\neq n-k+g$. Thus, there exists no partition $\mu$ such that $\mu
_{1}=n-k+g$ and such that $\mu/\lambda$ is a horizontal $j$-strip. Hence,
\begin{equation}
\sum_{\substack{\mu\text{ is a partition;}\\\mu_{1}=n-k+g;\\\mu/\lambda\text{
is a horizontal }j\text{-strip}}}\mathbf{s}_{\mu}=\left(  \text{empty
sum}\right)  =0. \label{pf.thm.pieri-hall.Lam.1}%
\end{equation}

Now, forget that we fixed $g$. We thus have proven the equality
(\ref{pf.thm.pieri-hall.Lam.1}) for every integer $g$ satisfying $g\geq j+1$.

On the other hand, let $g\in\left\{  1,2,\ldots,j\right\}  $. Thus, $g\leq
j\leq n-k$. If $w$ is an integer satisfying $w\geq n+1$, then $\underbrace{w}%
_{\geq n+1}-\underbrace{g}_{\leq n-k}\geq\left(  n+1\right)  -\left(
n-k\right)  =k+1>k$, and thus the partition $\left(  1^{w-g}\right)  $ does
\textbf{not} satisfy $\left(  1^{w-g}\right)  \subseteq\lambda$ (because the
partition $\lambda$ has at most $k$ parts, whereas the partition $\left(
1^{w-g}\right)  $ has $w-g>k$ parts), and therefore we have
\begin{align}
\left(  \underbrace{\mathbf{e}_{w-g}}_{=\mathbf{s}_{\left(  1^{w-g}\right)  }%
}\right)  ^{\perp}\left(  \mathbf{s}_{\lambda}\right)   &  =\left(
\mathbf{s}_{\left(  1^{w-g}\right)  }\right)  ^{\perp}\left(  \mathbf{s}%
_{\lambda}\right)  =\mathbf{s}_{\lambda/\left(  1^{w-g}\right)  }%
\ \ \ \ \ \ \ \ \ \ \left(  \text{by (\ref{eq.skewing.ss})}\right) \nonumber\\
&  =0\ \ \ \ \ \ \ \ \ \ \left(  \text{since we don't have }\left(
1^{w-g}\right)  \subseteq\lambda\right)  . \label{pf.thm.pieri-hall.Lam.2a}%
\end{align}
Hence, if $w$ is an integer satisfying $w\geq n+1$, then%
\begin{align}
\left(  \underbrace{\mathbf{h}_{n-k+g-j}\mathbf{e}_{w-g}}_{=\mathbf{e}%
_{w-g}\mathbf{h}_{n-k+g-j}}\right)  ^{\perp}\mathbf{s}_{\lambda}  &
=\underbrace{\left(  \mathbf{e}_{w-g}\mathbf{h}_{n-k+g-j}\right)  ^{\perp}%
}_{\substack{=\left(  \mathbf{h}_{n-k+g-j}\right)  ^{\perp}\circ\left(
\mathbf{e}_{w-g}\right)  ^{\perp}\\\text{(by (\ref{eq.skewing.fg}))}%
}}\mathbf{s}_{\lambda}=\left(  \left(  \mathbf{h}_{n-k+g-j}\right)  ^{\perp
}\circ\left(  \mathbf{e}_{w-g}\right)  ^{\perp}\right)  \left(  \mathbf{s}%
_{\lambda}\right) \nonumber\\
&  =\left(  \mathbf{h}_{n-k+g-j}\right)  ^{\perp}\underbrace{\left(  \left(
\mathbf{e}_{w-g}\right)  ^{\perp}\left(  \mathbf{s}_{\lambda}\right)  \right)
}_{\substack{=0\\\text{(by (\ref{pf.thm.pieri-hall.Lam.2a}))}}}=0.
\label{pf.thm.pieri-hall.Lam.2b}%
\end{align}

Now,%
\begin{align}
&  \sum_{\substack{\mu\text{ is a partition;}\\\mu_{1}=n-k+g;\\\mu
/\lambda\text{ is a horizontal }j\text{-strip}}}\mathbf{s}_{\mu}\nonumber\\
&  =\sum_{w\geq1}\left(  -1\right)  ^{w-g}\mathbf{h}_{n-k+w}\left(
\mathbf{h}_{n-k+g-j}\mathbf{e}_{w-g}\right)  ^{\perp}\mathbf{s}_{\lambda
}\ \ \ \ \ \ \ \ \ \ \left(  \text{by Lemma \ref{lem.pieri-hall.Lam1}}\right)
\nonumber\\
&  =\sum_{w=1}^{n}\left(  -1\right)  ^{w-g}\mathbf{h}_{n-k+w}\left(
\mathbf{h}_{n-k+g-j}\mathbf{e}_{w-g}\right)  ^{\perp}\mathbf{s}_{\lambda
}\nonumber\\
&  \ \ \ \ \ \ \ \ \ \ +\sum_{w\geq n+1}\left(  -1\right)  ^{w-g}%
\mathbf{h}_{n-k+w}\underbrace{\left(  \mathbf{h}_{n-k+g-j}\mathbf{e}%
_{w-g}\right)  ^{\perp}\mathbf{s}_{\lambda}}_{\substack{=0\\\text{(by
(\ref{pf.thm.pieri-hall.Lam.2b}))}}}\nonumber\\
&  =\sum_{w=1}^{n}\left(  -1\right)  ^{w-g}\mathbf{h}_{n-k+w}\left(
\mathbf{h}_{n-k+g-j}\mathbf{e}_{w-g}\right)  ^{\perp}\mathbf{s}_{\lambda}.
\label{pf.thm.pieri-hall.Lam.3}%
\end{align}

Now, forget that we fixed $g$. We thus have proven the equality
(\ref{pf.thm.pieri-hall.Lam.3}) for each $g\in\left\{  1,2,\ldots,j\right\}  $.

Proposition \ref{prop.pieri.h} (applied to $i=j$) yields%
\[
\mathbf{s}_{\lambda}\mathbf{h}_{j}=\sum_{\substack{\mu\text{ is a
partition;}\\\mu/\lambda\text{ is a horizontal }j\text{-strip}}}\mathbf{s}%
_{\mu}=\sum_{\substack{\mu\text{ is a partition;}\\\mu_{1}\leq n-k;\\\mu
/\lambda\text{ is a horizontal }j\text{-strip}}}\mathbf{s}_{\mu}%
+\sum_{\substack{\mu\text{ is a partition;}\\\mu_{1}>n-k;\\\mu/\lambda\text{
is a horizontal }j\text{-strip}}}\mathbf{s}_{\mu}%
\]
(since each partition $\mu$ satisfies either $\mu_{1}\leq n-k$ or $\mu
_{1}>n-k$). Hence,%
\begin{align}
&  \mathbf{s}_{\lambda}\mathbf{h}_{j}-\sum_{\substack{\mu\text{ is a
partition;}\\\mu_{1}\leq n-k;\\\mu/\lambda\text{ is a horizontal
}j\text{-strip}}}\mathbf{s}_{\mu}\nonumber\\
&  =\sum_{\substack{\mu\text{ is a partition;}\\\mu_{1}>n-k;\\\mu
/\lambda\text{ is a horizontal }j\text{-strip}}}\mathbf{s}_{\mu}=\sum_{g\geq
1}\sum_{\substack{\mu\text{ is a partition;}\\\mu_{1}=n-k+g;\\\mu
/\lambda\text{ is a horizontal }j\text{-strip}}}\mathbf{s}_{\mu}\nonumber\\
&  \ \ \ \ \ \ \ \ \ \ \left(
\begin{array}
[c]{c}%
\text{because the partitions }\mu\text{ satisfying }\mu_{1}>n-k\text{ are
precisely}\\
\text{the partitions }\mu\text{ satisfying }\mu_{1}=n-k+g\text{ for some
}g\geq1\text{,}\\
\text{and moreover the }g\text{ is uniquely determined by the partition}%
\end{array}
\right) \nonumber\\
&  =\sum_{g=1}^{j}\underbrace{\sum_{\substack{\mu\text{ is a partition;}%
\\\mu_{1}=n-k+g;\\\mu/\lambda\text{ is a horizontal }j\text{-strip}%
}}\mathbf{s}_{\mu}}_{\substack{=\sum_{w=1}^{n}\left(  -1\right)
^{w-g}\mathbf{h}_{n-k+w}\left(  \mathbf{h}_{n-k+g-j}\mathbf{e}_{w-g}\right)
^{\perp}\mathbf{s}_{\lambda}\\\text{(by (\ref{pf.thm.pieri-hall.Lam.3}))}%
}}+\sum_{g=j+1}^{\infty}\underbrace{\sum_{\substack{\mu\text{ is a
partition;}\\\mu_{1}=n-k+g;\\\mu/\lambda\text{ is a horizontal }%
j\text{-strip}}}\mathbf{s}_{\mu}}_{\substack{=0\\\text{(by
(\ref{pf.thm.pieri-hall.Lam.1}))}}}\nonumber\\
&  =\underbrace{\sum_{g=1}^{j}\sum_{w=1}^{n}}_{=\sum_{w=1}^{n}\sum_{g=1}^{j}%
}\left(  -1\right)  ^{w-g}\mathbf{h}_{n-k+w}\left(  \mathbf{h}_{n-k+g-j}%
\mathbf{e}_{w-g}\right)  ^{\perp}\mathbf{s}_{\lambda}\nonumber\\
&  =\sum_{w=1}^{n}\sum_{g=1}^{j}\left(  -1\right)  ^{w-g}\mathbf{h}%
_{n-k+w}\left(  \mathbf{h}_{n-k+g-j}\mathbf{e}_{w-g}\right)  ^{\perp
}\mathbf{s}_{\lambda}\nonumber\\
&  =\sum_{w=1}^{n}\mathbf{h}_{n-k+w}\left(  \underbrace{\sum_{g=1}^{j}\left(
-1\right)  ^{w-g}\mathbf{h}_{n-k+g-j}\mathbf{e}_{w-g}}_{\substack{=\left(
-1\right)  ^{w-j}\mathbf{s}_{\left(  n-k+1,1^{w-j-1}\right)  }-\left(
-1\right)  ^{w}\mathbf{s}_{\left(  n-k-j+1,1^{w-1}\right)  }\\\text{(by Lemma
\ref{lem.pieri-hall.Lam2})}}}\right)  ^{\perp}\mathbf{s}_{\lambda}\nonumber\\
&  =\sum_{w=1}^{n}\mathbf{h}_{n-k+w}\left(  \left(  -1\right)  ^{w-j}%
\mathbf{s}_{\left(  n-k+1,1^{w-j-1}\right)  }-\left(  -1\right)
^{w}\mathbf{s}_{\left(  n-k-j+1,1^{w-1}\right)  }\right)  ^{\perp}%
\mathbf{s}_{\lambda}\nonumber\\
&  =\sum_{w=1}^{n}\mathbf{h}_{n-k+w}\left(  -1\right)  ^{w-j}\left(
\mathbf{s}_{\left(  n-k+1,1^{w-j-1}\right)  }\right)  ^{\perp}\mathbf{s}%
_{\lambda}\nonumber\\
&  \ \ \ \ \ \ \ \ \ \ -\sum_{w=1}^{n}\mathbf{h}_{n-k+w}\left(  -1\right)
^{w}\left(  \mathbf{s}_{\left(  n-k-j+1,1^{w-1}\right)  }\right)  ^{\perp
}\mathbf{s}_{\lambda}. \label{pf.thm.pieri-hall.Lam.5}%
\end{align}

Next, we claim that%
\begin{equation}
\left(  \mathbf{s}_{\left(  n-k+1,1^{w-j-1}\right)  }\right)  ^{\perp
}\mathbf{s}_{\lambda}=0\ \ \ \ \ \ \ \ \ \ \text{for each }w\in\left\{
1,2,\ldots,n\right\}  . \label{pf.thm.pieri-hall.Lam.6}%
\end{equation}

[\textit{Proof of (\ref{pf.thm.pieri-hall.Lam.6}):} Let $w\in\left\{
1,2,\ldots,n\right\}  $. If $w-j-1$ is a negative integer, then $\mathbf{s}%
_{\left(  n-k+1,1^{w-j-1}\right)  }=0$ (by Convention \ref{conv.sm1j.jneg}),
and thus (\ref{pf.thm.pieri-hall.Lam.6}) holds in this case. Hence, for the
rest of this proof of (\ref{pf.thm.pieri-hall.Lam.6}), we WLOG assume that
$w-j-1$ is not a negative integer. Thus, $w-j-1\in\mathbb{N}$. Now, the
partition $\left(  n-k+1,1^{w-j-1}\right)  $ has a bigger first entry than the
partition $\lambda$ (since its first entry is $n-k+1>n-k\geq\lambda_{1}$).
Thus, we do not have $\left(  n-k+1,1^{w-j-1}\right)  \subseteq\lambda$.
Hence, $\mathbf{s}_{\lambda/\left(  n-k+1,1^{w-j-1}\right)  }=0$. But
(\ref{eq.skewing.ss}) yields $\left(  \mathbf{s}_{\left(  n-k+1,1^{w-j-1}%
\right)  }\right)  ^{\perp}\mathbf{s}_{\lambda}=\mathbf{s}_{\lambda/\left(
n-k+1,1^{w-j-1}\right)  }=0$. This proves (\ref{pf.thm.pieri-hall.Lam.6}).]

Next, we claim that%
\begin{equation}
\left(  \mathbf{s}_{\left(  n-k-j+1,1^{w-1}\right)  }\right)  ^{\perp
}\mathbf{s}_{\lambda}=0\ \ \ \ \ \ \ \ \ \ \text{for each }w\in\left\{
k+1,k+2,\ldots,n\right\}  . \label{pf.thm.pieri-hall.Lam.7}%
\end{equation}

[\textit{Proof of (\ref{pf.thm.pieri-hall.Lam.7}):} Let $w\in\left\{
k+1,k+2,\ldots,n\right\}  $. Then, $w\geq k+1$. Now, the number of parts of
the partition $\left(  n-k-j+1,1^{w-1}\right)  $ is $1+\left(  w-1\right)
=w\geq k+1>k$, which is bigger than the number of parts of $\lambda$ (since
$\lambda$ has at most $k$ parts). Hence, we don't have $\left(
n-k-j+1,1^{w-1}\right)  \subseteq\lambda$. Thus, $\mathbf{s}_{\lambda/\left(
n-k-j+1,1^{w-1}\right)  }=0$. But (\ref{eq.skewing.ss}) yields $\left(
\mathbf{s}_{\left(  n-k-j+1,1^{w-1}\right)  }\right)  ^{\perp}\mathbf{s}%
_{\lambda}=\mathbf{s}_{\lambda/\left(  n-k-j+1,1^{w-1}\right)  }=0$. This
proves (\ref{pf.thm.pieri-hall.Lam.7}).]

Now, (\ref{pf.thm.pieri-hall.Lam.5}) becomes%
\begin{align*}
&  \mathbf{s}_{\lambda}\mathbf{h}_{j}-\sum_{\substack{\mu\text{ is a
partition;}\\\mu_{1}\leq n-k;\\\mu/\lambda\text{ is a horizontal
}j\text{-strip}}}\mathbf{s}_{\mu}\\
&  =\sum_{w=1}^{n}\mathbf{h}_{n-k+w}\left(  -1\right)  ^{w-j}%
\underbrace{\left(  \mathbf{s}_{\left(  n-k+1,1^{w-j-1}\right)  }\right)
^{\perp}\mathbf{s}_{\lambda}}_{\substack{=0\\\text{(by
(\ref{pf.thm.pieri-hall.Lam.6}))}}}\\
&  \ \ \ \ \ \ \ \ \ \ -\sum_{w=1}^{n}\mathbf{h}_{n-k+w}\left(  -1\right)
^{w}\left(  \mathbf{s}_{\left(  n-k-j+1,1^{w-1}\right)  }\right)  ^{\perp
}\mathbf{s}_{\lambda}\\
&  =-\sum_{w=1}^{n}\mathbf{h}_{n-k+w}\left(  -1\right)  ^{w}\left(
\mathbf{s}_{\left(  n-k-j+1,1^{w-1}\right)  }\right)  ^{\perp}\mathbf{s}%
_{\lambda}\\
&  =-\left(  \sum_{w=1}^{k}\mathbf{h}_{n-k+w}\left(  -1\right)  ^{w}\left(
\mathbf{s}_{\left(  n-k-j+1,1^{w-1}\right)  }\right)  ^{\perp}\mathbf{s}%
_{\lambda}\right. \\
&  \ \ \ \ \ \ \ \ \ \ \ \ \ \ \ \ \ \ \ \ \left.  +\sum_{w=k+1}^{n}%
\mathbf{h}_{n-k+w}\left(  -1\right)  ^{w}\underbrace{\left(  \mathbf{s}%
_{\left(  n-k-j+1,1^{w-1}\right)  }\right)  ^{\perp}\mathbf{s}_{\lambda}%
}_{\substack{=0\\\text{(by (\ref{pf.thm.pieri-hall.Lam.7}))}}}\right) \\
&  \ \ \ \ \ \ \ \ \ \ \left(  \text{since }0\leq k\leq n\right) \\
&  =-\sum_{w=1}^{k}\mathbf{h}_{n-k+w}\left(  -1\right)  ^{w}\left(
\mathbf{s}_{\left(  n-k-j+1,1^{w-1}\right)  }\right)  ^{\perp}\mathbf{s}%
_{\lambda}\\
&  =-\sum_{w=1}^{k}\left(  -1\right)  ^{w}\mathbf{h}_{n-k+w}\left(
\mathbf{s}_{\left(  n-k-j+1,1^{w-1}\right)  }\right)  ^{\perp}\mathbf{s}%
_{\lambda}\\
&  =-\sum_{i=1}^{k}\left(  -1\right)  ^{i}\mathbf{h}_{n-k+i}\left(
\mathbf{s}_{\left(  n-k-j+1,1^{i-1}\right)  }\right)  ^{\perp}\mathbf{s}%
_{\lambda}%
\end{align*}
(here, we have renamed the summation index $w$ as $i$). Hence,%
\[
\mathbf{s}_{\lambda}\mathbf{h}_{j}=\sum_{\substack{\mu\text{ is a
partition;}\\\mu_{1}\leq n-k;\\\mu/\lambda\text{ is a horizontal
}j\text{-strip}}}\mathbf{s}_{\mu}-\sum_{i=1}^{k}\left(  -1\right)
^{i}\mathbf{h}_{n-k+i}\left(  \mathbf{s}_{\left(  n-k-j+1,1^{i-1}\right)
}\right)  ^{\perp}\mathbf{s}_{\lambda}.
\]
This proves Theorem \ref{thm.pieri-hall.Lam}.
\end{proof}

\begin{proof}
[Proof of Theorem \ref{thm.pieri-hall}.]Theorem \ref{thm.pieri-hall.Lam}
yields%
\[
\mathbf{s}_{\lambda}\mathbf{h}_{j}=\sum_{\substack{\mu\text{ is a
partition;}\\\mu_{1}\leq n-k;\\\mu/\lambda\text{ is a horizontal
}j\text{-strip}}}\mathbf{s}_{\mu}-\sum_{i=1}^{k}\left(  -1\right)
^{i}\mathbf{h}_{n-k+i}\left(  \mathbf{s}_{\left(  n-k-j+1,1^{i-1}\right)
}\right)  ^{\perp}\mathbf{s}_{\lambda}.
\]
Both sides of this equality are symmetric functions in $\Lambda$. If we
evaluate them at $x_{1},x_{2},\ldots,x_{k}$ and project the resulting
symmetric polynomials onto $\mathcal{S}/I$, then we obtain%
\begin{align}
\overline{s_{\lambda}h_{j}}  &  =\sum_{\substack{\mu\text{ is a partition;}%
\\\mu_{1}\leq n-k;\\\mu/\lambda\text{ is a horizontal }j\text{-strip}%
}}\overline{s_{\mu}}-\sum_{i=1}^{k}\left(  -1\right)  ^{i}%
\underbrace{\overline{h_{n-k+i}}}_{\substack{=a_{i}\\\text{(since
(\ref{eq.h=amodI})}\\\text{yields }h_{n-k+i}\equiv a_{i}\operatorname{mod}%
I\text{)}}}\overline{\left(  \mathbf{s}_{\left(  n-k-j+1,1^{i-1}\right)
}\right)  ^{\perp}\mathbf{s}_{\lambda}}\nonumber\\
&  =\sum_{\substack{\mu\text{ is a partition;}\\\mu_{1}\leq n-k;\\\mu
/\lambda\text{ is a horizontal }j\text{-strip}}}\overline{s_{\mu}}-\sum
_{i=1}^{k}\left(  -1\right)  ^{i}a_{i}\overline{\left(  \mathbf{s}_{\left(
n-k-j+1,1^{i-1}\right)  }\right)  ^{\perp}\mathbf{s}_{\lambda}}.
\label{pf.thm.pieri-hall.2}%
\end{align}

But every partition $\mu$ has either at most $k$ parts or more than $k$ parts.
Hence,
\begin{align*}
&  \sum_{\substack{\mu\text{ is a partition;}\\\mu_{1}\leq n-k;\\\mu
/\lambda\text{ is a horizontal }j\text{-strip}}}\overline{s_{\mu}}\\
&  =\sum_{\substack{\mu\text{ is a partition;}\\\mu_{1}\leq n-k;\\\mu\text{
has at most }k\text{ parts;}\\\mu/\lambda\text{ is a horizontal }%
j\text{-strip}}}\overline{s_{\mu}}+\sum_{\substack{\mu\text{ is a
partition;}\\\mu_{1}\leq n-k;\\\mu\text{ has more than }k\text{ parts;}%
\\\mu/\lambda\text{ is a horizontal }j\text{-strip}}}\underbrace{\overline
{s_{\mu}}}_{\substack{=0\\\text{(because (\ref{eq.slam=0-too-long}) (applied
to }\mu\\\text{instead of }\lambda\text{) yields }s_{\mu}=0\text{)}}}\\
&  =\underbrace{\sum_{\substack{\mu\text{ is a partition;}\\\mu_{1}\leq
n-k;\\\mu\text{ has at most }k\text{ parts;}\\\mu/\lambda\text{ is a
horizontal }j\text{-strip}}}}_{\substack{=\sum_{\substack{\mu\in P_{k,n}%
;\\\mu/\lambda\text{ is a horizontal }j\text{-strip}}}\\\text{(because the
partitions }\mu\text{ such that }\mu_{1}\leq n-k\\\text{and such that }%
\mu\text{ has at most }k\text{ parts}\\\text{are precisely the partitions }%
\mu\in P_{k,n}\text{)}}}\overline{s_{\mu}}=\sum_{\substack{\mu\in
P_{k,n};\\\mu/\lambda\text{ is a horizontal }j\text{-strip}}}\overline{s_{\mu
}}.
\end{align*}
Hence, (\ref{pf.thm.pieri-hall.2}) becomes%
\begin{align*}
\overline{s_{\lambda}h_{j}}  &  =\underbrace{\sum_{\substack{\mu\text{ is a
partition;}\\\mu_{1}\leq n-k;\\\mu/\lambda\text{ is a horizontal
}j\text{-strip}}}\overline{s_{\mu}}}_{=\sum_{\substack{\mu\in P_{k,n}%
;\\\mu/\lambda\text{ is a horizontal }j\text{-strip}}}\overline{s_{\mu}}}%
-\sum_{i=1}^{k}\left(  -1\right)  ^{i}a_{i}\overline{\left(  \mathbf{s}%
_{\left(  n-k-j+1,1^{i-1}\right)  }\right)  ^{\perp}\mathbf{s}_{\lambda}}\\
&  =\sum_{\substack{\mu\in P_{k,n};\\\mu/\lambda\text{ is a horizontal
}j\text{-strip}}}\overline{s_{\mu}}-\sum_{i=1}^{k}\left(  -1\right)  ^{i}%
a_{i}\overline{\left(  \mathbf{s}_{\left(  n-k-j+1,1^{i-1}\right)  }\right)
^{\perp}\mathbf{s}_{\lambda}}.
\end{align*}
This proves Theorem \ref{thm.pieri-hall}.
\end{proof}

Let us again use the notation $c_{\alpha,\beta}^{\gamma}$ for a
Littlewood--Richardson coefficient (defined as in \cite[Definition
2.5.8]{GriRei18}, for example). Then, we can restate Theorem
\ref{thm.pieri-hall} as follows:

\begin{theorem}
\label{thm.pieri-hall-LR}Let $\lambda\in P_{k,n}$. Let $j\in\left\{
0,1,\ldots,n-k\right\}  $. Then,%
\[
\overline{s_{\lambda}h_{j}}=\sum_{\substack{\mu\in P_{k,n};\\\mu/\lambda\text{
is a horizontal }j\text{-strip}}}\overline{s_{\mu}}-\sum_{i=1}^{k}\left(
-1\right)  ^{i}a_{i}\sum_{\nu\subseteq\lambda}c_{\left(  n-k-j+1,1^{i-1}%
\right)  ,\nu}^{\lambda}\overline{s_{\nu}},
\]
where the last sum ranges over all partitions $\nu$ satisfying $\nu
\subseteq\lambda$.
\end{theorem}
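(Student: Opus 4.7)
The plan is to deduce Theorem \ref{thm.pieri-hall-LR} directly from Theorem \ref{thm.pieri-hall} by rewriting the skewing-operator expression $\left(\mathbf{s}_{\left(n-k-j+1,1^{i-1}\right)}\right)^{\perp}\mathbf{s}_{\lambda}$ as an explicit $\mathbf{k}$-linear combination of Schur functions, using the standard Littlewood-Richardson expansion of a skew Schur function. This is essentially a translation exercise, so there is no genuine analytic obstacle; the only thing to be slightly careful about is sum-indexing and the passage from $\Lambda$ to $\mathcal{S}/I$.

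First I would fix $i\in\{1,2,\ldots,k\}$ and apply the identity (\ref{eq.skewing.ss}) (with $\mu$ and $\lambda$ replaced by $\left(n-k-j+1,1^{i-1}\right)$ and $\lambda$, respectively) to obtain
\[
\left(\mathbf{s}_{\left(n-k-j+1,1^{i-1}\right)}\right)^{\perp}\mathbf{s}_{\lambda}=\mathbf{s}_{\lambda/\left(n-k-j+1,1^{i-1}\right)}.
\]
Then I would invoke Proposition \ref{prop.LR.props}\textbf{(a)} (applied to $\lambda$ and $\left(n-k-j+1,1^{i-1}\right)$ in the roles of $\lambda$ and $\mu$) to expand
\[
\mathbf{s}_{\lambda/\left(n-k-j+1,1^{i-1}\right)}=\sum_{\nu\text{ is a partition}}c_{\left(n-k-j+1,1^{i-1}\right),\nu}^{\lambda}\,\mathbf{s}_{\nu},
\]
and next I would use Proposition \ref{prop.LR.props}\textbf{(b)} to observe that $c_{\left(n-k-j+1,1^{i-1}\right),\nu}^{\lambda}=0$ whenever $\nu\not\subseteq\lambda$, so the sum collapses to a sum over $\nu\subseteq\lambda$.

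Evaluating both sides of the resulting identity in $\Lambda$ at $x_{1},x_{2},\ldots,x_{k}$ and projecting onto $\mathcal{S}/I$ yields
\[
\overline{\left(\mathbf{s}_{\left(n-k-j+1,1^{i-1}\right)}\right)^{\perp}\mathbf{s}_{\lambda}}=\sum_{\nu\subseteq\lambda}c_{\left(n-k-j+1,1^{i-1}\right),\nu}^{\lambda}\,\overline{s_{\nu}}
\]
for each $i\in\{1,2,\ldots,k\}$ (here the evaluation is compatible because $\nu\subseteq\lambda$ forces $\nu$ to have at most $k$ parts, so no spurious vanishing or re-indexing is needed). Substituting these identities into the formula given by Theorem \ref{thm.pieri-hall} and carrying the minus sign and the $a_{i}$ through the linearity of projection gives exactly the claim of Theorem \ref{thm.pieri-hall-LR}. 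The main subtlety, if any, is simply to note that $\lambda\in P_{k,n}$ already ensures $\ell(\lambda)\le k$, so the $\nu\subseteq\lambda$ appearing in the sum have $\overline{s_{\nu}}$ well-defined and nonzero in general, and no terms are inadvertently dropped.
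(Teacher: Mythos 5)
Your proposal is correct and follows essentially the same route as the paper's own proof: expand $\left(\mathbf{s}_{\left(n-k-j+1,1^{i-1}\right)}\right)^{\perp}\mathbf{s}_{\lambda}$ via (\ref{eq.skewing.ss}) and Proposition \ref{prop.LR.props} \textbf{(a)}, cut the sum down to $\nu\subseteq\lambda$ with Proposition \ref{prop.LR.props} \textbf{(b)}, evaluate at $x_{1},\ldots,x_{k}$, project onto $\mathcal{S}/I$, and substitute into Theorem \ref{thm.pieri-hall}. The only cosmetic difference is that the paper proves the projection identity for an arbitrary partition $\mu$ before specializing to $\mu=\left(n-k-j+1,1^{i-1}\right)$, whereas you specialize from the start.
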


\begin{proof}
[Proof of Theorem \ref{thm.pieri-hall-LR}.]Let $\mu$ be a partition. Then,
(\ref{eq.skewing.ss}) yields%
\begin{align}
\left(  \mathbf{s}_{\mu}\right)  ^{\perp}\mathbf{s}_{\lambda}  &
=\mathbf{s}_{\lambda/\mu}=\sum_{\nu\text{ is a partition}}c_{\mu,\nu}%
^{\lambda}\mathbf{s}_{\nu}\nonumber\\
&  =\sum_{\substack{\nu\text{ is a partition;}\\\nu\subseteq\lambda}%
}c_{\mu,\nu}^{\lambda}\mathbf{s}_{\nu}+\sum_{\substack{\nu\text{ is a
partition;}\\\text{we don't have }\nu\subseteq\lambda}}\underbrace{c_{\mu,\nu
}^{\lambda}}_{\substack{=0\\\text{(by Proposition \ref{prop.LR.props}
\textbf{(b)})}}}\mathbf{s}_{\nu}\nonumber\\
&  =\underbrace{\sum_{\substack{\nu\text{ is a partition;}\\\nu\subseteq
\lambda}}}_{=\sum_{\nu\subseteq\lambda}}c_{\mu,\nu}^{\lambda}\mathbf{s}_{\nu
}=\sum_{\nu\subseteq\lambda}c_{\mu,\nu}^{\lambda}\mathbf{s}_{\nu}.
\label{pf.thm.pieri-hall-LR.1}%
\end{align}
Both sides of this equality are symmetric functions in $\Lambda$. If we
evaluate them at $x_{1},x_{2},\ldots,x_{k}$ and project the resulting
symmetric polynomials onto $\mathcal{S}/I$, then we obtain%
\begin{equation}
\overline{\left(  \mathbf{s}_{\mu}\right)  ^{\perp}\mathbf{s}_{\lambda}}%
=\sum_{\nu\subseteq\lambda}c_{\mu,\nu}^{\lambda}\overline{s_{\nu}}.
\label{pf.thm.pieri-hall-LR.2}%
\end{equation}

Now, forget that we fixed $\mu$. We thus have proven
(\ref{pf.thm.pieri-hall-LR.2}) for each partition $\mu$. Theorem
\ref{thm.pieri-hall} yields%
\begin{align*}
\overline{s_{\lambda}h_{j}}  &  =\sum_{\substack{\mu\in P_{k,n};\\\mu
/\lambda\text{ is a horizontal }j\text{-strip}}}\overline{s_{\mu}}-\sum
_{i=1}^{k}\left(  -1\right)  ^{i}a_{i}\underbrace{\overline{\left(
\mathbf{s}_{\left(  n-k-j+1,1^{i-1}\right)  }\right)  ^{\perp}\mathbf{s}%
_{\lambda}}}_{\substack{=\sum_{\nu\subseteq\lambda}c_{\left(  n-k-j+1,1^{i-1}%
\right)  ,\nu}^{\lambda}\overline{s_{\nu}}\\\text{(by
(\ref{pf.thm.pieri-hall-LR.2}), applied to }\mu=\left(  n-k-j+1,1^{i-1}%
\right)  \text{)}}}\\
&  =\sum_{\substack{\mu\in P_{k,n};\\\mu/\lambda\text{ is a horizontal
}j\text{-strip}}}\overline{s_{\mu}}-\sum_{i=1}^{k}\left(  -1\right)  ^{i}%
a_{i}\sum_{\nu\subseteq\lambda}c_{\left(  n-k-j+1,1^{i-1}\right)  ,\nu
}^{\lambda}\overline{s_{\nu}}.
\end{align*}
This proves Theorem \ref{thm.pieri-hall-LR}.
\end{proof}

Note that Theorem \ref{thm.pieri-hall-LR} can also be used to prove Theorem
\ref{thm.s-h-triangularity}.

%\subsection{Swanson's corollary}

\begin{verlong}
The Pieri rule has several consequences. One is the following
\textquotedblleft reduction rule\textquotedblright\ conjectured by Josh Swanson:

\begin{definition}
Let $Z$ be the $\mathbb{Z}$-submodule of $\mathcal{S}/I$ spanned by all
products of the form $a_{i}\overline{s_{\mu}}$ with $i\in\left\{
1,2,\ldots,k\right\}  $ and $\mu\in P_{k,n}$. (This is not a $\mathbf{k}$-submodule.)
\end{definition}

\begin{proposition}
\label{prop.swanson1}Let $\lambda$ be a partition such that $2\left(
n-k\right)  \geq\lambda_{1}>n-k\geq\lambda_{2}$. Then, $\overline{s_{\lambda}%
}\in Z$.
\end{proposition}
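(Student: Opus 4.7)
The plan is strong induction on $\lambda_1$. We may assume $\ell(\lambda)\le k$, for otherwise $s_\lambda = 0$ in $k$ variables by (\ref{eq.slam=0-too-long}) and the claim is trivial. Given $\lambda$ with $n-k < \lambda_1 \le 2(n-k)$ and $\lambda_2 \le n-k$, set
\[
\nu := (n-k, \lambda_2, \lambda_3, \ldots, \lambda_k) \in P_{k,n}, \qquad j := \lambda_1 - (n-k) \in \{1, 2, \ldots, n-k\};
\]
both are well-defined thanks to the hypotheses on $\lambda$. The idea is to compute $\overline{s_\nu h_j}$ in two different ways and compare.

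First, the classical Pieri rule in $\Lambda$ gives $\mathbf{s}_\nu \mathbf{h}_j = \sum_{\mu} \mathbf{s}_\mu$ summed over partitions $\mu$ with $\mu/\nu$ a horizontal $j$-strip. Evaluating at $x_1,\ldots,x_k$ and projecting to $\mathcal{S}/I$ yields
\[
\overline{s_\nu h_j} \;=\; \sum_{\mu \in A_1} \overline{s_\mu} \;+\; \sum_{\mu \in A_2} \overline{s_\mu},
\]
where $A := \{\mu : \mu/\nu \text{ is a horizontal } j\text{-strip}, \ell(\mu) \le k\}$, $A_1 := A \cap P_{k,n} = \{\mu \in A : \mu_1 \le n-k\}$, and $A_2 := \{\mu \in A : \mu_1 > n-k\}$. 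Second, Theorem \ref{thm.pieri-hall-LR} gives
\[
\overline{s_\nu h_j} \;=\; \sum_{\mu \in A_1} \overline{s_\mu} \;-\; \sum_{i=1}^{k} (-1)^i a_i \sum_{\rho \subseteq \nu} c^{\nu}_{(n-k-j+1,\,1^{i-1}),\,\rho}\, \overline{s_\rho}.
\]
Since $\rho \subseteq \nu \in P_{k,n}$ forces $\rho \in P_{k,n}$, every term of the double sum on the right is of the form $(\text{integer}) \cdot a_i \overline{s_\rho}$ with $i \in \{1,\ldots,k\}$ and $\rho \in P_{k,n}$, so the double sum lies in $Z$. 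Equating the two expressions and cancelling $\sum_{\mu \in A_1}\overline{s_\mu}$ produces $\sum_{\mu \in A_2} \overline{s_\mu} \in Z$.

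It remains to dissect $A_2$. For $\mu \in A_2$, the horizontal-strip inequality $\nu_1 \ge \mu_2$ forces $\mu_2 \le n-k$, and writing $\mu_1 = (n-k) + r$ where $r \in \{1, \ldots, j\}$ counts the boxes added in the first row, one gets $n-k < \mu_1 \le n-k+j = \lambda_1 \le 2(n-k)$. So every $\mu \in A_2$ satisfies the hypothesis of Proposition \ref{prop.swanson1}, with $\mu = \lambda$ precisely when $r = j$; all other $\mu \in A_2$ have $\mu_1 < \lambda_1$, so by the induction hypothesis $\overline{s_\mu} \in Z$. Consequently
\[
\overline{s_\lambda} \;=\; \underbrace{\sum_{\mu \in A_2} \overline{s_\mu}}_{\in Z} \;-\; \underbrace{\sum_{\mu \in A_2 \setminus \{\lambda\}} \overline{s_\mu}}_{\in Z} \;\in\; Z,
\]
which completes the inductive step. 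The minimal case $\lambda_1 = n-k+1$ is absorbed into this step, since then $A_2 = \{\lambda\}$ (alternatively one may appeal directly to Lemma \ref{lem.coeffw.0}). I foresee no serious obstacle; the only points requiring care are the Pieri/horizontal-strip bookkeeping (in particular the bound $\mu_1 \le \lambda_1$) and the observation that $\rho \subseteq \nu \in P_{k,n}$ keeps $\rho$ inside $P_{k,n}$ so that the error term from Theorem \ref{thm.pieri-hall-LR} lands in $Z$.
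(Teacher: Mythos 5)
Your argument is correct, and it fills in a proof that the paper itself only sketches: the paper offers no detailed proof of Proposition \ref{prop.swanson1}, merely the remark that one can induct on $\lambda_{1}$ using Theorem \ref{thm.pieri-hall-LR} (applied there to $s_{\overline{\lambda}}h_{\lambda_{1}}$), or alternatively deduce the claim from the rim hook rule, Theorem \ref{thm.rimhook}. Your write-up follows the first route, but with a bookkeeping choice that is actually an improvement on the paper's hint: since $\lambda_{1}>n-k$, the product $s_{\overline{\lambda}}h_{\lambda_{1}}$ falls outside the range $j\in\left\{  0,1,\ldots,n-k\right\}$ in which Theorem \ref{thm.pieri-hall-LR} applies (and its Pieri expansion contains partitions with first part up to $\lambda_{1}+\lambda_{2}$, which would escape the inductive hypothesis), whereas your choice $\nu=\left(  n-k,\lambda_{2},\ldots,\lambda_{k}\right)  \in P_{k,n}$ and $j=\lambda_{1}-\left(  n-k\right)  \leq n-k$ stays inside it. Comparing the theorem with the classical Pieri rule (Proposition \ref{prop.pieri.h}) correctly yields $\sum_{\mu\in A_{2}}\overline{s_{\mu}}\in Z$, because every $\rho\subseteq\nu$ lies in $P_{k,n}$ and the Littlewood--Richardson coefficients are integers; and your interlacing estimates ($\mu_{2}\leq\nu_{1}=n-k$, $\mu_{1}\leq\nu_{1}+j=\lambda_{1}$, with equality forcing $\mu=\lambda$) are all valid, so each $\mu\in A_{2}\setminus\left\{  \lambda\right\}$ satisfies the hypotheses of the proposition with strictly smaller first part, and the strong induction (with the case $\lambda_{1}=n-k+1$ absorbed, since then $A_{2}=\left\{  \lambda\right\}$) closes. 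The only points worth making explicit are the trivial ones you gesture at: $Z$ is a $\mathbb{Z}$-module, so it contains $0$ (covering $\ell\left(  \lambda\right)  >k$) and is closed under the final subtraction.
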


Visually speaking, this proposition says that if $\lambda$ is a partition
which \textquotedblleft protrudes\textquotedblright\ from the rectangle
$\omega$ in its first row but not in any other row, and if the first row of
$\lambda$ is not more than twice the width of this rectangle, then
$\overline{s_{\lambda}}$ can be reduced to a $\mathbb{Z}$-linear combination
of products of the form $a_{i}\overline{s_{\mu}}$ with $i\in\left\{
1,2,\ldots,k\right\}  $ and $\mu\in P_{k,n}$. For example, for $n=6$ and
$k=3$, we have%
\[
\overline{s_{\left(  5,3,2\right)  }}=a_{3}\overline{s_{\left(  2,1,1\right)
}}-a_{2}\overline{s_{\left(  2,2,1\right)  }}-a_{2}\overline{s_{\left(
3,1,1\right)  }}+a_{1}\overline{s_{\left(  3,2,1\right)  }}.
\]

Note that in the case when $\lambda_{1}=n-k+1$, Proposition
\ref{prop.swanson1} follows easily from Lemma \ref{lem.coeffw.0} (and one
obtains an explicit representation for $\overline{s_{\lambda}}$ as a
$\mathbb{Z}$-linear combination of $a_{i}\overline{s_{\mu}}$ in this case).

One way to prove Proposition \ref{prop.swanson1} is by induction on
$\lambda_{1}$ (using Theorem \ref{thm.pieri-hall-LR} to prove that
$s_{\overline{\lambda}}h_{\lambda_{1}}=s_{\lambda}+f$, where $f$ is a
symmetric polynomial of degree $<\left\vert \lambda\right\vert $).
Alternatively, Proposition \ref{prop.swanson1} can also be derived from
Theorem \ref{thm.rimhook} further below; with some extra work it can be shown
that (in the situation of Proposition \ref{prop.swanson1}) we can write
$\overline{s_{\lambda}}$ as a $\mathbf{k}$-linear combination of $\pm
a_{i}\overline{s_{\mu}}$ with $\mu\in P_{k,n}$ in which no single
$\overline{s_{\mu}}$ occurs more than once.

[This little section has been removed, since Theorem \ref{thm.rimhook} makes
it uninteresting.]
\end{verlong}

\subsection{Positivity?}

Let us recall some background about the quantum cohomology ring
$\operatorname*{QH}\nolimits^{\ast}\left(  \operatorname*{Gr}\nolimits_{kn}%
\right)  $ discussed in \cite{Postni05}. The structure constants of the
$\mathbb{Z}\left[  q\right]  $-algebra $\operatorname*{QH}\nolimits^{\ast
}\left(  \operatorname*{Gr}\nolimits_{kn}\right)  $ are polynomials in the
indeterminate $q$, whose coefficients are the famous Gromov-Witten invariants
$C_{\lambda\mu\nu}^{d}$. These Gromov-Witten invariants $C_{\lambda\mu\nu}%
^{d}$ are nonnegative integers (as follows from their geometric
interpretation, but also from the \textquotedblleft Quantum
Littlewood-Richardson Rule\textquotedblright\ \cite[Theorem 2]{BKPT16}). This
appears to generalize to the general case of $\mathcal{S}/I$:

\begin{conjecture}
Let $b_{i}=\left(  -1\right)  ^{n-k-1}a_{i}$ for each $i\in\left\{
1,2,\ldots,k\right\}  $. Let $\lambda$, $\mu$ and $\nu$ be three partitions in
$P_{k,n}$. Then, $\left(  -1\right)  ^{\left\vert \lambda\right\vert
+\left\vert \mu\right\vert -\left\vert \nu\right\vert }\operatorname*{coeff}%
\nolimits_{\nu}\left(  \overline{s_{\lambda}s_{\mu}}\right)  $ is a polynomial
in $b_{1},b_{2},\ldots,b_{k}$ with nonnegative integer coefficients. (See
Definition \ref{def.omega-and-complement} \textbf{(b)} for the meaning of
$\operatorname*{coeff}\nolimits_{\nu}$.)
\end{conjecture}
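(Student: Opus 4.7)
The plan is to derive the conjecture by combining the classical Littlewood--Richardson positivity with a ``$c$-positive'' reduction inside $\mathcal{S}/I$, after a convenient change of variable. Set $c_i := (-1)^{i-1} a_i$ for each $i \in \{1,2,\ldots,k\}$; then $c_i = (-1)^{n-k+i} b_i$. Each monomial appearing in $\operatorname{coeff}_{\nu}(\overline{s_\lambda s_\mu})$ is homogeneous of graded degree $|\lambda|+|\mu|-|\nu|$, with the grading $\deg(a_i) = \deg(b_i) = \deg(c_i) = n-k+i$ (this is the grading that makes each generator $h_{n-k+i} - a_i$ of $I$ homogeneous). Hence the substitution $b_i \mapsto (-1)^{n-k+i} c_i$ multiplies every such monomial uniformly by $(-1)^{|\lambda|+|\mu|-|\nu|}$, and the conjecture is equivalent to the cleaner assertion that $\operatorname{coeff}_{\nu}(\overline{s_\lambda s_\mu})$, written in the variables $c_1,\ldots,c_k$, has nonnegative integer coefficients. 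The virtue of the $c_i$ is that, since $-(-1)^i a_i = c_i$, Theorem \ref{thm.pieri-hall-LR} rewrites in the manifestly positive form
\[
\overline{s_\lambda h_j} = \sum_{\substack{\mu \in P_{k,n};\\ \mu/\lambda\ \text{horizontal } j\text{-strip}}} \overline{s_\mu} + \sum_{i=1}^{k} c_i \sum_{\nu \subseteq \lambda} c_{(n-k-j+1,\,1^{i-1}),\,\nu}^{\lambda}\, \overline{s_\nu},
\]
for every $\lambda \in P_{k,n}$ and $j \in \{0,1,\ldots,n-k\}$, with all coefficients being nonnegative integers (Littlewood--Richardson numbers).

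Next I would compute $\overline{s_\lambda s_\mu}$ in two steps. First, the classical Littlewood--Richardson rule gives $s_\lambda s_\mu = \sum_\rho c_{\lambda,\mu}^{\rho}\, s_\rho$ in $\Lambda$; evaluating at $x_1,\ldots,x_k$ (which kills every $s_\rho$ with $\ell(\rho) > k$ by \eqref{eq.slam=0-too-long}) and projecting to $\mathcal{S}/I$ yields
\[
\overline{s_\lambda s_\mu} = \sum_{\substack{\rho\ \text{partition};\\ \ell(\rho) \leq k}} c_{\lambda,\mu}^{\rho}\, \overline{s_\rho},
\]
in which every coefficient is a nonnegative integer. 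Second, I would re-express each $\overline{s_\rho}$ with $\ell(\rho) \leq k$ but $\rho \notin P_{k,n}$ in the basis $(\overline{s_\sigma})_{\sigma \in P_{k,n}}$. The heart of the argument would be the following reduction lemma: for every partition $\rho$ with $\ell(\rho) \leq k$, there is an expansion $\overline{s_\rho} = \sum_{\sigma \in P_{k,n}} R_{\rho}^{\sigma}\, \overline{s_\sigma}$ in which each $R_{\rho}^{\sigma} \in \mathbb{Z}[c_1,\ldots,c_k]$ has nonnegative integer coefficients. Combining the two steps with the nonnegativity of $c_{\lambda,\mu}^{\rho}$ and of $R_{\rho}^{\sigma}$ yields the conjecture.

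For the reduction lemma, the base case $\rho_1 = n-k+1$ is already contained in Lemma \ref{lem.coeffw.0}: once one rewrites $(-1)^i a_{1+i} = c_{1+i}$, that identity becomes a manifestly $c$-positive expansion of $\overline{s_\rho}$ in smaller Schur classes. For the inductive step on $\rho_1 - (n-k)$, I would apply a Bernstein-type identity such as Corollary \ref{cor.bernstein.2} to the long first row of $\rho$, or alternatively iterate the Pieri rule of Theorem \ref{thm.pieri-hall} to peel horizontal strips from $\rho$ and decrement $\rho_1$.

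The inductive step is the main obstacle. Each peeling introduces further products of $a_i$'s, and the signs coming from Bernstein or Jacobi--Trudi determinants and from the recursion must all conspire, after $a_i = (-1)^{i-1} c_i$, to leave nonnegative $c$-coefficients in every monomial, with no cancellation between monomials of opposite $a$-sign. In the purely quantum case $a_1 = \cdots = a_{k-1} = 0$, $a_k = -(-1)^k q$, this miracle is the classical rim hook formula of Bertram--Ciocan-Fontanine--Fulton, combinatorially explained by the quantum Littlewood--Richardson rule of Buch--Kresch--Purbhoo--Tamvakis. In the general setting of arbitrary $a_1,\ldots,a_k$ I expect the right framework to be a generalized rim hook algorithm (in the spirit of the one promised in the abstract of the paper) in which each removed strip is indexed by some $i \in \{1,\ldots,k\}$ and contributes a single factor $c_i$, with signs absorbed into the combinatorial indexing. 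Producing such a model and proving that its signed enumeration computes $\overline{s_\rho}$ with the correct nonnegative multiplicities is the conceptual heart of the conjecture, and is where I expect the proof to either succeed or stall.
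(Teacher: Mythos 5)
First, note that the statement you are proving is stated in the paper as a \emph{conjecture}: the paper offers no proof at all, only a computational verification for all $n\leq 8$. So there is no argument of the paper to compare yours against; the only question is whether your proposal closes the conjecture, and it does not. Your preliminary reductions are fine: working over the universal base $\mathbf{k}=\mathbb{Z}\left[ a_{1},\ldots ,a_{k}\right] $ with $\deg a_{i}=n-k+i$, the ideal $I$ is homogeneous, Theorem \ref{thm.S/J} applies, and $\operatorname*{coeff}\nolimits_{\nu}\left( \overline{s_{\lambda}s_{\mu}}\right) $ is indeed homogeneous of degree $\left\vert \lambda\right\vert +\left\vert \mu\right\vert -\left\vert \nu\right\vert $, so the substitution $c_{i}=\left( -1\right) ^{i-1}a_{i}$ does absorb the global sign and converts the conjecture into the statement that the structure constants are polynomials in $c_{1},\ldots ,c_{k}$ with nonnegative integer coefficients; likewise the rewriting of Theorem \ref{thm.pieri-hall-LR} in $c$-positive form is correct, and the worked examples after Theorem \ref{thm.rimhook} (e.g.\ $\overline{s_{\left( 5,4,1\right) }}$ and $\overline{s_{\left( 4,4,3\right) }}$) are consistent with your expectation.

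The genuine gap is the ``reduction lemma'': that every $\overline{s_{\rho}}$ with $\ell\left( \rho\right) \leq k$ expands in the basis $\left( \overline{s_{\sigma}}\right) _{\sigma\in P_{k,n}}$ with coefficients in $\mathbb{N}\left[ c_{1},\ldots ,c_{k}\right] $. Together with Littlewood--Richardson positivity this lemma implies the conjecture, but it is itself at least as strong as the conjecture and encodes exactly the same positivity phenomenon; so the proposal is a reformulation, not a proof, and you acknowledge as much. The tools you point to do not obviously supply it: one application of Theorem \ref{thm.rimhook} (or of Lemma \ref{lem.coeffw.0}, or of a Bernstein-type peeling via Corollary \ref{cor.bernstein.2}) produces an expansion that is $c$-positive term by term only after the non-partition indices are straightened by Proposition \ref{prop.non-part.schur-straighten}, which introduces signs $\left( -1\right) ^{\sigma}$; and when the algorithm is iterated, distinct branches can land on the same $\overline{s_{\sigma}}$ with monomials $c_{i_{1}}\cdots c_{i_{r}}$ of a priori different signs, so one must rule out cancellation globally, not just per step. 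No such cancellation-free combinatorial model (the ``generalized quantum Littlewood--Richardson rule'' your last paragraph asks for) is constructed in the proposal, and constructing it is precisely the open problem. Until that inductive step is supplied, the conjecture remains unproven.
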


We have verified this conjecture for all $n\leq8$ using SageMath.

\section{The \textquotedblleft rim hook algorithm\textquotedblright}

We shall next take aim at a recursive formula for \textquotedblleft
straightening\textquotedblright\ a Schur polynomial -- i.e., representing an
$\overline{s_{\mu}}$, where $\mu$ is a partition that does not belong to
$P_{k,n}$, as a $\mathbf{k}$-linear combination of \textquotedblleft
smaller\textquotedblright\ $\overline{s_{\lambda}}$'s. However, before we can
state this formula, we will have to introduce several new notations.

\subsection{Schur polynomials for non-partitions}

Recall Definition \ref{def.Pk}. Thus, the elements of $P_{k}$ are weakly
decreasing $k$-tuples in $\mathbb{N}^{k}$. For each $\lambda\in P_{k}$, a
Schur polynomial $s_{\lambda}\in\mathcal{S}$ is defined. Let us extend this
definition by defining $s_{\lambda}$ for each $\lambda\in\mathbb{Z}^{k}$:

\begin{definition}
\label{def.non-part.schur}Let $\lambda=\left(  \lambda_{1},\lambda_{2}%
,\ldots,\lambda_{k}\right)  \in\mathbb{Z}^{k}$. Then, we define a symmetric
polynomial $s_{\lambda}\in\mathcal{S}$ by%
\begin{equation}
s_{\lambda}=\det\left(  \left(  h_{\lambda_{u}-u+v}\right)  _{1\leq u\leq
k,\ 1\leq v\leq k}\right)  . \label{eq.def.non-part.schur.sla=}%
\end{equation}
This new definition does not clash with the previous use of the notation
$s_{\lambda}$, because when $\lambda\in P_{k}$, both definitions yield the
same result (because of Proposition \ref{prop.jacobi-trudi.Sh} \textbf{(a)}).
\end{definition}

This definition is similar to the definition of $\overline{s}_{\left(
\alpha_{1},\alpha_{2},\ldots,\alpha_{n}\right)  }$ in \cite[Exercise 2.9.1
(c)]{GriRei18}, but we are working with symmetric polynomials rather than
symmetric functions here.

Definition \ref{def.non-part.schur} does not really open the gates to a new
world of symmetric polynomials; indeed, each $s_{\alpha}$ (with $\alpha
\in\mathbb{Z}^{k}$) defined in Definition \ref{def.non-part.schur} is either
$0$ or can be rewritten in the form $\pm s_{\lambda}$ for some $\lambda\in
P_{k}$. Here is a more precise statement of this:

\begin{proposition}
\label{prop.non-part.schur-straighten}Let $\alpha\in\mathbb{Z}^{k}$. Define a
$k$-tuple $\beta=\left(  \beta_{1},\beta_{2},\ldots,\beta_{k}\right)  $ by%
\[
\left(  \beta_{i}=\alpha_{i}+k-i\ \ \ \ \ \ \ \ \ \ \text{for each }%
i\in\left\{  1,2,\ldots,k\right\}  \right)  .
\]

\textbf{(a)} If $\beta$ has at least one negative entry, then $s_{\alpha}=0$.

\textbf{(b)} If $\beta$ has two equal entries, then $s_{\alpha}=0$.

\textbf{(c)} Assume that $\beta$ has no negative entries and no two equal
entries. Let $\sigma\in S_{k}$ be the permutation such that $\beta
_{\sigma\left(  1\right)  }>\beta_{\sigma\left(  2\right)  }>\cdots
>\beta_{\sigma\left(  k\right)  }$. (Such a permutation $\sigma$ exists and is
unique, since $\beta$ has no two equal entries.) Define a $k$-tuple
$\lambda=\left(  \lambda_{1},\lambda_{2},\ldots,\lambda_{k}\right)
\in\mathbb{Z}^{k}$ by%
\[
\left(  \lambda_{i}=\beta_{\sigma\left(  i\right)  }%
-k+i\ \ \ \ \ \ \ \ \ \ \text{for each }i\in\left\{  1,2,\ldots,k\right\}
\right)  .
\]
Then, $\lambda\in P_{k}$ and $s_{\alpha}=\left(  -1\right)  ^{\sigma
}s_{\lambda}$.
\end{proposition}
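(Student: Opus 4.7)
The common thread of all three parts is a rewriting of the Jacobi--Trudi matrix using $\beta$ instead of $\alpha$. Since $\alpha_i = \beta_i - k + i$, we have $\alpha_u - u + v = \beta_u - k + v$ for all $u,v \in \{1,2,\ldots,k\}$, and hence
\[
s_\alpha = \det\left(\left(h_{\beta_u - k + v}\right)_{1 \le u \le k,\ 1 \le v \le k}\right).
\]
All three parts are extracted from this single identity.

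For part \textbf{(a)}, I would fix an index $u$ with $\beta_u < 0$ and observe that every $v \in \{1,2,\ldots,k\}$ satisfies $\beta_u - k + v \le \beta_u - k + k = \beta_u < 0$, so that $h_{\beta_u - k + v} = 0$. The $u$-th row of the matrix is therefore identically zero, whence the determinant vanishes. For part \textbf{(b)}, if $\beta_u = \beta_{u'}$ for distinct indices $u,u'$, then the $u$-th and $u'$-th rows of the matrix $(h_{\beta_u - k + v})_{u,v}$ are literally identical (each entry depends on $u$ only through $\beta_u$), so again the determinant is $0$.

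For part \textbf{(c)}, I would first verify that $\lambda \in P_k$. Since the $\beta_i$ are distinct nonnegative integers and $\sigma$ was chosen so that $\beta_{\sigma(1)} > \beta_{\sigma(2)} > \cdots > \beta_{\sigma(k)}$, for each $i \in \{1,2,\ldots,k-1\}$ we get $\beta_{\sigma(i)} \ge \beta_{\sigma(i+1)} + 1$ (integers), hence
\[
\lambda_i - \lambda_{i+1} = \left(\beta_{\sigma(i)} - k + i\right) - \left(\beta_{\sigma(i+1)} - k + i + 1\right) = \beta_{\sigma(i)} - \beta_{\sigma(i+1)} - 1 \ge 0,
\]
while $\lambda_k = \beta_{\sigma(k)} \ge 0$ by the no-negative-entry hypothesis. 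Thus $\lambda$ is a weakly decreasing tuple of nonnegatives, i.e.\ $\lambda \in P_k$. Now Proposition \ref{prop.jacobi-trudi.Sh}\,\textbf{(a)} gives $s_\lambda = \det\left(\left(h_{\lambda_u - u + v}\right)_{u,v}\right)$, and the choice $\lambda_u - u = \beta_{\sigma(u)} - k$ shows that the matrix underlying $s_\lambda$ is obtained from the matrix underlying $s_\alpha$ by applying the row permutation $u \mapsto \sigma(u)$. The standard determinant formula for row permutations yields $s_\lambda = (-1)^\sigma s_\alpha$, and since $\left((-1)^\sigma\right)^2 = 1$ this rearranges to $s_\alpha = (-1)^\sigma s_\lambda$, as desired.

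The only real bookkeeping issue is sign and orientation in the row-permutation step of part \textbf{(c)}: one must be careful that the matrix $(h_{\lambda_u - u + v})_{u,v}$ is obtained from $(h_{\beta_u - k + v})_{u,v}$ by permuting rows according to $\sigma$ (and not $\sigma^{-1}$), so that the sign that appears is $(-1)^\sigma$ rather than $(-1)^{\sigma^{-1}}$ --- though these happen to be equal, so even a sloppy version of the argument lands on the correct answer. Nothing deeper is required; parts \textbf{(a)} and \textbf{(b)} are essentially immediate once the $\beta$-rewriting is in place.
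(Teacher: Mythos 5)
Your proposal is correct and follows essentially the same route as the paper's proof: rewrite $s_{\alpha}$ as $\det\left(\left(h_{\beta_{u}-k+v}\right)_{1\leq u\leq k,\ 1\leq v\leq k}\right)$ via $\alpha_{u}-u=\beta_{u}-k$, kill the determinant by a zero row for \textbf{(a)} and by equal rows for \textbf{(b)}, and for \textbf{(c)} check that $\lambda$ is weakly decreasing and nonnegative and then compare the two Jacobi--Trudi matrices through the row permutation $\sigma$, picking up the sign $\left(-1\right)^{\sigma}$. The sign bookkeeping you flag is handled identically in the paper, so no changes are needed.
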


\begin{proof}
[Proof of Proposition \ref{prop.non-part.schur-straighten}.]For each
$u\in\left\{  1,2,\ldots,k\right\}  $, we have $\beta_{u}=\alpha_{u}+k-u$ (by
the definition of $\beta_{u}$) and thus%
\begin{equation}
\underbrace{\beta_{u}}_{=\alpha_{u}+k-u}-k=\left(  \alpha_{u}+k-u\right)
-k=\alpha_{u}-u. \label{pf.prop.non-part.schur-straighten.1}%
\end{equation}
The definition of $s_{\alpha}$ yields%
\begin{align}
s_{\alpha}  &  =\det\left(  \left(  \underbrace{h_{\alpha_{u}-u+v}%
}_{\substack{=h_{\beta_{u}-k+v}\\\text{(since
(\ref{pf.prop.non-part.schur-straighten.1}) yields }\alpha_{u}-u=\beta
_{u}-k\text{)}}}\right)  _{1\leq u\leq k,\ 1\leq v\leq k}\right) \nonumber\\
&  =\det\left(  \left(  h_{\beta_{u}-k+v}\right)  _{1\leq u\leq k,\ 1\leq
v\leq k}\right)  . \label{pf.prop.non-part.schur-straighten.2}%
\end{align}

\textbf{(b)} Assume that $\beta$ has two equal entries. In other words, there
are two distinct elements $i$ and $j$ of $\left\{  1,2,\ldots,k\right\}  $
such that $\beta_{i}=\beta_{j}$. Consider these $i$ and $j$. The $i$-th and
$j$-th rows of the matrix $\left(  h_{\beta_{u}-k+v}\right)  _{1\leq u\leq
k,\ 1\leq v\leq k}$ are equal (since $\beta_{i}=\beta_{j}$). Hence, this
matrix has two equal rows. Thus, its determinant is $0$. In other words,
$\det\left(  \left(  h_{\beta_{u}-k+v}\right)  _{1\leq u\leq k,\ 1\leq v\leq
k}\right)  =0$. Now, (\ref{pf.prop.non-part.schur-straighten.2}) becomes
$s_{\alpha}=\det\left(  \left(  h_{\beta_{u}-k+v}\right)  _{1\leq u\leq
k,\ 1\leq v\leq k}\right)  =0$. This proves Proposition
\ref{prop.non-part.schur-straighten} \textbf{(b)}.

\textbf{(a)} Assume that $\beta$ has at least one negative entry. In other
words, there exists some $i\in\left\{  1,2,\ldots,k\right\}  $ such that
$\beta_{i}<0$. Consider this $i$. For each $v\in\left\{  1,2,\ldots,k\right\}
$, we have $\beta_{i}-k+\underbrace{v}_{\leq k}\leq\beta_{i}-k+k=\beta_{i}<0$
and thus $h_{\beta_{i}-k+v}=0$. Hence, all entries of the $i$-th row of the
matrix $\left(  h_{\beta_{u}-k+v}\right)  _{1\leq u\leq k,\ 1\leq v\leq k}$
are $0$. Hence, this matrix has a zero row. Thus, its determinant is $0$. In
other words, $\det\left(  \left(  h_{\beta_{u}-k+v}\right)  _{1\leq u\leq
k,\ 1\leq v\leq k}\right)  =0$. Now, the equality
(\ref{pf.prop.non-part.schur-straighten.2}) becomes $s_{\alpha}=\det\left(
\left(  h_{\beta_{u}-k+v}\right)  _{1\leq u\leq k,\ 1\leq v\leq k}\right)
=0$. This proves Proposition \ref{prop.non-part.schur-straighten} \textbf{(a)}.

\textbf{(c)} It is well-known that if we permute the rows of a $k\times
k$-matrix using a permutation $\tau$, then the determinant of the matrix gets
multiplied by $\left(  -1\right)  ^{\tau}$. In other words, every $k\times
k$-matrix $\left(  b_{u,v}\right)  _{1\leq u\leq k,\ 1\leq v\leq k}$ and every
$\tau\in S_{k}$ satisfy $\det\left(  \left(  b_{\tau\left(  u\right)
,v}\right)  _{1\leq u\leq k,\ 1\leq v\leq k}\right)  =\left(  -1\right)
^{\tau}\det\left(  \left(  b_{u,v}\right)  _{1\leq u\leq k,\ 1\leq v\leq
k}\right)  $. Applying this to $\left(  b_{u,v}\right)  _{1\leq u\leq
k,\ 1\leq v\leq k}=\left(  h_{\beta_{u}-k+v}\right)  _{1\leq u\leq k,\ 1\leq
v\leq k}$ and $\tau=\sigma$, we obtain%
\[
\det\left(  \left(  h_{\beta_{\sigma\left(  u\right)  }-k+v}\right)  _{1\leq
u\leq k,\ 1\leq v\leq k}\right)  =\left(  -1\right)  ^{\sigma}\det\left(
\left(  h_{\beta_{u}-k+v}\right)  _{1\leq u\leq k,\ 1\leq v\leq k}\right)  .
\]
Multiplying both sides of this equality by $\left(  -1\right)  ^{\sigma}$, we
find%
\begin{align}
\left(  -1\right)  ^{\sigma}\det\left(  \left(  h_{\beta_{\sigma\left(
u\right)  }-k+v}\right)  _{1\leq u\leq k,\ 1\leq v\leq k}\right)   &
=\underbrace{\left(  -1\right)  ^{\sigma}\left(  -1\right)  ^{\sigma}%
}_{=\left(  \left(  -1\right)  ^{\sigma}\right)  ^{2}=1}\det\left(  \left(
h_{\beta_{u}-k+v}\right)  _{1\leq u\leq k,\ 1\leq v\leq k}\right) \nonumber\\
&  =\det\left(  \left(  h_{\beta_{u}-k+v}\right)  _{1\leq u\leq k,\ 1\leq
v\leq k}\right)  . \label{pf.prop.non-part.schur-straighten.c.0}%
\end{align}

For each $u\in\left\{  1,2,\ldots,k\right\}  $, we have $\lambda_{u}%
=\beta_{\sigma\left(  u\right)  }-k+u$ (by the definition of $\lambda_{u}$)
and thus
\begin{equation}
\lambda_{u}-u=\beta_{\sigma\left(  u\right)  }-k.
\label{pf.prop.non-part.schur-straighten.c.1}%
\end{equation}
Now, (\ref{pf.prop.non-part.schur-straighten.2}) becomes%
\begin{align*}
s_{\alpha}  &  =\det\left(  \left(  h_{\beta_{u}-k+v}\right)  _{1\leq u\leq
k,\ 1\leq v\leq k}\right) \\
&  =\left(  -1\right)  ^{\sigma}\det\left(  \left(  \underbrace{h_{\beta
_{\sigma\left(  u\right)  }-k+v}}_{\substack{=h_{\lambda_{u}-u+v}%
\\\text{(since (\ref{pf.prop.non-part.schur-straighten.c.1}) yields }%
\beta_{\sigma\left(  u\right)  }-k=\lambda_{u}-u\text{)}}}\right)  _{1\leq
u\leq k,\ 1\leq v\leq k}\right)  \ \ \ \ \ \ \ \ \ \ \left(  \text{by
(\ref{pf.prop.non-part.schur-straighten.c.0})}\right) \\
&  =\left(  -1\right)  ^{\sigma}\underbrace{\det\left(  \left(  h_{\lambda
_{u}-u+v}\right)  _{1\leq u\leq k,\ 1\leq v\leq k}\right)  }%
_{\substack{=s_{\lambda}\\\text{(by (\ref{eq.def.non-part.schur.sla=}))}%
}}=\left(  -1\right)  ^{\sigma}s_{\lambda}.
\end{align*}
It remains to prove that $\lambda\in P_{k}$.

Let $i\in\left\{  1,2,\ldots,k-1\right\}  $. Then, $\beta_{\sigma\left(
i\right)  }>\beta_{\sigma\left(  i+1\right)  }$ (since $\beta_{\sigma\left(
1\right)  }>\beta_{\sigma\left(  2\right)  }>\cdots>\beta_{\sigma\left(
k\right)  }$) and thus $\beta_{\sigma\left(  i\right)  }\geq\beta
_{\sigma\left(  i+1\right)  }+1$ (since $\beta_{\sigma\left(  i\right)  }$ and
$\beta_{\sigma\left(  i+1\right)  }$ are integers). The definition of
$\lambda_{i+1}$ yields $\lambda_{i+1}=\beta_{\sigma\left(  i+1\right)
}-k+\left(  i+1\right)  $. The definition of $\lambda_{i}$ yields%
\[
\lambda_{i}=\underbrace{\beta_{\sigma\left(  i\right)  }}_{\geq\beta
_{\sigma\left(  i+1\right)  }+1}-k+i\geq\beta_{\sigma\left(  i+1\right)
}+1-k+i=\beta_{\sigma\left(  i+1\right)  }-k+\left(  i+1\right)
=\lambda_{i+1}.
\]
Now, forget that we fixed $i$. We thus have proven that $\lambda_{i}%
\geq\lambda_{i+1}$ for each $i\in\left\{  1,2,\ldots,k-1\right\}  $. In other
words, $\lambda_{1}\geq\lambda_{2}\geq\cdots\geq\lambda_{k}$.

Let $i\in\left\{  1,2,\ldots,k\right\}  $. Then, $1\leq i\leq k$ and thus
$k\geq1$, so that $\lambda_{k}$ is well-defined. Furthermore, from $i\leq k$,
we obtain $\lambda_{i}\geq\lambda_{k}$ (since $\lambda_{1}\geq\lambda_{2}%
\geq\cdots\geq\lambda_{k}$). But the definition of $\lambda_{k}$ yields
$\lambda_{k}=\beta_{\sigma\left(  k\right)  }-k+k=\beta_{\sigma\left(
k\right)  }\geq0$ (since all entries of $\beta$ are nonnegative (since $\beta$
has no negative entries)). Thus, $\lambda_{i}\geq\lambda_{k}\geq0$.

Now, forget that we fixed $i$. We thus have proven that $\lambda_{i}\geq0$ for
each $i\in\left\{  1,2,\ldots,k\right\}  $. In other words, $\lambda
_{1},\lambda_{2},\ldots,\lambda_{k}$ are nonnegative integers (since they are
clearly integers). Hence, $\left(  \lambda_{1},\lambda_{2},\ldots,\lambda
_{k}\right)  \in\mathbb{N}^{k}$. Combining this with $\lambda_{1}\geq
\lambda_{2}\geq\cdots\geq\lambda_{k}$, we obtain $\left(  \lambda_{1}%
,\lambda_{2},\ldots,\lambda_{k}\right)  \in P_{k}$. Hence, $\lambda=\left(
\lambda_{1},\lambda_{2},\ldots,\lambda_{k}\right)  \in P_{k}$. This completes
the proof of Proposition \ref{prop.non-part.schur-straighten} \textbf{(c)}.
\end{proof}

Let us next recall the bialternant formula for Schur polynomials. We need a
few definitions first:

\begin{definition}
\textbf{(a)} Let $\rho$ denote the $k$-tuple $\left(  k-1,k-2,\ldots,0\right)
\in\mathbb{N}^{k}$.

\textbf{(b)} We regard $\mathbb{Z}^{k}$ as a $\mathbb{Z}$-module in the
obvious way: Addition is define entrywise (i.e., we set $\alpha+\beta=\left(
\alpha_{1}+\beta_{1},\alpha_{2}+\beta_{2},\ldots,\alpha_{k}+\beta_{k}\right)
$ for any $\alpha=\left(  \alpha_{1},\alpha_{2},\ldots,\alpha_{k}\right)
\in\mathbb{Z}^{k}$ and any $\beta=\left(  \beta_{1},\beta_{2},\ldots,\beta
_{k}\right)  \in\mathbb{Z}^{k}$). This also defines subtraction on
$\mathbb{Z}^{k}$ (which, too, works entrywise). We let $\mathbf{0}$ denote the
$k$-tuple $\left(  \underbrace{0,0,\ldots,0}_{k\text{ entries}}\right)
\in\mathbb{N}^{k}\subseteq\mathbb{Z}^{k}$; this is the zero vector of
$\mathbb{Z}^{k}$.
\end{definition}

\begin{definition}
Let $\alpha=\left(  \alpha_{1},\alpha_{2},\ldots,\alpha_{k}\right)
\in\mathbb{N}^{k}$. Then, we define the \textit{alternant }$a_{\alpha}%
\in\mathcal{P}$ by%
\[
a_{\alpha}=\det\left(  \left(  x_{i}^{\alpha_{j}}\right)  _{1\leq i\leq
k,\ 1\leq j\leq k}\right)  .
\]

\end{definition}

The two definitions we have just made match the notations in \cite[\S 2.6]%
{GriRei18}, except that we are using $k$ instead of $n$ for the number of indeterminates.

Note that the element $a_{\rho}$ of $\mathcal{P}$ is the Vandermonde
determinant \newline$\det\left(  \left(  x_{i}^{k-j}\right)  _{1\leq i\leq
k,\ 1\leq j\leq k}\right)  =\prod_{1\leq i<j\leq k}\left(  x_{i}-x_{j}\right)
$; it is a regular element of $\mathcal{P}$ (that is, a non-zero-divisor).

We recall the \textit{bialternant formula} for Schur polynomials
(\cite[Corollary 2.6.7]{GriRei18}):

\begin{proposition}
\label{prop.bialternant}For any $\lambda\in P_{k}$, we have $s_{\lambda
}=a_{\lambda+\rho}/a_{\rho}$ in $\mathcal{P}$.
\end{proposition}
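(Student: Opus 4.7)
The plan is to prove the bialternant formula in the classical two-step fashion: first, to show that $a_{\lambda+\rho}$ is divisible by $a_\rho$ in $\mathcal{P}$, so that the quotient $a_{\lambda+\rho}/a_\rho$ is a well-defined symmetric polynomial of degree $|\lambda|$; and second, to identify this quotient with $s_\lambda$ via the Jacobi-Trudi identity already established in Proposition~\ref{prop.jacobi-trudi.Sh}.

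First I would handle the divisibility. By its determinantal definition $a_{\lambda+\rho} = \det(x_i^{\lambda_j + k - j})$, swapping any two variables $x_i$ and $x_j$ swaps two rows of the matrix and thus changes the sign of the determinant; hence $a_{\lambda+\rho}$ is antisymmetric under $S_k$ and, in particular, vanishes whenever $x_i = x_j$ for $i \neq j$. Since $\mathcal{P}$ is a UFD in which each $x_i - x_j$ is prime, every factor $x_i - x_j$ divides $a_{\lambda+\rho}$; since the $k(k-1)/2$ such linear factors are pairwise coprime, their product $a_\rho = \prod_{i<j}(x_i - x_j)$ divides $a_{\lambda+\rho}$ as well, so the quotient $\tilde{s}_\lambda := a_{\lambda+\rho}/a_\rho \in \mathcal{P}$ is well-defined. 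It is symmetric, because both numerator and denominator transform by the same sign under each transposition of variables.

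For the identification, Proposition~\ref{prop.jacobi-trudi.Sh} gives $s_\lambda = \det(h_{\lambda_u - u + v})_{u,v=1}^k$, so it suffices to establish the matrix identity
\[
\det(h_{\lambda_u - u + v})_{u,v=1}^k \cdot a_\rho = a_{\lambda+\rho}.
\]
The standard route is via the Cauchy-Binet formula: encode row $u$ of the Jacobi-Trudi matrix through the generating-function identity $\sum_{m \geq 0} h_m(x_1,\ldots,x_k)\, t^m = \prod_{i=1}^k (1 - x_i t)^{-1}$, write the product $\det(h_{\lambda_u - u + v})\cdot a_\rho$ as the determinant of a product of two matrices (one of infinite shape capturing the $h_m$'s, the other Vandermonde-type in the $x_j$'s), and expand by Cauchy-Binet; the resulting signed sum of complementary minors collapses to $a_{\lambda+\rho}$ after cancellation. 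An equivalent and more visual alternative is the Lindström-Gessel-Viennot lemma applied to non-intersecting lattice paths, which makes the cancellation combinatorially transparent by bijecting surviving path systems with semistandard Young tableaux of shape $\lambda$.

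The main obstacle is this matrix identity: the divisibility step is essentially formal, whereas the Cauchy-Binet expansion (or LGV bijection) requires careful index bookkeeping and is the combinatorial heart of the proof. Once the matrix identity is in hand, one divides through by the non-zero-divisor $a_\rho$ in $\mathcal{P}$ to obtain $s_\lambda = a_{\lambda+\rho}/a_\rho$, completing the argument.
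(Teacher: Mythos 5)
There is no proof in the paper to compare against: Proposition \ref{prop.bialternant} is simply quoted from \cite[Corollary 2.6.7]{GriRei18}. Your outline is the classical two-step argument, but as written it has two problems in the paper's setting. First, the divisibility step is not valid here: $\mathbf{k}$ is an arbitrary commutative ring, so $\mathcal{P}=\mathbf{k}\left[  x_{1},x_{2},\ldots,x_{k}\right]$ is in general not a UFD, the elements $x_{i}-x_{j}$ are not prime, and the inference ``each $x_{i}-x_{j}$ divides, the factors are pairwise coprime, hence their product divides'' breaks down when $\mathbf{k}$ has zero divisors. This step can be repaired (induct on the factors using that each $x_{i}-x_{j}$, and more generally $a_{\rho}$, is a non-zero-divisor, or prove everything over $\mathbb{Z}$ and base-change), but it is in fact dispensable: all the proposition asserts -- and all the paper later uses, e.g.\ in the proof of Proposition \ref{prop.non-part.bialternant} -- is the identity $s_{\lambda}a_{\rho}=a_{\lambda+\rho}$ together with the regularity of $a_{\rho}$, so you can skip divisibility entirely and divide by the regular element $a_{\rho}$ at the end.

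Second, and more seriously, the actual content -- the identity $\det\left(  \left(  h_{\lambda_{u}-u+v}\right)  _{1\leq u\leq k,\ 1\leq v\leq k}\right)  \cdot a_{\rho}=a_{\lambda+\rho}$ -- is only gestured at (``Cauchy--Binet \ldots collapses after cancellation'', or LGV). You acknowledge this is the heart, and as written the proposal does not prove it; that is precisely where the proposition lives, so this is a genuine gap. A concrete way to close it with the paper's own tools: the generating-function computation behind Lemma \ref{lem.heh-id}, run with the single variable $x_{j}$ omitted instead of an initial segment, gives $\sum_{r=0}^{k-1}\left(  -1\right)  ^{r}e_{r}\left(  x_{1},\ldots,\widehat{x_{j}},\ldots,x_{k}\right)  h_{p-r}\left(  x_{1},\ldots,x_{k}\right)  =x_{j}^{p}$ for all $p\geq0$. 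In matrix form this says that $\left(  h_{\lambda_{u}-u+v}\right)  _{u,v}$ times the matrix $\left(  \left(  -1\right)  ^{k-v}e_{k-v}\left(  x_{1},\ldots,\widehat{x_{j}},\ldots,x_{k}\right)  \right)  _{v,j}$ is the matrix $\left(  x_{j}^{\left(  \lambda+\rho\right)  _{u}}\right)  _{u,j}$, whose determinant is $a_{\lambda+\rho}$. Taking $\lambda=\varnothing$ makes the first factor unitriangular, which shows the second factor has determinant $a_{\rho}$; taking determinants for general $\lambda$ and invoking Proposition \ref{prop.jacobi-trudi.Sh} then yields $s_{\lambda}a_{\rho}=a_{\lambda+\rho}$. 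Since all entries lie in $\mathbb{Z}\left[  x_{1},\ldots,x_{k}\right]$, the identity holds over every $\mathbf{k}$, avoiding both the Cauchy--Binet bookkeeping and the UFD issue.
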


Let us extend this fact to arbitrary $\lambda\in\mathbb{Z}^{k}$ satisfying
$\lambda+\rho\in\mathbb{N}^{k}$ (and rename $\lambda$ as $\alpha$):

\begin{proposition}
\label{prop.non-part.bialternant}Let $\alpha\in\mathbb{Z}^{k}$ be such that
$\alpha+\rho\in\mathbb{N}^{k}$. Then, $s_{\alpha}=a_{\alpha+\rho}/a_{\rho}$ in
$\mathcal{P}$.
\end{proposition}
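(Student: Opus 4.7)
The plan is to reduce to the classical bialternant formula (Proposition \ref{prop.bialternant}) via the straightening result (Proposition \ref{prop.non-part.schur-straighten}), with a case split on whether $\beta := \alpha+\rho$ has any repeated entries. Note that the hypothesis $\alpha+\rho\in\mathbb{N}^k$ means exactly that $\beta\in\mathbb{N}^k$, so Proposition \ref{prop.non-part.schur-straighten}(a) cannot apply and need not be considered. Also observe, with $\rho=(k-1,k-2,\ldots,0)$, that $\rho_i = k-i$, so the tuple $\beta$ defined in Proposition \ref{prop.non-part.schur-straighten} is literally $\alpha+\rho$.

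First I would handle the degenerate case. Suppose $\beta$ has two equal entries, i.e., $\beta_i=\beta_j$ for some $i\ne j$. On the Schur side, Proposition \ref{prop.non-part.schur-straighten}(b) gives $s_\alpha=0$. On the alternant side, the matrix $\bigl(x_u^{\beta_v}\bigr)_{1\le u,v\le k}$ has two equal columns (the $i$-th and $j$-th), so $a_{\alpha+\rho}=a_\beta=0$. Hence both sides of the claim equal $0$ in $\mathcal{P}$.

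Next I would handle the generic case in which all entries of $\beta$ are distinct. Let $\sigma\in S_k$ be the unique permutation with $\beta_{\sigma(1)}>\beta_{\sigma(2)}>\cdots>\beta_{\sigma(k)}$, and define $\lambda_i=\beta_{\sigma(i)}-k+i$ as in Proposition \ref{prop.non-part.schur-straighten}(c). That proposition then furnishes $\lambda\in P_k$ together with the identity
\[
s_\alpha \;=\; (-1)^\sigma s_\lambda .
\]
Now, since $\rho_i=k-i$, the definition of $\lambda_i$ rewrites as $\lambda_i+\rho_i=\beta_{\sigma(i)}$, so the tuple $\lambda+\rho$ is precisely the $\sigma$-rearrangement $\bigl(\beta_{\sigma(1)},\ldots,\beta_{\sigma(k)}\bigr)$ of $\beta$. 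Applying Proposition \ref{prop.bialternant} to $\lambda\in P_k$ gives $s_\lambda=a_{\lambda+\rho}/a_\rho$, so
\[
s_\alpha \;=\; (-1)^\sigma \, a_{\lambda+\rho}/a_\rho .
\]

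The last step is to check $(-1)^\sigma a_{\lambda+\rho}=a_{\alpha+\rho}=a_\beta$, which is just the fact that permuting the columns of a determinant by $\sigma$ scales it by $(-1)^\sigma$: the matrix $\bigl(x_u^{(\lambda+\rho)_v}\bigr)_{u,v}=\bigl(x_u^{\beta_{\sigma(v)}}\bigr)_{u,v}$ is obtained from $\bigl(x_u^{\beta_v}\bigr)_{u,v}$ by the column permutation $\sigma$, hence $a_{\lambda+\rho}=(-1)^\sigma a_\beta$. Substituting and using $\beta=\alpha+\rho$ yields $s_\alpha=a_{\alpha+\rho}/a_\rho$, as desired. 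The only subtlety here is bookkeeping the sign convention so that the sign from the Jacobi--Trudi-style row permutation in Proposition \ref{prop.non-part.schur-straighten}(c) and the sign from the column permutation of the Vandermonde-type matrix line up; since both are governed by the same permutation $\sigma$, they cancel cleanly, and no serious obstacle arises.
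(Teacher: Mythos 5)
Your proof is correct and follows essentially the same route as the paper's own argument: the same case split on whether $\beta=\alpha+\rho$ has repeated entries, the same appeal to Proposition \ref{prop.non-part.schur-straighten} \textbf{(b)}, \textbf{(c)} and Proposition \ref{prop.bialternant}, and the same column-permutation sign cancellation. No gaps.
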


\begin{proof}
[Proof of Proposition \ref{prop.non-part.bialternant}.]We have $\rho=\left(
k-1,k-2,\ldots,0\right)  $. Thus,%
\begin{equation}
\rho_{i}=k-i\ \ \ \ \ \ \ \ \ \ \text{for each }i\in\left\{  1,2,\ldots
,k\right\}  . \label{pf.prop.non-part.bialternant.rhoi}%
\end{equation}
Define a $k$-tuple $\beta=\left(  \beta_{1},\beta_{2},\ldots,\beta_{k}\right)
$ as in Proposition \ref{prop.non-part.schur-straighten}. Thus, for each
$i\in\left\{  1,2,\ldots,k\right\}  $, we have%
\[
\beta_{i}=\alpha_{i}+\underbrace{k-i}_{\substack{=\rho_{i}\\\text{(by
(\ref{pf.prop.non-part.bialternant.rhoi}))}}}=\alpha_{i}+\rho_{i}=\left(
\alpha+\rho\right)  _{i}.
\]
In other words, $\beta=\alpha+\rho$. Hence, $\beta=\alpha+\rho\in
\mathbb{N}^{k}$. Thus, the $k$-tuple $\beta$ has no negative entries.

Moreover, from $\alpha+\rho=\beta$, we obtain%
\begin{align}
a_{\alpha+\rho}  &  =a_{\beta}=\det\left(  \left(  x_{i}^{\beta_{j}}\right)
_{1\leq i\leq k,\ 1\leq j\leq k}\right)  \ \ \ \ \ \ \ \ \ \ \left(  \text{by
the definition of }a_{\beta}\right) \nonumber\\
&  =\det\left(  \left(  x_{u}^{\beta_{v}}\right)  _{1\leq u\leq k,\ 1\leq
v\leq k}\right)  \label{pf.prop.non-part.bialternant.a=}%
\end{align}
(here, we have renamed the indices $i$ and $j$ as $u$ and $v$). Now, we are in
one of the following two cases:

\textit{Case 1:} The $k$-tuple $\beta$ has two equal entries.

\textit{Case 2:} The $k$-tuple $\beta$ has no two equal entries.

Let us first consider Case 1. In this case, the $k$-tuple $\beta$ has two
equal entries. In other words, there are two distinct elements $i$ and $j$ of
$\left\{  1,2,\ldots,k\right\}  $ such that $\beta_{i}=\beta_{j}$. Consider
these $i$ and $j$. The $i$-th and $j$-th columns of the matrix $\left(
x_{u}^{\beta_{v}}\right)  _{1\leq u\leq k,\ 1\leq v\leq k}$ are equal (since
$\beta_{i}=\beta_{j}$). Hence, this matrix has two equal columns. Thus, its
determinant is $0$. In other words, $\det\left(  \left(  x_{u}^{\beta_{v}%
}\right)  _{1\leq u\leq k,\ 1\leq v\leq k}\right)  =0$. Now,
(\ref{pf.prop.non-part.bialternant.a=}) becomes $a_{\alpha+\rho}=\det\left(
\left(  x_{u}^{\beta_{v}}\right)  _{1\leq u\leq k,\ 1\leq v\leq k}\right)
=0$. Hence, $a_{\alpha+\rho}/a_{\rho}=0/a_{\rho}=0$. Comparing this with
$s_{\alpha}=0$ (which follows from Proposition
\ref{prop.non-part.schur-straighten} \textbf{(b)}), we obtain $s_{\alpha
}=a_{\alpha+\rho}/a_{\rho}$. Thus, Proposition \ref{prop.non-part.bialternant}
is proven in Case 1.

Let us next consider Case 2. In this case, the $k$-tuple $\beta$ has no two
equal entries. Thus, there is a unique permutation $\sigma\in S_{k}$ that
sorts this $k$-tuple into strictly decreasing order. In other words, there is
a unique permutation $\sigma\in S_{k}$ such that $\beta_{\sigma\left(
1\right)  }>\beta_{\sigma\left(  2\right)  }>\cdots>\beta_{\sigma\left(
k\right)  }$. Consider this $\sigma$. Define a $k$-tuple $\lambda=\left(
\lambda_{1},\lambda_{2},\ldots,\lambda_{k}\right)  \in\mathbb{Z}^{k}$ by%
\[
\left(  \lambda_{i}=\beta_{\sigma\left(  i\right)  }%
-k+i\ \ \ \ \ \ \ \ \ \ \text{for each }i\in\left\{  1,2,\ldots,k\right\}
\right)  .
\]
Then, Proposition \ref{prop.non-part.schur-straighten} \textbf{(c)} yields
$\lambda\in P_{k}$ and $s_{\alpha}=\left(  -1\right)  ^{\sigma}s_{\lambda}$.

It is well-known that if we permute the columns of a $k\times k$-matrix using
a permutation $\tau$, then the determinant of the matrix gets multiplied by
$\left(  -1\right)  ^{\tau}$. In other words, every $k\times k$-matrix
$\left(  b_{u,v}\right)  _{1\leq u\leq k,\ 1\leq v\leq k}$ and every $\tau\in
S_{k}$ satisfy $\det\left(  \left(  b_{u,\tau\left(  v\right)  }\right)
_{1\leq u\leq k,\ 1\leq v\leq k}\right)  =\left(  -1\right)  ^{\tau}%
\det\left(  \left(  b_{u,v}\right)  _{1\leq u\leq k,\ 1\leq v\leq k}\right)
$. Applying this to $\left(  b_{u,v}\right)  _{1\leq u\leq k,\ 1\leq v\leq
k}=\left(  x_{u}^{\beta_{v}}\right)  _{1\leq u\leq k,\ 1\leq v\leq k}$ and
$\tau=\sigma$, we obtain%
\begin{equation}
\det\left(  \left(  x_{u}^{\beta_{\sigma\left(  v\right)  }}\right)  _{1\leq
u\leq k,\ 1\leq v\leq k}\right)  =\left(  -1\right)  ^{\sigma}\det\left(
\left(  x_{u}^{\beta_{v}}\right)  _{1\leq u\leq k,\ 1\leq v\leq k}\right)  .
\label{pf.prop.non-part.bialternant.c2.3}%
\end{equation}

But each $v\in\left\{  1,2,\ldots,k\right\}  $ satisfies
\begin{align}
\left(  \lambda+\rho\right)  _{v}  &  =\underbrace{\lambda_{v}}%
_{\substack{=\beta_{\sigma\left(  v\right)  }-k+v\\\text{(by the definition of
}\lambda_{v}\text{)}}}+\underbrace{\rho_{v}}_{\substack{=k-v\\\text{(by
(\ref{pf.prop.non-part.bialternant.rhoi}))}}}=\left(  \beta_{\sigma\left(
v\right)  }-k+v\right)  +\left(  k-v\right) \nonumber\\
&  =\beta_{\sigma\left(  v\right)  }.
\label{pf.prop.non-part.bialternant.c2.4}%
\end{align}
Now, the definition of $a_{\lambda+\rho}$ yields%
\begin{align*}
a_{\lambda+\rho}  &  =\det\left(  \left(  x_{i}^{\left(  \lambda+\rho\right)
_{j}}\right)  _{1\leq i\leq k,\ 1\leq j\leq k}\right)  =\det\left(  \left(
\underbrace{x_{u}^{\left(  \lambda+\rho\right)  _{v}}}_{\substack{=x_{u}%
^{\beta_{\sigma\left(  v\right)  }}\\\text{(by
(\ref{pf.prop.non-part.bialternant.c2.4}))}}}\right)  _{1\leq u\leq k,\ 1\leq
v\leq k}\right) \\
&  \ \ \ \ \ \ \ \ \ \ \left(  \text{here, we have renamed the indices
}i\text{ and }j\text{ as }u\text{ and }v\right) \\
&  =\det\left(  \left(  x_{u}^{\beta_{\sigma\left(  v\right)  }}\right)
_{1\leq u\leq k,\ 1\leq v\leq k}\right) \\
&  =\left(  -1\right)  ^{\sigma}\underbrace{\det\left(  \left(  x_{u}%
^{\beta_{v}}\right)  _{1\leq u\leq k,\ 1\leq v\leq k}\right)  }%
_{\substack{=a_{\alpha+\rho}\\\text{(by (\ref{pf.prop.non-part.bialternant.a=}%
))}}}\ \ \ \ \ \ \ \ \ \ \left(  \text{by
(\ref{pf.prop.non-part.bialternant.c2.3})}\right) \\
&  =\left(  -1\right)  ^{\sigma}a_{\alpha+\rho}.
\end{align*}

But $\lambda\in P_{k}$. Hence, Proposition \ref{prop.bialternant} yields
\[
s_{\lambda}=\underbrace{a_{\lambda+\rho}}_{=\left(  -1\right)  ^{\sigma
}a_{\alpha+\rho}}/a_{\rho}=\left(  -1\right)  ^{\sigma}a_{\alpha+\rho}%
/a_{\rho}.
\]
Hence,
\[
s_{\alpha}=\left(  -1\right)  ^{\sigma}\underbrace{s_{\lambda}}_{=\left(
-1\right)  ^{\sigma}a_{\alpha+\rho}/a_{\rho}}=\underbrace{\left(  -1\right)
^{\sigma}\left(  -1\right)  ^{\sigma}}_{=\left(  \left(  -1\right)  ^{\sigma
}\right)  ^{2}=1}a_{\alpha+\rho}/a_{\rho}=a_{\alpha+\rho}/a_{\rho}.
\]
Thus, Proposition \ref{prop.non-part.bialternant} is proven in Case 2.

We have now proven Proposition \ref{prop.non-part.bialternant} in both Cases 1
and 2. Thus, Proposition \ref{prop.non-part.bialternant} is proven.
\end{proof}

\subsection{The uncancelled Pieri rule}

Having defined $s_{\lambda}$ for all $\lambda\in\mathbb{Z}^{k}$ (rather than
merely for partitions), we can state a nonstandard version of the Pieri rule
for products of the form $s_{\lambda}h_{i}$, which will turn out rather$_{{}}$ useful:

\begin{theorem}
\label{thm.non-part.pieri-h}Let $\lambda\in\mathbb{Z}^{k}$ be such that
$\lambda+\rho\in\mathbb{N}^{k}$. Let $m\in\mathbb{N}$. Then,%
\[
s_{\lambda}h_{m}=\sum_{\substack{\nu\in\mathbb{N}^{k};\\\left\vert
\nu\right\vert =m}}s_{\lambda+\nu}.
\]

\end{theorem}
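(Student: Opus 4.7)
The plan is to reduce Theorem \ref{thm.non-part.pieri-h} to an identity between alternants, using the bialternant formula (Proposition \ref{prop.non-part.bialternant}), and then prove that alternant identity directly by expanding determinants as sums over $S_k$. Set $\mu = \lambda + \rho \in \mathbb{N}^k$. Since $\nu \in \mathbb{N}^k$, we also have $\lambda + \nu + \rho = \mu + \nu \in \mathbb{N}^k$, so Proposition \ref{prop.non-part.bialternant} applies to each $s_{\lambda + \nu}$ as well as to $s_\lambda$. Because $a_\rho$ is a non-zero-divisor in $\mathcal{P}$, proving the identity in the statement is equivalent to proving
\begin{equation*}
h_m \cdot a_\mu = \sum_{\substack{\nu \in \mathbb{N}^k \\ |\nu| = m}} a_{\mu + \nu}
\end{equation*}
in $\mathcal{P}$. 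So that is the identity I will establish.

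The key step is a substitution argument on the right-hand side. Expanding each alternant via the Leibniz formula, we have
\begin{equation*}
\sum_{\substack{\nu \in \mathbb{N}^k \\ |\nu| = m}} a_{\mu+\nu}
= \sum_{\substack{\nu \in \mathbb{N}^k \\ |\nu|=m}} \sum_{\sigma \in S_k} (-1)^\sigma \prod_{i=1}^k x_i^{\mu_{\sigma(i)} + \nu_{\sigma(i)}}.
\end{equation*}
For each fixed $\sigma \in S_k$, the map $\nu \mapsto (\nu_{\sigma(1)}, \nu_{\sigma(2)}, \ldots, \nu_{\sigma(k)})$ is a bijection from $\{\nu \in \mathbb{N}^k : |\nu| = m\}$ to itself. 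After making this substitution and pulling the $\mu$-terms out, the inner sum over $\nu$ becomes $\left(\prod_i x_i^{\mu_{\sigma(i)}}\right) \sum_{|\nu'|=m} \prod_i x_i^{\nu'_i} = \left(\prod_i x_i^{\mu_{\sigma(i)}}\right) h_m$, using the definition \eqref{eq.hm} of $h_m$. Summing over $\sigma$ then gives $a_\mu \cdot h_m$, as desired.

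I do not anticipate a real obstacle here: the proof is essentially a one-line application of the bialternant formula combined with the permutation-invariance of $h_m$. The only care needed is verifying that $\lambda+\nu+\rho \in \mathbb{N}^k$ for every $\nu \in \mathbb{N}^k$ (which is immediate from $\lambda+\rho \in \mathbb{N}^k$ and the componentwise nonnegativity of $\nu$), and choosing signs/conventions consistently so that the Leibniz expansion of $a_{\mu+\nu}$ lines up with that of $a_\mu$ after the substitution. Once the alternant identity is established, dividing both sides by $a_\rho$ yields precisely $s_\lambda h_m = \sum_{|\nu|=m,\ \nu \in \mathbb{N}^k} s_{\lambda+\nu}$, proving the theorem.
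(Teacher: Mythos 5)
Your proposal is correct and follows essentially the same route as the paper's own proof: both reduce the claim via Proposition \ref{prop.non-part.bialternant} and the regularity of $a_{\rho}$ to the alternant identity $a_{\mu}h_{m}=\sum_{\nu\in\mathbb{N}^{k},\ \left\vert \nu\right\vert =m}a_{\mu+\nu}$, which is then verified by a Leibniz expansion combined with the permutation-invariance of $h_{m}$ (your re-indexing $\nu\mapsto\left(  \nu_{\sigma\left(  1\right)  },\ldots,\nu_{\sigma\left(  k\right)  }\right)  $ is the same trick as the paper's substitution of permuted variables into $h_{m}$). No gaps.
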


\begin{example}
For this example, let $k=3$ and $\lambda=\left(  -2,2,1\right)  $. Then,
$\lambda+\rho=\left(  -2,2,1\right)  +\left(  2,1,0\right)  =\left(
0,3,1\right)  $. It is easy to see (using Proposition
\ref{prop.non-part.schur-straighten} \textbf{(c)}) that $s_{\lambda
}=s_{\left(  1\right)  }$.

Furthermore, set $m=2$. Then, the $\nu\in\mathbb{N}^{k}$ satisfying
$\left\vert \nu\right\vert =m$ are the six $3$-tuples%
\[
\left(  2,0,0\right)  ,\ \ \ \ \ \ \ \ \ \ \left(  0,2,0\right)
,\ \ \ \ \ \ \ \ \ \ \left(  0,0,2\right)  ,\ \ \ \ \ \ \ \ \ \ \left(
1,1,0\right)  ,\ \ \ \ \ \ \ \ \ \ \left(  1,0,1\right)
,\ \ \ \ \ \ \ \ \ \ \left(  0,1,1\right)  .
\]
Hence, Theorem \ref{thm.non-part.pieri-h} yields%
\begin{align*}
s_{\left(  -2,2,1\right)  }h_{2}  &  =\sum_{\substack{\nu\in\mathbb{N}%
^{k};\\\left\vert \nu\right\vert =m}}s_{\left(  -2,2,1\right)  +\nu}\\
&  =\underbrace{s_{\left(  -2,2,1\right)  +\left(  2,0,0\right)  }%
}_{\substack{=s_{\left(  0,2,1\right)  }=-s_{\left(  1,1,1\right)
}\\\text{(by Proposition \ref{prop.non-part.schur-straighten} \textbf{(c)})}%
}}+\underbrace{s_{\left(  -2,2,1\right)  +\left(  0,2,0\right)  }%
}_{\substack{=s_{\left(  -2,4,1\right)  }=s_{\left(  3\right)  }\\\text{(by
Proposition \ref{prop.non-part.schur-straighten} \textbf{(c)})}}%
}+\underbrace{s_{\left(  -2,2,1\right)  +\left(  0,0,2\right)  }%
}_{\substack{=s_{\left(  -2,2,3\right)  }=0\\\text{(by Proposition
\ref{prop.non-part.schur-straighten} \textbf{(b)})}}}\\
&  \ \ \ \ \ \ \ \ \ \ +\underbrace{s_{\left(  -2,2,1\right)  +\left(
1,1,0\right)  }}_{\substack{=s_{\left(  -1,3,1\right)  }=0\\\text{(by
Proposition \ref{prop.non-part.schur-straighten} \textbf{(b)})}}%
}+\underbrace{s_{\left(  -2,2,1\right)  +\left(  1,0,1\right)  }%
}_{\substack{=s_{\left(  -1,2,2\right)  }=s_{\left(  1,1,1\right)
}\\\text{(by Proposition \ref{prop.non-part.schur-straighten} \textbf{(c)})}%
}}+\underbrace{s_{\left(  -2,2,1\right)  +\left(  0,1,1\right)  }%
}_{\substack{=s_{\left(  -2,3,2\right)  }=s_{\left(  2,1\right)  }\\\text{(by
Proposition \ref{prop.non-part.schur-straighten} \textbf{(c)})}}}\\
&  =-s_{\left(  1,1,1\right)  }+s_{\left(  3\right)  }+0+0+s_{\left(
1,1,1\right)  }+s_{\left(  2,1\right)  }=s_{\left(  2,1\right)  }+s_{\left(
3\right)  }.
\end{align*}
In view of $s_{\left(  -2,2,1\right)  }=s_{\left(  1\right)  }$, this rewrites
as $s_{\left(  1\right)  }h_{2}=s_{\left(  2,1\right)  }+s_{\left(  3\right)
}$, which is exactly what the usual Pieri rule would yield. Note that the
expression we obtained from Theorem \ref{thm.non-part.pieri-h} involves both
vanishing addends (here, $s_{\left(  -2,2,1\right)  +\left(  0,0,2\right)  }$
and $s_{\left(  -2,2,1\right)  +\left(  1,1,0\right)  }$) and mutually
cancelling addends (here, $s_{\left(  -2,2,1\right)  +\left(  2,0,0\right)  }$
and $s_{\left(  -2,2,1\right)  +\left(  1,0,1\right)  }$); this is why I call
it the \textquotedblleft uncancelled Pieri rule\textquotedblright.
\end{example}

We note that the idea of such an \textquotedblleft uncancelled Pieri
rule\textquotedblright\ as our Theorem \ref{thm.non-part.pieri-h} is not new
(similar things appeared in \cite[\S 2]{LakTho07} and \cite{Tamvak13}), but we
have not seen it stated in this exact form anywhere in the literature. Thus,
let us give a proof:

\begin{proof}
[Proof of Theorem \ref{thm.non-part.pieri-h}.]Define $\beta\in\mathbb{N}^{k}$
by $\beta=\lambda+\rho$. (This is well-defined, since $\lambda+\rho
\in\mathbb{N}^{k}$.)

From (\ref{eq.hm}), we obtain%
\begin{equation}
h_{m}=\sum_{\substack{\alpha\in\mathbb{N}^{k};\\\left\vert \alpha\right\vert
=m}}\underbrace{x^{\alpha}}_{\substack{=x_{1}^{\alpha_{1}}x_{2}^{\alpha_{2}%
}\cdots x_{k}^{\alpha_{k}}\\=\prod_{i=1}^{k}x_{i}^{\alpha_{i}}}}=\sum
_{\substack{\alpha\in\mathbb{N}^{k};\\\left\vert \alpha\right\vert =m}%
}\prod_{i=1}^{k}x_{i}^{\alpha_{i}}. \label{pf.thm.non-part.pieri-h.hm=}%
\end{equation}
For each permutation $\sigma\in S_{k}$, we have%
\begin{align}
h_{m}  &  =h_{m}\left(  x_{\sigma\left(  1\right)  },x_{\sigma\left(
2\right)  },\ldots,x_{\sigma\left(  k\right)  }\right)
\ \ \ \ \ \ \ \ \ \ \left(  \text{since the polynomial }h_{m}\text{ is
symmetric}\right) \nonumber\\
&  =\sum_{\substack{\alpha\in\mathbb{N}^{k};\\\left\vert \alpha\right\vert
=m}}\prod_{i=1}^{k}x_{\sigma\left(  i\right)  }^{\alpha_{i}}
\label{pf.thm.non-part.pieri-h.hm=2}%
\end{align}
(here, we have substituted $x_{\sigma\left(  1\right)  },x_{\sigma\left(
2\right)  },\ldots,x_{\sigma\left(  k\right)  }$ for $x_{1},x_{2},\ldots
,x_{k}$ in the equality (\ref{pf.thm.non-part.pieri-h.hm=})).

But Proposition \ref{prop.non-part.bialternant} (applied to $\alpha=\lambda$)
yields $s_{\lambda}=a_{\lambda+\rho}/a_{\rho}$ in $\mathcal{P}$. Thus,
\begin{align*}
a_{\rho}s_{\lambda}  &  =a_{\lambda+\rho}=a_{\beta}\ \ \ \ \ \ \ \ \ \ \left(
\text{since }\lambda+\rho=\beta\right) \\
&  =\det\left(  \left(  x_{i}^{\beta_{j}}\right)  _{1\leq i\leq k,\ 1\leq
j\leq k}\right)  \ \ \ \ \ \ \ \ \ \ \left(  \text{by the definition of
}a_{\beta}\right) \\
&  =\det\left(  \left(  x_{j}^{\beta_{i}}\right)  _{1\leq i\leq k,\ 1\leq
j\leq k}\right) \\
&  \ \ \ \ \ \ \ \ \ \ \left(
\begin{array}
[c]{c}%
\text{since the determinant of a matrix equals}\\
\text{the determinant of its transpose}%
\end{array}
\right) \\
&  =\sum_{\sigma\in S_{k}}\left(  -1\right)  ^{\sigma}\prod_{i=1}^{k}%
x_{\sigma\left(  i\right)  }^{\beta_{i}}\ \ \ \ \ \ \ \ \ \ \left(  \text{by
the definition of a determinant}\right)  .
\end{align*}
Multiplying both sides of this equality with $h_{m}$, we find%
\begin{align}
a_{\rho}s_{\lambda}h_{m}  &  =\left(  \sum_{\sigma\in S_{k}}\left(  -1\right)
^{\sigma}\prod_{i=1}^{k}x_{\sigma\left(  i\right)  }^{\beta_{i}}\right)
h_{m}=\sum_{\sigma\in S_{k}}\left(  -1\right)  ^{\sigma}\left(  \prod
_{i=1}^{k}x_{\sigma\left(  i\right)  }^{\beta_{i}}\right)  \underbrace{h_{m}%
}_{\substack{=\sum_{\substack{\alpha\in\mathbb{N}^{k};\\\left\vert
\alpha\right\vert =m}}\prod_{i=1}^{k}x_{\sigma\left(  i\right)  }^{\alpha_{i}%
}\\\text{(by (\ref{pf.thm.non-part.pieri-h.hm=2}))}}}\nonumber\\
&  =\sum_{\sigma\in S_{k}}\left(  -1\right)  ^{\sigma}\left(  \prod_{i=1}%
^{k}x_{\sigma\left(  i\right)  }^{\beta_{i}}\right)  \sum_{\substack{\alpha
\in\mathbb{N}^{k};\\\left\vert \alpha\right\vert =m}}\prod_{i=1}^{k}%
x_{\sigma\left(  i\right)  }^{\alpha_{i}}\nonumber\\
&  =\sum_{\substack{\alpha\in\mathbb{N}^{k};\\\left\vert \alpha\right\vert
=m}}\sum_{\sigma\in S_{k}}\left(  -1\right)  ^{\sigma}\underbrace{\left(
\prod_{i=1}^{k}x_{\sigma\left(  i\right)  }^{\beta_{i}}\right)  \prod
_{i=1}^{k}x_{\sigma\left(  i\right)  }^{\alpha_{i}}}_{=\prod_{i=1}^{k}\left(
x_{\sigma\left(  i\right)  }^{\beta_{i}}x_{\sigma\left(  i\right)  }%
^{\alpha_{i}}\right)  }\nonumber\\
&  =\sum_{\substack{\alpha\in\mathbb{N}^{k};\\\left\vert \alpha\right\vert
=m}}\sum_{\sigma\in S_{k}}\left(  -1\right)  ^{\sigma}\prod_{i=1}%
^{k}\underbrace{\left(  x_{\sigma\left(  i\right)  }^{\beta_{i}}%
x_{\sigma\left(  i\right)  }^{\alpha_{i}}\right)  }_{\substack{=x_{\sigma
\left(  i\right)  }^{\beta_{i}+\alpha_{i}}=x_{\sigma\left(  i\right)
}^{\left(  \beta+\alpha\right)  _{i}}\\\text{(since }\beta_{i}+\alpha
_{i}=\left(  \beta+\alpha\right)  _{i}\text{)}}}\nonumber\\
&  =\sum_{\substack{\alpha\in\mathbb{N}^{k};\\\left\vert \alpha\right\vert
=m}}\sum_{\sigma\in S_{k}}\left(  -1\right)  ^{\sigma}\prod_{i=1}^{k}%
x_{\sigma\left(  i\right)  }^{\left(  \beta+\alpha\right)  _{i}}\nonumber\\
&  =\sum_{\substack{\nu\in\mathbb{N}^{k};\\\left\vert \nu\right\vert =m}%
}\sum_{\sigma\in S_{k}}\left(  -1\right)  ^{\sigma}\prod_{i=1}^{k}%
x_{\sigma\left(  i\right)  }^{\left(  \beta+\nu\right)  _{i}}
\label{pf.thm.non-part.pieri-h.3}%
\end{align}
(here, we have renamed the summation index $\alpha$ as $\nu$).

On the other hand, let $\nu\in\mathbb{N}^{k}$. Then, $\left(  \lambda
+\nu\right)  +\rho=\underbrace{\left(  \lambda+\rho\right)  }_{\in
\mathbb{N}^{k}}+\underbrace{\nu}_{\in\mathbb{N}^{k}}\in\mathbb{N}^{k}$. Thus,
Proposition \ref{prop.non-part.bialternant} (applied to $\alpha=\lambda+\nu$)
yields $s_{\lambda+\nu}=a_{\left(  \lambda+\nu\right)  +\rho}/a_{\rho}$ in
$\mathcal{P}$. Thus,
\begin{align}
a_{\rho}s_{\lambda+\nu}  &  =a_{\left(  \lambda+\nu\right)  +\rho}%
=a_{\beta+\nu}\ \ \ \ \ \ \ \ \ \ \left(  \text{since }\left(  \lambda
+\nu\right)  +\rho=\underbrace{\lambda+\rho}_{=\beta}+\nu=\beta+\nu\right)
\nonumber\\
&  =\det\left(  \left(  x_{i}^{\left(  \beta+\nu\right)  _{j}}\right)  _{1\leq
i\leq k,\ 1\leq j\leq k}\right)  \ \ \ \ \ \ \ \ \ \ \left(  \text{by the
definition of }a_{\beta+\nu}\right) \nonumber\\
&  =\det\left(  \left(  x_{j}^{\left(  \beta+\nu\right)  _{i}}\right)  _{1\leq
i\leq k,\ 1\leq j\leq k}\right) \nonumber\\
&  \ \ \ \ \ \ \ \ \ \ \left(
\begin{array}
[c]{c}%
\text{since the determinant of a matrix equals}\\
\text{the determinant of its transpose}%
\end{array}
\right) \nonumber\\
&  =\sum_{\sigma\in S_{k}}\left(  -1\right)  ^{\sigma}\prod_{i=1}^{k}%
x_{\sigma\left(  i\right)  }^{\left(  \beta+\nu\right)  _{i}}
\label{pf.thm.non-part.pieri-h.4}%
\end{align}
(by the definition of a determinant).

Now, forget that we fixed $\nu$. We thus have proven
(\ref{pf.thm.non-part.pieri-h.4}) for each $\nu\in\mathbb{N}^{k}$.

Now, (\ref{pf.thm.non-part.pieri-h.3}) becomes%
\[
a_{\rho}s_{\lambda}h_{m}=\sum_{\substack{\nu\in\mathbb{N}^{k};\\\left\vert
\nu\right\vert =m}}\underbrace{\sum_{\sigma\in S_{k}}\left(  -1\right)
^{\sigma}\prod_{i=1}^{k}x_{\sigma\left(  i\right)  }^{\left(  \beta
+\nu\right)  _{i}}}_{\substack{=a_{\rho}s_{\lambda+\nu}\\\text{(by
(\ref{pf.thm.non-part.pieri-h.4}))}}}=\sum_{\substack{\nu\in\mathbb{N}%
^{k};\\\left\vert \nu\right\vert =m}}a_{\rho}s_{\lambda+\nu}=a_{\rho}%
\sum_{\substack{\nu\in\mathbb{N}^{k};\\\left\vert \nu\right\vert
=m}}s_{\lambda+\nu}.
\]
We can cancel $a_{\rho}$ from this equality (since $a_{\rho}$ is a regular
element of $\mathcal{P}$), and thus obtain%
\[
s_{\lambda}h_{m}=\sum_{\substack{\nu\in\mathbb{N}^{k};\\\left\vert
\nu\right\vert =m}}s_{\lambda+\nu}.
\]
This proves Theorem \ref{thm.non-part.pieri-h}.
\end{proof}

\subsection{The \textquotedblleft rim hook algorithm\textquotedblright}

For the rest of this section, we assume that $k>0$.

We need one more weird definition:

\begin{definition}
\label{def.rimhook.V}Let $V$ be the set of all $k$-tuples $\left(  -n,\tau
_{2},\tau_{3},\ldots,\tau_{k}\right)  \in\mathbb{Z}^{k}$ satisfying%
\begin{equation}
\left(  \tau_{i}\in\left\{  0,1\right\}  \ \ \ \ \ \ \ \ \ \ \text{for each
}i\in\left\{  2,3,\ldots,k\right\}  \right)  . \label{eq.def.rimhook.V.cond}%
\end{equation}

\end{definition}

\begin{example}
\label{exa.rimhook.V}If $n=6$ and $k=3$, then%
\begin{equation}
V=\left\{  \left(  -6,0,0\right)  ,\left(  -6,0,1\right)  ,\left(
-6,1,0\right)  ,\left(  -6,1,1\right)  \right\}  . \label{eq.exa.rimhook.V.63}%
\end{equation}

\end{example}

\begin{proposition}
\label{prop.rimhook.V.size}Let $\tau\in V$. Then, $-\left\vert \tau\right\vert
\in\left\{  n-k+1,n-k+2,\ldots,n\right\}  $.
\end{proposition}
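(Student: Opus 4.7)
The plan is to unpack the definition of $V$ and directly compute the range of $-|\tau|$. Write $\tau = (-n, \tau_2, \tau_3, \ldots, \tau_k)$ with each $\tau_i \in \{0,1\}$ for $i \in \{2,3,\ldots,k\}$, as guaranteed by Definition \ref{def.rimhook.V}. By the definition of $|\tau|$ (the sum of entries), we then have
\[
|\tau| = -n + \sum_{i=2}^{k} \tau_i.
\]

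Next I would bound the sum $\sum_{i=2}^k \tau_i$. Since each of the $k-1$ summands $\tau_2, \tau_3, \ldots, \tau_k$ lies in $\{0,1\}$, the sum is at least $0$ and at most $k-1$. Therefore
\[
-n \;\leq\; |\tau| \;\leq\; -n + (k-1),
\]
which, upon negating, gives
\[
n-k+1 \;\leq\; -|\tau| \;\leq\; n.
\]
Since $-|\tau|$ is an integer, this is exactly the statement $-|\tau| \in \{n-k+1, n-k+2, \ldots, n\}$.

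There is essentially no obstacle here — the proposition is a routine bookkeeping consequence of the definition of $V$, designed to verify that the indices $-|\tau|$ produced by elements of $V$ land in the range where the congruences $h_i \equiv a_{i-n+k} \pmod{I}$ from (\ref{eq.h=amodI2}) are applicable (which is presumably why the set $V$ will be used in the forthcoming rim hook algorithm). So the proof is simply the two-line computation above, perhaps preceded by a sentence citing Definition \ref{def.rimhook.V} to justify the form of $\tau$ and the constraint (\ref{eq.def.rimhook.V.cond}).
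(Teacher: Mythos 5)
Your proof is correct and follows exactly the paper's own argument: unpack the definition of $V$, write $\left\vert \tau\right\vert = -n + \sum_{i=2}^{k}\tau_{i}$, and bound the sum between $0$ and $k-1$ using $\tau_{i}\in\left\{ 0,1\right\}$. Nothing is missing.
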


\begin{proof}
[Proof of Proposition \ref{prop.rimhook.V.size}.]We have $\tau\in V$. Thus,
$\tau$ has the form
\[
\tau=\left(  -n,\tau_{2},\tau_{3},\ldots,\tau_{k}\right)  \in\mathbb{Z}^{k}%
\]
for some $\tau_{2},\tau_{3},\ldots,\tau_{k}$ satisfying
(\ref{eq.def.rimhook.V.cond}) (by the definition of $V$). Consider these
$\tau_{2},\tau_{3},\ldots,\tau_{k}$. We have $\tau=\left(  -n,\tau_{2}%
,\tau_{3},\ldots,\tau_{k}\right)  $ and thus%
\[
\left\vert \tau\right\vert =\left(  -n\right)  +\underbrace{\tau_{2}+\tau
_{3}+\cdots+\tau_{k}}_{=\sum_{i=2}^{k}\tau_{i}}=\left(  -n\right)  +\sum
_{i=2}^{k}\tau_{i}.
\]
Therefore,%
\begin{align*}
-\left\vert \tau\right\vert  &  =-\left(  \left(  -n\right)  +\sum_{i=2}%
^{k}\tau_{i}\right)  =n-\sum_{i=2}^{k}\underbrace{\tau_{i}}_{\substack{\leq
1\\\text{(since (\ref{eq.def.rimhook.V.cond})}\\\text{yields }\tau_{i}%
\in\left\{  0,1\right\}  \text{)}}}\geq n-\underbrace{\sum_{i=2}^{k}1}%
_{=k-1}\\
&  =n-\left(  k-1\right)  =n-k+1.
\end{align*}
Combining this with%
\[
-\left\vert \tau\right\vert =n-\sum_{i=2}^{k}\underbrace{\tau_{i}%
}_{\substack{\geq0\\\text{(since (\ref{eq.def.rimhook.V.cond})}\\\text{yields
}\tau_{i}\in\left\{  0,1\right\}  \text{)}}}\leq n-\underbrace{\sum_{i=2}%
^{k}0}_{=0}=n,
\]
we obtain $n-k+1\leq-\left\vert \tau\right\vert \leq n$. Thus, $-\left\vert
\tau\right\vert \in\left\{  n-k+1,n-k+2,\ldots,n\right\}  $ (since
$-\left\vert \tau\right\vert $ is an integer). This proves Proposition
\ref{prop.rimhook.V.size}.
\end{proof}

We are now ready to state the main theorem of this section: a generalization
of the \textquotedblleft rim hook algorithm\textquotedblright\ from
\cite[\S 2, Main Lemma]{BeCiFu99}:

\begin{theorem}
\label{thm.rimhook}Assume that $a_{1},a_{2},\ldots,a_{k}$ belong to
$\mathbf{k}$.

Let $\mu\in P_{k}$ be such that $\mu_{1}>n-k$. Then,%
\[
\overline{s_{\mu}}=\sum_{j=1}^{k}\left(  -1\right)  ^{k-j}a_{j}\sum
_{\substack{\tau\in V;\\-\left\vert \tau\right\vert =n-k+j}}\overline
{s_{\mu+\tau}}.
\]

\end{theorem}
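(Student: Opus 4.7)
The plan is to lift the desired congruence from $\mathcal{S}/I$ to an equality in $\mathcal{S}$ itself, namely
\begin{equation*}
s_\mu \;=\; \sum_{j=1}^k (-1)^{k-j}\, h_{n-k+j} \sum_{\substack{\tau \in V \\ -|\tau|\,=\,n-k+j}} s_{\mu+\tau},
\end{equation*}
where each $s_{\mu+\tau}$ with $\mu+\tau \in \mathbb{Z}^k$ is interpreted in the sense of Definition~\ref{def.non-part.schur}. Since $h_{n-k+j} - a_j \in I$ by the definition of $I$, reducing this identity modulo $I$ and using that $a_j \in \mathbf{k}$ will immediately produce Theorem~\ref{thm.rimhook}. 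The advantage is that the lifted identity no longer involves the $a_j$ or the ideal $I$, so it becomes a clean assertion about Schur and complete homogeneous polynomials in $k$ variables.

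To establish the lifted identity I will expand each product $s_{\mu+\tau}\,h_{n-k+j}$ on the right-hand side using the uncancelled Pieri rule (Theorem~\ref{thm.non-part.pieri-h}). Its hypothesis is easily checked: since $\mu_1 > n-k$ is an integer, $\mu_1 \ge n-k+1$, so the first entry of $(\mu+\tau)+\rho$ equals $\mu_1-n+k-1 \ge 0$, while each subsequent entry $\mu_i+\tau_i+k-i$ is manifestly non-negative. Theorem~\ref{thm.non-part.pieri-h} therefore yields
\begin{equation*}
s_{\mu+\tau}\, h_{n-k+j} \;=\; \sum_{\substack{\nu \in \mathbb{N}^k \\ |\nu|\,=\,n-k+j}} s_{\mu+\tau+\nu}.
\end{equation*}

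After substituting, I will combine the iterated sum on the right via the change of variables $\gamma := \tau+\nu \in \mathbb{Z}^k$. A short computation shows $|\gamma| = |\tau|+|\nu| = 0$, $\gamma_1 \ge -n$, and $\gamma_i \ge 0$ for $i \ge 2$. The relation $-|\tau| = n-k+j$ rewrites the sign as $(-1)^{k-j} = (-1)^{\sum_{i\ge 2}\tau_i}$; for each fixed admissible $\gamma$, the pairs $(\tau,\nu)$ with $\tau+\nu = \gamma$ are parameterized by independent binary choices $\tau_i \in \{0,1\}$ with $\tau_i \le \gamma_i$ for $i \ge 2$ (together with the forced $\tau_1 = -n$). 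Hence the signed count of such pairs factors as
\[
\prod_{i=2}^k \bigl(1 - [\gamma_i \ge 1]\bigr),
\]
which vanishes unless $\gamma_i = 0$ for every $i \ge 2$; combined with $|\gamma| = 0$, this forces $\gamma = \mathbf{0}$. Only the term $s_{\mu+\mathbf{0}} = s_\mu$ survives, and the lifted identity is proven.

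The main obstacle is bookkeeping rather than conceptual: correctly identifying $k-j$ with $\sum_{i \ge 2}\tau_i$ from the combinatorial description of $V$, checking that the reparameterization $(\tau,\nu)\leftrightarrow\gamma$ is a bijection with the claimed constraints, and verifying that the signed count telescopes into the asserted product. Once this cancellation argument is carried out, no further technicalities arise, and the final passage to $\mathcal{S}/I$ is immediate from $h_{n-k+j} \equiv a_j \pmod{I}$.
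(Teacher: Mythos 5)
Your proposal is correct and follows essentially the same route as the paper: the lifted identity is exactly Theorem~\ref{thm.rimhook-S}, proved by applying the uncancelled Pieri rule (Theorem~\ref{thm.non-part.pieri-h}) to each $s_{\mu+\tau}h_{-|\tau|}$ and then cancelling via the signed count over pairs $(\tau,\nu)$ with $\tau+\nu=\gamma$, which is precisely the content of Lemma~\ref{lem.rimhook-S.sum0} (your product evaluation $\prod_{i\ge 2}(1-[\gamma_i\ge 1])$ is just a closed form of the paper's sign-reversing involution, and the case of some $\gamma_i<0$ is harmless since the pair set is then empty). The final reduction modulo $I$ via $h_{n-k+j}\equiv a_j$ matches the paper's deduction of Theorem~\ref{thm.rimhook} from Theorem~\ref{thm.rimhook-S}.
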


\begin{example}
For this example, set $n=6$ and $k=3$ and $\mu=\left(  5,4,1\right)  $. Then,
Theorem \ref{thm.rimhook} yields%
\begin{align*}
&  \overline{s_{\left(  5,4,1\right)  }}\\
&  =\sum_{j=1}^{k}\left(  -1\right)  ^{k-j}a_{j}\sum_{\substack{\tau\in
V;\\-\left\vert \tau\right\vert =n-k+j}}\overline{s_{\left(  5,4,1\right)
+\tau}}\\
&  =\left(  -1\right)  ^{3-1}a_{1}\overline{s_{\left(  5,4,1\right)  +\left(
-6,1,1\right)  }}+\left(  -1\right)  ^{3-2}a_{2}\left(  \overline{s_{\left(
5,4,1\right)  +\left(  -6,0,1\right)  }}+\overline{s_{\left(  5,4,1\right)
+\left(  -6,1,0\right)  }}\right) \\
&  \ \ \ \ \ \ \ \ \ \ +\left(  -1\right)  ^{3-3}a_{3}\overline{s_{\left(
5,4,1\right)  +\left(  -6,0,0\right)  }}\ \ \ \ \ \ \ \ \ \ \left(  \text{by
(\ref{eq.exa.rimhook.V.63})}\right) \\
&  =a_{1}\underbrace{\overline{s_{\left(  5,4,1\right)  +\left(
-6,1,1\right)  }}}_{\substack{=\overline{s_{\left(  -1,5,2\right)  }%
}\\=\overline{s_{\left(  4,1,1\right)  }}\\\text{(by Proposition
\ref{prop.non-part.schur-straighten} \textbf{(c)})}}}-a_{2}\left(
\underbrace{\overline{s_{\left(  5,4,1\right)  +\left(  -6,0,1\right)  }}%
}_{\substack{=\overline{s_{\left(  -1,4,2\right)  }}\\=\overline{s_{\left(
3,1,1\right)  }}\\\text{(by Proposition \ref{prop.non-part.schur-straighten}
\textbf{(c)})}}}+\underbrace{\overline{s_{\left(  5,4,1\right)  +\left(
-6,1,0\right)  }}}_{\substack{=\overline{s_{\left(  -1,5,1\right)  }%
}\\=0\\\text{(by Proposition \ref{prop.non-part.schur-straighten}
\textbf{(b)})}}}\right) \\
&  \ \ \ \ \ \ \ \ \ \ +a_{3}\underbrace{\overline{s_{\left(  5,4,1\right)
+\left(  -6,0,0\right)  }}}_{\substack{=\overline{s_{\left(  -1,4,1\right)  }%
}\\=0\\\text{(by Proposition \ref{prop.non-part.schur-straighten}
\textbf{(b)})}}}\\
&  =a_{1}\overline{s_{\left(  4,1,1\right)  }}-a_{2}\overline{s_{\left(
3,1,1\right)  }}.
\end{align*}
Note that this is \textbf{not} yet an expansion of $\overline{s_{\mu}}$ in the
basis $\left(  \overline{s_{\lambda}}\right)  _{\lambda\in P_{k,n}}$. Indeed,
we still have a term $\overline{s_{\left(  4,1,1\right)  }}$ on the right hand
side which has $\left(  4,1,1\right)  \notin P_{k,n}$. But this term can, in
turn, be rewritten using Theorem \ref{thm.rimhook}, and so on until we end up
with an expansion of $\overline{s_{\mu}}$ in the basis $\left(  \overline
{s_{\lambda}}\right)  _{\lambda\in P_{k,n}}$, namely%
\[
\overline{s_{\left(  5,4,1\right)  }}=-a_{2}\overline{s_{\left(  3,1,1\right)
}}+a_{1}^{2}\overline{s_{\left(  1,1\right)  }}-a_{1}a_{2}\overline{s_{\left(
1\right)  }}+a_{1}a_{3}\overline{s_{\left(  {}\right)  }}.
\]

\end{example}

As we saw in this example, when we apply Theorem \ref{thm.rimhook}, some of
the $\overline{s_{\mu+\tau}}$ addends on the right hand side may be $0$ (by
Proposition \ref{prop.non-part.schur-straighten} \textbf{(b)}). Once these
addends are removed, the remaining addends can be rewritten in the form
$\pm\overline{s_{\lambda}}$ for some $\lambda\in P_{k}$ satisfying $\left\vert
\lambda\right\vert <\left\vert \mu\right\vert $ (using Proposition
\ref{prop.non-part.schur-straighten} \textbf{(c)}). The resulting sum is
multiplicity-free -- in the sense that no $\overline{s_{\lambda}}$ occurs more
than once in it. (This is not difficult to check, but would take us too far
afield.) However, this sum is (in general) not an expansion of $\overline
{s_{\mu}}$ in the basis $\left(  \overline{s_{\lambda}}\right)  _{\lambda\in
P_{k,n}}$ yet, because it often contains terms $\overline{s_{\lambda}}$ with
$\lambda\notin P_{k,n}$. If we keep applying Theorem \ref{thm.rimhook}
multiple times until we reach an expansion of $\overline{s_{\mu}}$ in the
basis $\left(  \overline{s_{\lambda}}\right)  _{\lambda\in P_{k,n}}$, then
this latter expansion may contain multiplicities: For example, for $n=6$ and
$k=3$, we have%
\[
\overline{s_{\left(  4,4,3\right)  }}=-a_{2}\overline{s_{\left(  3,3\right)
}}+a_{3}\overline{s_{\left(  3,2\right)  }}+a_{1}^{2}\overline{s_{\left(
3\right)  }}-2a_{1}a_{2}\overline{s_{\left(  2\right)  }}+a_{2}^{2}%
\overline{s_{\left(  1\right)  }}.
\]

We owe the reader an explanation of why we call Theorem \ref{thm.rimhook} a
\textquotedblleft rim hook algorithm\textquotedblright. It owes this name to
the fact that it generalizes the \textquotedblleft rim hook
algorithm\textquotedblright\ for quantum cohomology \cite[\S 2, Main
Lemma]{BeCiFu99} (which can be obtained from it with some work by setting
$a_{i}=0$ for all $i<k$). Nevertheless, it does not visibly involve any rim
hooks itself. I am, in fact, unaware of a way to restate it in the language of
Young diagrams; the operation $\mu\mapsto\mu+\tau$ for $\tau\in V$ resembles
both the removal of an $n$-rim hook (since it lowers the first entry by $n$)
and the addition of a vertical strip (since it increases each of the remaining
entries by $0$ or $1$), but it cannot be directly stated as one of these
operations followed by the other.

We shall prove Theorem \ref{thm.rimhook} by deriving it from an identity in
$\mathcal{S}$:

\begin{theorem}
\label{thm.rimhook-S}Let $\mu\in P_{k}$ be such that $\mu_{1}>n-k$. Then,%
\[
s_{\mu}=\sum_{j=1}^{k}\left(  -1\right)  ^{k-j}h_{n-k+j}\sum_{\substack{\tau
\in V;\\-\left\vert \tau\right\vert =n-k+j}}s_{\mu+\tau}.
\]

\end{theorem}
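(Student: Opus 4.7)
The plan is to transform the right-hand side using the ``uncancelled Pieri rule'' (Theorem \ref{thm.non-part.pieri-h}) and then observe that a sign-reversing involution collapses everything except one term. First, I verify the key prerequisites. For $\tau = (-n,\tau_2,\ldots,\tau_k) \in V$ and $\mu \in P_k$ with $\mu_1 > n-k$, we have $\mu_1 \geq n-k+1$, so
\[
(\mu+\tau+\rho)_1 = \mu_1 - n + k - 1 \geq 0, \qquad (\mu+\tau+\rho)_i = \mu_i + \tau_i + k - i \geq 0 \quad (i \geq 2),
\]
so $(\mu+\tau)+\rho \in \mathbb{N}^k$ and Theorem \ref{thm.non-part.pieri-h} applies to $s_{\mu+\tau} h_{n-k+j}$. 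Next, I repackage the sign: if $-|\tau| = n-k+j$, then $\sum_{i=2}^k \tau_i = n - (n-k+j) = k-j$, hence
\[
(-1)^{k-j} = \prod_{i=2}^k (-1)^{\tau_i}.
\]

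Now I apply Theorem \ref{thm.non-part.pieri-h} and swap summations. The RHS becomes
\[
\sum_{j=1}^k \sum_{\substack{\tau \in V \\ -|\tau|=n-k+j}} \sum_{\substack{\nu \in \mathbb{N}^k \\ |\nu|=n-k+j}} \prod_{i=2}^k (-1)^{\tau_i} \, s_{\mu+\tau+\nu} = \sum_{\substack{\tau \in V,\ \nu \in \mathbb{N}^k \\ |\tau|+|\nu|=0}} \prod_{i=2}^k (-1)^{\tau_i} \, s_{\mu+\tau+\nu},
\]
where I have used that the constraint $j = -|\tau|-(n-k) \in \{1,\ldots,k\}$ is automatic by Proposition \ref{prop.rimhook.V.size}. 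Setting $\kappa = \tau + \nu$, the expression becomes $\sum_\kappa c_\kappa \, s_{\mu+\kappa}$, where $c_\kappa$ is the signed count of decompositions $\kappa = \tau+\nu$ with $\tau \in V$ and $\nu \in \mathbb{N}^k$.

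The crucial observation is that $c_\kappa$ factors across the coordinates $i = 2,\ldots,k$, since $\tau_1 = -n$ is forced. For each $i \geq 2$ the decomposition $\kappa_i = \tau_i + \nu_i$ with $\tau_i \in \{0,1\}$ and $\nu_i \in \mathbb{N}$ contributes: the unique decomposition $(\tau_i,\nu_i)=(0,0)$ with sign $+1$ if $\kappa_i = 0$; the two decompositions $(0,\kappa_i)$ and $(1,\kappa_i-1)$ with signs $+1$ and $-1$ (summing to $0$) if $\kappa_i \geq 1$; and no decomposition if $\kappa_i < 0$. Thus $c_\kappa = 0$ unless $\kappa_2 = \cdots = \kappa_k = 0$. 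Combined with the necessary condition $|\kappa| = |\tau|+|\nu| = 0$, this forces $\kappa_1 = 0$ as well. Hence only $\kappa = \mathbf{0}$ survives, with $c_{\mathbf{0}} = 1$, yielding $\text{RHS} = s_\mu$.

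There is no real obstacle here; the only step requiring care is the bookkeeping that rewrites the sign $(-1)^{k-j}$ as the product $\prod_{i=2}^k (-1)^{\tau_i}$, because that is precisely what makes the coefficient $c_\kappa$ factor across coordinates and allows the elementary identity $1 + (-1) = 0$ to do all the work.
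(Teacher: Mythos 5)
Your proof is correct and follows essentially the same route as the paper: check $\mu+\tau+\rho\in\mathbb{N}^{k}$, apply the uncancelled Pieri rule (Theorem \ref{thm.non-part.pieri-h}), interchange the sums, and reduce everything to the fact that the signed count of pairs $\left(\tau,\nu\right)$ with $\tau\in V$, $\nu\in\mathbb{N}^{k}$ and $\tau+\nu=\kappa$ equals $\delta_{\kappa,\mathbf{0}}$, which is precisely the paper's Lemma \ref{lem.rimhook-S.sum0}. Your coordinatewise factorization of that signed count (each coordinate $i\geq2$ contributing $1+\left(-1\right)=0$ unless $\kappa_{i}=0$) is just a repackaging of the paper's sign-reversing involution, so the two arguments agree in substance.
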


Our proof of this identity, in turn, will rely on the following combinatorial lemmas:

\begin{lemma}
\label{lem.rimhook-S.sum0lem}Let $j\in\left\{  2,3,\ldots,k\right\}  $. Let
$\Delta$ be the vector $\left(  0,0,\ldots,0,1,0,0,\ldots,0\right)
\in\mathbb{Z}^{k}$, where $1$ is the $j$-th entry.

\textbf{(a)} If $\tau\in V$ satisfies $\tau_{j}=0$, then $\tau+\Delta\in V$
and $\left(  \tau+\Delta\right)  _{j}=1$.

\textbf{(b)} If $\tau\in V$ satisfies $\tau_{j}=1$, then $\tau-\Delta\in V$
and $\left(  \tau-\Delta\right)  _{j}=0$.

\textbf{(c)} If $\nu\in\mathbb{N}^{k}$ satisfies $\nu_{j}\neq0$, then
$\nu-\Delta\in\mathbb{N}^{k}$.

\textbf{(d)} If $\nu\in\mathbb{N}^{k}$, then $\nu+\Delta\in\mathbb{N}^{k}$.
\end{lemma}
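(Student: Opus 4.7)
The proof plan here is essentially just to unfold the definitions and check each clause by direct entry-wise verification; there is no real mathematical obstacle, and I will handle all four parts in parallel since they all follow the same template.

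First I would record the concrete shape of $\Delta$: namely, $\Delta_j = 1$ and $\Delta_i = 0$ for all $i \in \{1,2,\ldots,k\} \setminus \{j\}$. In particular, since $j \geq 2$, we have $\Delta_1 = 0$, so adding or subtracting $\Delta$ from a $k$-tuple never alters its first entry. This is the key observation making parts (a) and (b) trivial, because membership in $V$ is defined by the two conditions $\text{first entry} = -n$ and $\text{entries}_2,\ldots,\text{entries}_k \in \{0,1\}$, and the first condition is preserved automatically.

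For (a), given $\tau \in V$ with $\tau_j = 0$, I would observe that $(\tau + \Delta)_1 = \tau_1 + 0 = -n$, that $(\tau + \Delta)_j = 0 + 1 = 1 \in \{0,1\}$, and that for $i \in \{2,\ldots,k\} \setminus \{j\}$ one has $(\tau + \Delta)_i = \tau_i \in \{0,1\}$; hence $\tau + \Delta \in V$ by Definition \ref{def.rimhook.V}. Part (b) is the mirror image: from $\tau_j = 1$ one gets $(\tau - \Delta)_j = 0 \in \{0,1\}$, while the first entry and the other entries are unchanged and so remain in the required sets.

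For (c) and (d), the relevant condition is entrywise nonnegativity. For (d), the $j$-th entry of $\nu + \Delta$ equals $\nu_j + 1 \geq 1 \geq 0$, and all other entries are unchanged, so $\nu + \Delta \in \mathbb{N}^k$. For (c), the hypothesis $\nu_j \neq 0$ combined with $\nu_j \in \mathbb{N}$ forces $\nu_j \geq 1$, so $(\nu - \Delta)_j = \nu_j - 1 \geq 0$, while all other entries are unchanged and so remain in $\mathbb{N}$; thus $\nu - \Delta \in \mathbb{N}^k$. Since every step is a one-line check, the main ``risk'' is simply being careful that the index $j$ lies in $\{2,\ldots,k\}$ (so that the first coordinate is untouched) in parts (a) and (b); nothing deeper is required.
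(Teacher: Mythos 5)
Your proposal is correct and matches the paper's own proof: both arguments simply record that $\Delta_{j}=1$, $\Delta_{i}=0$ for $i\neq j$ (in particular $\Delta_{1}=0$ because $j\geq2$), and then verify the defining conditions of $V$ resp. $\mathbb{N}^{k}$ entry by entry, with (b) being the mirror image of (a) and (c), (d) being one-line nonnegativity checks. No differences worth noting.
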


\begin{proof}
[Proof of Lemma \ref{lem.rimhook-S.sum0lem}.]We have $j\in\left\{
2,3,\ldots,k\right\}  $, thus $j\neq1$.

We have $\Delta=\left(  0,0,\ldots,0,1,0,0,\ldots,0\right)  \in\mathbb{N}^{k}%
$. Thus, $\Delta_{j}=1$ and%
\begin{equation}
\left(  \Delta_{i}=0\ \ \ \ \ \ \ \ \ \ \text{for each }i\in\left\{
1,2,\ldots,k\right\}  \text{ satisfying }i\neq j\right)  .
\label{pf.lem.rimhook-S.sum0lem.1}%
\end{equation}

Applying (\ref{pf.lem.rimhook-S.sum0lem.1}) to $i=1$, we obtain $\Delta_{1}=0$
(since $1\neq j$).

\textbf{(a)} Let $\tau\in V$ be such that $\tau_{j}=0$.

We have $\tau\in V$. According to the definition of $V$, this means that
$\tau$ is a $k$-tuple $\left(  -n,\tau_{2},\tau_{3},\ldots,\tau_{k}\right)
\in\mathbb{Z}^{k}$ satisfying (\ref{eq.def.rimhook.V.cond}). In other words,
$\tau\in\mathbb{Z}^{k}$ and $\tau_{1}=-n$ and%
\begin{equation}
\left(  \tau_{i}\in\left\{  0,1\right\}  \ \ \ \ \ \ \ \ \ \ \text{for each
}i\in\left\{  2,3,\ldots,k\right\}  \right)  .
\label{pf.lem.rimhook-S.sum0lem.a.1}%
\end{equation}

Define a $k$-tuple $\sigma\in\mathbb{Z}^{k}$ by $\sigma=\tau+\Delta$. Thus,
$\sigma_{1}=\left(  \tau+\Delta\right)  _{1}=\tau_{1}+\underbrace{\Delta_{1}%
}_{=0}=\tau_{1}=-n$.

Furthermore, from $\sigma=\tau+\Delta$, we obtain $\sigma_{j}=\left(
\tau+\Delta\right)  _{j}=\underbrace{\tau_{j}}_{=0}+\underbrace{\Delta_{j}%
}_{=1}=1\in\left\{  0,1\right\}  $.

Next, we have $\sigma_{i}\in\left\{  0,1\right\}  $ for each $i\in\left\{
2,3,\ldots,k\right\}  $\ \ \ \ \footnote{\textit{Proof.} Let $i\in\left\{
2,3,\ldots,k\right\}  $. We must prove $\sigma_{i}\in\left\{  0,1\right\}  $.
\par
If $i=j$, then this follows from $\sigma_{j}\in\left\{  0,1\right\}  $. Hence,
for the rest of this proof, we WLOG assume that $i\neq j$. Thus,
(\ref{pf.lem.rimhook-S.sum0lem.1}) yields $\Delta_{i}=0$. Now, from
$\sigma=\tau+\Delta$, we obtain $\sigma_{i}=\left(  \tau+\Delta\right)
_{i}=\tau_{i}+\underbrace{\Delta_{i}}_{=0}=\tau_{i}\in\left\{  0,1\right\}  $
(by (\ref{pf.lem.rimhook-S.sum0lem.a.1})). Qed.}. Altogether, we thus have
shown that $\sigma\in\mathbb{Z}^{k}$ and $\sigma_{1}=-n$ and
\begin{equation}
\left(  \sigma_{i}\in\left\{  0,1\right\}  \ \ \ \ \ \ \ \ \ \ \text{for each
}i\in\left\{  2,3,\ldots,k\right\}  \right)  .
\label{pf.lem.rimhook-S.sum0lem.a.2}%
\end{equation}
In other words, $\sigma$ is a $k$-tuple $\left(  -n,\sigma_{2},\sigma
_{3},\ldots,\sigma_{k}\right)  \in\mathbb{Z}^{k}$ satisfying
(\ref{pf.lem.rimhook-S.sum0lem.a.2}). In other words, $\sigma\in V$ (by the
definition of $V$). Thus, $\tau+\Delta=\sigma\in V$. So we have proven that
$\tau+\Delta\in V$ and $\left(  \tau+\Delta\right)  _{j}=1$. Thus, Lemma
\ref{lem.rimhook-S.sum0lem} \textbf{(a)} is proven.

\textbf{(b)} The proof of Lemma \ref{lem.rimhook-S.sum0lem} \textbf{(b)} is
analogous to the above proof of Lemma \ref{lem.rimhook-S.sum0lem}
\textbf{(a)}, and is left to the reader.

\textbf{(c)} Let $\nu\in\mathbb{N}^{k}$ be such that $\nu_{j}\neq0$. We must
prove that $\nu-\Delta\in\mathbb{N}^{k}$.

We have $\nu_{j}\in\mathbb{N}$ (since $\nu\in\mathbb{N}^{k}$). Hence, from
$\nu_{j}\neq0$, we conclude that $\nu_{j}\geq1$. Thus, $\nu_{j}-1\in
\mathbb{N}$. Also, the entries $\nu_{1},\nu_{2},\ldots,\nu_{j-1},\nu_{j+1}%
,\nu_{j+2},\ldots,\nu_{k}$ of $\nu$ belong to $\mathbb{N}$ (since $\nu
\in\mathbb{N}^{k}$).

Recall that $\Delta$ is the vector $\left(  0,0,\ldots,0,1,0,0,\ldots
,0\right)  \in\mathbb{Z}^{k}$, where $1$ is the $j$-th entry. Hence,%
\begin{align*}
\nu-\Delta &  =\nu-\left(  0,0,\ldots,0,1,0,0,\ldots,0\right) \\
&  =\left(  \nu_{1},\nu_{2},\ldots,\nu_{j-1},\nu_{j}-1,\nu_{j+1},\nu
_{j+2},\ldots,\nu_{k}\right)  \in\mathbb{N}^{k}%
\end{align*}
(since $\nu_{j}-1\in\mathbb{N}$ and since the entries $\nu_{1},\nu_{2}%
,\ldots,\nu_{j-1},\nu_{j+1},\nu_{j+2},\ldots,\nu_{k}$ of $\nu$ belong to
$\mathbb{N}$). This proves Lemma \ref{lem.rimhook-S.sum0lem} \textbf{(c)}.

\textbf{(d)} Let $\nu\in\mathbb{N}^{k}$. Also, $\Delta\in\mathbb{N}^{k}$.
Thus, $\underbrace{\nu}_{\in\mathbb{N}^{k}}+\underbrace{\Delta}_{\in
\mathbb{N}^{k}}\in\mathbb{N}^{k}$. This proves Lemma
\ref{lem.rimhook-S.sum0lem} \textbf{(d)}.
\end{proof}

\begin{lemma}
\label{lem.rimhook-S.sum0}Let $\gamma\in\mathbb{Z}^{k}$. Then,%
\[
\sum_{\tau\in V}\sum_{\substack{\nu\in\mathbb{N}^{k};\\\left\vert
\nu\right\vert =-\left\vert \tau\right\vert ;\\\nu+\tau=\gamma}}\left(
-1\right)  ^{n+\left\vert \tau\right\vert }=%
\begin{cases}
1, & \text{if }\gamma=\mathbf{0};\\
0, & \text{if }\gamma\neq\mathbf{0}%
\end{cases}
.
\]
(Recall that $\mathbf{0}$ denotes the vector $\left(  \underbrace{0,0,\ldots
,0}_{k\text{ zeroes}}\right)  \in\mathbb{Z}^{k}$.)
\end{lemma}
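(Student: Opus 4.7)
\textbf{Proof plan for Lemma \ref{lem.rimhook-S.sum0}.} The plan is to reduce the claim to a sign-reversing involution argument. First I would note that the inner constraint $\nu + \tau = \gamma$ together with $|\nu| = -|\tau|$ forces $|\gamma| = |\nu| + |\tau| = 0$; so if $|\gamma| \neq 0$, the double sum is empty and equals $0$ (consistent with the right-hand side, since $|\mathbf{0}|=0$ would be needed for $\gamma = \mathbf{0}$). From now on I assume $|\gamma| = 0$.

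Next, for $\tau \in V$, the condition $\tau_1 = -n$ gives $n + |\tau| = \sum_{i=2}^{k}\tau_i$, so the sign simplifies to $(-1)^{n+|\tau|} = (-1)^{\sum_{i=2}^{k}\tau_i}$. Moreover, given $\tau$, the inner sum has at most one term, namely $\nu = \gamma - \tau$, and this $\nu$ lies in $\mathbb{N}^k$ iff $\gamma_1 \geq -n$ and $\gamma_i \geq \tau_i$ for each $i \in \{2,\ldots,k\}$ (the equality $|\nu| = -|\tau|$ is then automatic from $|\gamma|=0$). So the whole expression reduces to
\[
\sum_{\tau \in V'} (-1)^{\sum_{i=2}^{k}\tau_i},
\]
where $V' = \{\tau \in V : \gamma_1 \geq -n \text{ and } \gamma_i \geq \tau_i \text{ for } i \geq 2\}$.

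The main step is a sign-reversing involution. Suppose there exists $j \in \{2,\ldots,k\}$ with $\gamma_j \geq 1$. Then for every $\tau \in V'$, both choices $\tau_j = 0$ and $\tau_j = 1$ are permitted by the constraint $\gamma_j \geq \tau_j$. Using Lemma \ref{lem.rimhook-S.sum0lem} parts (a) and (b) with $\Delta$ the $j$-th standard basis vector, the map $\tau \mapsto \tau + \Delta$ (if $\tau_j = 0$) or $\tau \mapsto \tau - \Delta$ (if $\tau_j = 1$) is a well-defined involution on $V'$ that flips exactly one entry, and therefore flips the sign $(-1)^{\sum_{i=2}^{k}\tau_i}$. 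Hence the sum vanishes. The one subtle point — essentially the only work — is verifying that this map preserves membership in $V'$, i.e., that the constraint $\gamma_i \geq \tau_i$ continues to hold for $i \neq j$ (trivial, since only coordinate $j$ changes) and for $i = j$ (uses $\gamma_j \geq 1$). I do not expect any real obstacle here.

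It remains to handle the case where no such $j$ exists, i.e., $\gamma_i \leq 0$ for all $i \in \{2,\ldots,k\}$. In that case, for $V'$ to be nonempty we need $\gamma_i \geq \tau_i = 0$, forcing $\gamma_i = 0$ for every $i \geq 2$. Combined with $|\gamma| = 0$, this gives $\gamma_1 = 0$, hence $\gamma = \mathbf{0}$, and $\gamma_1 = 0 \geq -n$ holds. The unique element of $V'$ is then $\tau = (-n, 0, 0, \ldots, 0)$, contributing $(-1)^{0} = 1$. So the sum equals $1$, matching the right-hand side. This completes the plan; the argument is a routine sign-reversing involution, with Lemma \ref{lem.rimhook-S.sum0lem} doing the bookkeeping for why $\tau \pm \Delta$ stays in $V$.
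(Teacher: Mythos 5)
Your proposal is correct and rests on the same key mechanism as the paper's proof: a sign-reversing involution that toggles the entry $\tau_j$ at a coordinate $j\in\left\{  2,3,\ldots,k\right\}$ where $\gamma_j$ is positive (the paper carries this out on pairs $\left(  \tau,\nu\right)$ via Lemma \ref{lem.rimhook-S.sum0lem}, whereas you first eliminate $\nu=\gamma-\tau$ and work with $\tau$ alone, which is only a cosmetic streamlining). Your remaining bookkeeping — observing that $\left\vert \gamma\right\vert =0$ is forced, and that in the case $\gamma_i\leq0$ for all $i\geq2$ the only possible survivor is $\tau=\left(  -n,0,0,\ldots,0\right)$ occurring exactly when $\gamma=\mathbf{0}$ — matches the paper's Cases 2 and 3, so nothing further is needed.
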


\begin{proof}
[Proof of Lemma \ref{lem.rimhook-S.sum0}.]Let $Q$ be the set of all pairs
$\left(  \tau,\nu\right)  \in V\times\mathbb{N}^{k}$ satisfying $\left\vert
\nu\right\vert =-\left\vert \tau\right\vert $ and $\nu+\tau=\gamma$. We have
the following equality of summation signs:%
\begin{equation}
\sum_{\tau\in V}\sum_{\substack{\nu\in\mathbb{N}^{k};\\\left\vert
\nu\right\vert =-\left\vert \tau\right\vert ;\\\nu+\tau=\gamma}}=\sum
_{\substack{\left(  \tau,\nu\right)  \in V\times\mathbb{N}^{k};\\\left\vert
\nu\right\vert =-\left\vert \tau\right\vert ;\\\nu+\tau=\gamma}}=\sum_{\left(
\tau,\nu\right)  \in Q} \label{pf.lem.rimhook-S.sum0.sumeq}%
\end{equation}
(since $Q$ is the set of all pairs $\left(  \tau,\nu\right)  \in
V\times\mathbb{N}^{k}$ satisfying $\left\vert \nu\right\vert =-\left\vert
\tau\right\vert $ and $\nu+\tau=\gamma$).

We are in one of the following three cases:

\textit{Case 1:} We have $\left(  \gamma_{2},\gamma_{3},\ldots,\gamma
_{k}\right)  \neq\left(  0,0,\ldots,0\right)  $.

\textit{Case 2:} We have $\left(  \gamma_{2},\gamma_{3},\ldots,\gamma
_{k}\right)  =\left(  0,0,\ldots,0\right)  $ and $\gamma_{1}\neq0$.

\textit{Case 3:} We have $\left(  \gamma_{2},\gamma_{3},\ldots,\gamma
_{k}\right)  =\left(  0,0,\ldots,0\right)  $ and $\gamma_{1}=0$.

Let us first consider Case 1. In this case, we have $\left(  \gamma_{2}%
,\gamma_{3},\ldots,\gamma_{k}\right)  \neq\left(  0,0,\ldots,0\right)  $. In
other words, there exists a $j\in\left\{  2,3,\ldots,k\right\}  $ such that
$\gamma_{j}\neq0$. Consider such a $j$. Clearly, $\gamma\neq\mathbf{0}$ (since
$\gamma_{j}\neq0$). Hence, $%
\begin{cases}
1, & \text{if }\gamma=\mathbf{0};\\
0, & \text{if }\gamma\neq\mathbf{0}%
\end{cases}
=0$.

Let $\Delta$ be the vector $\left(  0,0,\ldots,0,1,0,0,\ldots,0\right)
\in\mathbb{Z}^{k}$, where $1$ is the $j$-th entry. Clearly, $\Delta
\in\mathbb{N}^{k}$ and $\left\vert \Delta\right\vert =1$.

Let $Q_{0}$ be the set of all $\left(  \tau,\nu\right)  \in Q$ satisfying
$\tau_{j}=0$. (Recall that $\tau_{j}$ denotes the $j$-th entry of the
$k$-tuple $\tau\in V\subseteq\mathbb{Z}^{k}$.) Let $Q_{1}$ be the set of all
$\left(  \tau,\nu\right)  \in Q$ satisfying $\tau_{j}=1$. Each $\left(
\tau,\nu\right)  \in Q$ satisfies $\left(  \tau,\nu\right)  \in V\times
\mathbb{N}^{k}$ (by the definition of $Q$) and thus $\tau\in V$ and thus
$\tau_{j}\in\left\{  0,1\right\}  $ (by (\ref{eq.def.rimhook.V.cond}), applied
to $i=j$). In other words, each $\left(  \tau,\nu\right)  \in Q$ satisfies
either $\tau_{j}=0$ or $\tau_{j}=1$ (but not both at the same time). In other
words, each $\left(  \tau,\nu\right)  \in Q$ belongs to either $Q_{0}$ or
$Q_{1}$ (but not both at the same time).

For each $\left(  \tau,\nu\right)  \in Q_{0}$, we have $\left(  \tau
+\Delta,\nu-\Delta\right)  \in Q_{1}$\ \ \ \ \footnote{\textit{Proof.} Let
$\left(  \tau,\nu\right)  \in Q_{0}$. According to the definition of $Q_{0}$,
this means that $\left(  \tau,\nu\right)  \in Q$ and $\tau_{j}=0$.
\par
We have $\left(  \tau,\nu\right)  \in Q$. According to the definition of $Q$,
this means that $\left(  \tau,\nu\right)  \in V\times\mathbb{N}^{k}$ and
$\left\vert \nu\right\vert =-\left\vert \tau\right\vert $ and $\nu+\tau
=\gamma$.
\par
From $\left(  \tau,\nu\right)  \in V\times\mathbb{N}^{k}$, we obtain $\tau\in
V$ and $\nu\in\mathbb{N}^{k}$.
\par
From $\nu+\tau=\gamma$, we obtain $\left(  \nu+\tau\right)  _{j}=\gamma_{j}$.
Hence, $\gamma_{j}=\left(  \nu+\tau\right)  _{j}=\nu_{j}+\underbrace{\tau_{j}%
}_{=0}=\nu_{j}$. Thus, $\nu_{j}=\gamma_{j}\neq0$. Thus, Lemma
\ref{lem.rimhook-S.sum0lem} \textbf{(c)} yields $\nu-\Delta\in\mathbb{N}^{k}$.
Also, Lemma \ref{lem.rimhook-S.sum0lem} \textbf{(a)} yields that $\tau
+\Delta\in V$ and $\left(  \tau+\Delta\right)  _{j}=1$. Also, any two
$k$-tuples $\alpha\in\mathbb{N}^{k}$ and $\beta\in\mathbb{N}^{k}$ satisfy
$\left\vert \alpha+\beta\right\vert =\left\vert \alpha\right\vert +\left\vert
\beta\right\vert $ and $\left\vert \alpha-\beta\right\vert =\left\vert
\alpha\right\vert -\left\vert \beta\right\vert $. Thus, $\left\vert
\tau+\Delta\right\vert =\left\vert \tau\right\vert +\left\vert \Delta
\right\vert $ and $\left\vert \nu-\Delta\right\vert =\underbrace{\left\vert
\nu\right\vert }_{=-\left\vert \tau\right\vert }-\left\vert \Delta\right\vert
=-\left\vert \tau\right\vert -\left\vert \Delta\right\vert
=-\underbrace{\left(  \left\vert \tau\right\vert +\left\vert \Delta\right\vert
\right)  }_{=\left\vert \tau+\Delta\right\vert }=-\left\vert \tau
+\Delta\right\vert $. Also, $\left(  \nu-\Delta\right)  +\left(  \tau
+\Delta\right)  =\nu+\tau=\gamma$.
\par
From $\tau+\Delta\in V$ and $\nu-\Delta\in\mathbb{N}^{k}$ and $\left\vert
\nu-\Delta\right\vert =-\left\vert \tau+\Delta\right\vert $ and $\left(
\nu-\Delta\right)  +\left(  \tau+\Delta\right)  =\gamma$, we obtain $\left(
\tau+\Delta,\nu-\Delta\right)  \in Q$ (by the definition of $Q$). Combining
this with $\left(  \tau+\Delta\right)  _{j}=1$, we obtain $\left(  \tau
+\Delta,\nu-\Delta\right)  \in Q_{1}$ (by the definition of $Q_{1}$), qed.}.
Thus, the map%
\begin{equation}
Q_{0}\rightarrow Q_{1},\ \ \ \ \ \ \ \ \ \ \left(  \tau,\nu\right)
\mapsto\left(  \tau+\Delta,\nu-\Delta\right)
\label{pf.lem.rimhook-S.sum0.map1}%
\end{equation}
is well-defined.

For each $\left(  \tau,\nu\right)  \in Q_{1}$, we have $\left(  \tau
-\Delta,\nu+\Delta\right)  \in Q_{0}$\ \ \ \ \footnote{\textit{Proof.} Let
$\left(  \tau,\nu\right)  \in Q_{1}$. According to the definition of $Q_{1}$,
this means that $\left(  \tau,\nu\right)  \in Q$ and $\tau_{j}=1$.
\par
We have $\left(  \tau,\nu\right)  \in Q$. According to the definition of $Q$,
this means that $\left(  \tau,\nu\right)  \in V\times\mathbb{N}^{k}$ and
$\left\vert \nu\right\vert =-\left\vert \tau\right\vert $ and $\nu+\tau
=\gamma$.
\par
From $\left(  \tau,\nu\right)  \in V\times\mathbb{N}^{k}$, we obtain $\tau\in
V$ and $\nu\in\mathbb{N}^{k}$.
\par
Lemma \ref{lem.rimhook-S.sum0lem} \textbf{(d)} yields $\nu+\Delta\in
\mathbb{N}^{k}$. Also, Lemma \ref{lem.rimhook-S.sum0lem} \textbf{(b)} yields
that $\tau-\Delta\in V$ and $\left(  \tau-\Delta\right)  _{j}=0$. Also, any
two $k$-tuples $\alpha\in\mathbb{N}^{k}$ and $\beta\in\mathbb{N}^{k}$ satisfy
$\left\vert \alpha+\beta\right\vert =\left\vert \alpha\right\vert +\left\vert
\beta\right\vert $ and $\left\vert \alpha-\beta\right\vert =\left\vert
\alpha\right\vert -\left\vert \beta\right\vert $. Thus, $\left\vert
\tau-\Delta\right\vert =\left\vert \tau\right\vert -\left\vert \Delta
\right\vert $ and $\left\vert \nu+\Delta\right\vert =\underbrace{\left\vert
\nu\right\vert }_{=-\left\vert \tau\right\vert }+\left\vert \Delta\right\vert
=-\left\vert \tau\right\vert +\left\vert \Delta\right\vert
=-\underbrace{\left(  \left\vert \tau\right\vert -\left\vert \Delta\right\vert
\right)  }_{=\left\vert \tau-\Delta\right\vert }=-\left\vert \tau
-\Delta\right\vert $. Also, $\left(  \nu+\Delta\right)  +\left(  \tau
-\Delta\right)  =\nu+\tau=\gamma$.
\par
From $\tau-\Delta\in V$ and $\nu+\Delta\in\mathbb{N}^{k}$ and $\left\vert
\nu+\Delta\right\vert =-\left\vert \tau-\Delta\right\vert $ and $\left(
\nu+\Delta\right)  +\left(  \tau-\Delta\right)  =\gamma$, we obtain $\left(
\tau-\Delta,\nu+\Delta\right)  \in Q$ (by the definition of $Q$). Combining
this with $\left(  \tau-\Delta\right)  _{j}=0$, we obtain $\left(  \tau
-\Delta,\nu+\Delta\right)  \in Q_{0}$ (by the definition of $Q_{0}$), qed.}.
Thus, the map%
\begin{equation}
Q_{1}\rightarrow Q_{0},\ \ \ \ \ \ \ \ \ \ \left(  \tau,\nu\right)
\mapsto\left(  \tau-\Delta,\nu+\Delta\right)
\label{pf.lem.rimhook-S.sum0.map2}%
\end{equation}
is well-defined.

The two maps (\ref{pf.lem.rimhook-S.sum0.map1}) and
(\ref{pf.lem.rimhook-S.sum0.map2}) are mutually inverse (this is clear from
their definitions), and thus are bijections. Hence, in particular, the map
(\ref{pf.lem.rimhook-S.sum0.map1}) is a bijection.

Also, each $\tau\in\mathbb{Z}^{k}$ satisfies
\begin{align*}
\left\vert \tau+\Delta\right\vert  &  =\left\vert \tau\right\vert
+\underbrace{\left\vert \Delta\right\vert }_{=1}\ \ \ \ \ \ \ \ \ \ \left(
\text{since }\left\vert \alpha+\beta\right\vert =\left\vert \alpha\right\vert
+\left\vert \beta\right\vert \text{ for all }\alpha\in\mathbb{Z}^{k}\text{ and
}\beta\in\mathbb{Z}^{k}\right) \\
&  =\left\vert \tau\right\vert +1
\end{align*}
and thus%
\begin{equation}
\left(  -1\right)  ^{n+\left\vert \tau+\Delta\right\vert }=\left(  -1\right)
^{n+\left\vert \tau\right\vert +1}=-\left(  -1\right)  ^{n+\left\vert
\tau\right\vert }. \label{pf.lem.rimhook-S.sum0.signs}%
\end{equation}

Now, recall that $Q_{0}$ and $Q_{1}$ are two subsets of $Q$ such that each
$\left(  \tau,\nu\right)  \in Q$ belongs to either $Q_{0}$ or $Q_{1}$ (but not
both at the same time). In other words, $Q_{0}$ and $Q_{1}$ are two disjoint
subsets of $Q$ whose union is the whole set $Q$. Hence, we can split the sum
$\sum_{\left(  \tau,\nu\right)  \in Q}\left(  -1\right)  ^{n+\left\vert
\tau\right\vert }$ as follows:%
\begin{align*}
\sum_{\left(  \tau,\nu\right)  \in Q}\left(  -1\right)  ^{n+\left\vert
\tau\right\vert }  &  =\sum_{\left(  \tau,\nu\right)  \in Q_{0}}\left(
-1\right)  ^{n+\left\vert \tau\right\vert }+\underbrace{\sum_{\left(  \tau
,\nu\right)  \in Q_{1}}\left(  -1\right)  ^{n+\left\vert \tau\right\vert }%
}_{\substack{=\sum_{\left(  \tau,\nu\right)  \in Q_{0}}\left(  -1\right)
^{n+\left\vert \tau+\Delta\right\vert }\\\text{(here, we have substituted
}\left(  \tau+\Delta,\nu-\Delta\right)  \text{ for }\left(  \tau,\nu\right)
\\\text{in the sum, since the map (\ref{pf.lem.rimhook-S.sum0.map1}) is a
bijection)}}}\\
&  =\sum_{\left(  \tau,\nu\right)  \in Q_{0}}\left(  -1\right)  ^{n+\left\vert
\tau\right\vert }+\sum_{\left(  \tau,\nu\right)  \in Q_{0}}\underbrace{\left(
-1\right)  ^{n+\left\vert \tau+\Delta\right\vert }}_{\substack{=-\left(
-1\right)  ^{n+\left\vert \tau\right\vert }\\\text{(by
(\ref{pf.lem.rimhook-S.sum0.signs}))}}}\\
&  =\sum_{\left(  \tau,\nu\right)  \in Q_{0}}\left(  -1\right)  ^{n+\left\vert
\tau\right\vert }+\sum_{\left(  \tau,\nu\right)  \in Q_{0}}\left(  -\left(
-1\right)  ^{n+\left\vert \tau\right\vert }\right) \\
&  =\sum_{\left(  \tau,\nu\right)  \in Q_{0}}\left(  -1\right)  ^{n+\left\vert
\tau\right\vert }-\sum_{\left(  \tau,\nu\right)  \in Q_{0}}\left(  -1\right)
^{n+\left\vert \tau\right\vert }=0.
\end{align*}

Now, (\ref{pf.lem.rimhook-S.sum0.sumeq}) yields%
\[
\sum_{\tau\in V}\sum_{\substack{\nu\in\mathbb{N}^{k};\\\left\vert
\nu\right\vert =-\left\vert \tau\right\vert ;\\\nu+\tau=\gamma}}\left(
-1\right)  ^{n+\left\vert \tau\right\vert }=\sum_{\left(  \tau,\nu\right)  \in
Q}\left(  -1\right)  ^{n+\left\vert \tau\right\vert }=0=%
\begin{cases}
1, & \text{if }\gamma=\mathbf{0};\\
0, & \text{if }\gamma\neq\mathbf{0}%
\end{cases}
\]
(since $%
\begin{cases}
1, & \text{if }\gamma=\mathbf{0};\\
0, & \text{if }\gamma\neq\mathbf{0}%
\end{cases}
=0$). Thus, Lemma \ref{lem.rimhook-S.sum0} is proven in Case 1.

Let us now consider Case 2. In this case, we have $\left(  \gamma_{2}%
,\gamma_{3},\ldots,\gamma_{k}\right)  =\left(  0,0,\ldots,0\right)  $ and
$\gamma_{1}\neq0$. From $\gamma_{1}\neq0$, we obtain $\gamma\neq\mathbf{0}$
and thus $%
\begin{cases}
1, & \text{if }\gamma=\mathbf{0};\\
0, & \text{if }\gamma\neq\mathbf{0}%
\end{cases}
=0$.

Now, $Q=\varnothing$\ \ \ \ \footnote{\textit{Proof.} Let $\left(  \tau
,\nu\right)  \in Q$. We shall derive a contradiction.
\par
Indeed, we have $\left(  \tau,\nu\right)  \in Q$. According to the definition
of $Q$, this means that $\left(  \tau,\nu\right)  \in V\times\mathbb{N}^{k}$
and $\left\vert \nu\right\vert =-\left\vert \tau\right\vert $ and $\nu
+\tau=\gamma$.
\par
From $\left(  \tau,\nu\right)  \in V\times\mathbb{N}^{k}$, we obtain $\tau\in
V$ and $\nu\in\mathbb{N}^{k}$.
\par
We have $\tau\in V$. According to the definition of $V$, this means that
$\tau$ is a $k$-tuple $\left(  -n,\tau_{2},\tau_{3},\ldots,\tau_{k}\right)
\in\mathbb{Z}^{k}$ satisfying (\ref{eq.def.rimhook.V.cond}). In other words,
$\tau\in\mathbb{Z}^{k}$ and $\tau_{1}=-n$ and the condition
(\ref{eq.def.rimhook.V.cond}) holds.
\par
Now, fix $j\in\left\{  2,3,\ldots,k\right\}  $. Then, $\tau_{j}\in\left\{
0,1\right\}  $ (by (\ref{eq.def.rimhook.V.cond}), applied to $i=j$). Hence,
$\tau_{j}\geq0$. Also, $\nu_{j}\in\mathbb{N}$ (since $\nu\in\mathbb{N}^{k}$),
so that $\nu_{j}\geq0$. But $\left(  \gamma_{2},\gamma_{3},\ldots,\gamma
_{k}\right)  =\left(  0,0,\ldots,0\right)  $, and thus $\gamma_{j}=0$ (since
$j\in\left\{  2,3,\ldots,k\right\}  $). But $\gamma=\nu+\tau$, and thus
$\gamma_{j}=\left(  \nu+\tau\right)  _{j}=\nu_{j}+\tau_{j}$. Hence, $\nu
_{j}+\tau_{j}=\gamma_{j}=0$, so that $\nu_{j}=-\underbrace{\tau_{j}}_{\geq
0}\leq0$. Combining this with $\nu_{j}\geq0$, we obtain $\nu_{j}=0$. Hence,
$\nu_{j}=-\tau_{j}$ rewrites as $0=-\tau_{j}$, so that $\tau_{j}=0$.
\par
Now, forget that we fixed $j$. Thus, we have shown that each $j\in\left\{
2,3,\ldots,k\right\}  $ satisfies
\begin{equation}
\nu_{j}=0 \label{pf.lem.rimhook-S.sum0.c2.fn1.1}%
\end{equation}
and
\begin{equation}
\tau_{j}=0. \label{pf.lem.rimhook-S.sum0.c2.fn1.2}%
\end{equation}
\par
Now,%
\[
\left\vert \tau\right\vert =\tau_{1}+\tau_{2}+\cdots+\tau_{k}=\sum_{j=1}%
^{k}\tau_{j}=\tau_{1}+\sum_{j=2}^{k}\underbrace{\tau_{j}}%
_{\substack{=0\\\text{(by (\ref{pf.lem.rimhook-S.sum0.c2.fn1.2}))}}}=\tau
_{1}=-n,
\]
so that $-\left\vert \tau\right\vert =n$. Furthermore,%
\[
\left\vert \nu\right\vert =\nu_{1}+\nu_{2}+\cdots+\nu_{k}=\sum_{j=1}^{k}%
\nu_{j}=\nu_{1}+\sum_{j=2}^{k}\underbrace{\nu_{j}}_{\substack{=0\\\text{(by
(\ref{pf.lem.rimhook-S.sum0.c2.fn1.1}))}}}=\nu_{1},
\]
so that $\nu_{1}=\left\vert \nu\right\vert =-\left\vert \tau\right\vert =n$.
\par
Now, from $\gamma=\nu+\tau$, we obtain $\gamma_{1}=\left(  \nu+\tau\right)
_{1}=\underbrace{\nu_{1}}_{=n}+\underbrace{\tau_{1}}_{=-n}=n+\left(
-n\right)  =0$. This contradicts $\gamma_{1}\neq0$.
\par
Now, forget that we fixed $\left(  \tau,\nu\right)  $. We thus have found a
contradiction for each $\left(  \tau,\nu\right)  \in Q$. Thus, there exists no
$\left(  \tau,\nu\right)  \in Q$. In other words, $Q=\varnothing$.}. But
(\ref{pf.lem.rimhook-S.sum0.sumeq}) yields%
\begin{align*}
\sum_{\tau\in V}\sum_{\substack{\nu\in\mathbb{N}^{k};\\\left\vert
\nu\right\vert =-\left\vert \tau\right\vert ;\\\nu+\tau=\gamma}}\left(
-1\right)  ^{n+\left\vert \tau\right\vert }  &  =\sum_{\left(  \tau
,\nu\right)  \in Q}\left(  -1\right)  ^{n+\left\vert \tau\right\vert }=\left(
\text{empty sum}\right)  \ \ \ \ \ \ \ \ \ \ \left(  \text{since
}Q=\varnothing\right) \\
&  =0=%
\begin{cases}
1, & \text{if }\gamma=\mathbf{0};\\
0, & \text{if }\gamma\neq\mathbf{0}%
\end{cases}
\end{align*}
(since $%
\begin{cases}
1, & \text{if }\gamma=\mathbf{0};\\
0, & \text{if }\gamma\neq\mathbf{0}%
\end{cases}
=0$). Thus, Lemma \ref{lem.rimhook-S.sum0} is proven in Case 2.

Let us finally consider Case 3. In this case, we have $\left(  \gamma
_{2},\gamma_{3},\ldots,\gamma_{k}\right)  =\left(  0,0,\ldots,0\right)  $ and
$\gamma_{1}=0$. Combining these two equalities, we obtain $\gamma_{i}=0$ for
all $i\in\left\{  1,2,\ldots,k\right\}  $. In other words, $\gamma=\mathbf{0}%
$. Hence, $%
\begin{cases}
1, & \text{if }\gamma=\mathbf{0};\\
0, & \text{if }\gamma\neq\mathbf{0}%
\end{cases}
=1$.

Now, define two $k$-tuples $\tau_{0}\in\mathbb{Z}^{k}$ and $\nu_{0}%
\in\mathbb{Z}^{k}$ by%
\[
\tau_{0}=\left(  -n,0,0,\ldots,0\right)  \ \ \ \ \ \ \ \ \ \ \text{and}%
\ \ \ \ \ \ \ \ \ \ \nu_{0}=\left(  n,0,0,\ldots,0\right)  .
\]
Clearly, $\tau_{0}\in V$ (by the definition of $V$) and $\nu_{0}\in
\mathbb{N}^{k}$ and $\left\vert \tau_{0}\right\vert =-n$ and $\left\vert
\nu_{0}\right\vert =n$ and $\nu_{0}+\tau_{0}=\mathbf{0}$.

From $\tau_{0}\in V$ and $\nu_{0}\in\mathbb{N}^{k}$, we obtain $\left(
\tau_{0},\nu_{0}\right)  \in V\times\mathbb{N}^{k}$. Also, $\left\vert \nu
_{0}\right\vert =-\left\vert \tau_{0}\right\vert $ (since
$\underbrace{\left\vert \nu_{0}\right\vert }_{=n}+\underbrace{\left\vert
\tau_{0}\right\vert }_{=-n}=n+\left(  -n\right)  =0$) and $\nu_{0}+\tau
_{0}=\mathbf{0}=\gamma$. Thus, we have shown that $\left(  \tau_{0},\nu
_{0}\right)  \in V\times\mathbb{N}^{k}$ and $\left\vert \nu_{0}\right\vert
=-\left\vert \tau_{0}\right\vert $ and $\nu_{0}+\tau_{0}=\gamma$. In other
words, $\left(  \tau_{0},\nu_{0}\right)  \in Q$ (by the definition of $Q$). In
other words, $\left\{  \left(  \tau_{0},\nu_{0}\right)  \right\}  \subseteq Q$.

On the other hand, $Q\subseteq\left\{  \left(  \tau_{0},\nu_{0}\right)
\right\}  $\ \ \ \ \footnote{\textit{Proof.} Let $\left(  \tau,\nu\right)  \in
Q$. We shall prove that $\left(  \tau,\nu\right)  =\left(  \tau_{0},\nu
_{0}\right)  $.
\par
Most of the following argument is copypasted from the previous footnote.
\par
We have $\left(  \tau,\nu\right)  \in Q$. According to the definition of $Q$,
this means that $\left(  \tau,\nu\right)  \in V\times\mathbb{N}^{k}$ and
$\left\vert \nu\right\vert =-\left\vert \tau\right\vert $ and $\nu+\tau
=\gamma$.
\par
From $\left(  \tau,\nu\right)  \in V\times\mathbb{N}^{k}$, we obtain $\tau\in
V$ and $\nu\in\mathbb{N}^{k}$.
\par
We have $\tau\in V$. According to the definition of $V$, this means that
$\tau$ is a $k$-tuple $\left(  -n,\tau_{2},\tau_{3},\ldots,\tau_{k}\right)
\in\mathbb{Z}^{k}$ satisfying (\ref{eq.def.rimhook.V.cond}). In other words,
$\tau\in\mathbb{Z}^{k}$ and $\tau_{1}=-n$ and the condition
(\ref{eq.def.rimhook.V.cond}) holds.
\par
Now, fix $j\in\left\{  2,3,\ldots,k\right\}  $. Then, $\tau_{j}\in\left\{
0,1\right\}  $ (by (\ref{eq.def.rimhook.V.cond}), applied to $i=j$). Hence,
$\tau_{j}\geq0$. Also, $\nu_{j}\in\mathbb{N}$ (since $\nu\in\mathbb{N}^{k}$),
so that $\nu_{j}\geq0$. But $\left(  \gamma_{2},\gamma_{3},\ldots,\gamma
_{k}\right)  =\left(  0,0,\ldots,0\right)  $, and thus $\gamma_{j}=0$ (since
$j\in\left\{  2,3,\ldots,k\right\}  $). But $\gamma=\nu+\tau$, and thus
$\gamma_{j}=\left(  \nu+\tau\right)  _{j}=\nu_{j}+\tau_{j}$. Hence, $\nu
_{j}+\tau_{j}=\gamma_{j}=0$, so that $\nu_{j}=-\underbrace{\tau_{j}}_{\geq
0}\leq0$. Combining this with $\nu_{j}\geq0$, we obtain $\nu_{j}=0$. Hence,
$\nu_{j}=-\tau_{j}$ rewrites as $0=-\tau_{j}$, so that $\tau_{j}=0$.
\par
Now, forget that we fixed $j$. Thus, we have shown that each $j\in\left\{
2,3,\ldots,k\right\}  $ satisfies $\tau_{j}=0$. In other words, $\left(
\tau_{2},\tau_{3},\ldots,\tau_{k}\right)  =\left(  0,0,\ldots,0\right)  $.
Combining this with $\tau_{1}=-n$, we obtain $\tau=\left(  -n,0,0,\ldots
,0\right)  =\tau_{0}$.
\par
From $\nu+\tau=\gamma$, we obtain $\nu=\gamma-\underbrace{\tau}_{=\tau_{0}%
}=\gamma-\tau_{0}=\nu_{0}$ (since $\nu_{0}+\tau_{0}=\gamma$). Combining this
with $\tau=\tau_{0}$, we obtain $\left(  \tau,\nu\right)  =\left(  \tau
_{0},\nu_{0}\right)  \in\left\{  \left(  \tau_{0},\nu_{0}\right)  \right\}  $.
\par
Now, forget that we fixed $\left(  \tau,\nu\right)  $. We thus have proven
that $\left(  \tau,\nu\right)  \in\left\{  \left(  \tau_{0},\nu_{0}\right)
\right\}  $ for each $\left(  \tau,\nu\right)  \in Q$. In other words,
$Q\subseteq\left\{  \left(  \tau_{0},\nu_{0}\right)  \right\}  $.}. Combining
this with $\left\{  \left(  \tau_{0},\nu_{0}\right)  \right\}  \subseteq Q$,
we obtain $Q=\left\{  \left(  \tau_{0},\nu_{0}\right)  \right\}  $.

But (\ref{pf.lem.rimhook-S.sum0.sumeq}) yields%
\begin{align*}
\sum_{\tau\in V}\sum_{\substack{\nu\in\mathbb{N}^{k};\\\left\vert
\nu\right\vert =-\left\vert \tau\right\vert ;\\\nu+\tau=\gamma}}\left(
-1\right)  ^{n+\left\vert \tau\right\vert }  &  =\sum_{\left(  \tau
,\nu\right)  \in Q}\left(  -1\right)  ^{n+\left\vert \tau\right\vert }=\left(
-1\right)  ^{n+\left\vert \tau_{0}\right\vert }\ \ \ \ \ \ \ \ \ \ \left(
\text{since }Q=\left\{  \left(  \tau_{0},\nu_{0}\right)  \right\}  \right) \\
&  =\left(  -1\right)  ^{0}\ \ \ \ \ \ \ \ \ \ \left(  \text{since
}n+\underbrace{\left\vert \tau_{0}\right\vert }_{=-n}=n+\left(  -n\right)
=0\right) \\
&  =1=%
\begin{cases}
1, & \text{if }\gamma=\mathbf{0};\\
0, & \text{if }\gamma\neq\mathbf{0}%
\end{cases}
\end{align*}
(since $%
\begin{cases}
1, & \text{if }\gamma=\mathbf{0};\\
0, & \text{if }\gamma\neq\mathbf{0}%
\end{cases}
=1$). Thus, Lemma \ref{lem.rimhook-S.sum0} is proven in Case 3.

We have now proven Lemma \ref{lem.rimhook-S.sum0} in each of the three Cases
1, 2 and 3. Hence, Lemma \ref{lem.rimhook-S.sum0} always holds.
\end{proof}

\begin{proof}
[Proof of Theorem \ref{thm.rimhook-S}.]Each $\tau\in V$ satisfies $-\left\vert
\tau\right\vert \in\left\{  n-k+1,n-k+2,\ldots,n\right\}  $ (by Proposition
\ref{prop.rimhook.V.size}). Thus, we have the following equality of summation
signs:%
\begin{equation}
\sum_{\tau\in V}=\sum_{i=n-k+1}^{n}\sum_{\substack{\tau\in V;\\-\left\vert
\tau\right\vert =i}}=\sum_{j=1}^{k}\sum_{\substack{\tau\in V;\\-\left\vert
\tau\right\vert =n-k+j}} \label{pf.thm.rimhook-S.sum=sumsum}%
\end{equation}
(here, we have substituted $n-k+j$ for $i$ in the outer sum). Now,%
\begin{align}
&  \sum_{j=1}^{k}\left(  -1\right)  ^{k-j}h_{n-k+j}\sum_{\substack{\tau\in
V;\\-\left\vert \tau\right\vert =n-k+j}}s_{\mu+\tau}\nonumber\\
&  =\underbrace{\sum_{j=1}^{k}\sum_{\substack{\tau\in V;\\-\left\vert
\tau\right\vert =n-k+j}}}_{\substack{=\sum_{\tau\in V}\\\text{(by
(\ref{pf.thm.rimhook-S.sum=sumsum}))}}}\underbrace{\left(  -1\right)  ^{k-j}%
}_{\substack{=\left(  -1\right)  ^{n+\left\vert \tau\right\vert }%
\\\text{(since }k-j=n+\left\vert \tau\right\vert \\\text{(because }-\left\vert
\tau\right\vert =n-k+j\text{))}}}\underbrace{h_{n-k+j}}%
_{\substack{=h_{-\left\vert \tau\right\vert }\\\text{(since }n-k+j=-\left\vert
\tau\right\vert \\\text{(because }-\left\vert \tau\right\vert =n-k+j\text{))}%
}}s_{\mu+\tau}\nonumber\\
&  =\sum_{\tau\in V}\left(  -1\right)  ^{n+\left\vert \tau\right\vert
}h_{-\left\vert \tau\right\vert }s_{\mu+\tau}. \label{pf.thm.rimhook-S.2}%
\end{align}

But each $\tau\in V$ satisfies%
\begin{equation}
h_{-\left\vert \tau\right\vert }s_{\mu+\tau}=\sum_{\substack{\nu\in
\mathbb{N}^{k};\\\left\vert \nu\right\vert =-\left\vert \tau\right\vert
}}s_{\mu+\left(  \nu+\tau\right)  }. \label{pf.thm.rimhook-S.pieri}%
\end{equation}

[\textit{Proof of (\ref{pf.thm.rimhook-S.pieri}):} Let $\tau\in V$. According
to the definition of $V$, this means that $\tau$ is a $k$-tuple $\left(
-n,\tau_{2},\tau_{3},\ldots,\tau_{k}\right)  \in\mathbb{Z}^{k}$ satisfying
(\ref{eq.def.rimhook.V.cond}). In other words, $\tau\in\mathbb{Z}^{k}$ and
$\tau_{1}=-n$ and the relation (\ref{eq.def.rimhook.V.cond}) holds.

Proposition \ref{prop.rimhook.V.size} yields $-\left\vert \tau\right\vert
\in\left\{  n-k+1,n-k+2,\ldots,n\right\}  \subseteq\mathbb{N}$.

Also, $\mu\in P_{k}\subseteq\mathbb{N}^{k}$; hence,
\begin{equation}
\mu_{i}\geq0\ \ \ \ \ \ \ \ \ \ \text{for each }i\in\left\{  1,2,\ldots
,k\right\}  . \label{pf.thm.rimhook-S.pieri.pf.1}%
\end{equation}

Also, $\rho_{1}=k-1$ (by the definition of $\rho$) and $\rho\in\mathbb{N}^{k}$
(likewise). Now,%
\[
\left(  \mu+\tau+\rho\right)  _{1}=\underbrace{\mu_{1}}_{>n-k}%
+\underbrace{\tau_{1}}_{=-n}+\underbrace{\rho_{1}}_{=k-1}>\left(  n-k\right)
+\left(  -n\right)  +\left(  k-1\right)  =-1.
\]
Thus, $\left(  \mu+\tau+\rho\right)  _{1}\geq0$ (since $\left(  \mu+\tau
+\rho\right)  _{1}$ is an integer). In other words, $\left(  \mu+\tau
+\rho\right)  _{1}\in\mathbb{N}$. Furthermore, for each $i\in\left\{
2,3,\ldots,k\right\}  $, we have%
\[
\left(  \mu+\tau+\rho\right)  _{i}=\underbrace{\mu_{i}}_{\substack{\in
\mathbb{N}\\\text{(since }\mu\in\mathbb{N}^{k}\text{)}}}+\underbrace{\tau_{i}%
}_{\substack{\in\mathbb{N}\\\text{(since (\ref{eq.def.rimhook.V.cond}%
)}\\\text{yields }\tau_{i}\in\left\{  0,1\right\}  \subseteq\mathbb{N}%
\text{)}}}+\underbrace{\rho_{i}}_{\substack{\in\mathbb{N}\\\text{(since }%
\rho\in\mathbb{N}^{k}\text{)}}}\in\mathbb{N}.
\]
This also holds for $i=1$ (since $\left(  \mu+\tau+\rho\right)  _{1}%
\in\mathbb{N}$). Thus, we have $\left(  \mu+\tau+\rho\right)  _{i}%
\in\mathbb{N}$ for each $i\in\left\{  1,2,\ldots,k\right\}  $. In other words,
$\mu+\tau+\rho\in\mathbb{N}^{k}$. Hence, Theorem \ref{thm.non-part.pieri-h}
(applied to $\lambda=\mu+\tau$ and $m=-\left\vert \tau\right\vert $) yields%
\[
s_{\mu+\tau}h_{-\left\vert \tau\right\vert }=\sum_{\substack{\nu\in
\mathbb{N}^{k};\\\left\vert \nu\right\vert =-\left\vert \tau\right\vert
}}\underbrace{s_{\mu+\tau+\nu}}_{=s_{\mu+\left(  \nu+\tau\right)  }}%
=\sum_{\substack{\nu\in\mathbb{N}^{k};\\\left\vert \nu\right\vert =-\left\vert
\tau\right\vert }}s_{\mu+\left(  \nu+\tau\right)  }.
\]
Thus,
\[
h_{-\left\vert \tau\right\vert }s_{\mu+\tau}=s_{\mu+\tau}h_{-\left\vert
\tau\right\vert }=\sum_{\substack{\nu\in\mathbb{N}^{k};\\\left\vert
\nu\right\vert =-\left\vert \tau\right\vert }}s_{\mu+\left(  \nu+\tau\right)
}.
\]
This proves (\ref{pf.thm.rimhook-S.pieri}).]

Now, (\ref{pf.thm.rimhook-S.2}) becomes%
\begin{align*}
&  \sum_{j=1}^{k}\left(  -1\right)  ^{k-j}h_{n-k+j}\sum_{\substack{\tau\in
V;\\-\left\vert \tau\right\vert =n-k+j}}s_{\mu+\tau}\\
&  =\sum_{\tau\in V}\left(  -1\right)  ^{n+\left\vert \tau\right\vert
}\underbrace{h_{-\left\vert \tau\right\vert }s_{\mu+\tau}}_{\substack{=\sum
_{\substack{\nu\in\mathbb{N}^{k};\\\left\vert \nu\right\vert =-\left\vert
\tau\right\vert }}s_{\mu+\left(  \nu+\tau\right)  }\\\text{(by
(\ref{pf.thm.rimhook-S.pieri}))}}}\\
&  =\sum_{\tau\in V}\left(  -1\right)  ^{n+\left\vert \tau\right\vert
}\underbrace{\sum_{\substack{\nu\in\mathbb{N}^{k};\\\left\vert \nu\right\vert
=-\left\vert \tau\right\vert }}}_{=\sum_{\gamma\in\mathbb{Z}^{k}}%
\sum_{\substack{\nu\in\mathbb{N}^{k};\\\left\vert \nu\right\vert =-\left\vert
\tau\right\vert ;\\\nu+\tau=\gamma}}}s_{\mu+\left(  \nu+\tau\right)  }%
=\sum_{\tau\in V}\left(  -1\right)  ^{n+\left\vert \tau\right\vert }%
\sum_{\gamma\in\mathbb{Z}^{k}}\sum_{\substack{\nu\in\mathbb{N}^{k}%
;\\\left\vert \nu\right\vert =-\left\vert \tau\right\vert ;\\\nu+\tau=\gamma
}}\underbrace{s_{\mu+\left(  \nu+\tau\right)  }}_{\substack{=s_{\mu+\gamma
}\\\text{(since }\nu+\tau=\gamma\text{)}}}\\
&  =\sum_{\tau\in V}\left(  -1\right)  ^{n+\left\vert \tau\right\vert }%
\sum_{\gamma\in\mathbb{Z}^{k}}\sum_{\substack{\nu\in\mathbb{N}^{k}%
;\\\left\vert \nu\right\vert =-\left\vert \tau\right\vert ;\\\nu+\tau=\gamma
}}s_{\mu+\gamma}=\sum_{\gamma\in\mathbb{Z}^{k}}\underbrace{\left(  \sum
_{\tau\in V}\sum_{\substack{\nu\in\mathbb{N}^{k};\\\left\vert \nu\right\vert
=-\left\vert \tau\right\vert ;\\\nu+\tau=\gamma}}\left(  -1\right)
^{n+\left\vert \tau\right\vert }\right)  }_{\substack{=%
\begin{cases}
1, & \text{if }\gamma=\mathbf{0};\\
0, & \text{if }\gamma\neq\mathbf{0}%
\end{cases}
\\\text{(by Lemma \ref{lem.rimhook-S.sum0})}}}s_{\mu+\gamma}\\
&  =\sum_{\gamma\in\mathbb{Z}^{k}}%
\begin{cases}
1, & \text{if }\gamma=\mathbf{0};\\
0, & \text{if }\gamma\neq\mathbf{0}%
\end{cases}
s_{\mu+\gamma}=\underbrace{%
\begin{cases}
1, & \text{if }\mathbf{0}=\mathbf{0};\\
0, & \text{if }\mathbf{0}\neq\mathbf{0}%
\end{cases}
}_{\substack{=1\\\text{(since }\mathbf{0}=\mathbf{0}\text{)}}}s_{\mu
+\mathbf{0}}+\sum_{\substack{\gamma\in\mathbb{Z}^{k};\\\gamma\neq\mathbf{0}%
}}\underbrace{%
\begin{cases}
1, & \text{if }\gamma=\mathbf{0};\\
0, & \text{if }\gamma\neq\mathbf{0}%
\end{cases}
}_{\substack{=0\\\text{(since }\gamma\neq\mathbf{0}\text{)}}}s_{\mu+\gamma}\\
&  \ \ \ \ \ \ \ \ \ \ \left(  \text{here, we have split off the addend for
}\gamma=\mathbf{0}\text{ from the sum}\right) \\
&  =s_{\mu+\mathbf{0}}=s_{\mu}.
\end{align*}
This proves Theorem \ref{thm.rimhook-S}.
\end{proof}

\begin{proof}
[Proof of Theorem \ref{thm.rimhook}.]Theorem \ref{thm.rimhook-S} yields%
\[
s_{\mu}=\sum_{j=1}^{k}\left(  -1\right)  ^{k-j}\underbrace{h_{n-k+j}%
}_{\substack{\equiv a_{j}\operatorname{mod}I\\\text{(by (\ref{eq.h=amodI}))}%
}}\sum_{\substack{\tau\in V;\\-\left\vert \tau\right\vert =n-k+j}}s_{\mu+\tau
}\equiv\sum_{j=1}^{k}\left(  -1\right)  ^{k-j}a_{j}\sum_{\substack{\tau\in
V;\\-\left\vert \tau\right\vert =n-k+j}}s_{\mu+\tau}\operatorname{mod}I.
\]
Thus, in $\mathcal{S}/I$, we have%
\[
\overline{s_{\mu}}=\overline{\sum_{j=1}^{k}\left(  -1\right)  ^{k-j}a_{j}%
\sum_{\substack{\tau\in V;\\-\left\vert \tau\right\vert =n-k+j}}s_{\mu+\tau}%
}=\sum_{j=1}^{k}\left(  -1\right)  ^{k-j}a_{j}\sum_{\substack{\tau\in
V;\\-\left\vert \tau\right\vert =n-k+j}}\overline{s_{\mu+\tau}}.
\]
This proves Theorem \ref{thm.rimhook}.
\end{proof}

\section{\label{sect.bigquot}Deforming symmetric functions}

\subsection{The basis theorem}

\begin{convention}
Let $R$ be any commutative ring. Let $\left(  a_{1},a_{2},\ldots,a_{p}\right)
$ be any list of elements of $R$. Then, $\left\langle a_{1},a_{2},\ldots
,a_{p}\right\rangle _{R}$ shall denote the ideal of $R$ generated by these
elements $a_{1},a_{2},\ldots,a_{p}$. When it is clear from the context what
$R$ is, we will simply write $\left\langle a_{1},a_{2},\ldots,a_{p}%
\right\rangle $ for this ideal (thus omitting the mention of $R$); for
example, when we write \textquotedblleft$R/\left\langle a_{1},a_{2}%
,\ldots,a_{p}\right\rangle $\textquotedblright, we will always mean
$R/\left\langle a_{1},a_{2},\ldots,a_{p}\right\rangle _{R}$.
\end{convention}

We have so far studied a quotient $\mathcal{S}/I$ of the ring $\mathcal{S}$ of
symmetric polynomials in $k$ variables $x_{1},x_{2},\ldots,x_{k}$. But
$\mathcal{S}$ itself is a quotient of a larger ring -- the ring $\Lambda$ of
symmetric functions in infinitely many variables. More precisely,%
\[
\mathcal{S}\cong\Lambda/\left\langle \mathbf{e}_{k+1},\mathbf{e}%
_{k+2},\mathbf{e}_{k+3},\ldots\right\rangle
\]
(and the canonical $\mathbf{k}$-algebra isomorphism $\mathcal{S}%
\rightarrow\Lambda/\left\langle \mathbf{e}_{k+1},\mathbf{e}_{k+2}%
,\mathbf{e}_{k+3},\ldots\right\rangle $ sends $\mathbf{e}_{1},\mathbf{e}%
_{2},\mathbf{e}_{3},\ldots$ to $e_{1},e_{2},\ldots,e_{k},0,0,0,\ldots$).
Hence, at least when $a_{1},a_{2},\ldots,a_{k}\in\mathbf{k}$, we have%
\[
\mathcal{S}/I\cong\Lambda/\left(  \left\langle \mathbf{h}_{n-k+1}%
-a_{1},\mathbf{h}_{n-k+2}-a_{2},\ldots,\mathbf{h}_{n}-a_{k}\right\rangle
+\left\langle \mathbf{e}_{k+1},\mathbf{e}_{k+2},\mathbf{e}_{k+3}%
,\ldots\right\rangle \right)  .
\]
If $a_{1},a_{2},\ldots,a_{k}$ are themselves elements of $\mathcal{S}$, then
we need to lift them to elements $\mathbf{a}_{1},\mathbf{a}_{2},\ldots
,\mathbf{a}_{k}$ of $\Lambda$ in order for such an isomorphism to hold.

This suggests a further generalization: What if we replace $\mathbf{e}%
_{k+1},\mathbf{e}_{k+2},\mathbf{e}_{k+3},\ldots$ by $\mathbf{e}_{k+1}%
-\mathbf{b}_{1},\mathbf{e}_{k+2}-\mathbf{b}_{2},\mathbf{e}_{k+3}%
-\mathbf{b}_{3},\ldots$ for some $\mathbf{b}_{1},\mathbf{b}_{2},\mathbf{b}%
_{3},\ldots\in\Lambda$ ? Let us take a look at this generalization:

\begin{definition}
\label{def.bigquot.setup}Throughout Section \ref{sect.bigquot}, we shall use
the following notations:

Let $\Lambda$ be the ring of symmetric functions in infinitely many
indeterminates $x_{1},x_{2},x_{3},\ldots$ over $\mathbf{k}$. (See
\cite[Chapter 2]{GriRei18} for more about this ring $\Lambda$.) Let
$\mathbf{e}_{m}$ and $\mathbf{h}_{m}$ be the elementary symmetric functions
and the complete homogeneous symmetric functions in $\Lambda$. For each
partition $\lambda$, let $\mathbf{s}_{\lambda}$ be the Schur function in
$\Lambda$ corresponding to $\lambda$.

For each $i\in\left\{  1,2,\ldots,k\right\}  $, let $\mathbf{a}_{i}$ be an
element of $\Lambda$ with degree $<n-k+i$.

For each $i\in\left\{  1,2,3,\ldots\right\}  $, let $\mathbf{b}_{i}$ be an
element of $\Lambda$ with degree $<k+i$.

Let $K$ be the ideal%
\[
\left\langle \mathbf{h}_{n-k+1}-\mathbf{a}_{1},\mathbf{h}_{n-k+2}%
-\mathbf{a}_{2},\ldots,\mathbf{h}_{n}-\mathbf{a}_{k}\right\rangle
+\left\langle \mathbf{e}_{k+1}-\mathbf{b}_{1},\mathbf{e}_{k+2}-\mathbf{b}%
_{2},\mathbf{e}_{k+3}-\mathbf{b}_{3},\ldots\right\rangle
\]
of $\Lambda$. For each $f\in\Lambda$, we let $\overline{f}$ denote the
projection of $f$ onto the quotient $\Lambda/K$.
\end{definition}

\begin{theorem}
\label{thm.bigquot}The $\mathbf{k}$-module $\Lambda/K$ is a free $\mathbf{k}%
$-module with basis $\left(  \overline{\mathbf{s}_{\lambda}}\right)
_{\lambda\in P_{k,n}}$.
\end{theorem}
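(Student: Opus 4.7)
The overall strategy is to reduce Theorem \ref{thm.bigquot} to the already-proved Theorem \ref{thm.S/J}. Write $K = K_1 + K_2$, where $K_1 = \langle \mathbf{e}_{k+i} - \mathbf{b}_i : i \geq 1 \rangle$ is the ``$\mathbf{e}$-part'' of $K$ and $K_2 = \langle \mathbf{h}_{n-k+i} - \mathbf{a}_i : 1 \leq i \leq k \rangle$ is the ``$\mathbf{h}$-part''. The key idea is that quotienting by $K_1$ collapses $\Lambda$ onto a ring isomorphic to $\mathcal{S}$, under which $K_2$ will be sent to an ideal of exactly the form handled by Theorem \ref{thm.S/J}.

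To build the collapse, I will exploit that $\Lambda = \mathbf{k}[\mathbf{e}_1, \mathbf{e}_2, \ldots]$ is a polynomial ring. Since $\deg \mathbf{b}_i < k + i$, every monomial in $\mathbf{b}_i$ (in the $\mathbf{e}_j$'s) involves only $\mathbf{e}_j$ with $j < k + i$, so $\mathbf{b}_i \in \mathbf{k}[\mathbf{e}_1, \ldots, \mathbf{e}_{k+i-1}]$. This lets me recursively define a $\mathbf{k}$-algebra map $\phi : \Lambda \to \mathcal{S} = \mathbf{k}[e_1, \ldots, e_k]$ by $\phi(\mathbf{e}_j) = e_j$ for $j \leq k$ and $\phi(\mathbf{e}_{k+i}) = \phi(\mathbf{b}_i)$ for $i \geq 1$. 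By design $\phi$ annihilates the generators of $K_1$ and descends to $\bar\phi : \Lambda/K_1 \to \mathcal{S}$. Surjectivity is immediate. For injectivity, iteratively replacing $\mathbf{e}_{k+i}$ by $\mathbf{b}_i$ (descending on $i$) reduces every element of $\Lambda$ modulo $K_1$ to a polynomial in $\mathbf{e}_1, \ldots, \mathbf{e}_k$; composing this surjection $\mathbf{k}[\mathbf{e}_1, \ldots, \mathbf{e}_k] \twoheadrightarrow \Lambda/K_1$ with $\bar\phi$ yields the obvious isomorphism onto $\mathcal{S}$, forcing both maps to be bijections. A routine induction shows $\phi$ is degree-non-increasing, so $\deg \phi(\mathbf{a}_i) \leq \deg \mathbf{a}_i < n-k+i$; and because $\phi$ differs from the standard evaluation $\Lambda \to \mathcal{S}$ only in lower-degree terms coming from the $\mathbf{b}_j$'s, we also get $\deg(\phi(\mathbf{h}_{n-k+i}) - h_{n-k+i}) < n-k+i$. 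Consequently $\bar\phi(K_2) = \langle h_{n-k+i} - a_i' : 1 \leq i \leq k \rangle$ for suitable $a_i' \in \mathcal{S}$ with $\deg a_i' < n-k+i$, and Theorem \ref{thm.S/J} applied to this ideal yields that $\mathcal{S}/\bar\phi(K_2)$, hence $\Lambda/K \cong \mathcal{S}/\bar\phi(K_2)$, is free of rank $\binom{n}{k}$.

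It remains to show that $(\overline{\mathbf{s}_\lambda})_{\lambda \in P_{k,n}}$ spans $\Lambda/K$; Lemma \ref{lem.freemod-span-basis} will then upgrade this spanning family of size $\binom{n}{k}$ to a basis of the free module of the same rank. I will prove two reduction lemmas by strong induction on $|\mu|$. First, if a partition $\mu$ has $\ell(\mu) > k$, then expanding the $e$-form Jacobi--Trudi determinant for $\mathbf{s}_\mu$ (Corollary \ref{cor.jt.el}) along its first row produces entries $\mathbf{e}_m$ with $m \geq \ell(\mu) > k$, each congruent modulo $K_1$ to $\mathbf{b}_{m-k}$ of strictly smaller degree, exhibiting $\mathbf{s}_\mu$ modulo $K$ as a symmetric function of degree $< |\mu|$. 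Second, if $\mu \in P_k$ satisfies $\mu_1 > n-k$, an analogous expansion of the $h$-form Jacobi--Trudi determinant (Proposition \ref{prop.jacobi-trudi.Sh}), combined with the $\Lambda$-analog of Lemma \ref{lem.I.hi-red} (proved inductively using the Newton identity $\mathbf{h}_m = \sum_{t=1}^m (-1)^{t-1} \mathbf{e}_t \mathbf{h}_{m-t}$ and both sets of relations in $K$), does the same. Since $(\mathbf{s}_\lambda)_\lambda$ is a $\mathbf{k}$-basis of $\Lambda$, iterating these two reductions establishes the required spanning.

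The main obstacle I anticipate is the degree bookkeeping in the second paragraph: verifying carefully that $\bar\phi$ is genuinely an isomorphism (not merely a surjection) and that $\bar\phi(K_2)$ lands in exactly the form $\langle h_{n-k+i} - a_i' \rangle$ with $\deg a_i' < n-k+i$ demanded by Theorem \ref{thm.S/J}. The spanning argument of the third paragraph is conceptually a direct translation of Lemmas \ref{lem.I.hi-red}, \ref{lem.I.sl-red} and \ref{lem.I.sl-red2}, and should present no serious additional difficulty once the $\Lambda$-version of Lemma \ref{lem.I.hi-red} is established.
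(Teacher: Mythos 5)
Your argument is correct, and while its overall architecture matches the paper's (spanning by degree reduction, then a rank count via Theorem \ref{thm.S/J}, then Lemma \ref{lem.freemod-span-basis}), the way you obtain the isomorphism with a quotient of $\mathcal{S}$ is genuinely different from the paper's. The paper constructs an automorphism $\varphi$ of $\Lambda$ fixing $\mathbf{e}_{1},\ldots,\mathbf{e}_{k}$ and sending $\mathbf{e}_{k+i}\mapsto\mathbf{e}_{k+i}-\mathbf{b}_{i}$, proves via a filtration lemma (Proposition \ref{prop.filtr.iso-g}, Lemma \ref{lem.bigquot.filtr-hom}) that $\varphi$ is invertible with both $\varphi$ and $\varphi^{-1}$ respecting the degree filtration, and uses the control on $\varphi^{-1}$ to transport $K$ to an ideal of the form $\left\langle \mathbf{h}_{n-k+1}-\mathbf{c}_{1},\ldots,\mathbf{h}_{n}-\mathbf{c}_{k}\right\rangle +\left\langle \mathbf{e}_{k+1},\mathbf{e}_{k+2},\ldots\right\rangle$ with $\deg\mathbf{c}_{i}<n-k+i$, whence $\Lambda/K\cong\mathcal{S}/I_{\mathbf{c}}$ and Theorem \ref{thm.S/J} applies. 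You instead build a surjection $\phi:\Lambda\rightarrow\mathcal{S}$ whose kernel is exactly the $\mathbf{e}$-part $K_{1}$, defined recursively by $\mathbf{e}_{k+i}\mapsto\phi\left(\mathbf{b}_{i}\right)$ (legitimate precisely because $\deg\mathbf{b}_{i}<k+i$ forces $\mathbf{b}_{i}\in\mathbf{k}\left[\mathbf{e}_{1},\ldots,\mathbf{e}_{k+i-1}\right]$), and push the $\mathbf{h}$-part forward to $\left\langle h_{n-k+i}-a_{i}^{\prime}\right\rangle$ with $\deg a_{i}^{\prime}<n-k+i$. This needs only ``forward'' estimates -- that $\phi$ is degree-non-increasing and agrees with the canonical projection $\Lambda\rightarrow\mathcal{S}$ up to terms of lower degree -- both of which follow by an easy induction on monomials in the $\mathbf{e}_{j}$ (each factor $\phi\left(\mathbf{e}_{j}\right)$ equals its canonical image plus something of degree $<j$), so no inverse map or general filtration machinery is required; the one point you flagged, $\deg\left(\phi\left(\mathbf{h}_{n-k+i}\right)-h_{n-k+i}\right)<n-k+i$, indeed holds by exactly this factor-by-factor comparison. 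In short, your route is somewhat more elementary and self-contained for this theorem, while the paper's automorphism viewpoint isolates the ``untwisting'' of the $\mathbf{b}_{i}$ as a reusable statement about filtered isomorphisms of $\Lambda$; the spanning half of your proposal coincides with the paper's Lemmas \ref{lem.bigquot.sl-red-1}, \ref{lem.bigquot.hi-red}, \ref{lem.bigquot.sl-red} and \ref{lem.bigquot.sl-red2}.
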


\subsection{Spanning}

Proving Theorem \ref{thm.bigquot} will take us a while. We start with some
easy observations:

\begin{itemize}
\item For each $i\in\left\{  1,2,\ldots,k\right\}  $, we have%
\begin{equation}
\mathbf{a}_{i}=\left(  \text{some symmetric function of degree }<n-k+i\right)
. \label{eq.thm.bigquot.ai-deg}%
\end{equation}
(This follows from the definition of $\mathbf{a}_{i}$.)

\item For each $i\in\left\{  1,2,3,\ldots\right\}  $, we have%
\begin{equation}
\mathbf{b}_{i}=\left(  \text{some symmetric function of degree }<k+i\right)  .
\label{eq.thm.bigquot.bi-deg}%
\end{equation}
(This follows from the definition of $\mathbf{b}_{i}$.)

\item For each $i\in\left\{  1,2,3,\ldots\right\}  $, we have%
\begin{equation}
\mathbf{e}_{k+i}-\mathbf{b}_{i}\in K \label{eq.thm.bigquot.ek+iinK}%
\end{equation}
(because of how $K$ was defined). In other words, for each $i\in\left\{
1,2,3,\ldots\right\}  $, we have%
\begin{equation}
\mathbf{e}_{k+i}\equiv\mathbf{b}_{i}\operatorname{mod}K.
\label{eq.thm.bigquot.ek+imodK}%
\end{equation}
Substituting $j-k$ for $i$ in this statement, we obtain the following: For
each $j\in\left\{  k+1,k+2,k+3,\ldots\right\}  $, we have%
\begin{equation}
\mathbf{e}_{j}\equiv\mathbf{b}_{j-k}\operatorname{mod}K.
\label{eq.thm.bigquot.ejmodK}%
\end{equation}

\item For each $i\in\left\{  1,2,\ldots,k\right\}  $, we have%
\begin{equation}
\mathbf{h}_{n-k+i}-\mathbf{a}_{i}\in K \label{eq.thm.bigquot.hn-k+iinK}%
\end{equation}
(because of how $K$ was defined). In other words, for each $i\in\left\{
1,2,\ldots,k\right\}  $, we have%
\begin{equation}
\mathbf{h}_{n-k+i}\equiv\mathbf{a}_{i}\operatorname{mod}K.
\label{eq.thm.bigquot.hn-k+imodK}%
\end{equation}
Substituting $j-\left(  n-k\right)  $ for $i$ in this statement, we obtain the
following: For each $j\in\left\{  n-k+1,n-k+2,\ldots,n\right\}  $, we have%
\begin{equation}
\mathbf{h}_{j}\equiv\mathbf{a}_{j-\left(  n-k\right)  }\operatorname{mod}K.
\label{eq.thm.bigquot.hjmodK}%
\end{equation}

\end{itemize}

Let $\operatorname*{Par}$ denote the set of all partitions.

For each $m\in\mathbb{Z}$, we let $\Lambda_{\deg\leq m}$ denote the
$\mathbf{k}$-submodule of $\Lambda$ that consists of all symmetric functions
$f\in\Lambda$ of degree $\leq m$. Thus, $\left(  \Lambda_{\deg\leq m}\right)
_{m\in\mathbb{N}}$ is a filtration of the $\mathbf{k}$-algebra $\Lambda$. In
particular, $1\in\Lambda_{\deg\leq0}$ and
\begin{equation}
\Lambda_{\deg\leq i}\Lambda_{\deg\leq j}\subseteq\Lambda_{\deg\leq
i+j}\ \ \ \ \ \ \ \ \ \ \text{for all }i,j\in\mathbb{N}.
\label{eq.thm.bigquot.Lamfil}%
\end{equation}
Also, $\Lambda_{\deg\leq m}$ is the $\mathbf{k}$-submodule $0$ of $\Lambda$
whenever $m\in\mathbb{Z}$ is negative; thus, in particular, $\Lambda_{\deg
\leq-1}=0$.

We state an analogue of Lemma \ref{lem.I.cofactor1}:

\begin{lemma}
\label{lem.K.cofactor1}Let $\lambda=\left(  \lambda_{1},\lambda_{2}%
,\ldots,\lambda_{\ell}\right)  $ be any partition. Let $i\in\left\{
1,2,\ldots,\ell\right\}  $ and $j\in\left\{  1,2,\ldots,\ell\right\}  $. Then:

\textbf{(a)} The $\left(  i,j\right)  $-th cofactor of the matrix $\left(
\mathbf{h}_{\lambda_{u}-u+v}\right)  _{1\leq u\leq\ell,\ 1\leq v\leq\ell}$ is
a homogeneous element of $\Lambda$ of degree $\left\vert \lambda\right\vert
-\left(  \lambda_{i}-i+j\right)  $.

\textbf{(b)} The $\left(  i,j\right)  $-th cofactor of the matrix $\left(
\mathbf{e}_{\lambda_{u}-u+v}\right)  _{1\leq u\leq\ell,\ 1\leq v\leq\ell}$ is
a homogeneous element of $\Lambda$ of degree $\left\vert \lambda\right\vert
-\left(  \lambda_{i}-i+j\right)  $.
\end{lemma}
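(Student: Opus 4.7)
My plan is to prove both parts of Lemma \ref{lem.K.cofactor1} by essentially the same argument as the proof of Lemma \ref{lem.I.cofactor1}, which establishes the analogous fact for $h_m \in \mathcal{S}$ instead of $\mathbf{h}_m \in \Lambda$. The only property of $h_m$ used there was that it is a homogeneous symmetric polynomial of degree $m$ (and $0$ if $m<0$); the same is true of $\mathbf{h}_m$ and, crucially, also of $\mathbf{e}_m$, which is what allows part (b) to be handled in parallel.

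For part (a), I would set $w(u,v) = \lambda_u - u + v$ and let $\mu$ denote the $(i,j)$-th minor of $(\mathbf{h}_{w(u,v)})_{1 \leq u,v \leq \ell}$, so that the cofactor is $(-1)^{i+j}\mu$. Expanding $\mu$ combinatorially as a sum over permutations of $\{1,\ldots,\ell\}\setminus\{i\} \to \{1,\ldots,\ell\}\setminus\{j\}$, each summand is a signed product $\pm \mathbf{h}_{w(i_1,j_1)} \mathbf{h}_{w(i_2,j_2)} \cdots \mathbf{h}_{w(i_{\ell-1},j_{\ell-1})}$ where $i_1,\ldots,i_{\ell-1}$ run through the elements of $\{1,\ldots,\ell\}\setminus\{i\}$ and likewise $j_1,\ldots,j_{\ell-1}$ through $\{1,\ldots,\ell\}\setminus\{j\}$. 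Since $\mathbf{h}_m$ is homogeneous of degree $m$ (or zero when $m<0$, which remains homogeneous of every degree), each such product is homogeneous of total degree $\sum_{p=1}^{\ell-1} w(i_p,j_p) = \sum_{u \neq i}(\lambda_u - u) + \sum_{u \neq j} u$, which by Lemma \ref{lem.lam-sum-deg-1} equals $|\lambda| - (\lambda_i - i + j)$. Summing these homogeneous pieces of the same degree gives the claim for $\mu$, and multiplying by the sign $(-1)^{i+j}$ does not change homogeneity or degree.

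For part (b), I would run verbatim the same argument, replacing every $\mathbf{h}_m$ by $\mathbf{e}_m$. The only fact required is that $\mathbf{e}_m \in \Lambda$ is homogeneous of degree $m$ (and equals $0$ when $m<0$), which is standard.

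I do not expect any real obstacle: the argument is essentially a transcription of the proof of Lemma \ref{lem.I.cofactor1}, and the only point that needs even a moment's attention is making sure both $\mathbf{h}_m$ and $\mathbf{e}_m$ are treated uniformly as homogeneous elements of $\Lambda$ of degree $m$. Accordingly, the write-up can be short: state the combinatorial expansion, cite Lemma \ref{lem.lam-sum-deg-1}, and note that the argument is identical in the two cases.
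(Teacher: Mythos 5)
Your proposal is correct and matches the paper's own proof, which simply transfers the argument of Lemma \ref{lem.I.cofactor1} to $\Lambda$, replacing $h_m$ by $\mathbf{h}_m$ (resp. $\mathbf{e}_m$) and using only their homogeneity of degree $m$ together with Lemma \ref{lem.lam-sum-deg-1}. No gaps.
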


\begin{proof}
[Proof of Lemma \ref{lem.K.cofactor1}.]Each of the two parts of Lemma
\ref{lem.K.cofactor1} is proven in the same way as Lemma \ref{lem.I.cofactor1}%
, with the obvious modifications to the argument (viz., replacing
$\mathcal{S}$ by $\Lambda$, and replacing $h_{m}$ by $\mathbf{h}_{m}$ or by
$\mathbf{e}_{m}$).
\end{proof}

Next, we claim a lemma that will yield one half of Theorem \ref{thm.bigquot}
(namely, that the family $\left(  \overline{\mathbf{s}_{\lambda}}\right)
_{\lambda\in P_{k,n}}$ spans the $\mathbf{k}$-module $\Lambda/K$):

\begin{lemma}
\label{lem.bigquot.sl-red}Let $\lambda$ be a partition such that
$\lambda\notin P_{k,n}$. Then,%
\[
\mathbf{s}_{\lambda}\equiv\left(  \text{some symmetric function of degree
}<\left\vert \lambda\right\vert \right)  \operatorname{mod}K.
\]

\end{lemma}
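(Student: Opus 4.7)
The plan is to reduce to two separate cases based on how $\lambda$ fails to lie in $P_{k,n}$, and in each case to Laplace-expand an appropriate Jacobi--Trudi determinant along its first row, using the defining relations of $K$ to lower the degree of the top-row entries. Concretely, either (A) $\ell\left(\lambda\right)>k$, or (B) $\ell\left(\lambda\right)\leq k$ but $\lambda_{1}>n-k$.

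For Case (A), I apply Corollary \ref{cor.jt.el} with $\lambda^{t}$ in place of $\lambda$ (and use $\left(\lambda^{t}\right)^{t}=\lambda$) to express $\mathbf{s}_{\lambda}$ as a $\lambda_{1}\times\lambda_{1}$ determinant whose entries are $\mathbf{e}_{\left(\lambda^{t}\right)_{u}-u+v}$. Laplace-expansion along the first row gives $\mathbf{s}_{\lambda}=\sum_{v=1}^{\lambda_{1}}\mathbf{e}_{\ell\left(\lambda\right)-1+v}\cdot C_{v}$. Since $\ell\left(\lambda\right)>k$, each first-row index $\ell\left(\lambda\right)-1+v$ exceeds $k$, so by (\ref{eq.thm.bigquot.ek+imodK}) and (\ref{eq.thm.bigquot.bi-deg}), $\mathbf{e}_{\ell\left(\lambda\right)-1+v}$ is congruent mod $K$ to a symmetric function of degree strictly less than $\ell\left(\lambda\right)-1+v$. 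The cofactor $C_{v}$ is, by Lemma \ref{lem.K.cofactor1}(b), a homogeneous symmetric function of degree $\left\vert \lambda\right\vert -\left(\ell\left(\lambda\right)-1+v\right)$. Their product has degree $<\left\vert \lambda\right\vert $, proving the claim in Case (A).

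For Case (B), the strategy mirrors the proof of Lemma \ref{lem.I.sl-red}: apply the $\mathbf{h}$-Jacobi--Trudi identity in $\Lambda$ (the $\Lambda$-analog of Proposition \ref{prop.jacobi-trudi.Sh}(b), following from \cite[(2.4.16)]{GriRei18}) to write $\mathbf{s}_{\lambda}$ as an $\ell\left(\lambda\right)\times\ell\left(\lambda\right)$ determinant of $\mathbf{h}_{\lambda_{u}-u+v}$, Laplace-expand along the first row, and observe that each top-row entry $\mathbf{h}_{\lambda_{1}-1+v}$ has index $\geq\lambda_{1}>n-k$. Combined with Lemma \ref{lem.K.cofactor1}(a) for the cofactors, the claim reduces to establishing a $\Lambda/K$-analog of Lemma \ref{lem.I.hi-red}: for every integer $i>n-k$,
\[
\mathbf{h}_{i}\equiv\left(\text{some symmetric function of degree }<i\right)\operatorname{mod}K.
\]

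Establishing this replacement for Lemma \ref{lem.I.hi-red} is the main obstacle, since Corollary \ref{cor.heh-id.0} (which exploited $e_{t}=0$ for $t>k$ in $\mathcal{S}$) is unavailable in $\Lambda$. In its place I would use the universal identity $\sum_{p+q=i}\left(-1\right)^{p}\mathbf{e}_{p}\mathbf{h}_{q}=0$ in $\Lambda$ for all $i>0$ (equivalently, $E\left(-t\right)H\left(t\right)=1$), and argue by strong induction on $i$. The base case $n-k<i\leq n$ is immediate from (\ref{eq.thm.bigquot.hjmodK}) together with (\ref{eq.thm.bigquot.ai-deg}). For $i>n$, rewrite $\mathbf{h}_{i}=-\sum_{p=1}^{i}\left(-1\right)^{p}\mathbf{e}_{p}\mathbf{h}_{i-p}$ and analyze each term: when $1\leq p\leq k$, we have $i-p>n-k$ (since $i>n$), so by the inductive hypothesis $\mathbf{h}_{i-p}$ is congruent mod $K$ to something of degree $<i-p$, and multiplication by the homogeneous $\mathbf{e}_{p}$ of degree $p$ yields degree $<i$; when $p>k$, (\ref{eq.thm.bigquot.ek+imodK}) and (\ref{eq.thm.bigquot.bi-deg}) give $\mathbf{e}_{p}\equiv\mathbf{b}_{p-k}\operatorname{mod}K$ of degree $<p$, and pairing with $\mathbf{h}_{i-p}$ (of degree $i-p$) again yields a degree-$<i$ contribution. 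Summing these estimates completes the induction, hence the analog of Lemma \ref{lem.I.hi-red}, and hence the lemma.
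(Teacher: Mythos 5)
Your proposal is correct and follows essentially the same route as the paper: the paper first proves the case of more than $k$ parts (your Case (A)) via the dual Jacobi--Trudi determinant expanded along its first row, then establishes exactly your $\mathbf{h}_i$-reduction claim (Lemma \ref{lem.bigquot.hi-red}) by the same strong induction on $i$ using $\sum_{p+q=i}\left(-1\right)^{p}\mathbf{e}_{p}\mathbf{h}_{q}=0$, and finally handles your Case (B) with the $\mathbf{h}$-Jacobi--Trudi expansion and the cofactor degree bounds of Lemma \ref{lem.K.cofactor1}. The only (harmless) deviation is that in the induction step you bound $\mathbf{e}_{p}$ for $p>k$ directly from $\mathbf{e}_{p}\equiv\mathbf{b}_{p-k}\operatorname{mod}K$, whereas the paper routes this through its Case-(A) lemma applied to the column $\left(1^{p}\right)$ -- a slight simplification on your part, not a different argument.
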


We will not prove Lemma \ref{lem.bigquot.sl-red} immediately; instead, let us
show a weakening of it first:

\begin{lemma}
\label{lem.bigquot.sl-red-1}Let $\lambda$ be a partition such that
$\lambda\notin P_{k}$. Then,%
\[
\mathbf{s}_{\lambda}\equiv\left(  \text{some symmetric function of degree
}<\left\vert \lambda\right\vert \right)  \operatorname{mod}K.
\]

\end{lemma}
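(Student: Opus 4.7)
The plan is to use the dual Jacobi--Trudi identity (Corollary \ref{cor.jt.el}), which expresses $\mathbf{s}_\lambda$ as a determinant whose entries are elementary symmetric functions, and then to expand this determinant along its first row. The crucial observation is that since $\lambda \notin P_k$, the partition $\lambda$ has more than $k$ parts, so its conjugate $\lambda^t$ satisfies $\lambda^t_1 = \ell(\lambda) > k$, i.e., $\lambda^t_1 \geq k+1$. This means that every entry in the first row of the relevant matrix is of the form $\mathbf{e}_m$ with $m > k$, and can therefore be reduced modulo $K$ using (\ref{eq.thm.bigquot.ejmodK}).

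More precisely, I would set $\ell = \lambda_1$ (so that $\lambda^t$ can be written as a partition with exactly $\ell$ entries $\lambda^t_1, \lambda^t_2, \ldots, \lambda^t_\ell$) and apply Corollary \ref{cor.jt.el} to $\lambda^t$ in place of $\lambda$. Combined with $(\lambda^t)^t = \lambda$, this yields
\[
\mathbf{s}_\lambda = \det\left( \left( \mathbf{e}_{\lambda^t_u - u + v} \right)_{1 \leq u \leq \ell,\ 1 \leq v \leq \ell} \right).
\]
Laplace expansion along the first row then gives $\mathbf{s}_\lambda = \sum_{j=1}^{\ell} \mathbf{e}_{\lambda^t_1 - 1 + j} \cdot C_j$, where $C_j$ is the $(1,j)$-th cofactor. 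By Lemma \ref{lem.K.cofactor1} \textbf{(b)} (applied to $\lambda^t$ and $i = 1$), each $C_j$ is homogeneous of degree $|\lambda| - (\lambda^t_1 - 1 + j)$.

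For the reduction step, fix $j \in \{1, 2, \ldots, \ell\}$. Since $\lambda^t_1 \geq k+1$, we have $\lambda^t_1 - 1 + j \geq k+1$, so (\ref{eq.thm.bigquot.ejmodK}) gives $\mathbf{e}_{\lambda^t_1 - 1 + j} \equiv \mathbf{b}_{\lambda^t_1 - 1 + j - k} \pmod{K}$, and (\ref{eq.thm.bigquot.bi-deg}) guarantees that this $\mathbf{b}_{\lambda^t_1 - 1 + j - k}$ has degree $< k + (\lambda^t_1 - 1 + j - k) = \lambda^t_1 - 1 + j$. Therefore each term $\mathbf{b}_{\lambda^t_1 - 1 + j - k} \cdot C_j$ has degree strictly less than $(\lambda^t_1 - 1 + j) + (|\lambda| - (\lambda^t_1 - 1 + j)) = |\lambda|$. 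Summing over $j$ yields $\mathbf{s}_\lambda \equiv f \pmod{K}$ for some symmetric function $f$ of degree $< |\lambda|$, as required. I do not anticipate any real obstacle here; the only mild subtlety is ensuring the indexing in Corollary \ref{cor.jt.el} is handled correctly (in particular that $\lambda^t$ fits into a size-$\ell$ tuple with $\ell = \lambda_1$, which is automatic since $\lambda^t$ has exactly $\lambda_1$ positive parts).
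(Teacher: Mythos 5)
Your proposal is correct and follows essentially the same route as the paper's own proof: conjugate to $\lambda^t$ (whose first part exceeds $k$ since $\lambda$ has more than $k$ parts), apply the dual Jacobi--Trudi identity (Corollary \ref{cor.jt.el}), expand along the first row, bound the cofactors via Lemma \ref{lem.K.cofactor1} \textbf{(b)}, and reduce each $\mathbf{e}_{\lambda^t_1-1+j}$ modulo $K$ using (\ref{eq.thm.bigquot.ejmodK}) and the degree bound (\ref{eq.thm.bigquot.bi-deg}). The only difference is notational (the paper names $\mu=\lambda^t$), so there is nothing to add.
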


\begin{proof}
[Proof of Lemma \ref{lem.bigquot.sl-red-1} (sketched).]We have $\lambda\notin
P_{k}$. Hence, the partition $\lambda$ has more than $k$ parts.

Define a partition $\mu$ by $\mu=\lambda^{t}$. Hence, $\mu_{1}$ is the number
of parts of $\lambda$. Thus, we have $\mu_{1}>k$ (since $\lambda$ has more
than $k$ parts), so that $\mu_{1}\geq k+1$. Moreover, $\left\vert
\mu\right\vert =\left\vert \lambda\right\vert $ (since $\mu=\lambda^{t}$).

Write the partition $\mu$ in the form $\mu=\left(  \mu_{1},\mu_{2},\ldots
,\mu_{\ell}\right)  $. For each $j\in\left\{  1,2,\ldots,\ell\right\}  $, we
have $\mu_{1}-1+\underbrace{j}_{\geq1}\geq\mu_{1}-1+1=\mu_{1}\geq k+1$ and
thus $\mu_{1}-1+j\in\left\{  k+1,k+2,k+3,\ldots\right\}  $ and therefore%
\begin{align}
&  \mathbf{e}_{\mu_{1}-1+j}\nonumber\\
&  \equiv\mathbf{b}_{\mu_{1}-1+j-k}\ \ \ \ \ \ \ \ \ \ \left(  \text{by
(\ref{eq.thm.bigquot.ejmodK}), applied to }\mu_{1}-1+j\text{ instead of
}j\right) \nonumber\\
&  =\left(  \text{some symmetric function of degree }<\underbrace{k+\left(
\mu_{1}-1+j-k\right)  }_{=\mu_{1}-1+j}\right) \nonumber\\
&  \ \ \ \ \ \ \ \ \ \ \left(  \text{by (\ref{eq.thm.bigquot.bi-deg}), applied
to }i=\mu_{1}-1+j-k\right) \nonumber\\
&  =\left(  \text{some symmetric function of degree }<\mu_{1}-1+j\right)
\operatorname{mod}K. \label{pf.lem.bigquot.sl-red-1.e-red}%
\end{align}

From $\mu=\lambda^{t}$, we obtain $\mu^{t}=\left(  \lambda^{t}\right)
^{t}=\lambda$. But Corollary \ref{cor.jt.el} (applied to $\mu$ and $\mu_{i}$
instead of $\lambda$ and $\lambda_{i}$) yields%
\begin{align}
\mathbf{s}_{\mu^{t}}  &  =\det\left(  \left(  \mathbf{e}_{\mu_{i}-i+j}\right)
_{1\leq i\leq\ell,\ 1\leq j\leq\ell}\right)  =\det\left(  \left(
\mathbf{e}_{\mu_{u}-u+v}\right)  _{1\leq u\leq\ell,\ 1\leq v\leq\ell}\right)
\nonumber\\
&  =\sum_{j=1}^{\ell}\mathbf{e}_{\mu_{1}-1+j}\cdot C_{j},
\label{pf.lem.bigquot.sl-red.JT1}%
\end{align}
where $C_{j}$ denotes the $\left(  1,j\right)  $-th cofactor of the
$\ell\times\ell$-matrix $\left(  \mathbf{e}_{\mu_{u}-u+v}\right)  _{1\leq
u\leq\ell,\ 1\leq v\leq\ell}$. (Here, the last equality sign follows from
(\ref{eq.det.lap-exp}), applied to $R=\Lambda$ and $A=\left(  \mathbf{e}%
_{\mu_{u}-u+v}\right)  _{1\leq u\leq\ell,\ 1\leq v\leq\ell}$ and
$a_{u,v}=\mathbf{e}_{\mu_{u}-u+v}$ and $i=1$.)

For each $j\in\left\{  1,2,\ldots,\ell\right\}  $, the element $C_{j}$ is the
$\left(  1,j\right)  $-th cofactor of the matrix $\left(  \mathbf{e}_{\mu
_{u}-u+v}\right)  _{1\leq u\leq\ell,\ 1\leq v\leq\ell}$ (by its definition),
and thus is a homogeneous element of $\Lambda$ of degree $\left\vert
\mu\right\vert -\left(  \mu_{1}-1+j\right)  $ (by Lemma \ref{lem.K.cofactor1}
\textbf{(b)}, applied to $1$ and $\mu$ instead of $i$ and $\lambda$). Hence,%
\begin{equation}
C_{j}=\left(  \text{some symmetric function of degree }\leq\left\vert
\mu\right\vert -\left(  \mu_{1}-1+j\right)  \right)
\label{pf.lem.bigquot.sl-red.deg1}%
\end{equation}
for each $j\in\left\{  1,2,\ldots,\ell\right\}  $. Therefore,
(\ref{pf.lem.bigquot.sl-red.JT1}) becomes%
\begin{align*}
\mathbf{s}_{\mu^{t}}  &  =\sum_{j=1}^{\ell}\underbrace{\mathbf{e}_{\mu
_{1}-1+j}}_{\substack{\equiv\left(  \text{some symmetric function of degree
}<\mu_{1}-1+j\right)  \operatorname{mod}K\\\text{(by
(\ref{pf.lem.bigquot.sl-red-1.e-red})})}}\\
&  \ \ \ \ \ \ \ \ \ \ \cdot\underbrace{C_{j}}_{\substack{=\left(  \text{some
symmetric function of degree }\leq\left\vert \mu\right\vert -\left(  \mu
_{1}-1+j\right)  \right)  \\\text{(by (\ref{pf.lem.bigquot.sl-red.deg1}))}}}\\
&  \equiv\sum_{j=1}^{k}\left(  \text{some symmetric function of degree }%
<\mu_{1}-1+j\right) \\
&  \ \ \ \ \ \ \ \ \ \ \cdot\left(  \text{some symmetric function of degree
}\leq\left\vert \mu\right\vert -\left(  \mu_{1}-1+j\right)  \right) \\
&  =\left(  \text{some symmetric function of degree }<\left\vert
\mu\right\vert \right)  \operatorname{mod}K.
\end{align*}
In view of $\mu^{t}=\lambda$ and $\left\vert \mu\right\vert =\left\vert
\lambda\right\vert $, this rewrites as%
\[
\mathbf{s}_{\lambda}\equiv\left(  \text{some symmetric function of degree
}<\left\vert \lambda\right\vert \right)  \operatorname{mod}K.
\]
This proves Lemma \ref{lem.bigquot.sl-red-1}.
\end{proof}

Our next lemma is an analogue of Lemma \ref{lem.I.hi-red}:

\begin{lemma}
\label{lem.bigquot.hi-red}Let $i$ be an integer such that $i>n-k$. Then,%
\[
\mathbf{h}_{i}\equiv\left(  \text{some symmetric function of degree
}<i\right)  \operatorname{mod}K.
\]

\end{lemma}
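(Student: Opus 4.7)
The plan is to mimic the proof of Lemma \ref{lem.I.hi-red} (using strong induction on $i$), but to handle the extra generators $\mathbf{e}_{k+j}-\mathbf{b}_j$ of the ideal $K$ by invoking the congruences (\ref{eq.thm.bigquot.ejmodK}) whenever an $\mathbf{e}_t$ with $t>k$ appears. The key algebraic tool is the Newton-type identity in $\Lambda$: from \cite[(2.4.4)]{GriRei18}, for every positive integer $p$ we have
\[
\mathbf{h}_p \;=\; -\sum_{t=1}^{p}(-1)^t\,\mathbf{e}_t\,\mathbf{h}_{p-t},
\]
which is the $\Lambda$-analogue of Corollary \ref{cor.heh-id.0} (and, crucially, the sum here runs up to $p$, not merely up to $k$, because $\mathbf{e}_t$ need not vanish for $t>k$ in $\Lambda$).

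Next I would set up the strong induction on $i$. For the base cases $n-k<i\le n$, (\ref{eq.thm.bigquot.hjmodK}) yields $\mathbf{h}_i\equiv \mathbf{a}_{i-(n-k)}\bmod K$, and (\ref{eq.thm.bigquot.ai-deg}) guarantees that $\mathbf{a}_{i-(n-k)}$ is a symmetric function of degree $<n-k+(i-(n-k))=i$, which is exactly the desired form. So assume $i>n$ and that the lemma holds for every $j$ with $n-k<j<i$.

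For the inductive step, apply the Newton identity above with $p=i$ and split the resulting sum at $t=k$:
\begin{itemize}
\item For $t\in\{1,2,\ldots,k\}$: we have $i-t\ge i-k>n-k$, so the induction hypothesis (applied to $j=i-t$) replaces $\mathbf{h}_{i-t}$, modulo $K$, by a symmetric function of degree $<i-t$; multiplying by the homogeneous $\mathbf{e}_t$ of degree $t$ gives a symmetric function of degree $<i$.
\item For $t\in\{k+1,k+2,\ldots,i\}$: by (\ref{eq.thm.bigquot.ejmodK}) we have $\mathbf{e}_t\equiv \mathbf{b}_{t-k}\bmod K$, and (\ref{eq.thm.bigquot.bi-deg}) says $\mathbf{b}_{t-k}$ has degree $<k+(t-k)=t$; on the other hand $\mathbf{h}_{i-t}$ is itself a symmetric function of degree $\le i-t$ (with no invocation of the induction hypothesis needed), so the product is congruent mod $K$ to a symmetric function of degree $<i$.
\end{itemize}
Summing these contributions, $\mathbf{h}_i$ is congruent mod $K$ to a sum of symmetric functions each of degree $<i$, which completes the induction.

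The only mild subtlety, and what I expect to be the sole thing worth double-checking, is the range-bookkeeping in the split at $t=k$: in the range $t>k$ one must not try to apply the induction hypothesis (since $i-t$ may be $\le n-k$ or even $\le 0$), and one must instead rely on the degree bound $\deg \mathbf{h}_{i-t}\le i-t$ coming from homogeneity. Everything else is a routine transcription of the proof of Lemma \ref{lem.I.hi-red} from $\mathcal{S}/I$ to $\Lambda/K$.
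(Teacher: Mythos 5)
Your proof is correct and follows essentially the same route as the paper: strong induction on $i$, base cases $n-k<i\leq n$ handled via (\ref{eq.thm.bigquot.hjmodK}) and (\ref{eq.thm.bigquot.ai-deg}), and the identity $\sum_{t=0}^{i}\left(  -1\right)  ^{t}\mathbf{h}_{i-t}\mathbf{e}_{t}=0$ split at $t=k$, using the induction hypothesis for $t\leq k$ and a degree bound on $\mathbf{e}_{t}$ modulo $K$ for $t>k$. The only (harmless) difference is that for $t>k$ you invoke $\mathbf{e}_{t}\equiv\mathbf{b}_{t-k}\operatorname{mod}K$ together with (\ref{eq.thm.bigquot.bi-deg}) directly, whereas the paper obtains the same bound by applying Lemma \ref{lem.bigquot.sl-red-1} to the column partition $\left(  1^{t}\right)  $; both are valid, and your shortcut is slightly more direct.
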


\begin{proof}
[Proof of Lemma \ref{lem.bigquot.hi-red} (sketched).]We shall prove Lemma
\ref{lem.bigquot.hi-red} by strong induction on $i$. Thus, we assume (as the
induction hypothesis) that%
\begin{equation}
\mathbf{h}_{j}\equiv\left(  \text{some symmetric function of degree
}<j\right)  \operatorname{mod}K \label{pf.lem.bigquot.hi-red.IH}%
\end{equation}
for every $j\in\left\{  n-k+1,n-k+2,\ldots,i-1\right\}  $.

If $i\leq n$, then (\ref{eq.thm.bigquot.hjmodK}) (applied to $j=i$) yields
$\mathbf{h}_{i}\equiv\mathbf{a}_{i-\left(  n-k\right)  }\operatorname{mod}K$
(since $i\in\left\{  n-k+1,n-k+2,\ldots,n\right\}  $), which clearly proves
Lemma \ref{lem.bigquot.hi-red} (since $\mathbf{a}_{i-\left(  n-k\right)  }$ is
a symmetric function of degree $<i$\ \ \ \ \footnote{by
(\ref{eq.thm.bigquot.ai-deg})}). Thus, for the rest of this proof, we WLOG
assume that $i>n$. Hence, each $t\in\left\{  1,2,\ldots,k\right\}  $ satisfies
\newline$i-t\in\left\{  n-k+1,n-k+2,\ldots,i-1\right\}  $ (since
$\underbrace{i}_{>n}-\underbrace{t}_{\leq k}>n-k$ and $i-\underbrace{t}%
_{\geq1}\leq i-1$) and therefore%
\begin{equation}
\mathbf{h}_{i-t}\equiv\left(  \text{some symmetric function of degree
}<i-t\right)  \operatorname{mod}K \label{pf.lem.bigquot.hi-red.2}%
\end{equation}
(by (\ref{pf.lem.bigquot.hi-red.IH}), applied to $j=i-t$).

On the other hand, each $t\in\left\{  k+1,k+2,k+3,\ldots\right\}  $ satisfies%
\begin{equation}
\mathbf{e}_{t}\equiv\left(  \text{some symmetric function of degree
}<t\right)  \operatorname{mod}K. \label{pf.lem.bigquot.hi-red.3}%
\end{equation}

[\textit{Proof of (\ref{pf.lem.bigquot.hi-red.3}):} Let $t\in\left\{
k+1,k+2,k+3,\ldots\right\}  $. Thus, $t>k$. Hence, the partition $\left(
1^{t}\right)  $ has more than $k$ parts (since it has $t$ parts), and
therefore we have $\left(  1^{t}\right)  \notin P_{k}$. Hence, Lemma
\ref{lem.bigquot.sl-red-1} (applied to $\lambda=\left(  1^{t}\right)  $)
yields%
\[
\mathbf{s}_{\left(  1^{t}\right)  }\equiv\left(  \text{some symmetric function
of degree }<\left\vert 1^{t}\right\vert \right)  \operatorname{mod}K.
\]
In view of $\mathbf{s}_{\left(  1^{t}\right)  }=\mathbf{e}_{t}$ and
$\left\vert 1^{t}\right\vert =t$, this rewrites as
\[
\mathbf{e}_{t}\equiv\left(  \text{some symmetric function of degree
}<t\right)  \operatorname{mod}K.
\]
This proves (\ref{pf.lem.bigquot.hi-red.3}).]

Now, we claim that each $t\in\left\{  1,2,\ldots,i\right\}  $ satisfies%
\begin{equation}
\mathbf{h}_{i-t}\mathbf{e}_{t}\equiv\left(  \text{some symmetric function of
degree }<i\right)  \operatorname{mod}K. \label{pf.lem.bigquot.hi-red.4}%
\end{equation}

[\textit{Proof of (\ref{pf.lem.bigquot.hi-red.4}):} Let $t\in\left\{
1,2,\ldots,i\right\}  $. We are in one of the following two cases:

\textit{Case 1:} We have $t\leq k$.

\textit{Case 2:} We have $t>k$.

Let us first consider Case 1. In this case, we have $t\leq k$. Hence,
$t\in\left\{  1,2,\ldots,k\right\}  $. Thus,
\begin{align*}
&  \underbrace{\mathbf{h}_{i-t}}_{\substack{\equiv\left(  \text{some symmetric
function of degree }<i-t\right)  \operatorname{mod}K\\\text{(by
(\ref{pf.lem.bigquot.hi-red.2}))}}}\mathbf{e}_{t}\\
&  \equiv\left(  \text{some symmetric function of degree }<i-t\right)
\cdot\mathbf{e}_{t}\\
&  =\left(  \text{some symmetric function of degree }<i\right)
\operatorname{mod}K
\end{align*}
(since $\mathbf{e}_{t}$ is a symmetric function of degree $t$). Hence,
(\ref{pf.lem.bigquot.hi-red.4}) is proven in Case 1.

Let us next consider Case 2. In this case, we have $t>k$. Hence, $t\in\left\{
k+1,k+2,k+3,\ldots\right\}  $. Thus,%
\begin{align*}
&  \mathbf{h}_{i-t}\underbrace{\mathbf{e}_{t}}_{\substack{\equiv\left(
\text{some symmetric function of degree }<t\right)  \operatorname{mod}%
K\\\text{(by (\ref{pf.lem.bigquot.hi-red.3}))}}}\\
&  \equiv\mathbf{h}_{i-t}\cdot\left(  \text{some symmetric function of degree
}<t\right) \\
&  =\left(  \text{some symmetric function of degree }<i\right)
\operatorname{mod}K
\end{align*}
(since $\mathbf{h}_{i-t}$ is a symmetric function of degree $i-t$). Thus,
(\ref{pf.lem.bigquot.hi-red.4}) is proven in Case 2.

We have now proven (\ref{pf.lem.bigquot.hi-red.4}) in both Cases 1 and 2.
Thus, (\ref{pf.lem.bigquot.hi-red.4}) always holds.]

On the other hand, $i>n-k\geq0$ (since $n\geq k$), so that $i\neq0$. Now,
(\ref{pf.prop.redh.Lam.3}) (applied to $N=i$) yields%
\[
\sum_{j=0}^{i}\left(  -1\right)  ^{j}\mathbf{h}_{i-j}\mathbf{e}_{j}%
=\delta_{0,i}=0\ \ \ \ \ \ \ \ \ \ \left(  \text{since }i\neq0\right)  .
\]
Hence,%
\begin{align*}
0  &  =\sum_{j=0}^{i}\left(  -1\right)  ^{j}\mathbf{h}_{i-j}\mathbf{e}%
_{j}=\sum_{t=0}^{i}\left(  -1\right)  ^{t}\mathbf{h}_{i-t}\mathbf{e}_{t}\\
&  \ \ \ \ \ \ \ \ \ \ \left(  \text{here, we have renamed the summation index
}j\text{ as }t\right) \\
&  =\underbrace{\left(  -1\right)  ^{0}}_{=1}\underbrace{\mathbf{h}_{i-0}%
}_{=\mathbf{h}_{i}}\underbrace{\mathbf{e}_{0}}_{=1}+\sum_{t=1}^{i}\left(
-1\right)  ^{t}\mathbf{h}_{i-t}\mathbf{e}_{t}\\
&  \ \ \ \ \ \ \ \ \ \ \left(  \text{here, we have split off the addend for
}t=0\text{ from the sum}\right) \\
&  =\mathbf{h}_{i}+\sum_{t=1}^{i}\left(  -1\right)  ^{t}\mathbf{h}%
_{i-t}\mathbf{e}_{t}.
\end{align*}
Hence,%
\begin{align*}
\mathbf{h}_{i}  &  =-\sum_{t=1}^{i}\left(  -1\right)  ^{t}%
\underbrace{\mathbf{h}_{i-t}\mathbf{e}_{t}}_{\substack{\equiv\left(
\text{some symmetric function of degree }<i\right)  \operatorname{mod}%
K\\\text{(by (\ref{pf.lem.bigquot.hi-red.4}))}}}\\
&  \equiv-\sum_{t=1}^{i}\left(  -1\right)  ^{t}\left(  \text{some symmetric
function of degree }<i\right) \\
&  =\left(  \text{some symmetric function of degree }<i\right)
\operatorname{mod}K.
\end{align*}
This completes the induction step. Thus, Lemma \ref{lem.bigquot.hi-red} is proven.
\end{proof}

Recall the first Jacobi-Trudi identity (\cite[(2.4.16)]{GriRei18}):

\begin{proposition}
\label{prop.jt.h}Let $\lambda=\left(  \lambda_{1},\lambda_{2},\ldots
,\lambda_{\ell}\right)  $ and $\mu=\left(  \mu_{1},\mu_{2},\ldots,\mu_{\ell
}\right)  $ be two partitions. Then,%
\[
\mathbf{s}_{\lambda/\mu}=\det\left(  \left(  \mathbf{h}_{\lambda_{i}-\mu
_{j}-i+j}\right)  _{1\leq i\leq\ell,\ 1\leq j\leq\ell}\right)  .
\]

\end{proposition}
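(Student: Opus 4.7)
The plan is to deduce Proposition \ref{prop.jt.h} from the Lindström--Gessel--Viennot (LGV) lemma, following the classical combinatorial route. This is a standard identity (the first Jacobi--Trudi formula for skew Schur functions), so the cleanest option is in fact to cite \cite[(2.4.16)]{GriRei18}; but let me sketch the proof I would write if I had to reproduce it.

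First, I would unfold $\mathbf{s}_{\lambda/\mu}$ using its combinatorial definition, $\mathbf{s}_{\lambda/\mu} = \sum_T \mathbf{x}^T$, where $T$ ranges over all column-strict (semistandard) tableaux of shape $\lambda/\mu$ and $\mathbf{x}^T$ is the product of $x_{T(c)}$ over the cells $c$ of the skew shape. Second, I would encode such tableaux as $\ell$-tuples of non-intersecting lattice paths in $\mathbb{Z}^2$: for each row index $i \in \{1,2,\ldots,\ell\}$, choose a source $A_i$ and a sink $B_i$ (a standard choice is $A_i = (1-i, \mu_i)$ and $B_i = (\lambda_i - i, \lambda_i)$, up to convention), and apply the well-known bijection identifying the entries in row $i$ of $T$ with the $y$-coordinates of the east steps of the $i$-th path. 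Under this bijection, $\mathbf{x}^T$ equals the product of step-weights (each east step at height $h$ contributing a factor $x_h$), so $\mathbf{s}_{\lambda/\mu}$ becomes the weighted sum over non-intersecting path families from $(A_i)$ to $(B_i)$.

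Third, the LGV lemma rewrites this weighted sum as a determinant $\det((G_{ij}))$, where $G_{ij}$ is the weighted generating function of all (not necessarily non-intersecting) east-north lattice paths from $A_i$ to $B_j$. A direct computation, using the generating-function identity $\sum_{m \geq 0} \mathbf{h}_m t^m = \prod_{j \geq 1} (1 - x_j t)^{-1}$ interpreted as counting sequences of east-step heights, identifies $G_{ij}$ with $\mathbf{h}_{\lambda_i - \mu_j - i + j}$ under the conventions above. This yields exactly the claimed determinant.

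The main obstacle, and the real content of the argument, is the sign-reversing involution at the heart of the LGV lemma: one must show that in the full expansion $\sum_\sigma (-1)^\sigma \prod_i G_{i,\sigma(i)}$, every path family containing a pair of paths that cross cancels with its partner obtained by swapping tails at the first intersection, so that only non-intersecting families survive (and these force $\sigma = \mathrm{id}$ by the geometry of the chosen sources and sinks). Once this combinatorial cancellation is in place, the rest of the argument is bookkeeping; hence in practice I would outsource the entire proof to \cite[(2.4.16)]{GriRei18} rather than re-prove it in the present paper.
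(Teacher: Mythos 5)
Your proposal is correct and ends up in the same place as the paper, which states Proposition \ref{prop.jt.h} as a known result and simply cites \cite[(2.4.16)]{GriRei18} without giving a proof. Your LGV sketch is the standard argument behind that reference and is fine as an outline, so citing the source (as you ultimately suggest) matches the paper's treatment exactly.
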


Next, we are ready to prove Lemma \ref{lem.bigquot.sl-red}:

\begin{proof}
[Proof of Lemma \ref{lem.bigquot.sl-red} (sketched).]We must prove that
\[
\mathbf{s}_{\lambda}\equiv\left(  \text{some symmetric function of degree
}<\left\vert \lambda\right\vert \right)  \operatorname{mod}K.
\]
If $\lambda\notin P_{k}$, then this follows from Lemma
\ref{lem.bigquot.sl-red-1}. Thus, for the rest of this proof, we WLOG assume
that $\lambda\in P_{k}$.

From $\lambda\in P_{k}$ and $\lambda\notin P_{k,n}$, we conclude that not all
parts of the partition $\lambda$ are $\leq n-k$. Thus, the first entry
$\lambda_{1}$ of $\lambda$ is $>n-k$ (since $\lambda_{1}\geq\lambda_{2}%
\geq\lambda_{3}\geq\cdots$). But $\lambda=\left(  \lambda_{1},\lambda
_{2},\ldots,\lambda_{k}\right)  $ (since $\lambda\in P_{k}$). Thus,
Proposition \ref{prop.jt.h} (applied to $\ell=k$, $\mu=\varnothing$ and
$\mu_{i}=0$) yields%
\begin{align}
\mathbf{s}_{\lambda/\varnothing}  &  =\det\left(  \left(  \mathbf{h}%
_{\lambda_{i}-0-i+j}\right)  _{1\leq i\leq k,\ 1\leq j\leq k}\right)
=\det\left(  \left(  \mathbf{h}_{\lambda_{i}-i+j}\right)  _{1\leq i\leq
k,\ 1\leq j\leq k}\right) \nonumber\\
&  =\det\left(  \left(  \mathbf{h}_{\lambda_{u}-u+v}\right)  _{1\leq u\leq
k,\ 1\leq v\leq k}\right) \nonumber\\
&  \ \ \ \ \ \ \ \ \ \ \left(
\begin{array}
[c]{c}%
\text{here, we have renamed the indices }i\text{ and }j\\
\text{as }u\text{ and }v\text{ in the matrix}%
\end{array}
\right) \nonumber\\
&  =\sum_{j=1}^{k}\mathbf{h}_{\lambda_{1}-1+j}\cdot C_{j},
\label{pf.lem.bigquot.sl-red.JT}%
\end{align}
where $C_{j}$ denotes the $\left(  1,j\right)  $-th cofactor of the $k\times
k$-matrix $\left(  \mathbf{h}_{\lambda_{u}-u+v}\right)  _{1\leq u\leq
k,\ 1\leq v\leq k}$. (Here, the last equality sign follows from
(\ref{eq.det.lap-exp}), applied to $\ell=k$ and $R=\Lambda$ and $A=\left(
\mathbf{h}_{\lambda_{u}-u+v}\right)  _{1\leq u\leq k,\ 1\leq v\leq k}$ and
$a_{u,v}=\mathbf{h}_{\lambda_{u}-u+v}$ and $i=1$.)

For each $j\in\left\{  1,2,\ldots,k\right\}  $, we have $\lambda_{1}%
-1+j\geq\lambda_{1}-1+1=\lambda_{1}>n-k$ and therefore%
\begin{align}
&  \mathbf{h}_{\lambda_{1}-1+j}\nonumber\\
&  \equiv\left(  \text{some symmetric function of degree }<\lambda
_{1}-1+j\right)  \operatorname{mod}K \label{pf.lem.bigquot.sl-red.deg2}%
\end{align}
(by Lemma \ref{lem.bigquot.hi-red}, applied to $i=\lambda_{1}-1+j$).

For each $j\in\left\{  1,2,\ldots,k\right\}  $, the polynomial $C_{j}$ is the
$\left(  1,j\right)  $-th cofactor of the matrix $\left(  \mathbf{h}%
_{\lambda_{u}-u+v}\right)  _{1\leq u\leq k,\ 1\leq v\leq k}$ (by its
definition), and thus is a homogeneous element of $\Lambda$ of degree
$\left\vert \lambda\right\vert -\left(  \lambda_{1}-1+j\right)  $ (by Lemma
\ref{lem.K.cofactor1} \textbf{(a)}, applied to $\ell=k$ and $i=1$). Hence,%
\begin{equation}
C_{j}=\left(  \text{some symmetric function of degree }\leq\left\vert
\lambda\right\vert -\left(  \lambda_{1}-1+j\right)  \right)
\label{pf.lem.bigquot.sl-red.deg}%
\end{equation}
for each $j\in\left\{  1,2,\ldots,k\right\}  $.

Therefore, (\ref{pf.lem.bigquot.sl-red.JT}) becomes%
\begin{align*}
\mathbf{s}_{\lambda/\varnothing}  &  =\sum_{j=1}^{k}\underbrace{\mathbf{h}%
_{\lambda_{1}-1+j}}_{\substack{\equiv\left(  \text{some symmetric function of
degree }<\lambda_{1}-1+j\right)  \operatorname{mod}K\\\text{(by
(\ref{pf.lem.bigquot.sl-red.deg2}))}}}\\
&  \ \ \ \ \ \ \ \ \ \ \cdot\underbrace{C_{j}}_{\substack{=\left(  \text{some
symmetric function of degree }\leq\left\vert \lambda\right\vert -\left(
\lambda_{1}-1+j\right)  \right)  \\\text{(by (\ref{pf.lem.bigquot.sl-red.deg}%
))}}}\\
&  \equiv\sum_{j=1}^{k}\left(  \text{some symmetric function of degree
}<\lambda_{1}-1+j\right) \\
&  \ \ \ \ \ \ \ \ \ \ \cdot\left(  \text{some symmetric function of degree
}\leq\left\vert \lambda\right\vert -\left(  \lambda_{1}-1+j\right)  \right) \\
&  =\left(  \text{some symmetric function of degree }<\left\vert
\lambda\right\vert \right)  \operatorname{mod}K.
\end{align*}
In view of $\mathbf{s}_{\lambda/\varnothing}=\mathbf{s}_{\lambda}$, this
rewrites as
\[
\mathbf{s}_{\lambda}\equiv\left(  \text{some symmetric function of degree
}<\left\vert \lambda\right\vert \right)  \operatorname{mod}K.
\]
This proves Lemma \ref{lem.bigquot.sl-red}.
\end{proof}

\begin{lemma}
\label{lem.bigquot.sm-as-smaller}Let $N\in\mathbb{N}$. Let $f\in\Lambda$ be a
symmetric function of degree $<N$. Then, there exists a family $\left(
c_{\kappa}\right)  _{\kappa\in\operatorname*{Par};\ \left\vert \kappa
\right\vert <N}$ of elements of $\mathbf{k}$ such that $f=\sum
_{\substack{\kappa\in\operatorname*{Par};\\\left\vert \kappa\right\vert
<N}}c_{\kappa}\mathbf{s}_{\kappa}$.
\end{lemma}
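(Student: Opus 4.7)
The plan is to mimic the proof of Lemma \ref{lem.I.sm-as-smaller} verbatim, with $\mathcal{S}$ replaced by $\Lambda$. The key input is the standard fact that $(\mathbf{s}_\lambda)_{\lambda \in \operatorname{Par}}$ is a graded basis of the graded $\mathbf{k}$-module $\Lambda$: for each $d \in \mathbb{N}$, the family $(\mathbf{s}_\lambda)_{\lambda \in \operatorname{Par};\ |\lambda|=d}$ is a basis of the $d$-th homogeneous component $\Lambda_{\deg=d}$.

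First I would decompose $f$ into its homogeneous components. Since $f$ has degree $<N$, I can write $f = \sum_{d=0}^{N-1} f_d$ where each $f_d$ is a homogeneous symmetric function of degree $d$. (This uses that $\Lambda$ is a graded $\mathbf{k}$-module and each homogeneous component of a symmetric function is itself symmetric.) Then, for each $d \in \{0,1,\ldots,N-1\}$, the element $f_d$ lies in $\Lambda_{\deg=d}$, so by the graded basis property it can be written as $f_d = \sum_{\lambda \in \operatorname{Par};\ |\lambda|=d} c_{d,\lambda} \mathbf{s}_\lambda$ for some family of scalars $c_{d,\lambda} \in \mathbf{k}$.

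Summing these expressions over $d \in \{0,1,\ldots,N-1\}$ and reindexing (with $\kappa = \lambda$ and $c_\kappa = c_{|\kappa|,\kappa}$, extended by zero for $\kappa$ not appearing in any of the sums), I obtain
\[
f = \sum_{d=0}^{N-1} \sum_{\substack{\lambda \in \operatorname{Par};\\ |\lambda|=d}} c_{d,\lambda} \mathbf{s}_\lambda = \sum_{\substack{\kappa \in \operatorname{Par};\\ |\kappa|<N}} c_\kappa \mathbf{s}_\kappa,
\]
which is the desired expression. There is no real obstacle here: this lemma is purely a statement about the graded structure of $\Lambda$ and has nothing to do with the ideal $K$; the argument is essentially the same chain of equalities that appears in the proof of Lemma \ref{lem.I.sm-as-smaller} (using $\Lambda_{\deg=d} = \sum_{\lambda \in \operatorname{Par};\ |\lambda|=d} \mathbf{k}\mathbf{s}_\lambda$ in place of the corresponding identity for $\mathcal{S}$).
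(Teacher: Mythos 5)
Your proposal is correct and follows the same route as the paper: the paper's proof also decomposes $f$ into homogeneous components and uses that $\left(\mathbf{s}_{\lambda}\right)_{\lambda\in\operatorname*{Par}}$ is a graded basis of $\Lambda$, i.e.\ that $\Lambda_{\deg=d}=\sum_{\substack{\lambda\in\operatorname*{Par};\ \left\vert\lambda\right\vert=d}}\mathbf{k}\mathbf{s}_{\lambda}$ for each $d$, exactly as in the proof of Lemma \ref{lem.I.sm-as-smaller}. No gaps.
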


\begin{proof}
[Proof of Lemma \ref{lem.bigquot.sm-as-smaller}.]For each $d\in\mathbb{N}$, we
let $\Lambda_{\deg=d}$ be the $d$-th graded part of the graded $\mathbf{k}%
$-module $\Lambda$. This is the $\mathbf{k}$-submodule of $\Lambda$ consisting
of all homogeneous elements of $\Lambda$ of degree $d$ (including the zero
vector $0$, which is homogeneous of every degree).

Recall that the family $\left(  \mathbf{s}_{\lambda}\right)  _{\lambda
\in\operatorname*{Par}}$ is a graded basis of the graded $\mathbf{k}$-module
$\Lambda$. In other words, for each $d\in\mathbb{N}$, the family $\left(
\mathbf{s}_{\lambda}\right)  _{\lambda\in\operatorname*{Par};\ \left\vert
\lambda\right\vert =d}$ is a basis of the $\mathbf{k}$-submodule
$\Lambda_{\deg=d}$ of $\Lambda$. Hence, for each $d\in\mathbb{N}$, we have%
\begin{align}
\Lambda_{\deg=d}  &  =\left(  \text{the }\mathbf{k}\text{-linear span of the
family }\left(  \mathbf{s}_{\lambda}\right)  _{\lambda\in\operatorname*{Par}%
;\ \left\vert \lambda\right\vert =d}\right) \nonumber\\
&  =\sum_{\substack{\lambda\in\operatorname*{Par};\\\left\vert \lambda
\right\vert =d}}\mathbf{ks}_{\lambda}. \label{pf.lem.bigquot.sm-as-smaller.2}%
\end{align}

The symmetric function $f$ has degree $<N$. Hence, we can write $f$ in the
form $f=\sum_{d=0}^{N-1}f_{d}$ for some $f_{0},f_{1},\ldots,f_{N-1}\in\Lambda
$, where each $f_{d}$ is a homogeneous symmetric function of degree $d$.
Consider these $f_{0},f_{1},\ldots,f_{N-1}$. For each $d\in\left\{
0,1,\ldots,N-1\right\}  $, the symmetric function $f_{d}$ is an element of
$\Lambda$ and is homogeneous of degree $d$ (as we already know). In other
words, for each $d\in\left\{  0,1,\ldots,N-1\right\}  $, we have%
\begin{equation}
f_{d}\in\Lambda_{\deg=d}. \label{pf.lem.bigquot.sm-as-smaller.4}%
\end{equation}
Now,
\[
f=\sum_{d=0}^{N-1}\underbrace{f_{d}}_{\substack{\in\Lambda_{\deg=d}\\\text{(by
(\ref{pf.lem.bigquot.sm-as-smaller.4}))}}}\in\sum_{d=0}^{N-1}%
\underbrace{\Lambda_{\deg=d}}_{\substack{=\sum_{\substack{\lambda
\in\operatorname*{Par};\\\left\vert \lambda\right\vert =d}}\mathbf{ks}%
_{\lambda}\\\text{(by (\ref{pf.lem.bigquot.sm-as-smaller.2}))}}%
}=\underbrace{\sum_{d=0}^{N-1}\sum_{\substack{\lambda\in\operatorname*{Par}%
;\\\left\vert \lambda\right\vert =d}}}_{=\sum_{\substack{\lambda
\in\operatorname*{Par};\\\left\vert \lambda\right\vert <N}}}\mathbf{ks}%
_{\lambda}=\sum_{\substack{\lambda\in\operatorname*{Par};\\\left\vert
\lambda\right\vert <N}}\mathbf{ks}_{\lambda}=\sum_{\substack{\kappa
\in\operatorname*{Par};\\\left\vert \kappa\right\vert <N}}\mathbf{ks}_{\kappa}%
\]
(here, we have renamed the summation index $\lambda$ as $\kappa$ in the sum).
In other words, there exists a family $\left(  c_{\kappa}\right)  _{\kappa
\in\operatorname*{Par};\ \left\vert \kappa\right\vert <N}$ of elements of
$\mathbf{k}$ such that $f=\sum_{\substack{\kappa\in\operatorname*{Par}%
;\\\left\vert \kappa\right\vert <N}}c_{\kappa}\mathbf{s}_{\kappa}$. This
proves Lemma \ref{lem.bigquot.sm-as-smaller}.
\end{proof}

\begin{lemma}
\label{lem.bigquot.sl-red2}For each $\mu\in\operatorname*{Par}$, the element
$\overline{\mathbf{s}_{\mu}}\in\Lambda/K$ belongs to the $\mathbf{k}%
$-submodule of $\Lambda/K$ spanned by the family $\left(  \overline
{\mathbf{s}_{\lambda}}\right)  _{\lambda\in P_{k,n}}$.
\end{lemma}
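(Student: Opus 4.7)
The plan is to mimic the proof of Lemma \ref{lem.I.sl-red2} essentially verbatim, since all the required ingredients have already been assembled in the $\Lambda$-setting. Let $M$ denote the $\mathbf{k}$-submodule of $\Lambda/K$ spanned by the family $\left(\overline{\mathbf{s}_{\lambda}}\right)_{\lambda\in P_{k,n}}$. I will prove by strong induction on $\left|\mu\right|$ that $\overline{\mathbf{s}_{\mu}}\in M$ for every $\mu\in\operatorname{Par}$. Fix $N\in\mathbb{N}$ as the induction step, with induction hypothesis
\[
\overline{\mathbf{s}_{\kappa}}\in M \quad\text{for each }\kappa\in\operatorname{Par}\text{ satisfying }\left|\kappa\right|<N,
\]
and let $\mu\in\operatorname{Par}$ be a partition of size $\left|\mu\right|=N$.

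If $\mu\in P_{k,n}$, then $\overline{\mathbf{s}_{\mu}}$ already lies in the spanning family of $M$, so there is nothing to prove. Otherwise, $\mu\notin P_{k,n}$, and Lemma \ref{lem.bigquot.sl-red} yields a symmetric function $f\in\Lambda$ of degree $<\left|\mu\right|=N$ such that $\mathbf{s}_{\mu}\equiv f\operatorname{mod}K$. Apply Lemma \ref{lem.bigquot.sm-as-smaller} to $f$ to obtain a family $\left(c_{\kappa}\right)_{\kappa\in\operatorname{Par};\ \left|\kappa\right|<N}$ of elements of $\mathbf{k}$ with $f=\sum_{\kappa\in\operatorname{Par};\ \left|\kappa\right|<N}c_{\kappa}\mathbf{s}_{\kappa}$. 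Projecting onto $\Lambda/K$ and invoking the induction hypothesis for each $\kappa$ appearing in the sum gives
\[
\overline{\mathbf{s}_{\mu}}=\overline{f}=\sum_{\substack{\kappa\in\operatorname{Par};\\\left|\kappa\right|<N}}c_{\kappa}\overline{\mathbf{s}_{\kappa}}\in M,
\]
which closes the induction.

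There is no real obstacle here; the work has already been done by Lemma \ref{lem.bigquot.sl-red} (the hard lemma whose proof used the Jacobi--Trudi identity, Lemma \ref{lem.bigquot.hi-red}, and Lemma \ref{lem.K.cofactor1}) and by the degree-decomposition Lemma \ref{lem.bigquot.sm-as-smaller}. The present lemma is the purely formal strong-induction wrapper that extracts a spanning statement from a degree-reduction statement, exactly as in the $\mathcal{S}/I$ case. The only minor difference from the proof of Lemma \ref{lem.I.sl-red2} is that the summation index ranges over all of $\operatorname{Par}$ (rather than $P_{k}$), which does not affect the argument.
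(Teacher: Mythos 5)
Your proof is correct and coincides with the paper's own argument: the same strong induction on $\left|\mu\right|$, using Lemma \ref{lem.bigquot.sl-red} to reduce $\mathbf{s}_{\mu}$ modulo $K$ to a function of smaller degree and Lemma \ref{lem.bigquot.sm-as-smaller} to expand that function in Schur functions of smaller size, then invoking the induction hypothesis. No differences worth noting.
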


\begin{proof}
[Proof of Lemma \ref{lem.bigquot.sl-red2}.]Let $M$ be the $\mathbf{k}%
$-submodule of $\Lambda/K$ spanned by the family $\left(  \overline
{\mathbf{s}_{\lambda}}\right)  _{\lambda\in P_{k,n}}$. We thus must prove that
$\overline{\mathbf{s}_{\mu}}\in M$ for each $\mu\in\operatorname*{Par}$.

We shall prove this by strong induction on $\left\vert \mu\right\vert $. Thus,
we fix some $N\in\mathbb{N}$, and we assume (as induction hypothesis) that
\begin{equation}
\overline{\mathbf{s}_{\kappa}}\in M\ \ \ \ \ \ \ \ \ \ \text{for each }%
\kappa\in\operatorname*{Par}\text{ satisfying }\left\vert \kappa\right\vert
<N. \label{pf.lem.bigquot.sl-red2.IH}%
\end{equation}
Now, let $\mu\in\operatorname*{Par}$ be such that $\left\vert \mu\right\vert
=N$. We then must show that $\overline{\mathbf{s}_{\mu}}\in M$.

If $\mu\in P_{k,n}$, then this is obvious (since $\overline{\mathbf{s}_{\mu}}$
then belongs to the family that spans $M$). Thus, for the rest of this proof,
we WLOG assume that $\mu\notin P_{k,n}$. Hence, Lemma \ref{lem.bigquot.sl-red}
(applied to $\lambda=\mu$) yields%
\[
\mathbf{s}_{\mu}\equiv\left(  \text{some symmetric function of degree
}<\left\vert \mu\right\vert \right)  \operatorname{mod}K.
\]
In other words, there exists some symmetric function $f\in\Lambda$ of degree
$<\left\vert \mu\right\vert $ such that $\mathbf{s}_{\mu}\equiv
f\operatorname{mod}K$. Consider this $f$.

But $f$ is a symmetric function of degree $<\left\vert \mu\right\vert $. In
other words, $f$ is a symmetric function of degree $<N$ (since $\left\vert
\mu\right\vert =N$). Hence, Lemma \ref{lem.bigquot.sm-as-smaller} shows that
there exists a family $\left(  c_{\kappa}\right)  _{\kappa\in
\operatorname*{Par};\ \left\vert \kappa\right\vert <N}$ of elements of
$\mathbf{k}$ such that $f=\sum_{\substack{\kappa\in\operatorname*{Par}%
;\\\left\vert \kappa\right\vert <N}}c_{\kappa}\mathbf{s}_{\kappa}$. Consider
this family. From $f=\sum_{\substack{\kappa\in\operatorname*{Par};\\\left\vert
\kappa\right\vert <N}}c_{\kappa}\mathbf{s}_{\kappa}$, we obtain%
\[
\overline{f}=\overline{\sum_{\substack{\kappa\in\operatorname*{Par}%
;\\\left\vert \kappa\right\vert <N}}c_{\kappa}\mathbf{s}_{\kappa}}%
=\sum_{\substack{\kappa\in\operatorname*{Par};\\\left\vert \kappa\right\vert
<N}}c_{\kappa}\underbrace{\overline{\mathbf{s}_{\kappa}}}_{\substack{\in
M\\\text{(by (\ref{pf.lem.bigquot.sl-red2.IH}))}}}\in\sum_{\substack{\kappa
\in\operatorname*{Par};\\\left\vert \kappa\right\vert <N}}c_{\kappa}M\subseteq
M\ \ \ \ \ \ \ \ \ \ \left(  \text{since }M\text{ is a }\mathbf{k}%
\text{-module}\right)  .
\]
But from $\mathbf{s}_{\mu}\equiv f\operatorname{mod}K$, we obtain
$\overline{\mathbf{s}_{\mu}}=\overline{f}\in M$. This completes our induction
step. Thus, we have proven by strong induction that $\overline{\mathbf{s}%
_{\mu}}\in M$ for each $\mu\in\operatorname*{Par}$. This proves Lemma
\ref{lem.bigquot.sl-red2}.
\end{proof}

\begin{corollary}
\label{cor.bigquot.span}The family $\left(  \overline{\mathbf{s}_{\lambda}%
}\right)  _{\lambda\in P_{k,n}}$ spans the $\mathbf{k}$-module $\Lambda/K$.
\end{corollary}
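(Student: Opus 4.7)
The plan is to deduce this corollary almost immediately from Lemma \ref{lem.bigquot.sl-red2}, which is the real workhorse. The main idea is that $\left(\mathbf{s}_{\lambda}\right)_{\lambda \in \operatorname{Par}}$ is a well-known $\mathbf{k}$-module basis of $\Lambda$, so its image $\left(\overline{\mathbf{s}_{\lambda}}\right)_{\lambda \in \operatorname{Par}}$ under the canonical projection $\Lambda \twoheadrightarrow \Lambda/K$ spans $\Lambda/K$. It then suffices to show that each generator $\overline{\mathbf{s}_{\mu}}$ (for $\mu \in \operatorname{Par}$) already lies in the span of the subfamily indexed by $P_{k,n}$.

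First I would let $M$ denote the $\mathbf{k}$-submodule of $\Lambda/K$ spanned by $\left(\overline{\mathbf{s}_{\lambda}}\right)_{\lambda \in P_{k,n}}$. Lemma \ref{lem.bigquot.sl-red2} states precisely that $\overline{\mathbf{s}_{\mu}} \in M$ for every $\mu \in \operatorname{Par}$. Next I would take an arbitrary element $\overline{f} \in \Lambda/K$ and lift it to $f \in \Lambda$; expanding $f = \sum_{\mu \in \operatorname{Par}} c_{\mu}\mathbf{s}_{\mu}$ in the Schur basis (with only finitely many $c_{\mu} \in \mathbf{k}$ nonzero) and projecting down gives $\overline{f} = \sum_{\mu \in \operatorname{Par}} c_{\mu}\overline{\mathbf{s}_{\mu}} \in M$, since each $\overline{\mathbf{s}_{\mu}} \in M$ and $M$ is a $\mathbf{k}$-submodule. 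Hence $\Lambda/K \subseteq M$, which combined with $M \subseteq \Lambda/K$ gives $M = \Lambda/K$, proving the corollary.

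Since all the technical content is already packaged inside Lemma \ref{lem.bigquot.sl-red2}, there is no genuine obstacle here; the only thing to be careful about is not to confuse \emph{spanning} with \emph{being a basis} (linear independence is the content of the subsequent arguments, not of this corollary). The proof is essentially a two-line argument: spanning of $\Lambda/K$ by all Schur classes $\overline{\mathbf{s}_{\mu}}$ is immediate from surjectivity of the projection, and Lemma \ref{lem.bigquot.sl-red2} reduces the indexing set from $\operatorname{Par}$ down to $P_{k,n}$.
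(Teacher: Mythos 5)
Your proposal is correct and follows essentially the same route as the paper: the paper's proof also notes that $\left(\mathbf{s}_{\lambda}\right)_{\lambda\in\operatorname*{Par}}$ is a basis of $\Lambda$, so the classes $\overline{\mathbf{s}_{\lambda}}$ for all partitions span $\Lambda/K$, and then invokes Lemma \ref{lem.bigquot.sl-red2} to cut the index set down to $P_{k,n}$. Your explicit lifting of an arbitrary element and expansion in the Schur basis is just a spelled-out version of that same argument.
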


\begin{proof}
[Proof of Corollary \ref{cor.bigquot.span}.]It is well-known that $\left(
\mathbf{s}_{\lambda}\right)  _{\lambda\in\operatorname*{Par}}$ is a basis of
the $\mathbf{k}$-module $\Lambda$. Hence, $\left(  \overline{\mathbf{s}%
_{\lambda}}\right)  _{\lambda\in\operatorname*{Par}}$ is a spanning set of the
$\mathbf{k}$-module $\Lambda/K$. Thus, $\left(  \overline{\mathbf{s}_{\lambda
}}\right)  _{\lambda\in P_{k,n}}$ is also a spanning set of the $\mathbf{k}%
$-module $\Lambda/K$ (because Lemma \ref{lem.bigquot.sl-red2} shows that every
element of the first spanning set belongs to the span of the second). This
proves Corollary \ref{cor.bigquot.span}.
\end{proof}

With Corollary \ref{cor.bigquot.span}, we have proven \textquotedblleft one
half\textquotedblright\ of Theorem \ref{thm.bigquot}.

\subsection{A lemma on filtrations}

Next, we recall the definition of a filtration of a \textbf{$k$}-module:

\begin{definition}
\label{def.filtr.modfiltr}Let $V$ be a $\mathbf{k}$-module. A $\mathbf{k}%
$\textit{-module filtration} of $V$ means a sequence $\left(  V_{m}\right)
_{m\in\mathbb{N}}$ of $\mathbf{k}$-submodules of $V$ such that $\bigcup
\limits_{m\in\mathbb{N}}V_{m}=V$ and $V_{0}\subseteq V_{1}\subseteq
V_{2}\subseteq\cdots$.
\end{definition}

For example, $\left(  \Lambda_{\deg\leq m}\right)  _{m\in\mathbb{N}}$ is a
$\mathbf{k}$-module filtration of $\Lambda$.

The filtered $\mathbf{k}$-modules are the objects of a category, whose
morphisms are $\mathbf{k}$-linear maps respecting the filtration. Here is how
they are defined:

\begin{definition}
\label{def.filtr.respect}Let $V$ and $W$ be two $\mathbf{k}$-modules. Let
$f:V\rightarrow W$ be a $\mathbf{k}$-module homomorphism. Let $\left(
V_{m}\right)  _{m\in\mathbb{N}}$ be a $\mathbf{k}$-module filtration of $V$,
and let $\left(  W_{m}\right)  _{m\in\mathbb{N}}$ be a $\mathbf{k}$-module
filtration of $W$.

We say that \textit{the map }$f$ \textit{respects the filtrations }$\left(
V_{m}\right)  _{m\in\mathbb{N}}$ \textit{and }$\left(  W_{m}\right)
_{m\in\mathbb{N}}$ if it satisfies $\left(  f\left(  V_{m}\right)  \subseteq
W_{m}\text{ for every }m\in\mathbb{N}\right)  $. Sometimes we abbreviate
\textquotedblleft the map $f$ respects the filtrations $\left(  V_{m}\right)
_{m\geq0}$ and $\left(  W_{m}\right)  _{m\geq0}$\textquotedblright\ to
\textquotedblleft\textit{the map }$f$\textit{ respects the filtration}%
\textquotedblright, as long as the filtrations $\left(  V_{m}\right)
_{m\in\mathbb{N}}$ and $\left(  W_{m}\right)  _{m\in\mathbb{N}}$ are clear
from the context.
\end{definition}

The following elementary fact about filtrations of $\mathbf{k}$-modules will
be crucial to us:

\begin{proposition}
\label{prop.filtr.iso}Let $V$ be a $\mathbf{k}$-module. Let $\left(
V_{m}\right)  _{m\in\mathbb{N}}$ be a $\mathbf{k}$-module filtration of $V$.
Let $f:V\rightarrow V$ be a $\mathbf{k}$-module homomorphism which satisfies%
\[
\left(  f\left(  V_{m}\right)  \subseteq V_{m-1}\ \ \ \ \ \ \ \ \ \ \text{for
every }m\in\mathbb{N}\right)  ,
\]
where $V_{-1}$ denotes the $\mathbf{k}$-submodule $0$ of $V$. Then:

\textbf{(a)} The $\mathbf{k}$-module homomorphism $\operatorname{id}-f$ is an isomorphism.

\textbf{(b)} Each of the maps $\operatorname{id}-f$ and $\left(
\operatorname{id}-f\right)  ^{-1}$ respects the filtration.
\end{proposition}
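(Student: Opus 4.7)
The plan is to exploit the fact that $f$ lowers the filtration by one step, which, together with exhaustiveness $\bigcup_{m \in \mathbb{N}} V_m = V$, forces $f$ to be locally nilpotent. The inverse of $\operatorname{id} - f$ will then be given by the formal geometric series $\sum_{n \geq 0} f^n$, which converges pointwise because for each $v \in V$ only finitely many terms are nonzero.

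First I would establish local nilpotence. Fix $v \in V$. Since $\bigcup_{m} V_m = V$, there exists $m \in \mathbb{N}$ with $v \in V_m$. Iterating the hypothesis $f(V_j) \subseteq V_{j-1}$ for $j = m, m-1, \ldots, 0$ gives $f^{m+1}(v) \in V_{-1} = 0$, so $f^{m+1}(v) = 0$. Consequently the map
\[
g : V \to V, \qquad g(v) = \sum_{n \geq 0} f^n(v)
\]
is well-defined (the sum has only finitely many nonzero summands for each $v$) and clearly $\mathbf{k}$-linear.

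Next I would verify that $g$ is a two-sided inverse of $\operatorname{id} - f$. For any $v \in V$, one computes $f(g(v)) = \sum_{n \geq 0} f^{n+1}(v) = g(v) - v$, so $(\operatorname{id}-f)(g(v)) = v$; the identity $g((\operatorname{id}-f)(v)) = v$ is the same telescoping in the reverse order. This proves part (a).

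For part (b), I would observe that $(\operatorname{id} - f)(V_m) \subseteq V_m + f(V_m) \subseteq V_m + V_{m-1} \subseteq V_m$, so $\operatorname{id}-f$ respects the filtration. For $g$, the inclusion $f^n(V_m) \subseteq V_{m-n}$ (obtained by iterating the hypothesis, with $V_j$ read as $0$ for $j < 0$) yields $f^n(V_m) \subseteq V_m$ for every $n \geq 0$, hence $g(V_m) \subseteq V_m$. I do not expect any real obstacle here; the one point to be careful about is confirming that the sum defining $g$ makes sense as a genuine element of $V$ (not merely a formal series), which is precisely what local nilpotence provides.
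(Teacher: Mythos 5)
Your proof is correct: local nilpotence of $f$ (forced by $f\left(  V_{m}\right)  \subseteq V_{m-1}$ together with $\bigcup_{m\in\mathbb{N}}V_{m}=V$) makes the pointwise geometric series $\sum_{n\geq0}f^{n}$ a well-defined two-sided inverse of $\operatorname{id}-f$, and your filtration checks for both $\operatorname{id}-f$ and its inverse are sound. The paper does not prove this proposition itself but defers to the cited reference, whose argument is exactly this locally nilpotent geometric-series one, so your approach is essentially the same.
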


Proposition \ref{prop.filtr.iso} is classical; a proof can be found in
\cite[Proposition 1.99]{Grinbe11} (see the detailed version of \cite{Grinbe11}
for a detailed proof). Let us restate this proposition in a form adapted for
our use:

\begin{proposition}
\label{prop.filtr.iso-g}Let $V$ be a $\mathbf{k}$-module. Let $\left(
V_{m}\right)  _{m\in\mathbb{N}}$ be a $\mathbf{k}$-module filtration of $V$.
Let $g:V\rightarrow V$ be a $\mathbf{k}$-module homomorphism which satisfies%
\begin{equation}
\left(  g\left(  v\right)  \in v+V_{m-1}\ \ \ \ \ \ \ \ \ \ \text{for every
}m\in\mathbb{N}\text{ and each }v\in V_{m}\right)  ,
\label{eq.prop.filtr.iso-g.ass}%
\end{equation}
where $V_{-1}$ denotes the $\mathbf{k}$-submodule $0$ of $V$. Then:

\textbf{(a)} The $\mathbf{k}$-module homomorphism $g$ is an isomorphism.

\textbf{(b)} Each of the maps $g$ and $g^{-1}$ respects the filtration.
\end{proposition}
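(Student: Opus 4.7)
The plan is to reduce Proposition \ref{prop.filtr.iso-g} to Proposition \ref{prop.filtr.iso} by a simple substitution. Define the $\mathbf{k}$-module homomorphism $f : V \to V$ by $f = \operatorname{id} - g$. The first step is to verify that this $f$ satisfies the hypothesis of Proposition \ref{prop.filtr.iso}, i.e., that $f(V_m) \subseteq V_{m-1}$ for every $m \in \mathbb{N}$.

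To check this, fix $m \in \mathbb{N}$ and $v \in V_m$. The assumption \eqref{eq.prop.filtr.iso-g.ass} gives $g(v) \in v + V_{m-1}$, so $g(v) - v \in V_{m-1}$. Hence $f(v) = v - g(v) = -\bigl(g(v) - v\bigr) \in V_{m-1}$, since $V_{m-1}$ is a $\mathbf{k}$-submodule. Thus $f(V_m) \subseteq V_{m-1}$, as needed.

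Now Proposition \ref{prop.filtr.iso} applies to this $f$ and yields two conclusions: first, that $\operatorname{id} - f$ is a $\mathbf{k}$-module isomorphism; and second, that both $\operatorname{id} - f$ and $(\operatorname{id} - f)^{-1}$ respect the filtration $(V_m)_{m \in \mathbb{N}}$. But $\operatorname{id} - f = \operatorname{id} - (\operatorname{id} - g) = g$ by the definition of $f$. Substituting this into the two conclusions immediately gives parts \textbf{(a)} and \textbf{(b)} of Proposition \ref{prop.filtr.iso-g}.

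There is essentially no obstacle here: the entire content is the translation between the additive form of the hypothesis used in Proposition \ref{prop.filtr.iso} (where $f$ strictly lowers the filtration degree) and the multiplicative/unipotent form used in Proposition \ref{prop.filtr.iso-g} (where $g$ acts as the identity modulo lower-degree terms). The argument is a one-line reduction, and the routine verification that $f = \operatorname{id} - g$ satisfies the required decrease in filtration degree is the only computation involved.
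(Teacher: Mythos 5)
Your reduction is correct and is exactly the paper's own argument: set $f=\operatorname{id}-g$, check $f(V_m)\subseteq V_{m-1}$ from the hypothesis (\ref{eq.prop.filtr.iso-g.ass}), and apply Proposition \ref{prop.filtr.iso} using $g=\operatorname{id}-f$. Nothing is missing.
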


\begin{proof}
[Proof of Proposition \ref{prop.filtr.iso-g}.]Let $f:V\rightarrow V$ be the
$\mathbf{k}$-module homomorphism $\operatorname*{id}-g$. Then,
$f=\operatorname*{id}-g$, so that $g=\operatorname*{id}-f$. Now, for each
$m\in\mathbb{N}$, we have $f\left(  V_{m}\right)  \subseteq V_{m-1}$ (since
each $v\in V_{m}$ satisfies%
\begin{align*}
\underbrace{f}_{=\operatorname*{id}-g}\left(  v\right)   &  =\left(
\operatorname*{id}-g\right)  \left(  v\right)  =\underbrace{\operatorname*{id}%
\left(  v\right)  }_{=v}-g\left(  v\right)  =v-g\left(  v\right) \\
&  =-\underbrace{\left(  g\left(  v\right)  -v\right)  }_{\substack{\in
V_{m-1}\\\text{(since }g\left(  v\right)  \in v+V_{m-1}\\\text{(by
(\ref{eq.prop.filtr.iso-g.ass})))}}}\in-V_{m-1}\\
&  \subseteq V_{m-1}\ \ \ \ \ \ \ \ \ \ \left(  \text{since }V_{m-1}\text{ is
a }\mathbf{k}\text{-module}\right)
\end{align*}
). Hence, Proposition \ref{prop.filtr.iso} \textbf{(a)} yields that the
$\mathbf{k}$-module homomorphism $\operatorname{id}-f$ is an isomorphism. In
other words, the $\mathbf{k}$-module homomorphism $g$ is an isomorphism (since
$g=\operatorname*{id}-f$). This proves Proposition \ref{prop.filtr.iso-g}
\textbf{(a)}.

\textbf{(b)} Proposition \ref{prop.filtr.iso} \textbf{(b)} yields that each of
the maps $\operatorname{id}-f$ and $\left(  \operatorname{id}-f\right)  ^{-1}$
respects the filtration. In other words, each of the maps $g$ and $g^{-1}$
respects the filtration (since $g=\operatorname*{id}-f$). This proves
Proposition \ref{prop.filtr.iso-g} \textbf{(b)}.
\end{proof}

We next move back to symmetric functions. Recall that $\left(  \Lambda
_{\deg\leq m}\right)  _{m\in\mathbb{N}}$ is a $\mathbf{k}$-module filtration
of $\Lambda$. Whenever we say that a map $\varphi:\Lambda\rightarrow\Lambda$
\textquotedblleft respects the filtration\textquotedblright, we shall be
referring to this filtration.

\begin{lemma}
\label{lem.bigquot.e-as-smaller}Let $N\in\mathbb{N}$. Let $f\in\Lambda$ be a
symmetric function of degree $<N$. Then, there exists a family $\left(
c_{\kappa}\right)  _{\kappa\in\operatorname*{Par};\ \left\vert \kappa
\right\vert <N}$ of elements of $\mathbf{k}$ such that $f=\sum
_{\substack{\kappa\in\operatorname*{Par};\\\left\vert \kappa\right\vert
<N}}c_{\kappa}\mathbf{e}_{\kappa}$.
\end{lemma}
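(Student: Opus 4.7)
The plan is to mimic the proof of Lemma \ref{lem.bigquot.sm-as-smaller} essentially verbatim, with $\mathbf{s}_{\lambda}$ replaced by $\mathbf{e}_{\lambda}$ throughout. The only nontrivial input we need is the classical fact (going back to Gauss) that the family $\left(\mathbf{e}_{\lambda}\right)_{\lambda\in\operatorname{Par}}$ is a graded basis of the graded $\mathbf{k}$-module $\Lambda$ (equivalently, $\Lambda=\mathbf{k}\left[\mathbf{e}_{1},\mathbf{e}_{2},\mathbf{e}_{3},\ldots\right]$ is a polynomial ring in the $\mathbf{e}_{i}$'s, with $\mathbf{e}_{i}$ homogeneous of degree $i$, so that $\mathbf{e}_{\lambda}$ is homogeneous of degree $\left|\lambda\right|$). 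This fact is, e.g., a consequence of \cite[Corollary 2.4.6]{GriRei18}.

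First I would let $\Lambda_{\deg=d}$ denote the $d$-th homogeneous component of $\Lambda$ for each $d\in\mathbb{N}$, and observe (from the graded-basis property above) that
\[
\Lambda_{\deg=d}=\sum_{\substack{\lambda\in\operatorname{Par};\\\left|\lambda\right|=d}}\mathbf{k}\mathbf{e}_{\lambda}\qquad\text{for each }d\in\mathbb{N}.
\]
Next I would decompose the given symmetric function $f\in\Lambda$ of degree $<N$ into its homogeneous components: write $f=\sum_{d=0}^{N-1}f_{d}$, where each $f_{d}\in\Lambda_{\deg=d}$ (this is possible precisely because $f$ has degree $<N$, and each $f_{d}$ lies in $\Lambda$ since $\Lambda$ is a graded $\mathbf{k}$-module).

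Finally, I would chain these together exactly as in the proof of Lemma \ref{lem.bigquot.sm-as-smaller}: substitute the spanning expression for each $\Lambda_{\deg=d}$ into the decomposition of $f$, collapse the double sum $\sum_{d=0}^{N-1}\sum_{\lambda\in\operatorname{Par},\ \left|\lambda\right|=d}$ into the single sum $\sum_{\kappa\in\operatorname{Par},\ \left|\kappa\right|<N}$ (renaming $\lambda$ as $\kappa$), and conclude that $f\in\sum_{\kappa\in\operatorname{Par},\ \left|\kappa\right|<N}\mathbf{k}\mathbf{e}_{\kappa}$. Unpacking this membership as an explicit $\mathbf{k}$-linear combination produces the desired family $\left(c_{\kappa}\right)_{\kappa\in\operatorname{Par};\ \left|\kappa\right|<N}$.

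There is no real obstacle here; the argument is formally identical to that of Lemma \ref{lem.bigquot.sm-as-smaller}, and the only substantive ingredient is the graded-basis property of the $\mathbf{e}_{\lambda}$'s, which is entirely standard. In fact, the same proof scheme would yield the analogous statement for any graded $\mathbf{k}$-basis of $\Lambda$ indexed by partitions (the $\mathbf{h}_{\lambda}$'s, the $\mathbf{m}_{\lambda}$'s, etc.), and this is worth noting in passing.
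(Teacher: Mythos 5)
Your proposal is correct and coincides with the paper's own proof, which likewise just transplants the argument of Lemma \ref{lem.bigquot.sm-as-smaller} with $\mathbf{s}_{\mu}$ replaced by $\mathbf{e}_{\mu}$, the only substantive input being that $\left(\mathbf{e}_{\lambda}\right)_{\lambda\in\operatorname*{Par}}$ is a graded basis of $\Lambda$ with $\mathbf{e}_{\lambda}$ homogeneous of degree $\left\vert \lambda\right\vert$.
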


\begin{proof}
[Proof of Lemma \ref{lem.bigquot.e-as-smaller}.]This can be proved using the
same argument that we used to prove Lemma \ref{lem.bigquot.sm-as-smaller}, as
long as we replace every Schur function $\mathbf{s}_{\mu}$ by the
corresponding $\mathbf{e}_{\mu}$.
\end{proof}

Recall one of our notations defined long time ago: For any partition $\lambda
$, we let $\mathbf{e}_{\lambda}$ be the corresponding elementary symmetric
function in $\Lambda$. (This is called $e_{\lambda}$ in \cite[Definition
2.2.1]{GriRei18}.)

\begin{lemma}
\label{lem.bigquot.filtr-hom}Let $\varphi:\Lambda\rightarrow\Lambda$ be a
$\mathbf{k}$-algebra homomorphism. Assume that%
\begin{equation}
\varphi\left(  \mathbf{e}_{i}\right)  \in\mathbf{e}_{i}+\Lambda_{\deg\leq
i-1}\ \ \ \ \ \ \ \ \ \ \text{for each }i\in\left\{  1,2,3,\ldots\right\}  .
\label{eq.lem.bigquot.filtr-hom.ass}%
\end{equation}
Then:

\textbf{(a)} We have $\varphi\left(  v\right)  \in v+\Lambda_{\deg\leq m-1}$
for each $m\in\mathbb{N}$ and $v\in\Lambda_{\deg\leq m}$. (Here,
$\Lambda_{\deg\leq-1}$ denotes the $\mathbf{k}$-submodule $0$ of $\Lambda$.)

\textbf{(b)} The map $\varphi:\Lambda\rightarrow\Lambda$ is a $\mathbf{k}%
$-algebra isomorphism.

\textbf{(c)} Each of the maps $\varphi$ and $\varphi^{-1}$ respects the filtration.
\end{lemma}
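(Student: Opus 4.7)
The plan is to first prove part (a), since parts (b) and (c) will follow almost immediately by feeding (a) into Proposition \ref{prop.filtr.iso-g}.

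For part (a), I would reduce the problem from arbitrary $v \in \Lambda_{\deg \leq m}$ to products of the form $\mathbf{e}_\kappa = \mathbf{e}_{\kappa_1}\mathbf{e}_{\kappa_2}\cdots\mathbf{e}_{\kappa_\ell}$ with $|\kappa| \leq m$. Concretely, given $v \in \Lambda_{\deg \leq m}$, we have $\deg v < m+1$, so Lemma \ref{lem.bigquot.e-as-smaller} (applied with $N = m+1$) provides scalars $c_\kappa \in \mathbf{k}$ with $v = \sum_{\kappa \in \operatorname{Par},\ |\kappa| \leq m} c_\kappa \mathbf{e}_\kappa$. Thus it suffices to show
\[
\varphi(\mathbf{e}_\kappa) \in \mathbf{e}_\kappa + \Lambda_{\deg \leq |\kappa| - 1}
\qquad \text{for every partition } \kappa.
\]
Since $\varphi$ is a $\mathbf{k}$-algebra homomorphism, writing $\kappa = (\kappa_1, \kappa_2, \ldots, \kappa_\ell)$ gives $\varphi(\mathbf{e}_\kappa) = \prod_{j=1}^{\ell} \varphi(\mathbf{e}_{\kappa_j})$. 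By the hypothesis (\ref{eq.lem.bigquot.filtr-hom.ass}), each factor satisfies $\varphi(\mathbf{e}_{\kappa_j}) = \mathbf{e}_{\kappa_j} + r_j$ with $r_j \in \Lambda_{\deg \leq \kappa_j - 1}$. Expanding the product $\prod_j(\mathbf{e}_{\kappa_j} + r_j)$ yields the main term $\mathbf{e}_\kappa$ plus $2^\ell - 1$ correction terms; each correction term is a product in which at least one factor $\mathbf{e}_{\kappa_j}$ has been replaced by $r_j$. Using the filtration-multiplicativity (\ref{eq.thm.bigquot.Lamfil}) repeatedly, every such correction lies in $\Lambda_{\deg \leq |\kappa| - 1}$, which gives the desired inclusion. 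Summing against the $c_\kappa$ and using $|\kappa| - 1 \leq m - 1$ then yields $\varphi(v) \in v + \Lambda_{\deg \leq m-1}$.

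For parts (b) and (c), part (a) is precisely the hypothesis (\ref{eq.prop.filtr.iso-g.ass}) of Proposition \ref{prop.filtr.iso-g} (with $V = \Lambda$, $V_m = \Lambda_{\deg \leq m}$, $g = \varphi$). Applying that proposition, $\varphi$ is a $\mathbf{k}$-module isomorphism, and both $\varphi$ and $\varphi^{-1}$ respect the filtration, giving (c). To upgrade the $\mathbf{k}$-module isomorphism to a $\mathbf{k}$-algebra isomorphism for (b), I only need to observe that $\varphi^{-1}$ inherits multiplicativity from $\varphi$ via the standard two-line check $\varphi^{-1}(xy) = \varphi^{-1}(\varphi(\varphi^{-1}(x)) \varphi(\varphi^{-1}(y))) = \varphi^{-1}(x) \varphi^{-1}(y)$.

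The only point requiring any real care is the expansion step in part (a), where one must keep track of degrees across all correction terms in the product $\prod_j(\mathbf{e}_{\kappa_j} + r_j)$; but this is an elementary consequence of (\ref{eq.thm.bigquot.Lamfil}) together with the observation that $\sum_j \kappa_j = |\kappa|$, so replacing any nonempty subset of factors by $r_j$'s strictly decreases the total degree bound. I do not anticipate any substantive obstacle beyond this bookkeeping.
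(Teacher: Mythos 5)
Your proposal is correct and follows essentially the same route as the paper: reduce to the $\mathbf{e}_\kappa$ via Lemma \ref{lem.bigquot.e-as-smaller}, show $\varphi\left(\mathbf{e}_\kappa\right)\in\mathbf{e}_\kappa+\Lambda_{\deg\leq\left\vert\kappa\right\vert-1}$, and then invoke Proposition \ref{prop.filtr.iso-g} for parts (b) and (c). The only cosmetic difference is that you expand the product $\prod_j\left(\mathbf{e}_{\kappa_j}+r_j\right)$ directly, whereas the paper packages the same degree bookkeeping as an induction on the number of parts of $\kappa$.
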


\begin{proof}
[Proof of Lemma \ref{lem.bigquot.filtr-hom}.]We shall use the notation
$\ell\left(  \lambda\right)  $ defined in Definition \ref{def.LpHp}
\textbf{(a)}.

Let us first prove a few auxiliary claims:

\begin{statement}
\textit{Claim 1:} Let $i,j\in\left\{  -1,0,1,\ldots\right\}  $. Then,
$\Lambda_{\deg\leq i}\Lambda_{\deg\leq j}\subseteq\Lambda_{\deg\leq i+j}$.
\end{statement}

[\textit{Proof of Claim 1:} If one of $i$ and $j$ is $-1$, then Claim 1 holds
for obvious reasons (since $\Lambda_{\deg\leq-1}=0$ and thus $\Lambda
_{\deg\leq i}\Lambda_{\deg\leq j}=0$ in this case). Hence, for the rest of
this proof, we WLOG assume that none of $i$ and $j$ is $-1$. Hence, $i$ and
$j$ belong to $\mathbb{N}$ (since $i,j\in\left\{  -1,0,1,\ldots\right\}  $).
Thus, (\ref{eq.thm.bigquot.Lamfil}) yields $\Lambda_{\deg\leq i}\Lambda
_{\deg\leq j}\subseteq\Lambda_{\deg\leq i+j}$. This proves Claim 1.]

\begin{statement}
\textit{Claim 2:} Let $\alpha,\beta\in\mathbb{N}$. Let $a\in\Lambda_{\deg
\leq\alpha}$ and $b\in\Lambda_{\deg\leq\beta}$. Let $u\in a+\Lambda_{\deg
\leq\alpha-1}$ and $v\in b+\Lambda_{\deg\leq\beta-1}$. Then, $uv\in
ab+\Lambda_{\deg\leq\alpha+\beta-1}$.
\end{statement}

[\textit{Proof of Claim 2:} For every $m\in\mathbb{N}$, we have $\Lambda
_{\deg\leq m-1}\subseteq\Lambda_{\deg\leq m}$ (indeed, this is clear from the
definitions of $\Lambda_{\deg\leq m-1}$ and $\Lambda_{\deg\leq m}$). Thus,
$\Lambda_{\deg\leq\alpha-1}\subseteq\Lambda_{\deg\leq\alpha}$ and
$\Lambda_{\deg\leq\beta-1}\subseteq\Lambda_{\deg\leq\beta}$.

We have $u\in a+\Lambda_{\deg\leq\alpha-1}$. In other words, $u=a+x$ for some
$x\in\Lambda_{\deg\leq\alpha-1}$. Consider this $x$.

We have $v\in b+\Lambda_{\deg\leq\beta-1}$. In other words, $v=b+y$ for some
$y\in\Lambda_{\deg\leq\beta-1}$. Consider this $y$.

We have $v\in\underbrace{b}_{\in\Lambda_{\deg\leq\beta}}+\underbrace{\Lambda
_{\deg\leq\beta-1}}_{\subseteq\Lambda_{\deg\leq\beta}}\subseteq\Lambda
_{\deg\leq\beta}+\Lambda_{\deg\leq\beta}\subseteq\Lambda_{\deg\leq\beta}$
(since $\Lambda_{\deg\leq\beta}$ is a $\mathbf{k}$-module).

Now,
\begin{align*}
\underbrace{x}_{\in\Lambda_{\deg\leq\alpha-1}}\underbrace{v}_{\in\Lambda
_{\deg\leq\beta}}  &  \in\Lambda_{\deg\leq\alpha-1}\Lambda_{\deg\leq\beta
}\subseteq\Lambda_{\deg\leq\left(  \alpha-1\right)  +\beta}\\
&  \ \ \ \ \ \ \ \ \ \ \left(  \text{by Claim 1, applied to }i=\alpha-1\text{
and }j=\beta\right) \\
&  =\Lambda_{\deg\leq\alpha+\beta-1}\ \ \ \ \ \ \ \ \ \ \left(  \text{since
}\left(  \alpha-1\right)  +\beta=\alpha+\beta-1\right)  .
\end{align*}
Furthermore,%
\begin{align*}
\underbrace{a}_{\in\Lambda_{\deg\leq\alpha}}\underbrace{y}_{\in\Lambda
_{\deg\leq\beta-1}}  &  \in\Lambda_{\deg\leq\alpha}\Lambda_{\deg\leq\beta
-1}\subseteq\Lambda_{\deg\leq\alpha+\left(  \beta-1\right)  }\\
&  \ \ \ \ \ \ \ \ \ \ \left(  \text{by Claim 1, applied to }i=\alpha\text{
and }j=\beta-1\right) \\
&  =\Lambda_{\deg\leq\alpha+\beta-1}\ \ \ \ \ \ \ \ \ \ \left(  \text{since
}\alpha+\left(  \beta-1\right)  =\alpha+\beta-1\right)  .
\end{align*}
Now,%
\begin{align*}
\underbrace{u}_{=a+x}v  &  =\left(  a+x\right)  v=a\underbrace{v}%
_{=b+y}+xv=a\left(  b+y\right)  +xv\\
&  =ab+\underbrace{ay}_{\in\Lambda_{\deg\leq\alpha+\beta-1}}+\underbrace{xv}%
_{\in\Lambda_{\deg\leq\alpha+\beta-1}}\in ab+\underbrace{\Lambda_{\deg
\leq\alpha+\beta-1}+\Lambda_{\deg\leq\alpha+\beta-1}}_{\substack{\subseteq
\Lambda_{\deg\leq\alpha+\beta-1}\\\text{(since }\Lambda_{\deg\leq\alpha
+\beta-1}\text{ is a }\mathbf{k}\text{-module)}}}\\
&  \subseteq ab+\Lambda_{\deg\leq\alpha+\beta-1}.
\end{align*}
This proves Claim 2.]

\begin{statement}
\textit{Claim 3:} We have $\varphi\left(  \mathbf{e}_{\lambda}\right)
\in\mathbf{e}_{\lambda}+\Lambda_{\deg\leq\left\vert \lambda\right\vert -1}$
for each partition $\lambda$.
\end{statement}

[\textit{Proof of Claim 3:} We shall prove Claim 3 by induction on
$\ell\left(  \lambda\right)  $.

\textit{Induction base:} Claim 3 is clearly true when $\ell\left(
\lambda\right)  =0$\ \ \ \ \footnote{\textit{Proof.} Let $\lambda$ be a
partition such that $\ell\left(  \lambda\right)  =0$. We must show that
$\varphi\left(  \mathbf{e}_{\lambda}\right)  \in\mathbf{e}_{\lambda}%
+\Lambda_{\deg\leq\left\vert \lambda\right\vert -1}$.
\par
We have $\lambda=\varnothing$ (since $\ell\left(  \lambda\right)  =0$) and
thus $\mathbf{e}_{\lambda}=\mathbf{e}_{\varnothing}=1$. Hence, $\varphi\left(
\mathbf{e}_{\lambda}\right)  =\varphi\left(  1\right)  =1$ (since $\varphi$ is
a $\mathbf{k}$-algebra homomorphism). Thus, $\underbrace{\varphi\left(
\mathbf{e}_{\lambda}\right)  }_{=1}-\underbrace{\mathbf{e}_{\lambda}}%
_{=1}=1-1=0\in\Lambda_{\deg\leq\left\vert \lambda\right\vert -1}$ (since
$\Lambda_{\deg\leq\left\vert \lambda\right\vert -1}$ is a $\mathbf{k}%
$-module), so that $\varphi\left(  \mathbf{e}_{\lambda}\right)  \in
\mathbf{e}_{\lambda}+\Lambda_{\deg\leq\left\vert \lambda\right\vert -1}$. This
is precisely what we needed to show; qed.}. This completes the induction base.

\textit{Induction step:} Let $r$ be a positive integer. Assume (as the
induction hypothesis) that Claim 3 is true whenever $\ell\left(
\lambda\right)  =r-1$. We must prove that Claim 3 is true whenever
$\ell\left(  \lambda\right)  =r$.

So let $\lambda$ be a partition such that $\ell\left(  \lambda\right)  =r$. We
must prove that $\varphi\left(  \mathbf{e}_{\lambda}\right)  \in
\mathbf{e}_{\lambda}+\Lambda_{\deg\leq\left\vert \lambda\right\vert -1}$.

We have $\ell\left(  \lambda\right)  =r$. Thus, the entries $\lambda
_{1},\lambda_{2},\ldots,\lambda_{r}$ of $\lambda$ are positive, while
$\lambda_{r+1}=\lambda_{r+2}=\lambda_{r+3}=\cdots=0$. Hence, $\lambda=\left(
\lambda_{1},\lambda_{2},\ldots,\lambda_{r}\right)  $.

We have $1\in\left\{  1,2,\ldots,r\right\}  $ (since $r$ is positive). Hence,
$\lambda_{1}$ is positive (since $\lambda_{1},\lambda_{2},\ldots,\lambda_{r}$
are positive).

Let $\overline{\lambda}$ be the partition $\left(  \lambda_{2},\lambda
_{3},\lambda_{4},\ldots\right)  $. Then, $\overline{\lambda}=\left(
\lambda_{2},\lambda_{3},\lambda_{4},\ldots\right)  =\left(  \lambda
_{2},\lambda_{3},\ldots,\lambda_{r}\right)  $ (since $\lambda_{r+1}%
=\lambda_{r+2}=\lambda_{r+3}=\cdots=0$), so that $\ell\left(  \overline
{\lambda}\right)  =r-1$ (since $\lambda_{1},\lambda_{2},\ldots,\lambda_{r}$
are positive). Hence, our induction hypothesis shows that Claim 3 holds for
$\overline{\lambda}$ instead of $\lambda$. In other words, we have
$\varphi\left(  \mathbf{e}_{\overline{\lambda}}\right)  \in\mathbf{e}%
_{\overline{\lambda}}+\Lambda_{\deg\leq\left\vert \overline{\lambda
}\right\vert -1}$.

But from $\lambda=\left(  \lambda_{1},\lambda_{2},\ldots,\lambda_{r}\right)  $
and $\overline{\lambda}=\left(  \lambda_{2},\lambda_{3},\ldots,\lambda
_{r}\right)  $, we see easily that $\left\vert \lambda\right\vert =\lambda
_{1}+\left\vert \overline{\lambda}\right\vert $. Furthermore, $\lambda_{1}%
\in\left\{  1,2,3,\ldots\right\}  $ (since $\lambda_{1}$ is positive). Hence,
(\ref{eq.lem.bigquot.filtr-hom.ass}) (applied to $i=\lambda_{1}$) yields
$\varphi\left(  \mathbf{e}_{\lambda_{1}}\right)  \in\mathbf{e}_{\lambda_{1}%
}+\Lambda_{\deg\leq\lambda_{1}-1}$.

The symmetric function $\mathbf{e}_{\lambda_{1}}$ is homogeneous of degree
$\lambda_{1}$. Thus, $\mathbf{e}_{\lambda_{1}}\in\Lambda_{\deg\leq\lambda_{1}%
}$.

The symmetric function $\mathbf{e}_{\overline{\lambda}}$ is homogeneous of
degree $\left\vert \overline{\lambda}\right\vert $. Thus, $\mathbf{e}%
_{\overline{\lambda}}\in\Lambda_{\deg\leq\left\vert \overline{\lambda
}\right\vert }$.

We have now shown that $\mathbf{e}_{\lambda_{1}}\in\Lambda_{\deg\leq
\lambda_{1}}$ and $\mathbf{e}_{\overline{\lambda}}\in\Lambda_{\deg
\leq\left\vert \overline{\lambda}\right\vert }$ and $\varphi\left(
\mathbf{e}_{\lambda_{1}}\right)  \in\mathbf{e}_{\lambda_{1}}+\Lambda_{\deg
\leq\lambda_{1}-1}$ and $\varphi\left(  \mathbf{e}_{\overline{\lambda}%
}\right)  \in\mathbf{e}_{\overline{\lambda}}+\Lambda_{\deg\leq\left\vert
\overline{\lambda}\right\vert -1}$. Thus, Claim 2 (applied to $\alpha
=\lambda_{1}$, $\beta=\left\vert \overline{\lambda}\right\vert $,
$a=\mathbf{e}_{\lambda_{1}}$, $b=\mathbf{e}_{\overline{\lambda}}$,
$u=\varphi\left(  \mathbf{e}_{\lambda_{1}}\right)  $ and $v=\varphi\left(
\mathbf{e}_{\overline{\lambda}}\right)  $) yields that%
\begin{align}
\varphi\left(  \mathbf{e}_{\lambda_{1}}\right)  \varphi\left(  \mathbf{e}%
_{\overline{\lambda}}\right)   &  \in\mathbf{e}_{\lambda_{1}}\mathbf{e}%
_{\overline{\lambda}}+\Lambda_{\deg\leq\lambda_{1}+\left\vert \overline
{\lambda}\right\vert -1}\nonumber\\
&  =\mathbf{e}_{\lambda_{1}}\mathbf{e}_{\overline{\lambda}}+\Lambda_{\deg
\leq\left\vert \lambda\right\vert -1} \label{pf.lem.bigquot.filtr-hom.2}%
\end{align}
(since $\lambda_{1}+\left\vert \overline{\lambda}\right\vert =\left\vert
\lambda\right\vert $).

But $\overline{\lambda}=\left(  \lambda_{2},\lambda_{3},\lambda_{4}%
,\ldots\right)  $; thus, the definition of $\mathbf{e}_{\overline{\lambda}}$
yields%
\begin{equation}
\mathbf{e}_{\overline{\lambda}}=\mathbf{e}_{\lambda_{2}}\mathbf{e}%
_{\lambda_{3}}\mathbf{e}_{\lambda_{4}}\cdots.
\label{pf.lem.bigquot.filtr-hom.3}%
\end{equation}

But the definition of $\mathbf{e}_{\lambda}$ yields
\begin{equation}
\mathbf{e}_{\lambda}=\mathbf{e}_{\lambda_{1}}\mathbf{e}_{\lambda_{2}%
}\mathbf{e}_{\lambda_{3}}\cdots=\mathbf{e}_{\lambda_{1}}\underbrace{\left(
\mathbf{e}_{\lambda_{2}}\mathbf{e}_{\lambda_{3}}\mathbf{e}_{\lambda_{4}}%
\cdots\right)  }_{\substack{=\mathbf{e}_{\overline{\lambda}}\\\text{(by
(\ref{pf.lem.bigquot.filtr-hom.3}))}}}=\mathbf{e}_{\lambda_{1}}\mathbf{e}%
_{\overline{\lambda}}. \label{pf.lem.bigquot.filtr-hom.5}%
\end{equation}
Applying the map $\varphi$ to both sides of this equality, we obtain%
\begin{align*}
\varphi\left(  \mathbf{e}_{\lambda}\right)   &  =\varphi\left(  \mathbf{e}%
_{\lambda_{1}}\mathbf{e}_{\overline{\lambda}}\right)  =\varphi\left(
\mathbf{e}_{\lambda_{1}}\right)  \varphi\left(  \mathbf{e}_{\overline{\lambda
}}\right)  \ \ \ \ \ \ \ \ \ \ \left(  \text{since }\varphi\text{ is a
}\mathbf{k}\text{-algebra homomorphism}\right) \\
&  \in\underbrace{\mathbf{e}_{\lambda_{1}}\mathbf{e}_{\overline{\lambda}}%
}_{\substack{=\mathbf{e}_{\lambda}\\\text{(by
(\ref{pf.lem.bigquot.filtr-hom.5}))}}}+\Lambda_{\deg\leq\left\vert
\lambda\right\vert -1}\ \ \ \ \ \ \ \ \ \ \left(  \text{by
(\ref{pf.lem.bigquot.filtr-hom.2})}\right) \\
&  =\mathbf{e}_{\lambda}+\Lambda_{\deg\leq\left\vert \lambda\right\vert -1}.
\end{align*}

Now, forget that we fixed $\lambda$. We thus have proven that $\varphi\left(
\mathbf{e}_{\lambda}\right)  \in\mathbf{e}_{\lambda}+\Lambda_{\deg
\leq\left\vert \lambda\right\vert -1}$ for each partition $\lambda$ satisfying
$\ell\left(  \lambda\right)  =r$. In other words, Claim 3 is true whenever
$\ell\left(  \lambda\right)  =r$. This completes the induction step. Thus,
Claim 3 is proven.]

We also notice that%
\begin{equation}
\Lambda_{\deg\leq-1}\subseteq\Lambda_{\deg\leq0}\subseteq\Lambda_{\deg\leq
1}\subseteq\Lambda_{\deg\leq2}\subseteq\cdots
\label{pf.lem.bigquot.filtr-hom.filt}%
\end{equation}
(by the definition of the $\Lambda_{\deg\leq m}$).

\textbf{(a)} Let $m\in\mathbb{N}$. Let $v\in\Lambda_{\deg\leq m}$. We must
prove that $\varphi\left(  v\right)  \in v+\Lambda_{\deg\leq m-1}$.

We know that $v$ is a symmetric function of degree $\leq m$ (since
$v\in\Lambda_{\deg\leq m}$). Thus, $v$ is a symmetric function of degree
$<m+1$. Hence, Lemma \ref{lem.bigquot.e-as-smaller} (applied to $N=m+1$ and
$f=v$) yields that there exists a family $\left(  c_{\kappa}\right)
_{\kappa\in\operatorname*{Par};\ \left\vert \kappa\right\vert <m+1}$ of
elements of $\mathbf{k}$ such that
\begin{equation}
v=\sum_{\substack{\kappa\in\operatorname*{Par};\\\left\vert \kappa\right\vert
<m+1}}c_{\kappa}\mathbf{e}_{\kappa}. \label{pf.lem.bigquot.filtr-hom.a.v=sum}%
\end{equation}
Consider this $\left(  c_{\kappa}\right)  _{\kappa\in\operatorname*{Par}%
;\ \left\vert \kappa\right\vert <m+1}$.

For every $\kappa\in\operatorname*{Par}$ satisfying $\left\vert \kappa
\right\vert <m+1$, we have%
\begin{equation}
\varphi\left(  \mathbf{e}_{\kappa}\right)  \in\mathbf{e}_{\kappa}%
+\Lambda_{\deg\leq m-1}. \label{pf.lem.bigquot.filtr-hom.a.1}%
\end{equation}

[\textit{Proof of (\ref{pf.lem.bigquot.filtr-hom.a.1}):} Let $\kappa
\in\operatorname*{Par}$ be such that $\left\vert \kappa\right\vert <m+1$. From
$\left\vert \kappa\right\vert <m+1$, we obtain $\left\vert \kappa\right\vert
-1<m$ and thus $\left\vert \kappa\right\vert -1\leq m-1$ (since $\left\vert
\kappa\right\vert -1$ and $m$ are integers). Hence, $\Lambda_{\deg
\leq\left\vert \kappa\right\vert -1}\subseteq\Lambda_{\deg\leq m-1}$ (by
(\ref{pf.lem.bigquot.filtr-hom.filt})).

But Claim 3 (applied to $\lambda=\kappa$) yields $\varphi\left(
\mathbf{e}_{\kappa}\right)  \in\mathbf{e}_{\kappa}+\underbrace{\Lambda
_{\deg\leq\left\vert \kappa\right\vert -1}}_{\subseteq\Lambda_{\deg\leq m-1}%
}\subseteq\mathbf{e}_{\kappa}+\Lambda_{\deg\leq m-1}$. This proves
(\ref{pf.lem.bigquot.filtr-hom.a.1}).]

Now, applying the map $\varphi$ to both sides of the equality
(\ref{pf.lem.bigquot.filtr-hom.a.v=sum}), we obtain%
\begin{align*}
\varphi\left(  v\right)   &  =\varphi\left(  \sum_{\substack{\kappa
\in\operatorname*{Par};\\\left\vert \kappa\right\vert <m+1}}c_{\kappa
}\mathbf{e}_{\kappa}\right)  =\sum_{\substack{\kappa\in\operatorname*{Par}%
;\\\left\vert \kappa\right\vert <m+1}}c_{\kappa}\underbrace{\varphi\left(
\mathbf{e}_{\kappa}\right)  }_{\substack{\in\mathbf{e}_{\kappa}+\Lambda
_{\deg\leq m-1}\\\text{(by (\ref{pf.lem.bigquot.filtr-hom.a.1}))}%
}}\ \ \ \ \ \ \ \ \ \ \left(  \text{since the map }\varphi\text{ is
}\mathbf{k}\text{-linear}\right) \\
&  \in\sum_{\substack{\kappa\in\operatorname*{Par};\\\left\vert \kappa
\right\vert <m+1}}\underbrace{c_{\kappa}\left(  \mathbf{e}_{\kappa}%
+\Lambda_{\deg\leq m-1}\right)  }_{=c_{\kappa}\mathbf{e}_{\kappa}+c_{\kappa
}\Lambda_{\deg\leq m-1}}=\sum_{\substack{\kappa\in\operatorname*{Par}%
;\\\left\vert \kappa\right\vert <m+1}}\left(  c_{\kappa}\mathbf{e}_{\kappa
}+c_{\kappa}\Lambda_{\deg\leq m-1}\right) \\
&  =\underbrace{\sum_{\substack{\kappa\in\operatorname*{Par};\\\left\vert
\kappa\right\vert <m+1}}c_{\kappa}\mathbf{e}_{\kappa}}%
_{\substack{=v\\\text{(by (\ref{pf.lem.bigquot.filtr-hom.a.v=sum}))}%
}}+\underbrace{\sum_{\substack{\kappa\in\operatorname*{Par};\\\left\vert
\kappa\right\vert <m+1}}c_{\kappa}\Lambda_{\deg\leq m-1}}_{\substack{\subseteq
\Lambda_{\deg\leq m-1}\\\text{(since }\Lambda_{\deg\leq m-1}\text{ is a
}\mathbf{k}\text{-module)}}}\subseteq v+\Lambda_{\deg\leq m-1}.
\end{align*}
This proves Lemma \ref{lem.bigquot.filtr-hom} \textbf{(a)}.

\textbf{(b)} The map $\varphi:\Lambda\rightarrow\Lambda$ is a $\mathbf{k}%
$-algebra homomorphism, thus a $\mathbf{k}$-module homomorphism. Lemma
\ref{lem.bigquot.filtr-hom} \textbf{(a)} shows that $\varphi\left(  v\right)
\in v+\Lambda_{\deg\leq m-1}$ for each $m\in\mathbb{N}$ and $v\in\Lambda
_{\deg\leq m}$, where $\Lambda_{\deg\leq-1}$ denotes the $\mathbf{k}%
$-submodule $0$ of $\Lambda$. Hence, Proposition \ref{prop.filtr.iso-g}
\textbf{(a)} (applied to $V=\Lambda$, $V_{m}=\Lambda_{\deg\leq m}$ and
$g=\varphi$) yields that the $\mathbf{k}$-module homomorphism $\varphi$ is an
isomorphism. Hence, this homomorphism $\varphi$ is bijective and thus a
$\mathbf{k}$-algebra isomorphism (since it is a $\mathbf{k}$-algebra
homomorphism). This proves Lemma \ref{lem.bigquot.filtr-hom} \textbf{(b)}.

\textbf{(c)} The map $\varphi:\Lambda\rightarrow\Lambda$ is a $\mathbf{k}%
$-algebra homomorphism, thus a $\mathbf{k}$-module homomorphism. Lemma
\ref{lem.bigquot.filtr-hom} \textbf{(a)} shows that $\varphi\left(  v\right)
\in v+\Lambda_{\deg\leq m-1}$ for each $m\in\mathbb{N}$ and $v\in\Lambda
_{\deg\leq m}$, where $\Lambda_{\deg\leq-1}$ denotes the $\mathbf{k}%
$-submodule $0$ of $\Lambda$. Hence, Proposition \ref{prop.filtr.iso-g}
\textbf{(b)} (applied to $V=\Lambda$, $V_{m}=\Lambda_{\deg\leq m}$ and
$g=\varphi$) yields that each of the maps $\varphi$ and $\varphi^{-1}$
respects the filtration. This proves Lemma \ref{lem.bigquot.filtr-hom}
\textbf{(c)}.
\end{proof}

\subsection{Linear independence}

\begin{proof}
[Proof of Theorem \ref{thm.bigquot}.]Corollary \ref{cor.bigquot.span} shows
that the family $\left(  \overline{\mathbf{s}_{\lambda}}\right)  _{\lambda\in
P_{k,n}}$ spans the $\mathbf{k}$-module $\Lambda/K$. We need to prove that it
is a basis of $\Lambda/K$.

Let us first recall that $\Lambda/\left\langle \mathbf{e}_{k+1},\mathbf{e}%
_{k+2},\mathbf{e}_{k+3},\ldots\right\rangle \cong\mathcal{S}$. More precisely,
there is a canonical surjective $\mathbf{k}$-algebra homomorphism $\pi
:\Lambda\rightarrow\mathcal{S}$ which is given by substituting $0$ for each of
the variables $x_{k+1},x_{k+2},x_{k+3},\ldots$; the kernel of this
homomorphism is precisely the ideal $\left\langle \mathbf{e}_{k+1}%
,\mathbf{e}_{k+2},\mathbf{e}_{k+3},\ldots\right\rangle $ of $\Lambda$. This
homomorphism sends each $\mathbf{h}_{m}\in\Lambda$ to the polynomial $h_{m}%
\in\mathcal{S}$ defined in (\ref{eq.hm}).

It is well-known that the commutative $\mathbf{k}$-algebra $\Lambda$ is freely
generated by its elements $\mathbf{e}_{1},\mathbf{e}_{2},\mathbf{e}_{3}%
,\ldots$. Hence, we can define an $\mathbf{k}$-algebra homomorphism
$\varphi:\Lambda\rightarrow\Lambda$ by letting%
\begin{align}
\varphi\left(  \mathbf{e}_{i}\right)   &  =\mathbf{e}_{i}%
\ \ \ \ \ \ \ \ \ \ \text{for each }i\in\left\{  1,2,\ldots,k\right\}
;\label{pf.thm.bigquot.phi1}\\
\varphi\left(  \mathbf{e}_{i}\right)   &  =\mathbf{e}_{i}-\mathbf{b}%
_{i-k}\ \ \ \ \ \ \ \ \ \ \text{for each }i\in\left\{  k+1,k+2,k+3,\ldots
\right\}  . \label{pf.thm.bigquot.phi2}%
\end{align}
Consider this $\varphi$. Then, we have $\varphi\left(  \mathbf{e}_{i}\right)
\in\mathbf{e}_{i}+\Lambda_{\deg\leq i-1}$ for each $i\in\left\{
1,2,3,\ldots\right\}  $\ \ \ \ \footnote{\textit{Proof.} Let $i\in\left\{
1,2,3,\ldots\right\}  $. We must prove that $\varphi\left(  \mathbf{e}%
_{i}\right)  \in\mathbf{e}_{i}+\Lambda_{\deg\leq i-1}$.
\par
If $i\in\left\{  1,2,\ldots,k\right\}  $, then this is obvious (since the
definition of $\varphi$ yields $\varphi\left(  \mathbf{e}_{i}\right)
=\mathbf{e}_{i}=\mathbf{e}_{i}+\underbrace{0}_{\in\Lambda_{\deg\leq i-1}}%
\in\mathbf{e}_{i}+\Lambda_{\deg\leq i-1}$ in this case). Hence, for the rest
of this proof, we WLOG assume that we don't have $i\in\left\{  1,2,\ldots
,k\right\}  $. Hence,%
\[
i\in\left\{  1,2,3,\ldots\right\}  \setminus\left\{  1,2,\ldots,k\right\}
=\left\{  k+1,k+2,k+3,\ldots\right\}  .
\]
Thus, the definition of $\varphi$ yields $\varphi\left(  \mathbf{e}%
_{i}\right)  =\mathbf{e}_{i}-\mathbf{b}_{i-k}$. But
(\ref{eq.thm.bigquot.bi-deg}) (applied to $i-k$ instead of $i$) yields that
\begin{align*}
\mathbf{b}_{i-k}  &  =\left(  \text{some symmetric function of degree
}<\underbrace{k+\left(  i-k\right)  }_{=i}\right) \\
&  =\left(  \text{some symmetric function of degree }<i\right) \\
&  =\left(  \text{some symmetric function of degree }\leq i-1\right)
\in\Lambda_{\deg\leq i-1}.
\end{align*}
Thus, $\varphi\left(  \mathbf{e}_{i}\right)  =\mathbf{e}_{i}%
-\underbrace{\mathbf{b}_{i-k}}_{\in\Lambda_{\deg\leq i-1}}\in\mathbf{e}%
_{i}-\Lambda_{\deg\leq i-1}=\mathbf{e}_{i}+\Lambda_{\deg\leq i-1}$ (since
$\Lambda_{\deg\leq i-1}$ is a $\mathbf{k}$-module). Qed.}. Hence, Lemma
\ref{lem.bigquot.filtr-hom} \textbf{(a)} shows that we have%
\begin{equation}
\varphi\left(  v\right)  \in v+\Lambda_{\deg\leq m-1}%
\ \ \ \ \ \ \ \ \ \ \text{for each }m\in\mathbb{N}\text{ and }v\in
\Lambda_{\deg\leq m}. \label{pf.thm.bigquot.phiv}%
\end{equation}
(Here, $\Lambda_{\deg\leq-1}$ denotes the $\mathbf{k}$-submodule $0$ of
$\Lambda$.) Furthermore, Lemma \ref{lem.bigquot.filtr-hom} \textbf{(b)} shows
that the map $\varphi:\Lambda\rightarrow\Lambda$ is a $\mathbf{k}$-algebra
isomorphism. In other words, $\varphi$ is an automorphism of the $\mathbf{k}%
$-algebra $\Lambda$. Finally, Lemma \ref{lem.bigquot.filtr-hom} \textbf{(c)}
shows that each of the maps $\varphi$ and $\varphi^{-1}$ respects the filtration.

The map $\varphi$ is a $\mathbf{k}$-algebra automorphism of $\Lambda$ and
sends the elements
\[
\mathbf{e}_{k+1},\mathbf{e}_{k+2},\mathbf{e}_{k+3},\ldots
\ \ \ \ \ \ \ \ \ \ \text{to}\ \ \ \ \ \ \ \ \ \ \mathbf{e}_{k+1}%
-\mathbf{b}_{1},\mathbf{e}_{k+2}-\mathbf{b}_{2},\mathbf{e}_{k+3}%
-\mathbf{b}_{3},\ldots,
\]
respectively (according to (\ref{pf.thm.bigquot.phi2})). Hence, it sends the
ideal $\left\langle \mathbf{e}_{k+1},\mathbf{e}_{k+2},\mathbf{e}_{k+3}%
,\ldots\right\rangle $ of $\Lambda$ to the ideal $\left\langle \mathbf{e}%
_{k+1}-\mathbf{b}_{1},\mathbf{e}_{k+2}-\mathbf{b}_{2},\mathbf{e}%
_{k+3}-\mathbf{b}_{3},\ldots\right\rangle $ of $\Lambda$. In other words,%
\begin{equation}
\varphi\left(  \left\langle \mathbf{e}_{k+1},\mathbf{e}_{k+2},\mathbf{e}%
_{k+3},\ldots\right\rangle \right)  =\left\langle \mathbf{e}_{k+1}%
-\mathbf{b}_{1},\mathbf{e}_{k+2}-\mathbf{b}_{2},\mathbf{e}_{k+3}%
-\mathbf{b}_{3},\ldots\right\rangle . \label{pf.thm.bigquot.phiideal2}%
\end{equation}

For each $i\in\left\{  1,2,\ldots,k\right\}  $, define $\mathbf{c}_{i}%
\in\Lambda$ by
\begin{equation}
\mathbf{c}_{i}=\varphi^{-1}\left(  \varphi\left(  \mathbf{h}_{n-k+i}\right)
-\mathbf{h}_{n-k+i}+\mathbf{a}_{i}\right)  . \label{pf.thm.bigquot.ci=}%
\end{equation}
This is well-defined, since $\varphi$ is an isomorphism. For each
$i\in\left\{  1,2,\ldots,k\right\}  $, we have
\begin{align*}
\varphi\left(  \mathbf{h}_{n-k+i}-\mathbf{c}_{i}\right)   &  =\varphi\left(
\mathbf{h}_{n-k+i}\right)  -\underbrace{\varphi\left(  \mathbf{c}_{i}\right)
}_{\substack{=\varphi\left(  \mathbf{h}_{n-k+i}\right)  -\mathbf{h}%
_{n-k+i}+\mathbf{a}_{i}\\\text{(by (\ref{pf.thm.bigquot.ci=}))}}}\\
&  \ \ \ \ \ \ \ \ \ \ \left(  \text{since }\varphi\text{ is a }%
\mathbf{k}\text{-algebra homomorphism}\right) \\
&  =\varphi\left(  \mathbf{h}_{n-k+i}\right)  -\left(  \varphi\left(
\mathbf{h}_{n-k+i}\right)  -\mathbf{h}_{n-k+i}+\mathbf{a}_{i}\right)
=\mathbf{h}_{n-k+i}-\mathbf{a}_{i}.
\end{align*}
In other words, the map $\varphi$ sends the elements $\mathbf{h}%
_{n-k+1}-\mathbf{c}_{1},\mathbf{h}_{n-k+2}-\mathbf{c}_{2},\ldots
,\mathbf{h}_{n}-\mathbf{c}_{k}$ to the elements $\mathbf{h}_{n-k+1}%
-\mathbf{a}_{1},\mathbf{h}_{n-k+2}-\mathbf{a}_{2},\ldots,\mathbf{h}%
_{n}-\mathbf{a}_{k}$, respectively. Thus, it sends the ideal $\left\langle
\mathbf{h}_{n-k+1}-\mathbf{c}_{1},\mathbf{h}_{n-k+2}-\mathbf{c}_{2}%
,\ldots,\mathbf{h}_{n}-\mathbf{c}_{k}\right\rangle $ of $\Lambda$ to the ideal
\newline$\left\langle \mathbf{h}_{n-k+1}-\mathbf{a}_{1},\mathbf{h}%
_{n-k+2}-\mathbf{a}_{2},\ldots,\mathbf{h}_{n}-\mathbf{a}_{k}\right\rangle $ of
$\Lambda$ (since $\varphi$ is a $\mathbf{k}$-algebra automorphism). In other
words,%
\begin{align}
&  \varphi\left(  \left\langle \mathbf{h}_{n-k+1}-\mathbf{c}_{1}%
,\mathbf{h}_{n-k+2}-\mathbf{c}_{2},\ldots,\mathbf{h}_{n}-\mathbf{c}%
_{k}\right\rangle \right) \nonumber\\
&  =\left\langle \mathbf{h}_{n-k+1}-\mathbf{a}_{1},\mathbf{h}_{n-k+2}%
-\mathbf{a}_{2},\ldots,\mathbf{h}_{n}-\mathbf{a}_{k}\right\rangle .
\label{pf.thm.bigquot.phiideal1}%
\end{align}

Recall that $\varphi$ is a $\mathbf{k}$-algebra homomorphism; thus,%
\begin{align}
&  \varphi\left(  \left\langle \mathbf{h}_{n-k+1}-\mathbf{c}_{1}%
,\mathbf{h}_{n-k+2}-\mathbf{c}_{2},\ldots,\mathbf{h}_{n}-\mathbf{c}%
_{k}\right\rangle +\left\langle \mathbf{e}_{k+1},\mathbf{e}_{k+2}%
,\mathbf{e}_{k+3},\ldots\right\rangle \right) \nonumber\\
&  =\underbrace{\varphi\left(  \left\langle \mathbf{h}_{n-k+1}-\mathbf{c}%
_{1},\mathbf{h}_{n-k+2}-\mathbf{c}_{2},\ldots,\mathbf{h}_{n}-\mathbf{c}%
_{k}\right\rangle \right)  }_{\substack{=\left\langle \mathbf{h}%
_{n-k+1}-\mathbf{a}_{1},\mathbf{h}_{n-k+2}-\mathbf{a}_{2},\ldots
,\mathbf{h}_{n}-\mathbf{a}_{k}\right\rangle \\\text{(by
(\ref{pf.thm.bigquot.phiideal1}))}}}+\underbrace{\varphi\left(  \left\langle
\mathbf{e}_{k+1},\mathbf{e}_{k+2},\mathbf{e}_{k+3},\ldots\right\rangle
\right)  }_{\substack{=\left\langle \mathbf{e}_{k+1}-\mathbf{b}_{1}%
,\mathbf{e}_{k+2}-\mathbf{b}_{2},\mathbf{e}_{k+3}-\mathbf{b}_{3}%
,\ldots\right\rangle \\\text{(by (\ref{pf.thm.bigquot.phiideal2}))}%
}}\nonumber\\
&  =\left\langle \mathbf{h}_{n-k+1}-\mathbf{a}_{1},\mathbf{h}_{n-k+2}%
-\mathbf{a}_{2},\ldots,\mathbf{h}_{n}-\mathbf{a}_{k}\right\rangle
+\left\langle \mathbf{e}_{k+1}-\mathbf{b}_{1},\mathbf{e}_{k+2}-\mathbf{b}%
_{2},\mathbf{e}_{k+3}-\mathbf{b}_{3},\ldots\right\rangle \nonumber\\
&  =K \label{pf.thm.bigquot.phiidealK}%
\end{align}
(by the definition of $K$).

For each $i\in\left\{  1,2,\ldots,k\right\}  $, let us consider the projection
$\overline{\mathbf{c}_{i}}$ of $\mathbf{c}_{i}\in\Lambda$ onto $\mathcal{S}$.
Let $I_{\mathbf{c}}$ denote the ideal of $\mathcal{S}$ generated by the $k$
differences%
\[
h_{n-k+1}-\overline{\mathbf{c}_{1}},h_{n-k+2}-\overline{\mathbf{c}_{2}}%
,\ldots,h_{n}-\overline{\mathbf{c}_{k}}.
\]

Moreover, for each $i\in\left\{  1,2,\ldots,k\right\}  $, the element
$\mathbf{c}_{i}$ is a symmetric function of degree $<n-k+i$%
\ \ \ \ \footnote{\textit{Proof.} Let $i\in\left\{  1,2,\ldots,k\right\}  $.
Thus, $\mathbf{h}_{n-k+i}$ is a homogeneous symmetric function of degree
$n-k+i$. Hence, $\mathbf{h}_{n-k+i}\in\Lambda_{\deg\leq n-k+i}$. Thus,
(\ref{pf.thm.bigquot.phiv}) (applied to $m=n-k+i$ and $v=\mathbf{h}_{n-k+i}$)
yields $\varphi\left(  \mathbf{h}_{n-k+i}\right)  \in\mathbf{h}_{n-k+i}%
+\Lambda_{\deg\leq n-k+i-1}$. In other words, $\varphi\left(  \mathbf{h}%
_{n-k+i}\right)  -\mathbf{h}_{n-k+i}\in\Lambda_{\deg\leq n-k+i-1}$.
\par
Also, (\ref{eq.thm.bigquot.ai-deg}) yields%
\begin{align*}
\mathbf{a}_{i}  &  =\left(  \text{some symmetric function of degree
}<n-k+i\right) \\
&  =\left(  \text{some symmetric function of degree }\leq n-k+i-1\right)
\in\Lambda_{\deg\leq n-k+i-1}.
\end{align*}
Hence,
\[
\underbrace{\varphi\left(  \mathbf{h}_{n-k+i}\right)  -\mathbf{h}_{n-k+i}%
}_{\in\Lambda_{\deg\leq n-k+i-1}}+\underbrace{\mathbf{a}_{i}}_{\in
\Lambda_{\deg\leq n-k+i-1}}\in\Lambda_{\deg\leq n-k+i-1}+\Lambda_{\deg\leq
n-k+i-1}\subseteq\Lambda_{\deg\leq n-k+i-1}%
\]
(since $\Lambda_{\deg\leq n-k+i-1}$ is a $\mathbf{k}$-module). But the map
$\varphi^{-1}$ respects the filtration; in other words, we have $\varphi
^{-1}\left(  \Lambda_{\deg\leq m}\right)  \subseteq\Lambda_{\deg\leq m}$ for
each $m\in\mathbb{N}$. Applying this to $m=n-k+i-1$, we obtain $\varphi
^{-1}\left(  \Lambda_{\deg\leq n-k+i-1}\right)  \subseteq\Lambda_{\deg\leq
n-k+i-1}$. Now, (\ref{pf.thm.bigquot.ci=}) becomes%
\[
\mathbf{c}_{i}=\varphi^{-1}\left(  \underbrace{\varphi\left(  \mathbf{h}%
_{n-k+i}\right)  -\mathbf{h}_{n-k+i}+\mathbf{a}_{i}}_{\in\Lambda_{\deg\leq
n-k+i-1}}\right)  \in\varphi^{-1}\left(  \Lambda_{\deg\leq n-k+i-1}\right)
\subseteq\Lambda_{\deg\leq n-k+i-1}.
\]
In other words, $\mathbf{c}_{i}$ is a symmetric function of degree $\leq
n-k+i-1$. Hence, $\mathbf{c}_{i}$ is a symmetric function of degree $<n-k+i$.
Qed.}. Hence, for each $i\in\left\{  1,2,\ldots,k\right\}  $, the projection
$\overline{\mathbf{c}_{i}}$ of $\mathbf{c}_{i}\in\Lambda$ onto $\mathcal{S}$
is a symmetric polynomial of degree $<n-k+i$ (because projecting a symmetric
function from $\Lambda$ onto $\mathcal{S}$ cannot raise the degree). Thus,
Theorem \ref{thm.S/J} (applied to $\overline{\mathbf{c}_{i}}$ and
$I_{\mathbf{c}}$ instead of $a_{i}$ and $I$) yields that the $\mathbf{k}%
$-module $\mathcal{S}/I_{\mathbf{c}}$ is free with basis $\left(
\overline{s_{\lambda}}\right)  _{\lambda\in P_{k,n}}$. Hence, this
$\mathbf{k}$-module $S/I_{\mathbf{c}}$ is free and has a basis of size
$\left\vert P_{k,n}\right\vert $.

But $\varphi$ is a $\mathbf{k}$-algebra automorphism of $\Lambda$. Thus, we
have a $\mathbf{k}$-module isomorphism%
\begin{align*}
&  \Lambda/\left(  \left\langle \mathbf{h}_{n-k+1}-\mathbf{c}_{1}%
,\mathbf{h}_{n-k+2}-\mathbf{c}_{2},\ldots,\mathbf{h}_{n}-\mathbf{c}%
_{k}\right\rangle +\left\langle \mathbf{e}_{k+1},\mathbf{e}_{k+2}%
,\mathbf{e}_{k+3},\ldots\right\rangle \right) \\
&  \cong\Lambda/\underbrace{\varphi\left(  \left\langle \mathbf{h}%
_{n-k+1}-\mathbf{c}_{1},\mathbf{h}_{n-k+2}-\mathbf{c}_{2},\ldots
,\mathbf{h}_{n}-\mathbf{c}_{k}\right\rangle +\left\langle \mathbf{e}%
_{k+1},\mathbf{e}_{k+2},\mathbf{e}_{k+3},\ldots\right\rangle \right)
}_{\substack{=K\\\text{(by (\ref{pf.thm.bigquot.phiidealK}))}}}\\
&  =\Lambda/K.
\end{align*}
Hence, we have the following chain of $\mathbf{k}$-module isomorphisms:%
\begin{align*}
\Lambda/K  &  \cong\Lambda/\left(  \left\langle \mathbf{h}_{n-k+1}%
-\mathbf{c}_{1},\mathbf{h}_{n-k+2}-\mathbf{c}_{2},\ldots,\mathbf{h}%
_{n}-\mathbf{c}_{k}\right\rangle +\left\langle \mathbf{e}_{k+1},\mathbf{e}%
_{k+2},\mathbf{e}_{k+3},\ldots\right\rangle \right) \\
&  \cong\underbrace{\left(  \Lambda/\left\langle \mathbf{e}_{k+1}%
,\mathbf{e}_{k+2},\mathbf{e}_{k+3},\ldots\right\rangle \right)  }%
_{\cong\mathcal{S}}/\left\langle \overline{\mathbf{h}_{n-k+1}-\mathbf{c}_{1}%
},\overline{\mathbf{h}_{n-k+2}-\mathbf{c}_{2}},\ldots,\overline{\mathbf{h}%
_{n}-\mathbf{c}_{k}}\right\rangle \\
&  \ \ \ \ \ \ \ \ \ \ \left(
\begin{array}
[c]{c}%
\text{where }\overline{\mathbf{h}_{n-k+1}-\mathbf{c}_{1}},\overline
{\mathbf{h}_{n-k+2}-\mathbf{c}_{2}},\ldots,\overline{\mathbf{h}_{n}%
-\mathbf{c}_{k}}\text{ denote}\\
\text{the projections of }\mathbf{h}_{n-k+1}-\mathbf{c}_{1},\mathbf{h}%
_{n-k+2}-\mathbf{c}_{2},\ldots,\mathbf{h}_{n}-\mathbf{c}_{k}\\
\text{onto }\Lambda/\left\langle \mathbf{e}_{k+1},\mathbf{e}_{k+2}%
,\mathbf{e}_{k+3},\ldots\right\rangle
\end{array}
\right) \\
&  \cong\mathcal{S}/\underbrace{\left\langle h_{n-k+1}-\overline
{\mathbf{c}_{1}},h_{n-k+2}-\overline{\mathbf{c}_{2}},\ldots,h_{n}%
-\overline{\mathbf{c}_{k}}\right\rangle }_{\substack{=I_{\mathbf{c}%
}\\\text{(by the definition of }I_{\mathbf{c}}\text{)}}}=\mathcal{S}%
/I_{\mathbf{c}}.
\end{align*}
Hence, the $\mathbf{k}$-module $\Lambda/K$ is free and has a basis of size
$\left\vert P_{k,n}\right\vert $ (since the $\mathbf{k}$-module
$S/I_{\mathbf{c}}$ is free and has a basis of size $\left\vert P_{k,n}%
\right\vert $).

Now, recall that the family $\left(  \overline{\mathbf{s}_{\lambda}}\right)
_{\lambda\in P_{k,n}}$ spans the $\mathbf{k}$-module $\Lambda/K$. Hence, Lemma
\ref{lem.freemod-span-basis} shows that this family must be a basis of
$\Lambda/K$ (since it has the same size as a basis of $\Lambda/K$). This
proves Theorem \ref{thm.bigquot}.
\end{proof}

\end{document}